\newtheorem{lem}{Lemma}[section]
\newtheorem{thrm}[lem]{Theorem}
\newtheorem{prop}[lem]{Proposition}
\newtheorem{cor}[lem]{Corollary}
\theoremstyle{definition}
\newtheorem{defn}[lem]{Definition}
\theoremstyle{remark}
\newtheorem*{rem}{Remark}
\newcommand{\eq}[2]{\begin{equation}\label{#1}#2\end{equation}}
\renewcommand{\Re}{\ensuremath{\operatorname{Re}}}
\renewcommand{\Im}{\ensuremath{\operatorname{Im}}}
\renewcommand{\epsilon}{\varepsilon}
\newcommand{\mc}{\mathcal}
\newcommand{\mb}{\mathbf}
\newcommand{\mr}{\mathrm}
\newcommand{\mf}{\mathfrak}
\newcommand{\R}{\ensuremath{\mathbb{R}}}
\newcommand{\C}{\ensuremath{\mathbb{C}}}
\newcommand{\N}{\ensuremath{\mathbb{N}}}
\newcommand{\Schwartz}{\ensuremath{\mathscr{S}}}
\newcommand{\Test}{\ensuremath{\mathcal C^\infty_c}}
\newcommand{\Cont}{\ensuremath{\mathcal C}}
\DeclareMathOperator{\sgn}{sgn}
\newcommand{\<}{\ensuremath{\langle}}
\renewcommand{\>}{\ensuremath{\rangle}}
\newcommand{\err}{\ensuremath{\mb{err}}}
\newcommand{\p}{\ensuremath{\partial}}
\newcommand{\step}[1]{\smallskip\noindent\uline{#1}}
\newcommand{\bigO}{\mc O}
\newcommand{\LHS}[1]{\mr{LHS}\eqref{#1}}
\newcommand{\RHS}[1]{\mr{RHS}\eqref{#1}}
\newcommand{\bbo}{\ensuremath{\mathbbm 1}}
\newcommand{\bpf}{\begin{proof}}
\newcommand{\epf}{\end{proof}}
\DeclareMathOperator{\tr}{tr}
\let\det=\undefined
\DeclareMathOperator{\det}{det}  
\newcommand{\eps}{\varepsilon}
\newcommand{\I}{\mf I}
\newcommand{\op}{\mr{op}}
\newcommand{\NLS}{\mr{NLS}}
\newcommand{\mKdV}{\mr{mKdV}}
\newcommand{\diff}{\mr{diff}}
\newcommand{\Bd}{B_\delta}
\newcommand{\BdS}{\Bd\cap \Schwartz}
\newcommand{\bR}{\mb R}
\newcommand{\vk}{\varkappa}
\newcommand{\norm}[1]{{\left\vert\kern-0.25ex\left\vert\kern-0.25ex\left\vert #1 \right\vert\kern-0.25ex\right\vert\kern-0.25ex\right\vert}}
\newcommand{\normN}[1]{\norm{#1}_{\NLS}}
\newcommand{\normM}[1]{\norm{#1}_{\mKdV}}
\newcommand{\normNK}[1]{\norm{#1}_{\NLS_\kappa}}
\newcommand{\normMK}[1]{\norm{#1}_{\mKdV_\kappa}}
\newcommand{\qtq}[1]{\quad\text{#1}\quad}
\newcommand{\vr}{\gamma}
\newcommand{\vj}{j_\gamma}
\newcommand{\sbrack}[1]{^{[#1]}}
\newcommand{\wrho}{\widetilde\rho}
\newcommand{\wj}{\widetilde j_\mKdV}
\newcommand{\wR}{\widetilde R}
\newcommand{\wbR}{\widetilde\bR}
\DeclareMathOperator{\sech}{sech}
\title{Sharp well-posedness for the cubic NLS and mKdV in $H^s(\R)$}
\author[B.~Harrop-Griffiths]{Benjamin Harrop-Griffiths}
\address{Benjamin Harrop-Griffiths\\
Department of Mathematics and Statistics\\
Georgetown University, Washington, DC 20057, USA}
\email{benjamin.harropgriffiths@georgetown.edu}
\author[R.~Killip]{Rowan Killip}
\address{Rowan Killip\\
Department of Mathematics\\
University of California, Los Angeles, CA 90095, USA}
\email{killip@math.ucla.edu}
\author[M.~Vi\c san]{Monica Vi\c san}
\address{Monica Visan\\
Department of Mathematics\\
University of California, Los Angeles, CA 90095, USA}
\email{visan@math.ucla.edu}
\numberwithin{equation}{section}
\begin{document}

\begin{abstract}
We prove that the cubic nonlinear Schr\"odinger equation (both focusing and defocusing) is globally well-posed in $H^s(\R)$ for any regularity $s>-\frac12$.  Well-posedness has long been known for $s\geq 0$, see \cite{MR915266}, but not previously for any $s<0$.  The scaling-critical value $s=-\frac12$ is necessarily excluded here, since instantaneous norm inflation is known to occur \cite{christ2003illposedness,MR3917712,MR3702002}.

We also prove (in a parallel fashion) well-posedness of the real- and complex-valued modified Korteweg--de Vries equations in $H^s(\R)$ for any $s>-\frac12$.  The best regularity achieved previously was $s\geq \tfrac14$; see \cite{MR1969209,MR2531556,MR1211741,MR2501679}.

To overcome the failure of uniform continuity of the data-to-solution map, we employ the method of commuting flows introduced in \cite{MR3990604}. In stark contrast with our arguments in \cite{MR3990604}, an essential ingredient in this paper is the demonstration of a local smoothing effect for both equations. Despite the non-perturbative nature of the well-posedness, the gain of derivatives matches that of the underlying linear equation.  To compensate for the local nature of the smoothing estimates, we also demonstrate tightness of orbits. The proofs of both local smoothing and tightness rely on our discovery of a new one-parameter family of coercive microscopic conservation laws that remain meaningful at this low regularity. 
\end{abstract}

\maketitle

\setcounter{tocdepth}{1}
\tableofcontents

\section{Introduction}

We consider solutions \(q\colon \R\times \R\rightarrow \C\) of the nonlinear Schr\"odinger equation
\eq{NLS}{\tag{NLS}
i\frac d{dt}q = - q'' \pm  2|q|^2q,
}
and the (complex Hirota) modified Korteweg--de Vries equation
\eq{mKdV}{\tag{mKdV}
\frac d{dt}q = -  q''' \pm 6|q|^2q',
}
with initial data \(q(0)\in H^s(\R)\).  The upper choice of signs yields the defocusing cases of these equations, while the lower signs correspond to the focusing cases.  In this paper, the symbols $\pm$ and $\mp$ will only be used in the context of this dichotomy.

By restricting \eqref{mKdV} to the case of real initial data, we recover the classical mKdV equation of Miura \cite{MR252825}:
\begin{equation}\tag{mKdV$_\R$}\label{Miura mKdV}
\frac d{dt}q = -  q''' \pm 2 (q^3)'.
\end{equation}

To treat both the defocusing and focusing versions of \eqref{NLS} and \eqref{mKdV} within the same framework, throughout this paper we adopt the notation
$$
r:=\pm \bar q.
$$
With this convention, both \eqref{NLS} and \eqref{mKdV} are Hamiltonian equations with respect to the following Poisson structure on Schwartz space: Given \(F,G\colon \Schwartz\rightarrow \C\),
\eq{PoissonBracket}{
\{F,G\} := \tfrac1i \int  \tfrac{\delta F}{\delta q}\tfrac{\delta G}{\delta r} - \tfrac{\delta F}{\delta r}\tfrac{\delta G}{\delta q} \,dx,
}
where our notation for functional derivatives is the classical one; see \eqref{FunctDeriv}.  Correspondingly, any Hamiltonian \(H\colon \Schwartz\rightarrow \R\) generates a flow, which we denote by \(e^{tJ\nabla H}\), via the equation
\eq{HFlow}{
i\dfrac d{dt} q = \frac{\delta H}{\delta r}, \qtq{or equivalently,}  i\dfrac d{dt} r = - \frac{\delta H}{\delta q} .
}
In particular, since Hamiltonians are real-valued, the relations $q=\pm\bar r$ are preserved by any such flow.

With these conventions, the equations \eqref{NLS} and \eqref{mKdV}  are the Hamiltonian flows associated to
\[
H_{\NLS} := \int q' r' + q^2r^2\,dx \qtq{and} H_{\mKdV} := \tfrac1i \int q' r'' + 3q^2rr'\,dx,
\]
respectively.  Two other important Hamiltonians are the mass and momentum,
\[
M:=\int qr\,dx \qtq{and} P=\tfrac1{i}\int qr'\,dx ,
\]
which generate phase rotations and spatial translations, respectively.  While our names for the basic conserved quantities agree with the usual parlance in the defocusing case, their signs are reversed in the focusing case; in particular, the mass becomes negative definite.  However, this sign change is offset by a corresponding sign change in the Poisson structure, so the dynamics remains those given in \eqref{NLS} and \eqref{mKdV}.

All four functions $M$, $P$, $H_{\NLS}$, and $H_{\mKdV}$ Poisson commute.   While commutation with $M$ and $P$ merely represent gauge and translation invariance, the commutativity of $H_{\NLS}$ and $H_{\mKdV}$ is surprising and a first sign of a very profound property of these equations: they are completely integrable.

One expression of this complete integrability is the existence of an infinite family of commuting flows.  Taken together, these form the AKNS--ZS hierarchy.  This name honors the authors of the seminal papers \cite{MR450815,MR0406174}.  For an authoritative introduction to this hierarchy, with particular attention to the Hamiltonian structure, we recommend \cite{MR2348643}.

The odd and even numbered Hamiltonian flows in the AKNS--ZS hierarchy behave differently under $(q,r)\mapsto(\bar q,\bar r)$.  In particular, conjugation acts as a time-reversal operator for $M$ and $H_\NLS$, but leaves the $P$ and $H_\mKdV$ flows unchanged.  This leads to a number of significant differences in our treatment of \eqref{NLS} and \eqref{mKdV}.

As we will discuss more fully below, it has been known for a long time that both \eqref{NLS} and \eqref{mKdV} are globally well-posed for sufficiently regular initial data.  In fact, the question of what constitutes sufficiently regular initial data has occupied several generations of researchers.  We are now able to give a definitive answer:

\begin{thrm}[Global well-posedness of the NLS and mKdV]\label{thrm:main}
Let \(s>-\frac12\). Then the equations \eqref{NLS} and \eqref{mKdV} are globally well-posed for all initial data in \(H^s(\R)\) in the sense that the solution map \(\Phi\) extends uniquely from Schwartz space to a jointly continuous map \(\Phi\colon \R\times H^s(\R)\rightarrow H^s(\R)\).
\end{thrm}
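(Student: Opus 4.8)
The plan is to prove the theorem by the method of \emph{a priori estimates plus convergence of regularized approximations}, using the completely integrable structure as the engine behind both. The first step is to construct, for each $\kappa$ large, a conserved quantity $\beta(\kappa;q)$ — a renormalized (regularized) logarithm of the transmission coefficient for the associated Lax operator — whose low-frequency part controls the $H^s$ norm: one wants equivalences of the form $\norm{q}_{H^s}^2 \sim \int_0^\infty \kappa^{2s}\,[\text{something like }\beta(\kappa;q)\text{ or its }\kappa\text{-derivatives}]\,\tfrac{d\kappa}{\kappa}$, valid uniformly on bounded balls $B_\delta = \{q : \norm{q}_{H^s} \le \delta\}$ once $\delta$ is small. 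Since $\beta$ is conserved under both flows, this immediately yields global \emph{a priori} control of the $H^s$ norm of Schwartz solutions, provided the data is small; the restriction to small balls is then removed by rescaling (both equations have scaling symmetries under which $H^s$, $s>-\tfrac12$, is subcritical, so any $H^s$ datum can be made small by choosing the scaling parameter appropriately, at the cost of running the flow for a longer but still finite time).

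The second, and harder, step is to upgrade these \emph{a priori} bounds to equicontinuity and then to convergence of the solution map. Here the key new analytic input advertised in the abstract is a \emph{local smoothing estimate} with the full linear gain of derivatives: for Schwartz solutions staying in $B_\delta$, one should establish something like $\| q \|_{L^2_t H^{s+1}_{x,\mr{loc}}} \lesssim_\delta \norm{q(0)}_{H^s}$ for \eqref{NLS} (and the analogous gain of one derivative in $L^2_t$ for \eqref{mKdV}), again extracted from the one-parameter family of microscopic (i.e.\ local, pointwise-in-$x$) conservation laws — one differentiates the density/flux identity in $\kappa$ and integrates against $\kappa^{2s}\,d\kappa/\kappa$ to see the smoothing term emerge with a favorable sign, with the nonlinear and boundary contributions absorbed by the \emph{a priori} bounds already in hand. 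The main obstacle is precisely controlling these error terms at negative regularity: the naive multilinear estimates fail below $L^2$, and one must exploit cancellation in the hierarchy (and the uniform-in-$\kappa$ structure of $\beta$) to close the estimates; this is where the bulk of the technical work lies.

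With the \emph{a priori} bound and local smoothing in place, the endgame is standard in outline. One approximates $H^s$ data $q(0)$ by Schwartz data $q_n(0) \to q(0)$ in $H^s$; the corresponding Schwartz solutions $q_n(t)$ satisfy uniform-in-$n$ bounds in $C_t H^s \cap L^2_t H^{s+1}_{\mr{loc}}$ on any compact time interval. The local smoothing gain provides the compactness needed to pass to the limit in the (weak formulation of the) equation — the cubic nonlinearity $|q|^2 q$ or $|q|^2 q'$ makes sense and converges once one has an extra half-derivative locally in $L^2_t$ — giving existence of a solution $q(t) \in C_t H^s$. For uniqueness and continuous dependence, one compares two solutions via a difference estimate: because the equations are not known to be well-posed below $L^2$ in the classical sense, one instead proves that the map is \emph{weakly Lipschitz} — e.g.\ that $q_n(0) \to q(0)$ in $H^s$ forces $q_n(t) \to q(t)$ in $H^s$ — by combining weak convergence (from the uniform bounds) with convergence of norms (from conservation of the $\beta$-induced quasi-norm, which depends continuously on the data). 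Joint continuity of $\Phi$ on $\R \times H^s$ then follows by a routine diagonal argument, the time-continuity being uniform on bounded sets by the equicontinuity of $\{q(t)\}$. Globality is automatic since the \emph{a priori} $H^s$ bound does not deteriorate in time.
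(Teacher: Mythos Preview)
Your first two steps---a priori $H^s$ control via a conserved perturbation determinant, and local smoothing via a one-parameter family of microscopic conservation laws---match the paper. (One correction: the NLS smoothing gain is $\tfrac12$ derivative, to $X^{s+1/2}$, not one full derivative; mKdV gains one.) The gap is in your endgame. You propose to pass to the limit in the weak formulation via compactness, claiming the cubic nonlinearity ``makes sense and converges once one has an extra half-derivative locally in $L^2_t$.'' For \eqref{NLS} this fails in the range $-\tfrac12<s<-\tfrac16$: the half-derivative gain puts $q$ in $L^2_t H^{s+1/2}_{\mr{loc}}$, which does not embed in $L^3_{t,x,\mr{loc}}$, so $|q|^2q$ cannot be interpreted as a product of the factors (the paper explicitly notes that local $L^3_{t,x}$ is only recovered for $s\ge-\tfrac16$). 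Your uniqueness mechanism (weak convergence plus conservation of a norm-equivalent quantity) also does not close: $\alpha(\kappa)$ is only equivalent to $\|q\|_{H^s}^2$ up to higher-order corrections, so weak limits plus conservation of $\alpha$ do not force norm convergence, and distinct subsequential limits are not excluded.

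The paper never passes to the limit in the equation. Instead it uses the \emph{method of commuting flows}: for each $\kappa$ it builds a regularized Hamiltonian $H^\kappa_\star$ (a linear combination of $A(\pm\kappa)$ and $M$ or $P$) whose flow is globally well-posed on $H^s$ by an ODE argument and commutes with the full flow; one then writes $e^{tJ\nabla H_\star}=e^{tJ\nabla H^\kappa_\star}\circ e^{tJ\nabla(H_\star-H^\kappa_\star)}$ and proves the \emph{difference flow} converges to the identity as $\kappa\to\infty$, locally in space and uniformly on equicontinuous sets. This is carried out not for $q$ but for the diffeomorphic variable $g_{12}(\vk;q)$ (an off-diagonal Green's function entry), whose evolution under the difference flow is controlled using local smoothing---which must therefore be proved for the difference flow, not just the full flow, and this is where most of the technical weight lies. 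A separate tightness argument upgrades local-in-space convergence to global. The upshot is a direct proof that Schwartz approximations are Cauchy in $C_tH^s$, yielding existence, uniqueness, and continuous dependence simultaneously without ever interpreting the nonlinearity on the limit.
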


Here we are evidently taking the well-posedness of \eqref{NLS} and \eqref{mKdV} on Schwartz space for granted.   This has been known for a long time; see \cite{MR759907,MR641651}.

The threshold $s=-\tfrac12$ appearing in Theorem~\ref{thrm:main} is both sharp and necessarily excluded.  It is also the scaling-critical regularity. Indeed, each evolution in the AKNS-ZS hierarchy admits a scaling symmetry of the form
\begin{equation}\label{scaling}
q_\lambda(t,x) = \lambda q(\lambda^m t, \lambda x), \qtq{or equivalently,} \widehat{q_\lambda}(t,\xi) = \hat q(\lambda^m t, \xi/\lambda),
\end{equation}
where $m$ denotes the ordinal position of the Hamiltonian.  For example, $m=0$ for $M$, while \eqref{NLS} corresponds to $m=2$ and \eqref{mKdV} to $m=3$.

While a great many dispersive equations have recently been shown to be well-posed at the scaling-critical regularity, this fails for \eqref{NLS} and \eqref{mKdV}.  In fact, one has instantaneous norm inflation:  For every $s\leq-\frac12$ and $\eps>0$, there is a Schwartz solution $q(t)$ to \eqref{NLS} satisfying
\begin{align}\label{instantaneous}
\| q(0) \|_{H^s} < \eps \qtq{and} \sup_{|t|<\eps} \| q(t) \|_{H^s} > \eps^{-1}.
\end{align}
This was shown for \eqref{NLS} in \cite{christ2003illposedness,MR3917712,MR3702002}.  In Appendix~\ref{S:A} we revisit this work, giving a simplified presentation and showing that the same norm inflation holds also for \eqref{mKdV}, as well as other members of the hierarchy.  This ill-posedness effect does not seem to have been noticed before.

This norm inflation argument does not extend to \eqref{Miura mKdV}.  Nevertheless, in the appendix we show (seemingly for the first time) that a slightly weaker form of ill-posedness holds in the focusing case; see  Proposition~\ref{P:mKdV in -1/2}.  Previously,  \cite{MR1404320} showed that the data-to-solution map cannot be extended continuously to the delta-function initial data in the focusing case.  The analogous assertion for NLS (both focusing and defocusing) was proved in \cite{MR1813239}.

Let us turn our attention to the existing well-posedness theory.  The advent of Strichartz estimates \cite{MR512086} had a transformative effect on the study of nonlinear dispersive equations.  These estimates provide an elegant and efficient expression of the dispersive effect and allowed researchers to pass beyond the regularity required to make sense of the nonlinearity pointwise in time.  In \cite{MR915266}, Tsutsumi used this new tool to prove global well-posedness of \eqref{NLS} in $L^2(\R)$.

We know of no further progress in the scale of $H^s$ spaces since that time.  Here is one reason:  No ingenious harmonic analysis estimate, nor clever choice of metric, can reduce matters to a contraction mapping argument.  Such constructions lead to solutions that depend analytically on the initial data; however, in  \cite{MR2018661,christ2003illposedness,MR1813239} it is shown the the data-to-solution map cannot even be uniformly continuous on bounded subsets of $H^{s}(\R)$ when $s<0$.

Due to the derivative in the nonlinearity, Strichartz estimates alone do not suffice to understand the behavior of \eqref{mKdV}.  By bringing in local-smoothing and maximal-function estimates, Kenig, Ponce, and Vega, \cite{MR1211741}, were able to prove that \eqref{mKdV} is locally well-posed in $H^s(\R)$ for all $s\geq \frac14$.  The solution they construct depends analytically on the initial data.  Moreover, the threshold $s=\tfrac14$ is sharp if one seeks solutions that depend uniformly continuously on the initial data.  This was shown in \cite{MR2018661,MR1813239}.

In the case of \eqref{NLS}, the critical threshold for analytic well-posedness coincides with an exact conservation law, namely, that of $M(q)$.  Thus, Tsutsumi's result is automatically global in time.  Due to the absence of any obvious conservation law at regularity $s=\frac14$, it was unclear at that time whether the Kenig--Ponce--Vega solutions to \eqref{mKdV} are, in fact, global in time.  This was subsequently shown for \eqref{Miura mKdV} through the construction of suitable almost conserved quantities.  For $s>\frac14$, this was proved by Colliander--Keel--Staffilani--Takaoka--Tao \cite{MR1969209} with the endpoint added later by Guo and Kishimoto  \cite{MR2531556,MR2501679}.

With the exact threshold for analytic (or even uniformly continuous) dependence settled, the question immediately arises as to what happens at lower regularity:  What lies in the sizable gap remaining between these well-posedness results and the known breakdown of continuity at $s=-\frac12$?  This gap corresponds to regularities $-\frac12<s<0$ for \eqref{NLS} and $-\frac12<s<\frac14$ for \eqref{mKdV}.
 
For typical Schr\"odinger equations in $\R^d$ with polynomial nonlinearities, there is no gap between analytic local well-posedness and the onset of ill-posedness; see \cite{christ2003illposedness}.  Thus, it is all the more remarkable to discover a region of \emph{non-perturbative well-posedness} in this setting. This phenomenon appears to be a remarkable feature of completely integrable systems and investigating it necessitates methods that take advantage of this integrability.
 
A natural first step toward understanding solutions in this delicate region is to seek a priori $H^s$ bounds.  While boundedness of solutions would obviously follow from well-posedness, proving boundedness is typically a first step.  It is also the principal challenge in the construction of weak solutions.  On the other hand, showing impossibility of such bounds would give ill-posedness.

Early successes in this direction include \cite{MR2376575,MR2353092,MR2995102} for \eqref{NLS} and \cite{MR3058496} for \eqref{mKdV}.  Recently, the definitive result in this direction was obtained in \cite{MR3820439,MR3874652}, where exact conservation laws were constructed that control the $H^s$ norm of solutions all the way down to $s>-\tfrac12$.  Given the norm inflation discussed earlier, one cannot go any lower.

The macroscopic conservation laws constructed in \cite{MR3820439,MR3874652} interact with the scaling symmetry in a useful way; indeed, this was already employed in \cite{MR3820439} to connect differing regularities and to obtain bounds in Besov spaces.  Another important consequence of this interaction is that when $s<0$, it guarantees equicontinuity of orbits (cf. Definition~\ref{d:equicontinuity} and Proposition~\ref{prop:Equicontinuity} below).  This seems to have been first noted explicitly in \cite{MR3990604} and will play several important roles in what follows.

One example of the significance of equicontinuity is that it connects well-posedness at different regularities:  If $\sigma>s$ then existence and uniqueness of solutions with initial data in $H^s$ automatically guarantees the same for initial data in $H^\sigma$.  That the $H^s$-solution remains in $H^\sigma$ at later times follows from the existence of a priori bounds.  However, continuity of the data-to-solution map in $H^\sigma$ requires more; convergence at low regularity together with boundedness at higher regularity does not guarantee convergence at the higher regularity.  Equicontinuity in $H^\sigma$ is the simple necessary and sufficient condition for convergence in $H^\sigma$ under these circumstances.

There are two further aspects of the history we wish to discuss before describing the methods we employ: well-posedness results outside the scale of $H^s$ spaces and for these PDE posed on the torus.

By working in Fourier--Lebesgue and modulation spaces, several researchers succeeded in studying well-posedness questions outside the scale of $H^s$ spaces.  For \eqref{NLS}, for example, analytic local well-posedness was shown in almost-critical spaces by Gr\"unrock \cite{MR2181058} and Guo \cite{MR3602811}.  For \eqref{mKdV}, analogous almost-critical results in Fourier--Lebesgue spaces were obtained in \cite{MR2096258,MR2529909}.  The threshold for analytic well-posedness of \eqref{mKdV} in modulation spaces was determined in \cite{arXiv:1811.05182,arXiv:1811.04606}; however, this still does not coincide with scaling criticality.

Each of the three types of spaces (Fourier--Lebesgue, modulation, and Sobolev) has a very different character; nevertheless, each of the spaces just described can be enveloped by $H^s$ provided one takes $s>-\frac12$ sufficiently close to $-\frac12$. Conversely, both Fourier--Lebesgue and modulation spaces suppress high frequencies more strongly than negative regularity $H^s$ spaces; this substantially reduces the dangers of high-high-low interactions, which are the dominant source of instability in these models.

We are not aware of any global well-posedness results in Fourier--Lebesgue spaces close to criticality.  However, by ingeniously exploiting the way Galilei boosts interact with the conservation laws constructed in \cite{MR3820439}, Oh and Wang \cite{arXiv:1806.08761} obtained global bounds in modulation spaces, which then yield global well-posedness in these spaces.

In order to construct solutions via a contraction mapping argument, one must employ an array of subtle norms expressing the dispersive effect.  The question arises whether there might be other solutions that are continuous in $H^s$, but lie outside the auxiliary space.  This is the question of unconditional uniqueness, pioneered by Kato \cite{MR1383498,MR1403260}.  For the latest advances in this direction, see \cite{MR3073156,arXiv:1805.08410}.

We now give a quick review of what is known for \eqref{NLS} and \eqref{mKdV} posed on the circle (i.e.,~for periodic initial data).  In the Euclidean setting dispersion causes solutions to spread out.  This is impossible on the circle, there is nowhere to spread to.  Nevertheless, Bourgain \cite{MR1209299,MR1215780} proved that select Strichartz estimates do hold (expressing a form of decoherence).  As an application, these new estimates were used to prove global well-posedness of \eqref{NLS} in $L^2(\mathbb{T})$ and local well-posedness of \eqref{mKdV} in $H^{1/2}(\mathbb{T})$.  Global well-posedness of \eqref{Miura mKdV} in $H^{1/2}(\mathbb{T})$ was subsequently proved in \cite{MR1969209}.  Moreover, \cite{MR2018661} showed that these results match the threshold for analytic (or uniformly continuous) dependence on the initial data. 

For \eqref{NLS} on the circle, this $L^2$ threshold also marks the boundary for even continuous dependence on the initial data.  This was shown in \cite{MR1909648,christ2003instability,MR3801473} and represents a sharp distinction from the line case.  This `premature' breakdown of well-posedness is now understood as arising from an infinite phase rotation, which in turn suggests a suitable renormalization, namely, Wick ordering the nonlinearity.  This point of view has been confirmed in \cite{MR2333210,MR2390318,arXiv:1806.08761} where Wick-ordered NLS is shown to be globally well-posed in (almost-critical) Fourier--Lebesgue spaces where the traditional \eqref{NLS} is ill-posed.

For \eqref{Miura mKdV} on the circle, $H^{1/2}$ is not the threshold for continuous dependence. In \cite{MR2131061}, Kappeler and Topalov proved well-posedness in $L^2(\mathbb{T})$; this was shown to be sharp by Molinet~\cite{MR2927357}.  By renormalizing the nonlinearity (to remove an infinite transport term), well-posedness was then shown in \cite{MR3665197} for a larger Fourier--Lebesgue class of initial data; see also~\cite{MR4142237}.  The recent work \cite{arXiv:1911.00551} dramatically clarifies the situation regarding the full complex equation \eqref{mKdV}:  It is shown that $H^{1/2}$ \emph{is} the threshold for continuous dependence in this setting; moreover, it is shown that to go below this threshold (even in Fourier--Lebesgue spaces) a second renormalization is required.

Given the known thresholds for continuous dependence on the circle, the proof of Theorem~\ref{thrm:main} must employ some property of our equations that distinguishes the line and the circle cases!  This will be the local smoothing effect, that is, a gain of regularity locally in space on average in time.  This constitutes a significant point of departure from \cite{MR3990604}, where the arguments developed do not distinguish between the two geometries. 

The local smoothing estimates that are relevant to us involve fractional numbers of derivatives.  Correspondingly, some prudence is required in selecting the proper way to localize in space.  We do so by choosing a fixed family of Schwartz cutoff functions
\begin{align}\label{psi}
\psi(x) := \sech(\tfrac x{99}) \qtq{and} \psi_h(x) := \psi(x - h),
\end{align}
whose particular properties will allow it to be used throughout the analysis.  Corresponding to this cut-off, we define local smoothing norms by
\begin{equation}\label{E:D:LS}
\|q\|_{X^\sigma}^2 := \sup\limits_{h\in \R}\int_{-1}^1\|\psi_h^6 q\|_{H^\sigma}^2\,dt.
\end{equation}
In Lemma~\ref{lem:Reg-Gain}, we will see that this norm is strong enough to control any other choice of Schwartz-class cut-off function.

The restriction of time to the interval $[-1,1]$ in \eqref{E:D:LS} was a rather arbitrary choice; however, we see little advantage to introducing additional time parameters.  Results for alternate time intervals (or indeed other spatial intervals) can be achieved by a simple covering argument, using time- and space-translation invariance.

We are now ready to state the local smoothing estimates we prove for the solutions constructed in Theorem~\ref{thrm:main}.  As the gain in regularity differs between the two evolutions, it is easier to state our results separately:

\begin{thrm}[Local smoothing: NLS]\label{thrm:LS_N}
Fix \(-\frac12<s<0\). Given initial data $q_0\in H^s(\R)$, the corresponding solution $q(t)$ to NLS constructed in Theorem~\ref{thrm:main} satisfies
\begin{align}\label{E:NLS LS apb}
\| q \|_{X^{s+\frac12}} \lesssim \Bigl( 1 + \| q_0 \|_{H^s}\Bigr)^{\frac8{1+2s}} \| q_0 \|_{H^s};
\end{align}
moreover $q_0\mapsto q(t)$ is a continuous mapping from $H^s$ to $X^{s+\frac12}$.
\end{thrm}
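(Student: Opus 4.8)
The plan is to derive the local smoothing estimate \eqref{E:NLS LS apb} first for Schwartz initial data, where the solution is already known to exist classically, and then to pass to general $q_0\in H^s$ by density using the equicontinuity of orbits and the a priori $H^s$ bounds. The heart of the matter is a microscopic conservation law: the one-parameter family of conserved densities underlying the $H^s$ bounds of \cite{MR3820439} comes with an associated current, and differentiating the (regularized, or $\kappa$-parametrized) conserved quantity $\int \psi_h^{12}\,\rho_\kappa(x)\,dx$ in time produces a flux term of the schematic form $\int (\psi_h^{12})'\, j_\kappa\,dx$ plus commutator errors. The key structural fact is that, for the right choice of density $\rho_\kappa$ built from the diagonal Green's function of the Lax operator, the current $j_\kappa$ contains a term that is \emph{positive and coercive} — morally $|\partial^{s+\frac12} q|^2$ after the correct frequency weighting — localized by the weight $(\psi_h^{12})'$, which is itself (up to sign and a harmless bounded factor) comparable to $\psi_h^{12}$. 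Integrating in $t\in[-1,1]$ and taking the supremum over $h$ then yields control of $\|q\|_{X^{s+\frac12}}$ by the $H^s$ norm of the data, after absorbing the error terms.

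Concretely, I would proceed as follows. First, recall from the excerpt's framework the $\kappa$-dependent conserved quantity whose leading behaviour reproduces the $H^s$ norm when one integrates a suitable power of $\kappa$ against it; its density $\rho(\kappa,x)$ is a rational expression in $q,r$ and their derivatives coming from the perturbation expansion of $\log$ of a transmission coefficient. Second, compute $\frac{d}{dt}\int \psi_h^{12}\rho(\kappa,x)\,dx$ along the NLS flow; this is where the current $j_\NLS(\kappa,x)$ enters, via a local conservation law $\partial_t\rho + \partial_x j_\NLS = 0$ valid pointwise for Schwartz solutions. Third, integrate this identity over $t\in[-1,1]$: the left side is bounded by $\sup_t\|q(t)\|_{H^s}^2\lesssim (1+\|q_0\|_{H^s})^{C}\|q_0\|_{H^s}^2$ using the conservation-law $H^s$ bounds plus equicontinuity (to control the weighted density uniformly in $h$), while the right side is $\int_{-1}^1\!\int (\psi_h^{12})' j_\NLS(\kappa,x)\,dx\,dt$. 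Fourth — the crucial step — extract from $j_\NLS$ the coercive quadratic piece. Multiplying by $\kappa^{\text{(appropriate power)}}$ and integrating $d\kappa$ over $\kappa\gtrsim 1$ (or using a single well-chosen $\kappa\sim 1$ together with the frequency-localization in $X^{s+\frac12}$), the coercive term reconstructs $\int_{-1}^1\|\psi_h^{6}q\|_{H^{s+\frac12}}^2\,dt$, and the remaining terms in $j_\NLS$ are estimated as lower-order multilinear errors — these are handled by the same kind of paraproduct/tame multilinear estimates used to prove the a priori bounds, exploiting that $\psi_h$ and all its derivatives are Schwartz and that $(\psi^{12})'$ is pointwise dominated by $\psi^{12}$.

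The main obstacle, and the step deserving the most care, is the multilinear estimation of the error terms in the current $j_\NLS$ at regularity $s>-\frac12$: these are high-order-in-$(q,r)$ expressions with one or more derivatives, and one must show they are controlled by $\|q_0\|_{H^s}^{C}$ times a small multiple of $\|q\|_{X^{s+\frac12}}^2$ so that the coercive term can absorb them. This requires (i) trading the derivative loss against the $\frac12$-derivative gain in the local-smoothing norm, (ii) using equicontinuity to localize frequencies and thereby convert $H^s$ factors into small-constant times $X^{s+\frac12}$ factors on the relevant output frequencies, and (iii) carefully tracking the polynomial dependence on $\|q_0\|_{H^s}$ to obtain the stated exponent $\frac{8}{1+2s}$. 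Once the a priori estimate \eqref{E:NLS LS apb} is established for Schwartz data with a constant uniform in the data, continuity of $q_0\mapsto q$ from $H^s$ to $X^{s+\frac12}$ follows from a standard approximation argument: difference two solutions, run the analogous (now \emph{difference}) local-smoothing/conservation-law computation to control $\|q^{(1)}-q^{(2)}\|_{X^{s+\frac12}}$ by $\|q_0^{(1)}-q_0^{(2)}\|_{H^s}$ times powers of the data norms — again using equicontinuity to handle the $s<0$ regime — and then invoke density of Schwartz functions in $H^s$ together with the already-established joint continuity of $\Phi$ from Theorem~\ref{thrm:main}.
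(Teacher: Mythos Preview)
Your overall strategy for the a priori bound --- use the microscopic conservation law $\partial_t\rho + \partial_x j_\NLS = 0$, extract coercivity from the quadratic part of the current, and integrate in the spectral parameter to rebuild the localized $H^{s+\frac12}$ norm --- matches the paper's. One technical slip: if you test $\rho$ against $\psi_h^{12}$ you obtain $\int (\psi_h^{12})'\, j_\NLS\,dx$, and $(\psi_h^{12})'$ is odd about $x=h$ and sign-changing, so no coercivity can be extracted from it. The paper instead tests $\rho$ against the monotone antiderivative $\phi_h(x) = \int_{-\infty}^x \psi_h^{12}$, so that after integrating by parts the current is paired with the nonnegative weight $\psi_h^{12}$; see \eqref{LSD}. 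Also, the exponent $\tfrac{8}{1+2s}$ is not obtained by tracking constants through the multilinear errors but by rescaling the small-data estimate \eqref{NLS-LS} via \eqref{scaling}.

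The genuine gap is your continuity argument. You propose to difference two solutions and run the conservation-law computation on the difference to obtain $\|q^{(1)}-q^{(2)}\|_{X^{s+\frac12}} \lesssim \|q_0^{(1)}-q_0^{(2)}\|_{H^s}\cdot(\text{powers of data})$. That would yield Lipschitz (indeed, uniformly continuous) dependence on bounded sets, which is known to be \emph{false} for $s<0$; moreover $\rho$ and $j_\NLS$ are nonlinear in $q$, so the conservation law does not pass to differences in any useful way. The paper's mechanism is different: alongside \eqref{NLS-LS} one proves a high-frequency refinement \eqref{NLS-LS-HF},
\[
\|(\psi_h^6 q)'\|_{L^2_t H^{s-\frac12}_\kappa}^2 \lesssim \|q(0)\|_{H^s_\kappa}^2 + \kappa^{-(2s+1)}\delta^2\|q(0)\|_{H^s}^2,
\]
whose right-hand side tends to zero as $\kappa\to\infty$ uniformly over any $H^s$-equicontinuous family of initial data. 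Continuity then follows by splitting $\psi_h^6(q_n-q_m)$ into frequencies $\leq\kappa$, controlled by $\sqrt\kappa\,\|q_n-q_m\|_{L^\infty_t H^s}\to 0$ via Theorem~\ref{thrm:main}, and frequencies $>\kappa$, uniformly small by the high-frequency bound. This frequency-threshold parameter, and the estimate it enables, is the missing idea.
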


\begin{thrm}[Local smoothing: mKdV]\label{thrm:LS_m}
Fix \(-\frac12<s<\frac12\). The solution $q(t)$ to mKdV with initial data $q_0\in H^s(\R)$ constructed in Theorem~\ref{thrm:main} satisfies
\begin{align}\label{E:mKdV LS apb}
\| q \|_{X^{s+1}} \lesssim \Bigl( 1 + \| q_0 \|_{H^s}\Bigr)^{\frac{11}{1+2s}} \| q_0 \|_{H^s};
\end{align}
moreover $q_0\mapsto q(t)$ is a continuous mapping from $H^s$ to $X^{s+1}$.
\end{thrm}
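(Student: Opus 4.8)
The plan is to deduce the estimate from a \emph{microscopic conservation law} for the \eqref{mKdV} flow. Let $\rho(\kappa;x,t)$ denote the density whose spatial integral is the $\kappa$-dependent conserved quantity manufactured from the diagonal Green's function of the Lax operator (the object promised in the abstract and constructed in the body of the paper). Then along the \eqref{mKdV} evolution one has an identity $\p_t \rho(\kappa) + \p_x J_{\mKdV}(\kappa) = 0$, in which the current $J_{\mKdV}(\kappa)$ carries \emph{one more derivative} than the current $J_{\NLS}(\kappa)$ governing Theorem~\ref{thrm:LS_N}, reflecting the passage from quadratic to cubic dispersion. For Schwartz data, I would multiply this identity by a sufficiently large fixed power $\psi_h^{N_0}$ of the cutoff from \eqref{psi}, integrate over $x$ and over $t\in[-1,1]$, and integrate by parts in time to obtain
\[
\Bigl[\int \psi_h^{N_0}\,\rho(\kappa;x,t)\,dx\Bigr]_{t=-1}^{t=1}
= \int_{-1}^{1}\!\!\int \bigl(\p_x\psi_h^{N_0}\bigr)\, J_{\mKdV}(\kappa;x,t)\,dx\,dt .
\]
That $\psi=\sech(x/99)$ has bounded logarithmic derivative is used crucially here: it makes $|\p_x\psi_h^{N_0}|\lesssim\psi_h^{N_0}$, so that every cutoff-derivative generated along the way is of lower order and can be reabsorbed.

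The gain of a derivative comes from the right-hand side. I would split $J_{\mKdV}(\kappa)$ into its part quadratic in $q$ and a genuinely nonlinear (quartic-and-higher) remainder. The quadratic part is, modulo acceptable errors, a Fourier multiplier applied to $\psi_h^6 q$ whose behaviour on the spectral band $|\xi|\sim\kappa$ is precisely that which — after a Littlewood--Paley decomposition, taking $\kappa\sim N$ on the piece of frequency $N$, and summing over dyadic $N$ against the appropriate $\kappa$-weight (the low frequencies being harmless since $s+1>s$) — reconstructs $\|\psi_h^6 q\|_{X^{s+1}}^2$; this is the local-smoothing gain of the linear Airy flow, one full derivative. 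The remainder terms all carry extra inverse powers of $\kappa$; they are controlled by the multilinear (resolvent-expansion / paraproduct) technology developed in the paper, being bounded by $\|q_0\|_{H^s}$ together with the local-smoothing norm of $q$ itself, so that once $\kappa$ exceeds a threshold $\kappa_0\simeq\bigl(1+\|q_0\|_{H^s}\bigr)^{2/(1+2s)}$ they can be absorbed into the left-hand side.

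It remains to bound the boundary term $\sup_{|t|\le1}\bigl|\int\psi_h^{N_0}\rho(\kappa;x,t)\,dx\bigr|$. The bounds on $\rho(\kappa)$ established for this purpose control it in terms of $\kappa$ and $\|q(t)\|_{H^s}$, and the a priori $H^s$ bounds underlying Theorem~\ref{thrm:main} then replace $\|q(t)\|_{H^s}$ by $\|q_0\|_{H^s}$ uniformly for $|t|\le1$; that the corresponding multiplier estimate requires $s>-\tfrac12$ is no accident, given the norm inflation \eqref{instantaneous} (and for $s\ge\tfrac12$ the conclusion is classical, so we do not pursue that range). Inserting $\kappa=\kappa_0$ and tracking the powers of $\kappa_0$ produced by the current and density bounds yields the exponent $\tfrac{11}{1+2s}$ in \eqref{E:mKdV LS apb}. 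I expect the genuine difficulty to reside in this middle step: identifying the right paralinearization of $J_{\mKdV}(\kappa)$ so that its main part is \emph{manifestly} coercive — bounded below by the frequency-localized smoothing quantity — while simultaneously showing that the complete multilinear tail remains subcritical for every $s>-\tfrac12$. This is the same obstacle one meets in Theorem~\ref{thrm:LS_N}, but the extra derivative forced by cubic dispersion makes the symbol bookkeeping and the multilinear estimates substantially heavier.

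Finally, since the estimate just sketched holds with a constant depending only on $\|q_0\|_{H^s}$, it transfers to the solution furnished by Theorem~\ref{thrm:main}: approximating by Schwartz data $q_0^{(n)}\to q_0$, the solutions converge in $C_tH^s$ by Theorem~\ref{thrm:main}, and lower semicontinuity of the $X^{s+1}$ norm under this convergence gives \eqref{E:mKdV LS apb}. For the asserted continuity of $q_0\mapsto q$ from $H^s$ into $X^{s+1}$ one upgrades $C_tH^s$-convergence to $X^{s+1}$-convergence by a frequency-splitting argument: on frequencies $\le K$ the $X^{s+1}$ norm is controlled by the $C_tH^s$ norm up to a $K$-dependent constant (using the cutoff estimates of Lemma~\ref{lem:Reg-Gain}), hence converges; on frequencies $>K$ one runs the frequency-localized version of the estimate above together with the equicontinuity of orbits (which, for $s<0$, is Proposition~\ref{prop:Equicontinuity}) to see that the tails are small uniformly in $n$ as $K\to\infty$.
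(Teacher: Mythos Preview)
Your overall strategy --- microscopic conservation law, localized monotonicity identity, coercivity of the quadratic current versus perturbative control of higher-order terms, then continuity via frequency-splitting and equicontinuity --- matches the paper's. But there are two genuine gaps.

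\textbf{The weight is on the wrong side.} Multiplying $\p_t\rho + \p_x j = 0$ by $\psi_h^{N_0}$ and integrating places the \emph{sign-indefinite} function $\p_x\psi_h^{N_0}$ on the current. Already for the linear Airy flow (density $|u|^2$, current containing $-3|u'|^2$) this destroys coercivity: $-3\int (\p_x\psi_h^{N_0})|u'|^2$ has no sign. The paper instead multiplies by the \emph{primitive} $\phi_h(x)=\int_{-\infty}^x \psi_h^{12}(y)\,dy$, so that the bounded monotone weight $\phi_h$ sits on $\rho$ while the positive localized weight $\psi_h^{12}=\phi_h'$ sits on the current; see \eqref{LSD}. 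Coercivity of $\Re\int\psi_h^{12}\,j_\mKdV^{[2]}$ (Lemma~\ref{lem:Currents-quad}) then delivers $\vk\|(\psi_h^6 q)'\|_{H^{-1}_\vk}^2$, which is integrated in $\vk$ via Lemma~\ref{lem:EquivNorm} to reconstruct the $X^{s+1}$ norm. The fact that $|\p_x\psi_h^{N_0}|\lesssim\psi_h^{N_0}$ is useful for commutator bookkeeping, but it does not substitute for positivity of the weight on the current.

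\textbf{One density does not cover the full range.} The $\rho$-based argument is carried out in the paper only for $-\tfrac12<s<0$ (indeed, all of Sections~\ref{S:2}--\ref{S:6} are developed under that standing hypothesis). Concretely, the error bound \eqref{jmKdV-err} contains a $\vk^{-1}$ term; after multiplication by the weight $\vk^{2s+1}$, the integral $\int_\kappa^\infty \vk^{2s-1}\,d\vk$ diverges for $s\geq0$. For $0\leq s<\tfrac12$ the paper introduces a \emph{different} density $\wrho(\vk)=qr-2\vk\,\rho(\vk)$ with its own conservation law and current, establishes new quadratic-coercivity and error estimates (Lemmas~\ref{L1}--\ref{L2}), and only then closes the local smoothing (Proposition~\ref{P:ls}). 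A single argument uniform over $-\tfrac12<s<\tfrac12$ using only $\rho$ does not go through.

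Two smaller points. The paper does not absorb the nonlinear errors by taking a threshold $\kappa_0$ large; it works under a small-data hypothesis $q(0)\in\Bd$ and obtains the exponent in \eqref{E:mKdV LS apb} afterwards by the scaling \eqref{scaling}. And for the continuity claim, the high-frequency estimates \eqref{mKdV-LS-HF} and \eqref{mKdV-LS-HF +} play the role you assign to equicontinuity; for $s\geq 0$ the relevant equicontinuity input is Proposition~\ref{p:higher} rather than Proposition~\ref{prop:Equicontinuity}.
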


Estimates of this type are well-known for the underlying linear equations and readily proven either by Fourier-analytic techniques, or by explicit monotonicity identities.  In the special cases where one has a suitable microscopic conservation law, the latter technique can be adapted to nonlinear problems.  Indeed, the original local smoothing effect was the case $s=0$ of \eqref{E:mKdV LS apb}, which was proven in \cite{MR759907} by employing the microscopic conservation law
$$
\partial_t\bigl( |q|^2\bigr)  + \partial_x^3 \bigl(  |q|^2 \bigr) - 3 \partial_x \bigl(  |q'|^2 \pm |q|^4 \bigr)  =0 
$$
satisfied by solutions of \eqref{mKdV}.  The analogous microscopic conservation law for \eqref{NLS} is
$$
\partial_t 2\Im (\bar q q' ) - \partial_x^3 \bigl(  |q|^2 \bigr) + \partial_x \bigl( 4|q'|^2 \pm 2 |q|^4 \bigr)  =0,
$$
which yields \eqref{E:NLS LS apb} with $s=\frac12$.

When the sought-after regularity does not match a known conservation law, local smoothing results for nonlinear PDE have traditionally been proven perturbatively, building on the corresponding estimates for the underlying linear equation.  In particular, the arguments of \cite{MR915266} can be used to show that \eqref{E:NLS LS apb} continues to hold for $s\geq 0$.  That \eqref{E:mKdV LS apb} continues to hold for $s\geq \frac14$ was proved in \cite{MR1211741}; indeed, there the local smoothing effect was crucial to even constructing solutions.

Due to the breakdown in uniform continuity of the data-to-solution map at low regularity, we cannot expect the nonlinear flow to be well modeled by a linear flow and so some truly nonlinear technique is needed to prove Theorems~\ref{thrm:LS_N} and~\ref{thrm:LS_m}.  It is the discovery of a new one-parameter family of microscopic conservation laws for these equations that will allow us to achieve such low regularity.  As local smoothing is a linear effect, it is surprising that the loss of uniform continuity is not accompanied by any lessening of this effect --- the estimates we obtain exhibit the same derivative gain as seen for the linear equation.

As we shall see, the proof of Theorem~\ref{thrm:main} relies crucially on the local smoothing effect (though in a rather stronger form than presented in Theorems~\ref{thrm:LS_N} and~\ref{thrm:LS_m}).  With this in mind, it is natural to begin our discussion of the methods employed in this paper by describing how local smoothing is to be proved.

Local smoothing estimates also allow us to make better sense of the nonlinearity.  Note that Theorem~\ref{thrm:main} already allows us to make sense of the nonlinearity taken holistically: If $q_n$ are Schwartz solutions converging to $q$ in $L^\infty_t H^s$, then directly from the equation, we see that the corresponding sequence of nonlinearities converge, for example, as spacetime distributions.  By contrast, one may seek to make sense of the individual factors in the nonlinearity in a way that allows them to be multiplied; this is where local smoothing helps.

For example, our results show that for any $s>-1/2$, solutions of \eqref{Miura mKdV} with initial data in $H^s(\R)$ belong to $L^3_{t,x}$ on all compact regions of spacetime.  Analogously, we see that solutions to \eqref{NLS} are locally $L^3_{t,x}$ whenever $s\geq -1/6$.

\subsection{Outline of the proof} 

As we have mentioned earlier, \eqref{NLS} and \eqref{mKdV} belong to an infinite hierarchy of evolution equations whose Hamiltonians Poisson commute.  Among PDEs, this phenomenology was first discovered in the case of the Korteweg--de Vries equation \cite{PhysRevLett.19.1095}.  And it was these discoveries that Lax \cite{MR235310} elegantly codified by introducing the Lax pair formalism. (The monograph \cite{MR2348643} employs a parallel approach based around the zero-curvature condition.)

As noted above, Lax pairs for \eqref{NLS} and \eqref{mKdV} were introduced in \cite{MR450815,MR0406174}.  Several different (but equivalent) choices of these operators exist in the literature.  Our convention will be to use Lax operators
\begin{equation}\label{Intro AKNS L}
L(\vk) := \begin{bmatrix}\vk - \p & q\\-r&\vk + \p\end{bmatrix} \qtq{as well as} L_0(\vk) := \begin{bmatrix}\vk - \p & 0\\0&\vk + \p\end{bmatrix}.
\end{equation}
Here $\vk$ denotes the spectral parameter (which will always be real in this paper).  The second member of the Lax pair (traditionally denoted $P$) can be taken to be
\begin{align*}
i \begin{bmatrix}2\p^2-qr & -q\p-\p q\\ r\p+\p r & -2\p^2+qr\end{bmatrix}
\qtq{and}
\begin{bmatrix} - 4\p^3 + 3 qr\p + 3 \p qr  & 3q'\p +3\p q' \\ 3r'\p+3\p r' & -4\p^3 + 3 qr\p + 3 \p qr \end{bmatrix},
\end{align*}
for \eqref{NLS} and \eqref{mKdV}, respectively.

The Lax equation $\partial_t L = [P,L]$ guarantees that the Lax operators at different times are conjugate.  In the setting of finite matrices, this would guarantee that the characteristic polynomial of $L$ is independent of time.  In the case of \eqref{Intro AKNS L}, renormalization is required --- indeed, $L$ is not even bounded, let alone trace-class.  Such a renormalization was presented in \cite{MR3820439} based on the renormalized Fredholm determinant \(\det_2(1+A) = \det(1+A) e^{-\tr(A)}\).  Concretely, it was shown in \cite{MR3820439} that 
$$
\pm \log\det_2 [ L_0(\vk)^{-1} L(\vk;q)] 
$$
is well defined, conserved for Schwartz solutions, and coercive.  This was the origin of the coercive macroscopic conservation laws constructed in that paper.  The regularities of these laws were adjusted by integrating against a suitable measure in~$\vk$.

Unfortunately, such macroscopic conservation laws are of no use in proving local smoothing. We need not only \emph{microscopic} conservation laws, but \emph{coercive} microscopic conservation laws.  In Section~\ref{S:4}, we present our discovery of just such a density $\rho$ and its attendant currents $j$.  We feel that this is an important contribution to the much-studied algebraic theory of these hierarchies.  Moreover, it is the driver of all that follows.

We do not have a systematic way of finding microscopic conservation laws attendant to the conservation of the perturbation determinant.  If we compare the answer for KdV from \cite{MR3990604} with that developed in this paper, it is tempting to predict that it should always be a rational function of components of the diagonal Green's function.  However, we have also found the corresponding quantity for the Toda lattice~\cite{MR4320535} and in that case, it is a \emph{transcendental} function of entries in the Green's matrix.

On the other hand, the closely related one-parameter family of macroscopic conservation laws 
\begin{equation}\label{E:res trace}
\frac{\partial\ }{\partial \vk} \log\det_2 [ L_0(\vk)^{-1} L(\vk;q)] = \tr\bigl\{ L(\vk;q)^{-1} - L_0(\vk)^{-1} \bigr\}
\end{equation}
are easily seen to admit a microscopic representation based on the diagonal of the Green's function.  The associated density $\gamma$ turns out to be far inferior for what we need to do here.  Indeed, in Lemma~\ref{lem:Currents are coercive}, we will show that unfortunately, the current corresponding to $\gamma$ is not adequately coercive.  This undermines its utility for proving local smoothing.  In principle, one could recover a $\rho$-like object by integrating $\gamma$ in energy.  (This need only agree with $\rho$ up to a mean-zero function.)  In fact, we pursued this approach for a long time while still seeking the true form of $\rho$.  We can attest that this approach is extremely painful and dramatically increases the number of subtle cancellations that need to be exhibited later in the argument.

The proof of local smoothing is far and away the most lengthy and complicated part of the paper, comprising the entirety of Section~\ref{S:6} and employing crucially all of the preceding analysis.  One reason is that we actually need a two-parameter family of estimates that go far beyond the simple a priori bounds \eqref{E:NLS LS apb} and \eqref{E:mKdV LS apb}.  The role of the first of these two parameters is easy to explain at this time: it acts as a frequency threshold in the local smoothing norm.  This refinement will allow us to prove that the high-frequency contribution to the local-smoothing norm is controlled (in a very quantitative way) by the high-frequency portion of the initial data.  This is the essential ingredient in the continuity claims made in Theorems~\ref{thrm:LS_N} and~\ref{thrm:LS_m}.  (The basic question of whether such continuity holds for Kato's original estimate \cite{MR759907} seems to have been open up until now.) 

This extra frequency parameter also plays a major role in Section~\ref{sec:Tight} where it is used to show that an $H^s$-precompact set of Schwartz-class initial data leads to a collection of solutions that is $H^s$-precompact at later times.  In view of the equicontinuity of orbits mentioned earlier, this is a question of tightness.

As local smoothing estimates control the flow of the $H^s$ norm through compact regions of spacetime, it is natural to attempt to employ them to prove tightness in $H^s$.  However, it is precisely the fact that the transport of $H^s$ norm cannot exceed the total $H^s$ norm available that is used to prove Theorems~\ref{thrm:LS_N} and~\ref{thrm:LS_m}; thus these results do not provide sufficient control to yield tightness!  Our tightness result relies crucially on the extra frequency parameter to demonstrate that there is little local smoothing norm residing at high frequencies and consequently, little high-speed transport of $H^s$-norm.

The compactness result just enunciated guarantees the existence of weak solutions.  To obtain well-posedness, we must verify uniqueness (i.e., that different subsequences do not lead to different solutions), as well as continuous dependence on the initial data.  To achieve that, we will rely crucially on ideas introduced in \cite{MR3990604} and further developed in \cite{arXiv:1912.01536,arXiv:1904.11910}.

While these papers provide a useful precedent on overall strategy, they provide no guidance on how to implement it.  The first triumph of this paper is to construct the algebraic and analytic framework needed for this type of analysis in the AKNS-ZS hierarchy.  We will see that even though the two equations belong to the same hierarchy, the fundamental monotonicity laws for \eqref{NLS} and \eqref{mKdV} are different; moreover, neither equation provides significant guidance in finding the numerous cancellations necessary to treat the other.

The first step in this strategy is the introduction of regularized Hamiltonians indexed by a scalar parameter $\kappa$.  The flows induced by these Hamiltonians should (a) be readily seen to be well-posed, (b) commute with the full flows, and (c) converge to the full flows as $\kappa\to\infty$.  Such flows are introduced in Section~\ref{S:4} where they are easily proven to have properties (a) and (b).  That they enjoy property (c) in the desired topology, however, is highly non-trivial.  This is the subject of Section~\ref{S:convg}, which is the climax of this paper.

Due to their commutativity, the problem of controlling the difference between the full and regularized flows can be reduced to controlling the evolution under the difference Hamiltonian (that is, the difference of the full and regularized Hamiltonians).  In fact, this is the key insight of the commuting flow paradigm introduced in \cite{MR3990604}: instead of needing to estimate the distance between two solutions (which is rendered intractable by the breakdown of uniformly continuous dependence), one need only study a single evolution, albeit under a much more complicated flow.  

The difference flow retains all the bad behavior of the original PDE; indeed, the regularized flows are (by construction) relatively harmless.  All obstacles that prevented previous researchers from successfully analyzing solutions in this non-perturbative regime are retained.  To succeed, we will need to rely on a number of new insights; these include the new two-parameter local smoothing estimates, a novel change of unknown, and the demonstration of myriad cancellations between the full flow and its regularized counterpart.

The necessity of employing a (diffeomorphic) change of variables is common also to \cite{arXiv:1912.01536,MR3990604}.  In those works, the new variable is the diagonal Green's function.  The fact that this originates from a microscopic conservation law places one derivative in a favorable position.  Alas, all conservation laws for the NLS/mKdV hierarchy are quadratic in $q$ and so none can offer a diffeomorphic change of variables.

In place of the diagonal Green's function that proved so successful in the treatment of the KdV hierarchy, we adopt an off-diagonal entry $g_{12}(x)$ of the Green's function as our new variable.  Among its merits are the following: it has a relatively accessible time evolution; as an integral part of the definition of $\rho$, it is something for which we need to develop extensive estimates anyway; the mapping $q\mapsto g_{12}$ is a diffeomorphism; and, lastly, it gains one degree of regularity, which aids in estimating nonlinear terms.

Nevertheless, this change of variables comes with significant shortcomings. Foremost, it is not possible to control the evolution of $g_{12}$ without employing local smoothing (or some other manifestation of the underlying geometry).  For otherwise, one would obtain results for the circle that are known to be false!

At this moment, it is important to remember that we are discussing the difference flow and that our ambition is to prove that it converges to the identity as $\kappa\to\infty$.  Concomitant with this, the local smoothing effect deteriorates rapidly as $\kappa\to\infty$.  This inherent deterioration in the local smoothing estimates means that in order to treat all regularities $s>-\frac12$, we must discover every cancellation available between the full and regularized flows.  This in turn necessitates the carefully premeditated decomposition of error terms in Section~\ref{S:convg} and the stringent estimation of paraproducts in Section~\ref{S:6}.

Due to the need for local smoothing estimates, we will only be able to verify convergence locally in space.  The tightness results of Section~\ref{sec:Tight} are therefore essential for overcoming this deficiency.

In Section~\ref{S:8} we prove Theorem~\ref{thrm:main}. The tools we develop in the first seven sections allow us to prove Theorem~\ref{thrm:main} in the range $-\frac12<s<0$. This suffices for \eqref{NLS} but leaves the gap $[0,\frac14)$ for \eqref{mKdV}.  To close this gap, we construct suitable macroscopic conservation laws for both equations that allow us to prove the equicontinity of orbits in $H^s$ for $0\leq s<\frac12$ and so deduce well-posedness from that at lower regularity.  This is interesting even for \eqref{NLS}, where, for example, global in time equicontinuity of orbits in $L^2$ does not seem to have been shown previously (nor is it trivially derivable from the standard techniques).  

Section~\ref{S:9} is devoted to proving Theorems~\ref{thrm:LS_N} and \ref{thrm:LS_m}.  All the ingredients we need for the range $-\frac12<s<0$ are presented already in Section~\ref{S:6}. Thus, the majority of Section~\ref{S:9} is devoted to proving local smoothing for \eqref{mKdV} over the range $0\leq s<\frac12$ by using a new underlying microscopic conservation law. 

In closing, let us quickly recapitulate the structure of the paper that follows.  Section~\ref{S:2} discusses myriad preliminaries: settling notation, verifying basic properties of the local smoothing spaces, and proving a variety of commutator estimates.  In Section~\ref{S:3} we discuss the (matrix-valued) Green's function of the Lax operator, with particular emphasis at the confluence of the two spatial coordinates.  Section~\ref{S:4} introduces the conserved density $\rho$ and derives equations for the time evolution of this and other important quantities.  Section~\ref{S:6} proves local smoothing estimates, not only for \eqref{NLS} and \eqref{mKdV}, but also for the associated difference flows.  It is essential for what follows that these local smoothing estimates contain an additional frequency cut-off parameter.   The freedom to vary this parameter plays a crucial role, for example, in Section~\ref{sec:Tight} where these local smoothing estimates are used to control the transport of $H^s$-norm.   Section~\ref{S:convg} uses local smoothing to demonstrate the convergence of the regularized flows to the full PDEs by proving that the difference flow approximates the identity.  In Section~\ref{S:8}, we prove Theorem~\ref{thrm:main}.  Section~\ref{S:9} addresses Theorems~\ref{thrm:LS_N} and \ref{thrm:LS_m}.  Appendix~\ref{S:A} gives a new presentation of existing ill-posedness results for \eqref{NLS}, extending them to other members of the hierarchy, including \eqref{mKdV}.

\subsection*{Acknowledgements}
We would like to thank the referees for carefully reading this article and their many insightful suggestions. R.K. was supported by NSF grant DMS-1856755.  M.V. was supported by NSF grant DMS-1763074. This work was completed while B.H.-G. was a member of the Department of Mathematics at UCLA.

\section{Some notation and preliminary estimates}\label{S:2}
For the remainder of the paper, we constrain
$$
s\in (-\tfrac12,0)
$$
and all implicit constants are permitted to depend on $s$.  In view of the scaling \eqref{scaling}, it will suffice to prove all our theorems under a small-data hypothesis.  For this purpose, we introduce the notation
\begin{equation}\label{Bdelta}
\Bd := \left\{q\in H^s:\|q\|_{H^s}\leq \delta\right\}.
\end{equation}

We use angle brackets to represent the pairing:
$$
\langle f, g\rangle = \int \overline{f(x)} g(x)\,dx.
$$
In addition to being the natural inner product on (complex) $L^2(\R)$, this also informs our notions of dual space (the dual of $H^s(\R)$ is $H^{-s}(\R)$) and of functional derivatives: If $F:\Schwartz\to\C$ is $C^1$, then
\begin{equation}\label{FunctDeriv}
\tfrac{d\ }{d\theta}\Big|_{\theta=0} F(q+\theta f) = \bigl\langle \bar f, \tfrac{\delta F}{\delta q}\bigr\rangle \pm \bigl\langle f, \tfrac{\delta F}{\delta r}\bigr\rangle.
\end{equation}
For real-valued $F$, the functions $\tfrac{\delta F}{\delta q}$ and $\tfrac{\delta F}{\delta \bar  q}=\pm \tfrac{\delta F}{\delta r}$ are complex conjugates. These are functional analogues of the (Wirtinger) directional derivatives of complex analysis --- $q$ and $\bar q$ are not independent variables!

We write $\I_p$ for the $\ell^p$ Schatten class over the Hilbert space $L^2(\R)$.  For most of our analysis, the Hilbert--Schmidt class $\I_2$ will suffice.

Commensurate with our choice of time interval in \eqref{E:D:LS}, all spacetime norms will also be taken over this time interval (unless the contrary is indicated explicitly).  Thus, for any Banach space \(Z\) and \(1\leq p\leq \infty\), we define
\[
\|q\|_{L^p_t Z} := \bigl\| \|q(t) \|_Z \bigr\|_{L^p(dt;[-1,1])}.
\]

Our convention for the Fourier transform is
\begin{align*}
\hat f(\xi) = \tfrac{1}{\sqrt{2\pi}} \int_\R e^{-i\xi x} f(x)\,dx,  \qtq{whence}  \widehat{fg}(\xi) = \tfrac{1}{\sqrt{2\pi}} [\hat f * \hat g] (\xi).
\end{align*}

We shall repeatedly employ a `continuum partition of unity' device based on the cut-off $\psi_h^{12}$.  Specifically, as
\begin{equation}\label{CPUD}
\int_\R \psi(x-h)^{12} \,dh \equiv \tfrac{512}{7}, \qtq{so} f(x) = \tfrac{7}{512} \int_\R f(x) \psi_h^{12}(x)\,dh
\end{equation}
in $H^\sigma(\R)$ sense, for any $f\in H^\sigma(\R)$ and any $\sigma\in\R$.

\subsection{Sobolev spaces}
For real \(|\kappa|\geq 1\) and \(\sigma\in \R\) we define the norm
\[
\|q\|_{H^\sigma_\kappa}^2 := \int \left(4\kappa^2 + \xi^2\right)^\sigma|\hat q(\xi)|^2\,d\xi
\]
and write \(H^\sigma := H^\sigma_1\).

For $-\frac12<s<0$, elementary considerations yield
\begin{align}\label{Linfty bdd}
\|f\|_{L^\infty}\leq \|\hat{f}\|_{L^1}\leq \|f\|_{H^{s+1}_\kappa} \bigl\|(|\xi|^2+4\kappa^2)^{-\frac{s+1}2}\bigr\|_{L^2}\lesssim |\kappa|^{-(s +\frac12)}\|f\|_{H^{s+1} _\kappa}.
\end{align}
Consequently, we have the following algebra property:
\begin{equation}\label{E:algebra}
\| f g \|_{H^{s+1}_\kappa} \lesssim |\kappa|^{-(\frac12+s)} \| f \|_{H^{s+1}_\kappa} \| g \|_{H^{s+1}_\kappa}.
\end{equation}

Arguing by duality and using the fractional product rule, Sobolev embedding, and \eqref{Linfty bdd}, we may bound
\begin{align}\label{multiplier bdd on Hs}
\|qf\|_{H^s}&\lesssim \|q\|_{H^s} \|f\|_{L^\infty} + \|q\|_{L^{\frac2{1-2|s|}}} \||\nabla|^{|s|}f\|_{L^{\frac1{|s|}}}\notag\\
&\lesssim \|q\|_{H^s} \bigl[ |\kappa|^{-(s+\frac12)}\|f\|_{H^{s+1}_\kappa} + \||\nabla|^{\frac12}f\|_{ L^2}\bigr]\\
&\lesssim  |\kappa|^{-(s+\frac12)} \|q\|_{H^s}\|f\|_{H^{s+1}_\kappa}.\notag
\end{align}

\begin{lem}\label{lem:EquivNorm}
If \(s'<s\), \(|\kappa|\geq 1\), and \(q\in H^s\), then
\eq{EquivNorm}{
\|q\|_{H^s_\kappa}^2\approx_{s,s'} \int_{|\kappa|}^\infty \vk^{2(s - s')}\|q\|_{H^{s'}_\vk}^2\,\tfrac{d\vk}\vk.
}
\end{lem}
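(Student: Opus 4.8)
The plan is to expand both sides of \eqref{EquivNorm} using the Fourier-side definition of $\|\cdot\|_{H^\sigma_\kappa}$ and reduce matters to a pointwise comparison of two Fourier multipliers. Writing out $\|q\|_{H^{s'}_\vk}^2 = \int_\R (4\vk^2+\xi^2)^{s'}|\hat q(\xi)|^2\,d\xi$ and applying Tonelli's theorem (the integrand is nonnegative and jointly measurable in $(\vk,\xi)$), the right-hand side of \eqref{EquivNorm} becomes $\int_\R |\hat q(\xi)|^2\,\mc I_\kappa(\xi)\,d\xi$, where
$$
\mc I_\kappa(\xi) := \int_{|\kappa|}^\infty \vk^{2(s-s')}\bigl(4\vk^2+\xi^2\bigr)^{s'}\,\tfrac{d\vk}\vk .
$$
Since the left-hand side of \eqref{EquivNorm} equals $\int_\R |\hat q(\xi)|^2 (4\kappa^2+\xi^2)^s\,d\xi$, it suffices to prove the weight estimate
$$
\mc I_\kappa(\xi)\approx_{s,s'}(4\kappa^2+\xi^2)^s \qquad\text{uniformly for }|\kappa|\ge 1,\ \xi\in\R ,
$$
which in particular encodes the finiteness of the right-hand side when $q\in H^s$.

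To establish the weight estimate I would homogenize: the substitution $\vk = |\kappa| u$ together with $\eta := \xi/(2|\kappa|)$ gives $\mc I_\kappa(\xi) = 4^{s'}|\kappa|^{2s}\,J(\eta)$, where
$$
J(\eta) := \int_1^\infty u^{2(s-s')-1}\bigl(u^2+\eta^2\bigr)^{s'}\,du ,
$$
while at the same time $(4\kappa^2+\xi^2)^s = 4^s|\kappa|^{2s}(1+\eta^2)^s$. The problem thus collapses to the single scalar estimate $J(\eta)\approx_{s,s'}(1+\eta^2)^s$, valid for all $\eta\in\R$. Two structural features are used here: $J(\eta)$ converges precisely because $s<0$ (so that the standing hypothesis $s\in(-\tfrac12,0)$ guarantees the integrand decays at $u=\infty$), and the exponent $2(s-s')$ is strictly positive — the latter is what makes the lower bound work.

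The scalar estimate is a routine computation obtained by splitting the $u$-integral at $u = \max\{1,|\eta|\}$. When $|\eta|\le 1$ one has $u^2+\eta^2\approx u^2$ on the whole range, so $J(\eta)\approx\int_1^\infty u^{2s-1}\,du\approx_s 1\approx (1+\eta^2)^s$. When $|\eta|\ge 1$ one uses $u^2+\eta^2\approx\eta^2$ on $[1,|\eta|]$ and $u^2+\eta^2\approx u^2$ on $[|\eta|,\infty)$ to obtain
$$
J(\eta)\approx_{s,s'} |\eta|^{2s'}\int_1^{|\eta|} u^{2(s-s')-1}\,du + \int_{|\eta|}^\infty u^{2s-1}\,du \approx_{s,s'} \frac{|\eta|^{2s}-|\eta|^{2s'}}{2(s-s')}+\frac{|\eta|^{2s}}{2|s|} .
$$
The last term is already comparable to $|\eta|^{2s}\approx (1+\eta^2)^s$, and the first is $\lesssim_{s,s'}|\eta|^{2s}$; for the matching lower bound one writes $|\eta|^{2s}-|\eta|^{2s'} = |\eta|^{2s}\bigl(1-|\eta|^{-2(s-s')}\bigr)\gtrsim_{s,s'}|\eta|^{2s}$ once $|\eta|\ge 2^{1/(2(s-s'))}$, while the leftover window $1\le |\eta|\le 2^{1/(2(s-s'))}$ is handled by the crude bounds $J(\eta)\le \int_1^\infty u^{2s-1}\,du$ and $J(\eta)\ge \int_{|\eta|}^{2|\eta|} u^{2(s-s')-1}(u^2+\eta^2)^{s'}\,du\gtrsim_{s,s'}1$. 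Combining the cases gives $\mc I_\kappa(\xi)\approx (4\kappa^2+\xi^2)^s$ and hence \eqref{EquivNorm}.

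I do not anticipate any genuine obstacle: the proof is one change of variables followed by a one-dimensional integral estimate. The only point requiring a modicum of care is the lower bound on $J(\eta)$ in the transition regime $|\eta|\asymp 1$, which is precisely why I would isolate that bounded window above rather than attempting to push the two-region split through uniformly in $\eta$.
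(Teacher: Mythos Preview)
Your proof is correct and follows essentially the same route as the paper: reduce via Tonelli to a pointwise multiplier estimate, rescale to remove the $\kappa$-dependence, and then compare the resulting one-variable integral to $(1+\eta^2)^s$ by splitting at $|\eta|$. The only difference is cosmetic (the paper first scales $\kappa$ to $1$ and then pulls out $|\xi|^{2s}$ in two steps, whereas you do it in one), and your treatment of the lower bound in the transition regime is more laborious than necessary since the tail piece $\int_{|\eta|}^\infty u^{2s-1}\,du \approx |\eta|^{2s}$ already suffices.
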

\begin{proof}
By scaling it suffices to consider the case \(\kappa = 1\). We may then write
\[
\int_1^\infty \vk^{2(s - s')}(4\vk^2 + \xi^2)^{s'}\,\tfrac{d\vk}\vk = |\xi|^{2s} \int_{\frac1{|\xi|}}^\infty \vk^{2(s - s')}(4\vk^2 + 1)^{s'}\,\tfrac{d\vk}\vk.
\]
By considering the cases \(|\xi|\leq 2\) and \(|\xi|>2\) separately, we may bound
\[
|\xi|^{2s} \int_{\frac1{|\xi|}}^\infty \vk^{2(s - s')}(4\vk^2 + 1)^{s'}\,\tfrac{d\vk}\vk\approx_{s,s'}(4 + \xi^2)^s,
\]
and the estimate \eqref{EquivNorm} then follows from the Fubini-Tonelli Theorem.
\end{proof}

\subsection{Local smoothing spaces}
It will be important to consider a one-parameter family of local smoothing norms generalizing that presented in the introduction.  To this end, given \(\kappa\geq 1\) and $\sigma\in \R$, we define the local smoothing space
\[
\|q\|_{X^\sigma_\kappa}^2 := \sup_{h\in \R} \bigl\|\tfrac{\psi_h^6q}{\sqrt{4\kappa^2 - \p^2}} \bigr\|_{L^2_t H^{\sigma+1}}^2,
\]
so that \(X^\sigma_1 = X^\sigma\), where we write \(\tfrac{\psi_h^6q}{\sqrt{4\kappa^2 - \p^2}} = (4\kappa^2 - \p^2)^{-\frac12}(\psi_h^6q)\).    At this moment, placing the inverse differential operators under their arguments (rather than in front of them) may seem clumsy; however, the mere act of writing out \eqref{gamma 4} in traditional form will quickly convince the reader of the virtue of this approach.

To ease dimensional analysis, the $X^\sigma_\kappa$ spaces have been defined to scale the same as $H^\sigma$ spaces.

Our next lemma allows us to understand the effect of changing the localizing function $\psi^6$ or the regularity $\sigma$ in the definition of the local smoothing norm:

\begin{lem}\label{lem:Reg-Gain}
Given \(\kappa\geq 1\), $\sigma\in\R$, and $\phi\in\Schwartz$,
\begin{equation}\label{E:change}
\bigl\|\tfrac{\phi q}{\sqrt{4\kappa^2 - \p^2}}\bigr\|_{L^2_t H^{\sigma+1}}\lesssim_{\phi,\sigma} \|q\|_{X^{\sigma}_\kappa}.
\end{equation}
Moreover, if \(s-1\leq \sigma'\leq\sigma \), then
\eq{Reg-Gain}{
\|q\|_{X^{\sigma'}_\kappa}\lesssim \kappa^{\frac{\sigma'-\sigma}{1 + \sigma-s}}\left(\|q\|_{X^\sigma_\kappa} + \|q\|_{L^\infty_t H^s}\right).
}
\end{lem}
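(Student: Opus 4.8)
The plan is to prove the two assertions separately, with the first (the change-of-cutoff estimate) serving as a tool for the second. For \eqref{E:change}, the idea is to use the continuum partition of unity \eqref{CPUD} to write $\phi q = \tfrac{7}{512}\int \phi q\,\psi_h^{12}\,dh$, and then for each $h$ reorganize $\phi\psi_h^{12}$ as $(\phi\psi_h^6)\cdot\psi_h^6$. Since $\phi\in\Schwartz$ and $\psi_h^6$ is a translate of a fixed Schwartz function, the factor $\phi\psi_h^6$ is Schwartz with all seminorms enjoying rapid decay in $|h|$ (because $\phi$ and $\psi_h$ have essentially disjoint support for large $|h|$; here the exponential decay of $\sech$ in \eqref{psi} is what makes this quantitative). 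Commuting the smooth localizer $\phi\psi_h^6$ past the Fourier multiplier $(4\kappa^2-\p^2)^{-1/2}$ — which one controls using the commutator estimates promised in Section~\ref{S:2}, with the multiplier bound \eqref{multiplier bdd on Hs} in reserve — one bounds the $L^2_tH^{\sigma+1}$ norm of each piece by $C(h)\,\|q\|_{X^\sigma_\kappa}$ with $C(h)$ integrable in $h$; integrating in $h$ and using Minkowski's inequality then gives \eqref{E:change}. One subtlety is that commuting a Schwartz function past $(4\kappa^2-\p^2)^{-1/2}$ should be done with $\kappa$-uniformity, which is exactly the kind of statement the preliminary commutator lemmas are set up to provide.

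For the interpolation inequality \eqref{Reg-Gain}, fix $h$ and work with the single quantity $\bigl\|\tfrac{\psi_h^6q}{\sqrt{4\kappa^2-\p^2}}\bigr\|_{L^2_tH^{\sigma'+1}}$. Split into frequencies $|\xi|\leq\kappa$ and $|\xi|>\kappa$. On low frequencies $\{|\xi|\le\kappa\}$ we have $(4\kappa^2+\xi^2)\approx\kappa^2$, so $H^{\sigma'+1}_\kappa$ and $H^{\sigma+1}_\kappa$ differ by a factor $\approx\kappa^{\sigma'-\sigma}$ there; since $\sigma'\le\sigma$ this is a gain, and the low-frequency part of $\psi_h^6 q$ is controlled by $\|\psi_h^6 q\|_{H^s}\lesssim\|q\|_{L^\infty_tH^s}$ after paying a further $\kappa$-power to convert $H^s$ to $H^{\sigma'+1}$ at frequencies $\le\kappa$ (this uses $\sigma'\ge s-1$, so $\sigma'+1\ge s$, making the exponent have the right sign). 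On high frequencies $\{|\xi|>\kappa\}$ one interpolates: write $H^{\sigma'+1}$ as a geometric-mean interpolant between $H^{\sigma+1}$ (controlled by $\|q\|_{X^\sigma_\kappa}$) and $H^s$ (controlled by $\|q\|_{L^\infty_tH^s}$, noting the extra $(4\kappa^2-\p^2)^{-1/2}$ only helps). Matching the exponent: if $\sigma'+1 = \theta(\sigma+1)+(1-\theta)s$ then $\theta = \frac{\sigma'+1-s}{\sigma+1-s}$, and $1-\theta = \frac{\sigma-\sigma'}{\sigma+1-s}$. The high-frequency restriction supplies a factor $\kappa^{-(1-\theta)(\sigma+1-s)} = \kappa^{\sigma'-\sigma}$ when trading the $H^s$-bound in, and balancing the two contributions (with the $L^2_t$ versus $L^\infty_t$ time norms reconciled using that the time interval has finite length) produces precisely the exponent $\frac{\sigma'-\sigma}{1+\sigma-s}$ claimed in \eqref{Reg-Gain}. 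Finally one takes the supremum over $h$ and invokes \eqref{E:change} to replace the bare cutoff $\psi_h^6$ by, if desired, the original definition of $X^{\sigma'}_\kappa$.

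The main obstacle I anticipate is bookkeeping the $\kappa$-dependence cleanly through the two frequency regimes so that the powers of $\kappa$ genuinely combine to the single stated exponent $\frac{\sigma'-\sigma}{1+\sigma-s}$ rather than to some larger (in absolute value) or inconsistent power — in particular, making sure the low-frequency conversion from $H^s$ to $H^{\sigma'+1}$ and the high-frequency interpolation exponent are consistent, and that the endpoint constraint $\sigma'\ge s-1$ is exactly what is needed for all signs to work out. A secondary technical point is the $\kappa$-uniform control of commutators between the Schwartz cutoffs and the operator $(4\kappa^2-\p^2)^{-1/2}$ in \eqref{E:change}; this is routine given the groundwork of Section~\ref{S:2} but must be invoked with care since $\psi_h^6$ is not compactly supported.
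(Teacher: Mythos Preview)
Your approach to \eqref{E:change} via the continuum partition of unity \eqref{CPUD} and the factorization $\phi\psi_h^{12}=(\phi\psi_h^6)\cdot\psi_h^6$ is exactly what the paper does. The paper executes the per-$h$ estimate more directly than you suggest: rather than invoking commutator lemmas, it writes down the operator $T_h$ with kernel
\[
\frac{(4+\xi^2)^{(\sigma+1)/2}}{(4\kappa^2+\xi^2)^{1/2}}\,\widehat{\phi\psi_h^6}(\xi-\eta)\,\frac{(4\kappa^2+\eta^2)^{1/2}}{(4+\eta^2)^{(\sigma+1)/2}}
\]
and bounds $\|T_h\|_{\op}\lesssim\|\phi\psi_h^6\|_{H^{|\sigma|+2}}$ by Schur's test; this is both $\kappa$-uniform and integrable in $h$.

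For \eqref{Reg-Gain} there is a real gap, and it is precisely the one you flag in your last paragraph: splitting at frequency $\kappa$ does not give the correct exponent on the low-frequency piece. Your low-frequency argument yields
\[
\Bigl\|\tfrac{P_{\le\kappa}(\psi_h^6 q)}{\sqrt{4\kappa^2-\p^2}}\Bigr\|_{L^2_tH^{\sigma'+1}}
\lesssim \kappa^{-1}\cdot\kappa^{\sigma'+1-s}\,\|q\|_{L^\infty_tH^s}
= \kappa^{\sigma'-s}\,\|q\|_{L^\infty_tH^s},
\]
but when $\sigma>s$ (the relevant regime throughout the paper, where $\sigma\in\{s+\tfrac12,\,s+1\}$) one has $(\sigma'-s)(1+\sigma-s)-(\sigma'-\sigma)=(\sigma-s)(\sigma'+1-s)\ge 0$, so $\sigma'-s\ge\tfrac{\sigma'-\sigma}{1+\sigma-s}$ with strict inequality unless $\sigma'=s-1$. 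Thus your low-frequency bound is too weak, and indeed grows in $\kappa$ whenever $\sigma'>s$. The paper's fix is to split instead at $N=\kappa^{1/(1+\sigma-s)}$: then the low-frequency factor $\kappa^{-1}N^{\sigma'+1-s}$ and the high-frequency factor $N^{\sigma'-\sigma}$ coincide, both equalling $\kappa^{(\sigma'-\sigma)/(1+\sigma-s)}$. Alternatively, your high-frequency interpolation argument works \emph{without any frequency split}: interpolating $H^{\sigma'+1}$ between $H^{\sigma+1}$ and $H^s$ globally and using $\|(4\kappa^2-\p^2)^{-1/2}f\|_{H^s}\le(2\kappa)^{-1}\|f\|_{H^s}$ already delivers the factor $\kappa^{-(1-\theta)}=\kappa^{(\sigma'-\sigma)/(1+\sigma-s)}$, with the $L^2_t$ versus $L^\infty_t$ issue handled by H\"older on $[-1,1]$ just as you say.
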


\begin{proof}
We begin by discussing \eqref{E:change}.  Let $T_h:L^2\to L^2$ denote the operator with integral kernel
$$
\frac{(4+\xi^2)^{\frac{\sigma+1}2}}{(4\kappa^2 +\xi^2)^{\frac{1}2}} \widehat{\phi \psi_h^6}(\xi-\eta) \frac{(4\kappa^2+\eta^2)^{\frac{1}2}}{(4+\eta^2)^{\frac{\sigma+1}2}}.
$$
By applying Schur's test, we find that
$$
\| T_h \|_\op \lesssim \| \phi \psi_h^6\|_{H^{|\sigma|+2}} \qtq{and so}  \int_\R \| T_h \|_\op \,dh \lesssim_\phi 1.
$$
Moreover, this bound holds uniformly in $\kappa$.  Thus by employing \eqref{CPUD}, we find
$$
\bigl\|\tfrac{\phi q}{\sqrt{4\kappa^2 - \p^2}}\bigr\|_{L^2_t H^{\sigma+1}} \lesssim \int \bigl\|\tfrac{\phi\psi_h^{12} q}{\sqrt{4\kappa^2 - \p^2}}\bigr\|_{L^2_t H^{\sigma+1}}\,dh
    \lesssim  \int_\R \| T_h \|_\op \| q \|_{X^\sigma_\kappa} \,dh \lesssim_\phi \| q \|_{X^\sigma_\kappa},
$$
which settles \eqref{E:change}.

Turning to \eqref{Reg-Gain}, and setting $N=\kappa^{\frac1{1 + \sigma - s}}$, we have
\begin{align*}
\|\tfrac{\psi_h^6 q}{\sqrt{4\kappa^2 - \p^2}}\|_{L^2_t H^{\sigma'+1}}&\lesssim \kappa^{-1 + \frac{\sigma' + 1 -s}{1 + \sigma-s}} \|(\psi_h^6q)_{\leq N}\|_{L^\infty_t H^s}
    + \kappa^{\frac{\sigma'-\sigma}{1 + \sigma-s}}\bigl\|P_{>N}\tfrac{\psi_h^6q}{\sqrt{4\kappa^2 - \p^2}}\bigr\|_{L^2_t H^{\sigma+1}}.
\end{align*}
Taking the supremum over \(h\) we obtain the estimate \eqref{Reg-Gain}.
\end{proof}

Next we record several commutator-type estimates that we will use in the later sections.

\begin{lem}\label{L:bounds}
Fix \(\kappa\geq 1\). Then
\begin{align}
\bigl\|\bigl[\psi_h, \tfrac{1}{4\kappa^2-\p^2}\bigr]q\bigr\|_{H^\sigma}&\lesssim \kappa^{-3+\sigma-s} \|q\|_{H^s_\kappa} \!\qtq{for}\!\! -1\leq \sigma\leq 3+s, \label{1} \\
\bigl\|\bigl[\psi_h, \tfrac{\p^\ell}{4\kappa^2-\p^2}\bigr]q\bigr\|_{H^\sigma}&\lesssim \kappa^{-3+\ell +\sigma-s} \|q\|_{H^s_\kappa} \!\qtq{for}\!\! 1\leq \sigma+\ell\leq 3+s,\, \ell=1,2, 3,\label{1'} 
\end{align}
uniformly for $h\in \R$. Moreover, for $\ell=2,3,4$ and $2+s\leq \sigma+\ell\leq 4+s$,
\begin{align}
\bigl\|\bigl[\psi_h,\tfrac{\p^\ell}{4\kappa^2 - \p^2}\bigr]q\bigr\|_{L_t^2H^\sigma}&\lesssim \kappa^{-2+\frac12(\ell + \sigma-s)}\bigl[\|q\|_{X_\kappa^{s+1}} + \|q\|_{L_t^\infty H^s}\bigr], \label{2}
\end{align}
uniformly for $h\in \R$.
\end{lem}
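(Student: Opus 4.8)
\emph{Common starting point.} The plan is to derive all three bounds from one computation of the Fourier-side kernel of the commutator. Write $m_\ell(\xi):=(i\xi)^\ell(4\kappa^2+\xi^2)^{-1}$ for the symbol of $\tfrac{\p^\ell}{4\kappa^2-\p^2}$, so that \eqref{1} is the case $\ell=0$ of \eqref{1'}. For Schwartz $q$,
\[
\widehat{\bigl[\psi_h,\tfrac{\p^\ell}{4\kappa^2-\p^2}\bigr]q}(\xi)=\tfrac1{\sqrt{2\pi}}\int_{\R}\widehat{\psi_h}(\xi-\eta)\,\bigl[m_\ell(\eta)-m_\ell(\xi)\bigr]\,\hat q(\eta)\,d\eta .
\]
The first key point is that the divided difference carries an explicit factor $(\eta-\xi)$: for $\ell\geq1$ one computes $m_\ell(\eta)-m_\ell(\xi)=i^\ell(\eta-\xi)\,N_\ell(\xi,\eta)\big/\bigl[(4\kappa^2+\eta^2)(4\kappa^2+\xi^2)\bigr]$ with $N_\ell(\xi,\eta):=4\kappa^2\tfrac{\eta^\ell-\xi^\ell}{\eta-\xi}+\xi^2\eta^2\tfrac{\eta^{\ell-2}-\xi^{\ell-2}}{\eta-\xi}$, while for $\ell=0$, $m_0(\eta)-m_0(\xi)=-(\eta-\xi)(\eta+\xi)\big/\bigl[(4\kappa^2+\eta^2)(4\kappa^2+\xi^2)\bigr]$. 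Since $(\eta-\xi)\widehat{\psi_h}(\xi-\eta)=i\,\widehat{\psi_h'}(\xi-\eta)$, the commutator kernel $\widehat{K_h^{(\ell)}}$ acquires a factor $\widehat{\psi_h'}(\xi-\eta)=e^{-i(\xi-\eta)h}\widehat{\psi'}(\xi-\eta)$; its rapid decay lets us treat $|\xi-\eta|\lesssim1$ as the principal regime, where $4\kappa^2+\xi^2\sim4\kappa^2+\eta^2$ and $|N_\ell(\xi,\eta)|\lesssim\langle\eta\rangle^{\ell-1}(4\kappa^2+\eta^2)$, so that
\[
\bigl|\widehat{K_h^{(\ell)}}(\xi,\eta)\bigr|\lesssim|\widehat{\psi'}(\xi-\eta)|\,\langle\xi-\eta\rangle^{C}\cdot\frac{\langle\eta\rangle^{\ell-1}}{4\kappa^2+\eta^2}\quad(\ell\geq1),\qquad\bigl|\widehat{K_h^{(0)}}(\xi,\eta)\bigr|\lesssim|\widehat{\psi'}(\xi-\eta)|\,\langle\xi-\eta\rangle^{C}\cdot\frac{\langle\eta\rangle}{(4\kappa^2+\eta^2)^{2}},
\]
the off-diagonal piece being harmless since $\langle\cdot\rangle^{C}|\widehat{\psi'}|$ is integrable; note these bounds do not see $h$, since $|\widehat{\psi_h'}|=|\widehat{\psi'}|$.

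\emph{Proof of \eqref{1} and \eqref{1'}.} It suffices to run Schur's test on the operator with positive kernel $(4+\xi^2)^{\sigma/2}\,|\widehat{K_h^{(\ell)}}(\xi,\eta)|\,(4\kappa^2+\eta^2)^{-s/2}$. As $\widehat{\psi'}(\xi-\eta)$ concentrates $\xi$ near $\eta$ in all the slowly-varying weights, both Schur integrals are bounded by a constant times $\sup_\eta(4+\eta^2)^{(\sigma+\ell-1)/2}(4\kappa^2+\eta^2)^{-(s+2)/2}$ for $\ell\geq1$ (resp.\ $\sup_\eta(4+\eta^2)^{(\sigma+1)/2}(4\kappa^2+\eta^2)^{-(s+4)/2}$ for $\ell=0$). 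Splitting into $|\eta|\leq\kappa$ and $|\eta|>\kappa$: on the low-frequency piece the weight $(4+\eta^2)^{(\sigma+\ell-1)/2}$ is nondecreasing precisely because $\sigma+\ell\geq1$ (this is $-1\leq\sigma$ in \eqref{1}, $1\leq\sigma+\ell$ in \eqref{1'}), so its supremum occurs near $|\eta|\sim\kappa$ and equals $\kappa^{\sigma+\ell-1}\kappa^{-(s+2)}=\kappa^{-3+\ell+\sigma-s}$; on the high-frequency piece the full weight behaves like $|\eta|^{\sigma+\ell-3-s}$, nonincreasing precisely because $\sigma+\ell\leq3+s$, hence again $\lesssim\kappa^{-3+\ell+\sigma-s}$. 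This yields \eqref{1} and \eqref{1'}, with the two hypotheses on $\sigma$ (resp.\ $\sigma+\ell$) being exactly the two inequalities used.

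\emph{Proof of \eqref{2}.} The kernel is the same; what is new is that the input must be fed through the local smoothing norm. Using the continuum partition of unity \eqref{CPUD} write $q=\tfrac7{512}\int\psi_{h'}^{12}q\,dh'$; the physical-space kernel of $[\psi_h,\tfrac{\p^\ell}{4\kappa^2-\p^2}]$ decays rapidly at scale $|x-y|\lesssim1$ (as $\kappa\geq1$) and is localized near $x=h$, so the $dh'$-integral may be restricted to $|h-h'|\lesssim1$ up to a contribution that decays rapidly in $|h-h'|$. For such $h'$, factor $(4\kappa^2+\eta^2)^{-1}=(4\kappa^2+\eta^2)^{-1/2}(4\kappa^2+\eta^2)^{-1/2}$ in the kernel bound above: the inner half-resolvent combines with $\widehat{\psi_{h'}^{12}q}(\eta)$ to produce $\widehat{G_{h'}}(\eta)$ with $G_{h'}:=(4\kappa^2-\p^2)^{-1/2}(\psi_{h'}^{12}q)$, which by \eqref{E:change} obeys $\|G_{h'}\|_{L^2_tH^{s+2}}\lesssim\|q\|_{X^{s+1}_\kappa}$ uniformly in $h'$; what remains is the operator with positive kernel $|\widehat{\psi'}(\xi-\eta)|\langle\xi-\eta\rangle^{C}\langle\eta\rangle^{\ell-1}(4\kappa^2+\eta^2)^{-1/2}$, which by the same Schur reduction maps $H^{s+2}\to H^\sigma$ with norm
\[
\lesssim\ \sup_{\eta}(4+\eta^2)^{(\ell+\sigma-s-3)/2}(4\kappa^2+\eta^2)^{-1/2}\ \lesssim\ \kappa^{-2+\frac12(\ell+\sigma-s)},
\]
where now finiteness of the high-frequency supremum uses $\sigma+\ell\leq4+s$ and the comparison of the two subcases ($|\eta|\sim\kappa$ versus $|\eta|\sim1$) with the target uses $\sigma+\ell\geq2+s$. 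This handles the main term; the error from the re-localization and from the genuinely lower-order pieces (those arising from $[\psi_h,\p^\ell]$, which lack a second resolvent to absorb a half-resolvent) is estimated crudely via \eqref{multiplier bdd on Hs}, the trivial $\|\cdot\|_{L^2_t([-1,1])}\leq\sqrt2\|\cdot\|_{L^\infty_t}$, and $\|q\|_{L^\infty_tH^s_\kappa}\leq\|q\|_{L^\infty_tH^s}$.

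\emph{Main obstacle.} The crux for \eqref{1}/\eqref{1'} is the algebraic identity — recognizing that the divided difference factors through $\widehat{\psi'}$ — which is precisely what turns the commutator into a kernel amenable to a clean Schur test closing on exactly the stated ranges. For \eqref{2} the remaining difficulty is bookkeeping: tracking powers of $\kappa$ through the commutator expansion so that the half-resolvent is paid for entirely by $\|q\|_{X^{s+1}_\kappa}$ while the residual multiplier still contributes no more than $\kappa^{-2+\frac12(\ell+\sigma-s)}$, and checking that the full harvest of lower-order remainder terms is indeed controlled by $\|q\|_{L^\infty_tH^s}$.
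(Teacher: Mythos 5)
Your argument for \eqref{1} and \eqref{1'} is correct and takes a genuinely different route from the paper. The paper uses the physical-space resolvent identity $[\psi_h,(4\kappa^2-\p^2)^{-1}]=(4\kappa^2-\p^2)^{-1}(\psi_h''-2\p\psi_h')(4\kappa^2-\p^2)^{-1}$, together with $[\psi_h,\tfrac{\p^\ell}{4\kappa^2-\p^2}]=[\psi_h,\tfrac1{4\kappa^2-\p^2}]\p^\ell+\tfrac1{4\kappa^2-\p^2}[\psi_h,\p^\ell]$, and then reads off the $\kappa$-powers directly; you instead pass to the Fourier side, extract the divided-difference factor $(\eta-\xi)$ so that $\widehat{\psi_h}$ becomes $\widehat{\psi_h'}$, and run Schur's test. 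Both proofs hinge on the same observation — the commutator trades a copy of $\psi_h$ for a derivative of it, i.e., it ``gains'' a factor of $\xi-\eta$ — and the two conditions on $\sigma+\ell$ reappear in your argument exactly as the low-/high-frequency monotonicity conditions in the Schur supremum. Your route is more calculational, the paper's more algebraic; they buy essentially the same thing.

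For \eqref{2}, however, there is a real gap. You write $q=\tfrac7{512}\int\psi_{h'}^{12}q\,dh'$, and your Schur argument bounds each summand by $\kappa^{-2+\frac12(\ell+\sigma-s)}\|G_{h'}\|_{L^2_tH^{s+2}}$; but the operator $T=[\psi_h,\tfrac{\p^\ell}{4\kappa^2-\p^2}](4\kappa^2-\p^2)^{1/2}$ whose kernel you estimate does not depend on $h'$, so that bound is uniform in $h'$, and $\int\|G_{h'}\|_{L^2_tH^{s+2}}\,dh'=\infty$. You are aware of this — you assert that ``the $dh'$-integral may be restricted to $|h-h'|\lesssim1$ up to a contribution that decays rapidly in $|h-h'|$'' — but this is precisely what your Fourier-side computation cannot see: the decay comes from the physical-space localisation of the commutator kernel near $x\approx h$ together with the localisation of $\psi_{h'}^{12}$ near $h'$, and to extract it one must go back to physical space (e.g., commute the resolvent past the cutoffs as in Lemma~\ref{L:localize} and exploit $\psi(x-h)\psi(x-h')\lesssim\psi(\tfrac{h-h'}2)$). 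Your description of the ``lower-order pieces $\ldots$ arising from $[\psi_h,\p^\ell]$'' is also unmoored from the framework you actually set up (you never decomposed the commutator that way, having instead computed the full divided difference), so the claim that they are ``estimated crudely'' by \eqref{multiplier bdd on Hs} and $L^2_t\subset L^\infty_t$ is not substantiated and it is not obvious that the $\kappa$-exponent survives. The paper circumvents both issues at once: the resolvent identity places a derivative of $\psi_h$ in multiplication position, after which \eqref{E:change} (which has already internalised exactly the $dh'$ convergence you need) converts that into $\|q\|_{X^{\sigma+\ell-3}_\kappa}$, and \eqref{Reg-Gain} supplies the $\kappa$-gain. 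You should either reproduce that mechanism or supply the physical-space decay argument you are gesturing at.
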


\begin{proof}
The estimate \eqref{1} follows from the observation that
\begin{align}\label{del2comm}
\bigl[\psi_h, \tfrac1{4\kappa^2-\p^2}\bigr]&= \tfrac1{4\kappa^2-\p^2}\bigl(\psi_h''-2\p \psi_h'\bigr)\tfrac1{4\kappa^2-\p^2}.
\end{align}
The lower bound on $\sigma$ expresses that the maximum possible decay in $\kappa$ is $\kappa^{-4-s}$.

To handle $\ell=1,2,3$, we also use the fact that
$$
\bigl[\psi_h, \tfrac{\p^\ell}{4\kappa^2-\p^2}\bigr] = \bigl[\psi_h,  \tfrac1{4\kappa^2-\p^2}\bigr]\p^\ell+  \tfrac1{4\kappa^2-\p^2}[\psi_h, \p^\ell],
$$
from which we see that the maximum possible decay in $\kappa$ is $\kappa^{-2-s}$.

We now turn to \eqref{2} and write
\begin{align*}
\bigl[\psi_h, \tfrac{\p^\ell }{4\kappa^2-\p^2}\bigr]
&=\tfrac{\p^\ell}{(4\kappa^2-\p^2)^2}\bigl(\psi_h''-2\p \psi_h'\bigr) + \tfrac1{4\kappa^2-\p^2} \bigl[\psi_h''-2\p \psi_h', \tfrac{\p^\ell }{4\kappa^2-\p^2}\bigr]\\
&\quad + \tfrac1{4\kappa^2-\p^2}\sum_{m=0}^{\ell-1} c_m \p^{m} \psi_h^{(\ell-m)}q.
\end{align*}
Using \eqref{E:change}, this readily yields
\begin{align*}
\bigl\|\bigl[\psi_h,\tfrac{\p^\ell}{4\kappa^2 - \p^2}\bigr]q\bigr\|_{L_t^2H^\sigma}&\lesssim \|q\|_{X_\kappa^{\sigma + \ell-3}} 
\end{align*}
and \eqref{2} follows from an application of \eqref{Reg-Gain}.
\end{proof}

We also have the following estimates:
\begin{lem}\label{L:X}
Let \(\sigma>0\), \(\kappa\geq 1\), and \(f,g\in \Cont([-1,1];\Schwartz)\). If \(|\vk|\geq1\), then
\eq{X-Smoothing}{
\|(2\vk - \p)f\|_{X^\sigma_\kappa}  \lesssim |\vk|\|f\|_{X^\sigma_\kappa} + \|f\|_{X^{\sigma+1}_\kappa} \lesssim \|(2\vk - \p)f\|_{X^\sigma_\kappa} + \|f\|_{L^\infty_t H^\sigma} .
}
Further, we have the product estimates
\begin{gather}
\|fg\|_{X^\sigma_\kappa} \lesssim
    |\vk|^{-(s+\frac12)}\left(\|f\|_{X^\sigma_\kappa}\|g\|_{L^\infty_t H^{s+1}_\vk} + \|f\|_{L^\infty_t H^{s+1}_\vk}\|g\|_{X^{\sigma}_\kappa} \right),\label{X-Product-1} \\
\|fg\|_{X^\sigma_\kappa} \lesssim
    |\vk|^{-(s+\frac12)}\Bigl[\|f\|_{X^\sigma_\kappa}\|g\|_{L^\infty_t H^{s+1}_\vk}\label{X-Product-2}+\|f\|_{L^\infty_t H^s_\vk}\bigl(|\vk|\|g\|_{X^\sigma_\kappa} + \|g\|_{X^{\sigma+1}_\kappa}\bigr)\Bigr],\!\!\\
\label{E:XH algebra}
\| f g  \|_{X^\sigma_\kappa \cap L^\infty_t H^{s+1}_\vk} \lesssim |\vk|^{-(\frac12+s)} \| f  \|_{X^\sigma_\kappa \cap L^\infty_t H^{s+1}_\vk}  \| g  \|_{X^\sigma_\kappa \cap L^\infty_t H^{s+1}_\vk}.
\end{gather}
All estimates are uniform in $\kappa$ and $\vk$.
\end{lem}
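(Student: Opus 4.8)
\emph{Proof strategy.} All four assertions will be reduced to the definition
\[
\|q\|_{X^\sigma_\kappa}=\sup_{h\in\R}\bigl\|(4\kappa^2-\p^2)^{-1/2}(\psi_h^6 q)\bigr\|_{L^2_tH^{\sigma+1}}
\]
by fixing $h$, working at a.e.\ time $t$, and commuting the spatial cutoff $\psi_h^6$ and the Fourier multipliers $(4\kappa^2-\p^2)^{-1/2}$ and $2\vk-\p$ past one another; this turns matters into fixed-time Sobolev-space product and commutator estimates. Every power $|\vk|^{-(s+\frac12)}$ that appears will be produced by the embedding $H^{s+1}_\vk\hookrightarrow L^\infty$ of \eqref{Linfty bdd} (and, for \eqref{X-Product-2}, by the $H^s$-multiplier bound \eqref{multiplier bdd on Hs}), applied to whichever factor of the product we place at the lower regularity.

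For \eqref{X-Smoothing} I would write $\psi_h^6(2\vk-\p)f=(2\vk-\p)(\psi_h^6f)+(\psi_h^6)'f$ and use that $2\vk-\p$ commutes with $(4\kappa^2-\p^2)^{-1/2}$. Since $\sqrt{4\vk^2+\xi^2}\approx|\vk|+|\xi|$ and $(4\kappa^2-\p^2)^{-1/2}$ trades a power of $\kappa$ for a derivative, one has the frequency-by-frequency equivalence $\|(2\vk-\p)u\|_{H^{\sigma+1}}\approx|\vk|\|u\|_{H^{\sigma+1}}+\|u\|_{H^{\sigma+2}}$; applying it with $u=(4\kappa^2-\p^2)^{-1/2}(\psi_h^6f)$ and taking $\sup_h\|\cdot\|_{L^2_t}$ turns the two terms into $|\vk|\|f\|_{X^\sigma_\kappa}+\|f\|_{X^{\sigma+1}_\kappa}$. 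The commutator term $(4\kappa^2-\p^2)^{-1/2}(\psi_h^6)'f$ is harmless for the first inequality by \eqref{E:change}, since $(\psi_h^6)'$ is a fixed Schwartz profile translated by $h$ and the proof of \eqref{E:change} only involves translation-invariant Sobolev norms of that profile; for the reverse inequality one must not reintroduce $\|f\|_{X^\sigma_\kappa}$ on the right, so one instead bounds this term crudely by $\|(\psi_h^6)'f\|_{L^2_tH^\sigma}\lesssim\|f\|_{L^\infty_tH^\sigma}$, using that multiplication by a Schwartz function is bounded on $H^\sigma$.

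For the product estimate \eqref{X-Product-1} I would run a Littlewood--Paley/paraproduct decomposition of $fg$, separating the contribution in which $f$ carries the high frequencies (the paraproduct $\sum_N (P_{<N}g)(P_Nf)$ together with, say, the resonant sum) from the one in which $g$ does. In the former, group $\psi_h^6$ with $f$, use that $g$ acts as an $L^\infty$ multiplier with $\|g\|_{L^\infty}\lesssim|\vk|^{-(s+\frac12)}\|g\|_{H^{s+1}_\vk}$, control the errors coming from $[\psi_h^6,P_N]$ and from moving $(4\kappa^2-\p^2)^{-1/2}$ past the low-frequency factor, and sum the (almost orthogonal) pieces in $N$; this yields $|\vk|^{-(s+\frac12)}\|f\|_{X^\sigma_\kappa}\|g\|_{L^\infty_tH^{s+1}_\vk}$. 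The latter contribution is symmetric and produces the remaining term of \eqref{X-Product-1}. Estimate \eqref{X-Product-2} is then obtained along the same lines, decomposing $f$ into frequencies above and below $|\vk|$: the high-frequency piece is handled through the first term of \eqref{X-Product-1} (its $X^\sigma_\kappa$ norm is available), while for the low-frequency piece one uses the second term of \eqref{X-Product-1} together with $\|P_{\le|\vk|}f\|_{H^{s+1}_\vk}\lesssim|\vk|\,\|f\|_{H^s_\vk}$; the need to track one derivative on $g$ across the low- and high-frequency regimes of $g$, packaged via \eqref{X-Smoothing}, is what produces the factor $|\vk|\|g\|_{X^\sigma_\kappa}+\|g\|_{X^{\sigma+1}_\kappa}$, and some care with the commutator $[\psi_h^6,P_{\le|\vk|}]$ is again needed. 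Finally, \eqref{E:XH algebra} is immediate: its $X^\sigma_\kappa$ component follows from \eqref{X-Product-1} and its $L^\infty_tH^{s+1}_\vk$ component from the fixed-time algebra property \eqref{E:algebra}, both carrying the factor $|\vk|^{-(\frac12+s)}$.

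The main obstacle is the bookkeeping in the paraproduct steps: the spatial cutoff $\psi_h^6$ commutes with neither the Littlewood--Paley projections nor $(4\kappa^2-\p^2)^{-1/2}$, so the decomposition must be arranged so that the resulting commutator errors are summable in the frequency index and bounded uniformly in $h$, $\kappa$ and $\vk$. It is precisely the Schwartz decay of $\psi$ (the reason a fixed power of a translated $\sech$ was chosen at the outset) that renders these errors negligible; the remaining computations are routine.
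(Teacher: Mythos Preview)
Your treatment of \eqref{X-Smoothing} and \eqref{E:XH algebra} matches the paper's exactly. The main difference lies in the product estimates.

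For \eqref{X-Product-1} and \eqref{X-Product-2} the paper avoids all of the commutator bookkeeping you anticipate by a single device: it writes $\psi_h^6=\psi_h^3\cdot\psi_h^3$ and distributes one factor to each of $f$ and $g$ \emph{before} the Littlewood--Paley decomposition. One then works with $\sum_{N_1,N_2}\bigl\|P_N\bigl[(\psi_h^3 f)_{N_1}(\psi_h^3 g)_{N_2}\bigr]\bigr\|_{L^2_{t,x}}$ and the usual high--low/high--high/low--high trichotomy, using Bernstein on the low-frequency factor or on the output. No commutators $[\psi_h^6,P_N]$ or switching of the cutoff between factors ever arise; the $X^\sigma_\kappa$ norm of each factor appears automatically because each already carries its own $\psi_h^3$. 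Your approach can be pushed through, but this splitting is the trick that removes what you flagged as ``the main obstacle.''

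Your plan for \eqref{X-Product-2} --- split $f=P_{\le|\vk|}f+P_{>|\vk|}f$ and feed each piece back into \eqref{X-Product-1} --- does not work as stated. Applying \eqref{X-Product-1} to $P_{>|\vk|}f\cdot g$ produces \emph{both} terms on its right-hand side, and the second of these is $|\vk|^{-(s+\frac12)}\|P_{>|\vk|}f\|_{L^\infty_tH^{s+1}_\vk}\|g\|_{X^\sigma_\kappa}$, which is not controlled by $|\vk|\,\|f\|_{L^\infty_tH^s_\vk}\|g\|_{X^\sigma_\kappa}$ when $f$ lives at frequencies $\gg|\vk|$. The paper instead reopens the paraproduct and modifies only the low--high case ($N_1\ll N_2\approx N$): there it bounds $\|(\psi_h^3 f)_{N_1}\|_{L^\infty}$ by $N_1^{1/2}(|\vk|+N_1)^{-s}\|f\|_{H^s_\vk}$ rather than by $N_1^{1/2}(|\vk|+N_1)^{-(s+1)}\|f\|_{H^{s+1}_\vk}$, and after summing in $N_1$ the factor $(|\vk|+N)^{1/2-s}$ is absorbed as $|\vk|\|g\|_{X^\sigma_\kappa}+\|g\|_{X^{\sigma+1}_\kappa}$. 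So the correct partition is by the relative sizes of $N_1$ and $N_2$, not by whether $f$ lies above or below $|\vk|$.
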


\begin{proof}
By translation invariance it suffices to prove the estimates for a fixed choice of \(\psi_h\) on the left-hand side. For simplicity we take \(h=0\).

We start with \eqref{X-Smoothing}. By Plancherel, we have
\begin{align*}
&4\vk^2\bigl\|\tfrac{\psi^6f}{\sqrt{4\kappa^2 - \p^2}}\bigr\|_{L^2_t H^{\sigma+1}}^2 + \bigl\|\tfrac{\psi^6f}{\sqrt{4\kappa^2 - \p^2}}\bigr\|_{L^2_t H^{\sigma+2}}^2
    \approx  \bigl\|\tfrac{(2\vk - \p)(\psi^6f)}{\sqrt{4\kappa^2 - \p^2}}\bigr\|_{L^2_t H^{\sigma+1}}^2 .
\end{align*}
On the other hand, $(2\vk - \p)(\psi^6f) = \psi^6 (2\vk - \p)f - (\psi^6)' f$.  Thus the first inequality in \eqref{X-Smoothing} follows from \eqref{E:change};  the second is elementary.

For the product estimates \eqref{X-Product-2} and \eqref{X-Product-1}, we first decompose dyadically to obtain
\begin{equation}\label{Decomposed Product}
\|\tfrac{\psi^6fg}{\sqrt{4\kappa^2 - \p^2}}\|_{L^2_t H^{\sigma+1}}^2\approx \sum_{N} \tfrac{N^{2\sigma+2}}{\kappa^2 + N^2} \Bigl| \sum_{N_1,N_2} \bigl\|P_N\bigl[(\psi^3f)_{N_1}(\psi^3g)_{N_2}\bigr]\bigr\|_{L^2_{t,x}} \Bigr|^2.
\end{equation}
For the high-low interactions where \(N_2\ll N_1\approx N\), we use Bernstein's inequality at low frequency to bound
\begin{align*}
\|P_N\bigl[(\psi^3f)_{N_1}(\psi^3g)_{N_2}\bigr]\|_{L^2_{t,x}} &\lesssim \|(\psi^3 f)_{N_1}\|_{L^2_{t,x}}\|(\psi^3g)_{N_2}\|_{L^\infty_{t,x}}\\
&\lesssim N_2^{\frac12}(|\vk| + N_2)^{-(s+1)}\|(\psi^3 f)_{N_1}\|_{L^2_{t,x}}\|g\|_{L^\infty_t H^{s+1}_\vk}.
\end{align*}
After summing in \(N,N_1,N_2\) we obtain a contribution to \(\RHS{Decomposed Product}\) that is
\[
\lesssim |\vk|^{-2(s+\frac12)}\|f\|_{X^\sigma_\kappa}^2\|g\|_{L^\infty_t H^{s+1}_\vk}^2.
\]

For the high-high interactions where \(N\lesssim N_1\approx N_2\), we use Bernstein's inequality at the output frequency to bound
\begin{align*}
\|P_N\bigl[(\psi^3f)_{N_1}(\psi^3g)_{N_2}\bigr]\|_{L^2_{t,x}} &\lesssim N^{\frac12}\|(\psi^3f)_{N_1}(\psi^3g)_{N_2}\|_{L^2_tL^1} \\
&\lesssim N^{\frac12}(|\vk| + N_1)^{-(s+1)}\|(\psi^3 f)_{N_1}\|_{L^2_{t,x}}\|g\|_{L^\infty_t H^{s+1}_\vk}.
\end{align*}
After summation, we again obtain a contribution to \(\RHS{Decomposed Product}\) that is
\[
\lesssim |\vk|^{-2(s+\frac12)}\|f\|_{X^\sigma_\kappa}^2\|g\|_{L^\infty_t H^{s+1}_\vk}^2.
\]

For the low-high interactions where \(N_1\ll N_2\approx N\), we proceed similarly to the case of the high-low interactions, using Bernstein's inequality at low frequency to bound
\begin{align*}
\|P_N\bigl[(\psi^3f)_{N_1}(\psi^3g)_{N_2}\bigr]\|_{L^2_{t,x}} &\lesssim \|(\psi^3f)_{N_1}\|_{L^\infty_{t,x}}\|(\psi^3g)_{N_2}\|_{L^2_{t,x}}\\
&\lesssim N_1^{\frac12}(|\vk| + N_1)^{-(s+1)}\|f\|_{L^\infty_t H^{s+1}_\vk}\|(\psi^3g)_{N_2}\|_{L^2_{t,x}}.
\end{align*}
In this case, we obtain a contribution to \(\RHS{Decomposed Product}\) that is
\[
\lesssim |\vk|^{-2(s+\frac12)}\|f\|_{L^\infty_t H^{s+1}_\vk}^2\|g\|_{X^\sigma_\kappa}^2.
\]
This completes the proof of \eqref{X-Product-1}.  Alternatively, we may bound
\begin{align*}
\|P_N\bigl[(\psi^3f)_{N_1}(\psi^3g)_{N_2}\bigr]\|_{L^2_{t,x}}\lesssim N_1^{\frac12}(|\vk| + N_1)^{-s}\|f\|_{L^\infty_t H^s_\vk}\|(\psi^3g)_{N_2}\|_{L^2_{t,x}},
\end{align*}
to obtain a contribution to \(\RHS{Decomposed Product}\) of
\[
\lesssim |\vk|^{-2(s+\frac12)}\|f\|_{L^\infty_t H^s_\vk}^2\Big(|\vk|\|g\|_{X^\sigma_\kappa} + \|g\|_{X^{\sigma+1}_\kappa}\Big)^2,
\]
which completes the proof of  \eqref{X-Product-2}.

The bound \eqref{E:XH algebra} follows from \eqref{E:algebra} and \eqref{X-Product-1}.
\end{proof}

\subsection{Operator estimates}
For \(0<\sigma<1\) and \(|\kappa|\geq 1\) we define the operator \((\kappa \mp \p)^{-\sigma}\) using the Fourier multiplier \((\kappa \mp i\xi)^{-\sigma}\) where, for \(\arg z\in (-\pi,\pi]\), we define
\begin{equation}\label{z to the sigma}
z^{-\sigma} = |z|^{-\sigma}e^{-i\sigma \arg z}.
\end{equation}
We observe that with this convention, for all \(|\kappa|\geq 1\) we have
\[
\left((\kappa \mp \p)^{-\sigma}\right)^* = (\kappa \pm \p)^{-\sigma},
\]
and readily obtain the estimate
\[
\left\|(\kappa \mp \p)^{-\sigma}\right\|_{\op}\leq |\kappa|^{-\sigma}. 
\]

We will make frequent use of the following Hilbert--Schmidt estimates:

\begin{lem}\label{L:BasicBounds}
For all \(q\in H^s_\kappa(\R)\),
\begin{align}
\|(\kappa - \p)^{\frac s2-\frac14}q(\kappa + \p)^{\frac s2 - \frac14}\|_{\I_2}^2 &\lesssim \|q\|_{H^s_\kappa}^2\label{BasicBound},\\
\|(\kappa - \p)^s q(\kappa + \p)^{- \frac34 -\frac s2}\|_{\I_2}^2 &\lesssim \kappa^{-\frac12(1+2s)}\|q\|_{H^s_\kappa}^2\label{BasicBound'},\\
\|(\kappa - \p)^{-\frac12}q(\kappa + \p)^{-\frac12}\|_{\I_2}^2 &\approx \int \log\bigl(4 + \tfrac{\xi^2}{\kappa^2}\bigr)\frac{|\hat q(\xi)|^2}{\sqrt{4\kappa^2 + \xi^2}}\,d\xi. \label{LogarithmicBound}
\end{align}
Moreover, for any real $|\kappa|\geq 1$,
\begin{align}
\|(\kappa - \p)^{-(1+s)} f (\kappa + \p)^{-(1+s)}\|_{\op} &\lesssim \kappa^{-\frac12(1+2s)} \|f\|_{H^{-(1+s)}_\kappa}. \label{BasicOpBound}
\end{align}
\end{lem}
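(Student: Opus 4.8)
The estimates \eqref{BasicBound}, \eqref{BasicBound'} and \eqref{LogarithmicBound} are all Hilbert--Schmidt bounds of one type, and I would prove them by a single computation. Conjugating by the (unitary) Fourier transform turns $(\kappa-\p)^\alpha\, q\, (\kappa+\p)^\beta$ into an integral operator on $L^2(d\xi)$ with kernel proportional to $(\kappa-i\xi)^\alpha\,\hat q(\xi-\eta)\,(\kappa+i\eta)^\beta$; since the $\I_2$-norm is unchanged, the substitution $\zeta=\xi-\eta$ gives
\[
\bigl\|(\kappa-\p)^\alpha q(\kappa+\p)^\beta\bigr\|_{\I_2}^2 \approx \int_\R |\hat q(\zeta)|^2\, I_{\alpha,\beta}(\zeta)\,d\zeta, \qquad I_{\alpha,\beta}(\zeta):=\int_\R (\kappa^2+(\eta+\zeta)^2)^\alpha(\kappa^2+\eta^2)^\beta\,d\eta .
\]
Thus matters reduce to estimating the one-dimensional integral $I_{\alpha,\beta}(\zeta)$ for the three choices $(\alpha,\beta)\in\{(\tfrac s2-\tfrac14,\tfrac s2-\tfrac14),\,(s,-\tfrac34-\tfrac s2),\,(-\tfrac12,-\tfrac12)\}$ and comparing with the appropriate power of $4\kappa^2+\zeta^2$.

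Each $I_{\alpha,\beta}(\zeta)$ I would handle by the elementary device of decomposing the $\eta$-integral according to whether $|\eta|$ is small compared to $|\zeta|$, or $|\eta+\zeta|$ is small compared to $|\zeta|$, or neither. On the first window $(\kappa^2+(\eta+\zeta)^2)^\alpha$ is comparable to the constant $(\kappa^2+\zeta^2)^\alpha$, on the second window $(\kappa^2+\eta^2)^\beta$ is comparable to $(\kappa^2+\zeta^2)^\beta$, and in each case what remains is a single-weight integral $\int_{|\eta|\lesssim\max(\kappa,|\zeta|)}(\kappa^2+\eta^2)^\gamma\,d\eta$ which is evaluated directly; on the remaining region the integrand decays like $|\eta|^{2\alpha+2\beta}$ with $2\alpha+2\beta<-1$. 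For the first pair this produces $I(\zeta)\lesssim(4\kappa^2+\zeta^2)^s$; for the second pair the weight $(\kappa^2+\eta^2)^{-3/4-s/2}$ is already integrable on the whole line, and contributes the extra factor $\kappa^{-\frac12(1+2s)}$; for the third pair the borderline exponent $-1$ produces the logarithm $\log(4+\zeta^2/\kappa^2)$, and here the same splitting yields a matching \emph{lower} bound as well, so one obtains the asserted equivalence ``$\approx$'' in \eqref{LogarithmicBound} rather than merely an inequality.

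For the operator bound \eqref{BasicOpBound} I would instead argue by duality, invoking the algebra estimate \eqref{E:algebra}. Since $\bigl((\kappa-\p)^{-(1+s)}\bigr)^*=(\kappa+\p)^{-(1+s)}$, for $u,v\in L^2$,
\[
\langle u,\, (\kappa-\p)^{-(1+s)} f (\kappa+\p)^{-(1+s)} v\rangle = \int f\cdot \overline{a}\,b \; dx, \qquad a:=(\kappa+\p)^{-(1+s)}u,\quad b:=(\kappa+\p)^{-(1+s)}v,
\]
so this pairing is at most $\|f\|_{H^{-(1+s)}_\kappa}\,\|\overline{a}\,b\|_{H^{1+s}_\kappa}$. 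By \eqref{E:algebra} the last norm is $\lesssim|\kappa|^{-(\frac12+s)}\|a\|_{H^{1+s}_\kappa}\|b\|_{H^{1+s}_\kappa}$, and since $(4\kappa^2+\xi^2)/(\kappa^2+\xi^2)\in[1,4]$ and $0<1+s<1$ we have $\|a\|_{H^{1+s}_\kappa}\lesssim\|u\|_{L^2}$ and $\|b\|_{H^{1+s}_\kappa}\lesssim\|v\|_{L^2}$; taking the supremum over $u,v$ gives \eqref{BasicOpBound}. (Here $0<1+s<1$ because $-\tfrac12<s<0$, so $(\kappa+\p)^{-(1+s)}$ is covered by the definition of $(\kappa\mp\p)^{-\sigma}$ above.)

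Nothing here is conceptually deep; the main obstacle is the bookkeeping of exponents in $I_{\alpha,\beta}(\zeta)$---in particular tracking the exact power of $\kappa$ in \eqref{BasicBound'} and keeping the region decomposition robust enough to produce a two-sided bound in \eqref{LogarithmicBound}. The role of the standing hypothesis $-\tfrac12<s<0$ is precisely to place all the relevant exponents $2\alpha$, $2\beta$, $2(\alpha+\beta)$ in the ranges where the resonant-window and tail integrals converge with the powers claimed.
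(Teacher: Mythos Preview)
Your proposal is correct and follows essentially the same route as the paper: the Hilbert--Schmidt bounds are reduced via Plancherel to the one-variable integral $I_{\alpha,\beta}$ (the paper first scales to $\kappa=1$, you work directly), and \eqref{BasicOpBound} is handled by exactly the duality-plus-\eqref{E:algebra} argument you give. Your three-region decomposition of $I_{\alpha,\beta}$ is more explicit than the paper, which simply asserts the bound for \eqref{BasicBound}/\eqref{BasicBound'} and cites \cite[Lemma~4.1]{MR3820439} for \eqref{LogarithmicBound}.
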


\begin{proof}
By scaling it suffices to consider \(\kappa = 1\). By Plancherel's Theorem we have
\begin{align*}
\|(1 - \p)^{-\alpha}q(1 + \p)^{-\beta}\|_{\I_2}^2 &= \tr\left\{(1 - \p^2)^{-\alpha}q(1 - \p^2)^{-\beta}\bar q\right\}\\
&= \tfrac1{2\pi}\iint \frac{|\hat q(\xi-\eta)|^2\,d\eta\, d\xi}{\left(1 + \xi ^2\right)^{\alpha}\left(1 + \eta^2\right)^{\beta}}.
\end{align*}
For the particular choices of $\alpha$ and $\beta$ relevant to \eqref{BasicBound} and \eqref{BasicBound'}, we have
\[
\int \frac1{\left(1 + (\xi + \eta)^2\right)^{\alpha}\left(1 + \eta^2\right)^{\beta}}\,d\eta \lesssim (4 + \xi^2)^s,
\]
from which we obtain \eqref{BasicBound}.  The estimate \eqref{LogarithmicBound} can be proved in a parallel manner; see~\cite[Lemma~4.1]{MR3820439}.

Arguing by duality, the key observation to prove \eqref{BasicOpBound} is that
$$
\biggl| \int f g h \,dx \biggr|  \lesssim |\kappa|^{-\frac12(1+2s)}\| f \|_{H^{-(1+s)}_\kappa} \| g \|_{H^{1+s}_\kappa}  \| h \|_{H^{1+s}_\kappa},
$$
which combines the duality of $H^\sigma_\kappa$ and $H^{-\sigma}_\kappa$ with the algebra property \eqref{E:algebra}.
\end{proof}

Our next two lemmas are devoted to similar bounds, but employing the local smoothing norm on the right-hand side.  The former employs the local smoothing norm pertinent to \eqref{NLS}, while the latter is relevant to \eqref{mKdV}.

By introducing spatial localization we obtain the following improvements:

\begin{lem}\label{lem:LambdaLoc NLS}
We have the estimates
\begin{align}
&\|(\vk - \p)^{-\frac12}(\psi_h q)(\vk + \p)^{-\frac12}\|_{L^2_t\I_2}^2\lesssim |\vk|^{-1}\kappa^{- \frac{4s}3}\Big(\|q\|_{X^{s+\frac12}_\kappa}^2 + \|q\|_{L^\infty_t H^s}^2\Big),\label{LambdaLoc NLS}\\
&\|(\vk - \p)^{-\frac12}(\psi_hq)(\vk + \p)^{-\frac12}\|_{L^4_t\I_4}^4  \label{Sharp LambdaLoc NLS} \\
&\qquad\qquad\qquad\lesssim |\vk|^{-3} \bigl[\kappa^{\frac23 - \frac{8s}3} + |\vk|^{-4s}\bigr]\|q\|_{L^\infty_tH^s}^2\Big(\|q\|_{X^{s+\frac12}_\kappa}^2 + \|q\|_{L^\infty_t H^s}^2\Big), \notag 
\end{align}
uniformly for \(|\vk|\geq \kappa^{\frac23}\geq 1\), \(q\in \Cont([-1,1];H^s)\cap X^{s+\frac12}_\kappa\), and \(h\in \R\). 
\end{lem}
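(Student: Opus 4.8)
The plan is to reduce both estimates, via the Plancherel/trace formula behind Lemma~\ref{L:BasicBounds}, to spacetime estimates for $\psi_h q$, and then to exploit the spatial localization through a dyadic Littlewood--Paley decomposition in which low frequencies ($\lesssim\kappa^{2/3}$) are handled by the $L^\infty_t H^s$ norm and higher frequencies by the local smoothing norm --- the threshold $\kappa^{2/3}$ being exactly the frequency at which the elementary bound $\|(\psi_h q)_N\|_{L^2_{t,x}}\lesssim\langle N\rangle^{-s}\|q\|_{L^\infty_t H^s}$ (using boundedness of multiplication by $\psi_h$ on $H^s$, cf.~\eqref{multiplier bdd on Hs}) and the local smoothing bound $\|(\psi_h q)_N\|_{L^2_{t,x}}\lesssim N^{-(s+\frac32)}\sqrt{\kappa^2+N^2}\,\|q\|_{X^{s+\frac12}_\kappa}$ (from \eqref{E:change} with $\phi=\psi_h$, together with \eqref{Reg-Gain}) cross over.

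For \eqref{LambdaLoc NLS}: as in the proof of \eqref{BasicBound}--\eqref{LogarithmicBound}, for each fixed time
\[
\bigl\|(\vk-\p)^{-\frac12}(\psi_h q)(\vk+\p)^{-\frac12}\bigr\|_{\I_2}^2 \approx \int \frac{w_\vk(\zeta)}{\sqrt{4\vk^2+\zeta^2}}\,\bigl|\widehat{\psi_h q}(\zeta)\bigr|^2\,d\zeta ,
\]
where $w_\vk(\zeta)\lesssim\log(2+|\zeta|/|\vk|)$ arises from integrating out the second frequency. Decomposing $\psi_h q=\sum_N(\psi_h q)_N$ and integrating over $t\in[-1,1]$, the block at frequency $N$ contributes $\lesssim\frac{w_\vk(N)}{\sqrt{4\vk^2+N^2}}\|(\psi_h q)_N\|_{L^2_{t,x}}^2$. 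Inserting the two bounds above according to whether $N\lesssim\kappa^{2/3}$ or $N\gtrsim\kappa^{2/3}$, and using $|\vk|\geq\kappa^{2/3}$ to guarantee that this crossover lies in the range $N\lesssim|\vk|$ where the weight is $\approx|\vk|^{-1}$ (rather than the weaker $\log N/N$ that governs $N\gtrsim|\vk|$), every term is seen to be $\lesssim|\vk|^{-1}\kappa^{-\frac{4s}3}\bigl(\|q\|_{X^{s+\frac12}_\kappa}^2+\|q\|_{L^\infty_tH^s}^2\bigr)$; the geometric series in $N$ are endpoint-dominated, and the high-frequency tail $N\gtrsim|\vk|$ (which carries the logarithm) is smaller by a positive power of $|\vk|\kappa^{-2/3}\geq1$, so causes no loss.

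For \eqref{Sharp LambdaLoc NLS}: write $A:=(\vk-\p)^{-\frac12}(\psi_h q)(\vk+\p)^{-\frac12}$. The natural route is to ``square'': $\|A(t)\|_{\I_4}^4=\|A(t)^*A(t)\|_{\I_2}^2$, and
\[
A^*A = (\vk-\p)^{-\frac12}\,\overline{\psi_h q}\,(4\vk^2-\p^2)^{-\frac12}\,(\psi_h q)\,(\vk+\p)^{-\frac12}
\]
is bilinear in $\psi_h q$; expanding $\|A\|_{L^4_t\I_4}^4=\|A^*A\|_{L^2_t\I_2}^2$ and using $\mathcal{R}_\vk(x,y)=\tfrac1{2\vk}e^{-\vk|x-y|}$ for the kernel of $(4\vk^2-\p^2)^{-1/2}$ turns this into an integral over $t\in[-1,1]$ of a quadrilinear form in $\psi_h q$ weighted by four resolvent kernels, whose off-diagonal decay confines all four spatial variables to scale $|\vk|^{-1}$. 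One then organizes the four factors of $\psi_h q$ into two groups, bounding two of them by the local smoothing norm (in $L^2_t$, giving the $X^{s+\frac12}_\kappa$ and $L^\infty_t H^s$ contributions) and two by $\|q\|_{L^\infty_t H^s}\leq\delta$ (giving the $\delta^2$), and runs the same $\kappa^{2/3}$ dichotomy as above --- now also keeping track of the contraction factors $\int e^{-2\vk|\cdot|}\sim|\vk|^{-1}$ supplied by the resolvent kernels --- to reach the asserted powers $|\vk|^{-3}\bigl[\kappa^{\frac23-\frac{8s}3}+|\vk|^{-4s}\bigr]$. The delicate step, and what I expect to be the main obstacle, is precisely this quadrilinear estimate: unlike the bound \eqref{LambdaLoc NLS}, a crude interpolation $\|\cdot\|_{\I_4}\leq\|\cdot\|_{\I_2}^{1/2}\|\cdot\|_{\op}^{1/2}$ is wasteful at large $|\vk|$ (the operator norm of $A(t)$ alone, which sees only $q(t)\in\Bd$, is not small enough), so one must genuinely exploit the spacetime localization built into $\psi_h$ and the resolvent decay, together with the algebra property \eqref{E:algebra} and the elementary operator bounds of this subsection, while exhibiting all the cancellations needed to make the $\vk$- and $\kappa$-dependence sharp simultaneously.
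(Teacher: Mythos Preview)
Your treatment of \eqref{LambdaLoc NLS} is essentially the same as the paper's: dyadic decomposition $\Lambda_N=(\vk-\p)^{-1/2}(\psi_h q)_N(\vk+\p)^{-1/2}$, the weight from \eqref{LogarithmicBound}, and the crossover at $N\sim\kappa^{2/3}$ between the $L^\infty_t H^s$ and $X^{s+\frac12}_\kappa$ bounds.

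For \eqref{Sharp LambdaLoc NLS}, however, you depart from the paper and make things harder for yourself. You explicitly reject the interpolation $\|\cdot\|_{\I_4}\leq\|\cdot\|_{\I_2}^{1/2}\|\cdot\|_{\op}^{1/2}$ as ``wasteful,'' on the grounds that the operator norm of the full $A(t)$ is not small enough. That is true for the full operator, but the paper applies the interpolation \emph{dyadically}, to each $\Lambda_N$ separately. The point is that the operator norm of $\Lambda_N$ is well controlled by Bernstein:
\[
\|\Lambda_N\|_{L^\infty_t\op}\lesssim|\vk|^{-1}\|(\psi_h q)_N\|_{L^\infty_{t,x}}\lesssim|\vk|^{-1}N^{\frac12-s}\delta,
\]
and this (together with the alternative bound coming from $\|\Lambda_N\|_{\op}\leq\|\Lambda_N\|_{\I_2}$) is sharp enough. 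One then uses $\|\Lambda_N\|_{L^4_t\I_4}^4\leq\|\Lambda_N\|_{L^2_t\I_2}^2\|\Lambda_N\|_{L^\infty_t\op}^2$, sums over $N$ (breaking at $\kappa^{2/3}$ and at $|\vk|$), and reads off the stated bound. This supplies the $\delta^2$ directly and produces the correct $\vk,\kappa$ powers without any quadrilinear analysis. Your proposed route through $\|A^*A\|_{\I_2}^2$ may be salvageable, but it is exactly the ``main obstacle'' you yourself flag, and it is unnecessary: the simple dyadic interpolation that you discarded is what the paper actually uses.
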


\begin{proof}
By translation invariance it suffices to consider the case \(h=0\).  Given a dyadic number \(N\geq 1\) we define
\[
\Lambda_N = (\vk - \p)^{-\frac12}(\psi q)_N(\vk + \p)^{-\frac12},
\]
presaging the notation \eqref{D:LambdaGamma}.  Employing \eqref{LogarithmicBound} we may bound 
\begin{equation}\label{LL-1}
\|\Lambda_N\|_{L^2_t\I_2}^2 \lesssim \frac{ \log(4 + \tfrac{N^2}{\vk^2})}{N^{2s} (|\vk|+N)}\min\Big\{\|q\|_{L^\infty_t H^s}^2,
	\ \frac{(\kappa + N)^2}{N^{3}}\|q\|_{X^{s+\frac12}_\kappa}^2\Big\}.
\end{equation}
The estimate \eqref{LambdaLoc NLS} now follows by taking a square-root and summing over $N\in 2^\N$.

From Bernstein's inequality we have
\[
\|\Lambda_N\|_{L^\infty_t \op} \lesssim |\vk|^{-1} \|(\psi q)_N\|_{L^\infty_{t,x}}\lesssim |\vk|^{-1}N^{\frac12 -s}\|q\|_{L^\infty_tH^s},
\]
which combined with the first part of \eqref{LL-1} yields
\begin{equation}\label{LL-2}
\|\Lambda_N\|_{L^\infty_t \op} \lesssim N^{-s}\min \Big\{ |\vk|^{-1}N^{\frac12},(|\vk| + N)^{-\frac12}\big[\log(4 + \tfrac{N^2}{\vk^2})\big]^{\frac12}\Big\}\|q\|_{L^\infty_tH^s}.
\end{equation}

Thus, we may prove \eqref{Sharp LambdaLoc NLS} via first interpolating between \eqref{LL-1} and \eqref{LL-2}, and then summing over $N\in 2^\N$.  This is most easily accomplished by breaking the sum at $\kappa^{\frac23}$ and $|\vk|$.
\end{proof}

\begin{lem}\label{L:LpIp mKdV}
Fix $2\leq p <\infty$. Then
\begin{align*}
& \|(\vk - \p)^{-\frac12}(\psi_h q)(\vk + \p)^{-\frac12}\|_{L^p_t\I_p}^p \\
&\quad \lesssim |\vk|^{1-p} \Bigl[ \kappa^{\frac p2(\frac12-s)-\frac12}
		+ \bigl(1+\tfrac{\kappa^2}{\vk^2}\bigr) |\vk|^{p(\frac12-s)-3} \log^p\bigl|\tfrac{4\vk^2}{\kappa}\bigr|  \Bigr]
		\Bigl[\|q\|_{X^{s+1}_\kappa}^2 + \|q\|_{L^\infty_t H^s}^2\Bigr], \notag
\end{align*}
uniformly for \(|\vk|\geq \kappa^{\frac12}\geq 1\), \(q\in \Cont([-1,1];\Bd)\cap X^{s+1}_\kappa\), and \(h\in \R\).  Moreover, the factor $(1+\tfrac{\kappa^2}{\vk^2})$ may be deleted if $p\leq 5$.
\end{lem}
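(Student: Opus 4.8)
The plan is to follow the same template as the proof of Lemma~\ref{lem:LambdaLoc NLS}, but now working with the stronger local smoothing norm $X^{s+1}_\kappa$ relevant to \eqref{mKdV} and interpolating in the Schatten scale $\mathfrak I_p$ rather than just $\mathfrak I_2$ and $\mathfrak I_4$. By translation invariance we may take $h=0$. Set $\Lambda_N := (\vk-\p)^{-\frac12}(\psi q)_N(\vk+\p)^{-\frac12}$ for dyadic $N\geq 1$, as in \eqref{D:LambdaGamma}, so that the left-hand side is dominated by $\bigl(\sum_N \|\Lambda_N\|_{L^p_t\I_p}\bigr)^p$ after a triangle inequality in $N$ (using $p\geq 1$); this reduces everything to a single-frequency estimate followed by a dyadic sum.

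First I would record the two basic endpoint bounds on each $\Lambda_N$. From \eqref{LogarithmicBound} (exactly as in \eqref{LL-1}, but now using that $X^{s+1}_\kappa$ controls one more derivative than $X^{s+\frac12}_\kappa$, which improves the factor $(\kappa+N)^2 N^{-3}$ to roughly $(\kappa+N)^2 N^{-5}$) we get
\[
\|\Lambda_N\|_{L^2_t\I_2}^2 \lesssim \frac{\log\bigl(4+\tfrac{N^2}{\vk^2}\bigr)}{N^{2s}(|\vk|+N)}\min\Bigl\{\|q\|_{L^\infty_t H^s}^2,\ \tfrac{(\kappa+N)^2}{N^{5}}\|q\|_{X^{s+1}_\kappa}^2\Bigr\}.
\]
On the other side, Bernstein gives $\|\Lambda_N\|_{L^\infty_t\op}\lesssim |\vk|^{-1}\|(\psi q)_N\|_{L^\infty_{t,x}}\lesssim |\vk|^{-1}N^{\frac12-s}\delta$, and combining with the $L^\infty_t H^s$ half of the display above (as in \eqref{LL-2}) produces an operator-norm bound $\|\Lambda_N\|_{L^\infty_t\op}\lesssim N^{-s}\min\{|\vk|^{-1}N^{\frac12},\,(|\vk|+N)^{-\frac12}[\log(\cdots)]^{\frac12}\}\delta$. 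Interpolating these via the Schatten–Hölder inequality $\|\Lambda_N\|_{L^p_t\I_p}\leq \|\Lambda_N\|_{L^2_t\I_2}^{2/p}\|\Lambda_N\|_{L^\infty_t\op}^{1-2/p}$ yields, for each $N$, an estimate with a power $N^{\alpha(p)}$ times either $\|q\|_{L^\infty_t H^s}$ or the product $\delta\|q\|_{X^{s+1}_\kappa}$ (with a logarithmic factor in the latter regime), where the $\delta$ enters because one factor of the $L^\infty_t\op$ norm carries a $\delta$.

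The main work — and the main obstacle — is then the dyadic summation in $N$, which must be split at the two natural thresholds $N\sim\kappa$ (where the Green's-function factor $(\kappa+N)^2$ switches from $\kappa^2$ to $N^2$) and $N\sim\vk^2/\kappa$ or $N\sim|\vk|$ (where the $\min$ in the high-frequency bound \eqref{LL-1}-analogue changes which term is active), and one must track the exact powers of $\kappa$ and $|\vk|$ carefully to land on the claimed expression $|\vk|^{1-p}[\kappa^{\frac p2(\frac12-s)-\frac12}+(1+\tfrac{\kappa^2}{\vk^2})|\vk|^{p(\frac12-s)-3}\log^p|\tfrac{4\vk^2}{\kappa}|]$. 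The first bracketed term comes from low-to-moderate frequencies $N\lesssim\kappa$ using the $L^\infty_t H^s$ bound; the second, with its logarithm, comes from the high-frequency tail $N\gtrsim\kappa$ using the $X^{s+1}_\kappa$ bound. The constraint $-\frac12<s<0$ together with $|\vk|\geq\kappa^{1/2}\geq 1$ is what keeps the relevant geometric series summable at both ends. Finally, the claim that the factor $(1+\tfrac{\kappa^2}{\vk^2})$ may be dropped when $p\leq 5$ is a matter of checking that for those $p$ the contribution producing that factor is already absorbed by the first term $\kappa^{\frac p2(\frac12-s)-\frac12}$ under the hypothesis $|\vk|\geq\kappa^{1/2}$; I would verify this by comparing exponents directly at the breakpoint $N\sim\kappa$.
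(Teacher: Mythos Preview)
Your approach is exactly the paper's: decompose into $\Lambda_N$, establish the dyadic $L^2_t\I_2$ bound analogous to \eqref{LL-1}, reuse the operator-norm bound \eqref{LL-2}, interpolate via $\|\Lambda_N\|_{L^p_t\I_p}\leq \|\Lambda_N\|_{L^2_t\I_2}^{2/p}\|\Lambda_N\|_{L^\infty_t\op}^{1-2/p}$, and sum.

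One concrete slip to fix before carrying out the summation: passing from $X^{s+\frac12}_\kappa$ to $X^{s+1}_\kappa$ raises the Sobolev index by one half (from $H^{s+3/2}$ to $H^{s+2}$), so in the squared bound the factor $(\kappa+N)^2 N^{-3}$ in \eqref{LL-1} becomes $(\kappa+N)^2 N^{-4}$, not $(\kappa+N)^2 N^{-5}$; this is precisely \eqref{LL-3} in the paper. With $N^{-4}$ the crossover in the $\min$ occurs at $N\sim\kappa^{1/2}$ (not $N\sim\kappa$), which is what dovetails with the hypothesis $|\vk|\geq\kappa^{1/2}$ and produces the stated exponents. Using $N^{-5}$ would shift the crossover and give powers that do not match the lemma.
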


\begin{proof}
We mimic the proof of Lemma~\ref{lem:LambdaLoc NLS}, replacing \eqref{LL-1} with
\begin{align}\label{LL-3}
\|\Lambda_N\|_{L^2_t\I_2}^2 &\lesssim \frac{\log(4 + \tfrac{N^2}{\vk^2})}{N^{2s}(|\vk|+N)}
	\min\Bigl\{\|q\|_{L^\infty_t H^s}^2,\ \frac{(\kappa + N)^2}{N^{4}}\|q\|_{X^{s+1}_\kappa}^2\Bigr\}
\end{align}
and reusing \eqref{LL-2}.  We simply interpolate and then sum.  Note that the logarithmic factor is only necessary when $p(\frac12-s)\in\{3,5\}$.  When $p\leq 5$, the extra factor can be neglected due to the other summand and the constraint $\smash{|\vk|\geq \kappa^{\frac12}}$.
\end{proof}

In order to apply Lemmas~\ref{lem:LambdaLoc NLS} and~\ref{L:LpIp mKdV}, we will need to bring some power of the localizing function $\psi$ adjacent to copies of $q$ and $r$.  This is the role of the following:

\begin{lem}[Multiplicative commutators]\label{L:localize}
For \(|\vk|,|\kappa|\geq 1\),  \(\sigma\in\R\), and any integer \(|\ell|\leq 12\),  we have the following estimate uniformly for $h\in\R$ and $u\in\Schwartz$,
\begin{equation}\label{Multiplicative Commutator}
 \|\psi_h^\ell(\vk - \p)^{-1}\psi_h^{-\ell} u \|_{H^{\sigma}_\kappa} \lesssim_{\sigma} \| (\vk - \p)^{-1} u \|_{H^{\sigma}_\kappa} .
\end{equation}

Further, if \(N\geq 1\) is a dyadic integer, \(1\leq p\leq \infty\), and \(n\geq 0\), we have 
\begin{equation}\label{Localized Multiplicative Commutator}
\|\psi_h^{\ell}\tfrac{\p^n}{4\kappa^2 - \p^2}\psi_h^{-\ell}P_N\|_{L^p\rightarrow L^p}\lesssim_{n} \tfrac{N^n}{(\kappa + N)^2}.
\end{equation}
\end{lem}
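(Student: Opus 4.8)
\emph{Plan.}
The engine behind both estimates is the conjugation identity
$$\psi_h^\ell\,\p\,\psi_h^{-\ell} \;=\; \p - V, \qquad V := \ell\,(\log\psi_h)' = -\tfrac{\ell}{99}\tanh\!\big(\tfrac{\,\cdot\,-h}{99}\big),$$
which exhibits $V$ as a smooth real potential with $\|V^{(j)}\|_{L^\infty}\lesssim_{j,\ell}99^{-1}$ uniformly in $h$; in particular $\|V\|_{L^\infty}\le\tfrac{12}{99}$ since $|\ell|\le 12$.  Consequently $\psi_h^\ell(\vk-\p)^{-1}\psi_h^{-\ell}=\big[(\vk-\p)+V\big]^{-1}=:T_\ell$, an honest bounded operator because $\|V\|_{L^\infty}<|\vk|$.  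Moreover $T_\ell$ has the explicit Volterra kernel $\big(\psi_h(x)/\psi_h(y)\big)^\ell$ times the kernel of $(\vk-\p)^{-1}$; since $\|(\log\psi_h)'\|_{L^\infty}=\tfrac1{99}$ and $|\ell|\le12$ we have $\big|(\psi_h(x)/\psi_h(y))^\ell\big|\le e^{12|x-y|/99}$, so all the kernels below are dominated by $\vk$-polynomials times $e^{-(|\vk|-12/99)|x-y|}\le e^{-\frac{87}{99}|\vk||x-y|}$.

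\emph{Proof of \eqref{Multiplicative Commutator}.}
Writing $v:=(\vk-\p)^{-1}u$ and using the algebraic identity $T_\ell(\vk-\p)=\mathrm{Id}-T_\ell V$, the claim reduces to: $\mathrm{Id}-T_\ell V$ is bounded on $H^\sigma_\kappa$ uniformly in $\vk,\kappa,h$.  Since $T_\ell=(\vk-\p)^{-1}S$ with $S:=(\vk-\p)T_\ell=\mathrm{Id}-VT_\ell$, and $(\vk-\p)^{-1}$ is a Fourier multiplier of norm $\le|\vk|^{-1}\le1$ on every $H^\sigma_\kappa$, it suffices to bound $S$ on $H^\sigma_\kappa$ uniformly.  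The case $\sigma=0$ is immediate: the kernel of $VT_\ell$ is dominated by $\tfrac{12}{99}e^{-(|\vk|-12/99)|x-y|}$, so Schur's test gives $\|VT_\ell\|_{L^2\to L^2}\le\tfrac{24}{87}$ using $|\vk|\ge1$.  For integer $\sigma\ge1$ I would induct: the Leibniz rule for the smooth $V$, together with $\p(T_\ell g)=(\vk+V)T_\ell g-g$ and its iterates, controls $\|VT_\ell f\|_{H^\sigma_\kappa}$ by $\|V\|_{L^\infty}\,\|T_\ell f\|_{H^\sigma_\kappa}$ plus lower-order terms built from $\|T_\ell f\|_{H^{\sigma-1}_\kappa}$ and $\|f\|_{H^{\sigma-1}_\kappa}$ — and, in the regime $|\vk|\ge2\kappa$ where $(\vk-\p)^{-1}$ genuinely gains a derivative on $H^\sigma_\kappa$, by purely lower-order terms.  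Feeding in $\|T_\ell f\|_{H^\tau_\kappa}\le|\vk|^{-1}\|S\|_{H^\tau_\kappa\to H^\tau_\kappa}\|f\|_{H^\tau_\kappa}$ and the a priori finiteness of $r_\tau:=\|S\|_{H^\tau_\kappa\to H^\tau_\kappa}$ (for each fixed $\vk,\kappa,h$, via the kernel bounds and interpolation), one obtains an inequality of the shape $r_\sigma\le C_{\sigma,\ell}(1+r_{\sigma-1})+\tfrac{12}{99}\,r_\sigma$; since $\tfrac{12}{99}<1$ this rearranges and the induction (started from the $\sigma=0$ bound) closes with constants independent of $\vk,\kappa,h$.
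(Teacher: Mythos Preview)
Your conjugation identity $\psi_h^\ell(\vk-\p)^{-1}\psi_h^{-\ell}=(\vk-\p+V)^{-1}$ and the resulting Schur bound on $T_\ell$ are correct and coincide with what the paper uses at $\sigma=0$ (there written as $B_\ell(0)=1+\psi^\ell(\vk-\p)^{-1}\psi^{-\ell}[\psi^\ell(\p\psi^{-\ell})]$).  The inductive step for integer $\sigma\ge1$ can also be made to work, but your description of it is muddled: the relation $\p(T_\ell g)=(\vk+V)T_\ell g-g$ is a red herring, since it introduces an uncontrolled factor of $|\vk|$.  The clean route is pure Leibniz on the multiplier $V$, namely $(2\kappa-\p)^\sigma VT_\ell f=\sum_{j}\binom{\sigma}{j}(-1)^jV^{(j)}(2\kappa-\p)^{\sigma-j}T_\ell f$, followed by $\|T_\ell f\|_{H^{\sigma-j}_\kappa}\le|\vk|^{-1}r_{\sigma-j}\|f\|_{H^{\sigma-j}_\kappa}$; no $\|f\|_{H^{\sigma-1}_\kappa}$ term appears, and the closing inequality $r_\sigma\le 1+\tfrac{12}{99}r_\sigma+C_\sigma\max_{j<\sigma}r_j$ follows.

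There are, however, two genuine gaps.  First, your argument only reaches non-negative integers~$\sigma$.  You still need complex interpolation between consecutive integers to cover $\sigma\ge0$, and duality (observe that the adjoint of $T_\ell(\vk-\p)$ is the same object with $\ell\mapsto-\ell$, $\vk\mapsto-\vk$) to handle $\sigma<0$.  The paper achieves both in one stroke via Stein interpolation of $B_\ell(z):=(4\kappa^2-\p^2)^{z/2}T_\ell(\vk-\p)(4\kappa^2-\p^2)^{-z/2}$ between $z=0$ and an integer endpoint, expanding the latter using $|\p^n\psi|\lesssim_n\psi$; this is essentially your Leibniz expansion done once rather than inductively.

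Second, and more seriously, you have not addressed \eqref{Localized Multiplicative Commutator} at all.  That estimate requires a separate (though similar) argument: one writes $\psi^\ell\tfrac{\p^n}{4\kappa^2-\p^2}\psi^{-\ell}P_N$ as a finite combination of $\psi^\ell(4\kappa^2-\p^2)^{-1}(\p^m\psi^{-\ell})\cdot\tfrac{(4\kappa^2-\p^2)\p^{n-m}}{4\kappa^2-\p^2}P_N$, reduces by duality to bounding operators built from the factors $\psi^{\pm\ell}(2\kappa\mp\p)^{-1}\psi^{\mp\ell}$ (bounded on every $L^p$ by your Schur argument) and the harmless multipliers $\psi^{-\ell}(\p^k\psi^\ell)$, and extracts the factor $N^n(\kappa+N)^{-2}$ from the Fourier multiplier acting on $P_N$.
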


\begin{proof}
By translation invariance, it suffices to consider the case \(h=0\).

Using Schur's test and the explicit kernel \eqref{G_0}, we find
\begin{equation}\label{psi up down}
\|\psi^{-\ell} (\vk - \p)^{-1}\psi^\ell\|_{L^p\to L^p}\lesssim \vk^{-1}.
\end{equation}
We will need this shortly.  It is important here that the exponential decay of the convolution kernel is faster than that of the function $\psi^\ell$.  This is a reason both for the large constant $99$ appearing in \eqref{psi} and for requiring a bound on the size of $\ell$.

We first consider the estimate \eqref{Multiplicative Commutator}. By duality, it suffices to consider the case \(\sigma\geq 0\). For \(z\in \C\), we write
\[
B_\ell(z) := (4\kappa^2 - \p^2)^{\frac{z}2}\psi^\ell (\vk - \p)^{-1}\psi^{-\ell}(\vk - \p) (4\kappa^2 - \p^2)^{-\frac{z}2},
\]
with the intention of using complex interpolation to prove $\|B_\ell(\sigma)\|\lesssim_{\sigma,\ell} 1$, which implies \eqref{Multiplicative Commutator}.  As imaginary powers of \(\kappa^2 - \p^2\) are unitary, we find 
\[
\|B_\ell(\sigma)\|_\op \leq \|B_\ell(0)\|_\op^{(m-\sigma)/m} \|B_\ell(m)\|_\op^{\sigma/m}
\]
for any integer $m\geq \sigma$.  For concreteness, we choose the least such integer.

Combining $|\psi'|\lesssim \psi$ and \eqref{psi up down} with the rewriting
\begin{equation*}
B_\ell(0)= 1 + \psi^\ell (\vk - \p)^{-1} \psi^{-\ell} \bigl[\psi^\ell \bigl(\p \psi^{-\ell}\bigr)\bigr], \qtq{yields} \|B_\ell(0)\|_\op \lesssim 1.
\end{equation*}

Turning our attention now to $B_\ell(m)$, we notice that
$$
\tilde B_\ell(m) = (2\kappa+\p)^m B_\ell(0) (2\kappa+\p)^{-m}  \qtq{satisfies} \|\tilde B_\ell(m)\|_\op =\|B_\ell(m)\|_\op ;
$$
moreover, we may expand $\tilde B_\ell(m)$ as
$$
1 + \sum \tbinom{m}{m_1,m_2,m_3,m_4}  \bigl[ \psi^{-\ell} \bigl(\p^{m_2}  \psi^\ell\bigr)\bigr]
	\bigl[ \psi^{\ell} (\vk - \p)^{-1} \psi^{-\ell} \bigr]\bigl[\psi^{\ell} \bigl( \p^{1+m_3}\psi^{-\ell} \bigr)\bigr]
	\Bigl[\tfrac{(2\kappa)^{m_1} \p^{m_4}}{(2\kappa+\p)^{m}}\Bigr],
$$
where the sum is over all decompositions $m=m_1+m_2+m_3+m_4$ using non-negative integers.  The key observation that finishes the proof is that each operator in square brackets is bounded; indeed, for every $n\geq0$ we have
\begin{align}\label{psi is harmless}
| \p^n \psi(x) | \lesssim_n \psi(x), \qtq{whence} \| \psi^{-\ell}(\p^{n}  \psi^\ell) \|_{L^\infty} \lesssim_{n} 1
\end{align}
for any integers $\ell$ and $n\geq 0$.

The proof of \eqref{Localized Multiplicative Commutator} employs similar ideas: We first write
\[
\psi^{\ell}\tfrac{\p^n}{4\kappa^2 - \p^2}\psi^{-\ell}P_N = \sum_{m=0}^n\tbinom nm \psi^\ell \tfrac1{4\kappa^2 - \p^2} (\p^m \psi^{-\ell}) (4\kappa^2 - \p^2)\tfrac{\p^{n-m}}{4\kappa^2 - \p^2}P_N,
\]
which shows that we need only prove
\eq{LpVersion}{
\|(4\kappa^2 - \p^2)  (\p^m \psi^{-\ell})  \tfrac1{2\kappa - \p}\psi^{\ell} \psi^{-\ell}\tfrac1{2\kappa + \p}\psi^{\ell}\|_{L^{p'}\rightarrow L^{p'}}\lesssim_{m}1.
}
This is easily verified, by commuting the derivatives and employing \eqref{psi is harmless} and \eqref{psi up down}.
\end{proof}

\section{The diagonal Green's functions}\label{S:3}

The role of this section is to introduce three central characters in the analysis, namely, $g_{12}$, $g_{21}$, and $\gamma$, and to develop some basic estimates for them.  What unifies these objects is that they all arise from the Green's function associated to the Lax operator $L(\kappa)$ introduced in \eqref{Intro AKNS L}.  Recall
\begin{equation}\label{Lax L'}
L(\kappa) = L_0(\kappa) +  \begin{bmatrix}0 & q\\-r&0 \end{bmatrix} \qtq{where} L_0(\kappa) := \begin{bmatrix}\kappa - \p & 0\\0&\kappa + \p\end{bmatrix}.
\end{equation}
We shall only consider $\kappa\in\R$ with $|\kappa|\geq 1$.  Note that
\begin{align}\label{L conjugation}
L(\kappa)^* = \begin{cases} -L(-\kappa) \quad & \text{in the defocusing case $r=\bar q$,}\\ -[\begin{smallmatrix} 1&\ 0\\0&-1\end{smallmatrix}]L(-\kappa) [\begin{smallmatrix} 1& 0\\0&-1\end{smallmatrix}] & \text{in the focusing case $r=-\bar q$.} \end{cases}
\end{align}
Evidently, both identities hold for $L_0$, since then $q=r=0$.

We will be constructing the Green's function, which is matrix valued, perturbatively from the case $q=r=0$.  By direct computation, one finds that
\[
R_0(\kappa) := L_0(\kappa)^{-1} = \begin{bmatrix}(\kappa - \p)^{-1} & 0\\0&(\kappa + \p)^{-1}\end{bmatrix}
\]
admits the integral kernel 
\begin{equation}\label{G_0}
G_0(x,y;\kappa) = e^{-\kappa|x - y|}\begin{bmatrix}\bbo_{\{x<y\}}&0\\0&\bbo_{\{y<x\}}\end{bmatrix} \quad\text{for $\kappa >0$}.
\end{equation}
For $\kappa<0$, we may use $G_0(x,y;-\kappa) =-G_0(y,x;\kappa)$, which follows from \eqref{L conjugation}.

Formally at least, the resolvent identity indicates that $R(\kappa):=L(\kappa)^{-1}$ can be expressed as
\begin{equation}\label{Resolvent}
R = R_0 + \sum_{\ell = 1}^\infty (-1)^{\ell}\sqrt{R_0}\left(\sqrt{R_0}(L - L_0)\sqrt{R_0}\right)^\ell\sqrt{R_0}.
\end{equation}
Here and below, fractional powers of $R_0$ are defined via \eqref{z to the sigma}.  This series forms the foundation of everything in this section; its convergence will be verified shortly as part of proving Proposition~\ref{P:R}.  With a view to this, we adopt the following notations:
\begin{equation}\label{D:LambdaGamma}
\Lambda := (\kappa - \p)^{-\frac12}q(\kappa + \p)^{-\frac12} \qtq{and} \Gamma := (\kappa + \p)^{-\frac12}r(\kappa - \p)^{-\frac12},
\end{equation}
whose significance is that
\begin{equation}\label{LambdaGammaJob}
\sqrt{R_0}(L - L_0)\sqrt{R_0} = \begin{bmatrix} 0 &\Lambda \\ -\Gamma & 0\end{bmatrix}.
\end{equation}
These operators also satisfy 
\begin{equation}\label{Lambda}
\|\Lambda\|_{\I_2} = \|\Gamma\|_{\I_2} \lesssim |\kappa|^{-(s+\frac12)}\|q\|_{H^s_\kappa},
\end{equation}
as is easily deduced from either \eqref{BasicBound} or \eqref{LogarithmicBound}.

\begin{prop}[Existence of the Green's function]\label{P:R}
There exists \(\delta>0\) so that $L(\kappa)$ is invertible, as an operator on $L^2(\R)$, for all \(q\in \Bd\) and all real \(|\kappa|\geq 1\).  The inverse $R(\kappa):=L(\kappa)^{-1}$ admits an integral kernel $G(x,y;\kappa)$ so that
\begin{equation}\label{tensor eqn}
q \mapsto G - G_0
\end{equation}
is a continuous mapping from $H^s_\kappa(\R)$ to the space of Hilbert--Schmidt operators from $H^{-\frac34 - \frac s2}_\kappa$ to $H^{\frac 34 + \frac s2}_\kappa$.  Moreover, $G-G_0$ is continuous as a function of $(x,y)\in\R^2$. Lastly,
\begin{align}
\p_xG(x,y;\kappa) &= \begin{bmatrix}\kappa&q(x)\\r(x)&-\kappa\end{bmatrix}G(x,y;\kappa) + \begin{bmatrix}-\delta(x-y)&0\\0&\delta(x-y)\end{bmatrix},\label{xID}\\
\p_yG(x,y;\kappa) &= G(x,y;\kappa)\begin{bmatrix}-\kappa&q(y)\\r(y)&\kappa\end{bmatrix} + \begin{bmatrix}\delta(x-y)&0\\0&-\delta(x-y)\end{bmatrix},\label{yID}
\end{align}
in the sense of distributions.
\end{prop}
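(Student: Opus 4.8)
The plan is to make the Neumann series \eqref{Resolvent} rigorous using the Hilbert--Schmidt bound \eqref{Lambda}, then extract the integral kernel and read off its properties termwise.

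\emph{Invertibility.} By \eqref{LambdaGammaJob} the operator $\sqrt{R_0}(L-L_0)\sqrt{R_0}$ is the off-diagonal matrix with entries $\Lambda,-\Gamma$, and \eqref{Lambda} gives $\|\Lambda\|_{\I_2}=\|\Gamma\|_{\I_2}\lesssim|\kappa|^{-(s+\frac12)}\|q\|_{H^s_\kappa}\lesssim\delta$. Choosing $\delta$ small enough that this $\I_2$ (hence operator) norm is $\leq\frac12$ for all $|\kappa|\geq1$ and all $q\in\Bd$, the series $\sum_{\ell\geq0}(-1)^\ell\bigl[\begin{smallmatrix}0&\Lambda\\-\Gamma&0\end{smallmatrix}\bigr]^\ell$ converges in operator norm to $\bigl(1+\bigl[\begin{smallmatrix}0&\Lambda\\-\Gamma&0\end{smallmatrix}\bigr]\bigr)^{-1}$. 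Since $\|R_0\|_\op\leq|\kappa|^{-1}$, conjugating by $\sqrt{R_0}$ shows $R:=\sqrt{R_0}(1+\cdots)^{-1}\sqrt{R_0}$ is a bounded operator that inverts $L(\kappa)=\sqrt{R_0}^{-1}(1+\cdots)\sqrt{R_0}^{-1}$ on $L^2(\R)$; this is exactly \eqref{Resolvent}.

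\emph{The kernel and its regularity.} The $\ell=0$ term is $R_0$, with kernel $G_0$ from \eqref{G_0}. For $\ell\geq1$, each factor $\Lambda$ or $\Gamma$ is Hilbert--Schmidt, and $\sqrt{R_0}$ is a bounded Fourier multiplier; a product of two $\I_2$ operators is trace-class, hence has a continuous kernel, so each term $\ell\geq1$ is a composition that has an integral kernel. To get the claimed mapping into $H^{\frac34+\frac s2}_\kappa\otimes H^{\frac34+\frac s2}_\kappa$, I would use \eqref{BasicBound'}: write $G-G_0=\sqrt{R_0}\bigl[\sum_{\ell\geq1}(-1)^\ell(\cdots)^\ell\bigr]\sqrt{R_0}$ and peel off the outermost $(\kappa\mp\p)^{-\frac12}$ from each $\sqrt{R_0}$, combining it with the adjacent $q$ (or $r$) factor via \eqref{BasicBound'} to produce operators of the form $(\kappa-\p)^s q(\kappa+\p)^{-\frac34-\frac s2}$, whose $\I_2$ norm gives exactly the $H^{\frac34+\frac s2}_\kappa$-valued tensor bound with a gain $\kappa^{-\frac14(1+2s)}$ per term; the remaining interior factors are controlled by \eqref{Lambda} as a geometric series. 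The tensor-norm estimate is equivalent to $(\kappa-\p)^{\frac34+\frac s2}_x(\kappa+\p)^{\frac34+\frac s2}_y(G-G_0)\in L^2(\R^2)$ (up to the focusing/defocusing bookkeeping of which variable carries $\kappa-\p$ vs $\kappa+\p$, handled via \eqref{L conjugation}). Continuity of $q\mapsto G-G_0$ in this norm follows from multilinearity of each term together with the uniform geometric bound, allowing us to sum and pass to the limit. Since $\frac34+\frac s2>\frac12$, the embedding $H^{\frac34+\frac s2}_\kappa\otimes H^{\frac34+\frac s2}_\kappa\hookrightarrow \Cont(\R^2)$ (via \eqref{Linfty bdd} in each variable) yields that $G-G_0$ is continuous on $\R^2$.

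\emph{The differential identities.} For \eqref{xID}--\eqref{yID} I would argue distributionally from $L(\kappa)R(\kappa)=I=R(\kappa)L(\kappa)$. Writing out $L(\kappa)=\bigl[\begin{smallmatrix}\kappa-\p&q\\-r&\kappa+\p\end{smallmatrix}\bigr]$ acting in the $x$ variable on $G(x,y;\kappa)$ and using that $I$ has kernel $\delta(x-y)\,\mr{Id}$, the first row gives $(\kappa-\p_x)G_{1j}+qG_{2j}=\delta(x-y)\delta_{1j}$ and the second row $-rG_{1j}+(\kappa+\p_x)G_{2j}=\delta(x-y)\delta_{2j}$; rearranging for $\p_x G$ produces precisely the matrix equation \eqref{xID} (the sign pattern $\mr{diag}(-1,1)$ on the $\delta$ term coming from moving $\p_x$ to the other side). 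Identity \eqref{yID} follows symmetrically from $R(\kappa)L(\kappa)=I$, transposing the action onto the $y$ variable; alternatively one can deduce it from \eqref{xID} and the symmetry relation $G(x,y;\kappa)^\top$-type identity implied by \eqref{L conjugation}. Making these manipulations rigorous requires knowing $G-G_0$ is regular enough that the products $qG_{2j}$, $rG_{1j}$ make sense as distributions and the weak derivatives are justified — but that is exactly what the tensor-regularity statement above provides, since $q\in H^s$ and $G-G_0$ lies locally in $H^{\frac34+\frac s2}$ in each variable with $\frac34+\frac s2>|s|$, so \eqref{multiplier bdd on Hs}-type bounds apply, while for the $G_0$ part the identities are checked by the explicit formula \eqref{G_0}.

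\emph{Main obstacle.} The delicate point is the tensor-product regularity claim \eqref{tensor eqn} with its \emph{uniformity in $\kappa$} and the attendant continuity in $q$: one must organize the Neumann series so that the two "free" half-derivatives of $\sqrt{R_0}$ at the ends are spent efficiently against the end copies of $q$ via \eqref{BasicBound'} (yielding the regularity $\frac34+\frac s2$ and a $\kappa$-gain), while every interior factor is bounded purely in $\I_2$ via \eqref{Lambda} so the series still sums geometrically — and all of this while tracking the focusing/defocusing asymmetry in which shifted derivative $(\kappa\mp\p)$ lands on which variable. Everything else is bookkeeping on top of the $\I_2$ estimates already in hand.
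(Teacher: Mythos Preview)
Your approach is essentially the same as the paper's: establish invertibility via the Neumann series using \eqref{Lambda}, derive the tensor regularity of $G-G_0$ term-by-term from Hilbert--Schmidt bounds, deduce continuity in $(x,y)$ from $\frac34+\frac s2>\frac12$, and read off \eqref{xID}--\eqref{yID} from $LR=RL=I$.

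One small correction: for the tensor claim \eqref{tensor eqn} you should invoke \eqref{BasicBound}, not \eqref{BasicBound'}. The target space $H^{\frac34+\frac s2}_\kappa\otimes H^{\frac34+\frac s2}_\kappa$ is symmetric in the two variables, so after stripping $(\kappa\mp\p)^{\frac34+\frac s2}$ on each side of the $\ell=1$ term $(\kappa-\p)^{-1}q(\kappa+\p)^{-1}$ you are left with exactly $(\kappa-\p)^{\frac s2-\frac14}q(\kappa+\p)^{\frac s2-\frac14}$, which is the operator in \eqref{BasicBound}. The asymmetric estimate \eqref{BasicBound'} does not produce the required regularity $\frac34+\frac s2$ in \emph{both} variables; in particular, the ``gain $\kappa^{-\frac14(1+2s)}$ per end'' you quote does not arise here (nor is any such gain asserted in the paper). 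The interior factors are then handled by \eqref{BasicBound} together with the leftover $(\kappa\pm\p)^{-(s+\frac12)}$ between consecutive $q$'s, reproducing the geometric factor $|\kappa|^{-(s+\frac12)}\|q\|_{H^s_\kappa}$ of \eqref{Lambda}. With that swap, your argument matches the paper.
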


\begin{proof} From \eqref{Lambda}, we have
$$
\bigl\| \sqrt{R_0}(L - L_0)\sqrt{R_0} \,\bigr\|_{\I_2}\leq \sqrt2 \|\Lambda\|_{\I_2} \lesssim  \|q\|_{H^s_\kappa} \lesssim \delta
$$
uniformly for $|\kappa|\geq 1$.  Thus, for $\delta>0$ sufficiently small, the series \eqref{Resolvent} converges in operator norm uniformly for $|\kappa|\geq 1$.  It is elementary to then verify that the sum acts as a (two-sided) inverse to $L(\kappa)$.
 
This argument also yields that $R-R_0 \in \I_2$.  In particular, it admits an integral kernel in $L^2(\R^2)$.  To prove \eqref{tensor eqn} is continuous, we only need to verify that the series defining $R-R_0$ converges in the sense of Hilbert--Schmidt operators from $H^{-\frac34 - \frac s2}_\kappa$ to $H^{\frac 34 + \frac s2}_\kappa$. This follows readily from \eqref{BasicBound}.

The continuity of $G-G_0$ as a function of $(x,y)$ follows from the Hilbert--Schmidt bound on \eqref{tensor eqn} because $\frac 34 + \frac s2 >\frac12$.

For regular $q$, the identities \eqref{xID} and \eqref{yID} precisely express the fact that $G$ is an integral kernel for $R(\kappa)$. The issue of how to make sense of them for irregular $q$ is settled by \eqref{tensor eqn}.
\end{proof}

From the jump discontinuities evident in \eqref{G_0}, we see that one cannot expect to restrict $G(x,y;\kappa)$ to the $x=y$ diagonal in a meaningful way.  However, as we have just shown, $G-G_0$ is continuous.  This allows us to unambiguously define the continuous functions
\begin{align*}
\vr(x;\kappa) &:= \sgn(\kappa)\bigl[ G_{11}(x,x;\kappa) + G_{22}(x,x;\kappa) \bigr]  - 1,\\
g_{12}(x;\kappa) &:= \sgn(\kappa) G_{12}(x,x;\kappa),\\
g_{21}(x;\kappa) &:= \sgn(\kappa) G_{21}(x,x;\kappa).
\end{align*}
Here, subscripts indicate matrix entries.  While the inclusion of the factor $\sgn(\kappa)$ may seem unnecessary, it has the aesthetical virtue of eliminating corresponding factors in many subsequent formulas, such as \eqref{g12-ID}--\eqref{micro As commute} below.

If \(q\in \BdS\) we may use the identities \eqref{xID} and \eqref{yID} for \(G\) to obtain
\begin{align}
\vr' &= 2\left(qg_{21} + rg_{12}\right),\label{rho-ID}\\
g_{12}' &= 2\kappa g_{12} + q[\vr + 1],\label{g12-ID}\\
g_{21}' &= - 2\kappa g_{21} + r[\vr + 1],\label{g21-ID}
\end{align}
in the sense of distributions. Combining \eqref{rho-ID}, \eqref{g12-ID}, and \eqref{g21-ID} yields the further identity
\begin{equation}\label{micro As commute}
\begin{aligned}
2(\kappa-\vk)&\bigl[ g_{12}(\kappa)g_{21}(\vk) - g_{21}(\kappa)g_{12}(\vk) \bigr]\\
&= \partial_x \bigl\{ g_{12}(\kappa)g_{21}(\vk) + g_{21}(\kappa)g_{12}(\vk) - \tfrac{[\vr(\kappa)+1][\vr(\vk)+1]}{2}\bigr\},
\end{aligned}
\end{equation}
which recurs several times in our analysis.  From \eqref{L conjugation} we also have
\begin{align}
\vr(\kappa) &= \bar \vr(-\kappa) \qtq{and} g_{12}(\kappa) = \pm \bar g_{21}(-\kappa).\label{grho-symmetries}
\end{align}

From the series representation \eqref{Resolvent} of the resolvent, we naturally can deduce corresponding series representations of $g_{12}$, $g_{21}$, and $\gamma$.  These are effectively power-series in terms of $q$ and $r$, albeit with each term being a paraproduct, rather than a monomial.  In what follows we shall often need to discuss individual terms in these series so, being sensitive to the order of such terms in $q$ and $r$, we adopt the following notations:
\begin{align}\label{g12-Series terms}
g_{12}\sbrack{2m+1}(\kappa) &:= \sgn(\kappa) (-1)^{m-1}\Big\< \delta_x,\ (\kappa - \p)^{-\frac12}\Lambda \left(\Gamma\Lambda\right)^m(\kappa + \p)^{-\frac12}\delta_x\Big\>, \\
    \label{g21-Series terms}
g_{21}\sbrack{2m+1}(\kappa) &:= \sgn(\kappa) (-1)^{m}\Big\< \delta_x,\ (\kappa + \p)^{-\frac12}\Gamma \left(\Lambda\Gamma\right)^m(\kappa - \p)^{-\frac12}\delta_x\Big\>,
\end{align}
with $g_{12}\sbrack{2m}(\kappa)=g_{21}\sbrack{2m}(\kappa):= 0$, and similarly, $\gamma\sbrack{2m+1}(\kappa):=0$ and
\begin{equation}\label{rho-Series terms}
\begin{aligned}
 \vr\sbrack{2m}(\kappa) &:= (-1)^m \sgn(\kappa)\Big\< \delta_x,\ (\kappa - \p)^{-\frac12}(\Lambda\Gamma)^m(\kappa - \p)^{-\frac12}\delta_x\Big\>\\
&\quad + (-1)^m\sgn(\kappa)\Big\< \delta_x,\  (\kappa + \p)^{-\frac12}(\Gamma\Lambda)^m (\kappa + \p)^{-\frac12}\delta_x\Big\>.
\end{aligned}
\end{equation}
In this way, we see that
\begin{equation}\label{g12-gamma-Series}
g_{12}(\kappa) = \sum_{\ell=1}^\infty g_{12}\sbrack{\ell}(\kappa),\quad g_{21}(\kappa) = \sum_{\ell=1}^\infty g_{21}\sbrack{\ell}(\kappa), \qtq{and} \gamma(\kappa) = \sum_{\ell=2}^\infty \gamma\sbrack{\ell}(\kappa) .
\end{equation}
In particular, we note that the expansion of $g_{12}$ contains only terms with $q$ appearing once more than $r$, while the expansion of $\gamma$ contains only terms of even order, with $q$ and $r$ appearing equally.  Analogous to our notation for individual terms, we write tails of these series as
$$
g_{12}\sbrack{\geq m}(\kappa) := \sum_{\ell=m}^\infty g_{12}\sbrack{\ell}(\kappa).
$$
We also extend these `square bracket' notations to algebraic combinations of these series; see, for example, \eqref{more sbrack}.

For small indices, it is possible to find explicit representations of the individual paraproducts via the explicit form of $G_0$; however, this quickly becomes overwhelming.  A more systematic approach can be based on iteration of the identities
$$
g_{12} = - (2\kappa-\p)^{-1} [q+\vr q],\quad g_{21} = (2\kappa+\p)^{-1} [r+\vr r], \qtq{and} \gamma = 2g_{12} g_{21} -\tfrac12 \gamma^2,
$$
which follow from \eqref{g12-ID}, \eqref{g21-ID}, and \eqref{QuadraticID}, respectively.  Pursuing either method, one is led to
\begin{gather}
g_{12}\sbrack{1}(\kappa) = - \tfrac q{2\kappa - \p},\qquad g_{12}\sbrack{3} (\kappa)= \tfrac2{2\kappa - \p}\big(q\cdot\tfrac r{2\kappa + \p}\cdot \tfrac q{2\kappa - \p}\big), \label{g12 1 3}\\
g_{21}\sbrack{1}(\kappa) =  \tfrac r{2\kappa + \p},\qquad\hphantom{-} g_{21}\sbrack{3} (\kappa)= \tfrac{-2}{2\kappa + \p}\big(r\cdot \tfrac q{2\kappa - \p}\cdot\tfrac r{2\kappa + \p}\big),\label{g21 1 3}
\end{gather}
as well as
\begin{align}
\vr\sbrack{2}(\kappa) &= - 2\,\tfrac q{2\kappa - \p}\cdot\tfrac r{2\kappa + \p}, \label{gamma 2}\\
\vr\sbrack{4}(\kappa) &= \tfrac q{2\kappa - \p}\cdot \tfrac 4{2\kappa + \p}\big(r\cdot\tfrac q{2\kappa - \p}\cdot \tfrac r{2\kappa + \p}\big)
        + \tfrac4{2\kappa - \p}\big(q\cdot\tfrac r{2\kappa + \p}\cdot \tfrac q{2\kappa - \p}\big)\cdot \tfrac r{2\kappa + \p} \label{gamma 4}\\
&\quad  - 2\,\tfrac q{2\kappa - \p}\cdot\tfrac r{2\kappa + \p}\cdot \tfrac q{2\kappa - \p}\cdot\tfrac r{2\kappa + \p}. \notag
\end{align}
Here dots emphasize occurrences of pointwise multiplication.

With these preliminaries out of the way, we are now ready to present some basic estimates on $g_{12}$, $g_{21}$, and $\gamma$.   Propositions~\ref{prop:g} and~\ref{prop:rho} focus on properties that hold pointwise in time; later in Lemma~\ref{L:LSgg} and Corollary~\ref{C:ET}, we employ local smoothing spaces.

\begin{prop}[Properties of \(g_{12}\) and \(g_{21}\)]\label{prop:g}
There exists \(\delta>0\) so that for all real \(|\kappa|\geq 1\) the maps \(q\mapsto g_{12}(\kappa)\) and \(q\mapsto g_{21}(\kappa)\) are (real analytic) diffeomorphisms of \(\Bd\) into \(H^{s+1}\) satisfying the estimates
\begin{equation}\label{g12-Hs}
\|g_{12}(\kappa)\|_{H^{s+1}_\kappa} + \|g_{21}(\kappa)\|_{H^{s+1}_\kappa} \lesssim \|q\|_{H^s_\kappa}.
\end{equation}
Further, the remainders satisfy the estimate
\begin{equation}\label{g12-LO}
\|g_{12}\sbrack{\geq 3}(\kappa)\|_{H^{s+1}_\kappa} + \|g_{21}\sbrack{\geq 3}(\kappa)\|_{H^{s+1}_\kappa} \lesssim |\kappa|^{-(2s+1)}\|q\|_{H^s_\kappa}^3,
\end{equation}
uniformly in \(\kappa\). Finally, if \(q\) is Schwartz then so are \(g_{12}(\kappa)\) and \(g_{21}(\kappa)\).
\end{prop}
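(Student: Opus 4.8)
\emph{Set-up and reductions.} The plan is to read off all four assertions from the explicit recursions governing the series \eqref{g12-gamma-Series}; the only real analysis is the estimate \eqref{g12-LO}, after which \eqref{g12-Hs} and the diffeomorphism statement are essentially formal, and the Schwartz claim is a separate bootstrap. Throughout it suffices to treat $g_{12}(\kappa)$: by \eqref{grho-symmetries} we have $g_{21}(\kappa)=\pm\overline{g_{12}(-\kappa)}$, conjugation is an isometry of each $H^\sigma_\kappa$, and $H^\sigma_\kappa$ depends only on $\kappa^2$, so every statement about $g_{21}(\kappa)$ follows from the corresponding one about $g_{12}(-\kappa)$. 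We also note that the leading term is exact: by \eqref{g12 1 3}, $g_{12}\sbrack1(\kappa)=-\tfrac q{2\kappa-\p}$, and since $|2\kappa-i\xi|^2=4\kappa^2+\xi^2$, Plancherel gives $\|g_{12}\sbrack1(\kappa)\|_{H^{s+1}_\kappa}=\|q\|_{H^s_\kappa}$.

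\emph{The key estimate \eqref{g12-LO}.} Matching orders of homogeneity in the identities recorded after \eqref{gamma 4} (equivalently, in \eqref{rho-ID}--\eqref{g21-ID}) produces, for every $m\geq1$, the recursions
\begin{gather*}
g_{12}\sbrack{2m+1}(\kappa)=-(2\kappa-\p)^{-1}\bigl(q\,\gamma\sbrack{2m}(\kappa)\bigr),\\
\gamma\sbrack{2m}(\kappa)=2\!\!\sum_{\substack{a+b=2m\\a,b\ \mathrm{odd}}}\!\! g_{12}\sbrack{a}(\kappa)\,g_{21}\sbrack{b}(\kappa)\;-\;\tfrac12\!\!\sum_{\substack{a+b=2m\\a,b\ \mathrm{even}}}\!\!\gamma\sbrack{a}(\kappa)\,\gamma\sbrack{b}(\kappa),
\end{gather*}
with base case \eqref{gamma 2}, together with the analogous recursion for $g_{21}\sbrack{2m+1}$. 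I would then run a simultaneous induction on $m$, using that $(2\kappa-\p)^{-1}$ is an isometric isomorphism $H^\sigma_\kappa\to H^{\sigma+1}_\kappa$, that $H^{s+1}_\kappa$ is an algebra with constant $|\kappa|^{-(\frac12+s)}$ by \eqref{E:algebra}, and that multiplication by an $H^{s+1}_\kappa$ function maps $H^s_\kappa$ to itself with the same constant (this is \eqref{multiplier bdd on Hs} after the rescaling used in Lemma~\ref{lem:EquivNorm}). Feeding the inductive hypotheses $\|g_{12}\sbrack{2m+1}(\kappa)\|_{H^{s+1}_\kappa}+\|g_{21}\sbrack{2m+1}(\kappa)\|_{H^{s+1}_\kappa}\lesssim C^{2m+1}|\kappa|^{-2m(s+\frac12)}\|q\|_{H^s_\kappa}^{2m+1}$ and $\|\gamma\sbrack{2m}(\kappa)\|_{H^{s+1}_\kappa}\lesssim C^{2m}|\kappa|^{-(2m-1)(s+\frac12)}\|q\|_{H^s_\kappa}^{2m}$ into the recursions reproduces them at the next order (each passage from $\gamma\sbrack{2m}$ to $g_{12}\sbrack{2m+1}$ costs exactly one factor $|\kappa|^{-(s+\frac12)}$, and the number of summands is absorbed by enlarging $C$); the $m=0$ case of the first bound is the exact identity above. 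Summing over $m\geq1$, using $|\kappa|^{-(s+\frac12)}\leq1$, the series is geometric once $C\delta<1$ and yields $\|g_{12}\sbrack{\geq3}(\kappa)\|_{H^{s+1}_\kappa}\lesssim|\kappa|^{-(2s+1)}\|q\|_{H^s_\kappa}^3$, i.e.\ \eqref{g12-LO}; combined with the exact leading term and $|\kappa|^{-(2s+1)}\leq1$ this gives \eqref{g12-Hs}. \textbf{This estimate is where the work lies}: one must obtain the \emph{full} gain $H^s_\kappa\to H^{s+1}_\kappa$ with the precise $|\kappa|^{-(2s+1)}$ decay, and verify that the $\kappa$-exponents are stable under the recursions. (An alternative route, parallel to the KdV treatment of \cite{MR3990604}, bounds each $g_{12}\sbrack{2m+1}(\kappa)$ as the restriction to the diagonal of the kernel of $(\kappa-\p)^{-1/2}\Lambda(\Gamma\Lambda)^m(\kappa+\p)^{-1/2}$ by duality against $H^{-(s+1)}_\kappa$ and the Schatten bounds of Lemma~\ref{L:BasicBounds}; it is more robust but needs a delicate asymmetric splitting of the available smoothing, since $\tfrac{s+1}2<\tfrac12$.)

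\emph{The diffeomorphism property.} The bounds above show that the series \eqref{g12-gamma-Series} converges in $H^{s+1}_\kappa$, uniformly for $q\in\Bd$ with $\delta$ small and uniformly in $\kappa$, with each summand a bounded homogeneous polynomial in $q$ (and $r=\pm\bar q$); hence $q\mapsto g_{12}(\kappa)$ is real analytic, in particular continuous, from $\Bd$ to $H^{s+1}$. Its differential at $q=0$ is $f\mapsto-(2\kappa-\p)^{-1}f$, a linear isomorphism $H^s\to H^{s+1}$; at a general $q\in\Bd$ it equals $-(2\kappa-\p)^{-1}\circ(I+K_q)$ with $\|K_q\|_{H^s\to H^s}\lesssim\delta^2<1$, hence is everywhere an isomorphism. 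Finally, from \eqref{g12-ID} one has $-(2\kappa-\p)g_{12}(\kappa)=q+\gamma(\kappa)q$, and the multilinear estimates (with \eqref{multiplier bdd on Hs}) give $\|\gamma(\kappa)q\|_{H^s_\kappa}\lesssim\delta^2\|q\|_{H^s_\kappa}$ together with a matching Lipschitz bound, so $q\mapsto q+\gamma(\kappa)q$ is a small perturbation of the identity on $\Bd$, hence injective, and therefore so is $g_{12}(\kappa)$. A real analytic injection with everywhere invertible differential is a real analytic diffeomorphism onto its open image; the $\kappa$-uniformity of the constants allows $\delta$ to be chosen independent of $\kappa$.

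\emph{The Schwartz claim.} For $q\in\BdS$ one bootstraps in \eqref{rho-ID}--\eqref{g21-ID}. Already $g_{12}(\kappa),g_{21}(\kappa),\gamma(\kappa)\in H^{s+1}(\R)\subset C_0(\R)$ (since $s+1>\tfrac12$ the Fourier transforms are in $L^1$, cf.\ \eqref{Linfty bdd}), so $\gamma(\kappa)(x)\to0$ as $x\to\pm\infty$. Rewrite \eqref{g12-ID} as $g_{12}(\kappa)=-(2\kappa-\p)^{-1}\bigl(q(1+\gamma(\kappa))\bigr)$ and similarly for $g_{21}(\kappa)$, and use \eqref{rho-ID} in the form $\gamma(\kappa)'=2\bigl(qg_{21}(\kappa)+rg_{12}(\kappa)\bigr)$: if $\gamma(\kappa)\in H^\sigma$ for some $\sigma\geq0$, then $q(1+\gamma(\kappa))\in H^\sigma$ (multiplication by the Schwartz function $q$ preserves $H^\sigma$), so $g_{12}(\kappa),g_{21}(\kappa)\in H^{\sigma+1}$, so $\gamma(\kappa)'\in H^{\sigma+1}$, and since $\gamma(\kappa)\in L^2$ vanishes at infinity this gives $\gamma(\kappa)\in H^{\sigma+2}$; iterating from $\sigma=s+1$ yields $\gamma(\kappa),g_{12}(\kappa),g_{21}(\kappa)\in H^\infty$. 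Moreover $q(1+\gamma(\kappa))$ is the product of a Schwartz function with a bounded one, hence rapidly decreasing, and $(2\kappa-\p)^{-1}$ has an exponentially localized convolution kernel (cf.\ \eqref{G_0}), so $g_{12}(\kappa),g_{21}(\kappa)$ and all their derivatives are rapidly decreasing; likewise $\gamma(\kappa)'$ and all its derivatives are rapidly decreasing, so integrating from $-\infty$ gives the same for $\gamma(\kappa)$. Hence $g_{12}(\kappa),g_{21}(\kappa)\in\Schwartz$, completing the proof.
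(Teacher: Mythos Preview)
Your overall strategy --- reducing to $g_{12}$ via \eqref{grho-symmetries}, reading off the linear term exactly, bounding the tail, then inferring the diffeomorphism and Schwartz properties --- is sound, and your diffeomorphism and Schwartz arguments are correct. They differ from the paper's (which computes the functional derivative via the resolvent identity and invokes the inverse function theorem, and handles Schwartz by translation covariance together with an explicit commutation of $x^n$ through $(\kappa-\p)^{-1}$), but your ODE bootstrap for Schwartz and your explicit injectivity via \eqref{g12-ID} are perfectly valid alternatives.

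There is, however, a genuine gap in your primary argument for \eqref{g12-LO}. The induction on $m$ does not close as stated: the $\gamma\sbrack{2m}$ recursion is a convolution with $O(m)$ summands, each bounded (under your inductive hypothesis) by the algebra constant times $C^{2m}|\kappa|^{-(2m-1)(s+\frac12)}\|q\|_{H^s_\kappa}^{2m}$, so the total acquires an extra factor of $m$. No fixed choice of $C$ absorbs this; you would need $m\lesssim 1$ for all $m$. The remedy is standard --- Catalan-type combinatorics giving $\lesssim 4^m$ growth, or a contraction/fixed-point argument on the full series --- but it is not what you wrote, and ``enlarging $C$'' does not do it.

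Your parenthetical alternative is exactly the route the paper takes: pair $g_{12}\sbrack{\geq3}$ with $f\in H^{-(s+1)}_\kappa$, view the pairing as a trace, and apply \eqref{BasicOpBound} to the $f$ factor, \eqref{BasicBound'} to the two outermost copies of $q$ (this is precisely the asymmetric splitting you anticipate), and \eqref{BasicBound} to the inner $\Lambda,\Gamma$ factors. Because each additional $\Lambda\Gamma$ pair contributes a single multiplicative factor $\lesssim|\kappa|^{-(2s+1)}\|q\|_{H^s_\kappa}^2$ with no combinatorial proliferation, the resulting series is genuinely geometric and yields \eqref{g12-LO} directly. That is what your recursion approach lacks and what the Schatten route buys.
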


\begin{proof}
It suffices to consider the case \(\kappa\geq 1\) as the case \(\kappa\leq -1\) is similar; moreover, by \eqref{grho-symmetries}, it suffices to consider $g_{12}(\kappa)$. Recalling \eqref{g12 1 3}, we obtain
\begin{equation}\label{linear isomorphism}
\|g_{12}\sbrack{1}(\kappa)\|_{H^{s+1}_\kappa} = \|q\|_{H^s_\kappa}.
\end{equation}
To bound the remaining terms in the series, we employ duality and Lemma~\ref{L:BasicBounds}:
\begin{align*}
\bigl|\<f, g_{12}\sbrack{\geq 3}(\kappa)\>\bigr|&\leq \|(\kappa + \p)^{-(1+s)}\bar f(\kappa - \p)^{-(1+s)}\|_{\op}\|(\kappa - \p)^s q (\kappa+\p)^{-(\frac34+\frac s2)}\|_{\I_2}^2\\
&\qquad\qquad \times\sum_{\ell=1}^\infty\|(\kappa - \p)^{-\frac14+\frac{s}2}q(\kappa + \p)^{-\frac14+\frac{s}2}\|_{\I_2}^{2\ell-1}\kappa^{-(\ell-1)(1+2s)}\\
&\lesssim |\kappa|^{-(2s+1)} \|f\|_{H^{-(1+s)}_\kappa} \|q\|_{H_\kappa^s}^3,
\end{align*}
provided $\delta>0$ is sufficiently small.  This proves \eqref{g12-LO} and completes the proof of \eqref{g12-Hs}.

We wish to apply the inverse function theorem to obtain the diffeomorphism property.  At the linearized level, we already have
\[
\tfrac{\delta g_{12}}{\delta q}(\kappa)\bigr|_{q=0} = - (2\kappa - \p)^{-1} \qtq{and}  \tfrac{\delta g_{12}}{\delta r}(\kappa)\bigr|_{q=0} = 0
\]
which is an isomorphism, as noted already in \eqref{linear isomorphism}.  At the nonlinear level,  we apply the resolvent identity, which shows that for any test function \(f\in \Schwartz\) we have
\begin{align*}
\left.\frac d{d\epsilon}\right|_{\epsilon = 0} G(x,z;q + \epsilon f) &= -\int G(x,y;q)\begin{bmatrix}0&f(y)\\\mp\bar f(y)&0\end{bmatrix}G(y,z;q)\,dy.
\end{align*}
Repeating the analysis used to prove \eqref{g12-LO}, we find
\begin{align*}
\bigl\| \tfrac{\delta g_{12}}{\delta r}(\kappa) \bigr\|_{H^s_\kappa\rightarrow H^{s+1}_\kappa} + \bigl\| \tfrac{\delta g_{12}}{\delta q}(\kappa)+ (2\kappa - \p)^{-1}  \bigr\|_{H^s_\kappa\rightarrow H^{s+1}_\kappa} 
    \lesssim \delta^2|\kappa|^{-(2s+1)} \lesssim \delta^2
\end{align*}
and so deduce that the diffeomorphism property holds for $\delta>0$ sufficiently small, which can be chosen independent of $|\kappa|\geq 1$.

Next we seek to show $g_{12} \in \Schwartz$ whenever \(q\in \BdS\), beginning with a consideration of derivatives.  For any \(h\in\R\), we have
\[
g_{12}(x+h;q) = g_{12}(x;q(\cdot + h)).
\]
In particular, differentiating \(n\) times with respect to \(h\) and evaluating at \(h=0\), we may use duality to bound
\begin{align*}
\|\partial_x^n g_{12}(\kappa)\|_{H^{s+1}} &\leq \sum\limits_{\ell=0}^\infty\sum\limits_{\substack{\sigma\in \N^{2\ell+1}\\|\sigma| = n}}\binom n\sigma |\kappa|^{-(2s+1)\ell}\prod\limits_{m=1}^{2\ell+1} \|\partial_x^{\sigma_m}q\|_{H^s}\\
&\leq \sum\limits_{\ell=0}^\infty C^{2\ell+1}(2\ell+1)^n|\kappa|^{-(2s+1)\ell} \|q\|_{H^s}^{2\ell}\|\partial_x^nq\|_{H^s}\lesssim_{n}\|\partial_x^nq\|_{H^s},
\end{align*}
where the constant \(C = C(s)> 0\) may be chosen independent of \(\kappa\). To handle spatial weights we observe that
\[
x^n(\kappa - \p)^{-1} = \sum\limits_{m=0}^n(-1)^m \frac{n!}{(n-m)!} (\kappa - \p)^{-m-1}x^{n-m}.
\]
In particular, by duality we may bound
\begin{align*}
\|x^ng_{12}(\kappa)\|_{H^{s+1}} &\leq \sum\limits_{\ell=0}^\infty \sum\limits_{m=0}^nC^{2\ell+1}\frac{n!}{(n-m)!} |\kappa|^{-m-(2s+1)\ell}\|x^{n-m}q\|_{H^s}\|q\|_{H^s}^{2\ell}\\
&\lesssim_{n} \|\<x\>^n q\|_{H^s}.
\end{align*}
Combining these, we see that if \(q\in \BdS\) then \(g_{12}(\kappa)\in \Schwartz\).
\end{proof}

\begin{prop}[Properties of \(\vr\)]\label{prop:rho}
There exists \(\delta>0\) so that for all real \(|\kappa|\geq 1\) the map \(q\mapsto \vr(\kappa)\) is bounded from \(\Bd\) to \(L^1\cap H^{s+1}\) and we have the estimates
\begin{align}
\|\vr(\kappa)\|_{H^{s+1}_\kappa} &\lesssim |\kappa|^{-(s + \frac12)}\|q\|_{H^s_\kappa}^2,\label{rho-Hs}\\
\|\vr(\kappa)\|_{L^\infty} &\lesssim |\kappa|^{-(2s +1)}\|q\|_{H^s_\kappa}^2,\label{rho-Linfty}\\
\|\vr(\kappa)\|_{L^1} &\lesssim \|q\|_{H^{-1}_\kappa}^2 + |\kappa|^{-2(2s+1)-1}\|q\|_{H^s_\kappa}^4,\label{rho-L1}\\
\|\vr\sbrack{\geq 4}(\kappa)\|_{L^1}&\lesssim |\kappa|^{-2(2s+1) - 1}\|q\|_{H^s_\kappa}^4,\label{rho-LO}
\end{align}
uniformly in \(\kappa\). Further, we have the quadratic identity
\eq{QuadraticID}{
\vr + \tfrac12\vr^2 = 2g_{12}g_{21},
}
and if \(q\) is Schwartz then so is \(\vr(\kappa)\).
\end{prop}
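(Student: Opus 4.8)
The plan is to deduce the entire proposition from the quadratic identity \eqref{QuadraticID}, which I establish first. For Schwartz $q$, the pointwise identities \eqref{rho-ID}, \eqref{g12-ID}, \eqref{g21-ID} give
\[
\p_x\bigl(g_{12}g_{21}\bigr)=g_{12}'g_{21}+g_{12}g_{21}'=(\vr+1)\bigl(qg_{21}+rg_{12}\bigr)=\tfrac12(\vr+1)\vr'=\p_x\bigl(\tfrac12\vr+\tfrac14\vr^2\bigr),
\]
so $g_{12}g_{21}-\tfrac12\vr-\tfrac14\vr^2$ is constant; since all three functions are continuous and decay at spatial infinity when $q$ is Schwartz, this constant is $0$, which is \eqref{QuadraticID}. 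To pass to general $q\in\Bd$ I approximate in $H^s$ by Schwartz functions: Proposition~\ref{prop:g} makes $q\mapsto g_{12}(\kappa),g_{21}(\kappa)$ continuous into $H^{s+1}\hookrightarrow L^\infty$, and Proposition~\ref{P:R} (via \eqref{tensor eqn} and $\tfrac34+\tfrac s2>\tfrac12$) makes $q\mapsto\vr(\kappa)$ continuous into $L^\infty$, so every term of \eqref{QuadraticID} converges in $L^\infty$ along the approximating sequence. The Schwartz-regularity claim follows either from the series \eqref{rho-Series terms} exactly as in Proposition~\ref{prop:g}, or from \eqref{QuadraticID} by solving the quadratic ($\vr=-1+(1+4g_{12}g_{21})^{1/2}$, expanded in a binomial series that converges in $\Schwartz$ because $\|g_{12}g_{21}\|_{L^\infty}$ is small).

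For the $H^{s+1}$ bound \eqref{rho-Hs}, Proposition~\ref{prop:g} and the algebra property \eqref{E:algebra} give $\|g_{12}(\kappa)g_{21}(\kappa)\|_{H^{s+1}_\kappa}\lesssim|\kappa|^{-(s+\frac12)}\|q\|_{H^s_\kappa}^2$. Estimating the series \eqref{rho-Series terms} termwise with Lemma~\ref{L:BasicBounds} (as in the proof of \eqref{g12-Hs}) first shows $\vr(\kappa)\in H^{s+1}_\kappa$ with $\|\vr(\kappa)\|_{H^{s+1}_\kappa}\lesssim\delta^2$; feeding this back into \eqref{QuadraticID} in the form $\vr=2g_{12}g_{21}-\tfrac12\vr^2$ and using \eqref{E:algebra} once more to absorb $\|\vr^2\|_{H^{s+1}_\kappa}\lesssim|\kappa|^{-(s+\frac12)}\|\vr\|_{H^{s+1}_\kappa}^2\lesssim\delta^2\|\vr\|_{H^{s+1}_\kappa}$ upgrades this to \eqref{rho-Hs}. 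The $L^\infty$ bound \eqref{rho-Linfty} is then immediate from \eqref{rho-Hs} and \eqref{Linfty bdd}, which together furnish two factors $|\kappa|^{-(s+\frac12)}$.

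The $L^1$ bounds are the crux. The leading term is handled directly from \eqref{gamma 2} by Cauchy--Schwarz:
\[
\|\vr\sbrack{2}(\kappa)\|_{L^1}=2\bigl\|\bigl[(2\kappa-\p)^{-1}q\bigr]\cdot\bigl[(2\kappa+\p)^{-1}r\bigr]\bigr\|_{L^1}\leq 2\|(2\kappa-\p)^{-1}q\|_{L^2}\|(2\kappa+\p)^{-1}r\|_{L^2}=2\|q\|_{H^{-1}_\kappa}^2.
\]
For the remainder I use \eqref{QuadraticID} and $\vr\sbrack{2}=2g_{12}\sbrack{1}g_{21}\sbrack{1}$ to expand
\[
\vr\sbrack{\geq 4}(\kappa)=2g_{12}\sbrack{\geq 3}g_{21}\sbrack{1}+2g_{12}\sbrack{1}g_{21}\sbrack{\geq 3}+2g_{12}\sbrack{\geq 3}g_{21}\sbrack{\geq 3}-\tfrac12\vr^2,
\]
and bound each product in $L^1$ by the product of the $L^2$ norms of its factors. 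The arithmetic input is the lossy embedding $\|h\|_{L^2}\lesssim|\kappa|^{-\sigma}\|h\|_{H^\sigma_\kappa}$ for $\sigma\geq 0$, $|\kappa|\geq 1$; applied with $\sigma=s+1$ to the factors controlled by \eqref{g12-LO} and \eqref{rho-Hs}, together with $\|g_{12}\sbrack{1}\|_{L^2}+\|g_{21}\sbrack{1}\|_{L^2}=2\|q\|_{H^{-1}_\kappa}\lesssim|\kappa|^{-(1+s)}\|q\|_{H^s_\kappa}$, it produces exactly the power $|\kappa|^{-(1+s)-(2s+1)-(1+s)}=|\kappa|^{-2(2s+1)-1}$ claimed in \eqref{rho-LO}; the remaining two products and the $\vr^2$ term gain an extra $|\kappa|^{-(2s+1)}\|q\|_{H^s_\kappa}^2\lesssim\delta^2$ and are subordinate. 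Summing the two displays gives \eqref{rho-L1}, whence boundedness of $q\mapsto\vr(\kappa)$ from $\Bd$ into $L^1\cap H^{s+1}$.

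I expect the $L^1$ estimate \eqref{rho-LO} to be the only genuinely delicate point: the naive pairing that discards the $H^{s+1}_\kappa\hookrightarrow L^2$ gain misses the sharp exponent by a full factor $|\kappa|^{-(1+s)}$, so one must be careful to place every available power of $(4\kappa^2+\xi^2)^{\frac12}$ on whichever factor is measured in $L^2$ rather than in $H^{s+1}_\kappa$. Everything else reduces to a one-line ODE computation, a density argument, or direct appeals to the algebra properties and basic estimates of Section~\ref{S:2}.
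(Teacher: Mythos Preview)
Your proof is correct and follows essentially the same route as the paper: derive the quadratic identity from the ODEs \eqref{rho-ID}--\eqref{g21-ID}, use it to write $\vr\sbrack{\geq 4}$ as a sum of products of $g_{12}$-type pieces, and then pair $L^2$ norms via Cauchy--Schwarz using the embedding $H^{s+1}_\kappa\hookrightarrow L^2$ to reach the exponent $|\kappa|^{-(3+4s)}$. The only organizational difference is that the paper first proves \eqref{rho-Hs} directly from the series (bounding $\vr\sbrack{2}$ via \eqref{gamma 2} and \eqref{E:algebra}, and $\vr\sbrack{\geq 4}$ by the same duality argument as \eqref{g12-LO}), and only then establishes \eqref{QuadraticID}, whereas you prove \eqref{QuadraticID} first and then obtain \eqref{rho-Hs} by a short bootstrap; both orderings work.
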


\begin{proof}
Once again it suffices to consider the case \(\kappa\geq1\). Using \eqref{gamma 2} and \eqref{E:algebra}, we obtain
\begin{align*}
\|\gamma\sbrack{2}\|_{H_\kappa^{s+1}}\lesssim \kappa^{-(s+\frac12)}\| q\|_{H^s_\kappa}^2.
\end{align*}
To handle $\gamma\sbrack{\geq 4}$ we use the series representation \eqref{g12-gamma-Series} and the same duality argument used to prove \eqref{g12-LO}. The estimate \eqref{rho-Linfty} then follows from \eqref{rho-Hs} via~\eqref{Linfty bdd}.

Setting $\vk=\kappa$ in \eqref{micro As commute}, we find that
$$
\partial_x \bigl\{ 2 g_{12}(x;\kappa)g_{21}(x;\kappa) - \tfrac12\vr(x;\kappa)^2 - \vr(x;\kappa) \bigr\} = 0.
$$
From \eqref{g12-Hs} and \eqref{rho-Hs}, we see that the term in braces vanishes as $|x|\to\infty$. Thus the quadratic identity \eqref{QuadraticID} follows by integration.

By using this quadratic identity, we may write
\begin{equation}\label{E:vr gr 4}
\vr\sbrack{\geq 4} = - \tfrac12 \vr^2 + 2g_{12}\sbrack{\geq 3}\cdot g_{21} + 2g_{12}\sbrack{1}\cdot g_{21}\sbrack{\geq 3}.
\end{equation}
By Proposition~\ref{prop:g} and  \eqref{rho-Hs}, we have
\begin{align*}
\|g_{12}\sbrack{\geq 3}\|_{L^2}+\|g_{21}\sbrack{\geq 3}\|_{L^2}&\lesssim |\kappa|^{-(1+s)}\bigl[\|g_{12}\sbrack{\geq 3}\|_{H^{s+1}_\kappa}+ \|g_{21}\sbrack{\geq 3}\|_{H^{s+1}_\kappa}\bigr] \lesssim |\kappa|^{-(2+3s)}\|q\|_{H^s_\kappa}^3\\
\|g_{12}\|_{L^2}+\|g_{21}\|_{L^2}&\lesssim |\kappa|^{-(1+s)} \bigl( \|g_{12}\|_{H_\kappa^{s+1}}+\|g_{21}\|_{H_\kappa^{s+1}}\bigr)\lesssim   |\kappa|^{-(1+s)}\|q\|_{H^s_\kappa} \\
\|\vr\|_{L^2}&\lesssim |\kappa|^{-(1+s)}\|\vr\|_{H^{s+1}_\kappa}\lesssim|\kappa|^{-(\frac 32 +2s)}\|q\|_{H^s_\kappa}^2.
\end{align*}
Thus
\begin{align*}
\|\vr\sbrack{\geq 4}\|_{L^1} &\lesssim \|\vr\|_{L^2}^2 + \|g_{12}\sbrack{\geq 3}\|_{L^2}\|g_{21}\|_{L^2} + \|g_{12}\sbrack{1}\|_{L^2}\|g_{21}\sbrack{\geq 3}\|_{L^2}\lesssim |\kappa|^{-(3+4s)}\|q\|_{H^s_\kappa}^4,
\end{align*}
which yields the estimate \eqref{rho-LO}. The estimate \eqref{rho-L1} then follows from applying the Cauchy-Schwarz inequality to \eqref{gamma 2}.

If \(q\in \BdS\) then from Proposition~\ref{prop:g} and the quadratic identity \eqref{QuadraticID} we see that \(\vr + \frac12\vr^2\in \Schwartz\). As \(H^{s+1}\) is an algebra, we may then bound
\begin{align*}
\|\partial_x^n \vr\|_{H^{s+1}}\left(1 - \|\vr\|_{H^{s+1}}\right) &\leq \left\|\partial_x^n\left(\vr + \tfrac12\vr^2\right)\right\|_{H^{s+1}} + \left(2^{n-1} - 1\right)\|\vr\|_{H^{n+s}}^2,\\
\|x^n \vr\|_{H^{s+1}}\left(1 - \|\vr\|_{H^{s+1}}\right) &\leq \left\|x^n\left(\vr + \tfrac12\vr^2\right)\right\|_{H^{s+1}},
\end{align*}
so using the estimate \eqref{rho-Hs} we see that \(\vr(\kappa)\in \Schwartz\), provided \(0<\delta\ll1\) is sufficiently small.
\end{proof}

Next we consider local smoothing estimates for \(g_{12} = g_{12}(\vk)\) and \(\vr = \vr(\vk)\).  We consider both \eqref{NLS} and \eqref{mKdV} here and so must allow two values for $\sigma$, namely, \(s+\frac12\) and \(s+1\). In fact, the proof below works uniformly on the interval $[s+\frac12,s+1]$.

\begin{lem}[Local smoothing estimates for \(g_{12}\), \(\vr\)]\label{L:LSgg}
Let \(\sigma \in\{ s+\frac12,s+1\}\). Then there exists \(\delta>0\) so that for all real $|\kappa|\geq 1$, $|\vk|\geq 1$, and \(q\in \Cont([-1,1];\Bd)\cap X^\sigma_\kappa\), the functions \(g_{12} = g_{12}(\vk)\) and \(\vr = \vr(\vk)\) satisfy the estimates
\begin{align}\label{g12-X}
|\vk|\|g_{12}\|_{X^\sigma_\kappa} + \|g_{12}\|_{X^{\sigma+1}_\kappa} &\lesssim \|q\|_{X^\sigma_\kappa} + \|q\|_{L^\infty_t H^s},\\
|\vk|\|g_{12}\sbrack{\geq 3}\|_{X^\sigma_\kappa} + \|g_{12}\sbrack{\geq 3}\|_{X^{\sigma+1}_\kappa} &\lesssim |\vk|^{-(2s+1)}\|q\|_{L^\infty_t H^s_\vk}^2\left(\|q\|_{X^\sigma_\kappa} + \|q\|_{L^\infty_t H^s}\right), \label{g12-X-1}\\
\label{rho-X-1}
|\vk|\|\vr\|_{X^\sigma_\kappa} + \|\vr\|_{X^{\sigma+1}_\kappa} &\lesssim |\vk|^{-(s+\frac12)}\|q\|_{L^\infty_t H^s_\vk}\left(\|q\|_{X^\sigma_\kappa} + \|q\|_{L^\infty_t H^s}\right),
\end{align}
where the implicit constants are independent of \(\kappa,\vk\).
\end{lem}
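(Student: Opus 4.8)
The plan is to read off all three estimates from two algebraic identities for the Green's function, fed into the local smoothing calculus of Lemma~\ref{L:X}. The identities are the first-order relation $g_{12}(\vk)=-(2\vk-\p)^{-1}\bigl[q+\vr q\bigr]$ from \eqref{g12-ID} — which also gives $g_{12}\sbrack{1}(\vk)=-(2\vk-\p)^{-1}q$, hence $g_{12}\sbrack{\ge3}(\vk)=-(2\vk-\p)^{-1}[\vr q]$ — its $g_{21}$ analogue, and the quadratic identity \eqref{QuadraticID} rearranged as $\vr=2g_{12}g_{21}-\tfrac12\vr^2$. The smoothing inequality \eqref{X-Smoothing} converts a factor $2\vk-\p$ into one gained derivative at the price of an $L^\infty_tH^\sigma$ error; the nonlinear products are handled by \eqref{X-Product-1} and \eqref{X-Product-2}; and the $L^\infty_tH^\sigma$ errors and $L^\infty_tH^{s+1}_\vk$ factors that surface are controlled by Propositions~\ref{prop:g} and~\ref{prop:rho}. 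Since Lemma~\ref{L:X} is phrased for Schwartz-valued inputs, I would first reduce by a routine approximation argument to $q\in\Cont([-1,1];\BdS)$, where $g_{12},g_{21},\vr$ are Schwartz-valued and — by the scheme of Proposition~\ref{prop:g}, now also invoking \eqref{X-Product-2} for the single factor of $q$ that lives only in $X^\sigma_\kappa$ — the series \eqref{g12-gamma-Series} converge in $X^{\sigma+1}_\kappa$, so that all the norms below are a priori finite. Abbreviate $A:=\|q\|_{X^\sigma_\kappa}+\|q\|_{L^\infty_tH^s}$ and $\eps_\vk:=\|q\|_{L^\infty_tH^s_\vk}$, and note $\eps_\vk\le\delta$ and $\eps_\vk\le A$.

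The heart of the matter is a \emph{bootstrap}. Set $P:=|\vk|\|g_{12}\|_{X^\sigma_\kappa}+\|g_{12}\|_{X^{\sigma+1}_\kappa}+(\text{same with }g_{21})$ and $B:=|\vk|\|\vr\|_{X^\sigma_\kappa}+\|\vr\|_{X^{\sigma+1}_\kappa}$. Applying \eqref{X-Smoothing} to $(2\vk-\p)g_{12}=-(q+\vr q)$ and its $g_{21}$ counterpart, and using $\|g_{12}\|_{L^\infty_tH^\sigma}\le\|g_{12}\|_{L^\infty_tH^{s+1}_\vk}\lesssim\eps_\vk$ from \eqref{g12-Hs}, gives $P\lesssim A+\|\vr q\|_{X^\sigma_\kappa}$. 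Here $q$ must be placed in the $X^\sigma_\kappa$ and $H^s_\vk$ slots of \eqref{X-Product-2} (forced, since $q\notin L^\infty_tH^{s+1}_\vk$), and with $\|\vr\|_{L^\infty_tH^{s+1}_\vk}\lesssim|\vk|^{-(s+\frac12)}\eps_\vk^2\le\delta^2$ from \eqref{rho-Hs} one gets $\|\vr q\|_{X^\sigma_\kappa}\lesssim\delta^2A+\delta B$, hence $P\lesssim A+\delta B$. Next, estimating $2g_{12}g_{21}-\tfrac12\vr^2$ in $X^\sigma_\kappa$ and $X^{\sigma+1}_\kappa$ via \eqref{X-Product-1} — deliberately not via the algebra property \eqref{E:XH algebra} — so that each product keeps one factor in $L^\infty_tH^{s+1}_\vk$ (bounded by \eqref{g12-Hs}, \eqref{rho-Hs}), one obtains $B\lesssim|\vk|^{-(s+\frac12)}\eps_\vk P+\delta^2B$, the $\vr^2$ term acquiring the extra smallness $|\vk|^{-(s+\frac12)}\eps_\vk^2\le\delta^2$. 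For $\delta$ small, absorb the $\delta^2B$: $B\lesssim|\vk|^{-(s+\frac12)}\eps_\vk P$; feeding this back, $P\lesssim A+\delta|\vk|^{-(s+\frac12)}\eps_\vk P\lesssim A+\delta^2P$, so $P\lesssim A$ and then $B\lesssim|\vk|^{-(s+\frac12)}\eps_\vk A$ — exactly \eqref{g12-X} and \eqref{rho-X-1}.

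For \eqref{g12-X-1}, apply \eqref{X-Smoothing} to $g_{12}\sbrack{\ge3}=-(2\vk-\p)^{-1}[\vr q]$: it bounds $|\vk|\|g_{12}\sbrack{\ge3}\|_{X^\sigma_\kappa}+\|g_{12}\sbrack{\ge3}\|_{X^{\sigma+1}_\kappa}$ by $\|\vr q\|_{X^\sigma_\kappa}+\|g_{12}\sbrack{\ge3}\|_{L^\infty_tH^\sigma}$; the error term is $\lesssim\|g_{12}\sbrack{\ge3}\|_{L^\infty_tH^{s+1}_\vk}\lesssim|\vk|^{-(2s+1)}\eps_\vk^3\le|\vk|^{-(2s+1)}\eps_\vk^2A$ by \eqref{g12-LO}, while re-estimating $\|\vr q\|_{X^\sigma_\kappa}$ through \eqref{X-Product-2} with the now-available sharp bounds $\|\vr\|_{L^\infty_tH^{s+1}_\vk}\lesssim|\vk|^{-(s+\frac12)}\eps_\vk^2$ and $|\vk|\|\vr\|_{X^\sigma_\kappa}+\|\vr\|_{X^{\sigma+1}_\kappa}\lesssim|\vk|^{-(s+\frac12)}\eps_\vk A$ yields $\|\vr q\|_{X^\sigma_\kappa}\lesssim|\vk|^{-(2s+1)}\eps_\vk^2A$, which completes the estimate.

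The main obstacle is the circular dependence of $g_{12}$, $g_{21}$ and $\vr$ through these identities, which rules out a direct estimate and makes the bootstrap unavoidable; the bootstrap closes only because the genuinely quadratic corrections can be arranged to carry an absorbable small factor ($\eps_\vk\le\delta$, or $|\vk|^{-(s+\frac12)}\le1$), and this requires the precise choice among \eqref{X-Product-1} and \eqref{X-Product-2} — using the algebra property \eqref{E:XH algebra} for $\vr^2$ or $g_{12}g_{21}$, or placing $q$ in the wrong slot of \eqref{X-Product-2}, would instead leave a factor that grows with $|\vk|$. A secondary point, handled by the reduction to Schwartz data together with convergence of \eqref{g12-gamma-Series} in $X^{\sigma+1}_\kappa$, is that $P$ and $B$ must be known finite before the absorptions are legitimate.
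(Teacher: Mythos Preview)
Your proposal is correct and follows essentially the same route as the paper's proof: both exploit $g_{12}\sbrack{\geq 3}=-(2\vk-\p)^{-1}(q\vr)$ and the quadratic identity \eqref{QuadraticID}, feed these into \eqref{X-Smoothing}, \eqref{X-Product-1}, \eqref{X-Product-2}, and close via an absorption argument for small $\delta$. The only cosmetic difference is that the paper invokes the conjugation symmetry \eqref{grho-symmetries} and writes $\max_{\pm\vk}$ rather than carrying $g_{21}$ explicitly in the bootstrap quantity, and it records the intermediate bound \eqref{rho-X-1-halfway} ($B$ controlled by $P$) before estimating $\|q\vr\|_{X^\sigma_\kappa}$, whereas you feed $B$ back into $P$ first; the logic is the same.
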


\begin{proof}
Applying the product estimate \eqref{X-Product-1} with the quadratic identity \eqref{QuadraticID} and the symmetry relation \eqref{grho-symmetries}, we may bound
\begin{align*}
|\vk|\|\vr\|_{X^\sigma_\kappa} + \|\vr\|_{X^{\sigma+1}_\kappa}&\lesssim |\vk|^{-(s+\frac12)}\|\vr\|_{L^\infty_t H^{s+1}_\vk}\left(|\vk|\|\vr\|_{X^\sigma_\kappa} + \|\vr\|_{X^{\sigma+1}_\kappa}\right)\\
&\quad +|\vk|^{-(s+\frac12)} \max_{\pm\vk}\|g_{12}\|_{L^\infty_t H^{s+1}_\vk}\max_{\pm\vk}\left(|\vk|\|g_{12}\|_{X^\sigma_\kappa} + \|g_{12}\|_{X^{\sigma+1}_\kappa}\right).
\end{align*}
In view of \eqref{rho-Hs}, taking \(0<\delta\ll1\) sufficiently small (independently of \(\vk\)) and using \eqref{g12-Hs}, we get
\begin{equation}\label{rho-X-1-halfway}
|\vk|\|\vr\|_{X^\sigma_\kappa} + \|\vr\|_{X^{\sigma+1}_\kappa}\lesssim |\vk|^{-(s+\frac12)}\|q\|_{L^\infty_t H^s_\vk} \max_{\pm\vk}\left(|\vk|\|g_{12}\|_{X^\sigma_\kappa} + \|g_{12}\|_{X^{\sigma+1}_\kappa}\right).
\end{equation}
As a consequence, the estimate \eqref{rho-X-1} follows from the estimate \eqref{g12-X}.

To prove the estimate \eqref{g12-X} we first apply the estimate \eqref{X-Smoothing} to obtain
\begin{equation}\label{AnotherReduction}
|\vk|\|g_{12}\|_{X^\sigma_\kappa} + \|g_{12}\|_{X^{\sigma+1}_\kappa}  \lesssim \|q\|_{X^\sigma_\kappa} + \|q\|_{L^\infty_t H^s} + |\vk|\|g_{12}\sbrack{\geq 3}\|_{X^\sigma_\kappa} + \|g_{12}\sbrack{\geq 3}\|_{X^{\sigma+1}_\kappa}.
\end{equation}
From the  identity \eqref{g12-ID} for \(g_{12}\), we see that $g_{12}\sbrack{\geq 3} = -(2\vk - \p)^{-1} (q\vr)$.  Thus, employing \eqref{X-Smoothing}, we find
\[
|\vk|\|g_{12}\sbrack{\geq 3}\|_{X^\sigma_\kappa} + \|g_{12}\sbrack{\geq 3}\|_{X^{\sigma+1}_\kappa}  \lesssim \|q\vr\|_{X^\sigma_\kappa} + \|q\vr\|_{L^\infty_t H^s}.
\]
To continue, we use \eqref{X-Product-2} together with \eqref{rho-Hs} and \eqref{rho-X-1-halfway} for \(\vr\) to obtain
\[
\|q\vr\|_{X^\sigma_\kappa}\lesssim |\vk|^{-(2s+1)}\|q\|_{L^\infty_t H^s_\vk}^2 \Bigl[\|q\|_{X^\sigma_\kappa} +\max_{\pm \vk}\bigl(|\vk|\|g_{12}\|_{X^\sigma_\kappa} + \|g_{12}\|_{X^{\sigma+1}_\kappa}\bigr)\Bigr].
\]
Using \eqref{multiplier bdd on Hs} and \eqref{rho-Hs}, we may bound
\begin{align*}
\|q\vr\|_{L^\infty_t H^s}&\lesssim  |\vk|^{-(2s+1)} \|q\|_{L^\infty_t H^s}\|q\|_{L^\infty_t H^s_\vk}^2.
\end{align*}
As a consequence,
\begin{align*}
&\max_{\pm \vk}\bigl(|\vk|\|g_{12}\sbrack{\geq 3}\|_{X^\sigma_\kappa} + \|g_{12}\sbrack{\geq 3}\|_{X^{\sigma+1}_\kappa}\bigr) \\
&\quad\lesssim |\vk|^{-(2s+1)} \|q\|_{L^\infty_t H^s_\vk}^2\Bigl[\|q\|_{X^\sigma_\kappa} +  \|q\|_{L^\infty_t H^s}+\max_{\pm \vk}\bigl(|\vk|\|g_{12}\|_{X^\sigma_\kappa} + \|g_{12}\|_{X^{\sigma+1}_\kappa}\bigr)\Bigr].
\end{align*}
Combining this with \eqref{AnotherReduction} and choosing \(0<\delta\ll1\) sufficiently small (independently of \(\kappa,\vk\)), we obtain \eqref{g12-X} and so also \eqref{g12-X-1}.
\end{proof}

Due to the structure of our microscopic conservation law, the functions $g_{12}$ and $\gamma$ will frequently occur in the combination \(\frac{g_{12}(\vk)}{2 + \vr(\vk)}\).  Naturally, this too may be written as a power series in $q$ and $r$ and we adapt our square brackets notation accordingly:
\[
\tfrac{g_{12}}{2 + \vr} = \big(\tfrac{g_{12}}{2 + \vr}\big)\sbrack{1} + \big(\tfrac{g_{12}}{2 + \vr}\big)\sbrack{\geq 3} = \big(\tfrac{g_{12}}{2 + \vr}\big)\sbrack{1} + \big(\tfrac{g_{12}}{2 + \vr}\big)\sbrack{3} + \big(\tfrac{g_{12}}{2 + \vr}\big)\sbrack{\geq 5},
\]
where the leading order terms are given by
\begin{equation}\label{more sbrack}
\big(\tfrac{g_{12}}{2 + \vr}\big)\sbrack{1} = \tfrac12 g_{12}\sbrack{1} \qtq{and} \big(\tfrac{g_{12}}{2 + \vr}\big)\sbrack{3} = \tfrac12 g_{12}\sbrack{3} - \tfrac14g_{12}\sbrack{1}\vr\sbrack{2},
\end{equation}
and the remainders by
\begin{align}
\big(\tfrac{g_{12}}{2 + \vr}\big)\sbrack{\geq 3} &= \tfrac12 g_{12}\sbrack{\geq 3} - \tfrac{g_{12}\vr}{2(2 + \vr)}, \label{more sbrack'}\\
\big(\tfrac{g_{12}}{2 + \vr}\big)\sbrack{\geq 5} &= \tfrac12 g_{12}\sbrack{\geq 5} - \tfrac14g_{12}\sbrack{1}\vr\sbrack{\geq 4} - \tfrac14g_{12}\sbrack{\geq 3}\vr + \tfrac{g_{12}\vr^2}{4(2 + \vr)}.  \label{more sbrack''}
\end{align}

Our earlier results yield the following information about these quantities:

\begin{cor}\label{C:ET}
Let \(\sigma \in\{ s+\frac12,s+1\}\). Then there exists \(\delta>0\) so that for all real $|\kappa|\geq 1$ and $|\vk|\geq 1$, we have the estimates
\begin{align}
|\vk|\bigl\|\tfrac{g_{12}(\vk)}{2 + \vr(\vk)}\bigr\|_{H^s} + \bigl\| \tfrac{g_{12}(\vk)}{2 + \vr(\vk)}\bigr\|_{H^{s+1}}
	&\lesssim \|q\|_{H^s},\label{ET Sob} \\
|\vk|\bigl\|\big(\tfrac{g_{12}(\vk)}{2 + \vr(\vk)}\big)\sbrack{\geq 3}\bigr\|_{H^s} + \bigl\| \big(\tfrac{g_{12}(\vk)}{2 + \vr(\vk)}\big)\sbrack{\geq 3}\bigr\|_{ H^{s+1}}
    &\lesssim |\vk|^{-(2s+1)}\|q\|_{ H^s_\vk}^2\|q\|_{H^s},\label{ET1 Sob}
\end{align}
for any $q\in B_\delta$.  Moreover, for \(q\in \Cont([-1,1];\Bd)\cap X^\sigma_\kappa\),
\begin{gather}
|\vk|\bigl\|\tfrac{g_{12}(\vk)}{2 + \vr(\vk)}\bigr\|_{X^\sigma_\kappa} + \bigl\|\tfrac{g_{12}(\vk)}{2 + \vr(\vk)}\bigr\|_{X^{\sigma+1}_\kappa} \lesssim \|q\|_{X^{\sigma}_\kappa} + \|q\|_{L^\infty_t H^s},\label{ET LS}\\
|\vk|\bigl\|\big(\tfrac{g_{12}}{2 + \vr}\big)\sbrack{\geq 3}\bigr\|_{X^\sigma_\kappa}
		+ \bigl\|\big(\tfrac{g_{12}}{2 + \vr}\big)\sbrack{\geq 3}\bigr\|_{X^{\sigma+1}_\kappa} \qquad\qquad\qquad\qquad \label{ET1 LS}\\
\qquad\qquad\qquad\qquad\qquad\qquad\lesssim |\vk|^{-(2s+1)} \|q\|_{L^\infty_t H^s_\vk}^2\Big(\|q\|_{X^{\sigma}_\kappa} + \|q\|_{L^\infty_t H^s}\Big), \notag\\
|\vk|\bigl\|\big(\tfrac{g_{12}}{2 + \vr}\big)\sbrack{\geq 5}\bigr\|_{X^\sigma_\kappa}
		+ \bigl\|\big(\tfrac{g_{12}}{2 + \vr}\big)\sbrack{\geq 5}\bigr\|_{X^{\sigma+1}_\kappa} \qquad\qquad\qquad\qquad \label{ET2 LS}\\
\qquad\qquad\qquad\qquad\qquad\qquad\lesssim |\vk|^{-2(2s+1)} \|q\|_{L^\infty_t H^s_\vk}^4\Big(\|q\|_{X^{\sigma}_\kappa} + \|q\|_{L^\infty_t H^s}\Big), \notag
\end{gather}
where $g_{12}=g_{12}(\vk)$ and $\vr=\vr(\vk)$.
\end{cor}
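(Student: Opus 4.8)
The strategy is to reduce every assertion to the estimates on $g_{12}$, $g_{21}$, and $\vr$ already available from Propositions~\ref{prop:g} and~\ref{prop:rho} and Lemma~\ref{L:LSgg}, combined with the product and algebra estimates of Section~\ref{S:2}. Two observations organize the argument. First, the reciprocal $\tfrac1{2+\vr}$ is harmless: since $\|\vr\|_{L^\infty}\lesssim\delta^2$ and both $H^{s+1}_\kappa$ and $X^\sigma_\kappa\cap L^\infty_tH^{s+1}_\vk$ are algebras with $\kappa$-uniform constants (the latter by \eqref{E:XH algebra}), expanding $\tfrac1{2+\vr}=\tfrac12\sum_{n\ge0}(-\vr/2)^n$ shows that multiplication by $\tfrac1{2+\vr}$ is bounded on each space that occurs below, and moreover $\tfrac1{2+\vr}-\tfrac12$ carries the same smallness and $|\vk|$-decay as $\vr$ itself. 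Second, the decompositions \eqref{more sbrack}--\eqref{more sbrack''} --- or, equivalently, the identity $\bigl(\tfrac{g_{12}}{2+\vr}\bigr)'=2\vk\bigl(\tfrac{g_{12}}{2+\vr}\bigr)+\tfrac q2-2r\bigl(\tfrac{g_{12}}{2+\vr}\bigr)^2$ obtained by combining \eqref{g12-ID}, \eqref{rho-ID}, and \eqref{QuadraticID}, which recasts $\tfrac{g_{12}}{2+\vr}$ as $-\tfrac12(2\vk-\p)^{-1}q=\tfrac12g_{12}\sbrack1$ plus $(2\vk-\p)^{-1}$ applied to higher-order terms --- reduce matters to an explicit leading term and remainders built from $g_{12}$, $g_{21}$, $\vr$ and the smoothing operators $(2\vk\mp\p)^{-1}$.

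Since $|\vk|\ge1$, the Fourier multipliers $\tfrac{\vk}{2\vk-i\xi}$ and $\tfrac{(4+\xi^2)^{1/2}}{2\vk-i\xi}$ are bounded uniformly in $\vk$; hence $|\vk|\|\tfrac12g_{12}\sbrack1\|_{H^s}+\|\tfrac12g_{12}\sbrack1\|_{H^{s+1}}\lesssim\|q\|_{H^s}$, and likewise $|\vk|\|\tfrac12g_{12}\sbrack1\|_{X^\sigma_\kappa}+\|\tfrac12g_{12}\sbrack1\|_{X^{\sigma+1}_\kappa}\lesssim\|q\|_{X^\sigma_\kappa}$ by \eqref{X-Smoothing}. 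Consequently \eqref{ET Sob} follows from \eqref{ET1 Sob}, and \eqref{ET LS} follows from \eqref{ET1 LS}, once those remainder bounds are established. For \eqref{ET1 Sob} I would use \eqref{more sbrack'}: the summand $\tfrac12g_{12}\sbrack{\ge3}=-\tfrac12(2\vk-\p)^{-1}(q\vr)$ is controlled in $H^{s+1}$ directly by \eqref{g12-LO}, and in $H^s$ via $|\vk|\|(2\vk-\p)^{-1}(q\vr)\|_{H^s}\lesssim\|q\vr\|_{H^s}$ together with \eqref{multiplier bdd on Hs} and \eqref{rho-Hs}; the summand $\tfrac{g_{12}\vr}{2(2+\vr)}=\tfrac12g_{12}\cdot\tfrac\vr{2+\vr}$ is controlled by $\|g_{12}\|_{H^s}\lesssim|\vk|^{-1}\|q\|_{H^s}$ (read off from \eqref{g12-ID}), $\|\tfrac\vr{2+\vr}\|_{H^{s+1}_\vk}\lesssim|\vk|^{-(s+\frac12)}\|q\|_{H^s_\vk}^2$ (from \eqref{rho-Hs}), and \eqref{multiplier bdd on Hs} and \eqref{E:algebra}; one then checks that, after using $\|q\|_{H^s_\vk}\le\|q\|_{H^s}$ freely, the $|\vk|$-powers collapse to $|\vk|^{-(2s+1)}$. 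The estimate \eqref{ET1 LS} is proved identically, with \eqref{g12-X-1}, \eqref{rho-X-1}, \eqref{rho-Hs}, \eqref{g12-Hs} in place of their Sobolev analogues, with the paraproducts estimated by \eqref{X-Product-1} and \eqref{X-Product-2}, and with \eqref{X-Smoothing} used to absorb $(2\vk-\p)^{-1}$.

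For \eqref{ET2 LS} I would expand $\bigl(\tfrac{g_{12}}{2+\vr}\bigr)\sbrack{\ge5}$ by \eqref{more sbrack''} and treat the four summands analogously. The term $g_{12}\sbrack{\ge5}=-(2\vk-\p)^{-1}(q\,\vr\sbrack{\ge4})$ is handled via \eqref{X-Smoothing} and \eqref{X-Product-2}, fed with local-smoothing and Sobolev bounds for $\vr\sbrack{\ge4}$ obtained from the identity \eqref{E:vr gr 4} and Lemma~\ref{L:LSgg}; note that \eqref{E:vr gr 4} supplies $\|\vr\sbrack{\ge4}\|_{H^{s+1}_\vk}\lesssim|\vk|^{-3(s+\frac12)}\|q\|_{H^s_\vk}^4$ (and the corresponding local-smoothing bound), which is exactly enough. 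The paraproducts $g_{12}\sbrack1\vr\sbrack{\ge4}$, $g_{12}\sbrack{\ge3}\vr$, and $\tfrac{g_{12}\vr^2}{4(2+\vr)}$ are handled by the product estimates of Lemma~\ref{L:X} applied to \eqref{g12-X}, \eqref{g12-X-1}, \eqref{rho-X-1}, and (for the last) the algebra property \eqref{E:XH algebra} to dispose of $\tfrac1{2+\vr}$. Each product spends an extra factor $|\vk|^{-(s+\frac12)}$ relative to the $\sbrack{\ge3}$ case, which yields the claimed $|\vk|^{-2(2s+1)}$ and the weight $\|q\|_{L^\infty_tH^s_\vk}^4$.

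\textbf{Main obstacle.} There is no single hard inequality; the difficulty is entirely bookkeeping. One must make sure that every application of a product or algebra estimate spends its $|\vk|^{-(s+\frac12)}$ gain on the correct factor, so that after also using $\|q\|_{H^s_\vk}\le\|q\|_{H^s}$ and $\vk\ge1$ the weights collapse to exactly $|\vk|^{-(2s+1)}$ (resp.\ $|\vk|^{-2(2s+1)}$) with no stray positive power of $|\vk|$ surviving. In particular the decay of $\tfrac{g_{12}\vr}{2(2+\vr)}$ in $H^s$ genuinely requires extracting a factor $|\vk|^{-1}$ from $g_{12}$ itself --- not merely from $\vr$ --- which is why one works from the identity \eqref{g12-ID} rather than from \eqref{g12-Hs} alone; likewise, in \eqref{ET2 LS} the naked copy of $q$ sits only in $H^s$ (not $H^{s+1}$), so the product estimate \eqref{X-Product-2}, rather than \eqref{X-Product-1}, must be used on the terms containing it. The smallness of $\delta$ is used throughout, both to sum the Neumann series for $\tfrac1{2+\vr}$ and to absorb remainder contributions into the leading-order bounds.
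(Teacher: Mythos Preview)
Your proposal is correct and follows essentially the same route as the paper: reduce \eqref{ET Sob} and \eqref{ET LS} to the remainder bounds via the leading-term estimate, then handle \eqref{ET1 Sob}, \eqref{ET1 LS}, \eqref{ET2 LS} by the decompositions \eqref{more sbrack'}--\eqref{more sbrack''} combined with Propositions~\ref{prop:g}, \ref{prop:rho}, Lemma~\ref{L:LSgg}, and the product/algebra estimates of Lemma~\ref{L:X}. The only organizational difference is that for \eqref{ET1 Sob} the paper computes the compact identity $(2\vk-\p)\bigl(\tfrac{g_{12}}{2+\vr}\bigr)\sbrack{\geq 3}=-\tfrac{\vr}{2(2+\vr)}q+\tfrac{g_{12}}{(2+\vr)^2}\vr'$ and estimates from there, whereas you split via \eqref{more sbrack'} into $\tfrac12 g_{12}\sbrack{\geq 3}$ and $\tfrac{g_{12}\vr}{2(2+\vr)}$ separately; the underlying ingredients are identical.
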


\begin{proof}
From \eqref{more sbrack} and \eqref{g12 1 3}, we see that
$$
|\vk|\bigl\|\big(\tfrac{g_{12}}{2 + \vr}\big)\sbrack{1}\bigr\|_{H^s} + \bigl\| \big(\tfrac{g_{12}}{2 + \vr}\big)\sbrack{1}\bigr\|_{ H^{s+1}}
\approx \bigl\| (2\vk-\p) \big(\tfrac{g_{12}(\vk)}{2 + \vr(\vk)}\big)\sbrack{1}\bigr\|_{ H^s}
\approx \|q\|_{H^s}.
$$
Thus \eqref{ET Sob} will follow once we prove \eqref{ET1 Sob}.  Moreover, using also \eqref{g12-ID}, we find
$$
(2\vk-\p) \big(\tfrac{g_{12}}{2 + \vr}\big)\sbrack{\geq 3} = - \tfrac{\vr}{2(2+\vr)} q + \tfrac{g_{12}}{(2 + \vr)^2} \gamma'
$$
and thence
\begin{align*}
\text{LHS\eqref{ET1 Sob}} &\lesssim \bigl\| \tfrac{\vr}{2+\vr} q \bigr\|_{H^s}  + \bigl\| \tfrac{g_{12}}{(2 + \vr)^2} \gamma' \bigr\|_{H^{s}} \\
&\lesssim |\vk|^{-(s+\frac12)} \|q\|_{H^s} \bigl\| \tfrac{\vr}{2+\vr} \bigr\|_{ H^{s+1}_\vk}
		+ |\vk|^{-(2s+1)} \|q\|_{H^s}\|q\|_{H^s_\vk}  \bigl\| \tfrac{g_{12}}{(2 + \vr)^2} \bigr\|_{ H^{s+1}_\vk},
\end{align*}
where the second step was an application of \eqref{multiplier bdd on Hs} and \eqref{rho-Hs}.  To handle the remaining rational functions, we expand as series and employ the algebra property \eqref{E:algebra}, together with \eqref{g12-Hs} and \eqref{rho-Hs}.  This yields \eqref{ET Sob} for $\delta>0$ sufficiently small.

Next we prove \eqref{ET1 LS}, since \eqref{ET LS} follows from this, \eqref{more sbrack}, and \eqref{X-Smoothing}.

In order to prove \eqref{ET1 LS}, we first employ \eqref{more sbrack'}.  The requisite estimate for the first term was given already in Lemma~\ref{L:LSgg}.  The second summand can be treated by combining that lemma with the algebra property \eqref{E:XH algebra}.

It remains to prove \eqref{ET2 LS}. Recalling the expansion \eqref{more sbrack''} the last two terms are easily controlled using \eqref{g12-X-1}, \eqref{rho-X-1}, \eqref{ET LS}, and Lemma~\ref{L:X}. To control the first two terms we use \eqref{g12-ID} and \eqref{E:vr gr 4}.
\end{proof}

\section{Conservation laws and dynamics}\label{S:4}

At a formal level, the logarithmic perturbation determinant $\log\det(L_0^{-1}L)$ (multiplied by $\sgn(\kappa)$) is given by
\begin{equation*}
\sgn(\kappa)\sum_{\ell=1}^\infty\frac{(-1)^{\ell-1}}\ell \tr\left\{\left(\sqrt{R_0}\left(L - L_0\right)\sqrt{R_0}\right)^\ell\right\}.
\end{equation*}
For $\ell>1$ the trace is well defined because the operator is trace class.  For $\ell=1$ this fails; however, in view of \eqref{LambdaGammaJob} it is natural to regard the trace as being zero in this case.  In fact, \eqref{LambdaGammaJob} implies that only the even $\ell$ contribute to this sum.  

With this in mind, we adopt the following as our rigorous definition of $A$:
\begin{equation}\label{A-def'}
A(\kappa; q) := \sgn(\kappa) \sum_{m=1}^\infty \tfrac{(-1)^{m-1}}{m} \tr\left\{(\Lambda\Gamma)^m \right\}.
\end{equation}
We will prove the convergence of this series in Lemma~\ref{L:A} below, as well as deriving several other basic properties.

The quantity $A$ is readily seen to be closely related to the quantity $\alpha(\kappa;q)$ that formed the center point of the analysis in \cite{MR3820439}.  Concretely, for $\kappa\geq 1$,
\begin{align}\label{alpha from A}
\alpha(\kappa;q) = \pm \Re A(\kappa;q) = \pm \tfrac 12\bigl[A(\kappa;q) - A(-\kappa;q)\bigr];
\end{align}
see \eqref{A-Symmetries} below. In that paper, it was shown that $\alpha(q)$ is preserved under the NLS and mKdV flows.  In fact, the argument given there even shows that $A(\kappa;q)$ is conserved.  However, for our purposes here, we need several stronger assertions of a similar flavor.
 
First, we need that $A(\kappa;q)$ is conserved under all flows generated by the real and imaginary parts of $A(\vk;q)$ for general $\vk$.  This is proved in Lemma~\ref{lem:PoissonBrackets} below and will yield the conservation of $\alpha$ under our  regularized Hamiltonians.  This allows us to obtain a priori bounds for these regularized flows.

Second, we rely on our discovery of a microscopic expression of the conservation of $A$; this will be essential in our development of local smoothing estimates.  The relevant density $\rho$ is introduced in Lemma~\ref{L:A}; see \eqref{micro A}.  The corresponding currents (for various flows) are collected in Corollary~\ref{C:microscopic}, building on a number of intermediate results.

\begin{lem}[Properties of \(A\)]\label{L:A}
There exists \(\delta>0\) so that for all \(q\in \Bd\) and real \(|\kappa|\geq 1\), the series \eqref{A-def'} defining \(A\) converges absolutely. Moreover,
\begin{gather}
A(\kappa) = - \bar A(-\kappa),    \label{A-Symmetries} \\
\tfrac{\delta A}{\delta q} = g_{21},\qquad \tfrac{\delta A}{\delta r} = - g_{12}, \qquad \vr' = 2\left(q\tfrac{\delta A}{\delta q} - r\tfrac{\delta A}{\delta r} \right), \label{rho from A} \\
\tfrac{\partial A}{\partial \kappa} = \int \vr(x;\kappa)\,dx \qtq{and} A(\kappa) = - \int_\kappa^{\sgn(\kappa)\infty} \int\vr(x;\vk)\,dx\,d\vk, \label{rho-to-A} \\
A  = \int \rho(x;\kappa)\, dx \qtq{where} \rho(\kappa)=\frac{qg_{21}(\kappa)-rg_{12}(\kappa)}{2+\vr(\kappa)}.  \label{micro A}
\end{gather}
\end{lem}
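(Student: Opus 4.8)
The plan is to extract all five assertions from the trace series \eqref{A-def'} together with the resolvent series \eqref{Resolvent}. First, by \eqref{Lambda} we have $\|\Lambda\|_{\I_2}=\|\Gamma\|_{\I_2}\lesssim|\kappa|^{-(s+\frac12)}\|q\|_{H^s_\kappa}\lesssim\delta$ uniformly for $|\kappa|\ge1$, and hence (writing $(\Lambda\Gamma)^m=\Lambda(\Gamma\Lambda)^{m-1}\Gamma$ and using $\|AB\|_{\I_1}\le\|A\|_{\I_2}\|B\|_{\I_2}$ together with $\|\cdot\|_{\op}\le\|\cdot\|_{\I_2}$) $|\tr\{(\Lambda\Gamma)^m\}|\le\|\Lambda\|_{\I_2}^{2m}\lesssim(C\delta^2)^m$. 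Thus for $\delta$ small the series \eqref{A-def'} converges absolutely, uniformly on $\Bd$ and on compact subsets of $\{|\kappa|\ge1\}$; this makes $A$ real-analytic in $q$ and in $\kappa$, and shows $A(\kappa)\to0$ as $|\kappa|\to\infty$ (since $\|q\|_{H^s_\kappa}\to0$ when $s<0$, by dominated convergence). For the symmetry \eqref{A-Symmetries} I would use cyclicity of the trace to rewrite $\tr\{(\Lambda(\kappa)\Gamma(\kappa))^m\}=\tr\{[q(\kappa+\p)^{-1}r(\kappa-\p)^{-1}]^m\}$ and then combine the four facts $\overline{\tr T}=\tr\{T^*\}$, $((\kappa\mp\p)^{-1})^*=(\kappa\pm\p)^{-1}$, $(-\kappa\mp\p)^{-1}=-(\kappa\pm\p)^{-1}$ (an integer power, so the branch in \eqref{z to the sigma} is immaterial), and $r=\pm\bar q$ to obtain $\overline{\tr\{(\Lambda(-\kappa)\Gamma(-\kappa))^m\}}=\tr\{(\Lambda(\kappa)\Gamma(\kappa))^m\}$; since $\sgn(-\kappa)=-\sgn(\kappa)$, this is exactly \eqref{A-Symmetries}. (Equivalently one may recognise $A(\kappa)=\sgn(\kappa)\log\det_2(\sqrt{R_0}\,L\,\sqrt{R_0})$ and quote \eqref{L conjugation}, as in \eqref{alpha from A}.)

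For \eqref{rho from A} I would differentiate \eqref{A-def'} term by term in $q$, holding $r$ fixed (consistent with \eqref{FunctDeriv}); cyclicity makes the $m$ terms in $\tfrac{d}{d\theta}\tr\{(\Lambda\Gamma)^m\}$ equal, so
\[
\partial_qA\cdot f=\sgn(\kappa)\sum_{m\ge1}(-1)^{m-1}\int f(x)\,\big[(\kappa+\p)^{-\frac12}\Gamma(\Lambda\Gamma)^{m-1}(\kappa-\p)^{-\frac12}\big](x,x)\,dx,
\]
and \eqref{g21-Series terms} identifies the diagonal value in brackets as $\sgn(\kappa)(-1)^{m-1}g_{21}\sbrack{2m-1}(x;\kappa)$; summing gives $\partial_qA\cdot f=\langle\bar f,g_{21}\rangle$, i.e. $\tfrac{\delta A}{\delta q}=g_{21}$. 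The same computation with $r$ in place of $q$, read against \eqref{g12-Series terms}, yields $\tfrac{\delta A}{\delta r}=-g_{12}$; the third identity in \eqref{rho from A} is then just \eqref{rho-ID}. For \eqref{rho-to-A} I would differentiate \eqref{A-def'} in $\kappa$ term by term (licit by the locally uniform convergence above) and resum. With $M:=\sqrt{R_0}(L-L_0)\sqrt{R_0}$ one has $I+M=\sqrt{R_0}\,L\,\sqrt{R_0}$ and $\partial_\kappa M=-\tfrac12(R_0M+MR_0)$, and (since $M$ and $(I+M)^{-1}$ commute) the resummation collapses to $\partial_\kappa A=\sgn(\kappa)\tr\{R_0M^2(I+M)^{-1}\}$; using $R_0^{-1}=L_0$ and discarding off-diagonal (traceless) terms this equals $\sgn(\kappa)\tr\{R-R_0\}$ — the rigorous form of $\partial_\kappa\log\det(L_0^{-1}L)=\tr\{(L_0^{-1}L)^{-1}\partial_\kappa(L_0^{-1}L)\}$. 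Since $R-R_0\in\I_1$ has a continuous kernel (Proposition~\ref{P:R}) and the pointwise $2\times2$ trace of $G_0$ extends continuously across the diagonal with value $\sgn(\kappa)$ there, the definition of $\vr$ gives $\sgn(\kappa)\,\mathrm{tr}_{\C^2}(G-G_0)(x,x;\kappa)=\vr(x;\kappa)$, whence $\partial_\kappa A=\int\vr\,dx$; integrating this from $\kappa$ to $\sgn(\kappa)\infty$ and using $A(\kappa)\to0$ gives the second formula in \eqref{rho-to-A}.

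The identity \eqref{micro A} — the new microscopic conservation law — is the main obstacle. I would prove $\int\rho\,dx=A$ by matching power series in $(q,r)$. Moving the outer resolvent factor of \eqref{g21-Series terms} (resp. \eqref{g12-Series terms}) around the trace to build one further copy of $\Lambda\Gamma$ gives
\[
\int q\,g_{21}\sbrack{2m+1}\,dx=\sgn(\kappa)(-1)^m\tr\{(\Lambda\Gamma)^{m+1}\},\qquad \int r\,g_{12}\sbrack{2m+1}\,dx=\sgn(\kappa)(-1)^{m-1}\tr\{(\Lambda\Gamma)^{m+1}\};
\]
in particular the degree-$2n$ part of $qg_{21}-rg_{12}$ integrates to $2n$ times the degree-$2n$ term of \eqref{A-def'}. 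Feeding this into the defining relation $\rho(2+\vr)=qg_{21}-rg_{12}$ and using the recursions for $g_{12}\sbrack{\ell}$, $g_{21}\sbrack{\ell}$, $\vr\sbrack{\ell}$ encoded in \eqref{g12-ID}, \eqref{g21-ID}, \eqref{QuadraticID}, one verifies by induction on $n$ that the degree-$2n$ term of $\int\rho\,dx$ equals $\sgn(\kappa)\tfrac{(-1)^{n-1}}{n}\tr\{(\Lambda\Gamma)^n\}$; summing over $n$ yields \eqref{micro A}. (Alternatively, differentiating the numerator and denominator of $\rho$ via the resolvent variation formula from the proof of Proposition~\ref{prop:g}, one can check directly that $\int\rho\,dx$ has functional gradient $(g_{21},-g_{12})$ and vanishes at $q=0$, hence coincides with $A$ since $\Bd$ is convex.) Throughout, it is cleanest to verify every identity first for $q\in\Schwartz$ — where all the operators appearing are trace class with smooth, rapidly decaying kernels and the cyclic manipulations are unproblematic — and then extend to all $q\in\Bd$ using the continuity established in Proposition~\ref{P:R} together with the locally uniform convergence of the series \eqref{A-def'} and \eqref{g12-gamma-Series}.
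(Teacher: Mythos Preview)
Your treatment of convergence, the symmetry \eqref{A-Symmetries}, the functional derivatives \eqref{rho from A}, and the formula \eqref{rho-to-A} is essentially correct and close to the paper's (the paper deduces \eqref{A-Symmetries} from \eqref{rho-to-A} and \eqref{grho-symmetries} rather than directly from the series, and proves $\partial_\kappa A=\int\vr\,dx$ by term-wise differentiation and comparison with \eqref{rho-Series terms}; your resolvent-trace computation is a legitimate variant).

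The gap is in \eqref{micro A}. Your observation that $\int(qg_{21}-rg_{12})^{[2n]}\,dx = 2n\,A_{2n}$ is correct and is the Euler-operator identity for a degree-$2n$ homogeneous functional. But the step ``feeding this into $\rho(2+\vr)=qg_{21}-rg_{12}$ \ldots\ one verifies by induction on $n$'' does not close: expanding degree by degree gives
\[
2\!\int\rho^{[2n]}\,dx = 2nA_{2n} - \sum_{k=1}^{n-1}\int\rho^{[2k]}\vr^{[2(n-k)]}\,dx,
\]
and the cross terms on the right are integrals of \emph{products}, not products of integrals; they are not determined by the inductive hypothesis $\int\rho^{[2k]}\,dx=A_{2k}$ for $k<n$ together with $\int\vr^{[2l]}\,dx$. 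To make the induction work you would need the separate identity $\sum_{k=1}^{n-1}\int\rho^{[2k]}\vr^{[2(n-k)]}\,dx=2(n-1)A_{2n}$, and no mechanism for this is indicated. Your parenthetical alternative --- showing $\int\rho\,dx$ has functional gradient $(g_{21},-g_{12})$ --- is valid in principle but requires exhibiting nontrivial cancellations among the variations of $g_{12}$, $g_{21}$, and $\vr$, which you have not done.

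The paper's argument for \eqref{micro A} avoids all of this by differentiating in $\kappa$ rather than in $q$ or in homogeneity degree. Differentiating the ODEs \eqref{g12-ID}, \eqref{g21-ID}, \eqref{QuadraticID} with respect to $\kappa$ and combining with the originals (and \eqref{rho-ID}) produces the exact $x$-derivative identity
\[
\partial_x\,\frac{g_{12}\,\partial_\kappa g_{21}-(\partial_\kappa g_{12})\,g_{21}}{2+\vr}= -\vr + \partial_\kappa\rho.
\]
Integrating in $x$ gives $\partial_\kappa\!\int\rho\,dx=\int\vr\,dx=\partial_\kappa A$, and since both sides vanish as $|\kappa|\to\infty$ one concludes $\int\rho\,dx=A$. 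This bypasses the power-series combinatorics entirely; the price is discovering the right Wronskian-type bilinear form $g_{12}\,\partial_\kappa g_{21}-(\partial_\kappa g_{12})\,g_{21}$, which is natural from the spectral-parameter viewpoint.
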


\begin{proof}
First, we observe that the series \eqref{A-def'} converges absolutely and uniformly for $|\kappa|\geq 1$ and $q\in \Bd$, provided $0<\delta\ll1$. This follows from the estimate \eqref{Lambda}. In the same way, convergence holds for the term-wise derivative of the series \eqref{A-def'} with respect to $\kappa$. The terms appearing are exactly those from \eqref{rho-Series terms} and \eqref{g12-gamma-Series}, and so we may deduce that
$$
\frac{\partial A}{\partial \kappa} = \int \vr(x;\kappa)\,dx.
$$
This proves the first assertion in \eqref{rho-to-A} as well as justifying \ref{E:res trace}. The second assertion of \eqref{rho-to-A} then follows since \eqref{Lambda} guarantees that $A(\kappa)\to0$ uniformly on $\Bd$ as $|\kappa|\to\infty$.

The conjugation symmetry \eqref{A-Symmetries} follows immediately from \eqref{rho-to-A} and \eqref{grho-symmetries}.

Differentiating the series \eqref{A-def'} with respect to $r$ yields the series \eqref{g12-gamma-Series} for $g_{12}$ with an additional minus sign, thus giving the second assertion in \eqref{rho from A}.  The first assertion follows in a parallel manner, or by invoking conjugation symmetry.  The third part of \eqref{rho from A} follows from the first two parts via \eqref{rho-ID}.

We now turn our attention to \eqref{micro A}.  First we must clarify what we mean by $\int \rho$. When $q\in\Schwartz$, then $\rho$ also belongs to Schwartz class (for $\delta$ small enough) and so the integral can be taken in the classical sense.  For $q\in H^s$ however, we interpret this integral via the duality between $H^s$ and $H^{-s}$, noting that
$$
q,r \in H^{s}(\R) \qtq{and} \tfrac{g_{21}(\kappa)}{2+\vr(\kappa)} , \tfrac{g_{12}(\kappa)}{2+\vr(\kappa)} \in H^{1+s}(\R) \hookrightarrow H^{-s}(\R);
$$
see Corollary~\ref{C:ET}.  By density and continuity, it suffices to verify \eqref{micro A} for $q\in\Schwartz$.

 Differentiating \eqref{g12-ID}, \eqref{g21-ID}, and \eqref{QuadraticID} with respect to $\kappa$ and then combining these with the original versions shows
\begin{align*}
\partial_x\Bigl(g_{12}\tfrac{\partial g_{21}}{\partial\kappa} - \tfrac{\partial g_{12}}{\partial\kappa} g_{21}\Bigr) & =-\vr(2+\vr) + (1+\vr)\tfrac{\partial\ }{\partial\kappa}\bigl(qg_{21}-rg_{12}\bigr) - \bigl(qg_{21}-rg_{12}\bigr) \tfrac{\partial\vr}{\partial\kappa}.
\end{align*}
Using also \eqref{rho-ID} we obtain
\begin{align*}
- \bigl( g_{12}\tfrac{\partial g_{21}}{\partial\kappa} - \tfrac{\partial g_{12}}{\partial\kappa} g_{21}\bigr) \vr'  =  -\vr(2+\vr)\tfrac{\partial\ }{\partial\kappa}\bigl(qg_{21}-rg_{12}\bigr)+ \bigl(qg_{21}-rg_{12}\bigr)(1+\vr)\tfrac{\partial\vr}{\partial\kappa}.
\end{align*}
These identities then combine to show
\begin{align*}
\partial_x\frac{g_{12}\tfrac{\partial g_{21}}{\partial\kappa} - \tfrac{\partial g_{12}}{\partial\kappa} g_{21}}{2+\vr} &= - \vr + \frac{\partial\ }{\partial\kappa}\frac{qg_{21}-rg_{12}}{2+\vr} ,
\end{align*}
which can then be integrated in $x$ to yield
\begin{align*}
\frac{\partial\ }{\partial\kappa} \int \frac{qg_{21}-rg_{12}}{2+\vr}\,dx = \int \vr\,dx = \frac{\partial A}{\partial\kappa} .
\end{align*}
The veracity of \eqref{micro A} then follows by observing that both sides of \eqref{micro A} vanish in the limit $|\kappa|\to\infty$.
\end{proof}

Next, we show that our basic Hamiltonians arise as coefficients in the asymptotic expansion of $A(\kappa)$ as $\kappa \to\infty$.  This will also be important for introducing our renormalized flows later on.

\begin{lem}\label{L:A asymptotics} For $q\in B_\delta\cap \Schwartz$,
\begin{equation}\label{A asymptotics}
A(\kappa)=\tfrac1{2\kappa} M + \tfrac{-i}{(2\kappa)^2} P + \tfrac{(-i)^2}{(2\kappa)^3} H_{\NLS} + \tfrac{(-i)^3}{(2\kappa)^4} H_{\mKdV} + O(\kappa^{-5})
\end{equation}
as an asymptotic series on Schwartz class.
\end{lem}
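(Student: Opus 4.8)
The plan is to compute $A(\kappa)$ for large $\kappa>0$ directly from the microscopic representation $A = \int \rho(x;\kappa)\,dx$ with $\rho = (qg_{21}-rg_{12})/(2+\vr)$ from \eqref{micro A}, by inserting the large-$\kappa$ expansions of $g_{12}(\kappa)$, $g_{21}(\kappa)$, and $\vr(\kappa)$ and matching the coefficients of $(2\kappa)^{-1},\dots,(2\kappa)^{-4}$ against the four Hamiltonians. The case $\kappa<0$ then follows from the conjugation symmetry \eqref{A-Symmetries}, $A(\kappa)=-\bar A(-\kappa)$, together with the fact that $M$, $P$, $H_{\NLS}$, $H_{\mKdV}$ are real-valued, so the claimed right-hand side obeys the same relation.

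The expansions are produced as follows. On Schwartz class the Neumann series gives the exact identity
\[
(2\kappa\mp\p)^{-1}f = \sum_{k=0}^{N}\frac{(\pm1)^k}{(2\kappa)^{k+1}}f^{(k)} + \frac{(\pm1)^{N+1}}{(2\kappa)^{N+1}}(2\kappa\mp\p)^{-1}f^{(N+1)},
\]
whose remainder has $L^1_x$-norm $\le (2\kappa)^{-N-2}\|f^{(N+1)}\|_{L^1}$ (Schur's test on the explicit kernel $e^{-2\kappa|x-y|}\mathbbm 1$). Feeding this into the bootstrap relations $g_{12} = -(2\kappa-\p)^{-1}(q+\vr q)$, $g_{21} = (2\kappa+\p)^{-1}(r+\vr r)$, $\vr = 2g_{12}g_{21}-\tfrac12\vr^2$, and using the a priori bounds of Propositions~\ref{prop:g} and~\ref{prop:rho} (in particular $g_{12},g_{21}=O(\kappa^{-1})$, $\vr=O(\kappa^{-2})$, $g_{12}\sbrack{\geq3},g_{21}\sbrack{\geq3}=O(\kappa^{-3})$, $\vr\sbrack{\geq4}=O(\kappa^{-4})$, so that the subleading tails do not disturb the first few orders), one obtains, with all errors in $L^1_x$ and constants depending on Schwartz seminorms,
\[
g_{12} = -\tfrac{q}{2\kappa} - \tfrac{q'}{(2\kappa)^2} + \tfrac{2q^2r-q''}{(2\kappa)^3} + \tfrac{6qq'r-q'''}{(2\kappa)^4} + O(\kappa^{-5}),
\]
\[
g_{21} = \tfrac{r}{2\kappa} - \tfrac{r'}{(2\kappa)^2} + \tfrac{r''-2qr^2}{(2\kappa)^3} + \tfrac{6qrr'-r'''}{(2\kappa)^4} + O(\kappa^{-5}), \qquad \vr = -\tfrac{2qr}{(2\kappa)^2} - \tfrac{2(q'r-qr')}{(2\kappa)^3} + O(\kappa^{-4}).
\]

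Next one forms $\rho = (qg_{21}-rg_{12})(2+\vr)^{-1}$, with $(2+\vr)^{-1} = \tfrac12 + \tfrac{qr}{2(2\kappa)^2} + \tfrac{q'r-qr'}{2(2\kappa)^3} + O(\kappa^{-4})$, collects powers of $(2\kappa)^{-1}$, and integrates in $x$, freely integrating by parts. The $(2\kappa)^{-1}$-coefficient is $\int qr = M$; the $(2\kappa)^{-2}$-coefficient collapses to $-\int qr' = -iP$; the $(2\kappa)^{-3}$-coefficient collapses (via $\int(qr''+q''r)=-2\int q'r'$) to $-\int(q'r'+q^2r^2) = -H_{\NLS}$; and the $(2\kappa)^{-4}$-coefficient collapses (via $\int(q'''r-qr''')=2\int q'r''$ and $\int qq'r^2=-\int q^2rr'$) to $\int(q'r''+3q^2rr') = iH_{\mKdV}$. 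Since $(-i)^2=-1$ and $(-i)^3=i$, and the tail of the $1/\kappa$-expansion is absorbed into $O(\kappa^{-5})$, this is exactly \eqref{A asymptotics}.

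The main obstacle is the $(2\kappa)^{-4}$-coefficient: it forces one to carry $g_{12},g_{21}$ one order further than seems necessary, the cubic term $3q^2rr'$ in $H_{\mKdV}$ appears only through the interplay of the $\vr q$ correction inside $g_{12}$, the $g_{12}\sbrack{\geq3}$ tail, and the $\vr$-correction in $(2+\vr)^{-1}$, and several integrations by parts are needed before the answer is recognizable. (An equivalent route is to expand the trace series \eqref{A-def'} directly: the $m\geq3$ terms are immediately $O(\kappa^{-5})$ from $\|\Lambda\|_{\I_2},\|\Gamma\|_{\I_2}=O(\kappa^{-1/2})$ and $\|\Lambda\|_{\op},\|\Gamma\|_{\op}=O(\kappa^{-1})$ on Schwartz class; $\tr\{\Lambda\Gamma\}=\int\hat q(\mu)\hat r(-\mu)(2\kappa-i\mu)^{-1}\,d\mu$ is handled by geometric series; and only $\tr\{(\Lambda\Gamma)^2\}$, needed to its two leading orders, requires genuine work, supplying the $\int q^2r^2$ and $\int q^2rr'$ contributions.)
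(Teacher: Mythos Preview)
Your proof is correct and complete. The approach differs from the paper's only in the final integration step: both compute the same asymptotic expansions of $g_{12}$, $g_{21}$, and $\vr$ by iterating the identities $g_{12}=-(2\kappa-\p)^{-1}(q+\vr q)$, $g_{21}=(2\kappa+\p)^{-1}(r+\vr r)$, and the quadratic relation for $\vr$, but the paper then recovers $A$ via the functional-derivative identities $\tfrac{\delta A}{\delta q}=g_{21}$, $\tfrac{\delta A}{\delta r}=-g_{12}$ and the homotopy formula $A(q)=\int_0^1 \langle \bar q,\tfrac{\delta A}{\delta q}(\theta q)\rangle + \langle \bar r,\tfrac{\delta A}{\delta r}(\theta q)\rangle\,d\theta$, whereas you use the already-proved identity $A=\int\rho\,dx$ and expand $\rho$ directly. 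Your route is arguably the more natural one once \eqref{micro A} is in hand, since it avoids the homotopy device; the paper's route has the minor advantage that the $\theta$-integration automatically supplies the correct combinatorial weights (e.g.\ the factor $\tfrac12$ on the quadratic part and $\tfrac14$ on the quartic part), so one need not also track the $(2+\vr)^{-1}$ expansion. The trace-series alternative you sketch at the end is a third legitimate path.
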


\begin{proof}
While the first few terms can readily be discovered by brute force, we follow a systematic method based on the biHamiltonian relations
\eq{biHam}{
\begin{aligned}
-2\kappa \tfrac{\delta A}{\delta q} &= \partial\tfrac{\delta A}{\delta q} - r\bigl[\vr+1\bigr] = \partial\tfrac{\delta A}{\delta q} + 2r\partial^{-1}\bigl(r\tfrac{\delta A}{\delta r} - q\tfrac{\delta A}{\delta q}\bigr) - r,\\
2\kappa  \tfrac{\delta A}{\delta r} &= \partial\tfrac{\delta A}{\delta r} + q\bigl[\vr+1\bigr] = \partial\tfrac{\delta A}{\delta r} - 2q\partial^{-1}\bigr(r\tfrac{\delta A}{\delta r} - q\tfrac{\delta A}{\delta q}\bigr) + q,
\end{aligned}
}
which, in view of \eqref{rho from A}, are merely a recapitulation of \eqref{g12-ID} and \eqref{g21-ID}.

By iterating \eqref{biHam}, we find
\eq{biHam2}{
\begin{aligned}
\tfrac{\delta A}{\delta r} &= - g_{12} = \tfrac{q}{2\kappa} + \tfrac{q'}{(2\kappa)^2} + \tfrac{q''-2q^2r}{(2\kappa)^3} + \tfrac{q'''-6qq'r}{(2\kappa)^4} + O(\kappa^{-5}), \\
\tfrac{\delta A}{\delta q} &=  g_{21} = \tfrac{r}{2\kappa} - \tfrac{r'}{(2\kappa)^2} + \tfrac{r''-2qr^2}{(2\kappa)^3} - \tfrac{r'''-6qrr'}{(2\kappa)^4} + O(\kappa^{-5}),
\end{aligned}
}
which can then be integrated to recover the series for $A$; indeed,
$$
A(q) = \int_0^1 \partial_\theta A(\theta q)\,d\theta
	= \int_0^1 \langle \bar q, \tfrac{\delta A}{\delta q}(\theta q)\rangle + \langle \bar r, \tfrac{\delta A}{\delta r}(\theta q)\rangle \,d\theta .
$$

In following this algorithm, we have found it convenient to successively update the asymptotic expansion of $\vr$ using \eqref{QuadraticID}, rather than compute $\partial^{-1}(r\tfrac{\delta A}{\delta r} - q\tfrac{\delta A}{\delta q})$ by laboriously finding complete derivatives.  We record here the key result:
\begin{align*}
\tfrac12\vr   = -\tfrac{qr}{(2\kappa)^2} - \tfrac{q'r-qr'}{(2\kappa)^3} & -  \tfrac{q''r - q'r' + qr''-3q^2r^2}{(2\kappa)^4} \\
	&-\tfrac{q'''r - q''r' + q'r'' - qr''' - 6qq'r^2 + 6q^2rr'}{(2\kappa)^5} + O(\kappa^{-6}).
\end{align*}
This technique is easily automated on a computer algebra system, which we have done as a check on our hand computations.
\end{proof}

Although the mechanical interpretation of the Poisson bracket \eqref{PoissonBracket} originates in real-valued observables $F$ and $G$, the definition makes sense for complex-valued functions as well.  In view of the conjugation symmetry \eqref{A-Symmetries}, the following guarantees the commutation of both the real and imaginary parts of $A$:

\begin{lem}[Poisson brackets]\label{lem:PoissonBrackets}
There exists \(\delta>0\) so that for all real \(|\kappa|,|\vk|\geq 1\) and \(q\in \BdS\) we have
\begin{equation}\label{APoissonBracket}
\{A(\kappa),A(\vk)\} = 0.
\end{equation}
\end{lem}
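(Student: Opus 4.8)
The plan is to establish $\{A(\kappa),A(\vk)\}=0$ directly from the definition \eqref{PoissonBracket} of the Poisson bracket together with the formulas for the functional derivatives of $A$ recorded in \eqref{rho from A}, namely $\tfrac{\delta A(\kappa)}{\delta q} = g_{21}(\kappa)$ and $\tfrac{\delta A(\kappa)}{\delta r} = -g_{12}(\kappa)$. Substituting these into \eqref{PoissonBracket} gives
\[
\{A(\kappa),A(\vk)\} = \tfrac1i\int g_{21}(\kappa)\,g_{12}(\vk) - g_{12}(\kappa)\,g_{21}(\vk)\,dx,
\]
so the task reduces to showing that this antisymmetric bilinear expression in the diagonal Green's functions integrates to zero. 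For this I would invoke the identity \eqref{micro As commute}, which asserts precisely that
\[
2(\kappa-\vk)\bigl[g_{12}(\kappa)g_{21}(\vk) - g_{21}(\kappa)g_{12}(\vk)\bigr]
= \partial_x\bigl\{g_{12}(\kappa)g_{21}(\vk) + g_{21}(\kappa)g_{12}(\vk) - \tfrac{[\vr(\kappa)+1][\vr(\vk)+1]}{2}\bigr\}.
\]
Since we are working with $q\in\BdS$, all of $g_{12}$, $g_{21}$, $\vr$ (at both spectral parameters) are Schwartz by Propositions~\ref{prop:g} and~\ref{prop:rho}, so the right-hand side is an exact $x$-derivative of a Schwartz function and therefore integrates to zero over $\R$. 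For $\kappa\neq\vk$ this immediately yields $\int g_{21}(\kappa)g_{12}(\vk) - g_{12}(\kappa)g_{21}(\vk)\,dx = 0$, hence \eqref{APoissonBracket}; the case $\kappa=\vk$ is trivial since the integrand is then identically zero by antisymmetry, but can also be recovered by continuity in the spectral parameters.

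The only genuine subtlety — and the step I expect to require the most care — is the justification that the functional-derivative formulas \eqref{rho from A} may legitimately be inserted into \eqref{PoissonBracket} and that the term-by-term manipulations are valid. Since \eqref{rho from A} was derived by differentiating the absolutely convergent series \eqref{A-def'} for $A$, and on Schwartz class $\tfrac{\delta A}{\delta q} = g_{21}$, $\tfrac{\delta A}{\delta r} = -g_{12}$ are themselves Schwartz with the convergence uniform on $\Bd$, the substitution is justified directly. One should also double-check the signs: the Poisson structure \eqref{PoissonBracket} uses $r=\pm\bar q$ and carries the compensating sign change in the focusing case, while \eqref{micro As commute} is stated with the $\sgn(\kappa)$ factors already absorbed into the definitions of $g_{12},g_{21},\vr$; tracking these conventions carefully confirms that the boundary-term argument produces exactly zero rather than a nonzero constant multiple.

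In summary, the proof is: (i) compute $\{A(\kappa),A(\vk)\}$ using \eqref{rho from A}, reducing it to $\tfrac1i\int[g_{21}(\kappa)g_{12}(\vk)-g_{12}(\kappa)g_{21}(\vk)]\,dx$; (ii) apply \eqref{micro As commute} to express the integrand (times $2(\kappa-\vk)$) as a total $x$-derivative of a Schwartz function; (iii) conclude the integral vanishes, and divide by $2(\kappa-\vk)$ when $\kappa\neq\vk$, the case $\kappa=\vk$ being immediate. No new estimates are needed beyond the Schwartz-class regularity already supplied by Propositions~\ref{prop:g} and~\ref{prop:rho} and the algebraic identity \eqref{micro As commute}.
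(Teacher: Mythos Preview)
Your proposal is correct and follows essentially the same route as the paper's proof: compute the bracket via \eqref{rho from A}, then invoke \eqref{micro As commute} to exhibit the integrand as a total derivative. Note a harmless sign slip in your displayed formula (the correct expression is $\tfrac1i\int g_{12}(\kappa)g_{21}(\vk) - g_{21}(\kappa)g_{12}(\vk)\,dx$), but this does not affect the argument since the integral vanishes either way.
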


\begin{proof}
If \(\kappa = \vk\) there is nothing to prove. Suppose now that \(\kappa\neq\vk\).  From \eqref{rho from A} and then \eqref{micro As commute} we deduce that
\begin{align*}
\{A(\kappa),A(\vk)\} &= \tfrac1{i} \int g_{12}(\kappa)g_{21}(\vk) - g_{21}(\kappa)g_{12}(\vk) =0.\qedhere
\end{align*}
\end{proof}

As shown already in \cite{MR3820439}, the conservation of $A(\kappa)$ leads to global in time control on the $H^s$ norm.  Rather than simply recapitulate that argument, which was based on the series \eqref{A-def'}, we will present a proof that brings the density $\rho$ to center stage.  This approach will be essential later, when we introduce localizations; see Lemmas~\ref{lem:rhA} and~\ref{L:ReRho}.

\begin{prop}[A priori bound]\label{prop:alpha}
There exists \(\delta>0\) so that for all \(q\in \Bd\) and \(\kappa\geq 1\) we have
\begin{equation}\label{AlphaToNorm}
\int_\kappa^\infty \vk^{2s+1}\alpha(\vk)\,\frac{d\vk}\vk \approx_s \|q\|_{H^s_\kappa}^2.
\end{equation}

Choosing \(\delta>0\) even smaller if necessary, we deduce the a priori estimate
\begin{equation}\label{APBound}
\|q(t)\|_{H^s_\kappa}\lesssim \|q(0)\|_{H^s_\kappa} \qtq{uniformly for} q(0)\in\BdS
\end{equation}
for any Hamiltonian flow that is continuous on Schwartz class and preserves $A(\vk)$ for all $|\vk|\geq 1$.
\end{prop}

\begin{proof}
We first decompose \(\rho(\vk)= \rho\sbrack{2} (\vk)+ \rho\sbrack{\geq 4}(\vk)\) with
\begin{align}\label{rho quadratic}
\rho\sbrack{2} (\vk)&= \tfrac12\big(q\cdot\tfrac r{2\vk + \p} + \tfrac q{2\vk - \p}\cdot r\big),\\
\label{rho higher}
\rho\sbrack{\geq 4} (\vk) &= q\cdot \big(\tfrac{g_{21}(\vk)}{2+\vr(\vk)}\big)\sbrack{\geq 3} - r\cdot (\tfrac{g_{12}(\vk)}{2 + \vr(\vk)}\big)\sbrack{\geq 3}.
\end{align}

Inspired by \eqref{alpha from A}, we compute
\begin{align}\label{R151}
\pm \int \tfrac12\bigl[\rho\sbrack{2}(x;\vk)-\rho\sbrack{2}(x;-\vk)\bigr]\,dx = 2\vk \int\frac{|\hat q(\xi)|^2\,d\xi}{4\vk^2+\xi^2}
\end{align}
and so, invoking \eqref{EquivNorm}, deduce that
\begin{align}\label{R152}
\pm \int_\kappa^\infty \! \int \tfrac12\bigl[\rho\sbrack{2}(x;\vk)-\rho\sbrack{2}(x;-\vk)\bigr] \vk^{2s}\,dx\,d\vk \approx \|q\|_{H^s_\kappa}^2.
\end{align}

On the other hand, interpolating the bounds in \eqref{ET1 Sob}, we find
\begin{align}\label{rat in -s}
\bigl\| \big(\tfrac{g_{21}(\vk)}{2+\vr(\vk)}\big)\sbrack{\geq 3} \bigr\|_{H^{-s}} \lesssim  |\vk|^{-2(1+2s)} \delta \|q\|_{H^s_\vk}^2
\end{align}
and consequently,
\begin{align}\label{R153}
\int_\kappa^\infty \biggl| \int \tfrac12\bigl[\rho\sbrack{\geq 4}(x;\vk)-\rho\sbrack{\geq 4}(x;-\vk)\bigr]\vk^{2s}\,dx \biggr| d\vk \lesssim \kappa^{-(1+2s)} \delta^2 \|q\|_{H^s_\kappa}^2.
\end{align}
Thus \eqref{AlphaToNorm} follows by choosing $\delta>0$ sufficiently small.

To deduce \eqref{APBound}, we exploit continuity in time.
\end{proof}

Proposition~\ref{prop:alpha} is the key to proving equicontinuity of orbits. The proper extension of the notion of equicontinuity from the setting of the Arzel\` a--Ascoli Theorem to Sobolev spaces was discussed already by M.~Riesz~\cite{MRiesz}.

\begin{defn}[Equicontinuity]\label{d:equicontinuity}
A set \(Q\subset H^s\) is said to be \emph{equicontinuous} if
\[
\limsup_{\delta\to0}\sup_{q\in Q}\sup_{|y|<\delta}\|q(\cdot +y) - q(\cdot)\|_{H^s} = 0.
\]
\end{defn}

Beyond boundedness and equicontinuity, the other key ingredient needed for compactness is tightness; see Definition~\ref{d:tight}.

\begin{prop}[Equicontinuity of orbits]\label{prop:Equicontinuity} Suppose that \(Q\subset \BdS\) is equicontinuous in \(H^s\). Let \(H_1,H_2\) be Hamiltonians with flows that are continuous on Schwartz class and preserve \(A(\vk)\) for all \(|\vk|\geq 1\). Then the set
\[
Q^* = \left\{e^{J\nabla\left(tH_1 + \tau H_2\right)}q:q\in Q,\,t,\tau\in \R,\,\kappa\geq 1\right\}
\]
is equicontinuous in \(H^s\).
\end{prop}

\begin{proof}
By Plancherel (cf.~\cite[\S4]{MR3990604}), it is easy to show that a bounded set \(Q\subset H^s\) is equicontinuous if and only if
\[
\lim\limits_{\kappa\rightarrow+\infty}\sup\limits_{q\in Q}\|q\|_{H^s_\kappa} = 0.
\]
The result then follows directly from the estimate \eqref{APBound}.
\end{proof}

Next we address the question of how $\gamma$, $g_{12}$, and $g_{21}$ evolve when taking $A(\kappa)$ as the Hamiltonian. As a complex-valued function, \(A(\kappa)\) cannot be a true Hamiltonian. Nevertheless, there is a natural vector field associated to it by Hamilton's equations. We caution the reader that this vector field does not respect the relation \(r = \pm \bar q\). Ultimately, we would like to restrict to the real and imaginary parts of $A(\kappa)$; however, it is convenient to temporarily retain this illusory complex Hamiltonian and recover the real and imaginary parts later using \eqref{A-Symmetries}.  This context is important for our next two results: the evolution equations we derive for the $A(\kappa)$ vector field really represent a complex linear combination of the vector fields associated to the real and imaginary parts (taken separately).

\begin{prop}[Lax representation]\label{P:Lax A} For distinct $\kappa,\vk\in\R\setminus(-1,1)$,
\begin{align*}
- 2(\kappa-\vk)\begin{bmatrix}0 & \tfrac{\delta A(\kappa)}{\delta r} \\ \tfrac{\delta A(\kappa)}{\delta q} & 0 \end{bmatrix}
&= L(\vk) \begin{bmatrix}0 & \tfrac{\delta A(\kappa)}{\delta r} \\ \tfrac{\delta A(\kappa)}{\delta q}  & 0 \end{bmatrix}
        + \begin{bmatrix}0 & \tfrac{\delta A(\kappa)}{\delta r} \\ \tfrac{\delta A(\kappa)}{\delta q}  & 0 \end{bmatrix} L(\vk) \\
&\quad + \tfrac12 \left[ L(\vk),\ \begin{bmatrix}\vr(\kappa)+1 & 0 \\ 0 & -\vr(\kappa)-1 \end{bmatrix} \right].
\end{align*}
Equivalently, under the $A(\kappa)$ vector field, $U :=[\begin{smallmatrix} 1&0\\0&-1 \end{smallmatrix}] L(\vk)$ obeys
\begin{align}\label{Lax repres}
\frac{d\ }{dt} U = [ P, U] \qtq{with} P = \tfrac{1}{2i(\kappa-\vk)}\begin{bmatrix}\frac12(\vr(\kappa) + 1) & - g_{12}(\kappa)\\g_{21}(\kappa)&-\frac12(\vr(\kappa) + 1)\end{bmatrix}.
\end{align}
\end{prop}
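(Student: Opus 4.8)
The plan is to establish the displayed operator identity by a direct entrywise computation using only \eqref{rho-ID}, \eqref{g12-ID} and \eqref{g21-ID}, and then to deduce the Lax form \eqref{Lax repres} from it by elementary matrix manipulations. Throughout, implicitly $q\in\BdS$ (with $\delta$ shrunk if necessary), which is all that is needed for those three identities and for the $A(\kappa)$ flow to make sense. For the first step, abbreviate $g_{12}=g_{12}(\kappa)$, $g_{21}=g_{21}(\kappa)$, $\vr=\vr(\kappa)$ and set
\[
B := \begin{bmatrix}0 & \tfrac{\delta A(\kappa)}{\delta r}\\ \tfrac{\delta A(\kappa)}{\delta q} & 0\end{bmatrix} = \begin{bmatrix}0 & -g_{12}\\ g_{21} & 0\end{bmatrix},\qquad D := \begin{bmatrix}\vr+1 & 0\\ 0 & -\vr-1\end{bmatrix},
\]
where the second form of $B$ uses \eqref{rho from A}. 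Viewing $L(\vk)$, $B$, $D$ as $2\times2$ matrices whose entries are (at most first-order) differential operators in $x$, one computes $L(\vk)B$, $BL(\vk)$ and $[L(\vk),D]$ entry by entry.

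One finds that $L(\vk)B+BL(\vk)$ has diagonal entries equal to the multiplication operators $qg_{21}+rg_{12}=\tfrac12\vr'$ (by \eqref{rho-ID}) and off-diagonal entries $-2\vk g_{12}+g_{12}'$ and $2\vk g_{21}+g_{21}'$, the first-order pieces $g_{12}\p$, $g_{21}\p$ having cancelled between the two products. Meanwhile $[L(\vk),D]$ carries no first-order part — the only surviving contribution of $[\p,(\vr+1)\,\cdot\,]$ is the scalar $\vr'$ — and $[L(\vk),D]=-\vr'I-2[\begin{smallmatrix}0 & q(\vr+1)\\ r(\vr+1) & 0\end{smallmatrix}]$. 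Adding the three contributions, the diagonal entries cancel, and on the off-diagonal one substitutes $g_{12}'=2\kappa g_{12}+q(\vr+1)$ and $g_{21}'=-2\kappa g_{21}+r(\vr+1)$ from \eqref{g12-ID}--\eqref{g21-ID}; the $q(\vr+1)$ and $r(\vr+1)$ terms then cancel against those from $[L(\vk),D]$, leaving exactly $2(\kappa-\vk)g_{12}$ in the $(1,2)$ slot and $-2(\kappa-\vk)g_{21}$ in the $(2,1)$ slot, i.e.\ $-2(\kappa-\vk)B$. This is the first display, and dividing by $\kappa-\vk\neq0$ (the reason for the hypothesis $\kappa\neq\vk$) makes it meaningful. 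For the passage to \eqref{Lax repres}: under the $A(\kappa)$ flow, \eqref{HFlow} and \eqref{rho from A} give $\tfrac{d}{dt}q=\tfrac1i\tfrac{\delta A(\kappa)}{\delta r}=ig_{12}$ and $\tfrac{d}{dt}r=-\tfrac1i\tfrac{\delta A(\kappa)}{\delta q}=ig_{21}$, so $\tfrac{d}{dt}L(\vk)=-iB$ and hence $\tfrac{d}{dt}U=-i[\begin{smallmatrix}1&0\\0&-1\end{smallmatrix}]B$. Now multiply the operator identity on the left by $[\begin{smallmatrix}1&0\\0&-1\end{smallmatrix}]$. Using $L(\vk)=[\begin{smallmatrix}1&0\\0&-1\end{smallmatrix}]U$, the fact that conjugation by $[\begin{smallmatrix}1&0\\0&-1\end{smallmatrix}]$ reverses the sign of the off-diagonal matrix $B$, and the fact that $[\begin{smallmatrix}1&0\\0&-1\end{smallmatrix}]$ commutes with the diagonal matrix $D$ — so that $[\begin{smallmatrix}1&0\\0&-1\end{smallmatrix}][L(\vk),D]=[[\begin{smallmatrix}1&0\\0&-1\end{smallmatrix}]L(\vk),D]=[U,D]$ — the identity becomes $[U,B]+\tfrac12[U,D]=-2(\kappa-\vk)[\begin{smallmatrix}1&0\\0&-1\end{smallmatrix}]B$. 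On the other hand, with $P=\tfrac{1}{2i(\kappa-\vk)}(\tfrac12 D+B)$ as in \eqref{Lax repres} one has $[P,U]=\tfrac{1}{2i(\kappa-\vk)}(-[U,B]-\tfrac12[U,D])=\tfrac{1}{2i(\kappa-\vk)}\cdot2(\kappa-\vk)[\begin{smallmatrix}1&0\\0&-1\end{smallmatrix}]B=-i[\begin{smallmatrix}1&0\\0&-1\end{smallmatrix}]B=\tfrac{d}{dt}U$, which is \eqref{Lax repres}.

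The content of the argument is entirely in the first step, and the only genuine obstacle there is careful bookkeeping: one must track which entries of $L(\vk)B$, $BL(\vk)$, $[L(\vk),D]$ are pure multiplication operators and which carry a $\p$, and verify that every first-order piece and every $q(\vr+1)$, $r(\vr+1)$ term cancels exactly as claimed. The identities \eqref{rho-ID}, \eqref{g12-ID}, \eqref{g21-ID} enter in precisely the three indicated places; everything else — including the translation into the Lax form — is routine $2\times2$ matrix algebra.
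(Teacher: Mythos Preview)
Your proof is correct and follows essentially the same approach as the paper, which merely says that both identities are elementary computations using \eqref{biHam} and \eqref{rho from A}; since \eqref{biHam} is (as the paper itself notes) just a recapitulation of \eqref{g12-ID}--\eqref{g21-ID} via \eqref{rho from A}, you are using the same ingredients and simply filling in the details. One minor remark: the first displayed identity in fact holds for all $\kappa,\vk$ without dividing by anything, and the hypothesis $\kappa\neq\vk$ is only needed to define $P$ in the Lax form \eqref{Lax repres}.
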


\begin{proof}
Both identities are elementary computations using \eqref{biHam} and \eqref{rho from A}.
\end{proof}

\begin{cor}\label{C:A flow}
Fix distinct $\kappa,\vk\in\R\setminus(-1,1)$.  Then under the $A(\kappa)$ vector field,
\begin{align}\label{qr under A}
i\frac d{dt}q = - g_{12}(\kappa) \qtq{and} i\frac d{dt}r = - g_{21}(\kappa).
\end{align}
Moreover,
\begin{equation}\label{10:33}
\begin{aligned}
i \frac{d\ }{dt} g_{12}(\vk) &= \tfrac{ 1}{2(\kappa - \vk)} \bigl\{ [\vr(\kappa)+1] g_{12}(\vk) - g_{12}(\kappa)[\vr(\vk)+1] \bigr\}, \\
i \frac{d\ }{dt} g_{21}(\vk) &= \tfrac{-1}{2(\kappa - \vk)} \bigl\{ [\vr(\kappa)+1] g_{21}(\vk) - g_{21}(\kappa)[\vr(\vk)+1] \bigr\},
\end{aligned}
\end{equation}
and $\partial_t \vr(\vk)  + \partial_x \vj(\vk,\kappa) =0$ where
\begin{align}\label{A:rho dot}
\vj(\vk,\kappa) :=  \tfrac{-i}{2(\kappa - \vk)^2} \Bigl[g_{12}(\kappa)g_{21}(\vk)+g_{12}(\vk)g_{21}(\kappa) - \tfrac{\vr(\kappa)\vr(\vk) + \vr(\vk) + \vr(\kappa)}2  \Bigr].
\end{align}
Lastly, $\partial_t \rho(\vk) + \partial_x j(\vk,\kappa) = 0$ with
\begin{align}\label{j sub A}
j(\vk,\kappa):= -i \frac{g_{12}(\kappa)g_{21}(\vk)+g_{21}(\kappa)g_{12}(\vk)}{2(\kappa-\vk)(2+\vr(\vk))} + i \frac{\vr(\kappa)}{4(\kappa-\vk)}.
\end{align}
\end{cor}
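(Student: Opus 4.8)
The identity \eqref{qr under A} is immediate: it is the definition \eqref{HFlow} of the Hamiltonian flow generated by $H=A(\kappa)$ combined with $\tfrac{\delta A}{\delta q}=g_{21}$ and $\tfrac{\delta A}{\delta r}=-g_{12}$ from \eqref{rho from A}. For the remaining assertions my plan is twofold: (i) obtain \eqref{10:33} together with $\partial_t\vr(\vk)+\partial_x\vj(\vk,\kappa)=0$ from the Lax representation \eqref{Lax repres}, by passing to an evolution equation for the resolvent kernel; and (ii) obtain the $\rho$-current \eqref{j sub A} by differentiating the microscopic formula \eqref{micro A} in time and matching the result against $\partial_x j(\vk,\kappa)$. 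Throughout I work with $q\in\BdS$, so that $g_{12}(\vk),g_{21}(\vk),\vr(\vk),\rho(\vk)$ are Schwartz and the Lax flow holds at the operator level; the general case follows by the density and continuity statements already established.

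For (i), write $\sigma:=[\begin{smallmatrix}1&0\\0&-1\end{smallmatrix}]$. Since $R(\vk)=L(\vk)^{-1}$ and $U=\sigma L(\vk)$ satisfies $\partial_t U=[P,U]$, the operator $U^{-1}=R(\vk)\sigma$ satisfies $\partial_t(U^{-1})=[P,U^{-1}]$, which rearranges to $\partial_t R(\vk)=P R(\vk)-R(\vk)\widetilde P$ with $\widetilde P:=\sigma P\sigma$ (the off-diagonal entries of $P$ sign-flipped). As $P,\widetilde P$ are matrix-valued multiplication operators, at the level of integral kernels this reads $\partial_t G(x,y;\vk)=P(x)G(x,y;\vk)-G(x,y;\vk)\widetilde P(y)$. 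I would then restrict to $x=y$: by Proposition~\ref{P:R}, $G_0$ is time-independent and $G-G_0$ is continuous, so the left side is $\partial_t(G-G_0)(x,x;\vk)$, an honest continuous function; on the right the only subtlety is the jump of $G_0$ on the diagonal, but a direct check shows that the combination $P(x)G_0(x,y;\vk)-G_0(x,y;\vk)\widetilde P(y)$ has a jump-free limit onto $\{x=y\}$, equal to $\tfrac{1}{2i(\kappa-\vk)}[\begin{smallmatrix}0&-g_{12}(\kappa)\\ g_{21}(\kappa)&0\end{smallmatrix}]$ (this is uniform in $\sgn(\vk)$ once one accounts for the factor $\sgn(\vk)$ in the definitions of $g_{12},g_{21},\vr$; one may alternatively just treat $\vk>0$ and invoke \eqref{grho-symmetries}). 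Multiplying through by $\sgn(\vk)$ and using $g_{12}(\vk)=\sgn(\vk)(G-G_0)_{12}(x,x;\vk)$, $g_{21}(\vk)=\sgn(\vk)(G-G_0)_{21}(x,x;\vk)$, and $\vr(\vk)=\sgn(\vk)\tr(G-G_0)(x,x;\vk)$, the $(1,2)$, $(2,1)$, and trace components of the resulting $2\times2$ identity give exactly the two lines of \eqref{10:33} and the relation $\partial_t\vr(\vk)=\tfrac{i}{\kappa-\vk}\bigl[g_{12}(\kappa)g_{21}(\vk)-g_{21}(\kappa)g_{12}(\vk)\bigr]$. Feeding the latter into \eqref{micro As commute} (whose additive constant $\tfrac12$ is annihilated by $\partial_x$) rewrites the right side as $-\partial_x\vj(\vk,\kappa)$ with $\vj$ as in \eqref{A:rho dot}.

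For (ii), I would start from $\rho(\vk)\,[2+\vr(\vk)]=qg_{21}(\vk)-rg_{12}(\vk)$ (this is \eqref{micro A}), differentiate in time, and substitute \eqref{qr under A}, \eqref{10:33}, and the evolution of $\vr(\vk)$ just found; using \eqref{rho-ID} at both $\vk$ and $\kappa$ to replace $qg_{21}+rg_{12}$ by $\tfrac12\vr'$ produces a closed expression for $\partial_t\rho(\vk)$. Separately, differentiating the numerator $g_{12}(\kappa)g_{21}(\vk)+g_{21}(\kappa)g_{12}(\vk)$ of $j(\vk,\kappa)$ via \eqref{g12-ID} and \eqref{g21-ID} — whereby the transport terms $\pm2\kappa,\mp2\vk$ combine into $\pm2(\kappa-\vk)$ and reproduce the commutator $W:=g_{12}(\kappa)g_{21}(\vk)-g_{21}(\kappa)g_{12}(\vk)$ — yields $\partial_x j(\vk,\kappa)$. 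After cancellation, matching the two sides reduces to the single pointwise identity $\vr(\vk)\,[2+\vr(\vk)]\,\vr(\kappa)'=2\bigl[g_{12}(\kappa)g_{21}(\vk)+g_{21}(\kappa)g_{12}(\vk)\bigr]\vr(\vk)'-4W\bigl[qg_{21}(\vk)-rg_{12}(\vk)\bigr]$, whose right side collapses — using only \eqref{rho-ID} at $\kappa$ — to $4g_{12}(\vk)g_{21}(\vk)\,\vr(\kappa)'$, and this equals the left side by the quadratic identity \eqref{QuadraticID} at $\vk$. The main obstacle is precisely this last computation: keeping the numerous quadratic monomials in $q,r,g_{12},g_{21}$ organized so that the cancellations, and the final appearance of \eqref{QuadraticID}, are manifest. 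The diagonal-jump bookkeeping in step (i) also demands a little care, but is otherwise routine.
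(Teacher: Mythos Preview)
Your proposal is correct and follows essentially the same route as the paper. The only minor difference is in step~(i): the paper uses the resolvent identity $\partial_t R=-R(\partial_t L)R$ together with the \emph{first} identity in Proposition~\ref{P:Lax A} to obtain the kernel evolution in the form $\tfrac{1}{2(\kappa-\vk)}[M(x)G+GM(z)+\tfrac12 D(x)G-\tfrac12 G D(z)]$, where the intermediate integral expression makes continuity of $\partial_t G$ manifest and so trivializes the diagonal restriction; you instead start from the $\partial_t U=[P,U]$ form and verify the jump cancellation for $P G_0-G_0\widetilde P$ by hand. Both routes yield the identical matrix formula on the diagonal, and your jump computation is correct. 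For step~(ii) the paper simply invokes ``a lengthy computation using \eqref{10:33}, \eqref{A:rho dot}, \eqref{micro As commute}, and \eqref{QuadraticID}''; your spelling-out of this computation, reducing matters to the single pointwise identity that collapses via \eqref{QuadraticID}, is exactly the intended argument.
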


\begin{proof}
The identities \eqref{qr under A} simply recapitulate \eqref{HFlow} and \eqref{rho from A}.

Combining \eqref{HFlow}, the resolvent identity, and Proposition~\ref{P:Lax A}, we have
\begin{align*}
i \frac{d\ }{dt} G(x,z;\vk) &= - \int G(x,y;\vk)\begin{bmatrix}0&\tfrac{\delta A}{\delta r}(y)\\\tfrac{\delta A}{\delta q}(y)&0\end{bmatrix}G(y,z;\vk)\,dy\\
&= \tfrac1{2(\kappa - \vk)}\left(\begin{bmatrix}0&\tfrac{\delta A}{\delta r}(x)\\\tfrac{\delta A}{\delta q}(x)&0\end{bmatrix}G(x,z;\vk) + G(x,z;\vk)\begin{bmatrix}0&\tfrac{\delta A}{\delta r}(z)\\\tfrac{\delta A}{\delta q}(z)&0\end{bmatrix}\right)\\
&\qquad + \tfrac1{4(\kappa - \vk)}[\vr(x;\kappa)+1] \begin{bmatrix}1&0\\0&-1\end{bmatrix}G(x,z;\vk) \\
&\qquad - \tfrac1{4(\kappa - \vk)}G(x,z;\vk)[\vr(z;\kappa)+1]\begin{bmatrix}1&0\\0&-1\end{bmatrix}.
\end{align*}
This quantity is actually a continuous function of $x$ and $z$ (as can be seen from the middle expression) and so we may restrict to $z=x$. Thus,
by \eqref{rho from A},
\begin{align*}
i \tfrac{d\ }{dt} G(x,x;\vk) &=
\tfrac{-1}{2(\kappa - \vk)}\begin{bmatrix}g_{12}(\kappa)g_{21}(\vk)-g_{12}(\vk)g_{21}(\kappa)\!\! & g_{12}(\kappa)[\vr(\vk)+1] \\ -g_{21}(\kappa)[\vr(\vk)+1] & \!\!g_{12}(\kappa)g_{21}(\vk)-g_{12}(\vk)g_{21}(\kappa) \end{bmatrix} \\
&\quad {} + \tfrac{\vr(\kappa)+1}{2(\kappa - \vk)} \begin{bmatrix}0&g_{12}(\vk)\\-g_{21}(\vk)&0\end{bmatrix}.
\end{align*}
This then yields \eqref{10:33} directly and \eqref{A:rho dot} by invoking  \eqref{micro As commute}.

The claim \eqref{j sub A} follows from a lengthy computation using \eqref{10:33}, \eqref{A:rho dot}, \eqref{micro As commute}, and \eqref{QuadraticID}.
\end{proof}

Corollary~\ref{C:A flow} shows that both $\rho(\vk)$ and $\vr(\vk)$ obey microscopic conservation laws.  From Lemma~\ref{L:A} we see that the corresponding macroscopic conservation laws are $A(\vk)$ and $\partial_\vk A(\vk)$, respectively; thus, these two microscopic conservation laws are closely related.  In the analysis that follows we shall rely exclusively on the conservation law associated to $\rho$, rather than $\vr$.  Let us explain why.

As we saw already in \eqref{E:res trace}, the quantity $\gamma$ arises very naturally in the theory and indeed, it was the basis of our initial investigations of the problem.  While it may be possible to build the entire theory around $\gamma$, we can attest that this approach rapidly becomes extremely tiresome.  It took us a very long time to discover the density $\rho$ that expresses the conservation of $A(\vk)$ and this innovation has immensely simplified all that follows.  A major virtue of $\rho$ compared to $\vr$ is coercivity.

The goal of our next lemma is to give a simple expression of this distinction, by looking only at the quadratic terms in the currents associated to the basic Hamiltonians appearing as coefficients in the expansion \eqref{A asymptotics}.  In particular, Lemma~\ref{lem:Currents are coercive} shows that the current $\vj$ associated with $\gamma$ is not coercive under the \eqref{mKdV} flow.  

Note that the terms in the series \eqref{A asymptotics} are alternately real and imaginary.  Correspondingly, to exploit the coercivity of $\Im j$ exhibited below, we shall need to use $\Im \rho$ when studying \eqref{NLS} and $\Re \rho$ when studying \eqref{mKdV}.  It is also instructive to remember that monotone observables must be odd (not even) under time reversal.

The identities \eqref{j2 pre-series} and \eqref{vj2 pre-series} appearing in the proof below also show us that neither $\Re j$ nor $\Re \vj$ possess any coercivity.

\begin{lem}[Coercivity of the current]\label{lem:Currents are coercive}  The coefficients in the asymptotic series
\begin{align*}
\int \Im j\sbrack{2}(\vk,\kappa)\,dx = \pm \sum_{\ell=0}^\infty \biggl\{ \frac{(-1)^\ell (2\ell+1)}{(2\kappa)^{2\ell+2}} \vk{\mkern 2mu} C_{\ell}(\vk) + \frac{(-1)^{\ell+1}(\ell+1)}{(2\kappa)^{2\ell+3}} C_{\ell+1}(\vk) \biggr\}
\end{align*}
are coercive; indeed,
\begin{align*}
C_\ell = \int \frac{2{\mkern 1mu}\xi^{2\ell} |\hat q(\xi)|^2}{4\vk^2+\xi^2}  \,d\xi.
\end{align*}
The corresponding asymptotic series for $j_\gamma$ is
\begin{align*}
\int \Im \vj\sbrack{2}(\vk,\kappa)\,dx = \pm \sum_{\ell=0}^\infty \biggl\{ \frac{(-1)^\ell (2\ell+1)}{(2\kappa)^{2\ell+2}} \frac{\partial [\vk{\mkern 2mu} C_{\ell}(\vk)]}{\partial\vk}  + \frac{(-1)^{\ell+1}(\ell+1)}{(2\kappa)^{2\ell+3}} \frac{\partial C_{\ell+1}(\vk)}{\partial\vk} \biggr\}.
\end{align*}
The coefficients appearing for even powers of \(\kappa\) are never sign definite; this undermines the utility of \(\gamma\).
\end{lem}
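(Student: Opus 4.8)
The plan is to collapse each integrated current to a single integral on the Fourier side and then expand in powers of $1/(2\kappa)$. First I would isolate the quadratic parts: because $g_{12}(\vk)$ and $g_{21}(\vk)$ start at first order in $(q,r)$ and $\vr(\vk)$ at second order, at quadratic order the factor $(2+\vr(\vk))^{-1}$ in \eqref{j sub A} and \eqref{A:rho dot} becomes $\tfrac12$, the term $\vr(\kappa)\vr(\vk)$ in \eqref{A:rho dot} drops, and only $g_{12}\sbrack{1}$, $g_{21}\sbrack{1}$, $\vr\sbrack{2}$ survive. Substituting the explicit formulas \eqref{g12 1 3}, \eqref{g21 1 3}, \eqref{gamma 2} gives
\begin{equation*}
j\sbrack{2}(\vk,\kappa) = \frac{i}{4(\kappa-\vk)}\Bigl[\tfrac{q}{2\kappa-\p}\cdot\tfrac{r}{2\vk+\p} + \tfrac{r}{2\kappa+\p}\cdot\tfrac{q}{2\vk-\p} - 2\,\tfrac{q}{2\kappa-\p}\cdot\tfrac{r}{2\kappa+\p}\Bigr],
\end{equation*}
together with an analogous expression for $\vj\sbrack{2}(\vk,\kappa)$ carrying the prefactor $\tfrac{-i}{2(\kappa-\vk)^2}$.

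Next I would integrate in $x$ and pass to the Fourier side using $\int fg\,dx=\int\hat f(\eta)\hat g(-\eta)\,d\eta$ together with $r=\pm\bar q$, so that $\hat r(-\eta)=\pm\overline{\hat q(\eta)}$. Each quadratic term carries one $q$ and one $r$, producing an overall sign $\pm$ and integrands $|\hat q(\eta)|^2$ times products of $(2\kappa\mp i\eta)^{-1}$ and $(2\vk\mp i\eta)^{-1}$. The partial-fraction identities $\tfrac{1}{(2\kappa-i\eta)(2\vk-i\eta)}-\tfrac{1}{(2\kappa-i\eta)^2}=\tfrac{2(\kappa-\vk)}{(2\kappa-i\eta)^2(2\vk-i\eta)}$ and $\bigl(\tfrac{1}{2\kappa-i\eta}-\tfrac{1}{2\vk-i\eta}\bigr)^2=\tfrac{4(\kappa-\vk)^2}{(2\kappa-i\eta)^2(2\vk-i\eta)^2}$ cancel the singular prefactors $(\kappa-\vk)^{-1}$ and $(\kappa-\vk)^{-2}$, leaving the two identities
\begin{equation*}
\int j\sbrack{2}(\vk,\kappa)\,dx = \pm i\int \frac{|\hat q(\eta)|^2\,d\eta}{(2\kappa-i\eta)^2(2\vk-i\eta)}, \qquad \int \vj\sbrack{2}(\vk,\kappa)\,dx = \mp 2i\int \frac{|\hat q(\eta)|^2\,d\eta}{(2\kappa-i\eta)^2(2\vk-i\eta)^2}
\end{equation*}
alluded to in the remark before the lemma. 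Since the real part of the first integrand is even and its imaginary part odd in $\eta$, the quantity $\Re\!\int j\sbrack{2}\,dx$ pairs $|\hat q(\eta)|^2$ with an odd function and is therefore sign-indefinite, which is why neither $\Re j$ nor $\Re\vj$ is coercive.

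For the imaginary parts, $\int\Im j\sbrack{2}\,dx=\pm\,\Re\!\int|\hat q(\eta)|^2(2\kappa-i\eta)^{-2}(2\vk-i\eta)^{-1}\,d\eta$. Expanding $(2\kappa-i\eta)^{-2}=\sum_{n\ge0}(n+1)(i\eta)^n(2\kappa)^{-n-2}$ and computing $\Re\,(i\eta)^n(2\vk-i\eta)^{-1}$ via $(2\vk-i\eta)^{-1}=(2\vk+i\eta)(4\vk^2+\eta^2)^{-1}$ gives $(-1)^\ell\,2\vk\,\eta^{2\ell}(4\vk^2+\eta^2)^{-1}$ for $n=2\ell$ and $(-1)^{\ell+1}\eta^{2\ell+2}(4\vk^2+\eta^2)^{-1}$ for $n=2\ell+1$; integrating against $|\hat q(\eta)|^2$ and recognizing $C_\ell(\vk)$ reproduces the stated expansion, with the $C_\ell$ manifestly nonnegative. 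For $\vj$ one only needs $\partial_\vk\bigl[(2\kappa-i\eta)^{-2}(2\vk-i\eta)^{-1}\bigr]=-2(2\kappa-i\eta)^{-2}(2\vk-i\eta)^{-2}$, which gives $\int\Im\vj\sbrack{2}\,dx=\partial_\vk\!\int\Im j\sbrack{2}\,dx$; differentiating the $j$-expansion in $\vk$ term by term yields the $\vj$-expansion with coefficients $\partial_\vk[\vk C_\ell(\vk)]$ and $\partial_\vk C_{\ell+1}(\vk)$, and these fail to be sign-definite since, e.g., $\partial_\vk C_{\ell+1}(\vk)=-\int16\vk\,\eta^{2\ell+2}|\hat q(\eta)|^2(4\vk^2+\eta^2)^{-2}\,d\eta$ has the opposite sign to $C_{\ell+1}$.

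The substantive part is the bookkeeping in the first two steps — carving out exactly the quadratic parts of $j$ and $\vj$ and shepherding all the factors of $i$, the $\pm/\mp$ dichotomy, and the substitution $\eta\mapsto-\eta$ through the Plancherel step so that the singular prefactors genuinely cancel; after that, the power-series extraction is routine and the $\vj$ case is a one-line differentiation. One should also record that the equalities in the statement are those of asymptotic series as $\kappa\to\infty$, since on Schwartz class the moments $\int\eta^{2\ell}|\hat q(\eta)|^2\,d\eta$ may grow faster than any geometric rate; thus the third step is properly phrased by truncating the geometric series at order $L$ and tracking the remainder $O(\kappa^{-2L-2})$.
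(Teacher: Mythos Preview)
Your proof is correct and follows essentially the same route as the paper's. The paper records the Fourier-side identity in the equivalent form
\[
\int j\sbrack{2}(\vk,\kappa)\,dx = \int \frac{2i\vk-\xi}{(2\kappa-i\xi)^2}\,\frac{\hat q(\xi)\hat r(-\xi)}{4\vk^2+\xi^2}\,d\xi,
\]
which is your $\pm i\!\int |\hat q|^2(2\kappa-i\xi)^{-2}(2\vk-i\xi)^{-1}\,d\xi$ after the factorization $\tfrac{2i\vk-\xi}{4\vk^2+\xi^2}=\tfrac{i}{2\vk-i\xi}$, and then says the expansion is ``readily verified''; your partial-fraction step making the cancellation of $(\kappa-\vk)^{-1}$ explicit is exactly what that verification entails. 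The paper also notes, as you do, that the $\vj$ identity is the $\vk$-derivative of the $j$ identity, tying this to $\partial_\vk A = \int\vr\,dx$.
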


\begin{proof}
From \eqref{j sub A}, we readily find
\begin{align}\label{j2 pre-series}
\int j\sbrack{2}(\vk,\kappa)\,dx = \int \frac{2i\vk-\xi}{(2\kappa-i\xi)^2} \frac{\hat q(\xi) \hat r(-\xi) }{4\vk^2+\xi^2} \,d\xi,
\end{align}
from which the expansion is readily verified.  The analogous formula for $\vj$ is
\begin{align}\label{vj2 pre-series}
\int \vj\sbrack{2}(\vk,\kappa)\,dx = 2 \int \frac{4\vk\xi+i(\xi^2 -4 \vk^2)}{(2\kappa-i\xi)^2} \frac{\hat q(\xi) \hat r(-\xi) }{(4\vk^2+\xi^2)^2} \,d\xi.
\end{align}
The fact that this coincides with the $\vk$-derivative of \eqref{j2 pre-series} is not a coincidence; it reflects the first identity in \eqref{rho-to-A}. 

It easy to verify that \(\p_\vk[\vk{\mkern 2mu} C_{\ell}(\vk)]\) is never sign definite because it contains the factor \(\xi^2 - 4\vk^2\).
\end{proof}

In view of the asymptotic expansion \eqref{A asymptotics}, Corollary~\ref{C:A flow} provides an efficient method for deriving the evolutions of $g_{12}$ and $\vr$ under \eqref{NLS} and \eqref{mKdV}, although they are also readily computable directly from the definitions.

\begin{cor}[Induced flows]\label{H Dynamics for rho}
Fix $\vk\in\R\setminus(-1,1)$.\\
Under the $M$ flow (i.e., phase rotation),
\begin{align}i\frac d{dt} g_{12} &= g_{12} \qtq{and} i\frac d{dt}\vr = 0.
\end{align}
Under the $P$ flow (i.e., spatial translation),
\begin{align}
\frac d{dt}g_{12} &= g_{12}' \qtq{and} \frac d{dt}\vr = \vr'.
\end{align}
Under the $H_{\NLS}$ flow \eqref{NLS},
\begin{align}
i\frac d{dt}g_{12} &= - g_{12}'' + 4qrg_{12} + 2q^2g_{21},\label{NLS-g12-eq}\\
i\frac d{dt}\vr &= \bigl\{ 2rg_{12} - 2qg_{21} - 4\vk\vr \bigr\}'.\label{NLS-rho-eq}
\end{align}
Under the \(H_{\mKdV}\) flow \eqref{mKdV},
\begin{align}
\frac d{dt}g_{12} &= - g_{12}''' + 6qrg_{12}' + 6qg_{21}q' + 6g_{12}rq',\\
\frac d{dt}\vr &= - \vr''' + \bigl\{ 12\vk(rg_{12} - qg_{21}) - 12\vk^2\vr + 6qr(1 + \vr)\bigr\}'.\label{mKdV-rho-eq}
\end{align}
\end{cor}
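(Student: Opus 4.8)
The plan is to read off all four induced flows at once from Corollary~\ref{C:A flow} together with the asymptotic expansion \eqref{A asymptotics}. The structural observation that makes this work is the following: since $A(\kappa)$ has the asymptotic expansion \eqref{A asymptotics} as a scalar multiple of $M$ plus one of $P$ plus one of $H_{\NLS}$ plus one of $H_{\mKdV}$ plus $O(\kappa^{-5})$, the Hamiltonian vector field $J\nabla A(\kappa)$ admits the same asymptotic expansion; hence for any functional $F$ that is differentiable on Schwartz space --- in particular $F=g_{12}(x;\vk)$ and $F=\vr(x;\vk)$ with $\vk$ held fixed --- its time derivative along the $A(\kappa)$ flow obeys
\[
\tfrac{d}{dt}F\big|_{A(\kappa)} = \tfrac{1}{2\kappa}\tfrac{d}{dt}F\big|_{M} + \tfrac{-i}{(2\kappa)^2}\tfrac{d}{dt}F\big|_{P} + \tfrac{-1}{(2\kappa)^3}\tfrac{d}{dt}F\big|_{H_{\NLS}} + \tfrac{i}{(2\kappa)^4}\tfrac{d}{dt}F\big|_{H_{\mKdV}} + O(\kappa^{-5}),
\]
the alternating factors $(-i)^m$ accounting for the alternating presence of $i$ in the statement. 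Thus it suffices to expand, in powers of $1/(2\kappa)$, the closed-form expressions for $\tfrac{d}{dt}g_{12}(\vk)$ and $\tfrac{d}{dt}\vr(\vk)$ under the $A(\kappa)$ flow provided by \eqref{10:33} and by $\partial_t\vr(\vk)=-\partial_x\vj(\vk,\kappa)$ with $\vj$ as in \eqref{A:rho dot}, and then read off the coefficients of $\tfrac{1}{2\kappa}$, $\tfrac{1}{(2\kappa)^2}$, $\tfrac{1}{(2\kappa)^3}$, and $\tfrac{1}{(2\kappa)^4}$.

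Concretely, I would first record the expansions serving as inputs: those of $g_{12}(\kappa)$ and $g_{21}(\kappa)$ from \eqref{biHam2}, and that of $\vr(\kappa)$ from the displayed formula for $\tfrac12\vr$ in the proof of Lemma~\ref{L:A asymptotics}, each carried to order $(2\kappa)^{-4}$. I would also expand the scalar prefactors $\tfrac{1}{2(\kappa-\vk)}=\sum_{n\geq 0}(2\vk)^n(2\kappa)^{-n-1}$ and $\tfrac{-i}{2(\kappa-\vk)^2}=-2i\sum_{n\geq 0}(n+1)(2\vk)^n(2\kappa)^{-n-2}$. Substituting these into \eqref{10:33} and into $\vj(\vk,\kappa)$ and collecting powers of $1/(2\kappa)$ is mechanical. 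The leading orders immediately reproduce the trivial flows: the $1/(2\kappa)$ coefficient of $i\tfrac{d}{dt}g_{12}(\vk)$ is $g_{12}(\vk)$ while the corresponding coefficient for $\vr$ vanishes (the $M$ flow), and the $1/(2\kappa)^2$ coefficients reproduce the $P$-flow relations $\tfrac{d}{dt}g_{12}=g_{12}'$, $\tfrac{d}{dt}\vr=\vr'$ once one uses the pointwise identity \eqref{g12-ID} in the form $2\vk g_{12}+q[\vr+1]=g_{12}'$ to absorb the spurious explicit factor of $\vk$.

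For the nonlinear flows the same procedure applies, but each collected coefficient must now be brought to its stated closed form, and this is where the pointwise identities \eqref{g12-ID}, \eqref{g21-ID}, \eqref{rho-ID}, \eqref{QuadraticID} and their $x$-derivatives do the work: one uses them repeatedly to eliminate every occurrence of $g_{12}(\kappa)$, $g_{21}(\kappa)$, $\vr(\kappa)$ and every explicit power of $\vk$, reducing the $g_{12}(\vk)$ coefficients to polynomials in $q,r,g_{12}(\vk),g_{21}(\vk)$ and the $\vr(\vk)$ coefficients to perfect $x$-derivatives. For instance, at order $(2\kappa)^{-3}$ the coefficient of $i\tfrac{d}{dt}g_{12}(\vk)$, after substituting $q[\vr+1]=g_{12}'-2\vk g_{12}$ and $\vr'=2(qg_{21}+rg_{12})$, collapses to $g_{12}''-4qrg_{12}-2q^2g_{21}$, which by the expansion above equals $-\bigl(i\tfrac{d}{dt}g_{12}\bigr)\big|_{H_{\NLS}}$; this yields \eqref{NLS-g12-eq}, and \eqref{NLS-rho-eq} comes out in parallel from the $(2\kappa)^{-3}$ coefficient of $\vj$.

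I expect the main obstacle to be purely computational: the order-$(2\kappa)^{-4}$ coefficients governing the \eqref{mKdV} flows are bulky, and recognizing the $-g_{12}'''$ and $-\vr'''$ contributions requires differentiating \eqref{g12-ID}, \eqref{g21-ID}, \eqref{rho-ID} twice and matching a considerable number of terms; for $\vr$ one must additionally verify that the $(2\kappa)^{-4}$ coefficient of $\vj$ equals, up to the factor $i$ and an overall sign, the perfect derivative $\vr''-\{12\vk(rg_{12}-qg_{21})-12\vk^2\vr+6qr(1+\vr)\}$, so that applying $-\partial_x$ gives \eqref{mKdV-rho-eq}. As a safeguard --- and as an alternative proof for anyone wishing to bypass the expansion --- each evolution can instead be derived directly, since $\tfrac{\delta H_{\NLS}}{\delta r}$ and $\tfrac{\delta H_{\mKdV}}{\delta r}$ are by construction the right-hand sides of \eqref{NLS} and \eqref{mKdV}: one repeats the resolvent-identity computation in the proof of Corollary~\ref{C:A flow}, now with $\partial_t L=[P,L]$ for the Lax-pair second members $P$ displayed in the introduction, and restricts the resulting kernel identity to the diagonal $z=x$.
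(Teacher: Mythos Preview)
Your proposal is correct and follows precisely the route the paper indicates: the paper does not give a detailed proof but remarks just before the corollary that, in view of the asymptotic expansion \eqref{A asymptotics}, Corollary~\ref{C:A flow} provides an efficient method for deriving these evolutions, ``although they are also readily computable directly from the definitions.'' You have supplied the details of the first method and noted the second as an alternative, exactly as the paper suggests.
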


These expressions highlight two phenomena that are worthy of note.  The first is that the evolution of $\gamma$ has the structure of a microscopic conservation law.  This has been discussed already, in the context of \eqref{A:rho dot}.

Although rather less obvious, these formulas also show that $g_{12}$ obeys the linearized equation around the trajectory $q$.  To explain why, let us first consider a generic one-parameter family of solutions $q(t;\tau)$ to a given PDE, say \eqref{NLS}.  Here $\tau$ is the parameter, while $t$ is time.  Evidently, the parametric derivative of $q$ obeys the linearized equation:
$$
i\partial_t \tfrac{\partial q}{\partial \tau} = -\partial^2_x \tfrac{\partial q}{\partial \tau} \pm 4 |q|^2 \tfrac{\partial q}{\partial \tau}
	\pm 2q^2 \tfrac{\partial \bar q}{\partial \tau} .
$$
This should be compared to \eqref{NLS-g12-eq}, noting the conjugation symmetry \eqref{grho-symmetries}.

Finally, to apply this general reasoning to the case at hand, we define our parametric family of solutions to the $H$-flow with initial data \(q_0\) via
$$
q(t;\tau) = \exp\bigl\{tJ\nabla H + \tau J\nabla A(\vk)\bigr\}q_0
$$
and then apply \eqref{rho from A}.

\subsection{Regularized and difference flows}

As discussed in the introduction, a key ingredient in our arguments is the decomposition of the full evolution into two commuting parts.  The first is a regularized part, that captures the dominant portion of the dynamics, while being very tame at high frequencies.  The second part, which we call the difference flow, restores the proper evolution to the high frequencies, but otherwise is very close to the identity.

The starting point for the corresponding decomposition of the Hamiltonian is \eqref{A asymptotics}, which we essentially rearrange to isolate an approximation to the true Hamiltonian.  While we wish to consider only real-valued Hamiltonians and taking real and imaginary parts of \eqref{A asymptotics} is a transparent way to do this, we should also acknowledge a more subtle point: in order to obtain local smoothing for the difference flow, it is essential that the regularized Hamiltonian retains the same conjugation/time-reversal symmetry as the full Hamiltonian.

\begin{defn} Associated to each \(\kappa\geq 1\), we define \emph{regularized Hamiltonians}
\begin{align}
H_{\NLS}^{\kappa} &= - 8\kappa^3\Re A(\kappa) + 4\kappa^2 M = - 4\kappa^3 \bigl[ A(\kappa) - A(-\kappa)\bigr] + 4\kappa^2 M  , \label{D:HNLSK}\\
H_{\mKdV}^{\kappa} &= 16\kappa^4\Im A(\kappa) + 4\kappa^2 P = -8i\kappa^4 \bigl[ A(\kappa) + A(-\kappa)\bigr] + 4\kappa^2 P , \label{D:HKdVK}
\end{align}
as functions on $\Bd\cap \Schwartz$, as well as \emph{difference Hamiltonians},
\begin{align}
H_{\NLS}^\diff = H_{\NLS}-H_{\NLS}^{\kappa} \qtq{and} H_\mKdV^\diff = H_{\mKdV} - H_\mKdV^\kappa .  \label{D:Hdiff}
\end{align}
\end{defn}

One of the key features of the regularized flows is that they are readily seen to be well-posed:

\begin{prop}[Global well-posedness of the regularized flows]\label{prop:kappa-flows}\leavevmode\hfill
There exists \(\delta>0\) so that for all \(\kappa\geq 1\) the $H_{\NLS}^{\kappa}$ and $H_{\mKdV}^{\kappa}$ flows
\begin{align}
i\frac d{dt}q &= 4\kappa^3\left(g_{12}(\kappa) - g_{12}(-\kappa)\right) + 4\kappa^2q,	\label{NLS-kappa}\tag{NLS${}_\kappa$}\\
\frac d{dt}q &= 8\kappa^4\left(g_{12}(\kappa) + g_{12}(-\kappa)\right) + 4\kappa^2q' 	\label{mKdV-kappa}\tag{mKdV${}_\kappa$}
\end{align}
are globally well-posed for initial data in \(\Bd\).  These solutions conserve \(\alpha(\vk)\) for every \(\vk\geq 1\).  Moreover, if the initial data is Schwartz then so are the corresponding solutions.
\end{prop}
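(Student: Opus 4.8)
The plan is to recognize each of \eqref{NLS-kappa} and \eqref{mKdV-kappa} as a semilinear evolution $\partial_t q=\mathcal L_\star q+\mathcal N_\star(q)$ with a skew-adjoint linear part and a super-cubic nonlinearity, solve it locally in time by a contraction mapping argument carried out directly on $\Bd$, and then globalize using the conservation of $\alpha$. First I would isolate the linear part: writing $g_{12}(\vk)=g_{12}\sbrack1(\vk)+g_{12}\sbrack{\geq3}(\vk)$ and using $g_{12}\sbrack1(\vk)=-(2\vk-\p)^{-1}q$ from \eqref{g12 1 3}, a Fourier-side computation puts these equations into the form $\partial_t q=\mathcal L_\star q+\mathcal N_\star(q)$, where $\mathcal L_\star$ is the Fourier multiplier with symbol $-\tfrac{4i\kappa^2\xi^2}{4\kappa^2+\xi^2}$ for $\star=\NLS$ (bounded) and $\tfrac{4i\kappa^2\xi^3}{4\kappa^2+\xi^2}$ for $\star=\mKdV$ (of linear growth), while $\mathcal N_\NLS(q)=-4i\kappa^3\bigl(g_{12}\sbrack{\geq3}(\kappa)-g_{12}\sbrack{\geq3}(-\kappa)\bigr)$ and $\mathcal N_\mKdV(q)=8\kappa^4\bigl(g_{12}\sbrack{\geq3}(\kappa)+g_{12}\sbrack{\geq3}(-\kappa)\bigr)$. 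The point is that in both cases the symbol of $\mathcal L_\star$ is purely imaginary, so $e^{t\mathcal L_\star}$ is a strongly continuous unitary group on $H^\sigma_\kappa$ for \emph{every} $\sigma$, losing no regularity despite $\mathcal L_\mKdV$ being unbounded; and, by Proposition~\ref{prop:g}, $q\mapsto g_{12}\sbrack{\geq3}(\vk)$ is real analytic from $\Bd$ into $H^{s+1}_\kappa\hookrightarrow H^s$, with $\|\mathcal N_\star(q)\|_{H^s}\lesssim_\kappa\|q\|_{H^s}^3$ and a comparable Lipschitz bound on $\Bd$, the latter from the linearization estimates in the proof of that proposition.

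Given this, the local theory is routine: I would solve the Duhamel equation $q(t)=e^{t\mathcal L_\star}q_0+\int_0^t e^{(t-t')\mathcal L_\star}\mathcal N_\star(q(t'))\,dt'$ by the Banach fixed point theorem in $C([-T,T];B_{2\delta})$ for a suitable $T=T(\kappa,\delta)>0$, obtaining a unique local solution depending Lipschitz-continuously on $q_0\in\Bd$ and jointly on $t$. For Schwartz persistence I would note that $\mathcal L_\star$ preserves $\Schwartz$ (it is a multiplier with smooth symbol of polynomial growth) and, by the last assertion of Proposition~\ref{prop:g}, so does $\mathcal N_\star$; more quantitatively, $\mathcal N_\star$ maps $\Bd\cap H^{s+n}$ boundedly into $H^{s+n+1}$ for each $n$, and likewise respects polynomial spatial weights (again by Proposition~\ref{prop:g} and its proof), so a standard persistence-of-regularity bootstrap shows that Schwartz data yield Schwartz solutions on $[-T,T]$.

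To pass from local to global I would invoke the conservation law. Since the regularized Hamiltonians are real linear combinations of $A(\pm\kappa)$, $M$, and $P$, and since $\{A(\vk),A(\pm\kappa)\}=0$ by Lemma~\ref{lem:PoissonBrackets} while $A(\vk)$ — being built from traces of powers of $\Lambda\Gamma$, cf.\ \eqref{A-def'} — is manifestly invariant under the phase-rotation and translation flows generated by $M$ and $P$, we obtain $\{A(\vk),H_\star^\kappa\}=0$ for all $\vk\geq1$; hence $\alpha(\vk)$ is conserved along the Schwartz solutions above. By \eqref{AlphaToNorm}, $\int_1^\infty\vk^{2s+1}\alpha(\vk)\,\tfrac{d\vk}\vk\approx_s\|q(t)\|_{H^s}^2$ is then constant in time so long as the solution remains in $\Bd$, giving $\|q(t)\|_{H^s}\le C_s\|q_0\|_{H^s}$ there; choosing the radius $\delta$ in the statement small enough that $C_s\delta$ stays below the threshold implicit in the cited lemmas, a continuity argument shows the solution never leaves $\Bd$, and since $T(\kappa,\delta)$ admits a lower bound independent of the solution, iterating the local construction yields a global Schwartz solution with $\sup_t\|q(t)\|_{H^s}\le C_s\|q_0\|_{H^s}$. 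For general $q_0\in\Bd$ I would approximate by Schwartz data converging in $H^s$ and use the uniform bounds together with the Lipschitz dependence of the local flow (applied over successive intervals of fixed length) to obtain convergence in $C_{\mathrm{loc}}(\R;H^s)$ to a global solution with the same a priori bound; conservation of $\alpha(\vk)$ and uniqueness then pass to the limit.

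The main obstacle, such as it is, is bookkeeping rather than analysis: one must reconcile the ``small data'' hypotheses of Propositions~\ref{prop:g} and~\ref{prop:alpha} with the fact that the a priori bound coming from \eqref{AlphaToNorm} carries an implicit constant $\ge1$, which forces the radius $\delta$ in the statement to be chosen a small multiple of the universal constant in those results. No genuinely new estimate is needed: all the required inputs are already in place in Sections~\ref{S:3} and~\ref{S:4}.
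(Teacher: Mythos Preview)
Your proposal is correct and follows essentially the same route as the paper: local well-posedness from the Lipschitz bounds on $g_{12}$ in Proposition~\ref{prop:g}, Schwartz persistence, conservation of $\alpha(\vk)$ via Lemma~\ref{lem:PoissonBrackets} (together with the manifest $M$- and $P$-invariance of $A$), and globalization through Proposition~\ref{prop:alpha} plus approximation. The one cosmetic difference is in handling the transport term $4\kappa^2 q'$ in \eqref{mKdV-kappa}: the paper removes it by the change of variables $(t,x)\mapsto(t,x-4\kappa^2 t)$ and then treats the remaining $g_{12}$ terms as a genuine ODE on $H^s$ via Picard--Lindel\"of, whereas you absorb it (along with the linear part of $g_{12}$) into a purely imaginary Fourier multiplier and run Duhamel against the unitary group it generates---equivalent devices achieving the same end.
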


\begin{proof}
The evolution equations  follow directly from \eqref{D:HNLSK} and \eqref{D:HKdVK} by applying \eqref{A-Symmetries} and \eqref{rho from A}.

Using the diffeomorphism property of the map \(q\mapsto g_{12}(\kappa)\) proved in Proposition~\ref{prop:g}, we may view the equations \eqref{NLS-kappa} and \eqref{mKdV-kappa} as ODEs in \(H^s\), the latter after making the change of variables
\[
(t,x) \mapsto (t,x - 4\kappa^2 t).
\]
Local well-posedness then follows from the Picard-Lindel\"of Theorem. Further, as the map \(q\mapsto g_{12}(\kappa)\) preserves the Schwartz class, it is clear that if \(q(0)\in \BdS\) then the corresponding solution remains Schwartz. Finally, to extend the solution globally in time we first observe that for \(q(0)\in \BdS\) we may apply Lemma~\ref{lem:PoissonBrackets} to deduce the conservation of $\alpha(\vk)$ for all \(\vk\geq 1\). Applying Proposition~\ref{prop:alpha} we may then extend the solution globally in time for \(q(0)\in \BdS\) and then for all \(q(0)\in \Bd\) by approximation.
\end{proof}

From Lemma~\ref{lem:PoissonBrackets} we see that the full and regularized Hamiltonian evolutions commute (at least on Schwartz space).  This allows us to obtain evolution equations for the difference Hamiltonians by simply combining the corresponding vector fields.  In this way, Proposition~\ref{prop:kappa-flows} together with Corollaries~\ref{C:A flow} and \ref{H Dynamics for rho} yields the following:

\begin{cor}[Difference flows]
Consider any \(\kappa,\vk\geq 1\) and any initial data in $\BdS$. Under the NLS difference flow,
\begin{gather} \label{NLS-diff}\tag{NLS-diff}
i\frac d{dt}q = - q'' + 2q^2r - 4\kappa^3\left(g_{12}(\kappa)- g_{12}(-\kappa)\right) - 4\kappa^2q, \\
\label{g12-NLS-diff}
\left\{\qquad \begin{aligned}
i\frac d{dt}g_{12}(\vk) &= - g_{12}(\vk)'' + 4qrg_{12}(\vk) + 2q^2g_{21}(\vk)\\
&\quad + \frac{2\kappa^3}{\kappa - \vk} \bigl\{ [\vr(\kappa)+1] g_{12}(\vk) - g_{12}(\kappa)[\vr(\vk)+1] \bigr\}\\
&\quad + \frac{2\kappa^3}{\kappa + \vk} \bigl\{ [\vr(-\kappa)+1] g_{12}(\vk) - g_{12}(-\kappa)[\vr(\vk)+1] \bigr\}\\
&\quad - 4\kappa^2g_{12}(\vk),
\end{aligned}\right.
\end{gather}
and under the mKdV difference flow
\begin{gather}\label{mKdV-diff}\tag{mKdV-diff}
\frac d{dt}q = - q''' + 6qrq' - 8\kappa^4\left(g_{12}(\kappa) + g_{12}(-\kappa)\right) - 4\kappa^2q', \\
\label{g12-mKdV-diff}
\left\{\qquad \begin{aligned}
\frac d{dt}g_{12}(\vk) &=- g_{12}(\vk)''' + 6qrg_{12}(\vk)' + 6qq'g_{21}(\vk) + 6rq'g_{12}(\vk)\\
&\quad + \frac{4\kappa^4}{\kappa - \vk} \bigl\{ [\vr(\kappa)+1] g_{12}(\vk) - g_{12}(\kappa)[\vr(\vk)+1] \bigr\}\\
&\quad - \frac{4\kappa^4}{\kappa + \vk} \bigl\{ [\vr(-\kappa)+1] g_{12}(\vk) - g_{12}(-\kappa)[\vr(\vk)+1] \bigr\}\\
&\quad - 4\kappa^2g_{12}(\vk)'.
\end{aligned}\right.
\end{gather}
\end{cor}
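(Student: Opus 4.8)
The plan is to exploit two facts: that the assignment $H\mapsto J\nabla H$ of Hamiltonian vector fields is linear, and that $H_\star$ and $H_\star^\kappa$ Poisson commute on $\BdS$ by Lemma~\ref{lem:PoissonBrackets}. First I would record that, by \eqref{D:Hdiff} and linearity, the vector field of $H_\star^\diff$ is $J\nabla H_\star - J\nabla H_\star^\kappa$ for $\star\in\{\NLS,\mKdV\}$, and that commutativity lets us realize the difference flow on $\BdS$ as the composition $e^{tJ\nabla H_\star}\circ e^{-tJ\nabla H_\star^\kappa}$ --- which, thanks to Proposition~\ref{prop:kappa-flows} and the standing well-posedness of the full flows, is a genuine flow on $\BdS$ that preserves the Schwartz class. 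The upshot I would then use is that the evolution along the difference flow of any differentiable functional of $q$ equals the difference of its evolutions along the $H_\star$ flow and along the $H_\star^\kappa$ flow.

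Next I would apply this to two functionals. Taking the functional to be $q$ itself: the $H_{\NLS}$ flow contributes \eqref{NLS} (rewritten as $i\dot q=-q''+2q^2r$ via $r=\pm\bar q$), the $H_{\NLS}^\kappa$ flow contributes \eqref{NLS-kappa}, and subtracting gives \eqref{NLS-diff}; the mKdV identity \eqref{mKdV-diff} is obtained identically from \eqref{mKdV} and \eqref{mKdV-kappa}. Taking the functional to be $g_{12}(\vk)$: the $H_{\NLS}$ and $H_{\mKdV}$ flows contribute the expressions recorded in Corollary~\ref{H Dynamics for rho} (in particular \eqref{NLS-g12-eq} for NLS), while for the regularized part I would use the decompositions \eqref{D:HNLSK} and \eqref{D:HKdVK} to write $H_\star^\kappa$ as an explicit linear combination of $A(\kappa)$, $A(-\kappa)$, and $M$ (resp.\ $P$). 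Then the $H_\star^\kappa$-contribution to $\dot g_{12}(\vk)$ is the corresponding combination of the $A(\cdot)$-flow identity \eqref{10:33}, evaluated first at $\kappa$ and then at $-\kappa$, and the phase-rotation (resp.\ translation) evolution of $g_{12}(\vk)$ from Corollary~\ref{H Dynamics for rho}. Subtracting from the full-flow contribution and collecting terms produces \eqref{g12-NLS-diff} and \eqref{g12-mKdV-diff}.

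I do not expect a genuine obstacle here, as the whole argument is bookkeeping; the only delicate points are the signs generated by the substitution $\kappa\mapsto-\kappa$ in the denominators $\kappa\mp\vk$ and the tracking of imaginary units --- the $H_{\mKdV}$ and $A(\vk)$ Hamiltonians enter with differing powers of $i$ (this is why the $\mKdV$ evolution of $g_{12}$ carries no $i$ while \eqref{10:33} does, and why the coefficients in \eqref{D:HKdVK} are imaginary). What remains to verify by hand is merely that, after inserting \eqref{10:33} at $\pm\kappa$ and multiplying by the coefficients from \eqref{D:HNLSK}--\eqref{D:HKdVK}, the prefactors collapse to $\tfrac{2\kappa^3}{\kappa-\vk}$ and $\tfrac{2\kappa^3}{\kappa+\vk}$ for NLS and to $\tfrac{4\kappa^4}{\kappa-\vk}$ and $-\tfrac{4\kappa^4}{\kappa+\vk}$ for mKdV, alongside the term $-4\kappa^2 g_{12}(\vk)$ (resp.\ $-4\kappa^2 g_{12}(\vk)'$), exactly as stated.
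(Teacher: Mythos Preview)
Your proposal is correct and follows essentially the same approach as the paper, which merely states (in the paragraph preceding the Corollary) that commutativity from Lemma~\ref{lem:PoissonBrackets} allows one to obtain the difference evolutions by combining the vector fields from Proposition~\ref{prop:kappa-flows}, Corollary~\ref{C:A flow}, and Corollary~\ref{H Dynamics for rho}. Your write-up is simply a more detailed spelling-out of that bookkeeping.
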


We end this section with the following result, which encapsulates the microscopic conservation law attendant to $A(\vk)$ under the various flows considered in this paper.

\begin{cor}\label{C:microscopic}
For \(\kappa,\vk\geq 1\) and initial data in $\BdS$, we have
\eq{MicroscopicA}{
\p_t\rho(\vk) + \p_x j_\star = 0,
}
for each of the NLS, mKdV, and difference flows; the currents are given by
\begin{align*}
&j_{\NLS}(\vk) = - i\left(\tfrac{ q'\cdot g_{21}(\vk) + r'\cdot g_{12}(\vk)}{2 + \vr(\vk)} - qr + 2\vk \rho(\vk)\right),\\
&j_{\mKdV}(\vk) = \tfrac{ (q'' - 2q^2r)\cdot g_{21}(\vk) - (r'' - 2r^2q)\cdot g_{12}(\vk)}{2 + \vr(\vk)} - q'r + qr' + 2i\vk j_{\NLS}(\vk),\\
&j_\NLS^\diff(\vk,\kappa) = j_{\NLS}(\vk) + 4\kappa^3\left(j(\vk,\kappa) - j(\vk,-\kappa)\right),\\
&j_\mKdV^\diff(\vk,\kappa) = j_{\mKdV}(\vk) + 8i\kappa^4\left(j(\vk,\kappa) + j(\vk,-\kappa)\right) + 4\kappa^2\rho(\vk).
\end{align*}
\end{cor}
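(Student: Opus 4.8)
The plan is to verify each of the four current formulas by combining the known microscopic conservation law $\partial_t\rho(\vk)+\partial_x j(\vk,\kappa)=0$ for the $A(\kappa)$ flow (Corollary~\ref{C:A flow}) with the known dynamics of the individual Hamiltonians via their asymptotic expansion \eqref{A asymptotics}, and then assembling the composite flows. First I would establish the NLS and mKdV currents directly. Since $H_{\NLS}$ and $H_{\mKdV}$ appear as the $(2\kappa)^{-3}$ and $(2\kappa)^{-4}$ coefficients in \eqref{A asymptotics}, one approach is to extract $j_\NLS$ and $j_\mKdV$ from the large-$\vk$ expansion of $j(\cdot,\kappa)$ in \eqref{j sub A}; however, it is cleaner to proceed directly. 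Namely, using the explicit NLS dynamics \eqref{qr under A}-type formulas — i.e. $i\partial_t q = -q''+2q^2r$, $i\partial_t r = -r''+2r^2 q$ — together with the evolution equations \eqref{NLS-g12-eq} and \eqref{NLS-rho-eq} for $g_{12}(\vk)$ and $\vr(\vk)$ (and their conjugate-symmetric partners for $g_{21}(\vk)$), I would compute $\partial_t\rho(\vk)$ from the quotient formula $\rho(\vk)=\frac{qg_{21}(\vk)-rg_{12}(\vk)}{2+\vr(\vk)}$ in \eqref{micro A} by the product/quotient rule, and check that the result equals $-\partial_x j_{\NLS}(\vk)$. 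This is a direct but lengthy calculation; the key algebraic inputs that make the currents close up into a perfect $x$-derivative are the spatial identities \eqref{g12-ID}, \eqref{g21-ID}, \eqref{rho-ID}, and the quadratic identity \eqref{QuadraticID}. The mKdV case is analogous, using instead $i\partial_t q = i(-q'''+6qrq')$ rearranged appropriately together with the $H_\mKdV$ evolution equations from Corollary~\ref{H Dynamics for rho}; the stated structure $j_\mKdV(\vk) = (\cdots) - q'r + qr' + 2i\vk\, j_\NLS(\vk)$ reflects exactly the relation between consecutive members of the hierarchy and should be exploited to reduce the mKdV computation to the NLS one plus a short additional verification.

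For the two difference-flow currents, the strategy is purely to take linear combinations. Recall the regularized Hamiltonians are defined in \eqref{D:HNLSK}, \eqref{D:HKdVK} as
$$
H_{\NLS}^{\kappa} = -4\kappa^3[A(\kappa)-A(-\kappa)] + 4\kappa^2 M, \qquad
H_{\mKdV}^{\kappa} = -8i\kappa^4[A(\kappa)+A(-\kappa)] + 4\kappa^2 P,
$$
and the difference Hamiltonians are $H_\star^\diff = H_\star - H_\star^\kappa$. Since $\rho(\vk)$ obeys a microscopic conservation law under each constituent flow — under $A(\pm\kappa)$ with current $j(\vk,\pm\kappa)$ from \eqref{j sub A}, under $M$ with current $i\rho(\vk)$ coming from the $M$-dynamics in Corollary~\ref{H Dynamics for rho} (phase rotation: $i\partial_t\rho = 0$ pointwise, hence trivially a conservation law, but one must track that $M = \frac1{2\kappa}$-coefficient contributes the $-i\,q r$ type term — more precisely I would read off the $M$-current from the $\vk\to\infty$ expansion), and under $P$ with current $-\rho(\vk)$ (translation: $\partial_t\rho = \rho'$, so the current is $-\rho(\vk)$) — the current associated to the difference Hamiltonian $H_\star^\diff = H_\star - H_\star^\kappa$ is simply $j_\star(\vk)$ minus the corresponding linear combination of $j(\vk,\pm\kappa)$, $i\rho(\vk)$, $-\rho(\vk)$. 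Carrying out the bookkeeping with the explicit coefficients $-4\kappa^3$, $4\kappa^2$ (for NLS) and $-8i\kappa^4$, $4\kappa^2$ (for mKdV) yields exactly the stated formulas for $j_\NLS^\diff(\vk,\kappa)$ and $j_\mKdV^\diff(\vk,\kappa)$. The sign patterns $j(\vk,\kappa)-j(\vk,-\kappa)$ and $j(\vk,\kappa)+j(\vk,-\kappa)$ match the $A(\kappa)-A(-\kappa)$ and $A(\kappa)+A(-\kappa)$ combinations in the regularized Hamiltonians, so this step is essentially immediate once the constituent currents are in hand.

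The main obstacle is the brute-force verification that $\partial_t\rho(\vk) = -\partial_x j_\NLS(\vk)$ (and its mKdV analogue) using only the pointwise identities available at this regularity. The difficulty is not conceptual but organizational: one must differentiate the rational expression $\rho = (qg_{21}-rg_{12})/(2+\vr)$, substitute the time derivatives of $q,r,g_{12},g_{21},\vr$, and then repeatedly use \eqref{g12-ID}--\eqref{QuadraticID} to recognize the outcome as a perfect spatial derivative — in particular, terms with denominators $(2+\vr)$ and $(2+\vr)^2$ must be shown to combine correctly, using $\gamma' = \partial_x\vr$ from \eqref{rho-ID} to move derivatives off $\vr$. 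I would organize this by first checking it for Schwartz $q$ (so all manipulations are classical) and then noting that both sides extend continuously to $q\in H^s$ by the estimates of Corollary~\ref{C:ET} and Lemma~\ref{L:A}, exactly as in the proof of \eqref{micro A}. A useful sanity check throughout is that $\Im\rho(\vk)$ (for NLS) and $\Re\rho(\vk)$ (for mKdV) must produce the coercive currents identified in Lemma~\ref{lem:Currents are coercive}, which pins down the overall normalization.
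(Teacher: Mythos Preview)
Your proposal is correct and follows essentially the same approach as the paper, which gives no detailed proof but simply indicates that the result follows from Proposition~\ref{prop:kappa-flows} together with Corollaries~\ref{C:A flow} and~\ref{H Dynamics for rho}. Your plan---compute $\partial_t\rho(\vk)$ directly for NLS and mKdV using the known dynamics of $q,r,g_{12},g_{21},\vr$ from Corollary~\ref{H Dynamics for rho}, then obtain the difference-flow currents by linear combination using the definitions \eqref{D:HNLSK}--\eqref{D:HKdVK}---is exactly what the paper has in mind.

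One small clarification: your hedging about the $M$-current is unnecessary. You correctly note that under the $M$-flow $\partial_t\rho(\vk)=0$ identically (since $i\partial_t q=q$ and $i\partial_t g_{12}=g_{12}$ imply $\partial_t(qg_{21})=\partial_t(rg_{12})=0$, and $\partial_t\vr=0$), so the $M$-current is simply $0$. Thus the $4\kappa^2 M$ term in $H_\NLS^\kappa$ contributes nothing to $j_\NLS^\diff$, which is why the stated formula reads $j_\NLS^\diff=j_\NLS+4\kappa^3(j(\vk,\kappa)-j(\vk,-\kappa))$ with no extra piece. Similarly the $P$-current is $-\rho(\vk)$ (from $\partial_t\rho=\rho'$), producing the $+4\kappa^2\rho(\vk)$ term in $j_\mKdV^\diff$. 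With this straightened out, the bookkeeping for the difference flows is immediate. Also, the asymptotic expansion you mention as an alternative should be in large $\kappa$ (not large $\vk$): one expands $j(\vk,\kappa)$ from \eqref{j sub A} using the series for $g_{12}(\kappa),g_{21}(\kappa),\vr(\kappa)$ in \eqref{biHam2} and reads off $j_M,j_P,j_\NLS,j_\mKdV$ as the coefficients matching those of $M,P,H_\NLS,H_\mKdV$ in \eqref{A asymptotics}; this is a viable and arguably cleaner route to the NLS and mKdV currents than the direct quotient-rule computation.
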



\section{Local smoothing}\label{S:6}

The goal of this section is to prove local smoothing estimates, not only for the NLS and mKdV flows, but also for the difference flows.   To do this, we will be using an integrated form of the microscopic conservation law \eqref{MicroscopicA} for \(A(\vk)\):
\begin{equation}\label{LSD}
\int_{-1}^1\int_\R j_\star(t,x;\vk)\,\psi_h^{12}(x)\,dx\,dt = \int \big[\rho(1,x;\vk) - \rho(-1,x;\vk)\big]\,\Psi_h(x)\,dx,
\end{equation}
where $h\in\R$ is a translation parameter, $\psi_h$ is as in \eqref{psi}, 
\begin{equation}\label{phi-def}
\Psi_h(x) := \int_{-\infty}^x\psi_h^{12}(y)\,dy,
\end{equation}
and the currents are as recorded in Corollary~\ref{C:microscopic}.

Eventually, we will take a supremum over $h\in\R$ as in \eqref{E:D:LS}.  With this in mind, implicit constants in this section are always to be interpreted as independent of $h$.

Control of the local smoothing norm will originate in the coercivity of the LHS\eqref{LSD} that we have already hinted at in Lemma~\ref{lem:Currents are coercive}.  The first result in this section, Lemma~\ref{lem:Currents-quad}, shows that this coercivity of the quadratic currents survives in the presence of localization.  As noted already in Section~\ref{S:4}, we will need to take the real or imaginary part of \eqref{LSD}, depending on the flow in question. 

To continue, we will show how to control the RHS\eqref{LSD}; see Lemma~\ref{lem:rhA}.  This leaves us to control the higher order terms in the currents; this is the topic of Lemma~\ref{lem:Currents-err}.  In estimating such terms, it is convenient to combine the key norms:
\begin{align*}
\normNK{q}^2 := \|q\|_{X_\kappa^{s+\frac12}}^2 + \|q\|_{L^\infty_t H^s}^2
	\qtq{and} \normMK{q}^2 := \|q\|_{X_\kappa^{s+1}}^2 + \|q\|_{L^\infty_t H^s}^2,
\end{align*}
with the convention that a missing subscript means $\kappa=1$.

The proofs of Lemmas~\ref{lem:Currents-quad} and~\ref{lem:Currents-err} are both quite substantial.  With this in mind, we delay presenting these proofs until after giving the main results of this section, namely, Propositions~\ref{prop:NLS-LS}, \ref{prop:mKdV-LS}, \ref{P:ls NLS diff}, and~\ref{P:ls mKdV diff}.

\begin{lem}[Estimates for \(j_\star\sbrack{2}\)]\label{lem:Currents-quad}
Fix $\delta>0$ sufficiently small.  Then
\begin{align}
\Im \int j_{\NLS}\sbrack{2}(\vk)\,\psi_h^{12}\,dx &= \pm 2\|(\psi_h^{6}q)'\|_{H^{-1}_\vk}^2  + \bigO\Big(\|q\|_{H^{-\frac12}_\vk}^2\Big),\label{jNLS-quad}\\
\Re\int j_{\mKdV}\sbrack{2}(\vk)\,\psi_h^{12}\,dx &= \mp 6\vk\|(\psi_h^{6} q)'\|_{H^{-1}_\vk}^2\label{jmKdV-quad}\\
&\quad + \bigO\Big(\|(\psi_h^{6} q)'\|_{H^{-\frac12}_\vk}\|q\|_{H^{-\frac12}_\vk} + \|q\|_{H^{-\frac12}_\vk}^2\Big),\notag
\end{align}
uniformly for $q\in\BdS$, \(\vk\geq 1\), and \(h\in \R\).  Analogously,
\begin{align}
\Im \int j_\NLS^\diff{}\sbrack{2}(\vk,\kappa)\,\psi_h^{12}\,dx &= \pm 2\int \frac{\xi^4(8\kappa^2 + \xi^2)|\widehat{\psi_h^6 q}(\xi)|^2\,d\xi}{(4\vk^2 + \xi^2)(4\kappa^2 + \xi^2)^2} + \bigO\Big(\|q\|_{H^{-\frac12}_\vk}^2\Big),\!\!\!\label{jNLSdiff-quad}\\
\Re\int j_\mKdV^{\diff\;[2]}(\vk,\kappa)\,\psi_h^{12}\,dx &= \mp 2\vk\int \frac{(20\kappa^2 + 3\xi^2)\xi^4|\widehat{\psi_h^{6}q}(\xi)|^2\,d\xi}{(4\vk^2 + \xi^2)(4\kappa^2 + \xi^2)^2} \label{jmKdVdiff-quad}\\
&\quad  + \bigO\Big(\|\tfrac{(\psi_h^{6}q)''}{\sqrt{4\kappa^2 - \p^2}}\|_{H^{-\frac12}_\vk}\|q\|_{H^{-\frac12}_\vk} + \|q\|_{H^{-\frac12}_\vk}^2\Big),\notag
\end{align}
uniformly for $q\in\BdS$, \(\kappa,\vk\geq 1\), and \(h\in \R\).
\end{lem}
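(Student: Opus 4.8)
The plan is to compute each quadratic current $j_\star\sbrack2$ explicitly from Corollary~\ref{C:microscopic}, integrate it against $\psi_h^{12}$, move one factor $\psi_h^6$ onto each of the two constituents of every bilinear term so that all operators act on $\psi_h^6 q$ (or $\psi_h^6 r$) rather than on $q$ or $r$, and then read off the main term by Plancherel, consigning every commutator remainder to the stated $\bigO(\cdot)$.

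\medskip
\noindent\textbf{Step 1: the quadratic currents.} Since $g_{12}(\vk),g_{21}(\vk)$ begin at first order while $\vr(\vk),\rho(\vk)$ begin at second order, the quadratic part of each current in Corollary~\ref{C:microscopic} is obtained by the substitutions $g_{12}(\vk)\mapsto g_{12}\sbrack1(\vk)$, $g_{21}(\vk)\mapsto g_{21}\sbrack1(\vk)$ (see \eqref{g12 1 3}, \eqref{g21 1 3}), $\vr(\vk)\mapsto\vr\sbrack2(\vk)$ (see \eqref{gamma 2}), $\rho(\vk)\mapsto\rho\sbrack2(\vk)$ (see \eqref{rho quadratic}) and $(2+\vr)^{-1}\mapsto\tfrac12$, and likewise at $\kappa$ for the difference currents. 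For instance, after collecting $q'+2\vk q=(2\vk+\p)q$ and $2\vk r-r'=(2\vk-\p)r$, one finds
\[
j_{\NLS}\sbrack2(\vk) = -\tfrac i2\Bigl((2\vk+\p)q\cdot\tfrac r{2\vk+\p} + (2\vk-\p)r\cdot\tfrac q{2\vk-\p}\Bigr) + i\,qr;
\]
the $\mKdV$ current additionally involves $q''\!\cdot\! g_{21}\sbrack1(\vk)$, $r''\!\cdot\! g_{12}\sbrack1(\vk)$, and $q'r-qr'$, together with $2i\vk\, j_{\NLS}\sbrack2(\vk)$, while the difference currents bring in $j\sbrack2(\vk,\pm\kappa)$ computed from \eqref{j sub A} and, for $\mKdV$, an extra $4\kappa^2\rho\sbrack2(\vk)$.

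\medskip
\noindent\textbf{Step 2: localization.} Writing $\psi_h^{12}=\psi_h^6\cdot\psi_h^6$ and distributing, every bilinear term becomes $\int(\psi_h^6 L_1q)(\psi_h^6 L_2r)\,dx$ with $L_1,L_2$ products of differential operators and resolvents; we then commute $\psi_h^6$ through $L_1$ and $L_2$. Through a differential operator this costs, by Leibniz, terms of the form $(\p^n\psi_h^6)\,q$ — no derivative on the data, hence lower order — or $(\p^n\psi_h^6)\,q'$, which we rewrite as $\bigl(\tfrac{\p^n\psi_h^6}{\psi_h^6}\bigr)(\psi_h^6 q)' - \bigl(\tfrac{\p^n\psi_h^6}{\psi_h^6}\bigr)(\psi_h^6)'q$ with $\tfrac{\p^n\psi_h^6}{\psi_h^6}$ bounded together with its derivatives (as $\psi=\sech(\tfrac{\cdot}{99})$), so that the genuine derivative $(\psi_h^6 q)'$ appears; this is the origin of the factor $\|(\psi_h^6 q)'\|_{H^{-\frac12}_\vk}$ in \eqref{jmKdV-quad}, and of $\|\tfrac{(\psi_h^6 q)''}{\sqrt{4\kappa^2-\p^2}}\|_{H^{-\frac12}_\vk}$ in \eqref{jmKdVdiff-quad} when the two derivatives are smoothed by a $\kappa$-resolvent. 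Through the resolvents $(2\vk\pm\p)^{-1}$ and $(2\kappa\pm\p)^{-1}$ we invoke the multiplicative commutator identity \eqref{Multiplicative Commutator} and the bounds \eqref{1}, \eqref{1'}, which replace $\psi_h^6(2\vk\pm\p)^{-1}f$ by $(2\vk\pm\p)^{-1}(\psi_h^6 f)$ up to a term better by one power of $\vk$ (resp.\ $\kappa$). Using that multiplication by the Schwartz function $\psi_h$ is bounded on $H^\sigma_\vk$ uniformly in $h$ and $\vk\geq1$, together with $|\int fg|\le\|f\|_{H^{-\frac12}_\vk}\|g\|_{H^{\frac12}_\vk}$ and $\|(2\vk\pm\p)^{-1}r\|_{H^{\frac12}_\vk}=\|r\|_{H^{-\frac12}_\vk}=\|q\|_{H^{-\frac12}_\vk}$, every such remainder is bilinear at exactly the regularity appearing in the error terms of \eqref{jNLS-quad}--\eqref{jmKdVdiff-quad}; moreover the two genuinely bilinear boundary terms $\int u(\psi_h^6)'r$ and $\int(\psi_h^6)'q\,v$ produced by the transport pieces $q'r,qr'$ cancel, both being $\pm\int\psi_h^6(\psi_h^6)'|q|^2$.

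\medskip
\noindent\textbf{Step 3: the main term and the difference flows.} After localization the leading contribution is a finite sum of terms $\int\bigl(P(\p)u\bigr)\bigl(Q(\p)^{-1}v\bigr)\,dx$ with $u=\psi_h^6 q$, $v=\psi_h^6 r=\pm\overline{\psi_h^6 q}$ and $P,Q$ explicit polynomials; via $\int fg\,dx=\int\hat f(\xi)\hat g(-\xi)\,d\xi$ and $\hat v(-\xi)=\pm\overline{\hat u(\xi)}$ this collapses to $\pm\int m(\xi)\,|\hat u(\xi)|^2\,d\xi$ for an explicit rational multiplier $m$. Taking $\Im$ for the $\NLS$ currents and $\Re$ for the $\mKdV$ currents annihilates the purely imaginary part of $m$ and leaves precisely $\int\tfrac{\xi^2}{4\vk^2+\xi^2}|\hat u|^2\,d\xi=\|(\psi_h^6 q)'\|_{H^{-1}_\vk}^2$ (with the extra factor $\vk$ in \eqref{jmKdV-quad} assembled from the $2i\vk\, j_{\NLS}\sbrack2$ piece and the $q''\!\cdot\! g_{21}\sbrack1$, $r''\!\cdot\! g_{12}\sbrack1$ pieces). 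For the difference currents one must additionally simplify $4\kappa^3\bigl(m(\xi;\vk,\kappa)-m(\xi;\vk,-\kappa)\bigr)$, respectively $8i\kappa^4\bigl(m(\xi;\vk,\kappa)+m(\xi;\vk,-\kappa)\bigr)+4\kappa^2$: the apparent pole at $\kappa=\vk$ inherited from \eqref{j sub A} cancels on symmetrization in $\pm\kappa$, and (in line with the regularization structure behind \eqref{A asymptotics}) the $O(\kappa)$ growth likewise cancels, so that the surviving rational function reduces, upon taking real/imaginary parts, to $\tfrac{\xi^4(8\kappa^2+\xi^2)}{(4\vk^2+\xi^2)(4\kappa^2+\xi^2)^2}$ and $\tfrac{\vk(20\kappa^2+3\xi^2)\xi^4}{(4\vk^2+\xi^2)(4\kappa^2+\xi^2)^2}$, yielding \eqref{jNLSdiff-quad} and \eqref{jmKdVdiff-quad}.

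\medskip
\noindent\textbf{The main obstacle.} There is little conceptual difficulty; the work is bookkeeping. The genuinely delicate step is the difference-flow algebra in Step~3: one must expand a sizeable number of bilinear paraproducts, perform the Plancherel reduction, symmetrize over $\pm\kappa$ to witness the cancellation both of the $(\kappa-\vk)^{-1}$ factor and of the would-be $O(\kappa)$ term, and then confirm that the surviving rational multiplier has exactly the advertised real (or imaginary) part with the precise numerical coefficients, all while keeping the many commutator remainders organized so that they visibly fall under the stated $\bigO(\cdot)$. A secondary point is the correct splitting of the $\mKdV$ errors — the third-order differential operators force one argument to be measured one (or, with a $\kappa$-resolvent, two smoothed) derivatives rougher, so one must check that the rough factor $\|(\psi_h^6 q)'\|_{H^{-\frac12}_\vk}$ (resp.\ $\|\tfrac{(\psi_h^6 q)''}{\sqrt{4\kappa^2-\p^2}}\|_{H^{-\frac12}_\vk}$) is never needed on both arguments simultaneously.
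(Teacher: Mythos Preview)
Your approach is correct and essentially matches the paper's: compute the quadratic currents explicitly, split $\psi_h^{12}=\psi_h^6\cdot\psi_h^6$, commute $\psi_h^6$ through the operators to produce the main term plus commutator remainders, and evaluate the main term by Plancherel.

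Two organizational differences are worth noting. First, the paper packages everything through a single paraproduct $\mathbf R[f,g]$ with symbol $R(\xi,\eta)=\tfrac1{2(2\vk-i\xi)}+\tfrac1{2(2\vk+i\eta)}$, so that $\rho\sbrack2=\mathbf R[q,r]$ and $j\sbrack2(\vk,\kappa)=i\mathbf R\bigl[\tfrac q{2\kappa-\p},\tfrac r{2\kappa+\p}\bigr]$; this makes the pole cancellation at $\kappa=\vk$ automatic rather than something to be verified by hand, and reduces each of the four cases to the same template (namely, bound $\int\mathbf R[f,g]\,\psi_h^{12}-\int\mathbf R[\psi_h^6 f,\psi_h^6 g]$). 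Second, for \eqref{jNLSdiff-quad} the paper observes that $j_{\NLS}\sbrack2-j_{\NLS}^{\diff}{}\sbrack2$ has exactly the same form as $j_{\NLS}\sbrack2$ with $q$ replaced by $\tfrac{4\kappa^2 q}{4\kappa^2-\p^2}$, so the difference-flow case follows immediately from the already-proved \eqref{jNLS-quad} together with the trivial bounds $\|\tfrac{4\kappa^2}{4\kappa^2-\p^2}q\|_{H^{-1/2}_\vk}\lesssim\|q\|_{H^{-1/2}_\vk}$ and $\|\p[\psi_h^6,\tfrac{4\kappa^2}{4\kappa^2-\p^2}]q\|_{H^{-1/2}_\vk}\lesssim\|q\|_{H^{-1/2}_\vk}$; this bypasses the multiplier computation you describe in Step~3 entirely for the NLS difference flow. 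Your direct route works too, but these devices cut the bookkeeping considerably.
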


\begin{lem}[Estimate for \(\rho\)]\label{lem:rhA}
Let \(q\in \BdS\) and \(\Psi_h\) be defined as in \eqref{phi-def}. Then for \(\vk\geq 1\) we have the estimate
\begin{equation}\label{rhoA}
\left|\int \rho(\vk)\,\Psi_h\,dx\right| \lesssim \|q\|_{H^{-\frac12}_\vk}^2  + \vk^{-2(2s+1)}\delta^2\|q\|_{H^s}^2,
\end{equation}
where the implicit constant is independent of \(h,\vk\).
\end{lem}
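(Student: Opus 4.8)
The plan is to decompose $\rho(\vk)$ into its quadratic part $\rho\sbrack{2}(\vk)$ and the remainder $\rho\sbrack{\geq 4}(\vk)$, exactly as in \eqref{rho quadratic}--\eqref{rho higher}, and estimate the two contributions to $\int\rho(\vk)\,\phi_h\,dx$ separately. The second term is the easy one: since $\phi_h$ is bounded (indeed $0\le\phi_h\le\tfrac{512}{7}$), we have $|\int\rho\sbrack{\geq 4}(\vk)\,\phi_h\,dx|\lesssim\|\rho\sbrack{\geq 4}(\vk)\|_{L^1}$, and then \eqref{rho-LO} — more precisely the analogue for $\rho$ rather than $\vr$, which follows from \eqref{rho higher} together with the $L^2$ bounds on $q$, $r$ and on $\big(\tfrac{g_{12}}{2+\vr}\big)\sbrack{\geq 3}$, $\big(\tfrac{g_{21}}{2+\vr}\big)\sbrack{\geq 3}$ coming from \eqref{ET1 Sob} and the embedding $H^{s+1}\hookrightarrow L^2$ — produces the claimed $\vk^{-2(2s+1)}\delta^2\|q\|_{H^s}^2$ (in fact with an extra power of $\delta$ to spare). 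So the entire difficulty sits in the quadratic term.

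For the quadratic term, I would write out $\int\rho\sbrack{2}(x;\vk)\,\phi_h(x)\,dx$ using \eqref{rho quadratic}: it is $\tfrac12\int\big[q\cdot\tfrac{r}{2\vk+\p}+\tfrac{q}{2\vk-\p}\cdot r\big]\phi_h\,dx$. The naive bound here costs two powers of $|\xi|^{-1}$ from the resolvents but the product structure together with the multiplication by $\phi_h$ threatens to lose control. The key point — and the main obstacle — is to get the $H^{-1/2}_\vk$ norm on the right-hand side rather than something weaker. I would exploit that $\phi_h'=\psi_h^{12}$ so that $\phi_h$ is, up to the constant $\tfrac{512}{7}$, a smoothed indicator, and split $\phi_h=\tfrac{512}{7}\bbo_{\{x>h\}}+(\text{smooth }L^1\text{ correction})$ is \emph{not} quite right; better is to integrate by parts moving one factor of $\p$ from the resolvent onto $\phi_h$, converting a resolvent $(2\vk\mp\p)^{-1}$ acting on one factor into a combination of $\phi_h$-localized pieces and a genuinely smoothing operator. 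Concretely, I would use the operator identity $\tfrac{1}{2\vk-\p}=\tfrac{1}{2\vk}+\tfrac{1}{2\vk}\p\tfrac{1}{2\vk-\p}$ and its analogue, together with the fact that commuting $\phi_h$ past a resolvent produces, via $[\phi_h,(2\vk\mp\p)^{-1}]=\mp(2\vk\mp\p)^{-1}\psi_h^{12}(2\vk\mp\p)^{-1}$, an operator with an integrable-in-$h$ kernel bound. After these manipulations the pairing reduces to a sum of terms each of which is controlled by $\|\psi_h^{a}q\|_{H^{-1/2}_\vk}^2$ for suitable powers $a$, and then Lemma~\ref{L:localize} (specifically \eqref{Multiplicative Commutator}/\eqref{Localized Multiplicative Commutator}) lets me trade the localizing powers of $\psi$ freely, absorbing everything into $\|q\|_{H^{-1/2}_\vk}^2$ — here one uses that multiplication by $\psi_h^a$ is bounded on $H^{-1/2}_\vk$ uniformly in $h$ and $\vk$, which follows from \eqref{psi is harmless} and duality.

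The reason the target is $H^{-1/2}_\vk$ and not, say, $H^s_\vk$ is that the two resolvents contribute $(4\vk^2+\xi^2)^{-1/2}$ worth of smoothing \emph{each} but the product/paraproduct structure forces us to spend one of them controlling the high-frequency interaction; what remains after pairing against the bounded function $\phi_h$ is exactly the weight $(4\vk^2+\xi^2)^{-1/2}$ on $|\hat q(\xi)|^2$, i.e.\ the $H^{-1/2}_\vk$ norm squared. I expect the bookkeeping of the localizing powers $\psi_h^\ell$ — making sure $|\ell|\le 12$ throughout so Lemma~\ref{L:localize} applies, and that the exponential decay of the resolvent kernel dominates the decay of $\psi^\ell$ (the role of the constant $99$ in \eqref{psi}) — to be the most delicate part of the write-up, though it is entirely parallel to manipulations already carried out in the proof of Lemma~\ref{lem:Currents-quad}. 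Assembling the quadratic bound $\lesssim\|q\|_{H^{-1/2}_\vk}^2$ with the remainder bound $\lesssim\vk^{-2(2s+1)}\delta^2\|q\|_{H^s}^2$ gives \eqref{rhoA}.
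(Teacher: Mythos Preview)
Your decomposition into $\rho\sbrack{2}$ and $\rho\sbrack{\geq 4}$ matches the paper, but both halves of your argument need correction.

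For the quadratic term you are vastly overcomplicating matters. No commutators, no resolvent expansions, no $\psi_h$-localization are needed. The paper simply observes that
\[
\int \rho\sbrack{2}(\vk)\,\phi_h\,dx = \tfrac12\int (\phi_h q)\cdot\tfrac{r}{2\vk+\p}\,dx + \tfrac12\int \tfrac{q}{2\vk-\p}\cdot(\phi_h r)\,dx
\]
and bounds each summand by the duality pairing $H^{-1/2}_\vk\times H^{1/2}_\vk$, giving $\|\phi_h q\|_{H^{-1/2}_\vk}\|q\|_{H^{-1/2}_\vk}$. Since $\phi_h$ is bounded with $\phi_h'=\psi_h^{12}$ (hence all derivatives bounded uniformly in $h$), multiplication by $\phi_h$ is bounded on $H^{-1/2}_\vk$ uniformly in $h$ and $\vk$, and the quadratic bound $\lesssim\|q\|_{H^{-1/2}_\vk}^2$ follows in one line. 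Your proposed route via $[\phi_h,(2\vk\mp\p)^{-1}]$ would ultimately reduce to the same estimate but through unnecessary intermediate steps; moreover, your claim that everything lands on $\|\psi_h^a q\|_{H^{-1/2}_\vk}^2$ is dubious, since $\phi_h$ is not decaying and no $\psi_h$-localization arises naturally here.

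For the remainder term there is a genuine gap: you invoke ``$L^2$ bounds on $q,r$'', but $q\in H^s$ with $s<0$ is \emph{not} controlled in $L^2$ by the hypothesis, so $\|\rho\sbrack{\geq 4}(\vk)\|_{L^1}$ cannot be bounded via $L^2\times L^2\to L^1$. The paper instead pairs $\phi_h q\in H^s$ against $\big(\tfrac{g_{21}}{2+\vr}\big)\sbrack{\geq 3}\in H^{-s}$ using \eqref{rat in -s} (which interpolates \eqref{ET1 Sob}), yielding $\vk^{-2(2s+1)}\delta^2\|q\|_{H^s}^2$ directly. Your instinct to use \eqref{ET1 Sob} is right, but the duality must be $H^s\times H^{-s}$, not $L^2\times L^2$.
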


\begin{proof}
As in the proof of Proposition~\ref{prop:alpha}, we write $\rho(\vk) = \rho\sbrack{2} (\vk)+ \rho\sbrack{\geq 4}(\vk)$.  From \eqref{rho quadratic}, we bound 
\[
\left|\int \rho\sbrack{2}(\vk)\,\Psi_h\,dx\right| \lesssim \|\Psi_h q\|_{H^{-\frac12}_\vk}\|q\|_{H^{-\frac12}_\vk}\lesssim \|q\|_{H^{-\frac12}_\vk}^2.
\]
Using \eqref{rho higher} and \eqref{rat in -s}, we may bound
\[
\left|\int \rho\sbrack{\geq 4}(\vk)\,\Psi_h\,dx\right| \lesssim \vk^{-2(2s+1)}\delta^2\|q\|_{H^s}^2.
\]
This completes the proof of the lemma.
\end{proof}

To control the contribution of the remaining part \(j_\star\sbrack{\geq 4}\) of the current, we use the following lemma.  The proof of this result will take up the majority of this section.

\begin{lem}[Estimates for \(j_\star\sbrack{\geq 4}\)]\label{lem:Currents-err}
Let \(q\in \Cont([-1,1];\BdS)\) with $\delta>0$ sufficiently small.  For any \(\vk\geq 1\), we have
\begin{align}
\left\|\int j_{\NLS}\sbrack{\geq 4}(\vk)\,\psi_h^{12}\,dx\right\|_{L^1_t} &\lesssim \vk^{-2(2s+1)}\delta^2\normN{q}^2,\label{jNLS-err}\\
\left\|\int j_{\mKdV}\sbrack{\geq 4}(\vk)\,\psi_h^{12}\,dx\right\|_{L^1_t}& \lesssim \big[\vk^{-1}+\vk^{-2(2s+1)}\log^4|2\vk|\big]\delta^2 \normM{q}^2 . \label{jmKdV-err}
\end{align}
Moreover, if \(\kappa\geq 8\) and \(\vk\in  [\kappa^{\frac23},\frac12\kappa]\cup[2\kappa,\infty)\), then
\eq{jNLSdiff-err}{
\begin{aligned}
\left\|\int \! j_\NLS^\diff{}\sbrack{\geq 4}(\vk,\kappa)\,\psi_h^{12}\,dx\right\|_{L^1_t} &\lesssim \bigl[\tfrac\kappa{\kappa + \vk}\kappa^{-\frac43(2s+1)}
	+ \vk^{-2(2s+1)}\bigr]\delta^2 \normNK{q}^2,
\end{aligned}
}
whereas for \(\vk\in [\kappa^{\frac12},\frac12 \kappa]\cup [2\kappa,\infty)\), we have
\eq{jmKdVdiff-err}{
\begin{aligned}
\left\|\int \! j_\mKdV^{\diff\;[\geq 4]}(\vk,\kappa)\,\psi_h^{12}\,dx\right\|_{L^1_t} &\lesssim \bigl[\tfrac\kappa{\kappa + \vk}\kappa^{-(2s+1)}
	+ \vk^{-2(2s+1)}\log|2\vk|\bigr] \delta^2 \normMK{q}^2.
\end{aligned}
}
In all cases, the implicit constant is independent of $h$, $\vk$, and $\kappa$.
\end{lem}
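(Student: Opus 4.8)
The plan is to establish all four bounds simultaneously, the point being that every term of $j_\star\sbrack{\geq 4}$ carries at least four copies of $q$ and $r$: two will be measured in a local-smoothing norm — producing the factor $\normNK{q}^2$, resp.\ $\normMK{q}^2$ — and the rest in $L^\infty_t H^s_\vk$, which is $\lesssim\delta$. First I would expand each current, using Corollary~\ref{C:microscopic} together with \eqref{more sbrack}--\eqref{more sbrack''}, \eqref{rho quadratic}, \eqref{rho higher}, \eqref{j2 pre-series}, and the series \eqref{g12-gamma-Series}, into a finite sum of terms, each of which is the product of an explicit low-order factor drawn from $\{q,r,q',r',q'',r'',q^2r,qr^2\}$ with one of the rational remainders $\bigl(\tfrac{g_{12}(\vk)}{2+\vr(\vk)}\bigr)\sbrack{\geq 3}$, $\bigl(\tfrac{g_{12}(\vk)}{2+\vr(\vk)}\bigr)\sbrack{\geq 5}$, their $g_{21}$-analogues, or — in the difference cases — with products such as $g_{12}(\kappa)g_{21}(\vk)$ and $\vr(\kappa)$, divided by $2+\vr(\vk)$ and by $(\kappa\mp\vk)$. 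For the first two estimates only $\vk$ occurs; the genuinely new feature of \eqref{jNLSdiff-err} and \eqref{jmKdVdiff-err} is the prefactor $4\kappa^3$, resp.\ $8\kappa^4$, inherited from $H_{\NLS}^\kappa$ and $H_{\mKdV}^\kappa$, which must be defeated.

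Second, I would localize and reduce to manageable pieces. Writing $\psi_h^{12}=\psi_h^6\psi_h^6$, I use the multiplicative commutators of Lemma~\ref{L:localize} — together with the iteration identities $g_{12}(\vk)=-(2\vk-\p)^{-1}(q+\vr q)$, $g_{21}(\vk)=(2\vk+\p)^{-1}(r+\vr r)$, and \eqref{QuadraticID} — to transport a power $\psi_h^6$ from each side of the expression up against a copy of $q$ or $r$; the commutator errors generated by \eqref{Multiplicative Commutator} and \eqref{Localized Multiplicative Commutator} are of the same shape but with additional decay, hence harmless. For a term in which a rational remainder $\bigl(\tfrac{g_{12}(\vk)}{2+\vr(\vk)}\bigr)\sbrack{\geq 3}$ etc.\ appears as a unit, I pair it — via Corollary~\ref{C:ET}, Lemma~\ref{L:LSgg}, and \eqref{rat in -s} — against the (now localized) low-order factor in the dual Sobolev space and in $L^2_t$, so that each of the two carries a local-smoothing norm and the $\vk^{-(2s+1)}$-type gains of \eqref{ET1 LS}--\eqref{ET2 LS} come out. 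For the remaining, genuinely paraproduct, terms I regard $\int(\cdots)\psi_h^{12}\,dx$ as a trace of an operator product: two building blocks are localized operators of the shape $(\vk-\p)^{-1/2}(\psi_h u)(\vk+\p)^{-1/2}$ (or their $\kappa$-counterparts), estimated in $L^2_t\I_2$ or $L^4_t\I_4$ (NLS) by Lemma~\ref{lem:LambdaLoc NLS}, resp.\ in $L^p_t\I_p$ (mKdV, with $p$ chosen to balance the factor count) by Lemma~\ref{L:LpIp mKdV}; the rest are un-localized $\Lambda,\Gamma$ — and, in the difference cases, $\Lambda(\kappa),\Gamma(\kappa)$ — estimated in $\I_2$ or $\op$ by \eqref{Lambda}, \eqref{BasicBound}, \eqref{BasicOpBound}, each contributing a factor $\lesssim\delta$; Hölder in $t$ then closes the time integrations. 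The explicit factors $q',q''$ etc.\ are dealt with by integration by parts, transferring the derivatives onto the remainders, for which ample local-smoothing regularity is available by Corollary~\ref{C:ET} — which is precisely why the mKdV local-smoothing norm must be taken at regularity $s+1$.

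The remaining task — tracking the powers of $\vk$ and $\kappa$ so that they assemble into the stated expressions — is where I expect the real work to lie. For the terms carrying a power of $\vk$ out front (the pieces $2\vk\,\rho(\vk)$ and $2i\vk\,j_{\NLS}$ inside $j_{\NLS}$ and $j_{\mKdV}$) and, a fortiori, the difference-flow terms carrying $\kappa^3$ or $\kappa^4$, a crude pairing of the low-order factor in $H^s$ against the remainder in $H^{-s}$ falls short of the claimed decay by a full power of $\vk$ or $\kappa$; one must instead exploit the fine structure — the identity $g_{12}\sbrack{\geq 3}(\vk)=-(2\vk-\p)^{-1}(q\vr)$ from \eqref{g12-ID} and the $\vk$-smallness of $\vr$ recorded in \eqref{rho-Hs}, \eqref{rho-X-1} — to absorb the prefactor while still leaving two copies of $q,r$ for the local-smoothing norms. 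In the difference flows the prefactors $\kappa^3,\kappa^4$ are defeated by three ingredients acting together: the denominators $(\kappa\mp\vk)^{-1}$ and $(\kappa\mp\vk)^{-2}$ in $j(\vk,\kappa)$, comparable to $\kappa^{-1}$ when $\vk\le\tfrac12\kappa$ and to $\vk^{-1}$ when $\vk\ge 2\kappa$ — which is exactly why the band $(\tfrac12\kappa,2\kappa)$ is excluded; the $\kappa$-decay of each localized $\Lambda(\kappa)$-type factor pulled out of $g_{12}(\kappa),g_{21}(\kappa),\vr(\kappa)$, where the conjugation-symmetric combinations $g_{12}(\kappa)\mp g_{12}(-\kappa)$ defining $j_\star^\diff$ supply one extra power of $\kappa^{-1}$ beyond the naive count; and the standing hypotheses $\vk\ge\kappa^{2/3}$, resp.\ $\vk\ge\kappa^{1/2}$, which are precisely what is needed to invoke Lemmas~\ref{lem:LambdaLoc NLS} and~\ref{L:LpIp mKdV}. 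Assembling these contributions produces the stated factors $\tfrac\kappa{\kappa+\vk}\kappa^{-\frac43(2s+1)}+\vk^{-2(2s+1)}$ and $\tfrac\kappa{\kappa+\vk}\kappa^{-(2s+1)}+\vk^{-2(2s+1)}\log|2\vk|$; the logarithmic losses $\log^4|2\vk|$ and $\log|2\vk|$ are inherited directly from the endpoint cases $p(\tfrac12-s)\in\{3,5\}$ of Lemma~\ref{L:LpIp mKdV}, while the NLS bounds stay log-free because Lemma~\ref{lem:LambdaLoc NLS} is.
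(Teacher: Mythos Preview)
Your overall strategy is sound and matches the paper's: expand the currents, localize via Lemma~\ref{L:localize}, feed two factors into local-smoothing via the trace-ideal lemmas~\ref{lem:LambdaLoc NLS} and~\ref{L:LpIp mKdV}, and keep the rest small by \eqref{Lambda} and Corollary~\ref{C:ET}. For \eqref{jNLS-err} and \eqref{jmKdV-err} this is essentially complete, though the paper packages the key step as a separate estimate (their \eqref{stack Dual NLS} and \eqref{stack Dual mKdV}) rather than arguing term-by-term.

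The genuine gap is in the difference-flow estimates \eqref{jNLSdiff-err} and \eqref{jmKdVdiff-err}. You assert that the combinations $g_{12}(\kappa)\mp g_{12}(-\kappa)$ ``supply one extra power of $\kappa^{-1}$ beyond the naive count,'' but your proposed mechanism --- estimating $\Lambda(\kappa)$-type factors in $\I_2$ --- does not see this gain: such estimates are insensitive to the sign of $\kappa$ and give identical bounds for both combinations. The paper obtains the extra decay by a different route: it expands $g_{12}\sbrack{3}(\kappa)\pm g_{12}\sbrack{3}(-\kappa)$ explicitly as a paraproduct with symbol in the class $S(\beta;\kappa)$ of \eqref{E:symbol class}, observes that terms with $|\beta|$ of one parity cancel, and then invokes dedicated quartic paraproduct lemmas (Lemmas~\ref{lem:NLS-para} and~\ref{lem:mKdV-para}) proved by direct Littlewood--Paley summation. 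This symbol-level cancellation is the missing idea; without it the $\kappa^3$ and $\kappa^4$ prefactors cannot be defeated.

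For \eqref{jmKdVdiff-err} the situation is substantially worse than you indicate. The paper's decomposition of $j_\mKdV^{\diff\,[\geq 4]}$ runs to nine error terms and requires \emph{adding and subtracting} explicit quartic expressions such as $\tfrac{1}{2\kappa^3}q^2r$ and $\tfrac{3}{2\kappa^2}qr\bigl(q\cdot\tfrac{r}{4\kappa^2-\p^2}+\tfrac{q}{4\kappa^2-\p^2}\cdot r\bigr)$ --- chosen precisely to cancel the $|\beta|=0$ parts of $g_{12}\sbrack{3}(\kappa)-g_{12}\sbrack{3}(-\kappa)$ and $\vr\sbrack{4}(\kappa)+\vr\sbrack{4}(-\kappa)$ --- before the paraproduct lemmas can be applied. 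The paper remarks that this ``represents a very delicate accounting for numerous cancellations''; none of this structure is visible from the ingredients you list, and a proof attempt along your lines would stall at these quartic terms.
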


The restriction $\kappa\geq 8$ (rather than $\kappa\geq 1$) appearing in this proposition is imposed to avoid confusion in the meaning of the constraints on $\vk$.  It guarantees that in both cases, the first interval is nonempty. 

The fact that the $\kappa=1$ case of \eqref{jmKdVdiff-err} yields a better bound than \eqref{jmKdV-err} warrants explanation.  Ultimately, this is because LHS\eqref{jmKdVdiff-err} requires a much more detailed analysis in order to achieve a satisfactory bound.  The bound \eqref{jmKdV-err} could be improved by a parallel analysis; however, this is not needed for what follows.

With these estimates in hand we are now able to prove our local smoothing estimates:

\begin{prop}[Local smoothing for the NLS]\label{prop:NLS-LS}
There exists \(\delta>0\) so that for any \(q(0)\in \BdS\), the solution \(q(t)\) of \eqref{NLS} satisfies the estimate
\begin{equation}\label{NLS-LS}
\|q\|_{X^{s+\frac12}}^2 \lesssim \|q(0)\|_{H^s}^2.
\end{equation}

Further, we have the high frequency estimate
\begin{equation}\label{NLS-LS-HF}
\bigl\|(\psi_h^6q)'\|_{L_t^2 H_\kappa^{s-\frac12}}^2\lesssim \|q(0)\|_{H^s_\kappa}^2 + \kappa^{-(2s+1)}\delta^2 \|q(0)\|_{H^s}^2,
\end{equation}
uniformly for \(\kappa\geq 1\).
\end{prop}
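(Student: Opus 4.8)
The plan is to run the integrated microscopic conservation law \eqref{LSD} for the NLS current $j_{\NLS}(\vk)$ of Corollary~\ref{C:microscopic}, take imaginary parts, insert Lemmas~\ref{lem:Currents-quad}, \ref{lem:rhA}, and~\ref{lem:Currents-err}, and then integrate the resulting family of estimates over the spectral parameter $\vk$. Two facts will be used throughout: since $q(0)\in\BdS$ the solution $q(t)$ remains in $\BdS$, and since the NLS flow preserves $A(\vk)$ for every $|\vk|\geq 1$, Proposition~\ref{prop:alpha} furnishes the a priori bound $\|q(t)\|_{H^s_\vk}\lesssim\|q(0)\|_{H^s_\vk}$ uniformly in $t\in[-1,1]$ and $\vk\geq 1$; in particular $\|q\|_{L^\infty_tH^s}\lesssim\|q(0)\|_{H^s}$. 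Taking $\Im$ in \eqref{LSD}, decomposing $j_{\NLS}(\vk)=j_{\NLS}\sbrack{2}(\vk)+j_{\NLS}\sbrack{\geq 4}(\vk)$, integrating the pointwise identity \eqref{jNLS-quad} in time, estimating the higher-order current by \eqref{jNLS-err}, and bounding the right side of \eqref{LSD} by \eqref{rhoA} at $t=\pm1$ (with the a priori bound applied to the factors $\|q(\pm1)\|_{H^s}$), we reach, for every $\vk\geq 1$ and $h\in\R$,
\[
\int_{-1}^1\bigl\|(\psi_h^6q)'\bigr\|_{H^{-1}_\vk}^2\,dt \lesssim \int_{-1}^1\|q\|_{H^{-\frac12}_\vk}^2\,dt + \|q(1)\|_{H^{-\frac12}_\vk}^2 + \|q(-1)\|_{H^{-\frac12}_\vk}^2 + \vk^{-2(2s+1)}\delta^2\bigl(\normN{q}^2+\|q(0)\|_{H^s}^2\bigr).
\]

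Next I would multiply this inequality by $\vk^{2s}$ and integrate over $\vk\in[\kappa,\infty)$ for a fixed $\kappa\geq 1$. The left side becomes $\int_{-1}^1\|(\psi_h^6q)'\|_{H^{s-\frac12}_\kappa}^2\,dt$ by Lemma~\ref{lem:EquivNorm} applied to $(\psi_h^6q)'$ with exponents $(s',s)=(-1,s-\tfrac12)$ — the associated weight $\vk^{2(s-\frac12-(-1))}\tfrac{d\vk}\vk$ is exactly $\vk^{2s}\,d\vk$. The same lemma with $(s',s)=(-\tfrac12,s)$, together with Fubini for the time-integral term and the a priori bound, turns each of the first three terms on the right into $\lesssim\|q(0)\|_{H^s_\kappa}^2$. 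Finally $\int_\kappa^\infty\vk^{2s-2(2s+1)}\,d\vk\approx\kappa^{-(2s+1)}$ — the integral converging precisely because $s>-\tfrac12$ — so the error term becomes $\lesssim\kappa^{-(2s+1)}\delta^2(\normN{q}^2+\|q(0)\|_{H^s}^2)$. Taking the supremum over $h$ and using $\normN{q}^2=\|q\|_{X^{s+\frac12}}^2+\|q\|_{L^\infty_tH^s}^2\lesssim\|q\|_{X^{s+\frac12}}^2+\|q(0)\|_{H^s}^2$, this yields
\begin{equation*}
\sup_{h}\bigl\|(\psi_h^6q)'\bigr\|_{L^2_tH^{s-\frac12}_\kappa}^2 \lesssim \|q(0)\|_{H^s_\kappa}^2 + \kappa^{-(2s+1)}\delta^2\bigl(\|q\|_{X^{s+\frac12}}^2+\|q(0)\|_{H^s}^2\bigr) \tag{$\star$}
\end{equation*}
for every $\kappa\geq 1$.

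To finish, I would first specialize $(\star)$ to $\kappa=1$. The identity $\|q\|_{X^{s+\frac12}}^2=\sup_h\int_{-1}^1\|\psi_h^6q\|_{H^{s+\frac12}}^2\,dt$ and the elementary bound $\|\psi_h^6q\|_{H^{s+\frac12}}^2\approx\|(\psi_h^6q)'\|_{H^{s-\frac12}}^2+\|P_{\leq 1}(\psi_h^6q)\|_{L^2}^2$ reduce matters to controlling the low-frequency remainder, which is $\lesssim\sup_h\int_{-1}^1\|\psi_h^6q\|_{H^s}^2\,dt\lesssim\|q(0)\|_{H^s}^2$ by the multiplier estimate \eqref{multiplier bdd on Hs} and the a priori bound. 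Hence $\|q\|_{X^{s+\frac12}}^2\lesssim\|q(0)\|_{H^s}^2+\delta^2\|q\|_{X^{s+\frac12}}^2$; since $q$ is Schwartz the left side is finite, so for $\delta$ small enough the last term may be absorbed, giving \eqref{NLS-LS}. Inserting \eqref{NLS-LS} back into $(\star)$ replaces $\|q\|_{X^{s+\frac12}}^2$ on the right by $\|q(0)\|_{H^s}^2$, which is \eqref{NLS-LS-HF}.

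All the genuinely hard work lies in Lemmas~\ref{lem:Currents-quad} and~\ref{lem:Currents-err}, so the proof of the proposition proper is mostly organizational. Within it, the one step that requires real care is the choice of the weight $\vk^{2s}\,d\vk$ in the integration over the spectral parameter: this single weight must simultaneously (i) convert the coercive quadratic term into exactly the $H^{s-\frac12}_\kappa$-norm of $(\psi_h^6q)'$, (ii) convert the $H^{-\frac12}_\vk$ remainders into the $H^s_\kappa$-norm that the a priori bound controls, and (iii) leave the $\vk^{-2(2s+1)}$ error summable over $[\kappa,\infty)$ down to $\kappa^{-(2s+1)}$ — all three being possible only when $-\tfrac12<s<0$. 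A secondary technicality is that the bootstrap closing \eqref{NLS-LS} is legitimate only because Schwartz initial data yield Schwartz, hence $X^{s+\frac12}$-finite, solutions.
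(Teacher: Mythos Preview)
Your proof is correct and follows the same route as the paper's. The only difference is the order of two steps: the paper first applies the a priori bound \eqref{APBound} at an auxiliary regularity $s'\in(-\tfrac12,s)$ to estimate $\|q\|_{L^\infty_t H^{-1/2}_\vk}^2\lesssim\vk^{-(2s'+1)}\|q(0)\|_{H^{s'}_\vk}^2$ and then integrates in~$\vk$, whereas you integrate first (via Lemma~\ref{lem:EquivNorm} with $(s',s)=(-\tfrac12,s)$) and then apply \eqref{APBound} at regularity~$s$, which spares you the auxiliary exponent~$s'$ but is otherwise identical.
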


\begin{proof}
Consider the imaginary part of \eqref{LSD}.  Applying the estimates \eqref{jNLS-quad} and \eqref{jNLS-err} to the LHS and the estimate \eqref{rhoA} to the RHS, we obtain
\begin{align*}
\|(\psi_h^6 q)'\|_{L^2_t H^{-1}_\vk}^2 &\lesssim  \|q\|_{L^\infty_t H^{-\frac12}_\vk}^2  +\vk^{-2(2s+1)}\delta^2\left(\|q\|_{X^{s+\frac12}}^2 +  \|q\|_{L^\infty_t H^s}^2\right),
\end{align*}
where the implicit constant is independent of \(h,\vk\). We then choose \(-\frac12<s'<s\) and apply the a priori estimate \eqref{APBound} to obtain
\begin{align*}
\|(\psi_h^6 q)'\|_{L^2_t H^{-1}_\vk}^2 &\lesssim \vk^{-(2s'+1)} \|q(0)\|_{H^{s'}_\vk}^2+ \vk^{-2(2s+1)}\delta^2\left(\|q\|_{X^{s+\frac12}}^2 + \|q(0)\|_{H^s}^2\right).
\end{align*}

Taking \(\kappa\geq 1\) and using \eqref{EquivNorm}, we obtain
\begin{equation}\label{NLS-LS-HFID}
\begin{aligned}
\|(\psi_h^6 q)'\|_{L^2_t H^{s-\frac12}_\kappa}^2 &\approx\int_\kappa^\infty \vk^{2s+1}\|(\psi_h^6 q)'\|_{L^2_t H^{-1}_\vk}^2\,\tfrac{d\vk}\vk\\
&\lesssim \|q(0)\|_{H^s_\kappa}^2 + \kappa^{-(2s+1)}\delta^2\left(\|q\|_{X^{s+\frac12}}^2 + \|q(0)\|_{H^s}^2\right).
\end{aligned}
\end{equation}

To complete the proof, we take \(\kappa = 1\) to deduce
\begin{align*}
\|\psi_h^6q\|_{L^2_t H^{s+\frac12}}^2 &\lesssim \|P_{>1}(\psi_h^6 q)'\|_{L^2_t H^{s-\frac12}}^2 + \|P_{\leq 1}(\psi_h^6 q)\|_{L^\infty_t H^s}^2 \\
&\lesssim \delta^2\|q\|_{X^{s+\frac12}}^2 + \|q(0)\|_{H^s}^2.
\end{align*}
Taking the supremum over \(h\in \R\) and choosing \(0<\delta\ll1\) sufficiently small, we obtain the estimate \eqref{NLS-LS}. The claim \eqref{NLS-LS-HF} then follows from \eqref{NLS-LS} and \eqref{NLS-LS-HFID}.
\end{proof}

An essentially identical argument yields the corresponding result for the mKdV:

\begin{prop}[Local smoothing for the mKdV]\label{prop:mKdV-LS}
There exists \(\delta>0\) so that for any \(q(0)\in\BdS\), the solution \(q(t)\) of \eqref{mKdV} satisfies the estimate
\eq{mKdV-LS}{
\|q\|_{X^{s+1}}^2\lesssim \|q(0)\|_{H^s}^2.
}

Further, we have the high frequency estimate
\eq{mKdV-LS-HF}{
\bigl\|(\psi_h^6q)'\|_{L^2_t H_\kappa^{s}}^2\lesssim \|q(0)\|_{H^s_\kappa}^2 +  \kappa^{-(2s+1)}\log^4\!|2\kappa| \, \delta^2 \|q(0)\|_{H^s}^2,
}
uniformly for \(\kappa\geq 1\).
\end{prop}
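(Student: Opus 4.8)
The plan is to mimic the proof of Proposition~\ref{prop:NLS-LS} verbatim, with three substitutions: the \emph{real} part of the integrated microscopic conservation law \eqref{LSD} in place of the imaginary part; the mKdV currents $j_{\mKdV}$, $j_{\mKdV}\sbrack{2}$, $j_{\mKdV}\sbrack{\geq 4}$ in place of their NLS counterparts; and the regularity $s+1$ in place of $s+\frac12$, matching the extra derivative gained by the Airy evolution. So I would start from the real part of \eqref{LSD} with $j_\star=j_{\mKdV}(\vk)$, decompose $j_{\mKdV}=j_{\mKdV}\sbrack{2}+j_{\mKdV}\sbrack{\geq 4}$, and feed in the three basic inputs: the quadratic-current identity \eqref{jmKdV-quad}, which isolates the coercive term $\mp 6\vk\|(\psi_h^6q)'\|_{H^{-1}_\vk}^2$; the higher-order bound \eqref{jmKdV-err}, which controls $\|\int j_{\mKdV}\sbrack{\geq 4}(\vk)\,\psi_h^{12}\,dx\|_{L^1_t}$ by $[\vk^{-1}+\vk^{-2(2s+1)}\log^4|2\vk|]\delta^2\normM{q}^2$; and \eqref{rhoA} applied at $t=\pm1$ (together with the a priori bound \eqref{APBound}) for the right side of \eqref{LSD}. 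Rearranging gives, for each fixed $h\in\R$ and each $\vk\geq 1$, a bound of the shape
\[
6\vk\,\|(\psi_h^6q)'\|_{L^2_t H^{-1}_\vk}^2\lesssim \|q\|_{L^\infty_tH^{-\frac12}_\vk}^2+\bigl[\vk^{-1}+\vk^{-2(2s+1)}\log^4|2\vk|\bigr]\delta^2\normM{q}^2+\int_{-1}^1\|(\psi_h^6q)'\|_{H^{-\frac12}_\vk}\|q\|_{H^{-\frac12}_\vk}\,dt.
\]

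The step I expect to be the main obstacle is the last term on the right, which has no analogue in the NLS argument: the quadratic-current error in \eqref{jmKdV-quad} carries a half-derivative more than in \eqref{jNLS-quad}, and in particular the factor $\|(\psi_h^6q)'\|_{H^{-\frac12}_\vk}$ lies a half-derivative above what the coercive quantity $\vk\|(\psi_h^6q)'\|_{H^{-1}_\vk}^2$ directly controls, the mismatch living precisely at frequencies $|\xi|\gtrsim\vk$. To handle it I would split $(\psi_h^6q)'$ into its $|\xi|\lesssim\vk$ and $|\xi|\gtrsim\vk$ portions: the low-frequency portion obeys $\|\cdot\|_{H^{-\frac12}_\vk}\lesssim\sqrt{\vk}\,\|(\psi_h^6q)'\|_{H^{-1}_\vk}$, so after Cauchy--Schwarz in $t$ and Young's inequality (using $\|q\|_{L^\infty_tH^{-\frac12}_\vk}\lesssim\vk^{-\frac12-s'}\|q(0)\|_{H^{s'}_\vk}$ from Sobolev embedding plus \eqref{APBound}, with $s'<s$) it is absorbed into the coercive left-hand side at the cost of an extra $\|q(0)\|_{H^{s'}_\vk}^2$-term of the same form as the $\rho$-term; the high-frequency portion obeys $\|\cdot\|_{L^2_tH^{-\frac12}_\vk}\lesssim \vk^{-\frac12-s}\|\psi_h^6q\|_{L^2_tH^{s+1}}\lesssim \vk^{-\frac12-s}\|q\|_{X^{s+1}_\kappa}$ by \eqref{E:change}, producing a contribution that is both small (it is multiplied by $\|q\|_{L^2_tH^{-\frac12}_\vk}$, which is $O(\delta)$ and decaying in $\vk$) and mild enough in $\vk$ to be handled by Young's inequality with a small free parameter $\eta$. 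One has to keep track of powers of $\vk$ rather carefully here — in particular, using the \emph{lower} regularity $s'\in(-\frac12,s)$ for all the pieces that are not the coercive term is essential so that the subsequent $\vk$-integral converges and reproduces $\|q(0)\|_{H^s_\kappa}^2$ on the nose (via \eqref{EquivNorm}), exactly as in the NLS argument.

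With the cross term disposed of, I would divide by $6\vk$, multiply by $\vk^{2s+1}$, and integrate over $\vk\in[\kappa,\infty)$; invoking Lemma~\ref{lem:EquivNorm} (with base regularity $-1$) then yields, for every $\kappa\geq1$,
\[
\|(\psi_h^6q)'\|_{L^2_tH^s_\kappa}^2\lesssim \|q(0)\|_{H^s_\kappa}^2+\kappa^{-(2s+1)}\log^4|2\kappa|\,\delta^2\normM{q}^2+\eta\,\|q\|_{X^{s+1}_\kappa}^2 ,
\]
where $\eta>0$ is a small absolute constant at our disposal, coming from the residue of the cross term. Specializing to $\kappa=1$, using the elementary inequality $\|\psi_h^6q\|_{H^{s+1}}\lesssim\|(\psi_h^6q)'\|_{H^s}+\|\psi_h^6q\|_{H^s}$ and \eqref{multiplier bdd on Hs}, taking the supremum over $h\in\R$, recalling that for Schwartz data the solution stays Schwartz so $\|q\|_{X^{s+1}}<\infty$, and absorbing the $\delta^2\normM{q}^2$ and $\eta\|q\|_{X^{s+1}}^2$ terms for $\delta$ and $\eta$ sufficiently small, one obtains \eqref{mKdV-LS}. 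Finally, \eqref{mKdV-LS-HF} follows from the displayed $\kappa\geq1$ estimate by using \eqref{mKdV-LS} to replace $\normM{q}^2$ and $\|q\|_{X^{s+1}_\kappa}^2$ by $\|q(0)\|_{H^s}^2$, together with the two elementary facts $\|q(0)\|_{H^s_\vk}\le\|q(0)\|_{H^s_\kappa}$ for $\vk\ge\kappa$ (valid since $s<0$) and $\kappa^{-1}\|q(0)\|_{H^s}^2\lesssim\|q(0)\|_{H^s_\kappa}^2$ (valid since $s>-\tfrac12$), which absorb the leftover non-$\delta$-small contributions into $\|q(0)\|_{H^s_\kappa}^2$.
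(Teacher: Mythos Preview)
Your overall strategy matches the paper's: take the real part of \eqref{LSD} for $j_{\mKdV}$, use the coercivity \eqref{jmKdV-quad}, the higher-order bound \eqref{jmKdV-err}, the density estimate \eqref{rhoA}, then integrate in $\vk$ against $\vk^{2s+1}\,\tfrac{d\vk}{\vk}$ and invoke Lemma~\ref{lem:EquivNorm}. The endgame (specialize to $\kappa=1$, absorb, then deduce \eqref{mKdV-LS-HF}) is also correct.

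The gap is in your handling of the cross term $\|(\psi_h^6q)'\|_{H^{-1/2}_\vk}\|q\|_{H^{-1/2}_\vk}$. Your high-frequency bound
\[
\|P_{>\vk}(\psi_h^6q)'\|_{L^2_tH^{-1/2}_\vk}\lesssim \vk^{-\frac12-s}\|q\|_{X^{s+1}_\kappa}
\]
is correct, but once you freeze this as a constant in $\vk$ and apply Young, the resulting $\eta\,\|q\|_{X^{s+1}_\kappa}^2$ term carries a weight $\vk^{-1-2s}$; after dividing by $\vk$ and multiplying by $\vk^{2s+1}$ you are left integrating $\eta\,\|q\|_{X^{s+1}_\kappa}^2\,\vk^{-1}\,d\vk$ over $[\kappa,\infty)$, which diverges logarithmically. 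No rearrangement of the Young split avoids this once you have passed to the constant $\|q\|_{X^{s+1}_\kappa}$: the bound discards precisely the residual $\vk$-decay that the integral needs.

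The paper sidesteps this by \emph{not} splitting frequencies. It applies Young directly,
\[
\|(\psi_h^6q)'\|_{H^{-1/2}_\vk}\|q\|_{H^{-1/2}_\vk}\le \varepsilon\|(\psi_h^6q)'\|_{H^{-1/2}_\vk}^2+C_\varepsilon\|q\|_{H^{-1/2}_\vk}^2,
\]
and then uses Lemma~\ref{lem:EquivNorm} \emph{twice}, with two different base regularities: with $s'=-1$ on the coercive left-hand side, and with $s'=-\tfrac12$ on the $\varepsilon$ term. Both integrals are $\approx\|(\psi_h^6q)'\|_{L^2_tH^s_\kappa}^2$, so choosing $\varepsilon$ small absorbs the latter into the former. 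Your low-frequency step is in fact equivalent to this; the unnecessary high-frequency detour through $\|q\|_{X^{s+1}_\kappa}$ is what breaks. Dropping the split and arguing as above closes the proof.
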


\begin{proof}
Consider the real part of \eqref{LSD}.  Applying \eqref{jmKdV-quad} and \eqref{jmKdV-err} to the LHS and applying \eqref{rhoA} to the RHS, we deduce that
\begin{align*}
\vk\|(\psi_h^6 q)'\|_{L^2_t H^{-1}_\vk}^2 &\lesssim \varepsilon\|(\psi_h^6 q)'\|^2_{L^2_t H^{-\frac12}_\vk} + (1 + \tfrac1\varepsilon)\|q\|_{L^\infty_t H^{-\frac12}_\vk}^2\\
& \quad + \big[\vk^{-1}+\vk^{-2(2s+1)}\log^4\!|2\vk|\big]\delta^2 \normM{q}^2
\end{align*}
for any $0<\varepsilon<1$.  Here the implicit constant is independent of \(h,\vk, \varepsilon\). Applying the a priori estimate \eqref{APBound}, for any \(-\frac12<s'<s\) we obtain
\begin{align*}
\vk\|(\psi_h^6 q)'\|_{L^2_t H^{-1}_\vk}^2 &\lesssim \varepsilon\|(\psi_h^6 q)'\|_{L^2_t H^{-\frac12}_\vk}^2+ (1 + \tfrac1\varepsilon)\vk^{-(2s'+1)}\|q(0)\|_{H^{s'}_\vk}^2\\
& \quad + \big[\vk^{-1}+\vk^{-2(2s+1)}\log^4\!|2\vk|\big]\delta^2\Big(\|q\|_{X^{s+1}}^2 + \|q(0)\|_{H^s}^2\Big).
\end{align*}

Using the estimate \eqref{EquivNorm}, we obtain
\begin{align*}
\|(\psi_h^6 q)'\|_{L^2_t H^s_\kappa}^2&\approx \int_{\kappa}^\infty\vk^{2s+2}\|(\psi_h^6 q)'\|_{H^{-1}_\vk}^2\,\tfrac{d\vk} \vk \\
&\lesssim \varepsilon \|(\psi_h q)'\|_{L^2_t H^s_\kappa}^2 + (1 + \tfrac1\varepsilon)\|q(0)\|_{H^s_\kappa}^2\\
& \quad + \big[\kappa^{2s}+\kappa^{-(2s+1)}\log^4\!|2\kappa|\big]\delta^2\Big(\|q\|_{X^{s+1}}^2 + \|q(0)\|_{H^s}^2\Big).
\end{align*}
Choosing \(0<\varepsilon<1\) sufficiently small to defeat the implicit constant, we get
\eq{mKdV-LS-HFID}{
\begin{aligned}
\|(\psi_h^6 q)'\|_{L^2_t H^s_\kappa}^2 &\lesssim \|q(0)\|_{H^s_\kappa}^2+ \big[\kappa^{2s}+\kappa^{-(2s+1)}\log^4\!|2\kappa|\big]\delta^2\Big(\|q\|_{X^{s+1}}^2 + \|q(0)\|_{H^s}^2\Big).
\end{aligned}
}

To complete the proof, we apply the estimate \eqref{mKdV-LS-HFID} with \(\kappa = 1\) to bound
\begin{align*}
\|\psi_h^6 q\|_{L^2_t H^{s+1}}^2 &\lesssim \|P_{>1}(\psi_h^6 q)'\|_{L^2_t H^s}^2 + \|P_{\leq 1}(\psi_h^6 q)\|_{L^\infty_t H^s}^2 \\
&\lesssim \delta^2\|q\|_{X^{s+1}}^2 + \|q(0)\|_{H^s}^2.
\end{align*}
Taking the supremum over \(h\in \R\) and choosing \(0<\delta\ll1\) sufficiently small, we obtain \eqref{mKdV-LS}.  The estimate \eqref{mKdV-LS-HF} then follows from \eqref{mKdV-LS}, \eqref{mKdV-LS-HFID}, and the observation that $\kappa^{2s}\|q(0)\|_{H^s}^2\lesssim \|q(0)\|_{H^s_\kappa}^2$.
\end{proof}

In Propositions~\ref{prop:NLS-LS} and~\ref{prop:mKdV-LS}, the parameter \(\kappa\) plays the role of a frequency threshold. The fact that we obtain decay as \(\kappa\to\infty\) will be essential both for proving tightness and for proving that the data-to-solution map is continuous in the local smoothing norm.

We now turn to proving local smoothing for the difference flows. In this context, \(\kappa\) takes on a new meaning as the parameter appearing in the regularized Hamiltonians; see~\eqref{D:Hdiff}. In this role, \(\kappa\) marks a border (in frequency space): it is only for frequencies below \(\kappa\) that the regularized and full Hamiltonian flows well-approximate one another. Correspondingly, it is only for frequencies above \(\kappa\) that we can expect to recover the full local smoothing effects documented above for \eqref{NLS} and \eqref{mKdV}.

\begin{prop}[Local smoothing for the NLS difference flow]\label{P:ls NLS diff} There exists \(\delta>0\) so that for any \(q(0)\in \BdS\) and \(\kappa\geq 8\), the solution \(q(t)\) of the NLS difference flow \eqref{NLS-diff} with parameter \(\kappa\) satisfies the estimate
\eq{NLS-diff-LS}{
\|q\|_{X^{s+\frac12}_\kappa}^2\lesssim \|q(0)\|_{H^s}^2,
}
where the implicit constant is independent of \(\kappa\).
\end{prop}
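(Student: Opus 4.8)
The plan is to feed the NLS difference current $j_\NLS^\diff(\vk,\kappa)$ into the integrated microscopic conservation law \eqref{LSD}, take imaginary parts, and harvest the coercivity of the quadratic current. Since the difference flow conserves $A(\vk)$ for every $\vk\ge1$ (as both the full NLS flow and the regularized flow do, cf.\ Lemma~\ref{lem:PoissonBrackets}), the a priori bound \eqref{APBound} applies; moreover, as the full NLS flow and the regularized flow both preserve the Schwartz class (Proposition~\ref{prop:kappa-flows}), the solution $q$ lies in $\Cont([-1,1];\BdS)$ after shrinking $\delta$, with $\|q\|_{L^\infty_tH^s}\lesssim\|q(0)\|_{H^s}$. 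Fix $\kappa\ge8$, write $c_{\vk,\kappa}(\xi):=\tfrac{\xi^4(8\kappa^2+\xi^2)}{(4\vk^2+\xi^2)(4\kappa^2+\xi^2)^2}$, and let $W:=[\kappa^{\frac23},\tfrac12\kappa]\cup[2\kappa,\infty)$ be the range of spectral parameters for which \eqref{jNLSdiff-err} is available. Splitting $j_\NLS^\diff=j_\NLS^\diff{}\sbrack{2}+j_\NLS^\diff{}\sbrack{\geq 4}$ and applying \eqref{jNLSdiff-quad} to the quadratic part of $\LHS{LSD}$, \eqref{jNLSdiff-err} to the remainder, and \eqref{rhoA} (at $t=\pm1$) to $\RHS{LSD}$, one obtains, uniformly in $h\in\R$ and $\vk\in W$,
\[
\int_{-1}^1\!\!\int_\R c_{\vk,\kappa}(\xi)\,\bigl|\widehat{\psi_h^6 q}(t,\xi)\bigr|^2\,d\xi\,dt
  \ \lesssim\ \|q\|_{L^\infty_t H^{-\frac12}_\vk}^2
  + \Bigl[\tfrac{\kappa}{\kappa+\vk}\,\kappa^{-\frac43(2s+1)}+\vk^{-2(2s+1)}\Bigr]\delta^2\normNK{q}^2 .
\]
Picking $-\tfrac12<s'<s$ and using \eqref{APBound} again replaces $\|q\|_{L^\infty_t H^{-\frac12}_\vk}^2$ by $\vk^{-(2s'+1)}\|q(0)\|_{H^{s'}_\vk}^2$, while $\normNK{q}^2\lesssim\|q\|_{X^{s+\frac12}_\kappa}^2+\|q(0)\|_{H^s}^2$.

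Next I would recover $\|q\|_{X^{s+\frac12}_\kappa}^2$ by integrating this inequality against $\vk^{2s+1}\,\tfrac{d\vk}{\vk}$ over $W$. Since $\tfrac{\xi^4(8\kappa^2+\xi^2)}{(4\kappa^2+\xi^2)^2}\approx\tfrac{\xi^4}{4\kappa^2+\xi^2}$, the symbol factors as $c_{\vk,\kappa}(\xi)\approx\tfrac{\xi^2}{4\kappa^2+\xi^2}\cdot\tfrac{\xi^2}{4\vk^2+\xi^2}$, and \eqref{EquivNorm} with base $\kappa^{\frac23}$ and exponents $s-\tfrac12,-1$ gives, pointwise in $\xi$,
\[
\int_{\kappa^{\frac23}}^\infty c_{\vk,\kappa}(\xi)\,\vk^{2s+1}\,\tfrac{d\vk}{\vk}
  \ \approx\ \frac{\xi^2}{4\kappa^2+\xi^2}\,\xi^2\,\bigl(4\kappa^{\frac43}+\xi^2\bigr)^{s-\frac12},
\]
which for $|\xi|\gtrsim\kappa^{\frac23}$ reduces to $(4+\xi^2)^{s+\frac32}(4\kappa^2+\xi^2)^{-1}$, precisely the Fourier weight defining $\|\cdot\|_{X^{s+\frac12}_\kappa}$. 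The spectral gap $(\tfrac12\kappa,2\kappa)$ omitted from $W$ is harmless: there both $\vk^{2s+1}$ and the symbol $c_{\vk,\kappa}$ are comparable to their values at $\vk=2\kappa$, so that contribution is dominated by that of $[2\kappa,4\kappa]\subset W$. For the complementary frequencies $|\xi|\lesssim\kappa^{\frac23}$ one checks directly that $(4+\xi^2)^{s+\frac32}(4\kappa^2+\xi^2)^{-1}\lesssim(4+\xi^2)^s$ when $\kappa\ge8$, so this range is absorbed into $\|\psi_h^6 q\|_{L^\infty_t H^s}^2\lesssim\|q(0)\|_{H^s}^2$. Taking the supremum over $h$, this reduces the Proposition to bounding $\int_W\vk^{2s}$ times the left-hand side of the first display above.

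The remaining integration is routine power counting, all exponents cooperating because $-\tfrac12<s<0$: the term $\vk^{-(2s'+1)}\|q(0)\|_{H^{s'}_\vk}^2$ integrates (again by \eqref{EquivNorm}, base $\kappa^{\frac23}$) to $\|q(0)\|_{H^s_{\kappa^{2/3}}}^2\le\|q(0)\|_{H^s}^2$; the contribution of $\vk^{-2(2s+1)}$ is $\lesssim\kappa^{-\frac23(2s+1)}$, and $\int_W\vk^{2s}\tfrac{\kappa}{\kappa+\vk}\,d\vk\lesssim\kappa^{2s+1}$, so the error terms contribute $\lesssim\delta^2\bigl[\kappa^{-\frac13(2s+1)}+\kappa^{-\frac23(2s+1)}\bigr]\bigl(\|q\|_{X^{s+\frac12}_\kappa}^2+\|q(0)\|_{H^s}^2\bigr)$, which is $\lesssim\delta^2\bigl(\|q\|_{X^{s+\frac12}_\kappa}^2+\|q(0)\|_{H^s}^2\bigr)$ since $\kappa\ge8$ and $2s+1>0$. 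Altogether $\|q\|_{X^{s+\frac12}_\kappa}^2\lesssim\|q(0)\|_{H^s}^2+\delta^2\|q\|_{X^{s+\frac12}_\kappa}^2$; as the left-hand side is finite (the solution is Schwartz and continuous in time), shrinking $\delta$ absorbs the last term and yields \eqref{NLS-diff-LS}.

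I expect essentially all the difficulty to reside in the cited inputs — above all Lemma~\ref{lem:Currents-err}, whose proof occupies the bulk of this section — rather than in the orchestration above. Within the present argument the only delicate points are the identification of the correct weight $\vk^{2s+1}\,\tfrac{d\vk}{\vk}$, the fact that the coercive quantities reconstruct the $X^{s+\frac12}_\kappa$ norm only at frequencies $|\xi|\gtrsim\kappa^{\frac23}$ (forcing a separate, a-priori-bound treatment of lower frequencies), and the bridging of the spectral gap $(\tfrac12\kappa,2\kappa)$ left by the hypotheses of Lemma~\ref{lem:Currents-err}.
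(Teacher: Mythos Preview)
Your proof is correct and follows essentially the same route as the paper's: take the imaginary part of \eqref{LSD}, apply \eqref{jNLSdiff-quad}, \eqref{jNLSdiff-err}, and \eqref{rhoA}, integrate against $\vk^{2s+1}\tfrac{d\vk}{\vk}$ over $W=[\kappa^{2/3},\tfrac12\kappa]\cup[2\kappa,\infty)$, handle the spectral gap by comparability with $[2\kappa,4\kappa]$, treat frequencies $|\xi|\lesssim\kappa^{2/3}$ via the a priori bound, and close by absorbing the $\delta^2$ term. The only cosmetic difference is that the paper packages the coercive quantity as the norm $\bigl\|\tfrac{(\psi_h^6 q)''}{\sqrt{4\kappa^2-\p^2}}\bigr\|_{L^2_t H^{s-\frac12}_{\kappa^{2/3}}}^2$ (using that $c_{\vk,\kappa}(\xi)\approx\tfrac{\xi^4}{(4\kappa^2+\xi^2)(4\vk^2+\xi^2)}$), whereas you work directly with the Fourier weight; the power-counting and the identification of the $X^{s+\frac12}_\kappa$ norm are the same.
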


\begin{proof}
Let us write $I=[\kappa^{\frac23},\frac12\kappa]\cup[2\kappa,\infty)$, which is the region of $\vk$ over which the estimate \eqref{jNLSdiff-err} will be proved.

Taking the imaginary part of \eqref{LSD} and applying \eqref{jNLSdiff-quad}, \eqref{jNLSdiff-err}, and \eqref{rhoA}, we find
\begin{align*}
\|\tfrac{(\psi_h^6 q)''}{\sqrt{4\kappa^2 - \p^2}}\|_{L^2_t H^{-1}_\vk}^2
&\lesssim \|q\|_{L^\infty_t H^{-\frac12}_\vk}^2 + \bigl[\tfrac\kappa{\kappa + \vk}\kappa^{-\frac43(2s+1)} + \vk^{-2(2s+1)}\bigr]\delta^2\normNK{q}^2,
\end{align*}
uniformly for $\vk\in I$. Choosing \(-\frac12<s'<s\) and employing the a priori estimate \eqref{APBound}, we deduce that 
\begin{align*}
\|\tfrac{(\psi_h^6 q)''}{\sqrt{4\kappa^2 - \p^2}}\|_{L^2_t H^{-1}_\vk}^2
	&\lesssim \vk^{-(2s'+ 1)}\|q(0)\|_{H^{s'}_\vk}^2 + \bigl[\tfrac\kappa{\kappa + \vk}\kappa^{-\frac43(2s+1)} + \vk^{-2(2s+1)}\bigr]\delta^2\normNK{q}^2,
\end{align*}
uniformly for $\vk\in I$. Next we wish to integrate out $\vk$.

By Lemma~\ref{lem:EquivNorm}, we have
\begin{align*}
\|\tfrac{(\psi_h^6 q)''}{\sqrt{4\kappa^2 - \p^2}}\|_{L^2_t H^{s-\frac12}_{\kappa^{2/3}}}^2
	\approx\int_{\kappa^{\frac23}}^\infty \vk^{2s+1}\|\tfrac{(\psi_h^6 q)''}{\sqrt{4\kappa^2 - \p^2}}\|_{L^2_t H^{-1}_\vk}^2\tfrac{d\vk}\vk,
\end{align*}
from which it follows that
\begin{align}\label{a hole is okay}
\|\tfrac{(\psi_h^6 q)''}{\sqrt{4\kappa^2 - \p^2}}\|_{L^2_t H^{s-\frac12}_{\kappa^{2/3}}}^2
	&\approx\int_I \vk^{2s+1}\|\tfrac{(\psi_h^6 q)''}{\sqrt{4\kappa^2 - \p^2}}\|_{L^2_t H^{-1}_\vk}^2\tfrac{d\vk}\vk ,
\end{align}
because the integrand on the interval $[\kappa/2,2\kappa]$ is comparable to that on $[2\kappa,4\kappa]$. 

Proceeding in this way, we find that 
\begin{align*}
\|\tfrac{(\psi_h^6 q)''}{\sqrt{4\kappa^2 - \p^2}}\|_{L^2_t H^{s-\frac12}_{\kappa^{2/3}}}^2  \lesssim \|q(0)\|_{H^s}^2 + \kappa^{-\frac13(1+2s)}\delta^2\Big(\|q\|_{X^{s+\frac12}_\kappa}^2 + \|q(0)\|_{H^s}^2\Big).
\end{align*}

To complete the proof, we decompose
\begin{align*}
\|\tfrac{\psi_h^6 q}{\sqrt{4\kappa^2 - \p^2}}\|_{L^2_t H^{s+\frac32}}^2 &\lesssim \|\tfrac1{\sqrt{4\kappa^2 - \p^2}}P_{\leq\kappa^{\frac23}}(\psi_h^6 q)\|_{L^2_t H^{s+\frac32}}^2 + \|\tfrac1{\sqrt{4\kappa^2 - \p^2}}P_{>\kappa^{\frac23}}(\psi_h^6 q)\|_{L^2_t H^{s+\frac32}}^2\\
&\lesssim \|q(0)\|_{H^s}^2 + \|\tfrac{(\psi_h^6 q)''}{\sqrt{4\kappa^2 - \p^2}}\|_{L^2_t H^{s-\frac12}_{\kappa^{2/3}}}^2\\
&\lesssim \|q(0)\|_{H^s}^2 + \delta^2\|q\|_{X^{s+\frac12}_\kappa}^2.
\end{align*}
Taking the supremum over \(h\in\R\), we obtain the estimate \eqref{NLS-diff-LS} whenever \(0<\delta\ll1\) is sufficiently small, depending only on \(s\).
\end{proof}

Next, we record a corollary of Proposition~\ref{P:ls NLS diff}, which will be used in Section~\ref{S:convg}.

\begin{cor}\label{C:ls} There exists \(\delta>0\) so that for any \(q(0)\in \BdS\) and \(\kappa\geq 8\), the solution \(q(t)\) of the NLS difference flow \eqref{NLS-diff} with parameter \(\kappa\) satisfies
\begin{equation} \label{ls1}
\begin{aligned}
\sup_{h\in \R} \|P_{N} (\psi_h^6 q)\|_{L_{t,x}^2}
\lesssim  \|q(0)\|_{H^s}
\begin{cases}
N^{-s} \quad &\text{if \, $N\leq \kappa^{\frac23}$,}\\
\kappa N^{-(\frac32+s)} \quad &\text{if \, $\kappa^{\frac23}\leq N\leq \kappa$},\\
N^{-(\frac12+s)} \quad &\text{if \, $N\geq \kappa$},\\
\end{cases}
\end{aligned}
\end{equation}
uniformly for $N\geq 1$ and  \(\kappa\geq 1\).  Consequently, 
\begin{equation}\label{ls2}
\begin{aligned}
\sup_{h\in \R} \bigl\| \psi_h^6\tfrac{\p^\ell q}{4\kappa^2-\p^2}\bigr\|_{L_{t,x}^2} +\sup_{h\in \R} \bigl\| \tfrac{\p^\ell (\psi_h^6q)}{4\kappa^2-\p^2}\bigr\|_{L_{t,x}^2}\lesssim \|q(0)\|_{H^s}
\begin{cases}
\kappa^{-2+ \frac23(l-s)} \quad &\text{if \,  $\ell=0,1$,}\\
\kappa^{-(\frac12+s)} \quad &\text{if \, $\ell=2$},
\end{cases}
\end{aligned}
\end{equation}
uniformly for \(\kappa\geq 1\).
\end{cor}
\begin{proof}
The claim \eqref{ls1} follows immediately from \eqref{NLS-diff-LS} and Bernstein inequalities.  To obtain \eqref{ls2}, we decompose into Littlewood--Paley pieces, use \eqref{ls1} and Lemma~\ref{L:localize}, and then sum.
\end{proof}

\begin{prop}[Local smoothing for the mKdV difference flow]\label{P:ls mKdV diff}
There exists \(\delta>0\) so that for any \(q(0)\in \BdS\) and \(\kappa\geq8\) the solution \(q(t)\) of the mKdV difference flow \eqref{mKdV-diff} with parameter \(\kappa\) satisfies
\eq{mKdV-diff-LS}{
\|q\|_{X^{s+1}_\kappa}^2\lesssim \|q(0)\|_{H^s}^2,
}
where the implicit constant is independent of \(\kappa\).
\end{prop}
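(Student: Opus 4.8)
The plan is to run the proof of Proposition~\ref{P:ls NLS diff} almost verbatim, with two adjustments: we work with the \emph{real} part of the integrated microscopic law \eqref{LSD} (since for mKdV the coercive, time-reversal-odd density is $\Re\rho$), and we must carry along an $\eps$-trick to absorb the non-coercive remainder of the quadratic current, exactly as in the proof of Proposition~\ref{prop:mKdV-LS}.

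First I would fix $I:=[\kappa^{1/2},\tfrac12\kappa]\cup[2\kappa,\infty)$, the range over which \eqref{jmKdVdiff-err} is available. By the usual small-data reduction we may take $\delta$ as small as needed; since the mKdV difference flow is a composition of the mKdV and $H_\mKdV^\kappa$ flows (which commute by Lemma~\ref{lem:PoissonBrackets} and each preserve the Schwartz class), $q(t)$ remains in $\BdS$ for $t\in[-1,1]$, it preserves every $A(\vk)$, and so the a priori bound \eqref{APBound} applies at any regularity in $(-\tfrac12,0)$; in particular $\|q\|_{X^{s+1}_\kappa}$ and $\|q\|_{L^\infty_t H^s}$ are finite, which legitimizes the absorption steps below. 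Taking the real part of \eqref{LSD}, splitting $j_\mKdV^\diff$ into its quadratic part (controlled by \eqref{jmKdVdiff-quad}) and higher-order part (controlled by \eqref{jmKdVdiff-err}), bounding the right side of \eqref{LSD} by \eqref{rhoA}, using $20\kappa^2+3\xi^2\approx 4\kappa^2+\xi^2$ to recognize the coercive term in \eqref{jmKdVdiff-quad} as comparable to $\vk\,\|\tfrac{(\psi_h^6 q)''}{\sqrt{4\kappa^2-\p^2}}\|_{L^2_t H^{-1}_\vk}^2$, splitting the cross term $\|\tfrac{(\psi_h^6q)''}{\sqrt{4\kappa^2-\p^2}}\|_{H^{-1/2}_\vk}\|q\|_{H^{-1/2}_\vk}$ by AM--GM, and invoking \eqref{APBound} (at a regularity $s'\in(-\tfrac12,s)$) on the $\|q\|_{L^\infty_t H^{-1/2}_\vk}^2$ errors, one arrives at: for every $\eps\in(0,1)$ and every $\vk\in I$,
\begin{align*}
\vk\,\bigl\|\tfrac{(\psi_h^6 q)''}{\sqrt{4\kappa^2-\p^2}}\bigr\|_{L^2_t H^{-1}_\vk}^2
&\lesssim \eps\,\bigl\|\tfrac{(\psi_h^6 q)''}{\sqrt{4\kappa^2-\p^2}}\bigr\|_{L^2_t H^{-1/2}_\vk}^2
  + (1+\tfrac1\eps)\,\vk^{-(2s'+1)}\|q(0)\|_{H^{s'}_\vk}^2\\
&\quad + \bigl[\tfrac{\kappa}{\kappa+\vk}\kappa^{-(2s+1)} + \vk^{-2(2s+1)}\log|2\vk|\bigr]\delta^2\,\normMK{q}^2 ,
\end{align*}
uniformly in $h$.

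Next I would integrate this against $\vk^{2s+1}\,\tfrac{d\vk}\vk$ over $I$. By Lemma~\ref{lem:EquivNorm} the left side reconstitutes $\|\tfrac{(\psi_h^6 q)''}{\sqrt{4\kappa^2-\p^2}}\|_{L^2_t H^s_{\kappa^{1/2}}}^2$, the spectral hole $(\tfrac12\kappa,2\kappa)$ being harmless since there the integrand is comparable to its value on the adjacent interval $[2\kappa,4\kappa]\subset I$, precisely as in \eqref{a hole is okay}. Again by Lemma~\ref{lem:EquivNorm}, the first term on the right lands at the same scale $\eps\,\|\tfrac{(\psi_h^6 q)''}{\sqrt{4\kappa^2-\p^2}}\|_{L^2_t H^s_{\kappa^{1/2}}}^2$ and is absorbed once $\eps$ is fixed small; the second integrates to $(1+\tfrac1\eps)\|q(0)\|_{H^s_{\kappa^{1/2}}}^2\lesssim\|q(0)\|_{H^s}^2$; and an elementary computation shows the third integrates to $\lesssim\delta^2\,\normMK{q}^2$ (the $\tfrac{\kappa}{\kappa+\vk}\kappa^{-(2s+1)}$ piece contributing a net $\kappa^0$, the logarithmic piece $\kappa^{-\frac12(2s+1)}\log\kappa\lesssim1$). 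Using \eqref{APBound} once more on the $\|q\|_{L^\infty_t H^s}^2$ summand of $\normMK{q}^2$, this yields
\begin{align*}
\bigl\|\tfrac{(\psi_h^6 q)''}{\sqrt{4\kappa^2-\p^2}}\bigr\|_{L^2_t H^s_{\kappa^{1/2}}}^2 \lesssim \|q(0)\|_{H^s}^2 + \delta^2\,\|q\|_{X^{s+1}_\kappa}^2 .
\end{align*}
Finally I would recover the full $X^{s+1}_\kappa$ norm by a Littlewood--Paley split of $\psi_h^6 q$ at frequency $\kappa^{1/2}$: on $|\xi|\lesssim\kappa^{1/2}$ one has $\tfrac{(4+\xi^2)^2}{4\kappa^2+\xi^2}\lesssim1$, so the low-frequency part of $\|\tfrac{\psi_h^6 q}{\sqrt{4\kappa^2-\p^2}}\|_{L^2_tH^{s+2}}^2$ is $\lesssim\|q\|_{L^2_t H^s}^2\lesssim\|q(0)\|_{H^s}^2$ by \eqref{APBound}; on $|\xi|>\kappa^{1/2}$ one has $(4+\xi^2)^{s+2}\approx(4\kappa+\xi^2)^s\xi^4$, so the high-frequency part is $\lesssim\|\tfrac{(\psi_h^6 q)''}{\sqrt{4\kappa^2-\p^2}}\|_{L^2_tH^s_{\kappa^{1/2}}}^2$. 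Hence $\|\tfrac{\psi_h^6 q}{\sqrt{4\kappa^2-\p^2}}\|_{L^2_tH^{s+2}}^2\lesssim\|q(0)\|_{H^s}^2+\delta^2\|q\|_{X^{s+1}_\kappa}^2$; taking the supremum over $h\in\R$ and choosing $0<\delta\ll1$ to defeat the implicit constant gives \eqref{mKdV-diff-LS}.

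The conceptual work is entirely front-loaded into Lemmas~\ref{lem:Currents-quad} and~\ref{lem:Currents-err}. What remains delicate in the assembly is that the non-coercive remainder of \eqref{jmKdVdiff-quad} sits at regularity $H^{-1/2}_\vk$, a half-derivative above the coercive $H^{-1}_\vk$ term, so it cannot be absorbed pointwise in $\vk$; it becomes absorbable only \emph{after} the $\vk$-integration, where it reaches the same $H^s_{\kappa^{1/2}}$ scale as the left side. This is why the $\eps$-trick must be deployed after, rather than before, integrating in $\vk$ — and it is the only substantive departure from the NLS-difference-flow argument.
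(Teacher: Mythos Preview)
Your proposal is correct and follows essentially the same approach as the paper's own proof: take the real part of \eqref{LSD}, combine \eqref{jmKdVdiff-quad}, \eqref{jmKdVdiff-err}, and \eqref{rhoA} with the $\varepsilon$-splitting of the cross term, integrate in $\vk$ over $I=[\kappa^{1/2},\tfrac12\kappa]\cup[2\kappa,\infty)$ using Lemma~\ref{lem:EquivNorm} (handling the spectral hole as in \eqref{a hole is okay}), absorb, and then recover $X^{s+1}_\kappa$ by splitting at frequency $\kappa^{1/2}$. Your commentary on why the absorption must occur after the $\vk$-integration is accurate and captures the one genuine subtlety beyond the NLS-difference-flow argument.
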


\begin{proof}
Consider the real part of \eqref{LSD}.  Applying the estimates \eqref{jmKdVdiff-quad}, \eqref{jmKdVdiff-err}, and \eqref{rhoA}, we deduce that
\begin{align*}
\vk\|\tfrac{(\psi_h^6 q)''}{\sqrt{4\kappa^2 - \p^2}}\|_{L^2_t H^{-1}_\vk}^2
&\lesssim \varepsilon\|\tfrac{(\psi_h^6 q)''}{\sqrt{4\kappa^2 - \p^2}}\|^2_{L^2_t H^{-\frac12}_\vk} + (1 + \tfrac1\varepsilon)\|q\|_{L^\infty_t H^{-\frac12}_\vk}^2\\
&\quad\  + \bigl[\tfrac\kappa{\kappa + \vk}\kappa^{-(2s+1)} + \vk^{-2(2s+1)}\log|2\vk|\bigr]\delta^2\normMK{q}^2,
\end{align*}
uniformly for $0<\varepsilon<1$ and $\vk\in I:= [\kappa^{\frac12},\frac12\kappa]\cup [2\kappa,\infty)$.

Choosing \(-\frac12<s'<s\)  and applying the a priori estimate \eqref{APBound}, this becomes
\begin{align*}
\vk\|\tfrac{(\psi_h^6 q)''}{\sqrt{4\kappa^2 - \p^2}}\|_{L^2_t H^{-1}_\vk}^2
	&\lesssim \varepsilon\|\tfrac{(\psi_h^6 q)''}{\sqrt{4\kappa^2 - \p^2}}\|_{L^2_t H^{-\frac12}_\vk}^2+ \vk^{-(2s'+1)}(1 + \tfrac1\varepsilon)\|q(0)\|_{H^{s'}_\vk}^2\\
&\quad\ + \bigl[\tfrac\kappa{\kappa + \vk}\kappa^{-(2s+1)} + \vk^{-2(2s+1)}\log|2\vk|\bigr] \delta^2\normMK{q}^2.
\end{align*}

Next we wish to integrate over $\vk\in I$.  Using Lemma~\ref{lem:EquivNorm} as in the proof of Proposition~\ref{P:ls NLS diff}, we obtain the following analogues of \eqref{a hole is okay}:
\begin{align*}
\|\tfrac{(\psi_h^6 q)''}{\sqrt{4\kappa^2 - \p^2}}\|_{L^2_t H^s_{\kappa^{1/2}}}^2
	&\approx\int_I \vk^{2s+2}\|\tfrac{(\psi_h^6 q)''}{\sqrt{4\kappa^2 - \p^2}}\|_{L^2_t H^{-1}_\vk}^2\tfrac{d\vk}\vk
		\approx\int_I \vk^{2s+1}\|\tfrac{(\psi_h^6 q)''}{\sqrt{4\kappa^2 - \p^2}}\|_{L^2_t H^{-\frac12}_\vk}^2\tfrac{d\vk}\vk.
\end{align*}

Proceeding in this way, and choosing $0<\varepsilon<1$ sufficiently small, we obtain 
\[
\bigl\|\tfrac{(\psi_h^6 q)''}{\sqrt{4\kappa^2 - \p^2}}\bigr\|_{L^2_t H^s_{\kappa^{1/2}}}^2\lesssim \|q(0)\|_{H^s}^2 + \delta^2\|q\|_{X^{s+1}_\kappa}^2.
\]

To complete the proof, we decompose
\begin{align*}
\|\tfrac{\psi_h^6 q}{\sqrt{4\kappa^2 - \p^2}}\|_{L^2_t H^{s+2}}^2 &\lesssim \|\tfrac1{\sqrt{4\kappa^2 - \p^2}}P_{\leq\kappa^{\frac12}}(\psi_h^6 q)\|_{L^2_t H^{s+2}}^2 + \|\tfrac1{\sqrt{4\kappa^2 - \p^2}}P_{>\kappa^{\frac12}}(\psi_h^6 q)\|_{L^2_t H^{s+2}}^2\\
&\lesssim \|q(0)\|_{H^s}^2 + \|\tfrac{(\psi_h^6 q)''}{\sqrt{4\kappa^2 - \p^2}}\|_{L^2_t H^{s}_{\kappa^{1/2}}}^2\\
&\lesssim \|q(0)\|_{H^s}^2 + \delta^2\|q\|_{X^{s+1}_\kappa}^2.
\end{align*}
Taking the supremum over \(h\in\R\) we obtain the estimate \eqref{NLS-diff-LS} whenever \(0<\delta\ll1\) is sufficiently small, depending only on \(s\).
\end{proof}

Proposition~\ref{P:ls mKdV diff} directly yields the following analogue of Corollary~\ref{C:ls}:

\begin{cor}\label{C:ls mKdV} There exists \(\delta>0\) so that for any \(q(0)\in \BdS\) and \(\kappa\geq 8\), the solution \(q(t)\) of the mKdV difference flow \eqref{mKdV-diff} with parameter \(\kappa\) satisfies
\begin{equation*}
\begin{aligned}
\sup_{h\in \R} \|P_{N} (\psi_h^6 q)\|_{L_{t,x}^2}
\lesssim  \|q(0)\|_{H^s}
\begin{cases}
N^{-s} \quad &\text{if \, $N\leq \kappa^{\frac12}$,}\\
\kappa N^{-(2+s)} \quad &\text{if \, $\kappa^{\frac12}\leq N\leq \kappa$},\\
N^{-(1+s)} \quad &\text{if \, $N\geq \kappa$},\\
\end{cases}
\end{aligned}
\end{equation*}
uniformly for $N\geq 1$ and  \(\kappa\geq 8\).  Consequently, 
\begin{equation*}
\begin{aligned}
\sup_{h\in \R} \bigl\| \psi_h^6\tfrac{\p^\ell q}{4\kappa^2-\p^2}\bigr\|_{L_{t,x}^2} +\sup_{h\in \R} \bigl\| \tfrac{\p^\ell (\psi_h^6q)}{4\kappa^2-\p^2}\bigr\|_{L_{t,x}^2}\lesssim \|q(0)\|_{H^s}
\begin{cases}
\kappa^{-2+ \frac12(l-s)} \quad &\text{if \,  $\ell=0,1$,}\\
\kappa^{-(1+s)} \quad &\text{if \, $\ell=2$},
\end{cases}
\end{aligned}
\end{equation*}
uniformly for \(\kappa\geq 2\).
\end{cor}

We now turn to the proof of Lemma~\ref{lem:Currents-quad}:

\begin{proof}[Proof of Lemma~\ref{lem:Currents-quad}]We introduce the paraproduct \(\bR[q,r]\) with symbol
\[
R(\xi,\eta) = \tfrac1{2(2\vk - i\xi)} + \tfrac1{2(2\vk + i\eta)}
\]
so that by \eqref{rho quadratic} we may write
\[
\rho\sbrack{2}(x;\vk)=\bR[q,r](x) = \frac1{2\pi}\int R(\xi,\eta)\hat q(\xi)\hat r(\eta) e^{ix(\xi+\eta)}\,d\xi\,d\eta.
\]
We then observe that the quadratic part of the current \(j(\vk,\kappa)\) defined in \eqref{j sub A} may be written as
\[
j\sbrack{2}(\vk,\kappa) = i\bR[\tfrac q{2\kappa - \p},\tfrac r{2\kappa + \p}].
\]
Expanding in powers of \(\kappa\), we readily obtain the expressions
\begin{align*}
j_\NLS\sbrack{2}(\vk) &= i\Big(\bR[q,r'] - \bR[q',r]\Big),\\
j_\mKdV\sbrack{2} (\vk) &= \bigl(\bR[q,r]\bigr)'' - 3 \bR[q',r'],
\end{align*}
for the NLS and mKdV flows, as well as the expressions
\begin{align*}
j_\NLS^\diff{}\sbrack{2}(\vk,\kappa) &= i\Big(\bR[q,r'] - \bR[q',r]\Big)\\
&\quad - 16\kappa^4i\Big(\bR[\tfrac q{4\kappa^2 - \p^2},\tfrac {r'}{4\kappa^2 - \p^2}] - \bR[\tfrac{q'}{4\kappa^2 - \p^2},\tfrac r{4\kappa^2 - \p^2}]\Big),\\
j_\mKdV^{\diff}{}\sbrack{2} (\vk,\kappa)&= \bigl(\bR[q,r]\bigr)'' - 3 \bR[q',r']\\
&\quad - 16\kappa^4\big(\bR[\tfrac{q}{4\kappa^2 - \p^2},\tfrac{r}{4\kappa^2 - \p^2}]\bigr)'' + 48\kappa^4 \bR[\tfrac{q'}{4\kappa^2 - \p^2},\tfrac{r'}{4\kappa^2 - \p^2}]\\
&\quad + 4\kappa^2 \bR[\tfrac{q''}{4\kappa^2 - \p^2},\tfrac{r''}{4\kappa^2 - \p^2}]
\end{align*}
for the corresponding difference flows. (Alternatively, we may use the definition of the currents from Corollary~\ref{C:microscopic} to compute the quadratic components directly.)

If we could simply replace \(q,r\) by \(\psi_h^6q,\psi_h^6r\) in these expressions, rather than integrating them against $\psi_h^{12}$, then we would obtain the leading order terms in \eqref{jNLS-quad}--\eqref{jmKdVdiff-quad}. Thus, the focal point of our analysis will be bounding the various commutator terms that arise.

\medskip\step{Proof of \eqref{jNLS-quad}.}
Using the above expression we may write
\begin{align*}
\LHS{jNLS-quad} &= \pm 2\|(\psi_h^{6}q)'\|_{H^{-1}_\vk}^2 + \Re\int \Big(\bR[q,r']\,\psi_h^{12} - \bR[\psi_h^{6}q,(\psi_h^{6} r)']\Big)\,dx\\
&\quad - \Re\int\Big(\bR[q',r]\,\psi_h^{12} - \bR[(\psi_h^{6} q)',\psi_h^{6}r]\Big)\,dx.
\end{align*}
By symmetry it suffices to bound
\begin{align*}
&\int \Big(\bR[q,r']\,\psi_h^{12} - \bR[\psi_h^{6}q,(\psi_h^{6}r)']\Big)\,dx\\
&\qquad = \int  [\psi_h^{6},\tfrac1{2(2\vk - \p)}] q\cdot \psi_h^{6} r'\,dx - \int \tfrac1{2(2\vk - \p)}(\psi_h^{6}q)\cdot (\psi_h^{6})'r\,dx\\
&\qquad\quad + \int \psi_h^{6} q\cdot [\psi_h^{6},\tfrac\p{2(2\vk + \p)}]r\,dx,
\end{align*}
which may be bounded by
\begin{align*}
&\left|\int \Big(\bR[q,r']\,\psi_h^{12} - \bR[\psi_h^{6}q,(\psi_h^{6}r)']\Big)\,dx\right|\\
&\qquad \leq \|[\psi_h^{6},\tfrac1{2(2\vk - \p)}] q\|_{H^{\frac32}_\vk}\|\psi_h^{6} r'\|_{H^{-\frac32}_\vk} + \|\tfrac1{2(2\vk - \p)}(\psi_h^{6} q)\|_{H^{\frac12}_\vk}\|(\psi_h^{6})'r\|_{H^{-\frac12}_\vk}\\
&\qquad\quad + \|\psi_h^{6} q\|_{H^{-\frac12}_\vk}\|[\psi_h^{6},\tfrac\p{2(2\vk + \p)}]r\|_{H^{\frac12}_\vk}\\
&\qquad\lesssim \|q\|_{H^{-\frac12}_\vk}^2,
\end{align*}
as required.

\medskip\step{Proof of \eqref{jNLSdiff-quad}.}
We observe that the difference
\(
j_{\NLS}\sbrack{2} - j_\NLS^\diff{}\sbrack{2}
\)
has an identical expression to \(j_{\NLS}\sbrack{2}\) with \(q\) replaced by \(\tfrac{4\kappa^2 q}{4\kappa^2 - \p^2}\). The estimate \eqref{jNLSdiff-quad} then follows from the estimate \eqref{jNLS-quad} and the estimates
\[
\|\tfrac{4\kappa^2}{4\kappa^2 - \p^2}q\|_{H^{-\frac12}_\vk}\lesssim \|q\|_{H^{-\frac12}_\vk} \qtq{and} \|\p[\psi_h^{6},\tfrac{4\kappa^2}{4\kappa^2 - \p^2}]q\|_{H^{-\frac12}_\vk}\lesssim \|q\|_{H^{-\frac12}_\vk}.
\]

\medskip\step{Proof of \eqref{jmKdV-quad}.}
Integrating by parts we find
\begin{align*}
\LHS{jmKdV-quad} &= \mp 6\vk\|(\psi_h^{6} q)'\|_{H^{-1}_\vk}^2 + \Re\int \bR[q,r]\,(\psi_h^{12})''\,dx\\
&\quad  - 3\Re\int \Big(\bR[q',r']\,\psi_h^{12}- \bR[(\psi_h^{6}q)',(\psi_h^{6}r)']\Big)\,dx.
\end{align*}
As in the proof of Lemma~\ref{lem:rhA}, the second term may be readily bounded by
\[
\left|\int \bR[q,r]\,(\psi_h^{12})''\,dx\right|\lesssim \|q\|_{H^{-\frac12}_\vk}^2.
\]
For the remaining term we write
\begin{align*}
&\int \Big(\bR[q',r']\,\psi_h^{12} - \bR[(\psi_h^{6}q)',(\psi_h^{6}r)']\Big)\,dx\\
&\qquad = \int [\psi_h^{6},\tfrac\p{2(2\vk - \p)}]q\cdot (\psi_h^{6} r)'\,dx -  \int \tfrac\p{2(2\vk - \p)}(\psi_h^{6}q)\cdot (\psi_h^{6})' r\,dx \\
&\qquad \quad - \int [\psi_h^{6},\tfrac\p{2(2\vk - \p)}]q\cdot (\psi_h^{6})' r\,dx + \int (\psi_h^{6} q)'\cdot [\psi_h^{6},\tfrac\p{2(2\vk + \p)}]r\,dx\\
&\qquad \quad -  \int (\psi_h^{6})' q\cdot \tfrac\p{2(2\vk + \p)}(\psi_h^{6} r)\,dx - \int (\psi_h^{6})' q\cdot[\psi_h^{6},\tfrac\p{2(2\vk + \p)}] r\,dx.
\end{align*}
The first three summands here may be bounded in magnitude via
\begin{align*}
\|[\psi_h^{6},\tfrac\p{2(2\vk - \p)}]q\|_{H^{\frac12}_\vk}\|(\psi_h^{6} r)'\|_{H^{-\frac12}_\vk}
	+ \|\tfrac\p{2(2\vk - \p)}&(\psi_h^{6} q)\|_{H^{\frac12}_\vk}\|(\psi_h^{6})' r\|_{H^{-\frac12}_\vk} \\
	&\qquad\lesssim \|(\psi_h^{6} q)'\|_{H^{-\frac12}_\vk} \|q\|_{H^{-\frac12}_\vk}
\end{align*}
and
\begin{align*}
\|[\psi_h^{6},\tfrac\p{2(2\vk - \p)}]q\|_{H^{\frac12}_\vk}\|(\psi_h^{6})' r\|_{H^{-\frac12}_\vk} 
&\lesssim  \|q\|_{H^{-\frac12}_\vk}^2,
\end{align*}
both of which are acceptable.  The remaining three summands can then be estimated in a parallel fashion; indeed, this is tantamount to replacing $\vk$ by $-\vk$.

\medskip\step{Proof of \eqref{jmKdVdiff-quad}.}
Integrating by parts several times we obtain the identity
\begin{align*}
\LHS{jmKdVdiff-quad} &= \mp 2\vk\int \tfrac{(20\kappa^2 + 3\xi^2)\xi^4}{(4\vk^2 + \xi^2)(4\kappa^2 + \xi^2)^2}|\widehat{\psi_h^{6}q}|^2\,d\xi\\
&\quad - 3\Re\int \Big(\bR[\tfrac{q'''}{4\kappa^2 - \p^2},\tfrac{r'''}{4\kappa^2 - \p^2}]\,\psi_h^{12} - \bR[\tfrac{(\psi_h^{6} q)'''}{4\kappa^2 - \p^2},\tfrac{(\psi_h^{6}r)'''}{4\kappa^2 - \p^2}]\Big)\,dx\\
&\quad - 20\kappa^2\Re\int \Big(\bR[\tfrac{q''}{4\kappa^2 - \p^2},\tfrac{r''}{4\kappa^2 - \p^2}]\,\psi_h^{12} - \bR[\tfrac{(\psi_h^{6}q)''}{4\kappa^2 - \p^2},\tfrac{(\psi_h^{6}r)''}{4\kappa^2 - \p^2}]\Big)\,dx\\
&\quad + \Re\int \bR[\tfrac{q''}{4\kappa^2 - \p^2},\tfrac{r''}{4\kappa^2 - \p^2}]\,(\psi_h^{12})''\,dx\\
&\quad + 20\kappa^2\Re\int \bR[\tfrac{q'}{4\kappa^2 - \p^2},\tfrac{r'}{4\kappa^2 - \p^2}]\,(\psi_h^{12})''\,dx\\
&\quad - 4\kappa^2\Re\int \bR[\tfrac{q}{4\kappa^2 - \p^2},\tfrac{r}{4\kappa^2 - \p^2}]\,(\psi_h^{12})''''\,dx.
\end{align*}
The final three terms are lower order errors that may be bounded by
\begin{align*}
&\left|\int \bR[\tfrac{q''}{4\kappa^2 - \p^2},\tfrac{r''}{4\kappa^2 - \p^2}]\,(\psi_h^{12})''\,dx\right| \lesssim \|\tfrac{q''}{4\kappa^2 - \p^2}\|_{H^{-\frac12}_\vk}^2\lesssim \|q\|_{H^{-\frac12}_\vk}^2\\
&\left|\kappa^2\int \bR[\tfrac{q'}{4\kappa^2 - \p^2},\tfrac{r'}{4\kappa^2 - \p^2}]\,(\psi_h^{12})''\,dx\right|\lesssim \kappa^2\|\tfrac{q'}{4\kappa^2 - \p^2}\|_{H^{-\frac12}_\vk}^2 \lesssim \|q\|_{H^{-\frac12}_\vk}^2,\\
&\left|\kappa^2\int \bR[\tfrac{q}{4\kappa^2 - \p^2},\tfrac{r}{4\kappa^2 - \p^2}]\,(\psi_h^{12})''''\,dx\right|\lesssim \kappa^2\|\tfrac q{4\kappa^2 - \p^2}\|_{H^{-\frac12}_\vk}^2\lesssim \kappa^{-2}\|q\|_{H^{-\frac12}_\vk}^2.
\end{align*}
For the first commutator term we estimate
\begin{align*}
&\left|\int \Big(\bR[\tfrac{q'''}{4\kappa^2 - \p^2},\tfrac{r'''}{4\kappa^2 - \p^2}]\,\psi_h^{12} - \bR[\tfrac{(\psi_h^{6} q)'''}{4\kappa^2 - \p^2},\tfrac{(\psi_h^{6}r)'''}{4\kappa^2 - \p^2}]\Big)\,dx\right|\\
&\qquad\lesssim \|[\psi_h^{6},\tfrac{\p^3}{(2\vk - \p)(4\kappa^2 - \p^2)}]q\|_{H^{\frac12}_\vk}\|\tfrac{\p^3}{4\kappa^2 - \p^2}(\psi_h^{6} q)\|_{H^{-\frac12}_\vk}\\
&\qquad \quad + \Bigl\{\|[\psi_h^{6},\tfrac{\p^3}{(2\vk - \p)(4\kappa^2 - \p^2)}]q\|_{H^{\frac12}_\vk}+ \|\tfrac{\p^3}{4\kappa^2 - \p^2}(\psi_h^{6} q)\|_{H^{-\frac12}_\vk}\Bigr\}\|[\psi_h^{6},\tfrac{\p^3}{4\kappa^2 - \p^2}]q\|_{H^{-\frac12}_\vk}\\
&\qquad\lesssim \|q\|_{H^{-\frac12}_\vk}\|\tfrac{(\psi_h^{6}q)''}{\sqrt{4\kappa^2 - \p^2}}\|_{H^{-\frac12}_\vk} + \|q\|_{H^{-\frac12}_\vk}^2.
\end{align*}
The second commutator term is bounded similarly:
\begin{align*}
&\left|\kappa^2\int \Big(\bR[\tfrac{q''}{4\kappa^2 - \p^2},\tfrac{r''}{4\kappa^2 - \p^2}]\,\psi_h^{12} - \bR[\tfrac{(\psi_h^{6}q)''}{4\kappa^2 - \p^2},\tfrac{(\psi_h^{6}r)''}{4\kappa^2 - \p^2}]\Big)\,dx\right|\\
&\qquad\qquad\qquad\qquad\qquad\qquad\qquad \lesssim \|q\|_{H^{-\frac12}_\vk}\|\tfrac{(\psi_h^{6}q)''}{\sqrt{4\kappa^2 - \p^2}}\|_{H^{-\frac12}_\vk} + \|q\|_{H^{-\frac12}_\vk}^2.
\end{align*}

This completes the proof of the lemma.
\end{proof}

We now turn to the proof of Lemma~\ref{lem:Currents-err}. Here we will use the estimates of Lemmas~\ref{lem:LambdaLoc NLS} and~\ref{L:LpIp mKdV} to obtain bounds for the tails of the series defining \(g_{12},g_{21},\vr\). However, these estimates are not sufficient to capture cancellations that occur for several quartic terms in the currents \(j_\NLS^\diff\) and \(j_\mKdV^\diff\). For this reason, we start by proving several quadrilinear estimates that are designed to capture the additional smallness that arises from these cancellations.

For any \(\kappa\geq 1\) and multi-index \(\beta\in \{0,1,2\}^4\) we introduce the class \(S(\beta;\kappa)\) of smooth symbols \(m\colon \R^4\rightarrow \C\) that may be written as
\begin{equation}\label{E:symbol class}
m(\xi;\kappa) = C\tfrac{\xi_1^{\beta_1}\xi_2^{\beta_2}\xi_3^{\beta_3}\xi_4^{\beta_4}}{(\kappa^2 + \xi_1^2)(\kappa^2 + \xi_2^2)(\kappa^2 + \xi_3^2)(\kappa^2 + \xi_4^2)},
\end{equation}
for a constant \(C\in \C\).  We write $m[f_1,\ldots,f_4]$ for the paraproduct with this symbol.

While it is often natural to consider paraproducts as multilinear operators, we shall only be applying them to \(q\) and to objects subordinate to \(q\), in the sense of \eqref{Norway}. Thus, it is more natural to view these paraproducts as polynomial-like functions of \(q\). When it comes to estimating these nonlinear expressions, the first step will always be to isolate the two highest frequency terms and use local smoothing to control them. (Integrability in time forbids using local smoothing for more than two factors.) Correspondingly, a multilinear point of view would lead to right-hand sides containing a sum over all permutations of the arguments. Here we see the virtue of phrasing them as nonlinear estimates and of subordinating their arguments to \(q\).

For the NLS we have the following lemma:
\begin{lem}[Quartic estimate for the NLS]\label{lem:NLS-para}
Let \(|\vk|\geq \kappa^{\frac23}\geq 1\) and the Schwartz functions \(q,f\in \Cont([-1,1];\BdS)\) satisfy
\eq{Norway}{
\normNK{f}\lesssim \normNK{q}.
}
Let \(m\in S(\beta;\kappa)\) where \(1\leq |\beta|\leq 5\) and at most one \(\beta_j = 2\). Then we have the paraproduct estimate
\eq{NLS-para}{
\bigl\| m[q,r,q,\psi_h^{12}\tfrac f{2\vk + \p}] \bigr\|_{L^1_{t,x}} \lesssim \kappa^{|\beta| - 7 -\frac43(2s+1)}\delta^2\normNK{q}^2,
}
where the implicit constant is independent of $\kappa$, $\vk$, and $h\in \R$, and $\psi$ is as in \eqref{psi}.
\end{lem}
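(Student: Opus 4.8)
Since the symbol in \eqref{E:symbol class} factorizes over the four frequency variables, the paraproduct on the left of \eqref{NLS-para} is simply the pointwise product $v_1v_2v_3v_4$ of the functions $v_1 = \tfrac{\p^{\beta_1}}{\kappa^2-\p^2}q$, $v_2 = \tfrac{\p^{\beta_2}}{\kappa^2-\p^2}r$, $v_3 = \tfrac{\p^{\beta_3}}{\kappa^2-\p^2}q$, and $v_4 = \tfrac{\p^{\beta_4}}{\kappa^2-\p^2}\bigl[\psi_h^{12}\tfrac{f}{2\vk+\p}\bigr]$, up to the constant $C$ and powers of $i$ which I absorb. The plan is to bound $\|v_1v_2v_3v_4\|_{L^1_{t,x}}$ by H\"older in space-time after first producing the spatial localization that lets the local-smoothing norm enter. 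Only $v_4$ is localized, but the convolution kernel of $\tfrac{\p^{\beta_4}}{\kappa^2-\p^2}$ decays exponentially at rate $\geq\kappa\geq1$, which outpaces the decay of $\psi$, so by Lemma~\ref{L:localize} I may write $v_4 = \psi_h^6\,w$, where $w$ is $\psi_h^{12}(2\vk+\p)^{-1}f$ composed with operators that are bounded on every $L^p_x$ uniformly in $h$; combining \eqref{Multiplicative Commutator}, \eqref{Localized Multiplicative Commutator}, and \eqref{Linfty bdd} then gives $\|w\|_{L^p_x}\lesssim_p\kappa^{\beta_4-2}|\vk|^{-(s+\frac12)}\|f\|_{H^s}$ for every $p\in[1,\infty]$. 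Distributing the remaining $\psi_h^6$ onto the two copies of $q$ turns the product into $(\psi_h^3v_1)(\psi_h^3v_3)\,v_2\,w$, in which both $q$-factors now carry a cut-off.

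Next I would estimate. After commuting the cut-off through $\tfrac{\p^{\beta_j}}{\kappa^2-\p^2}$ — the commutators being lower order by Lemma~\ref{L:bounds} — and invoking \eqref{E:change}, each localized $q$-factor is controlled by $\normNK q$: in $L^2_tL^\infty_x$ with a $\kappa$-gain when $\beta_j\le1$, and in $L^2_tL^{1/|s|}_x$ (the Sobolev exponent for $H^{s+\frac12}$) when $\beta_j=2$. Dually, one may instead reconnect the four factors into a trace of a product of Hilbert--Schmidt operators and apply the sharp local-smoothing bounds \eqref{LambdaLoc NLS} and \eqref{Sharp LambdaLoc NLS} — which already carry the factor $\delta^2\normNK q^2$ — together with the operator bound \eqref{LL-2}; this is the route that extracts the full power $\kappa^{-\frac43(2s+1)}$, the hypothesis $|\vk|\ge\kappa^{2/3}$ being precisely the balance point appearing in those estimates. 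The factors $v_2$ and $w$ are each handled using only the bounds $\|r\|_{H^s}=\|q\|_{H^s}\le\delta$ and $\|f\|_{H^s}\le\delta$ (with $\normNK f\lesssim\normNK q$ available should a local-smoothing bound on $f$ be preferable in a subcase), so that two of the four factors supply $\normNK q$ and the other two supply $\delta$. The hypothesis that at most one $\beta_j$ equals $2$ is exactly what guarantees that at most one factor is at borderline regularity, so that the Lebesgue (or Schatten) exponents can always be balanced. Collecting the powers of $\kappa$ from the resolvent weights and the derivatives, and using $|\vk|\ge\kappa^{2/3}$ to convert $|\vk|^{-(s+\frac12)}$ and any residual $\vk$-powers into powers of $\kappa$, produces the claimed bound $\kappa^{|\beta|-7-\frac43(2s+1)}\delta^2\normNK q^2$.

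The main obstacle is precisely this bookkeeping: one must apportion the $|\beta|$ derivatives, the four factors of $(\kappa^2-\p^2)^{-1}$, the $\vk$-resolvent, and the localization among the four functions so that every factor lands in a usable Lebesgue/Schatten space \emph{with} its full $\kappa$-gain — the single admissible borderline factor (present only when some $\beta_j=2$) absorbing the remaining $L^1_x$ budget — while the negative powers from the resolvent weights, the sharp local-smoothing estimates, and the conversion $|\vk|^{-(s+\frac12)}\le\kappa^{-\frac13(2s+1)}$ combine to exactly the exponent $|\beta|-7-\frac43(2s+1)$ and no larger. The restriction to at most one $\beta_j=2$ and the hypothesis $|\vk|\ge\kappa^{2/3}$ are what make this accounting close.
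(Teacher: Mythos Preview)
Your proposal has a real gap. You correctly observe that the paraproduct factorizes as a pointwise product and that the localization must be passed through the resolvents via Lemma~\ref{L:localize}; but your scheme fixes the roles of the four factors in advance --- the cut-off $\psi_h^3$ goes on the two copies of $q$, while $v_2$ and $w$ are estimated using only $\|r\|_{H^s}$ and $\|f\|_{H^s}$. This is too rigid. In the paper's proof all four inputs are Littlewood--Paley decomposed, each written as $\psi^{-3}(\psi^3 q)_{N_j}$ (or $\psi^{9}(\psi^3\tfrac f{2\vk+\p})_{N_4}$), Lemma~\ref{L:localize} converts the multiplier on each dyadic piece into the numerical factor $N_j^{\beta_j}/(\kappa+N_j)^2$, and then the two \emph{highest-frequency} factors are placed in $L^2_{t,x}$ and the two lowest in $L^\infty_{t,x}$. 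Which pair receives the local-smoothing gain therefore depends on the frequency configuration; in particular the $r$- or $f$-factor may carry a high frequency and must then go in $L^2_{t,x}$, something your scheme cannot do since $v_2$ carries no cut-off. The exponent $|\beta|-7-\tfrac43(2s+1)$ only emerges after summing the resulting dyadic double sum, with a case split according as the inner variable lies below $\kappa^{2/3}$, between $\kappa^{2/3}$ and $\kappa$, or above $\kappa$. A direct check in the simplest case $\beta=(1,0,0,0)$ shows that your fixed $L^2_{t,x}\times L^\infty_{t,x}\times L^2_{t,x}\times L^\infty_{t,x}$ allocation misses the target by a positive power of $\kappa$ for $s$ close to $0$.

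Your fallback to the Schatten bounds \eqref{LambdaLoc NLS} and \eqref{Sharp LambdaLoc NLS} is also misplaced: those estimates concern the operators $(\vk-\p)^{-\frac12}(\psi_h q)(\vk+\p)^{-\frac12}$ with half-resolvents at the spectral parameter $\vk$, not the multipliers $\p^{\beta_j}/(\kappa^2-\p^2)$ at parameter $\kappa$ that define $S(\beta;\kappa)$. There is no way to reassemble $v_1v_2v_3v_4$ as a trace of such operators. The dyadic decomposition is not optional bookkeeping here; it is the mechanism that adaptively assigns the local-smoothing gain and makes the estimate close.
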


\begin{proof}
By space-translation invariance, we may assume $h=0$. 

By Bernstein's inequality, for \(0\leq j\leq 2\) we may bound
\begin{align*}
\tfrac{N^j}{(\kappa + N)^2}\|(\psi^3q)_N\|_{L^2_{t,x}}& \lesssim \kappa^{-2} N^{j-s-\frac52} \min\{N^{\frac52},\kappa N,\kappa^2 \} \normNK{q}\\
\tfrac{N^j}{(\kappa + N)^2}\|\big(\psi^3\tfrac f{2\vk + \p}\big)_N\|_{L^2_{t,x}}&
	\lesssim |\vk|^{-(s+\frac12)}(|\vk| + N)^{s-\frac12} \kappa^{-2} N^{j-s-\frac52} \min\{N^{\frac52},\kappa N,\kappa^2 \} \normNK{q},
\end{align*}
which we will use to estimate high frequency terms. Using Bernstein's inequality again, we also find that for \(0\leq j\leq 2\),
\begin{align*}
&\sum\limits_{M\leq N}\tfrac{M^j}{(\kappa + M)^2}\|(\psi^3q)_M\|_{L^\infty_{t,x}}\lesssim \kappa^{-2}N^{\frac12-s} \min\{ N^j,\kappa^j\}\delta,\\
&\sum\limits_{M\leq N}\tfrac{M^j}{(\kappa + M)^2}\|\big(\psi^3\tfrac f{2\vk + \p}\big)_M\|_{L^\infty_{t,x}}\!\lesssim |\vk|^{-(s+\frac12)}(|\vk| + N)^{s-\frac12}\kappa^{-2}N^{\frac12-s} \min\{ N^j,\kappa^j\}\delta,
\end{align*}
which we will use to estimate low frequency terms.

For dyadic \(N_j\geq 1\) we write
\begin{align*}
m_{N_1,N_2,N_3,N_4} := m\bigl[\psi^{-3}(\psi^3q)_{N_1},\psi^{-3}(\psi^3r)_{N_2},\psi^{-3}(\psi^3q)_{N_3},\psi^9(\psi^3\tfrac f{2\vk + \p})_{N_4}\bigr],
\end{align*}
so that
\[
\bigl\|  m[q,r,q,\psi^{12}\tfrac f{2\vk + \p}] \bigr\|_{L^1_{t,x}} \leq \sum\limits_{N_j\geq 1} \bigl\| m_{N_1,N_2,N_3,N_4} \bigr\|_{L^1_{t,x}} .
\]
As the estimates will be symmetric in the first three terms, we may assume that \(N_1\geq N_2\geq N_3\). Our strategy will be to bound the two highest frequency terms in \(L^2_{t,x}\) to take advantage of the local smoothing norms, and the two lowest frequencies in \(L^\infty_{t,x}\).  Concretely, when $N_4\leq N_2$, we apply Lemma~\ref{L:localize}, to obtain
\begin{align*}
\bigl\| m_{N_1,N_2,N_3,N_4} \bigr\|_{L^1_{t,x}}&\lesssim\|\psi^3\tfrac{\p^{\beta_1}}{4\kappa^2 - \p^2}\psi^{-3}(\psi^3q)_{N_1}\|_{L^2_{t,x}}\|\psi^3\tfrac{\p^{\beta_2}}{4\kappa^2 - \p^2}\psi^{-3}(\psi^3q)_{N_2}\|_{L^2_{t,x}}\\
&\quad\times \|\psi^3\tfrac{\p^{\beta_3}}{4\kappa^2 - \p^2}\psi^{-3}(\psi^3q)_{N_3}\|_{L^\infty_{t,x}}\|\psi^{-9}\tfrac{\p^{\beta_4}}{4\kappa^2 - \p^2}\psi^9(\psi^3\tfrac f{2\vk + \p})_{N_4}\|_{L^\infty_{t,x}}\\
&\lesssim \tfrac{N_1^{\beta_1}N_2^{\beta_2}N_3^{\beta_3}N_4^{\beta_4}}{(\kappa + N_1)^2(\kappa + N_2)^2(\kappa + N_3)^2(\kappa + N_4)^2}\|(\psi^3q)_{N_1}\|_{L^2_{t,x}}\|(\psi^3q)_{N_2}\|_{L^2_{t,x}}\\
&\quad\times\|(\psi^3q)_{N_3}\|_{L^\infty_{t,x}}\|(\psi^3\tfrac f{2\vk + \p})_{N_4}\|_{L^\infty_{t,x}},
\end{align*}
whereas, when \(N_4>N_2\) we obtain instead
\begin{align*}
\bigl\| m_{N_1,N_2,N_3,N_4} \bigr\|_{L^1_{t,x}}&\lesssim\tfrac{N_1^{\beta_1}N_2^{\beta_2}N_3^{\beta_3}N_4^{\beta_4}}{(\kappa + N_1)^2(\kappa + N_2)^2(\kappa + N_3)^2(\kappa + N_4)^2}\|(\psi^3q)_{N_1}\|_{L^2_{t,x}}\|(\psi^3q)_{N_2}\|_{L^\infty_{t,x}}\\
&\qquad\qquad\times\|(\psi^3q)_{N_3}\|_{L^\infty_{t,x}}\|\big(\psi^3 \tfrac f{2\vk + \p}\big)_{N_4}\|_{L^2_{t,x}}.
\end{align*}
We then sum over the lowest two frequencies and invoke the estimates laid out above.  When $N_4\leq N_2$, this leads to a bound of the form
$$
\sum_{N_1\geq N_2} \Gamma_{N_1,N_2}  \delta^2 \normNK{q}^2 
$$
where $\Gamma$ is the matrix
\begin{align*}
\Gamma_{N_1,N_2} := |\vk|^{-(s+\frac12)}&(|\vk| + N_2)^{s-\frac12}\kappa^{-8}N_1^{\beta_1 - \frac52 - s}N_2^{\beta_2 - \frac32 - 3s}\\
&\times \min\{N_1^{\frac52},\kappa N_1,\kappa^2\}\min\{N_2^{\beta_3 + \beta_4 + \frac52},\kappa N_2^{\beta_3 + \beta_4 + 1},\kappa^{\beta_3 + \beta_4 + 2}\}.
\end{align*}
When on the other hand $N_4> N_2$, we are lead to a bound of the form
$$
\sum_{N_1\geq N_4} \Gamma_{N_1,N_4} \delta^2 \normNK{q}^2
		+  \sum_{N_4\geq N_1} \Bigl[\tfrac{|\vk| + N_4}{|\vk| + N_1}\Bigr]^{s-\frac12}\Gamma_{N_4,N_1} \delta^2 \normNK{q}^2 ,
$$
with corresponding permutations of the indices $\beta$.

In this way, we see that the proof can be completed by proving 
\begin{equation*}
\sum_{N \geq M } \Gamma_{N,M} \lesssim \kappa^{|\beta| - 7 -\frac83(s+\frac12)}.
\end{equation*}
As the matrix entries are monotone in $|\vk|$, it suffices to prove the bound when $|\vk|=\kappa^{2/3}$. Summing first in $N$, we are left to estimate
\begin{align}\label{NLS Gamma misery}
\sum_{M\geq 1} \kappa^{-8-\frac13(2s+1)} (\kappa^{\frac23} + M&)^{s-\frac12} M^{\beta_2-\frac32(2s+1)}\min\Bigl\{M^{\beta_3+\beta_4+\frac52}[\kappa^{\beta_1-\frac12-s}+ \kappa^{\frac23(\beta_1-s)}], \notag\\
&\kappa^2 M^{\beta_3+\beta_4+1} [\kappa^{\beta_1-\frac32-s}+ M^{\beta_1-\frac32-s}], \ \kappa^{\beta_3+\beta_4 +4} M^{\beta_1-\frac52-s}\Bigr\}.
\end{align}
From here, one need only consider the cases $M\leq \kappa^{\frac23}$, $\kappa^{\frac23}\leq M\leq \kappa$, and $M\geq \kappa$.
\end{proof}

For the mKdV we have the following variation:
\begin{lem}[Quartic estimates for the mKdV]\label{lem:mKdV-para}
Let \(m\in S(\beta;\kappa)\) with \(1\leq |\beta|\leq 8\).  For any \(|\vk|\geq \sqrt \kappa \geq 1\) and any Schwartz functions \(q,f\in \Cont([-1,1];\BdS)\) satisfying
\[
\normMK{f}\lesssim \normMK{q},
\]
we have the paraproduct estimate
\eq{mKdV-para-2}{
\begin{aligned}
\bigl\| m[q,r,q,\psi_h^{12} &\tfrac f{2\vk + \p}] \bigr\|_{L^1_{t,x}} + \bigl\| m[q,r,q,\tfrac f{2\vk + \p}]\,\psi_h^{12} \bigr\|_{L^1_{t,x}}\\
&\lesssim \Big[\tfrac{\kappa^{-2s}}{|\vk|}+|\vk|^{-2(2s+1)}\log\bigl|\tfrac{4\vk^2}{\kappa}\bigr|\Big]\kappa^{|\beta|-8}\delta^2\normMK{q}^2,
\end{aligned}
}
where the implicit constant is independent of $\kappa, \vk,$ and $h\in \R$.  Moreover, if $|\beta|\geq 2$,
\begin{align}\label{mKdV-para-2'}
\biggl|\iint m\bigl[q,r,q,\psi_h^{12} &\tfrac f{2\vk + \p}\bigr]\,dx\,dt\biggr|
	\lesssim \kappa^{|\beta|-9-2s}\delta^2\normMK{q}^2.
\end{align}
\end{lem}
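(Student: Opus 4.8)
The plan is to prove \eqref{mKdV-para-2'} by adapting the dyadic-summation scheme of Lemma~\ref{lem:NLS-para} and the first part of Lemma~\ref{lem:mKdV-para}, but exploiting the fact that here we integrate the paraproduct against $dx\,dt$ rather than taking its $L^1_{t,x}$ norm. Integrating in $x$ forces the output frequency to vanish, so the four input frequencies $N_1,N_2,N_3,N_4$ must nearly cancel; in particular the two highest of them must be comparable. This is precisely what will buy us the extra powers of $\kappa$ compared to \eqref{mKdV-para-2}. Concretely, I would set $h=0$ by translation invariance, decompose $q=\sum_{N}\psi^{-3}(\psi^3q)_N$ (and likewise $r$ and $\tfrac{f}{2\kappa+\p}$), insert these into $m[q,r,q,\psi^{12}\tfrac f{2\vk+\p}]$, and peel off the localized multipliers $\psi^3\tfrac{\p^{\beta_j}}{4\kappa^2-\p^2}\psi^{-3}$ using Lemma~\ref{L:localize}, exactly as in the proof of Lemma~\ref{lem:NLS-para}. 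By symmetry in the first three slots I may assume $N_1\ge N_2\ge N_3$, and the vanishing of the total frequency means the bulk of the estimate reduces to summing over configurations with $N_1\approx\max(N_2,N_4)$.

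The key estimates are the $L^2_{t,x}$ bounds for the two highest-frequency factors (using the $\mKdV$ local smoothing norm via Proposition~\ref{prop:mKdV-LS} / Corollary~\ref{C:ls mKdV}, i.e. the bounds on $\sup_h\|\psi^3\tfrac{\p^j q}{4\kappa^2-\p^2}\psi^{-3}(\psi^3q)_N\|_{L^2_{t,x}}$ already derived inside the proof of Lemma~\ref{lem:mKdV-para}) and $L^\infty_{t,x}$ bounds for the two lowest, via Bernstein, together with the gain $(|\vk|+N)^{-(s+1)}$ coming from $\tfrac{f}{2\vk+\p}$ when $f$ carries its $\normMK{\cdot}$ norm. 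I would recycle verbatim the dyadic building blocks
$$
\tfrac{N^j}{(\kappa+N)^2}\|(\psi^3q)_N\|_{L^2_{t,x}}\lesssim \kappa^{-2}N^{j-s-\frac52}\min\{N^{\frac52},\kappa N,\kappa^2\}\normMK{q}
$$
(and the analogous $L^\infty$ and $\tfrac{f}{2\vk+\p}$ versions) that appear in the proof of Lemma~\ref{lem:mKdV-para}, then sum. The new feature is that the constraint $N_1\approx\max(N_2,N_4)$ collapses one of the dyadic sums to a single scale before we integrate in time, so after summing the low frequencies one is left with a one-parameter sum of the shape
$$
\sum_{N\ge 1} \kappa^{-8}\,(|\vk|+N)^{-(s+1)}\,N^{(\text{exponent depending on }\beta)}\,\bigl[\min\{\cdots\}\bigr]
$$
which is geometric at both ends provided $|\beta|\ge 2$ (this hypothesis guarantees enough $N$-decay at high frequency after the $\kappa$-normalization; the constraint $|\vk|\ge\sqrt\kappa$ then kills the remaining boundary term), and evaluates to $\lesssim \kappa^{|\beta|-9-2s}$. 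As in Lemma~\ref{lem:NLS-para} it suffices to check this at the endpoint $|\vk|=\sqrt\kappa$ since the summand is monotone in $|\vk|$, and then to split into the ranges $N\le\sqrt\kappa$, $\sqrt\kappa\le N\le\kappa$, and $N\ge\kappa$, where the three branches of the $\min$ are active in turn.

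The main obstacle I anticipate is bookkeeping: tracking how the constraint $N_1\approx\max(N_2,N_4)$ interacts with the three regimes of the $\min\{N^{\frac52},\kappa N,\kappa^2\}$ truncations and with the distribution of the $\beta_j$'s among the slots, so that in every one of the resulting cases the final geometric sum closes with exactly the claimed exponent $|\beta|-9-2s$. One must be slightly careful about the case $N_4>N_1$ (so that $N_4$ is the top frequency), where the $\tfrac{f}{2\vk+\p}$ factor is the one estimated in $L^2_{t,x}$; the gain $(|\vk|+N_4)^{-(s+1)}$ is then the controlling factor and, combined with $\beta_4\le 2$, still yields the required decay. I do not expect any genuinely new cancellation to be needed beyond what is already encoded in the frequency-conservation from integrating in $x$; the work is in organizing the dyadic sum cleanly. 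Once the summation lemma of the form $\sum_{N\ge M}\Gamma_{N,M}\lesssim \kappa^{|\beta|-9-2s}$ (the analogue of \eqref{NLS Gamma misery}) is set up, \eqref{mKdV-para-2'} follows immediately.
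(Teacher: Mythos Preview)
Your approach differs from the paper's and has a real gap.

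You propose to exploit frequency conservation from $\int dx$: the output frequency vanishes, so the two highest input frequencies must be comparable, collapsing a double dyadic sum to a single one. But the decomposition you invoke, $q = \sum_N \psi^{-3}(\psi^3 q)_N$, destroys exactly the frequency localization needed for this argument. After applying Lemma~\ref{L:localize}, the factors are $(\psi^3 q)_{N_j}$ multiplied by further powers of $\psi^{\pm 3}$ and $\psi^{\pm 9}$; these smear the Fourier support, so integrating the resulting product in $x$ no longer forces the $N_j$ to nearly cancel. The tension is intrinsic: without the $\psi$-weighted decomposition you cannot access the local smoothing norm $\normMK{\cdot}$ (there is no global-in-space $L^2_{t,x}$ control on $q$); with it you lose the frequency constraint. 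One might try to quantify the smearing via the Schwartz decay of $\psi$, but that is substantial extra work that you do not mention.

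The paper avoids this entirely and does \emph{not} use frequency conservation. It re-examines the same unconstrained double sum over $N_1\geq N_2$ that proves \eqref{mKdV-para-2} and observes that a sharper case analysis already yields the improved $L^1_{t,x}$ bound
\[
\text{LHS\eqref{mKdV-para-2}} \lesssim \Big[\tfrac{\kappa^{-2s}}{\kappa+|\vk|}+|\vk|^{-2(2s+1)}\log\bigl|\tfrac{4\vk^2}{\kappa}\bigr|\Big]\kappa^{|\beta|-8}\delta^2\normMK{q}^2
\]
whenever (i) $|\beta|\geq 3$, or (ii) $|\beta|=2$ with no individual $\beta_j=2$. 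Under $|\vk|\geq\sqrt{\kappa}$ this is $\lesssim \kappa^{|\beta|-9-2s}\delta^2\normMK{q}^2$, giving \eqref{mKdV-para-2'} directly in those cases. The only remaining case, $|\beta|=2$ with some $\beta_j=2$, is dispatched by a single integration by parts in $x$ that moves one derivative off that factor, reducing to case (ii). Thus the integral over $x$ is used only to permit integration by parts, not to impose a frequency constraint.

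A minor slip: the $L^2_{t,x}$ building block you quote carries the NLS exponents; the mKdV version in the paper reads $\tfrac{N^j}{(\kappa+N)^2}\|(\psi^3q)_N\|_{L^2_{t,x}}\lesssim \kappa^{-2}N^{j-3-s}\min\{N^3,\kappa N,\kappa^2\}\normMK{q}$.
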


\begin{rem}
As we will see in the proof, it is not essential that the first three entries in the paraproduct are exactly $q$, $r$, and $q$.  Rather, we only require that they obey the same estimates as $q$, in the manner that $f$ does.  As we shall seldom need this extra generality, we have chosen to present the lemma in this more representative form.
\end{rem}

\begin{proof}
The proof is essentially identical to that of Lemma~\ref{lem:NLS-para}. By space-translation symmetry, we may assume $h=0$.

We will reuse the $L^\infty_{t,x}$ bounds appearing in the proof of Lemma~\ref{lem:NLS-para}; however, the $L^2_{t,x}$ local smoothing bounds used to treat the high-frequency terms must be adapted to the mKdV setting.  Specifically, we will use 
\begin{align*}
\tfrac{N^j}{(\kappa + N)^2}\|(\psi^3q)_N\|_{L^2_{t,x}} &\lesssim \kappa^{-2} N^{j-3-s} \min\{N^3,\kappa N, \kappa^2 \} \normMK{q},\\
\tfrac{N^j}{(\kappa + N)^2}\|\big(\psi^3\tfrac f{2\vk + \p}\big)_N\|_{L^2_{t,x}} &\lesssim |\vk|^{-(s+\frac12)}(|\vk| + N)^{s - \frac12} \kappa^{-2} N^{j-3-s} \min\{N^3,\kappa N, \kappa^2 \} \normMK{q}.
\end{align*}

Proceeding as in the proof of \eqref{NLS-para} we take
\begin{align*}
m_{N_1,N_2,N_3,N_4} := m\bigl[\psi^{-3}(\psi^3q)_{N_1},\psi^{-3}(\psi^3r)_{N_2},\psi^{-3}(\psi^3q)_{N_3},\psi^{9}(\psi^3\tfrac f{2\vk + \p})_{N_4}\bigr],
\end{align*}
so that
\[
\|m[q,r,q,\psi^{12}\tfrac f{2\vk + \p}]\|_{L^1_{t,x}} \leq \sum\limits_{N_j\geq 1}\|m_{N_1,N_2,N_3,N_4}\|_{L^1_{t,x}},
\]
or
\begin{align*}
\tilde m_{N_1,N_2,N_3,N_4} := m\bigl[\psi^{-3}(\psi^3q)_{N_1},\psi^{-3}(\psi^3r)_{N_2},\psi^{-3}(\psi^3q)_{N_3},\psi^{-3}(\psi^3\tfrac f{2\vk + \p})_{N_4}\bigr],
\end{align*}
\[
\|m[q,r,q,\tfrac f{2\vk + \p}]\,\psi^{12}\|_{L^1_{t,x}} \leq \sum\limits_{N_j\geq 1}\|\tilde m_{N_1,N_2,N_3,N_4}\,\psi^{12}\|_{L^1_{t,x}}.
\]
As in the proof of \eqref{NLS-para}, it suffices to restrict our attention to the case \(N_1\geq N_2\geq N_3\geq N_4\). With $\ell=3,9$, we may bound
\begin{align*}
\text{LHS}\eqref{mKdV-para-2}&\lesssim \!\!\sum\limits_{N_1\geq N_2\geq N_3\geq N_4}\!\!\|\psi^3\tfrac{\p^{\beta_1}}{4\kappa^2 - \p^2}\psi^{-3}(\psi^3q)_{N_1}\|_{L^2_{t,x}}\|\psi^3\tfrac{\p^{\beta_2}}{4\kappa^2 - \p^2}\psi^{-3}(\psi^3q)_{N_2}\|_{L^2_{t,x}}\\
&\qquad\quad\times \|\psi^3\tfrac{\p^{\beta_3}}{4\kappa^2 - \p^2}\psi^{-3}(\psi^3q)_{N_3}\|_{L^\infty_{t,x}}\|\psi^{-\ell}\tfrac{\p^{\beta_4}}{4\kappa^2 - \p^2}\psi^\ell(\psi^3\tfrac f{2\vk + \p})_{N_4}\|_{L^\infty_{t,x}}\\
&\lesssim  \!\!\!\sum\limits_{N_1\geq N_2\geq N_3\geq N_4}\!\!\!\!\tfrac{N_1^{\beta_1}N_2^{\beta_2}N_3^{\beta_3}N_4^{\beta_4}}{(\kappa + N_1)^2(\kappa + N_2)^2(\kappa + N_3)^2(\kappa + N_4)^2}\|(\psi^3q)_{N_1}\|_{L^2_{t,x}}\|(\psi^3q)_{N_2}\|_{L^2_{t,x}}\\
&\qquad\quad\times\|(\psi^3q)_{N_3}\|_{L^\infty_{t,x}}\|(\psi^3\tfrac f{2\vk + \p})_{N_4}\|_{L^\infty_{t,x}}.
\end{align*}
Summing in \(N_3\geq N_4\geq 1\) we obtain a bound of a constant multiple of
\begin{align*}
&\sum\limits_{N_1\geq N_2}|\vk|^{-(s+\frac12)}(|\vk| + N_2)^{s-\frac12}\kappa^{-8}N_1^{\beta_1-3-s}N_2^{\beta_2 - 2 - 3s}\min\{N_1^3,\kappa N_1,\kappa^2\}\\
&\qquad \qquad\qquad\qquad\times\min\{N_2^{\beta_3 + \beta_4 + 3},\kappa N_2^{\beta_3 + \beta_4 + 1}, \kappa^{\beta_3 + \beta_4 + 2}\}\delta^2\normMK{q}^2.
\end{align*}
Proceeding as in Lemma~\ref{lem:NLS-para} and summing in $N_1$, we are led to control the following analogue of \eqref{NLS Gamma misery}:
\begin{align*}
\sum_{M\geq 1} \kappa^{-8} |\vk|^{-(s+\frac12)} (|\vk|& + M)^{s-\frac12} M^{\beta_2-2-3s}\min\Bigl\{M^{\beta_3+\beta_4+3}[\kappa^{\beta_1-1-s}+ \kappa^{\frac12(\beta_1-s)}], \notag\\
&\kappa^2 M^{\beta_3+\beta_4+1} [\kappa^{\beta_1-2-s}+ M^{\beta_1-2-s}], \ \kappa^{\beta_3+\beta_4 +4} M^{\beta_1-3-s}\Bigr\}.
\end{align*}
Once again, this requires consideration of individual cases. Unlike in Lemma~\ref{lem:NLS-para}, the final bound depends upon $|\vk|$ and so we cannot exploit monotonicity; thus, we need to treat separately $\kappa^{\frac12}\leq |\vk|\leq \kappa$ and $|\vk|\geq \kappa$.  Evaluating these sums carefully reveals that
\eqref{mKdV-para-2} can be improved to
\begin{align}
\text{LHS\eqref{mKdV-para-2}} \lesssim \Big[\tfrac{\kappa^{-2s}}{\kappa+|\vk|}+|\vk|^{-2(2s+1)}\log\bigl|\tfrac{4\vk^2}{\kappa}\bigr|\Big]\kappa^{|\beta|-8}\delta^2\normMK{q}^2
\end{align}
in two cases: (i) if $|\beta|\geq 3$ or (ii) if $|\beta|=2$ and no individual $\beta_j=2$.  These bounds suffice to prove \eqref{mKdV-para-2'} because if $|\beta|=2$ and some factor has two derivatives (i.e. some $\beta_j=2$), then we may integrate by parts to redistribute one of the derivatives and recover case (ii).
\end{proof}

Next we prove another pair of lemmas that will act as replacements for Lemmas~\ref{lem:LambdaLoc NLS},~\ref{L:LpIp mKdV} in certain situations:
\begin{lem}
Let \(|\vk|\geq \kappa^{\frac23}\geq 1\) and \(f_1,f_2\in \Cont([-1,1];\BdS)\) satisfy
\[
\normNK{f_j}\lesssim \normNK{q}.
\]
Then we have the estimate
\eq{L4I4-replacement NLS}{
\|(\vk - \p)^{-\frac12}(\psi_h f_1\cdot\psi_h\tfrac{f_2}{2\vk - \p})(\vk + \p)^{-\frac12}\|_{L^2_t \I_2}^2\lesssim |\vk|^{-3}\bigl[\kappa^{\frac23 - \frac{8s}3} + |\vk|^{-4s}\bigr]\delta^2\normNK{q}^2.
}
\end{lem}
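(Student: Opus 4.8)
The plan is to view this as a bilinear refinement of \eqref{Sharp LambdaLoc NLS}: the single factor $\psi_h q$ there becomes the product $w:=\psi_h f_1\cdot\psi_h\tfrac{f_2}{2\vk-\p}$, while $L^4_t\I_4$ and the fourth power are replaced by $L^2_t\I_2$ and a square, so that both sides stay quartic in $q$. I would mimic the proof of Lemma~\ref{lem:LambdaLoc NLS}. After reducing to $h=0$ by translation invariance, I would decompose $w=\sum_{N_1,N_2}w_{N_1,N_2}$ with $w_{N_1,N_2}:=(\psi f_1)_{N_1}\cdot\bigl(\psi\tfrac{f_2}{2\vk-\p}\bigr)_{N_2}$ and use the triangle inequality in $\I_2$ to reduce to bounding each $\|(\vk-\p)^{-\frac12}w_{N_1,N_2}(\vk+\p)^{-\frac12}\|_{L^2_t\I_2}$.

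In the main regime $N_1\gtrsim N_2$ the output frequency is $\approx N_1$ (the high–high case being treated by Bernstein at the output frequency, as in Lemma~\ref{L:X}), so \eqref{LogarithmicBound} together with H\"older in $x$ and $t$ bound this piece by $(|\vk|+N_1)^{-\frac12}[\log(4+N_1^2/\vk^2)]^{\frac12}$ times $\|(\psi f_1)_{N_1}\|_{L^2_{t,x}}\,\|(\psi\tfrac{f_2}{2\vk-\p})_{N_2}\|_{L^\infty_{t,x}}$. For the first factor I would use, exactly as in \eqref{LL-1}, the bound $\|(\psi f_1)_{N_1}\|_{L^2_{t,x}}\lesssim N_1^{-s}\min\{1,\tfrac{\kappa+N_1}{N_1^{3/2}}\}\normNK{q}$, coming from the definition of $X^{s+\frac12}_\kappa$ (with Lemma~\ref{lem:Reg-Gain} used to replace the cut-off $\psi^6$ by $\psi$), Bernstein, and $\normNK{f_1}\lesssim\normNK{q}$. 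For the second factor, commuting $\psi$ through $(2\vk-\p)^{-1}=(2\vk+\p)(4\vk^2-\p^2)^{-1}$ by means of Lemma~\ref{L:localize} (with $\kappa$ there equal to $\vk$) and then Bernstein yields the gain $\|(\psi\tfrac{f_2}{2\vk-\p})_{N_2}\|_{L^\infty_{t,x}}\lesssim(|\vk|+N_2)^{-1}N_2^{\frac12-s}\delta$, using $f_2\in\BdS$. The point is that after summing the (geometric, since $\tfrac12-s>0$) series in $N_2\le N_1$, the extra factor $(|\vk|+N_2)^{-1}$ supplied by the resolvent makes the contribution of each $N_1$ dominated by the $N$-th term in the proof of \eqref{Sharp LambdaLoc NLS}; the $N_1$-sum then closes identically, broken at $\kappa^{2/3}$ and $|\vk|$ and using $|\vk|\geq\kappa^{2/3}$ (in particular $\log(4+N_1^2/\vk^2)\lesssim1$ for $N_1\le|\vk|$, and for $N_1>|\vk|$ the logarithm is absorbed by the geometric decay, so no logarithms survive). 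This gives $\|(\vk-\p)^{-\frac12}w(\vk+\p)^{-\frac12}\|_{L^2_t\I_2}^2\lesssim|\vk|^{-3}[\kappa^{\frac23-\frac{8s}3}+|\vk|^{-4s}]\delta^2\normNK{q}^2$, that is, \eqref{L4I4-replacement NLS}. The complementary regime $N_2>N_1$ is only more favorable: one puts $\psi\tfrac{f_2}{2\vk-\p}$ in $L^2_{t,x}$ (invoking \eqref{LambdaLoc NLS} for $f_2$, improved by the $(|\vk|+N_2)^{-1}$ from the resolvent) and $\psi f_1$ in $L^\infty_{t,x}$.

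I expect the main obstacle to be, as in Lemma~\ref{lem:LambdaLoc NLS} itself, the sharp bookkeeping of the double frequency sum --- in particular, using the alternative in the local smoothing bound for $(\psi f_1)_{N_1}$ correctly across the ranges $N_1\le\kappa^{2/3}$, $\kappa^{2/3}\le N_1\le|\vk|$, $N_1\ge|\vk|$ in concert with $|\vk|\ge\kappa^{2/3}$, and verifying that the single power $(|\vk|+N_2)^{-1}$ gained from $\tfrac1{2\vk-\p}$ is exactly enough to run the $N_2$-sum without degrading the powers of $\kappa$ and $\vk$. A secondary, routine point is that the Schwartz cut-offs must be commuted freely past the resolvents and Littlewood--Paley projections with constants uniform in $\vk$ and $h$, which is precisely the content of Lemma~\ref{L:localize}.
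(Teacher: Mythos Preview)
Your approach is essentially the paper's: both reduce to $h=0$, decompose into Littlewood--Paley pieces $(\psi f_1)_{N_1}$ and $(\psi\tfrac{f_2}{2\vk-\p})_{N_2}$, and run the high--low/low--high/high--high trichotomy using exactly the $L^2_{t,x}$ local-smoothing bound and the $L^\infty_{t,x}$ Bernstein bound you describe. The only organizational difference is that the paper first applies \eqref{LogarithmicBound} to pass to a sum of squares over output frequencies $N$ before introducing $N_1,N_2$, rather than using the triangle inequality in $\I_2$; since the resulting dyadic sums are geometric in each regime (broken at $\kappa^{2/3}$, $\kappa$, and $|\vk|$), both orderings yield the same bound.
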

\begin{proof}
By translation invariance, we may take \(h=0\).  Decomposing dyadically and using \eqref{LogarithmicBound}, yields
\[
\LHS{L4I4-replacement NLS}\lesssim \sum_{N\geq 1} (|\vk| + N)^{-1}\log\big(4 + \tfrac{N^2}{|\vk|^2}\big) \bigl\|P_N( \psi f_1\cdot \psi\tfrac {f_2}{2\vk - \p}\big)\bigr\|_{L^2_{t,x}}^2,
\]
in which we then substitute the bound 
\[
\bigl\|P_N( \psi f_1\cdot \psi\tfrac {f_2}{2\vk - \p}\big)\bigr\|_{L^2_{t,x}}\leq \biggl\| \sum_{N_1,N_2\geq 1}\big\|P_N\big( (\psi f_1)_{N_1}\cdot \big(\psi\tfrac {f_2}{2\vk - \p}\big)_{N_2}\big)\big\|_{L^2}\biggr\|_{L^2_t}.
\]

We then proceed using the Littlewood--Paley trichotomy:

\step{Case 1: \(N_2 \ll N_1 \approx N\).} Applying Bernstein's inequality, we bound
\begin{align*}
\big\|P_N\big( (\psi f_1)_{N_1}\cdot \big(\psi\tfrac {f_2}{2\vk - \p}\big)_{N_2}\big)\big\|_{L^2_{t,x}}
&\lesssim \|(\psi f_1)_{N_1}\|_{L^2_{t,x}}\|\big(\psi\tfrac {f_2}{2\vk - \p}\big)_{N_2}\|_{L^\infty_{t,x}}\\
&\lesssim N_1^{-s-\frac32}\min\{N_1^{\frac32},\kappa + N_1\} N_2^{\frac12-s}(|\vk| + N_2)^{-1}\delta \normNK{q}.
\end{align*}
Observing that for fixed \(N\geq 1\) we have
\[
\sum_{1\leq N_2\lesssim N}N_2^{\frac12-s}(|\vk| + N_2)^{-1}\lesssim |\vk|^{-1}(N\wedge |\vk|)^{\frac12-s},
\]
we are lead to estimate 
\begin{align*}
&\quad \sum_{N\geq 1}|\vk|^{-2}(N\wedge |\vk|)^{1-2s}(|\vk| + N)^{-1}\log(4 + \tfrac{N^2}{|\vk|^2}) N^{-2s-3}\min\{N^3,(\kappa + N)^2\} \\
&\qquad\qquad\lesssim |\vk|^{-3} \bigl[ \kappa^{\frac23 - \frac{8s}3}+|\vk|^{-4s} \bigr].
\end{align*}

\step{Case 2: \(N_1 \ll N_2\approx N\).} A similar argument yields the estimate
\begin{align*}
&\big\|P_N\big( (\psi f_1)_{N_1}\cdot \big(\psi\tfrac {f_2}{2\vk - \p}\big)_{N_2}\big)\big\|_{L^2_{t,x}}\lesssim N_1^{\frac12-s}(|\vk| + N_2)^{-1}N_2^{-s-\frac32}\min\{N_2^{\frac32},\kappa + N_2\} \delta \normNK{q}.
\end{align*}
This then leads us to evaluate
\[
\sum_{N\geq 1}(|\vk| + N)^{-3}\log(4 + \tfrac{N^2}{\vk^2})N^{-2(1+2s)}\min\{N^3,(\kappa + N)^2\},
\]
which yields the same bound as in Case~1.

\step{Case 3: \(N_1\approx N_2 \gtrsim N\).} Bernstein's inequality implies
\begin{align*}
\big\|P_N\big( (\psi f_1)_{N_1}\cdot &\big(\psi\tfrac {f_2}{2\vk - \p}\big)_{N_2}\big\|_{L^2}  \\
	&\lesssim N^{\frac12} \big\|(\psi f_1)_{N_1} \big\|_{L^2} \cdot N_1^{-s} (|\vk|+N_1)^{-1} \big\|(2\vk-\partial)\big(\psi\tfrac {f_2}{2\vk - \p}\big)_{N_2}\big\|_{H^{s}} .
\end{align*}
Thus, applying Cauchy--Schwarz to the sum, we obtain 
\begin{align*}
\biggl\| \sum_{N_1\approx N_2\gtrsim N}\big\|P_N\big( (\psi & f_1)_{N_1}\cdot \big(\psi\tfrac {f_2}{2\vk - \p}\big)_{N_2}\big)\big\|_{L^2}\biggr\|_{L^2_t}^2\\
&\lesssim N \sum\limits_{N_1\gtrsim N} N_1^{-4s-3}(|\vk| + N_1)^{-2}\min\{N_1^3,(\kappa + N_1)^2\} \delta^2\normNK{q}^2.
\end{align*}

We are then left to evaluate the sum
\begin{align*}
&\quad \sum_{N\geq 1} \sum_{N_1\gtrsim N} \tfrac N{|\vk| + N}\log\big(4 + \tfrac{N^2}{|\vk|^2}\big) N_1^{-4s-3}(|\vk| + N_1)^{-2}\min\{N_1^3,(\kappa + N_1)^2\} \\
&\lesssim \sum\limits_{N_1\geq 1}|\vk|^{-1}(|\vk| + N_1)^{-2} N_1^{-4s-2}\min\{N_1^3,(\kappa + N_1)^2\},
\end{align*}
which ultimately yields a contribution identical to that of Cases~1 and~2.
\end{proof}

In the case of the mKdV we have the following analogue:

\begin{lem}
Let \(|\vk|\geq \kappa^{\frac12}\geq 1\) and \(f_1,f_2,f_3\in \Cont([-1,1];\BdS)\) satisfy
\[
\normMK{f_j}\lesssim \normMK{q}.
\]
Then we have the estimates
\begin{align}
&\|(\vk - \p)^{-\frac12}(\psi_h f_1\cdot\psi_h\tfrac{f_2}{2\vk - \p})(\vk + \p)^{-\frac12}\|_{L^2_t \I_2}^2\label{L4I4-replacement mKdV}\\
&\qquad\qquad \lesssim |\vk|^{-3}\big[\kappa^{\frac12-2s} + |\vk|^{-1 - 4s}\log|2\vk|\big]\delta^2\normMK{q}^2,\notag\\
&\|(\vk - \p)^{-\frac12}(\psi_h f_1\cdot\psi_h\tfrac{f_2}{2\vk - \p}\cdot\psi_h\tfrac{f_3}{2\vk + \p})(\vk + \p)^{-\frac12}\|_{L^2_t \I_2}^2\label{L6I6-replacement mKdV}\\
&\qquad\qquad \lesssim |\vk|^{-5}\bigl[ \kappa^{1-3s} + \bigl(1+\tfrac{\kappa^2}{\vk^2}\bigr) |\vk|^{-6s} \log |2\vk|\bigr]\delta^4\normMK{q}^2.\notag
\end{align}
\end{lem}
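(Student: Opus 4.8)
\emph{Strategy.} Both estimates are proved by the method just used for the NLS analogue \eqref{L4I4-replacement NLS}. After reducing to $h=0$ by translation invariance and applying \eqref{LogarithmicBound} to pass from the Hilbert--Schmidt norm to a logarithmically-weighted Sobolev norm, the task becomes to estimate $\sum_{N\ge 1}(|\vk|+N)^{-1}\log\bigl(4+\tfrac{N^2}{\vk^2}\bigr)\,\|P_N(\,\cdot\,)\|_{L^2_{t,x}}^2$, where the dots stand for the two- or three-fold product in question. The main new inputs, replacing those in the NLS case, are the mKdV local smoothing bound $\|(\psi^3 f)_N\|_{L^2_{t,x}}\lesssim N^{-s-2}\min\{N^2,\kappa+N\}\normMK{q}$ (which is precisely what $\normMK{\cdot}$ provides, cf.\ Corollary~\ref{C:ls mKdV}), the elementary Bernstein bound $\|(\psi^3 f)_N\|_{L^\infty_{t,x}}\lesssim N^{\frac12-s}\delta$, and Lemma~\ref{L:localize}, which lets one commute the Fourier multipliers $(2\vk\mp\p)^{-1}$ past the cutoffs while gaining a factor $(|\vk|+N_j)^{-1}$ on the corresponding factor.

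\emph{Proof of \eqref{L4I4-replacement mKdV}.} Here the integrand is the two-fold product $\psi f_1\cdot\psi\tfrac{f_2}{2\vk-\p}$, so one runs exactly the Littlewood--Paley trichotomy of the NLS proof: in the high--low and low--high cases the high-frequency factor is placed in $L^2_{t,x}$ (using the mKdV smoothing bound above) and the low one in $L^\infty_{t,x}$ (Bernstein), while in the high--high case one additionally uses Bernstein at the output frequency together with a Cauchy--Schwarz in the summation. Summing the resulting series across the three regimes $N\le\kappa^{1/2}$, $\kappa^{1/2}\le N\le\kappa$, $N\ge\kappa$ (and likewise for the input frequencies) produces the claimed bound; it is here that the hypothesis $|\vk|\ge\kappa^{1/2}$ enters, in place of $|\vk|\ge\kappa^{2/3}$.

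\emph{Proof of \eqref{L6I6-replacement mKdV}.} Now the integrand is the three-fold product $\psi f_1\cdot\psi\tfrac{f_2}{2\vk-\p}\cdot\psi\tfrac{f_3}{2\vk+\p}$. The plan is to decompose all three factors into Littlewood--Paley pieces of frequencies $N_1,N_2,N_3$, using Lemma~\ref{L:localize} to extract the two gains $(|\vk|+N_2)^{-1}$ and $(|\vk|+N_3)^{-1}$ off the resolvents on $f_2,f_3$. After ordering the frequencies the dominant mechanism is unchanged: the two largest are comparable, and if they dominate the output $N$ one gains an extra $N^{\frac12}$ from Bernstein. One places a single (top-frequency) factor in $L^2_{t,x}$ to invoke local smoothing and the other two in $L^\infty_{t,x}$ (each bounded by $\delta$ times the Bernstein power of its frequency), so that after squaring via \eqref{LogarithmicBound} one obtains the $\delta^4\normMK{q}^2$ on the right. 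The two resolvent gains together with the local smoothing gain on the remaining factor are what force two derivatives' worth of decay at high frequency, hence the $|\vk|^{-5}$.

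\emph{Main obstacle.} As in Lemmas~\ref{lem:NLS-para} and~\ref{lem:mKdV-para}, the real work is the bookkeeping of the multi-parameter dyadic sums: one must carry out the summation over $N_1,N_2,N_3$ and then over $N$ through the several regimes cut out by the thresholds $\kappa^{1/2}$, $\kappa$, and $|\vk|$, and --- since the final constant depends on $|\vk|$ and is not monotone in it --- treat $\kappa^{1/2}\le|\vk|\le\kappa$ and $|\vk|\ge\kappa$ separately. It is in this case analysis that the factor $\log|2\vk|$ appears (from a borderline power of a dyadic parameter) and that the factor $1+\tfrac{\kappa^2}{\vk^2}$ in \eqref{L6I6-replacement mKdV} arises (only in the boundary regime of $N$ near $\kappa$). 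I expect no new conceptual difficulty beyond the NLS lemma; the third factor in the product merely enlarges the case tree.
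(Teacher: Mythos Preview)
Your proposal is correct and follows essentially the same approach as the paper: for \eqref{L4I4-replacement mKdV} the paper explicitly says ``all that changes are the specific powers inside the sums,'' and for \eqref{L6I6-replacement mKdV} it runs exactly the Littlewood--Paley decomposition you describe, reducing by the $N_2\leftrightarrow N_3$ symmetry to four cases (top factor at $N_1$ or $N_2$, output $N$ large or small), placing the top-frequency factor in $L^2_{t,x}$ via local smoothing and the others in $L^\infty_{t,x}$ via Bernstein, with the usual $N^{1/2}$ Bernstein gain at the output in the high--high cases. One small correction: the bound $\|(\psi^3 f)_N\|_{L^2_{t,x}}\lesssim N^{-s-2}\min\{N^2,\kappa+N\}\normMK{q}$ follows directly from the definition of $\normMK{\cdot}$, not from Corollary~\ref{C:ls mKdV} (which is specific to solutions of the difference flow).
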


\begin{proof}
The estimate \eqref{L4I4-replacement mKdV} follows from the same argument used to prove \eqref{L4I4-replacement NLS}; all that changes are the specific powers inside the sums.

Thus, it remains to consider the estimate \eqref{L6I6-replacement mKdV}. Proceeding as in the proof of \eqref{L4I4-replacement NLS}, we may assume that \(h=0\) and bound
\[
\LHS{L6I6-replacement mKdV}\lesssim \sum\limits_{N\geq 1}(|\vk| + N)^{-1}\log\big(4 + \tfrac{N^2}{\vk^2}\big)\big\|P_N\bigl(\psi f_1\cdot \psi\tfrac{f_2}{2\vk - \p}\cdot\psi\tfrac{f_3}{2\vk + \p}\bigr)\big\|_{L^2_{t,x}}^2 .
\]
We then decompose further by frequency, using
\begin{align*}
&\big\|P_N\big(\psi f_1\cdot \psi\tfrac{f_2}{2\vk - \p}\cdot\psi\tfrac{f_3}{2\vk + p}\big)\big\|_{L^2_{t,x}}\\
&\qquad  \leq \biggl\| \sum\limits_{N_1,N_2,N_3\geq 1} \big\|P_N\big((\psi f_1)_{N_1}\cdot \big(\psi\tfrac{f_2}{2\vk - \p}\big)_{N_2}\cdot\big(\psi\tfrac{f_3}{2\vk + \p}\big)_{N_3}\big)\big\|_{L^2} \biggr\|_{L^2_t}.
\end{align*}

As everything is symmetric under the $N_2\leftrightarrow N_3$ interchange, we may reduce matters to four possible cases:

\step{Case 1: \(\min\{N_1,N\}\geq \max\{N_2,N_3\}\).} Here we apply Bernstein's inequality to bound
\begin{align*}
&\big\|P_N\big((\psi f_1)_{N_1}\cdot \big(\psi\tfrac{f_2}{2\vk - \p}\big)_{N_2}\cdot\big(\psi\tfrac{f_3}{2\vk + \p}\big)_{N_3}\big)\big\|_{L^2_{t,x}}\\
&\qquad \leq \|(\psi f_1)_{N_1}\|_{L^2_{t,x}}\|\big(\psi\tfrac{f_2}{2\vk - \p}\big)_{N_2}\|_{L^\infty_{t,x}}\|\big(\psi\tfrac{f_3}{2\vk + \p}\big)_{N_3}\|_{L^\infty_{t,x}}\\
&\qquad \lesssim N_1^{-s-2}\min\{N_1^2,\kappa + N_1\}N_2^{\frac12-s}(|\vk| + N_2)^{-1}N_3^{\frac12-s}(|\vk| + N_3)^{-1}\delta^2 \normMK{q}.
\end{align*}
Summing in \(N_2,N_3\) and then in \(N_1\approx N\) using the Cauchy-Schwarz inequality we obtain a contribution to \(\RHS{L6I6-replacement mKdV}\) that is
\begin{align*}
&\lesssim \sum\limits_{N\geq 1}|\vk|^{-4}(N\wedge |\vk|)^{2-4s}(|\vk| + N)^{-1}\log\big(4 + \tfrac{N^2}{\vk^2}\big)N^{-2s-4}\min\{N^4,(\kappa + N)^2\} \delta^4 \normMK{q}^2 \\
&\lesssim |\vk|^{-5} \bigl[\kappa^{1-3s}+ \bigl(1+\tfrac{\kappa^2}{\vk^2}\bigr)|\vk|^{-6s} \log|2\vk| \bigr]\delta^4\normMK{q}^2.
\end{align*}

\step{Case 2: \(\min\{N_2,N\}\geq \max\{N_1,N_3\}\).} A similar argument, this time placing \(\psi\frac{f_2}{2\vk - \p}\) in \(L^2_{t,x}\) yields the estimate
\begin{align*}
&\big\|P_N\big((\psi f_1)_{N_1}\cdot \big(\psi\tfrac{f_2}{2\vk - \p}\big)_{N_2}\cdot\big(\psi\tfrac{f_3}{2\vk + \p}\big)_{N_3}\big)\big\|_{L^2_{t,x}}\\
&\qquad \lesssim N_1^{\frac12-s}(|\vk| + N_2)^{-1}N_2^{-s-2}\min\{N_2^2,\kappa + N_2\}N_3^{\frac12-s}(|\vk| + N_3)^{-1}\delta^2 \normMK{q},
\end{align*}
which yields a contribution to \(\RHS{L6I6-replacement mKdV}\) of
\[
\lesssim \sum\limits_{N\geq 1}|\vk|^{-2}(N\wedge |\vk|)^{1-2s}(|\vk| + N)^{-3}\log\big(4 + \tfrac{N^2}{\vk^2}\big)N^{-4s-3}\min\{N^4,(\kappa + N)^2\} \delta^4 \normMK{q}^2,
\]
which yields an identical contribution to Case 1.

\step{Case 3: \(\min\{N_1,N_2\}\geq \max\{N_3,N\}\).} Here we apply Bernstein's inequality at the output frequency and sum using the Cauchy-Schwarz inequality in \(N_1\approx N_2\) so that for fixed \(N\geq 1\) we obtain
\begin{align*}
&\left\|\sum\limits_{N_1\approx N_2\gtrsim N_3,N}\big\|P_N\big((\psi f_1)_{N_1}\cdot \big(\psi\tfrac{f_2}{2\vk - \p}\big)_{N_2}\cdot\big(\psi\tfrac{f_3}{2\vk + \p}\big)_{N_3}\big)\big\|_{L^2}\right\|_{L^2_t}^2\\
&\qquad\lesssim  N \left\| \sum\limits_{N_1\approx N_2\gtrsim N_3,N} \|(\psi f_1)_{N_1}\|_{L^2}\|\big(\psi\tfrac{f_2}{2\vk - \p}\big)_{N_2}\|_{L^2}\|\big(\psi\tfrac{f_3}{2\vk + \p}\big)_{N_3}\|_{L^\infty}\right\|_{L^2_t}^2\\
&\qquad \lesssim N \sum\limits_{N_1\gtrsim N}N_1^{-4s-4}\min\{N_1^4,(\kappa + N_1)^2\}(|\vk| + N_1)^{-2}|\vk|^{-2}(N_1\wedge |\vk|)^{1-2s}\delta^4 \normMK{q}^2.
\end{align*}
We then obtain a contribution to \(\RHS{L6I6-replacement mKdV}\) of
\begin{align*}
&\lesssim \sum\limits_{N\geq 1} \tfrac N{|\vk| + N}\log\big(4 + \tfrac{N^2}{\vk^2}\big) \sum\limits_{N_1\gtrsim N}N_1^{-4s-4}\min\{N_1^4,(\kappa + N_1)^2\}\\
&\qquad\qquad\qquad\times(|\vk| + N_1)^{-2}|\vk|^{-2}(N_1\wedge |\vk|)^{1-2s}\delta^4 \normMK{q}^2\\
&\lesssim \sum\limits_{N_1\geq 1} |\vk|^{-3}N_1^{-4s-3}\min\{N_1^4,(\kappa + N_1)^2\}(|\vk| + N_1)^{-2}(N_1\wedge |\vk|)^{1-2s}\delta^4\normMK{q}^2,
\end{align*}
which gives an identical contribution to Cases~1 and~2.

\step{Case 4: \(\min\{N_2,N_3\}\geq \max\{N_1,N\}\).} Arguing as in Case 3, for fixed \(N\geq 1\) we may bound
\begin{align*}
&\left\|\sum\limits_{N_2\approx N_3\gtrsim N_1,N}\big\|P_N\big((\psi f_1)_{N_1}\cdot \big(\psi\tfrac{f_2}{2\vk - \p}\big)_{N_2}\cdot\big(\psi\tfrac{f_3}{2\vk + \p}\big)_{N_3}\big)\big\|_{L^2}\right\|_{L^2_t}^2\\
&\qquad \lesssim N \sum\limits_{N_2\gtrsim N}N_2^{-6s-3}\min\{N_2^4,(\kappa + N_2)^2\}(|\vk| + N_2)^{-4}\delta^4 \normMK{q}^2,
\end{align*}
which yields a contribution to \(\RHS{L6I6-replacement mKdV}\) of
\[
\sum\limits_{N_2\geq 1} |\vk|^{-1}N_2^{-6s-2}\min\{N_2^4,(\kappa + N_2)^2\}(|\vk| + N_2)^{-4}\delta^4 \normMK{q}^2.
\]
This gives an identical contribution to the previous cases.
\end{proof}

We are now in a position to prove our main error estimates for the NLS:

\begin{lem}[Error estimates for the NLS]\label{lem:TAE-new}
There exists \(\delta>0\) so that for all real \(|\vk|\geq \kappa^{\frac23}\geq 1\), Schwartz functions \(q,f\in \Cont([-1,1];\BdS)\) satisfying
\[
\normNK{f}\lesssim \normNK{q},
\]
and \(\chi\in \{(\psi^\ell)^{(j)}: 6\leq \ell\leq 12, \, j=0,1 \}\), we have the estimates
\begin{align}
&\left\|\int \tfrac f{2\vk + \p}\, g_{12}\sbrack{\geq 3}(\pm\kappa)\,\chi_h\,dx\right\|_{L^1_t} \lesssim |\vk|^{-1}\kappa^{- 1 - \frac43(2s+1)}\delta^2\normNK{q}^2,\label{rem Dual NLS}\\
&\left\|\int f\,\big(\tfrac{g_{12}(\vk)}{2 + \vr(\vk)}\big)\sbrack{\geq 3}\,\chi_h\,dx\right\|_{L^1_t}\lesssim |\vk|^{-3} \bigl[\kappa^{\frac23 - \frac{8s}3} + |\vk|^{-4s}\bigr] \delta^2\normNK{q}^2,\label{stack Dual NLS}\\
&\left\|\vr(\pm\kappa)\sbrack{\geq 4}\,\chi_h\right\|_{L^1_{t,x}}  \lesssim \kappa^{-2 - \frac43(2s+1)}\delta^2\normNK{q}^2,\label{rho rem NLS}\\
&\left\|\int \tfrac f{2\vk + \p}\,\big(g_{12}\sbrack{\geq 3}(\kappa) + g_{12}\sbrack{\geq 3}(-\kappa)\big) \psi_h^{12}\,dx\right\|_{L^1_t}\lesssim \kappa^{- 2 - \frac43(2s+1)}\delta^2\normNK{q}^2,\label{rem Dual cancel NLS}
\end{align}
which are uniform in \(\kappa\), \(\vk\), and $h\in \R$.  As ever, $\chi_h(x):= \chi(x-h)$.
\end{lem}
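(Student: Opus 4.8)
The four bounds will be handled on a common template: split the series tail in each integrand into its leading, quadrilinear, contribution plus a remainder of order $\geq 6$, estimate the quadrilinear piece by the paraproduct bound of Lemma~\ref{lem:NLS-para} or the replacement estimate \eqref{L4I4-replacement NLS}, and sum the remainder as a geometric series, each extra pair $(q,r)$ costing a factor $\lesssim|\kappa|^{-(2s+1)}\delta^2$ (resp. $|\vk|^{-(2s+1)}\delta^2$) by exactly the mechanism that produced \eqref{g12-LO} and \eqref{g12-X-1}. Before the building blocks are invoked, the cutoff $\chi_h$ — which obeys $|\p^k\chi_h|\lesssim\psi_h^6$ for every $k$ since $|\p^k\psi|\lesssim\psi$ — is redistributed onto the individual copies of $q$, $r$, $f$ by commuting powers of $\psi_h$ through the resolvents using the multiplicative commutator bounds \eqref{Multiplicative Commutator} and \eqref{Localized Multiplicative Commutator}; this is precisely the preparatory step advertised just above Lemma~\ref{L:localize}.

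For \eqref{rem Dual NLS} the plan is to use $g_{12}\sbrack{\geq 3}(\kappa) = -(2\kappa-\p)^{-1}(q\,\vr(\kappa))$ together with \eqref{g12 1 3}, \eqref{gamma 2} to isolate the $O(q^3)$ term $g_{12}\sbrack{3}(\kappa)$ (the $-\kappa$ case being identical). Pairing it with $\tfrac{f}{2\vk+\p}\chi_h$, transposing the outermost resolvent onto the $f$-slot, and folding in the redistributed cutoff rewrites the integral as $\int m[q,r,q,\psi_h^{12}\tfrac{F}{2\vk+\p}]\,dx$ for an $F$ with $\normNK{F}\lesssim\normNK{q}$ and a symbol $m\in S(\beta;\kappa)$ (using a routine variant of Lemma~\ref{lem:NLS-para} that accommodates the general cutoff $\chi_h$ in place of $\psi_h^{12}$, proved by rerunning its argument): each half-resolvent $\tfrac{1}{2\kappa\mp\p} = \tfrac{2\kappa\pm\p}{4\kappa^2-\p^2}$ contributes one denominator $\kappa^2+\xi^2$ and a numerator power $\beta_j\in\{0,1\}$ (with a compensating $\kappa$ when $\beta_j=0$, and $\kappa$ and $2\kappa$ interchangeable up to constants), while the single unresolved copy of $q$ is absorbed by writing $1 = \tfrac{\xi^2}{\kappa^2+\xi^2} + \tfrac{\kappa^2}{\kappa^2+\xi^2}$, producing at most one $\beta_j = 2$. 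One checks $1\leq|\beta|\leq 5$, so Lemma~\ref{lem:NLS-para} applies, and collating the powers of $\kappa$ against the exponent $\kappa^{|\beta|-7-\frac43(2s+1)}$ gives \eqref{rem Dual NLS}, the displayed $|\vk|^{-1}$ coming from the $\tfrac{1}{2\vk+\p}$ carried by the $f$-slot. Estimate \eqref{rem Dual cancel NLS} is the same analysis for the symmetrized combination $g_{12}\sbrack{\geq 3}(\kappa) + g_{12}\sbrack{\geq 3}(-\kappa)$: using $\vr(-\kappa) = \bar\vr(\kappa)$, its leading term carries one more power of $\xi$ — an extra derivative landing on a slot that previously had none — which raises $|\beta|$ by one without violating the ``at most one $\beta_j=2$'' restriction; since the cutoff there is exactly $\psi_h^{12}$, Lemma~\ref{lem:NLS-para} directly yields the improved $\kappa^{-2-\frac43(2s+1)}$.

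For \eqref{stack Dual NLS} I would begin from \eqref{more sbrack'}, expanding $g_{12}\sbrack{3}(\vk)$ and $g_{12}\sbrack{1}(\vk)\,\vr\sbrack{2}(\vk)$ via \eqref{g12 1 3}, \eqref{gamma 2}. After pairing with $f\chi_h$ and redistributing the cutoff, the leading $O(q^3)$ contribution can be written — using the series representation \eqref{g12-Series terms} and cyclicity of the trace — as a constant multiple of $\tr\{\mathcal{A}(t)\mathcal{B}(t)\}$, where $\mathcal{A}$ and $\mathcal{B}$ are Hilbert--Schmidt operators of the replacement type $(\vk-\p)^{-\frac12}\bigl(\psi_h g_1\cdot\psi_h\tfrac{g_2}{2\vk\mp\p}\bigr)(\vk+\p)^{-\frac12}$ with $g_1,g_2\in\{q,r,f\}$; Cauchy--Schwarz in time followed by \eqref{L4I4-replacement NLS} (and its mirror image under $\Lambda\leftrightarrow\Gamma$) applied to each factor produces the claimed $|\vk|^{-3}\bigl[\kappa^{\frac23-\frac{8s}3} + |\vk|^{-4s}\bigr]$, while the remainder carries extra factors $\Lambda(\vk),\Gamma(\vk)$ bounded in $\I_2$ or operator norm by $\lesssim|\vk|^{-(s+\frac12)}\delta$ through \eqref{Lambda}, \eqref{BasicBound}, \eqref{BasicOpBound}, and so sums geometrically. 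For \eqref{rho rem NLS} I would invoke \eqref{E:vr gr 4}: writing $|\chi_h|\lesssim\psi_h^3\cdot\psi_h^3$ and applying Cauchy--Schwarz in $(t,x)$ reduces the claim to $L^2_{t,x}$ bounds for $\psi_h^3$ against $\vr$, $g_{12}\sbrack{\geq 3}$, $g_{21}\sbrack{\geq 3}$ and $\psi_h^3 g_{12}\sbrack{1} = -\psi_h^3\tfrac{q}{2\kappa\mp\p}$, all of which follow from \eqref{E:change}, \eqref{Reg-Gain}, Lemma~\ref{L:localize}, and the local smoothing estimates \eqref{g12-X}, \eqref{g12-X-1}, \eqref{rho-X-1}; as in the other cases, the constraint $|\vk|\geq\kappa^{\frac23}$ is used when collating the powers to reach $\kappa^{-2-\frac43(2s+1)}$.

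I expect the principal difficulty to be bookkeeping rather than any isolated estimate: in each case one must match the multilinear expansion to an admissible symbol class $S(\beta;\kappa)$ (respecting $1\leq|\beta|\leq 5$ with at most one $\beta_j=2$) or to the replacement-operator structure, while tracking honestly how many powers of $\kappa$ and $|\vk|$ are contributed by each resolvent, by each redistribution of the cutoff, and by each invocation of Lemma~\ref{lem:NLS-para} or \eqref{L4I4-replacement NLS}. The single most delicate point is the cancellation behind \eqref{rem Dual cancel NLS}: one must pin down precisely the leading term of $g_{12}\sbrack{\geq 3}(\kappa) + g_{12}\sbrack{\geq 3}(-\kappa)$, confirm that it genuinely gains a derivative, and verify that the resulting $\beta$ stays admissible for Lemma~\ref{lem:NLS-para}.
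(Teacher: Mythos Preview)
Your overall template is sound, and your treatment of \eqref{rem Dual cancel NLS} via Lemma~\ref{lem:NLS-para} with the parity cancellation is exactly what the paper does for the cubic part. However, there are two genuine gaps.

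\textbf{For \eqref{rem Dual NLS}:} Lemma~\ref{lem:NLS-para} gives a bound that is \emph{uniform} in $|\vk|\geq\kappa^{2/3}$ --- there is no $|\vk|^{-1}$ in its conclusion, and you cannot simply ``read off'' that factor from the $\tfrac{1}{2\vk+\p}$ in the fourth slot (the proof of the lemma discards the $|\vk|$-dependence by monotonicity). Moreover, your symbol decomposition produces a $|\beta|=0$ term (take $\beta_1=0$ from the $\kappa^2/(\kappa^2+\xi_1^2)$ piece and $\beta_2=\beta_3=\beta_4=0$ from the constant parts of each half-resolvent), which the lemma explicitly excludes. The paper avoids both problems by not using Lemma~\ref{lem:NLS-para} here at all: it first proves an auxiliary bound of the form
\[
\Bigl\|\int F\, g_{12}\sbrack{\geq 3}(\vk)\,\psi^4 \,dx\Bigr\|_{L^1_t}\lesssim |\vk|^{-3}\bigl[\kappa^{\frac23 - \frac{8s}3} + |\vk|^{-4s}\bigr]\delta\normNK{q}\bigl(\|F\|_{L^\infty_t H^s}\normNK{q} + \delta\normNK{F}\bigr)
\]
by expanding the trace series and applying the $L^4_t\I_4$ estimate \eqref{Sharp LambdaLoc NLS} to three $q$-factors and one $F$-factor. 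Specializing to $\vk=\pm\kappa$ and $F=\tfrac{\chi}{\psi^4}\tfrac{f}{2\vk+\p}$, the $|\vk|^{-1}$ then enters through $\normNK{F}\lesssim|\vk|^{-1}\normNK{q}$.

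\textbf{For \eqref{rho rem NLS}:} Splitting $\chi_h$ as $\psi_h^3\cdot\psi_h^3$ and using Cauchy--Schwarz in $(t,x)$ is too lossy. For instance, on the $\vr^2$ term you would need $\|\psi_h^3\vr(\pm\kappa)\|_{L^2_{t,x}}^2\lesssim\kappa^{-2-\frac43(2s+1)}\delta^2\normNK{q}^2$, but the $X$-norm bounds \eqref{rho-X-1} only give $\|\psi_h^3\vr\|_{L^2_{t,x}}\lesssim\kappa^{-(s+\frac12)}\delta\normNK{q}$ (the passage from $X^\sigma_\kappa$ to $L^2_{t,x}$ costs a full factor of $\kappa$ at low frequencies), so you land at $\kappa^{-(2s+1)}$, which is weaker by $\kappa^{2+\frac13(2s+1)}$. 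The paper instead expands $\vr\sbrack{\geq 4}$ directly via the trace series \eqref{rho-Series terms} and applies \eqref{Sharp LambdaLoc NLS} to four copies of $\Lambda$ in $L^4_t\I_4$, paired with the bounded operator $(\kappa\mp\p)^{-1/2}(\chi F\psi^{-4})(\kappa\mp\p)^{-1/2}$; this genuinely four-linear estimate captures the correct decay.

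The common thread is that the $L^p_t\I_p$ Schatten bounds of Lemma~\ref{lem:LambdaLoc NLS} are the primary engine for \eqref{rem Dual NLS}, \eqref{stack Dual NLS}, and \eqref{rho rem NLS}; Lemma~\ref{lem:NLS-para} is reserved for \eqref{rem Dual cancel NLS}, precisely because the cancellation there eliminates the $|\beta|=0$ term and no $|\vk|$-decay is claimed.
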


\begin{proof}
By translation invariance, it suffices to consider the case $h=0$.

Our basic technique here is to expand using the series \eqref{g12-gamma-Series}, commute copies of $\psi$, and then use H\"older's inequality in trace ideals.  We first exhibit this technique to prove the auxiliary  result \eqref{AVT NLS 1} before turning our attention to the principal claims.

Given a test function \(F\in \Cont([-1,1];\Schwartz)\), using \eqref{g12-gamma-Series} we may write
\[
\sgn(\vk) \int F\,g_{12}(\vk)\sbrack{\geq 3} \psi^4 \,dx = \sum_{\ell=1}^\infty (-1)^{\ell-1}
	\tr\left\{\Lambda(\Gamma\Lambda)^{\ell}(\vk + \p)^{-\frac12}\psi^4 F(\vk - \p)^{-\frac12}\right\}.
\]
Applying Lemma~\ref{L:localize} followed by the operator estimates \eqref{Lambda} and \eqref{Sharp LambdaLoc NLS}, we obtain
\begin{align*}
&\left\|\tr\left\{\Lambda(\Gamma\Lambda)^{\ell}(\vk + \p)^{-\frac12}\psi^4 F(\vk - \p)^{-\frac12}\right\}\right\|_{L^1_t}\\
&\qquad\lesssim \|\Lambda\|_{L^\infty_t\I_2}^{2\ell-2}\|(\vk - \p)^{-\frac12}(\psi q)(\vk + \p)^{-\frac12}\|_{L^4_t\I_4}^3\|(\vk + \p)^{-\frac12}(\psi F)(\vk - \p)^{-\frac12}\|_{L^4_t\I_4}\\
&\qquad\lesssim C^\ell |\vk|^{-(2s+1)(\ell-1)-3}\bigl[\kappa^{\frac23 - \frac{8s}3} + |\vk|^{-4s}\bigr]\delta^{2(\ell-1)}\\
&\qquad\qquad\qquad\qquad\times\|q\|_{L^\infty_tH^s}^{\frac32}\normNK{q}^{\frac32}\|F\|_{L^\infty_t H^s}^{\frac12}\normNK{F}^{\frac12},
\end{align*}
where the implicit constant is independent of \(\ell\). Summing in $\ell$ and applying Young's inequality, we obtain
\begin{align}\label{AVT NLS 1}
\Bigl\|\int F\,& g_{12}\sbrack{\geq 3}(\vk)\,\psi^4 \,dx\Bigr\|_{L^1_t}\notag\\
&\lesssim |\vk|^{-3}\bigl[\kappa^{\frac23 - \frac{8s}3} + |\vk|^{-4s}\bigr]\delta\normNK{q}\Big(\|F\|_{L^\infty_t H^s}\normNK{q} + \delta\normNK{F}\Big).
\end{align}

The estimate \eqref{rem Dual NLS} follows immediately from \eqref{AVT NLS 1} by setting \(\vk=\pm \kappa\), \(F=\frac{\chi}{\psi^4}\frac f{2\vk + \p}\), and using \eqref{X-Smoothing} and \eqref{E:change} to bound $\normNK{F}\lesssim |\vk|^{-1} \normNK{q}$.

We turn now to \eqref{stack Dual NLS} and recall that
\begin{align}\label{10:06 decomp}
\big(\tfrac{g_{12}}{2 + \vr}\big)\sbrack{\geq 3}= \tfrac12 g_{12}\sbrack{\geq 3}  -\tfrac{g_{12}\vr}{2(2 + \vr)}.
\end{align}
By \eqref{AVT NLS 1}, the contribution of the first term to the left-hand side of \eqref{stack Dual NLS} is easily seen to be acceptable. To estimate the contribution of the second term on the right-hand of \eqref{10:06 decomp}, we take \(f_1 = \chi f/\psi^4\) and \(f_2 = (2\vk - \p) \tfrac{g_{12}(\vk)}{2 + \vr(\vk)}\) and apply the estimate \eqref{L4I4-replacement NLS} to bound
\[
\|(\vk \pm \p)^{-\frac12}(\psi^2 f_1\tfrac{f_2}{2\vk-\p})(\vk \pm \p)^{-\frac12}\|_{L^2_t \I_2}
		\lesssim |\vk|^{-\frac32}\bigl[\kappa^{\frac13 - \frac{4s}3} + |\vk|^{-2s}\bigr]\delta\normNK q,
\]
where we have used \eqref{X-Smoothing} with the estimates \eqref{ET Sob}, \eqref{ET LS} to bound
$$
\normNK{f_2} = \normNK{(2\vk - \p) \tfrac{g_{12}(\vk)}{2 + \vr(\vk)}}\lesssim \normNK{q}.
$$
We then use \eqref{g12-gamma-Series} to write
\begin{align*}
\int \!\tfrac{\chi f g_{12}(\vk)}{2(2 + \vr(\vk))}\, \vr\,dx
	= \sgn(\vk)\sum\limits_{\ell=1}^\infty(-1)^{\ell}\Bigl[&\tr\left\{(\Lambda\Gamma)^\ell(\vk -\p)^{-\frac12}(\psi^4 f_1\,\tfrac{f_2}{2\vk-\p})(\vk - \p)^{-\frac12}\right\}\\
+{}& \tr\left\{(\Gamma\Lambda)^\ell(\vk +\p)^{-\frac12}(\psi^4 f_1\,\tfrac{f_2}{2\vk-\p})(\vk + \p)^{-\frac12}\right\}\Bigr].
\end{align*}
Repeating our basic technique using \eqref{Sharp LambdaLoc NLS}, we obtain
\begin{align}\label{AVT NLS 2}
&\left\|\int f_1\,\tfrac{f_2}{2\vk-\p}\, \vr(\vk)\,\psi^{4 }\,dx\right\|_{L^1_t}\\
&\lesssim \|(\vk - \p)^{-\frac12}(\psi q)(\vk + \p)^{-\frac12}\|_{L^4_t\I_4}^2\|(\vk - \p)^{-\frac12}(\psi^2 f_1\,\tfrac{f_2}{2\vk-\p}) (\vk - \p)^{-\frac12}\|_{L^2_t \I_2}\notag\\
&\lesssim |\vk|^{-3}\bigl[\kappa^{\frac23 - \frac{8s}3} + |\vk|^{-4s}\bigr]\delta^2\normNK{q}^2,\notag
\end{align}
which completes the proof of \eqref{stack Dual NLS}.  The estimate \eqref{rho rem NLS} follows analogously using \eqref{Sharp LambdaLoc NLS} with \(\vk = \kappa\):
\begin{align}\label{AVT NLS 3}
\biggl\|\int F\, \vr\sbrack{\geq 4}(\kappa)\,\chi\,dx\biggr\|_{L^1_t}&\lesssim \|(\kappa - \p)^{-\frac12}(\psi q)(\kappa + \p)^{-\frac12}\|_{L^4_t\I_4}^4 \|(\kappa \pm \p)^{-\frac12}\|_\op^2 \|\chi F \psi^{-4}\|_{L^\infty_{t,x}} \notag\\
&\lesssim \kappa^{- 2 - \frac43(2s+1)}\delta^2\normNK{q}^2\|F\|_{L^\infty_{t,x}}.
\end{align}

Finally, we consider \eqref{rem Dual cancel NLS}.  Arguing in the same style, we bound
\begin{align*}
\left\|\int g_{12}\sbrack{\geq 5}(\kappa)\, \tfrac f{2\vk + \p}\psi^{12}\,dx\right\|_{L^1_t}
&\lesssim \|\Lambda\|_{L^\infty_t\I_2} \|(\kappa - \p)^{-\frac12}(\psi q)(\kappa + \p)^{-\frac12}\|_{L^4_t\I_4}^4\\
&\qquad \times \|(\kappa - \p)^{-\frac12}( \psi^8 \tfrac f{2\vk + \p})(\kappa + \p)^{-\frac12}\|_{L^\infty_t\op}\\
&\lesssim \kappa^{- 2 - \frac{11}6(2s+1)}\delta^3\normNK{q}^2\|\tfrac f{2\vk + \p}\|_{L^\infty_{t,x}}\\
&\lesssim \kappa^{- 2 - \frac{13}6(2s+1)}\delta^4\normNK{q}^2,
\end{align*}
where we have used that \(|\vk|\geq\kappa^{\frac23}\) to estimate \(
\|\tfrac f{2\vk + \p}\|_{L^\infty_{t,x}}\lesssim \kappa^{-\frac13(2s+1)}\delta\). For the remaining term we observe that integrating by parts we may write
\[
\int g_{12}\sbrack{3}(\kappa)\,\tfrac f{2\vk + \p}\,\psi^{12}\,dx = \int m[q,r,q,\psi^{12} \tfrac f{2\vk + \p}]\,dx,
\]
where the symbol
\[
m(\xi_1,\xi_2,\xi_3,\xi_4) = \tfrac2{(2\kappa + i\xi_2) (2\kappa - i\xi_3)(2\kappa + i\xi_4)}
\]
is a sum of terms in \(\kappa^{5 - |\beta|}S(\beta;2\kappa)\) for \(0\leq |\beta|\leq 5\), where at most one \(\beta_j = 2\). In particular, when considering the sum \(g_{12}\sbrack{3}(\kappa) + g_{12}\sbrack{3}(-\kappa)\) we see that the terms with even \(|\beta|\) cancel and hence
\[
\int \Big(g_{12}\sbrack{3}(\kappa) + g_{12}\sbrack{3}(-\kappa)\Big)\,\tfrac f{2\vk + \p}\,\psi^{12}\,dx = \int \widetilde m[q,r,q,\psi^{12} \tfrac f{2\vk + \p}]\,dx,
\]
where \(\widetilde m\) has symbol given by a sum of terms in \(\kappa^{5 - |\beta|}S(\beta;2\kappa)\) for \(|\beta| = 1,3,5\) and at most one \(\beta_j = 2\). Applying the estimate \eqref{NLS-para} we then obtain
\[
\left\|\int \Big(g_{12}\sbrack{3}(\kappa) + g_{12}\sbrack{3}(-\kappa)\Big)\,\tfrac f{2\vk + \p}\,\psi^{12}\,dx\right\|_{L^1_t}\lesssim \kappa^{-2 - \frac43(2s+1)}\delta^2\normNK{q}^2,
\]
which completes the proof of \eqref{rem Dual cancel NLS}.
\end{proof}

Similar arguments yield the following error estimates for the mKdV:

\begin{lem}[Error estimates for the mKdV]
There exists \(\delta>0\) so that for all real \(|\vk|\geq \kappa^{\frac12}\geq 1\), \(q,f\in \Cont([-1,1];\BdS)\) satisfying
\[
\normMK{f}\lesssim\normMK{q},
\]
and \(\chi\in \{(\psi^\ell)^{(j)}: 6\leq \ell\leq 12, \, j=0,1,2 \}\), we have the estimates
\begin{gather}
\begin{aligned}
&\biggl\|\int f\, g_{12}\sbrack{3}(\vk)\,\chi_h\,dx\biggr\|_{L^1_t} + \biggl\|\int f\, \big(\tfrac{g_{12}(\vk)}{2 + \vr(\vk)}\big)\sbrack{\geq 3}\,\chi_h\,dx\biggr\|_{L^1_t} \\
&\qquad\qquad\qquad\qquad \lesssim |\vk|^{-3}\bigl[\kappa^{\frac12 - 2s}+ |\vk|^{-1-4s}\log^4\!|2\vk|\bigr]\delta^2\normMK{q}^2,
\end{aligned} \label{stack Dual mKdV} \\ 
\begin{aligned}
&\left\|\int f\, \big(\tfrac{g_{12}(\vk)}{2 + \vr(\vk)}\big)\sbrack{\geq 5}\,\chi_h\,dx\right\|_{L^1_t} \\
&\qquad\qquad\quad \lesssim |\vk|^{-4-(s+\frac12)}\bigl[\kappa^{\frac34 - \frac{5s}2} + |\vk|^{-\frac12-5s}\log^6\!|2\vk|\bigr]\delta^4\normMK{q}^2,
\end{aligned} \label{stack Dual mKdV 2}\\
\begin{aligned}
&\biggl\|\int g_{21}\sbrack{3}(\pm\vk)\, \big(\tfrac{g_{12}(\vk)}{2 + \vr(\vk)}\big)\sbrack{\geq 3}\,\chi_h\,dx\biggr\|_{L^1_t}\\
&\qquad\qquad\qquad\qquad\qquad\qquad\quad \lesssim |\vk|^{-5} \bigl[\kappa^{1-2s}+ |\vk|^{-4s}\bigr] \delta^4\normMK{q}^2,
\end{aligned} \label{stack mKdV L2} \\
\begin{aligned}
&\left\|\int \tfrac f{2\vk + \p}\, \big(g_{12}\sbrack{\geq 3}(\kappa) + g_{12}\sbrack{\geq 3}(-\kappa)\big)\,\chi_h\,dx\right\|_{L^1_t}\\
&\qquad\qquad\qquad\qquad\qquad\qquad\lesssim \bigl[ \kappa^{-1} + |\vk|^{-1} \bigr]\kappa^{-2-(2s+1)}\delta^2\normMK{q}^2,
\end{aligned} \label{rem Dual cancel mKdV}\\
\begin{aligned}
&\left\|\int \tfrac f{2\vk + \p}\,\big(g_{12}\sbrack{\geq 3}(\kappa) - g_{12}\sbrack{\geq 3}(-\kappa) - \tfrac1{2\kappa^3}q^2r\big)\,\chi_h\,dx\right\|_{L^1_t} \\
&\qquad\qquad\qquad\qquad\qquad\qquad\qquad\qquad\qquad\lesssim \kappa^{-3-(2s+1)}\delta^2\normMK{q}^2,
\end{aligned} \label{rem Dual cancel mKdV 2}  \\
\left\|\vr(\pm\kappa)\sbrack{\geq 4}\,\chi_h\right\|_{L^1_{t,x}}  \lesssim \kappa^{-\frac52-(2s+1)}\delta^2\normMK{q}^2,\label{rho rem mKdV}\\
\left\|\int \Big(\vr(\kappa)\sbrack{\geq 4} - \vr(-\kappa)\sbrack{\geq 4}\Big)\,\psi_h^{12}\,dx\right\|_{L^1_t} \lesssim \kappa^{-3-(2s+1)}\delta^2\normMK{q}^2,\label{rem Dual cancel mKdV 3}\\
\begin{aligned}
&\left\|\int \Big(\vr(\kappa)\sbrack{\geq 4} + \vr(-\kappa)\sbrack{\geq 4} - \tfrac3{2\kappa^2}qr\big(q\cdot \tfrac r{4\kappa^2 - \p^2} + \tfrac q{4\kappa^2 - \p^2}\cdot r\big)\Big)\,\psi_h^{12}\,dx\right\|_{L^1_t} \\
&\qquad\qquad\qquad\qquad\qquad\qquad\qquad\qquad\qquad\ \lesssim \kappa^{-3-(2s+1)}\delta^2\normMK{q}^2,
\end{aligned} \label{rem Dual cancel mKdV 4}
\end{gather}
where the implicit constants are independent of \(\kappa\), \(\vk\), and \(h\in \R\).
\end{lem}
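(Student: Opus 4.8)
The plan is to mirror the proof scheme of Lemma~\ref{lem:TAE-new}, using the same expand--commute--estimate philosophy but now with the mKdV-adapted ingredients: the trace-ideal bounds of Lemma~\ref{L:LpIp mKdV}, the multiplicative-commutator bounds of Lemma~\ref{L:localize}, the $g_{12}/g_{21}/\vr$ series \eqref{g12-gamma-Series}, and the quartic paraproduct estimate \eqref{mKdV-para-2}--\eqref{mKdV-para-2'}. By space-translation invariance we take $h=0$ throughout. The workhorse is again an auxiliary bound, the mKdV-analogue of \eqref{AVT NLS 1}: for a test function $F\in\Cont([-1,1];\Schwartz)$, writing $\int F\,g_{12}\sbrack{\geq3}(\vk)\,\psi^4\,dx$ as $\sum_\ell (-1)^{\ell-1}\tr\{\Lambda(\Gamma\Lambda)^\ell(\vk+\p)^{-1/2}\psi^4 F(\vk-\p)^{-1/2}\}$, inserting $\psi$ factors via Lemma~\ref{L:localize}, distributing the operators across $\I_p$ using H\"older in trace ideals, and invoking \eqref{Lambda} together with Lemma~\ref{L:LpIp mKdV} (with $p=4$ on three copies of $q$ and $p=\infty$ on the $F$ slot, say), one gets
\[
\Bigl\|\int F\,g_{12}\sbrack{\geq3}(\vk)\,\psi^4\,dx\Bigr\|_{L^1_t}
\lesssim |\vk|^{-3}\bigl[\kappa^{\frac12-2s}+|\vk|^{-1-4s}\log^4|2\vk|\bigr]\delta\normMK{q}\bigl(\|F\|_{L^\infty_t H^s}\normMK{q}+\delta\normMK{F}\bigr).
\]

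From this auxiliary estimate the proof proceeds term by term. For \eqref{stack Dual mKdV}: the $g_{12}\sbrack{3}(\vk)$ piece is exactly the above with $F=\chi f/\psi^4$ and $\normMK{F}\lesssim\normMK{q}$ by \eqref{E:change}; for $\big(\tfrac{g_{12}(\vk)}{2+\vr(\vk)}\big)\sbrack{\geq3}$ one uses the splitting \eqref{more sbrack'}, applying the auxiliary bound to $\tfrac12 g_{12}\sbrack{\geq3}$ and, for $-\tfrac{g_{12}\vr}{2(2+\vr)}$, expanding $\vr$ as a series and using \eqref{L4I4-replacement mKdV} with $f_2=(2\vk-\p)\tfrac{g_{12}(\vk)}{2+\vr(\vk)}$ (controlled by \eqref{ET Sob}, \eqref{ET LS}, \eqref{X-Smoothing}), exactly as in the passage from \eqref{10:06 decomp} to \eqref{AVT NLS 2}. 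For \eqref{stack Dual mKdV 2} one peels off further via \eqref{more sbrack''}; the new feature is a sixfold trace, which is handled by \eqref{L6I6-replacement mKdV}. For \eqref{stack mKdV L2} one has a product of two tails $g_{21}\sbrack{3}(\pm\vk)$ and $\big(\tfrac{g_{12}(\vk)}{2+\vr(\vk)}\big)\sbrack{\geq3}$: expand both and again use \eqref{L6I6-replacement mKdV} (note the $(1+\kappa^2/\vk^2)$ factor there is harmless since $|\vk|\ge\kappa^{1/2}$ gives $\kappa^2/\vk^2\le\kappa$, which is absorbed by $\kappa^{1-2s}$). For \eqref{rho rem mKdV} one mimics \eqref{AVT NLS 3}: the quartic and higher tails of $\vr(\pm\kappa)$ get four copies of $q$ in $\I_4$ via Lemma~\ref{L:LpIp mKdV}, one power of $\kappa^{-1}$ from $(\kappa\mp\p)^{-1}$, and $\|\chi\psi^{-4}\|_{L^\infty}$. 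The $L^\infty$ bound $\|\tfrac f{2\vk+\p}\|_{L^\infty}\lesssim\kappa^{-(2s+1)/4}\delta$ from $|\vk|\ge\kappa^{1/2}$ is the analogue of the gain used at the end of Lemma~\ref{lem:TAE-new}.

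The genuinely harder estimates are the cancellation identities \eqref{rem Dual cancel mKdV}, \eqref{rem Dual cancel mKdV 2}, \eqref{rem Dual cancel mKdV 3}, \eqref{rem Dual cancel mKdV 4}, where the naive term-by-term bound is insufficient and one must exploit the parity of $\kappa\mapsto g_{12}(\kappa),\vr(\kappa)$ together with the explicit leading cubic corrections from \eqref{biHam2}. The scheme: isolate the cubic term $g_{12}\sbrack{3}(\kappa)$ (resp.\ $\vr\sbrack{4}(\kappa)$) and write $\int g_{12}\sbrack{3}(\kappa)\tfrac f{2\vk+\p}\psi^{12}\,dx=\int m[q,r,q,\psi^{12}\tfrac f{2\vk+\p}]\,dx$ with $m$ a sum of terms in $\kappa^{9-|\beta|}S(\beta;2\kappa)$, $0\le|\beta|\le8$; forming $g_{12}\sbrack{3}(\kappa)\pm g_{12}\sbrack{3}(-\kappa)$ kills the terms of the wrong parity of $|\beta|$, and for the "$-$" combination in \eqref{rem Dual cancel mKdV 2} one further subtracts the single surviving $|\beta|=2$ top term (which is $\tfrac1{2\kappa^3}q^2r$, matching \eqref{biHam2}) leaving only $|\beta|\ge3$; then \eqref{mKdV-para-2'} (for $|\beta|\ge2$, with the integration-by-parts trick for $\beta_j=2$) supplies the $\kappa^{|\beta|-9-2s}$ decay, which after multiplying by $\kappa^{9-|\beta|}$ gives the claimed $\kappa^{-3-(2s+1)}$-type bounds. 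The genuine tails $g_{12}\sbrack{\geq5}(\pm\kappa)$ are estimated crudely as in Lemma~\ref{lem:TAE-new} (they are already two orders smaller). The quadratic-in-$\vr$ identities \eqref{rem Dual cancel mKdV 3}, \eqref{rem Dual cancel mKdV 4} are handled the same way using \eqref{gamma 4} to read off the parity structure and the leading $\tfrac3{2\kappa^2}qr(q\cdot\tfrac r{4\kappa^2-\p^2}+\tfrac q{4\kappa^2-\p^2}\cdot r)$ term. I expect the main obstacle to be the bookkeeping in \eqref{rem Dual cancel mKdV 2} and \eqref{rem Dual cancel mKdV 4}: one must verify that the explicit subtracted terms are \emph{exactly} the $|\beta|=2$ resonant contributions — this requires carefully tracking the asymptotic expansions \eqref{biHam2} and the $\vr$-expansion in the proof of Lemma~\ref{L:A asymptotics} — and then confirming that every remaining symbol genuinely lies in a class with $|\beta|\ge3$ so that \eqref{mKdV-para-2'} (rather than the weaker \eqref{mKdV-para-2}) applies with room to spare.
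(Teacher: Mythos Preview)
Your plan is essentially the paper's proof: expand via \eqref{g12-gamma-Series}, commute $\psi$-factors through Lemma~\ref{L:localize}, estimate in trace ideals via Lemma~\ref{L:LpIp mKdV} and \eqref{L4I4-replacement mKdV}--\eqref{L6I6-replacement mKdV}, and handle the cancellation estimates by reading off parity in the symbol class and invoking Lemma~\ref{lem:mKdV-para}. Two bookkeeping corrections, though. First, the symbol for $\int g_{12}\sbrack{3}(\kappa)\,\tfrac f{2\vk+\p}\,\psi^{12}\,dx$ lies in $\kappa^{5-|\beta|}S(\beta;2\kappa)$, not $\kappa^{9-|\beta|}$ (harmless, since the powers cancel against those in \eqref{mKdV-para-2}--\eqref{mKdV-para-2'}). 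Second, and this is where your flagged ``main obstacle'' goes slightly astray: $\tfrac1{2\kappa^3}q^2r$ is the $|\beta|=0$ term, not $|\beta|=2$. The parity is that under $\kappa\to-\kappa$ the $\beta$-term picks up $(-1)^{1+|\beta|}$; hence $g_{12}\sbrack{3}(\kappa)+g_{12}\sbrack{3}(-\kappa)$ retains only odd $|\beta|\geq 1$ (so \eqref{mKdV-para-2} gives \eqref{rem Dual cancel mKdV}), while $g_{12}\sbrack{3}(\kappa)-g_{12}\sbrack{3}(-\kappa)$ retains only even $|\beta|\geq 0$; subtracting $\tfrac1{2\kappa^3}q^2r$ removes $|\beta|=0$ and leaves even $|\beta|\geq 2$, precisely the threshold for \eqref{mKdV-para-2'}, yielding \eqref{rem Dual cancel mKdV 2}. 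The same parity scheme via \eqref{gamma 4} gives \eqref{rem Dual cancel mKdV 3}--\eqref{rem Dual cancel mKdV 4}.

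One further note on \eqref{stack Dual mKdV 2}: the paper does not work directly from \eqref{more sbrack''} but first rewrites it (using \eqref{E:vr gr 4} and $\vr\sbrack{2}=2g_{12}\sbrack{1}g_{21}\sbrack{1}$) as a sum whose nontrivial summands all take the shape $f_1\cdot\tfrac{f_2}{2\vk-\p}\cdot\tfrac{f_3}{2\vk+\p}\cdot g\sbrack{\geq 3}$, so that \eqref{L6I6-replacement mKdV} applies directly after placing three $q$-type factors in $L^6_t\I_6$. Your route via \eqref{more sbrack''} should also close, but you will need to reorganize the $\vr\sbrack{\geq 4}$ and $\vr^2$ pieces into this triple-product form before \eqref{L6I6-replacement mKdV} is usable.
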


\begin{proof}
The basic technique is that used to prove Lemma~\ref{lem:TAE-new}; however, new cancellations need to be exhibited.  We begin with the estimates on $\gamma$.

Mimicking \eqref{AVT NLS 3}, but using Lemma~\ref{L:LpIp mKdV} with $p=4$ yields \eqref{rho rem mKdV}.  When taking $p=6$, we obtain instead
\begin{align}\label{gamma 6 is okay}
\bigl\| \vr(\pm\kappa)\sbrack{\geq 6}\chi \bigr\|_{L^1_{t,x}}&\lesssim [\kappa^{-5-3s}+\kappa^{-6-6s}\log(2\kappa)]\delta^4\normMK{q}^2 \\
&\lesssim \kappa^{-4-2s}\delta^4\normMK{q}^2.\notag
\end{align}
This estimate reduces \eqref{rem Dual cancel mKdV 3} and \eqref{rem Dual cancel mKdV 4} to consideration of the quartic terms, for which we turn to \eqref{gamma 4}.
Evidently, every term in \eqref{rem Dual cancel mKdV 3} and \eqref{rem Dual cancel mKdV 4} can be written as a sum of paraproducts with symbols conforming to \eqref{E:symbol class}; however, by forming these particular linear combinations, we eliminate all terms with $|\beta|=0$.  Thus we may apply Lemma~\ref{lem:mKdV-para} (with $\vk=\kappa$) and so deduce \eqref{rem Dual cancel mKdV 3} and \eqref{rem Dual cancel mKdV 4}. 

Applying our basic technique to $g_{12}$ using Lemma~\ref{L:LpIp mKdV} with \(p=4\) yields
\begin{align*}
\biggl\|\int f\, g_{12}\sbrack{3}(\vk)\,\chi_h\,dx\biggr\|_{L^1_t} + \biggl\|\int f g_{12}\sbrack{\geq 3}(\vk)\,\chi\,dx\biggr\|_{L^1_t}
	\lesssim \text{RHS\eqref{stack Dual mKdV}}.
\end{align*}
Taking $p=5$ and  using also \eqref{Lambda} yields
\begin{align*}
\biggl\|\int f g_{12}\sbrack{\geq 5}(\vk)\,\chi\,dx\biggr\|_{L^1_t}
	\lesssim \text{RHS\eqref{stack Dual mKdV 2}} ,
\end{align*}
These constitute a significant step toward proving \eqref{stack Dual mKdV} and \eqref{stack Dual mKdV 2}.  In view of \eqref{10:06 decomp}, the proof of \eqref{stack Dual mKdV} is completed by the following:
\begin{align}
\biggl\|\int f \tfrac{g_{12}}{2+\gamma} \vr \,\chi\,dx\biggr\|_{L^1_t}
	&\lesssim |\vk|^{-3} \bigl[\kappa^{\frac12 - 2s} + |\vk|^{-1-4s}\log^4|2\vk|\bigr] \delta^2 \normMK{q}^2,
\end{align}
which is a consequence of the argument used in \eqref{AVT NLS 2}, but using Lemma~\ref{L:LpIp mKdV} and \eqref{L4I4-replacement mKdV} in place of their NLS analogues.

To prove \eqref{stack Dual mKdV 2}, we use \eqref{E:vr gr 4} and $\vr\sbrack{2} = 2g_{12}\sbrack{1}g_{21}\sbrack{1}$ to  rewrite \eqref{more sbrack''} as
\begin{equation}\label{sbrack5 expansion}
\big(\tfrac{g_{12}}{2 + \vr}\big)\sbrack{\geq 5} = \tfrac12g_{12}\sbrack{\geq 5} - \tfrac{g_{12}(4 + \vr)}{4(2+\vr)}\vr\sbrack{\geq 4}
	+ \tfrac{g_{12}g_{21}}{2 + \vr}g_{12}\sbrack{\geq 3} - \tfrac12g_{12}\sbrack{1}g_{21}\sbrack{1} g_{12}\sbrack{\geq 3}
	+ \tfrac{g_{12}}{2 + \vr}g_{12}\sbrack{1}g_{21}\sbrack{\geq 3}.
\end{equation}

The contribution of the first term was handled already.

Consider now the second term in \eqref{sbrack5 expansion}.  Applying Lemma~\ref{L:X} together with the estimates \eqref{rho-Hs}, \eqref{rho-X-1}, \eqref{ET Sob}, and \eqref{ET LS}, we find that
\[
F=\chi f\tfrac{g_{12}(4 + \vr)}{2(2+\vr)\psi^5} \qtq{satisfies} \normMK{F}\lesssim |\vk|^{-(s+\frac12)}\delta\normMK{q}.
\]
Thus, applying the basic technique and using Lemma~\ref{L:LpIp mKdV} with $p=5$ shows 
\begin{align*}
\left\|\int  \tfrac{g_{12}(4 + \vr)}{2(2+\vr)}\vr\sbrack{\geq 4} \,\chi\,dx\right\|_{L^1_t} &\lesssim \text{RHS\eqref{stack Dual mKdV 2}}.
\end{align*}

The remaining three terms in \eqref{sbrack5 expansion} are handled in a parallel fashion, which we demonstrate using the first term. Set $F= f_1\,\frac{f_2}{2\vk-\p} \, \frac{f_3}{2\vk+\p}$ with \(f_1 = \chi f/\psi^6\), \(f_2 = (2\vk - \p)\frac{g_{12}}{2 + \vr}\), and \(f_3 = (2\vk + \p)g_{21}\).  Then \eqref{L6I6-replacement mKdV} implies
\begin{align*}
\|(\vk - \p)^{-\frac12}\psi^3 & F(\vk + \p)^{-\frac12}\|_{L^2_t\I_2}^2  \\
&\lesssim |\vk|^{-5}\bigl[ \kappa^{1-3s} + \bigl(1+\tfrac{\kappa^2}{\vk^2}\bigr) |\vk|^{-6s} \log |2\vk|\bigr]\delta^4\normMK{q}^2.
\end{align*}
Thus applying Lemma~\ref{L:LpIp mKdV} with $p=6$, we find
\begin{align*}
\left\|\int F\,g_{12}\sbrack{\geq 3}(\vk)\,\psi^6\,dx\right\|_{L^1_t}
	&\lesssim  |\vk|^{-5}\bigl[ \kappa^{1-3s} + \bigl(1+\tfrac{\kappa^2}{\vk^2}\bigr) |\vk|^{-6s} \log^6\! |2\vk|\bigr]\delta^4\normMK{q}^2,
\end{align*}
which is no larger than RHS\eqref{stack Dual mKdV 2}.  This completes the proof of \eqref{stack Dual mKdV 2}.  

We turn now to \eqref{stack mKdV L2}.  Combining \eqref{stack Dual mKdV 2} with \eqref{g21 1 3} and Lemma~\ref{L:X} yields
\begin{align*}
\biggl\|\int g_{21}\sbrack{3}(\pm\vk)\, \big(\tfrac{g_{12}}{2 + \vr}\big)\sbrack{\geq 5}(\vk)\,\chi\,dx\biggr\|_{L^1_t}
	&\lesssim |\vk|^{-1-(1+2s)} \cdot\text{RHS\eqref{stack Dual mKdV 2}} \lesssim \text{RHS\eqref{stack mKdV L2}}.
\end{align*}
To continue,  we employ \eqref{more sbrack}.  From Lemma~\ref{L:LpIp mKdV} and \eqref{BasicBound}, we find that
\begin{align*}
\biggl|\iint g_{21}\sbrack{3}\,F\psi^3\,dx\,dt\biggr|
	&\lesssim\|(\vk-\p)^{-\frac12}\psi q(\vk + \p)^{-\frac12}\|_{L^6_t\I_6}^3\|(\vk - \p)^{-\frac12}F(\vk + \p)^{-\frac12}\|_{L^2_t\I_2}\\
&\lesssim |\vk|^{-3}\bigl[\kappa^{\frac12 - \frac{3s}2}+ (1+\tfrac{\kappa}{|\vk|}\bigr)|\vk|^{-3s}\log^3\!|2\vk| \bigr] \delta^2\normMK{q}\|F\|_{L^2_{t,x}}
\end{align*}
and consequently, that 
\begin{align*}
\biggl\|\int g_{21}\sbrack{3}(\pm\vk)\, g_{12}\sbrack{3}(\vk)\,\chi\,dx\biggr\|_{L^1_t}
&\lesssim\bigl\|g_{21}\sbrack{3}(\pm\vk)\psi^3\bigr\|_{L^2_{t,x}}^2 \lesssim \text{RHS\eqref{stack mKdV L2}}.
\end{align*}

On the other hand, using Lemma~\ref{L:LpIp mKdV}, \eqref{gamma 2}, and \eqref{L6I6-replacement mKdV}, we get
\begin{align*}
&\biggl\|\int g_{21}\sbrack{3}(\pm\vk)\, g_{12}\sbrack{1}(\vk) \vr\sbrack{2}(\vk)\,\chi\,dx\biggr\|_{L^1_t}\\
&\lesssim\|(\vk-\p)^{-\frac12}(\psi q)(\vk + \p)^{-\frac12}\|_{L^6_t\I_6}^3\|(\vk - \p)^{-\frac12}\psi^3 g_{12}\sbrack{1}(\vk) \vr\sbrack{2}(\vk)(\vk + \p)^{-\frac12}\|_{L^2_t\I_2}\\
&\lesssim |\vk|^{-6}\bigl[ \kappa^{1-3s} + \bigl(1+\tfrac{\kappa^2}{\vk^2}\bigr) |\vk|^{-6s} \log^6\! |2\vk|\bigr]\delta^4\normMK{q}^2
		\lesssim \text{RHS\eqref{stack mKdV L2}}.
\end{align*}
This completes the proof of \eqref{stack mKdV L2}.

It remains to prove \eqref{rem Dual cancel mKdV} and \eqref{rem Dual cancel mKdV 2}.  We begin by reducing matters to the quartic terms.  As $|\vk|\geq \sqrt\kappa$, so \(\|\tfrac f{2\vk + \p}\|_{L^\infty_{t,x}}\lesssim \kappa^{-\frac12(s+\frac12)}\delta\).  Thus we find
\[
\left\|\int \tfrac f{2\vk + \p}\cdot g_{12}\sbrack{\geq 5}(\pm\kappa)\,\chi\,dx\right\|_{L^1_t}\lesssim \kappa^{-\frac92-3s}\delta^4\normMK{q}^2,
\]
by applying Lemma~\ref{L:LpIp mKdV} with $p=5$.

Regarding the quartic terms, we observe that
\[
\int \tfrac f{2\vk + \p}\cdot \Big(g_{12}\sbrack{3}(\kappa) - \tfrac1{4\kappa^3}q^2r\Big)\,\chi\,dx = \int m[q,r,q,\chi \tfrac f{2\vk + \p}]\,dx,
\]
where the lowest order terms cancel to give a symbol \(m\) that is a sum of terms in \(\kappa^{5 - |\beta|}S(\beta;2\kappa)\) for \(1\leq |\beta|\leq 8\).  Thus, \eqref{mKdV-para-2} may be applied, which then yields \eqref{rem Dual cancel mKdV}.  To obtain \eqref{rem Dual cancel mKdV 2} we use \eqref{mKdV-para-2'} instead.  This is possible due to the absence of any $|\beta|=1$ terms in the multiplier.
\end{proof}

We are finally in a position to undertake the proof of Lemma~\ref{lem:Currents-err}:

\begin{proof}[Proof of Lemma~\ref{lem:Currents-err}] We consider each of the currents in turn.

\medskip\step{Proof of \eqref{jNLS-err}.}
From Corollary~\ref{C:microscopic} and \eqref{rho higher},
\[
j_{\NLS}\sbrack{\geq 4} = - i (2\vk + \p)q\cdot \big(\tfrac{g_{21}}{2 + \vr}\big)\sbrack{\geq 3} + i (2\vk - \p) r\cdot \big(\tfrac{g_{12}}{2 + \vr}\big)\sbrack{\geq 3}.
\]
Writing
$$
\psi_h^6 (2\vk + \p)q= 2\vk (\psi_h^6 q) - (\psi_h^6)' q + (\psi_h^6 q)'_{\leq \vk} +(\psi_h^6 q)'_{>\vk}
$$
and invoking \eqref{stack Dual NLS} and \eqref{ET1 LS}, we estimate 
\begin{align*}
&\left\|\int j_{\NLS}\sbrack{\geq 4}(\vk)\,\psi_h^{12}\,dx\right\|_{L^1_t}\\
&\lesssim \vk^{-2(2s+1)} \delta^2\normN{q}^2 + \|(\psi_h^6 q)'_{>\vk}\|_{L_t^2 H^{-(s+\frac32)}} \bigl\| \psi_h^6 \big(\tfrac{g_{21}}{2 + \vr}\big)\sbrack{\geq 3}\bigr\|_{L_t^2 H^{s+\frac32}}\\
&\lesssim \vk^{-2(2s+1)} \delta^2\normN{q}^2,
\end{align*}
which completes the proof of \eqref{jNLS-err}.

\medskip\step{Proof of \eqref{jmKdV-err}.}
From Corollary~\ref{C:microscopic}, we compute
\begin{align*}
j_{\mKdV}\sbrack{\geq 4} &= (4\vk^2 + 2\vk \p + \p^2)q\cdot\big(\tfrac{g_{21}}{2 + \vr}\big)\sbrack{\geq 3} - (4\vk^2 - 2\vk\p + \p^2)r\cdot\big(\tfrac{g_{12}}{2 + \vr}\big)\sbrack{\geq 3}\\
&\quad  - 2q^2 r\tfrac{g_{21}}{2+\vr} + 2r^2q\tfrac{g_{12}}{2 + \vr}.
\end{align*}

Focusing on the first line in our expression for $j_{\mKdV}\sbrack{\geq 4}$, we write
\begin{align}\label{lo+hi}
\psi_h^3 &(4\vk^2 + 2\vk \p + \p^2)q\\
&= 4\vk^2(\psi_h^3 q) + 2\vk(\psi_h^3 q)'+(\psi_h^3 q)''- 2\vk (\psi_h^3)' q + (\psi_h^3)'' q - 2 [(\psi_h^3)' q]' \notag.
\end{align}
Thus, using Bernstein's inequality and \eqref{ET1 LS}, we estimate 
\begin{align*}
&\left\|\int P_{>\vk} \bigl[\psi_h^3 (4\vk^2 + 2\vk \p + \p^2)q\bigr] \cdot \big(\tfrac{g_{12}(\vk)}{2 + \vr(\vk)}\big)\sbrack{\geq 3}\,\psi_h^{9}\,dx\right\|_{L^1_t}\\
&\quad\lesssim \vk^{-(1+2s)} \|\psi_h q\|_{L_t^2 H^{s+1}}  \bigl\|\big(\tfrac{g_{21}(\vk)}{2 + \vr(\vk)}\big)\sbrack{\geq 3}\bigr\|_{X^{s+2}}\lesssim \vk^{-2(2s+1)} \delta^2\normM{q}^2.
\end{align*}
On the other hand, an application of \eqref{stack Dual mKdV} yields
\begin{align*}
&\left\| \int P_{\leq \vk} \bigl[\psi_h^3 (4\vk^2 + 2\vk \p + \p^2)q\bigr] \cdot \big(\tfrac{g_{12}(\vk)}{2 + \vr(\vk)}\big)\sbrack{\geq 3} \,\psi_h^{9}\,dx\right\|_{L^1_t}\\
&\qquad\qquad\lesssim \big[\vk^{-1}+ \vk^{-2(2s+1)}\log^4\!|2\vk|\big] \delta^2\normM{q}^2.
\end{align*}

We now demonstrate how to estimate the contribution of the final two terms in our expression for $j_{\mKdV}\sbrack{\geq 4}$, using the former as our example.  We first decompose into frequencies, as follows:
\[
\left\|\int q^2r\,\tfrac{g_{21}(\vk)}{2+\vr(\vk)}\,\psi_h^{12}\,dx\right\|_{L^1_t}  \lesssim\sum\limits_{N_j\geq 1}\left\|\int (\psi_h^{3}q)_{N_1}(\psi_h^{3}r)_{N_2}(\psi_h^{3}q)_{N_3}\big(\psi_h^{3}\tfrac{g_{21}}{2+\vr}\big)_{N_4}\,dx\right\|_{L^1_t},
\]
where the two highest frequencies must be comparable.  By exploiting symmetries, we may reduce consideration to two cases, namely, $N_1\sim N_2\geq N_3\vee N_4$ and $N_1\sim N_4 \gtrsim N_2\geq N_3$.

To estimate the low frequencies we use
\begin{align}
\|(\psi_h^{3}q)_{N}\big\|_{L^\infty_{t,x}}&\lesssim N^{\frac12-s}\delta,\label{qinfty}\\
\big\|\big(\psi_h^{3}\tfrac{g_{21}}{2+\vr}\big)_{N}\big\|_{L^\infty_{t,x}}&\lesssim (\vk+N)^{-1}N^{\frac12-s}\delta,\label{ginfty}
\end{align}
which follow from Bernstein's inequality and \eqref{ET Sob}.  To estimate the high frequencies we use
\begin{align*}
\|(\psi_h^{3}q)_{N}\|_{L^2_{t,x}}&\lesssim N^{-1-s}\normM{q},\\
\big\|\big(\psi_h^{3}\tfrac{g_{21}}{2+\vr}\big)_{N}\big\|_{L^2_{t,x}}&\lesssim (\vk+N)^{-1}N^{-1-s}\normM{q},
\end{align*}
which follow from Bernstein's inequality and \eqref{ET LS}. Estimating the two lowest frequency terms in \(L^\infty_{t,x}\) and the two highest frequency terms in \(L^2_{t,x}\) we obtain
\begin{align*}
\left\|\int q^2r\tfrac{g_{21}(\vk)}{2+\vr(\vk)}\,\psi_h^{12}\,dx\right\|_{L^1_t}\lesssim \big[\vk^{-1} +\vk^{-2(2s+1)}\log|2\vk|\big]\delta^2\normM{q}^2.
\end{align*}
This completes the proof of \eqref{jmKdV-err}.

\medskip\step{Proof of \eqref{jNLSdiff-err}.}
Recall that \(\vk\in [\kappa^{\frac23},\frac12\kappa]\cup[2\kappa,\infty)\). We decompose
\begin{align*}
j_{\NLS}^\diff{}\sbrack{\geq 4} &= \underbrace{-i\big(1 - \tfrac{\kappa^2}{\kappa^2 - \vk^2}\tfrac{4\kappa^2}{4\kappa^2 - \p^2}\big)(2\vk + \p)q\cdot\big(\tfrac{g_{21}(\vk)}{2 + \vr(\vk)}\big)\sbrack{\geq 3}}_{\err_1}\\
&\quad \underbrace{+ i\big(1 - \tfrac{\kappa^2}{\kappa^2 - \vk^2}\tfrac{4\kappa^2}{4\kappa^2 - \p^2}\big)(2\vk - \p)r\cdot\big(\tfrac{g_{12}(\vk)}{2 + \vr(\vk)}\big)\sbrack{\geq 3}}_{\err_2}\\
&\quad \underbrace{- \Big(\tfrac{2i\kappa^3}{\kappa - \vk}g_{12}(\kappa)\sbrack{\geq 3} + \tfrac{2i\kappa^3}{\kappa + \vk}g_{12}(-\kappa)\sbrack{\geq 3}\Big)\cdot\tfrac{g_{21}(\vk)}{2 + \vr(\vk)}}_{\err_3}\\
&\quad \underbrace{- \Big(\tfrac{2i\kappa^3}{\kappa - \vk}g_{21}(\kappa)\sbrack{\geq 3} + \tfrac{2i\kappa^3}{\kappa + \vk}g_{21}(-\kappa)\sbrack{\geq 3}\Big)\cdot\tfrac{g_{12}(\vk)}{2 + \vr(\vk)}}_{\err_4}\\
&\quad \underbrace{+ \tfrac{i\kappa^3}{\kappa - \vk}\vr(\kappa)\sbrack{\geq 4} + \tfrac{i\kappa^3}{\kappa + \vk}\vr(-\kappa)\sbrack{\geq 4}}_{\err_5}
\end{align*}
and note that by symmetry, it suffices to consider the contributions of the terms \(\err_j\) with $j=1,3,5$.

For \(\err_1\), we first write
\[
\big(1 - \tfrac{\kappa^2}{\kappa^2 - \vk^2}\tfrac{4\kappa^2}{4\kappa^2 - \p^2}\big)(2\vk + \p)q = -\tfrac{4\kappa^2\vk^2}{\kappa^2- \vk^2}\tfrac{2\vk + \p}{4\kappa^2 - \p^2}q - \tfrac{(2\vk + \p)\p^2}{4\kappa^2 - \p^2}q.
\]

Using Lemma~\ref{L:localize} together with \eqref{ET1 LS}, we estimate the contribution of the high frequencies as follows:
\begin{align*}
&\tfrac{\kappa^2\vk^2}{\kappa^2+ \vk^2}\left\|\int \psi^3_h\tfrac1{2\kappa + \p}\psi_h^{-3}\bigl(\psi_h^3\tfrac{2\vk + \p}{2\kappa - \p}q\bigr)_{>\vk}\cdot\big(\tfrac{g_{21}(\vk)}{2 + \vr(\vk)}\big)\sbrack{\geq 3}\,\psi_h^9\,dx\right\|_{L^1_t}\\
&\quad\lesssim \tfrac{\kappa^2\vk^2}{\kappa^2+ \vk^2} \bigl\| \bigl(\psi_h^3\tfrac{2\vk + \p}{2\kappa - \p}q\bigr)_{>\vk}\bigr\|_{L^2_t H^{-\frac52-s}} \vk^{-(1+2s)}\delta^2\normNK{q}  \lesssim\vk^{-2(1+2s)}\delta^2\normNK{q}^2,\\
&\left\|\int \psi_h^3\tfrac1{2\kappa + \p}\psi_h^{-3}\bigl(\psi_h^3\tfrac{(2\vk + \p)\p^2}{2\kappa - \p}q\bigr)_{>\vk}\cdot\big(\tfrac{g_{21}(\vk)}{2 + \vr(\vk)}\big)\sbrack{\geq 3}\,\psi_h^9\,dx\right\|_{L^1_t}\\
&\quad\lesssim  \bigl\| \bigl(\psi_h^3\tfrac{(2\vk + \p)\p^2}{2\kappa - \p}q\bigr)_{>\vk}\bigr\|_{L^2_t H^{-\frac52-s}} \vk^{-(1+2s)}\delta^2\normNK{q}  \lesssim\vk^{-2(1+2s)}\delta^2\normNK{q}^2 .
\end{align*}
The two low-frequency terms are estimated using Lemma~\ref{L:localize} and \eqref{stack Dual NLS}:
\begin{align*}
&\tfrac{\kappa^2\vk^2}{\kappa^2+ \vk^2}\left\|\int \psi_h^3\tfrac1{2\kappa + \p}\psi_h^{-3}\bigl(\psi_h^3\tfrac{2\vk + \p}{2\kappa - \p}q\bigr)_{\leq \vk}\cdot\big(\tfrac{g_{21}(\vk)}{2 + \vr(\vk)}\big)\sbrack{\geq 3}\,\psi_h^9\,dx\right\|_{L^1_t} \\
&\qquad\qquad\qquad\qquad\lesssim \tfrac{\kappa^2\vk^2}{\kappa^2+ \vk^2} \cdot \tfrac{\vk}{\kappa^2}
		\cdot |\vk|^{-3} \bigl[\kappa^{\frac23 - \frac{8s}3} + |\vk|^{-4s}\bigr] \delta^2\normNK{q}^2
\end{align*}
and similarly,
\begin{align*}
&\left\|\int \psi_h^3\tfrac1{2\kappa + \p}\psi_h^{-3}\bigl(\psi_h^3\tfrac{(2\vk + \p)\p^2}{2\kappa - \p}q\bigr)_{\leq \vk}\cdot\big(\tfrac{g_{21}(\vk)}{2 + \vr(\vk)}\big)\sbrack{\geq 3}\,\psi_h^9\,dx\right\|_{L^1_t} \\
&\qquad\qquad\qquad\qquad\lesssim \min\bigl\{\tfrac{\vk^3}{\kappa^2},\vk\bigr\} \cdot |\vk|^{-3} \bigl[\kappa^{\frac23 - \frac{8s}3} + |\vk|^{-4s}\bigr] \delta^2\normNK{q}^2.
\end{align*}

Collecting these estimates, we deduce that
\[
\left\|\int \err_1\,\psi_h^{12}\,dx\right\|_{L^1_t} \lesssim \bigl[\tfrac\kappa{\kappa + \vk}\kappa^{-\frac43(2s+1)} + \vk^{-2(2s+1)}\bigr]\delta^2\normNK{q}^2.
\]

To estimate the contribution of \(\err_3\) we define
\(
f = (2\vk + \p)\big(\tfrac {g_{21}(\vk)}{2 + \vr(\vk)}\big),
\)
and apply the estimates \eqref{ET Sob} and \eqref{ET LS} to see that
\[
\|f\|_{L^\infty_t H^s}\lesssim \delta \qtq{and} \normNK{f}\lesssim \normNK{q}.
\]
We then write
\begin{align*}
\err_3 &= -\tfrac{2i\kappa^3\vk}{\kappa^2 - \vk^2}\Big(g_{12}(\kappa)\sbrack{\geq 3} - g_{12}(-\kappa)\sbrack{\geq 3}\Big)\cdot \tfrac f{2\vk + \p}\\
&\quad -\tfrac{2i\kappa^4}{\kappa^2 - \vk^2}\Big(g_{12}(\kappa)^{{[\geq 3]}} + g_{12}(-\kappa)^{{[\geq 3]}}\Big)\cdot \tfrac f{2\vk + \p}.
\end{align*}
Applying the estimate \eqref{rem Dual NLS} to the first term and the estimate \eqref{rem Dual cancel NLS} to the second, we obtain 
\[
\left\|\int \err_3\,\psi_h^{12}\,dx\right\|_{L^1_t}\lesssim \tfrac\kappa{\kappa + \vk}\kappa^{-\frac43(2s+1)}\delta^2\normNK{q}^2.
\]

Finally, using \eqref{rho rem NLS} we estimate the contribution of \(\err_5\) by
\[
\left\|\int \err_5\,\psi_h^{12}\,dx\right\|_{L^1_t} \lesssim \tfrac{\kappa}{\kappa + \vk} \kappa^{- \frac43(2s+1)}\delta^2\normNK{q}^2,
\]
which completes the proof of \eqref{jNLSdiff-err}.

\medskip\step{Proof of \eqref{jmKdVdiff-err}.}
Recall that \(\vk\in [\kappa^{\frac12},\frac12\kappa]\cup[2\kappa,\infty)\). We decompose 
\begin{align*}
&j_\mKdV^{\diff\;[\geq 4]} = \underbrace{(1 - \tfrac{\kappa^2}{\kappa^2 - \vk^2}\tfrac{4\kappa^2}{4\kappa^2 - \p^2})(2\vk + \p)q'\cdot\big(\tfrac{g_{21}(\vk)}{2 + \vr(\vk)}\big)\sbrack{\geq 3}}_{\err_1}\\
&\quad \underbrace{+ (1 - \tfrac{\kappa^2}{\kappa^2 - \vk^2}\tfrac{4\kappa^2}{4\kappa^2 - \p^2})(2\vk - \p)r'\cdot\big(\tfrac{g_{12}(\vk)}{2 + \vr(\vk)}\big)\sbrack{\geq 3}}_{\err_2}\\
&\quad \underbrace{+ \big(\tfrac{4\kappa^4}{\kappa - \vk}g_{12}\sbrack{\geq 3}(\kappa) - \tfrac{4\kappa^4}{\kappa + \vk}g_{12}\sbrack{\geq 3}(-\kappa) - \tfrac{2\kappa^2}{\kappa^2 - \vk^2}q^2r\big)\cdot\tfrac{g_{21}(\vk)}{2 + \vr(\vk)}}_{\err_3}\\
&\quad \underbrace{+\big(\tfrac{4\kappa^4}{\kappa - \vk}g_{21}\sbrack{\geq 3}(\kappa) - \tfrac{4\kappa^4}{\kappa + \vk}g_{21}\sbrack{\geq 3}(-\kappa) + \tfrac{2\kappa^2}{\kappa^2 - \vk^2} r^2q\big)\cdot\tfrac{g_{12}(\vk)}{2 + \vr(\vk)}}_{\err_4}\\
&\quad \underbrace{-\Big[\tfrac{2\kappa^4}{\kappa - \vk}\vr\sbrack{\geq 4}(\kappa) - \tfrac{2\kappa^4}{\kappa + \vk}\vr\sbrack{\geq 4}(-\kappa) - \tfrac{3\kappa^2\vk}{\kappa^2 - \vk^2}qr\big(q\tfrac r{4\kappa^2 - \p^2} + r\tfrac q{4\kappa^2 - \p^2}\big)\Big]}_{\err_5}\\
&\quad \underbrace{+ \tfrac{4\vk^4}{\kappa^2 - \vk^2}r\cdot\big(\tfrac{g_{12}(\vk)}{2 + \vr(\vk)}\big)\sbrack{\geq 5}+ \tfrac{2\vk^2}{\kappa^2 - \vk^2}q^2r\cdot\bigl(\tfrac{g_{21}(\vk)}{2 + \vr(\vk)}\bigr)\sbrack{\geq 3}}_{\err_6}\\
&\quad \underbrace{- \tfrac{3\kappa^2\vk}{\kappa^2 - \vk^2}q^2r\tfrac r{4\kappa^2 - \p^2}  - \tfrac{\vk^4}{\kappa^2 - \vk^2}rg_{12}\sbrack{1}(\vk)\vr\sbrack{2}(\vk) - \tfrac{2\vk^4}{\kappa^2-\vk^2}qg_{21}\sbrack{3}(\vk)+ \tfrac{\vk^2}{\kappa^2 - \vk^2}q^2rg_{21}\sbrack{1}(\vk)}_{\err_7}\\
&\quad\underbrace{ - \tfrac{4\vk^4}{\kappa^2 - \vk^2}q\cdot\big(\tfrac{g_{21}(\vk)}{2 + \vr(\vk)}\big)\sbrack{\geq 5}- \tfrac{2\vk^2}{\kappa^2 - \vk^2}r^2q\cdot\bigl(\tfrac{g_{12}(\vk)}{2 + \vr(\vk)}\bigr)\sbrack{\geq 3}}_{\err_8}\\
&\quad\underbrace{- \tfrac{3\kappa^2\vk}{\kappa^2 - \vk^2}r^2q\tfrac q{4\kappa^2 - \p^2} + \tfrac{\vk^4}{\kappa^2 - \vk^2}qg_{21}\sbrack{1}(\vk)\vr\sbrack{2}(\vk) + \tfrac{2\vk^4}{\kappa^2-\vk^2}r\cdot g_{12}\sbrack{3}(\vk)- \tfrac{\vk^2}{\kappa^2 - \vk^2}r^2qg_{12}\sbrack{1}(\vk)}_{\err_9}.
\end{align*}
While the validity of this equality is, of course, elementary, the particular grouping of terms (and the addition of an extra term in $\err_5$ that is then subtracted in $\err_7$ and $\err_9$) represents a very delicate accounting for numerous cancellations.

As we will see, each term in this expansion individually yields an acceptable contribution to \eqref{jmKdVdiff-err}.  We will treat $\err_1, \err_3, \err_5, \err_6$, and $\err_7$ in turn.  The remaining terms are covered by this analysis and conjugation symmetry. 

For \(\err_1\), we first write
\[
(1 - \tfrac{\kappa^2}{\kappa^2 - \vk^2}\tfrac{4\kappa^2}{4\kappa^2 - \p^2})(2\vk + \p)q' = -\tfrac{4\kappa^2\vk^2}{\kappa^2- \vk^2}\tfrac{(2\vk + \p)\p}{4\kappa^2 - \p^2}q - \tfrac{(2\vk + \p)\p^3}{4\kappa^2 - \p^2}q.
\]
Proceeding as in the proof of \eqref{jNLSdiff-err} and using \eqref{ET1 LS}, we estimate the contribution of the second term as follows:
\begin{align*}
&\left\|\int \psi_h^3\tfrac1{2\kappa+\p}\psi_h^{-3}\bigl(\psi_h^3\tfrac{(2\vk + \p)\p^3}{2\kappa - \p}q\bigr)_{>\vk}\,\big(\tfrac{g_{21}(\vk)}{2 + \vr(\vk)}\big)\sbrack{\geq 3}\,\psi_h^{9}\,dx\right\|_{L^1_t}\\
&\qquad\lesssim \bigl\| \bigl(\psi_h^3\tfrac{(2\vk + \p)\p^3}{2\kappa - \p}q\bigr)_{>\vk}\bigr\|_{L^2_t H^{-(3+s)}}\bigl\| \big(\tfrac{g_{12}(\vk)}{2 + \vr(\vk)}\big)\sbrack{\geq 3}\bigr\|_{X_\kappa^{2+s}}\lesssim \vk^{-2(2s+1)}\delta^2\normMK{q}^2,\\
&\left\|\int \psi_h^3\tfrac1{2\kappa+\p}\psi_h^{-3}\bigl(\psi_h^3\tfrac{(2\vk + \p)\p^3}{2\kappa - \p}q\bigr)_{\leq\vk}\,\big(\tfrac{g_{21}(\vk)}{2 + \vr(\vk)}\big)\sbrack{\geq 3}\,\psi_h^{9}\,dx\right\|_{L^1_t}\\
&\qquad\lesssim \bigl\| \bigl(\psi_h^3\tfrac{(2\vk + \p)\p^3}{2\kappa - \p}q\bigr)_{\leq\vk}\bigr\|_{L^2_t H^{-(2+s)}} \bigl\| \big(\tfrac{g_{12}(\vk)}{2 + \vr(\vk)}\big)\sbrack{\geq 3}\bigr\|_{X_\kappa^{1+s}}\lesssim\vk^{-2(1+2s)}\delta^2\normMK{q}^2.
\end{align*}

To estimate the term with fewer derivatives, we write $\big(\tfrac{g_{21}}{2 + \vr}\big)\sbrack{\geq 3} =\big(\tfrac{g_{21}}{2 + \vr}\big)\sbrack{\geq 5}+ \big(\tfrac{g_{21}}{2 + \vr}\big)\sbrack{3}$.  Arguing as above and using \eqref{ET2 LS} in place of \eqref{ET1 LS}, we get
\begin{align*}
&\tfrac{\kappa^2\vk^2}{\kappa^2+ \vk^2}\left\|\int \psi_h^3\tfrac1{2\kappa+\p}\psi_h^{-3}\bigl(\psi_h^3\tfrac{(2\vk + \p)\p}{2\kappa - \p}q\bigr)_{>\vk}\,\big(\tfrac{g_{21}(\vk)}{2 + \vr(\vk)}\big)\sbrack{\geq 5}\,\psi_h^{9}\,dx\right\|_{L^1_t}\\
&\qquad\lesssim \tfrac{\kappa^2\vk^2}{\kappa^2+ \vk^2}\bigl\| \bigl(\psi_h^3\tfrac{(2\vk + \p)\p}{2\kappa - \p}q\bigr)_{>\vk}\bigr\|_{L^2_t H^{-(3+s)}}\bigl\| \big(\tfrac{g_{12}(\vk)}{2 + \vr(\vk)}\big)\sbrack{\geq 5}\bigr\|_{X_\kappa^{2+s}}\\
&\qquad \lesssim \vk^{-3(2s+1)}\delta^2\normMK{q}^2,
\end{align*}
while using \eqref{stack Dual mKdV 2}, we estimate
\begin{align*}
&\tfrac{\kappa^2\vk^2}{\kappa^2+ \vk^2}\left\|\int \psi_h^3\tfrac1{2\kappa+\p}\psi_h^{-3}\bigl(\psi_h^3\tfrac{(2\vk + \p)\p}{2\kappa - \p}q\bigr)_{\leq\vk}\,\big(\tfrac{g_{21}(\vk)}{2 + \vr(\vk)}\big)\sbrack{\geq 5}\,\psi_h^{9}\,dx\right\|_{L^1_t}\\
&\qquad \lesssim \tfrac{|\vk|^{-(s+\frac12)}}{\kappa^2+\vk^2} \bigl[\kappa^{\frac34 - \frac{5s}2} +|\vk|^{-(\frac12+5s)}\log^6\!|2\vk|\bigr]\delta^4\normMK{q}^2\lesssim \RHS{jmKdVdiff-err}.
\end{align*}
 
It remains to estimate the contribution of the quartic terms, which we expand using \eqref{more sbrack} and treat the two parts separately.

Setting $m_1=\frac{i\xi_1}{4\kappa^2+\xi_1^2}$ and $m_2=\frac{-\xi_1^2}{4\kappa^2+\xi_1^2}$, and using \eqref{g21 1 3} and \eqref{gamma 2}, we have
$$
\tfrac{(2\vk + \p)\p q}{4\kappa^2 - \p^2}  \, g_{21}\sbrack{1}(\vk) \vr\sbrack{2}(\vk)  = -\tfrac{4}{\vk}m_1\bigl[ q, \tfrac{\vk\, r}{2\vk + \p}, \tfrac{\vk\, q}{2\vk - \p}, \tfrac{r}{2\vk + \p}\bigr] - \tfrac{2}{\vk^2}m_2\bigl[ q, \tfrac{\vk\, r}{2\vk + \p}, \tfrac{\vk\, q}{2\vk - \p}, \tfrac{r}{2\vk + \p}\bigr].
$$
Applying both \eqref{mKdV-para-2} and \eqref{mKdV-para-2'} from Lemma~\ref{lem:mKdV-para}, we deduce that 
\begin{align*}
&\tfrac{\kappa^2\vk^2}{\kappa^2+ \vk^2}\left\|\int \tfrac{(2\vk + \p)\p}{4\kappa^2 - \p^2}q\,\big[g_{21}\sbrack{1}\vr\sbrack{2}\big](\vk)\,\psi_h^{12}\,dx\right\|_{L^1_t} \lesssim \RHS{jmKdVdiff-err}.
\end{align*}

For the remaining quartic term we first use \eqref{g21-ID} to write 
\begin{align*}
 \psi_h^{12}g_{21}\sbrack{3}(\vk)
 &= \tfrac{1}{2\vk+\p}\bigl[ r\vr\sbrack{2}(\vk) \psi_h^{12}\bigr] + \tfrac{1}{2\vk+\p} \bigr[(\psi_h^{12})' g_{21}\sbrack{3}(\vk)\bigr]\\
  &= \tfrac{1}{2\vk+\p}\bigl[ r\vr\sbrack{2}(\vk) \psi_h^{12}\bigr] + \tfrac{1}{(2\vk+\p)^2}\bigl[ r\vr\sbrack{2}(\vk) (\psi_h^{12})'\bigr]+ \tfrac{1}{(2\vk+\p)^2} \bigr[(\psi_h^{12})'' g_{21}\sbrack{3}(\vk)\bigr]
\end{align*}
and so
\begin{align*}
&\left\|\int \tfrac{(2\vk + \p)\p q}{4\kappa^2 - \p^2}\, g_{21}\sbrack{3}(\vk)\psi_h^{12}\,dx\right\|_{L^1_t}\\
&\leq\left\|\int \tfrac{(2\vk + \p)\p q}{(2\vk-\p)(4\kappa^2 - \p^2)} r\vr\sbrack{2}(\vk) \psi_h^{12}\,dx\right\|_{L^1_t}+ \left\|\int \tfrac{(2\vk + \p)\p q}{(2\vk-\p)^2(4\kappa^2 - \p^2)}\, r\vr\sbrack{2}(\vk) (\psi_h^{12})'\,dx\right\|_{L^1_t} \\
&\quad+ \left\|\int \tfrac{(2\vk + \p)\p q}{(2\vk-\p)^2(4\kappa^2 - \p^2)}\, g_{21}\sbrack{3}(\vk) (\psi_h^{12})'' \,dx\right\|_{L^1_t}.
\end{align*}
Using \eqref{mKdV-para-2} again, we see that the contribution arising from the first two terms above is acceptable.  For the last term, we estimate
\begin{align*}
&\tfrac{\kappa^2\vk^2}{\kappa^2+ \vk^2}\left\|\int \tfrac{(2\vk + \p)\p q}{(2\vk-\p)^2(4\kappa^2 - \p^2)}\, g_{21}\sbrack{3}(\vk) (\psi_h^{12})'' \,dx\right\|_{L^1_t}\\
&\, \lesssim \tfrac{\kappa^2\vk^2}{\kappa^2+ \vk^2} \bigl\| \tfrac{(2\vk + \p)\p}{(2\vk-\p)^2(4\kappa^2 - \p^2)}q\bigr\|_{L^\infty_t H^{-(1+s)}} \bigl\|g_{21}\sbrack{3}(\vk)\bigr\|_{L^\infty_t H^{1+s}}\lesssim |\vk|^{-2(2s+1)} \delta^2\normMK{q}^2.
\end{align*}

Collecting the estimates above, we obtain
\[
\left\|\int \err_1\,\psi_h^{12}\,dx\right\|_{L^1_t}\lesssim \RHS{jmKdVdiff-err}.
\]

For \(\err_3\) we start by writing
\begin{align*}
\err_3 &= \tfrac{4\kappa^5}{\kappa^2 - \vk^2}\big(g_{12}(\kappa)\sbrack{\geq 3} - g_{12}(-\kappa)\sbrack{\geq 3} - \tfrac1{2\kappa^3}q^2r\big)\,\tfrac{g_{21}(\vk)}{2 + \vr(\vk)}\\
&\quad + \tfrac{4\kappa^4\vk}{\kappa^2 - \vk^2}\big(g_{12}(\kappa)\sbrack{\geq 3} + g_{12}(-\kappa)\sbrack{\geq 3}\big)\,\tfrac{g_{21}(\vk)}{2 + \vr(\vk)}.
\end{align*}
We then apply the estimates \eqref{rem Dual cancel mKdV} and \eqref{rem Dual cancel mKdV 2} with \(f=(2\vk+\p)\frac{g_{21}(\vk)}{2 + \vr(\vk)}\) together with \eqref{ET Sob} and \eqref{ET LS} to bound
\[
\left\|\int \err_3\,\psi_h^{12}\,dx\right\|_{L^1_t}\lesssim \tfrac\kappa{\kappa + \vk}\kappa^{-(2s+1)}\delta^2\normMK{q}^2.
\]

For \(\err_5\) we may write
\begin{align*}
\err_5 &= - \tfrac{2\kappa^5}{\kappa^2 - \vk^2}\Big(\vr(\kappa)\sbrack{\geq 4} - \vr(-\kappa)\sbrack{\geq 4}\Big) \\
&\quad - \tfrac{2\kappa^4\vk}{\kappa^2 - \vk^2}\Big(\vr(\kappa)\sbrack{\geq 4} + \vr(-\kappa)\sbrack{\geq 4} - \tfrac3{2\kappa^2}qr\big(q\cdot\tfrac r{4\kappa^2 - \p^2} + r\cdot\tfrac q{4\kappa^2 - \p^2}\big)\Big) 
\end{align*}
and then use \eqref{rem Dual cancel mKdV 3} and \eqref{rem Dual cancel mKdV 4} to bound
\[
\left\|\int \err_5\,\psi_h^{12}\,dx\right\|_{L^1_t}\lesssim \tfrac\kappa{\kappa + \vk}\kappa^{-(2s+1)}\delta^2\normMK{q}^2.
\]

For \(\err_6\) we first apply the estimate \eqref{stack Dual mKdV 2} to bound
\begin{align*}
&\left\|\int \tfrac{4\vk^4}{\kappa^2 - \vk^2}r\cdot\big(\tfrac{g_{12}(\vk)}{2 + \vr(\vk)}\big)\sbrack{\geq 5}\,\psi_h^{12}\,dx\right\|_{L^1_t}
\\
&\qquad\lesssim \tfrac{\vk^{-(s+\frac12)}}{\kappa^2 + \vk^2}\bigl[\kappa^{\frac34-\frac{5s}2}+ \vk^{-(\frac12+5s)}\log^6|2\vk|\bigr]\delta^4\normMK{q}^2\lesssim \RHS{jmKdVdiff-err}.
\end{align*}

Next, we use \eqref{g12 1 3} and \([\psi_h^{12},\tfrac{\p}{2\vk - \p}] = -\frac{2\vk}{2\vk - \p}(\psi_h^{12})'\frac1{2\vk - \p}\) to write
\begin{align*}
\psi_h^{12}\,q^2r &= \psi_h^{12}\,4\vk^3g_{12}\sbrack{3}(\vk) + \tfrac{4\vk^3}{2\vk - \p}\bigl[(\psi_h^{12})'g_{12}\sbrack{3}(\vk)\bigr]\\
&\quad  - \tfrac{\p}{2\vk - \p}\bigl[\psi_h^{12}q\tfrac{2\vk r}{2\vk + \p}\tfrac{2\vk q}{2\vk - \p}\bigr] + \psi_h^{12}\,q\tfrac{r'}{2\vk + \p}\tfrac{2\vk q}{2\vk - \p} - \psi_h^{12}\,qr\tfrac{q'}{2\vk - \p}.
\end{align*}
From Corollary~\ref{C:ET} and elementary manipulations, we have
\[
\normMK{ \tfrac1{2\vk + \p}  \big(\tfrac{g_{21}(\vk)}{2 + \vr(\vk)}\big)\sbrack{\geq 3} }\lesssim \vk^{-3-2s}\delta^2\normMK{q}.
\]
Thus, by taking \(m_1(\xi) = \frac{i\xi_2}{2\kappa + i\xi_2}\), \(m_2(\xi) = \frac{i\xi_3}{2\kappa - i\xi_3}\), and applying \eqref{stack mKdV L2} to the first term, \eqref{stack Dual mKdV} to the second, and \eqref{mKdV-para-2}, \eqref{ET1 Sob}, \eqref{ET1 LS} to the remaining terms,  we have
\begin{align*}
&\left\|\int \tfrac{2\vk^2}{\kappa^2 - \vk^2}q^2r\,\big(\tfrac{g_{21}(\vk)}{2 + \vr(\vk)}\big)\sbrack{\geq 3}\,\psi_h^{12}\,dx\right\|_{L^1_t}\\
&\qquad\lesssim\tfrac{\vk^5}{\kappa^2 + \vk^2}\left\|\int g_{12}\sbrack{3}(\vk)\,\big(\tfrac{g_{21}(\vk)}{2 + \vr(\vk)}\big)\sbrack{\geq 3}\,\psi_h^{12}\,dx\right\|_{L^1_t}\\
&\qquad\quad + \tfrac{\vk^5}{\kappa^2 + \vk^2}\left\|\int  (\psi_h^{12})'g_{12}\sbrack{3}(\vk) \cdot \tfrac 1{2\vk + \p} \bigl(\tfrac{g_{21}(\vk)}{2 + \vr(\vk)}\bigr)\sbrack{\geq 3}\,dx\right\|_{L^1_t}\\
&\qquad\quad + \tfrac{\vk^3}{\kappa^2 + \vk^2}\left\|\int m_1\Bigl[q,\tfrac{2\kappa + \p}{2\vk + \p}\bigl(\tfrac{g_{21}(\vk)}{2 + \vr(\vk)}\bigr)\sbrack{\geq 3},\tfrac{2\vk q}{2\vk - \p},\tfrac{r}{2\vk + \p}\Bigr]\,\psi^{12}_h\,dx\right\|_{L^1_t}\\
&\qquad\quad + \tfrac{\vk^2}{\kappa^2 + \vk^2}\left\|\int m_1\Bigl[q,\tfrac{2\kappa + \p}{2\vk + \p}r,\tfrac{2\vk q}{2\vk - \p},\bigl(\tfrac{g_{21}(\vk)}{2 + \vr(\vk)}\bigr)\sbrack{\geq 3}\Bigr]\,\psi_h^{12}\,dx\right\|_{L^1_t}\\
&\qquad\quad + \tfrac{\vk^2}{\kappa^2 + \vk^2}\left\|\int m_2\Bigl[q,r,\tfrac{2\kappa - \p}{2\vk - \p}q,\bigl(\tfrac{g_{21}(\vk)}{2 + \vr(\vk)}\bigr)\sbrack{\geq 3}\Bigr]\,\psi_h^{12}\,dx\right\|_{L^1_t} \lesssim\RHS{jmKdVdiff-err}.
\end{align*}

For \(\err_7\) we first observe that 
\begin{align*}
&-\tfrac{2\vk^4}{\kappa^2-\vk^2}\int qg_{21}\sbrack{3}(\vk)\,\psi_h^{12}\, dx\\
&\qquad= \tfrac{4\vk^4}{\kappa^2-\vk^2}\int \tfrac{q}{2\vk-\p}\tfrac{r}{2\vk+\p}\tfrac{q}{2\vk-\p}\, r\,\psi_h^{12}\,dx - \tfrac{2\vk^4}{\kappa^2-\vk^2}\int \tfrac{q}{2\vk-\p}g_{21}\sbrack{3}(\vk)\,(\psi_h^{12})'\, dx.
\end{align*}
Applying \eqref{stack Dual mKdV}, the second integral contributes a constant multiple of
$$
\tfrac{1}{\kappa^2+\vk^2}\bigl[ \kappa^{\frac12-2s} +\vk^{-(1+4s)}\log^4|2\vk| \bigr]\delta^2\normMK{q}^2\lesssim\RHS{jmKdVdiff-err}.
$$
Thus the remaining quartic terms are
\begin{align*}
- \tfrac{3\kappa^2\vk}{\kappa^2 - \vk^2}q^2r\tfrac r{4\kappa^2 - \p^2} + \tfrac{2\vk^4}{\kappa^2-\vk^2} \tfrac{q}{2\vk-\p}\tfrac{r}{2\vk+\p}\tfrac{q}{2\vk-\p}\, r
+\tfrac{\vk^2}{\kappa^2-\vk^2}q^2r\tfrac{r}{2\vk+\p}.
\end{align*} 
A quick computation shows that
\begin{align*}
&- \tfrac{3\kappa^2\vk}{\kappa^2 - \vk^2}q^2r\tfrac r{4\kappa^2 - \p^2}+\tfrac{\vk^2}{\kappa^2-\vk^2}q^2r\tfrac{r}{2\vk+\p}\\
&\qquad =-\tfrac{2\kappa^2\vk^2}{\kappa^2 - \vk^2} q^2\tfrac{r}{4\kappa^2 - \p^2}\tfrac r{2\vk + \p} -\tfrac{6\kappa^2\vk^2}{\kappa^2 - \vk^2} m_1\bigl[q,\tfrac{2\kappa+\p}{2\vk+\p}r,q,\tfrac r{2\vk+\p}\bigr]\\
&\qquad\quad - \tfrac{3\kappa^2\vk}{\kappa^2 - \vk^2}m_2\bigl[q,\tfrac{r'}{2\vk+\p},q,\tfrac r{2\vk+\p}\bigr]-\tfrac{\vk^2}{\kappa^2-\vk^2}m_3\bigl[q,r,q,\tfrac{r}{2\vk+\p}\bigr],
\end{align*} 
where $m_1(\xi)= \frac{i\xi_2}{2\kappa+i\xi_2} \frac{1}{4\kappa^2 + \xi_4^2}$, $m_2(\xi) = \frac{i\xi_4}{4\kappa^2 + \xi_4^2}$ and $m_3(\xi)= \frac{(i\xi_2)^2}{4\kappa^2+\xi_2^2}$.  To continue, we observe that
\begin{align*}
&-\tfrac{2\kappa^2\vk^2}{\kappa^2 - \vk^2} q^2\tfrac{r}{4\kappa^2 - \p^2}\tfrac r{2\vk + \p} + \tfrac{2\vk^4}{\kappa^2-\vk^2} \tfrac{q}{2\vk-\p}\tfrac{r}{2\vk+\p}\tfrac{q}{2\vk-\p}\,r\\
&\qquad=\tfrac{4\kappa^2\vk^2}{\kappa^2 - \vk^2} m_4\bigl[\tfrac{2\kappa-\p}{2\vk-\p}q,r,\tfrac{2\vk q}{2\vk-\p},\tfrac{r}{2\vk+\p}\bigr]
-\tfrac{2\kappa^2\vk^2}{\kappa^2 - \vk^2} m_4\bigl[\tfrac{2\kappa-\p}{2\vk-\p}q,r,\tfrac{q'}{2\vk-\p},\tfrac{r}{2\vk+\p}\bigr]\\
&\qquad \quad -\tfrac{\vk^2}{2(\kappa^2-\vk^2)}m_3\bigl[\tfrac{2\vk q}{2\vk-\p},r,\tfrac{2\vk q}{2\vk-\p},\tfrac{r}{2\vk+\p}\bigr],
\end{align*} 
where $m_4(\xi) = \frac{i\xi_1}{2\kappa-i\xi_1} \frac{1}{4\kappa^2+\xi_2^2}$.  Applying the estimate \eqref{mKdV-para-2}, we obtain
\begin{align*}
&\tfrac{\kappa^2\vk^2}{\kappa^2 +\vk^2}\left\|\int m_1\bigl[q,\tfrac{2\kappa+\p}{2\vk+\p}r,q,\tfrac r{2\vk + \p}\bigr]\,\psi_h^{12}\,dx\right\|_{L^1_t}\\
&\qquad\qquad + \tfrac{\kappa^2\vk}{\kappa^2 +\vk^2}\left\|\int m_2\bigl[q,\tfrac{r'}{2\vk+\p},q,\tfrac r{2\vk + \p}\bigr]\,\psi_h^{12}\,dx\right\|_{L^1_t}\\
&\qquad\qquad+\tfrac{\vk^2}{\kappa^2 +\vk^2}\left\|\int m_3\bigl[q,r,q,\tfrac{r}{2\vk+\p}\bigr]\,\psi_h^{12}\,dx\right\|_{L^1_t}\\
&\qquad\qquad + \tfrac{\vk^2}{\kappa^2 +\vk^2}\left\|\int m_3\bigl[\tfrac{\vk q}{2\vk-\p},r,\tfrac{\vk q}{2\vk-\p},\tfrac{r}{2\vk+\p}\bigr]\,\psi_h^{12}\,dx\right\|_{L^1_t} \\
&\qquad\qquad+ \tfrac{\kappa^2\vk^2}{\kappa^2 +\vk^2}\left\|\int m_4\bigl[\tfrac{2\kappa-\p}{2\vk-\p}q,r,\tfrac{\vk q}{2\vk-\p},\tfrac{r}{2\vk+\p}\bigr]\,\psi_h^{12}\,dx\right\|_{L^1_t} \\
&\qquad\qquad+ \tfrac{\kappa^2\vk^2}{\kappa^2 +\vk^2}\left\|\int m_4\bigl[\tfrac{2\kappa-\p}{2\vk-\p}q,r,\tfrac{q'}{2\vk-\p},\tfrac{r}{2\vk+\p}\bigr]\,\psi_h^{12}\,dx\right\|_{L^1_t} \lesssim \RHS{jmKdVdiff-err}.
\end{align*}

Collecting all our bounds, we obtain the estimate \eqref{jmKdVdiff-err}.
\end{proof}


\section{Tightness}\label{sec:Tight}~
Let \(\chi \in \Test\) be an even non-negative function supported in \(\{|x|\leq 1\}\) with \(\|\chi\|_{L^1} = 1\) and define
\[
\phi(x) = \int_0^{|x|} \chi(y-2)\,dy.
\]
For \(R\geq 1\) we define the rescaled function \(\phi_R(x) = \phi(\frac xR)\).  Notice that $\phi_R$ plays the role of a smooth cut-off to large $|x|$ and so leads naturally to the following formulation of tightness:

\begin{defn}\label{d:tight}
A bounded subset \(Q\subset H^s\) is \emph{tight} in \(H^s\) if
\[
\phi_Rq\rightarrow 0\text{ in }H^s\text{ as }R\rightarrow \infty, \text{ uniformly for }q\in Q.
\]
\end{defn}

We first prove that tightness of \(q\) implies tightness of \(g_{12}\):

\begin{lem}
For $\delta>0$ sufficiently small,
\begin{align}
\|\phi_Rg_{12}\|_{H^{s+1}_\vk} &\lesssim \|\phi_Rq\|_{H^s_\vk} + (|\vk| R)^{-1}\|q\|_{H^s_\vk},\label{g12-Tight}\\
\|\phi_Rg_{12}\sbrack{\geq 3}\|_{H^{s+1}_\vk} &\lesssim |\vk|^{-(2s+1)}\delta^2\Big(\|\phi_Rq\|_{H^s_\vk} + (|\vk| R)^{-1}\|q\|_{H^s_\vk}\Big),\label{g12-err-Tight}\\
\|\phi_R\big(\tfrac{g_{12}}{2 + \vr}\big)\sbrack{\geq 3}\|_{H^{s+1}_\vk} &\lesssim |\vk|^{-(2s+1)}\delta^2\Big(\|\phi_Rq\|_{H^s_\vk} + (|\vk| R)^{-1}\|q\|_{H^s_\vk}\Big).\label{rA-err-Tight}
\end{align}
uniformly for \(|\vk|\geq 1\),  \(R\geq 1\), and \(q\in \Bd\).  Here \(g_{12} = g_{12}(\vk)\) and  \(\vr = \vr(\vk)\).
\end{lem}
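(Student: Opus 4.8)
The plan is to mimic the structure of the proofs of Proposition~\ref{prop:g} and the local-smoothing estimates for $g_{12}$, but now with the additional commutator between $\phi_R$ and the inverse differential operators in the series \eqref{g12-gamma-Series}. The starting point for \eqref{g12-Tight} is the identity $g_{12}\sbrack{1}(\vk) = -(2\vk-\p)^{-1}q$, for which one readily gets
\[
\|\phi_R g_{12}\sbrack{1}\|_{H^{s+1}_\vk} \lesssim \|(2\vk-\p)^{-1}(\phi_R q)\|_{H^{s+1}_\vk} + \|[\phi_R,(2\vk-\p)^{-1}]q\|_{H^{s+1}_\vk}.
\]
The first term is $\lesssim \|\phi_R q\|_{H^s_\vk}$ by the elementary $(2\vk-\p)^{-1}$ bound, while the commutator satisfies $[\phi_R,(2\vk-\p)^{-1}] = (2\vk-\p)^{-1}\phi_R'(2\vk-\p)^{-1}$ with $\|\phi_R'\|_{L^\infty}\lesssim R^{-1}$, giving the $(|\vk|R)^{-1}\|q\|_{H^s_\vk}$ term. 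So \eqref{g12-Tight} follows once we prove \eqref{g12-err-Tight}.

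For \eqref{g12-err-Tight}, I would combine the $g_{12}\sbrack{\geq 3}$ identity from \eqref{g12-ID}, namely $g_{12}\sbrack{\geq 3} = -(2\vk-\p)^{-1}(q\vr)$, with the same commutator decomposition: $\phi_R g_{12}\sbrack{\geq 3}$ splits into $(2\vk-\p)^{-1}(\phi_R q\vr)$ plus a $(|\vk|R)^{-1}$-gaining commutator term. For the main term I would distribute $\phi_R$ onto one of the factors of $q\vr$ (either $\phi_R q \cdot \vr$ or $q\cdot \phi_R\vr$); the factor not carrying $\phi_R$ is then estimated using \eqref{rho-Hs} (for $\vr$) or \eqref{g12-Hs} (if one instead expands $\vr = 2 g_{12}g_{21} - \tfrac12\vr^2$), producing the gain $|\vk|^{-(s+\frac12)}\delta$, while $\|\phi_R(q\text{-factor})\|_{H^s_\vk}\lesssim \|\phi_R q\|_{H^s_\vk}$. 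Iterating, or more cleanly arguing at the level of the series \eqref{g12-gamma-Series} with $\phi_R$ inserted and commuted past the resolvents (each commutation costing at most $(|\vk|R)^{-1}$ and the tail summing geometrically thanks to \eqref{Lambda}), yields the full power $|\vk|^{-(2s+1)}\delta^2$. The cleanest route is probably to prove a "tight" analogue of the multiplicative commutator Lemma~\ref{L:localize}: that $\|\phi_R (2\vk-\p)^{-1}u - (2\vk-\p)^{-1}(\phi_R u)\|_{H^\sigma_\vk}\lesssim (|\vk|R)^{-1}\|(2\vk-\p)^{-1}u\|_{H^\sigma_\vk}$, and then feed the duality argument of Proposition~\ref{prop:g}'s proof of \eqref{g12-LO} with $\phi_R$ inserted.

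Finally, \eqref{rA-err-Tight} follows from \eqref{g12-err-Tight} via the expansion \eqref{more sbrack'}, $\big(\tfrac{g_{12}}{2+\vr}\big)\sbrack{\geq 3} = \tfrac12 g_{12}\sbrack{\geq 3} - \tfrac{g_{12}\vr}{2(2+\vr)}$: the first term is exactly \eqref{g12-err-Tight}; for the second, I would put $\phi_R$ on either $g_{12}$ or $\vr$ (using \eqref{g12-Hs} and \eqref{rho-Hs} to control the leftover and the rational function via its series and the algebra property \eqref{E:algebra}), so that $\|\phi_R \tfrac{g_{12}\vr}{2(2+\vr)}\|_{H^{s+1}_\vk}\lesssim |\vk|^{-(s+\frac12)}\delta\bigl(\|\phi_R g_{12}\|_{H^{s+1}_\vk} + \|\phi_R\vr\|_{H^{s+1}_\vk}\bigr)$, and then close the loop with \eqref{g12-Tight} and its $\vr$ analogue (obtained from \eqref{QuadraticID} and \eqref{g12-Tight}).

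\textbf{Main obstacle.} The delicate point is bookkeeping the commutators $[\phi_R,(2\vk\mp\p)^{-1}]$ uniformly in $\vk\geq 1$ and $R\geq 1$: one needs that each such commutator both preserves the Sobolev regularity (no loss of derivatives) \emph{and} gains the full factor $(|\vk|R)^{-1}$, and that when several resolvents appear in a term of the series the errors do not accumulate uncontrollably. Since $\phi_R$ is \emph{not} Schwartz (only its derivatives decay), one cannot invoke Lemma~\ref{L:localize} directly; instead one must exploit that $\phi_R$ is bounded with $\|\p^n\phi_R\|_{L^\infty}\lesssim_n R^{-n}$ and that $\phi_R$ itself can always be placed adjacent to a copy of $q$ or $r$ (which lies in $H^s$), so that only bounded multiplication by $\phi_R$ — never by its derivatives — needs to act on low-regularity objects. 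Getting this accounting clean, while keeping the geometric summability of the series from \eqref{Lambda}, is the crux.
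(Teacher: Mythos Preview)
Your approach is essentially correct and uses the same ingredients as the paper --- the identity $g_{12}\sbrack{\geq 3}=-(2\vk-\p)^{-1}(q\vr)$ from \eqref{g12-ID}, a single commutator with $\phi_R$, and the decomposition \eqref{more sbrack'} for \eqref{rA-err-Tight}. However, you make the argument considerably more involved than it needs to be, and the ``main obstacle'' you identify is not actually present.

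The paper's proof is only a few lines. Rather than treating $g_{12}\sbrack{1}$ and $g_{12}\sbrack{\geq 3}$ separately and then summing, it applies \eqref{g12-ID} once to the \emph{full} $g_{12}$: from $(2\vk-\p)g_{12}=-q(1+\vr)$ one gets directly
\[
\phi_R g_{12} = -\tfrac{1}{2\vk-\p}\bigl(\phi_R q(1+\vr)\bigr) - \tfrac{1}{2\vk-\p}\bigl(\phi_R' g_{12}\bigr),
\]
and \eqref{g12-Tight} follows from \eqref{rho-Hs} (for the first term, via \eqref{multiplier bdd on Hs}) and \eqref{g12-Hs} (for the second, which already absorbs the full tail). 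The same one-step identity applied to $g_{12}\sbrack{\geq 3}=-(2\vk-\p)^{-1}(q\vr)$ gives \eqref{g12-err-Tight} from \eqref{g12-LO} and \eqref{rho-Hs}. Finally \eqref{rA-err-Tight} follows from \eqref{more sbrack'}, \eqref{g12-Tight}, \eqref{g12-err-Tight}, and the algebra property \eqref{E:algebra}, exactly as you propose.

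The key point is that because the commutator term $\tfrac{1}{2\vk-\p}(\phi_R' g_{12})$ involves the \emph{full} $g_{12}$ (respectively $g_{12}\sbrack{\geq 3}$), it can be estimated immediately using the already-established a priori bounds \eqref{g12-Hs} and \eqref{g12-LO}. There is no need to iterate, to return to the series \eqref{g12-gamma-Series}, to commute $\phi_R$ past several resolvents, or to prove a $\phi_R$-analogue of Lemma~\ref{L:localize}. The accumulation problem you worry about simply does not arise.
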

\begin{proof}
Using the identity \eqref{g12-ID} we write
\[
\phi_Rg_{12} = - \tfrac1{2\vk - \p}\big(\phi_Rq(1 + \vr)\big) - \tfrac1{2\vk - \p}\big(\phi_R'g_{12}\big),
\]
so the estimate \eqref{g12-Tight} follows from the estimates \eqref{g12-Hs} and \eqref{rho-Hs}.

Similarly, the estimate \eqref{g12-err-Tight} follows from the identity
\[
\phi_Rg_{12}\sbrack{\geq 3} = - \tfrac1{2\vk - \p}\big(\phi_Rq\vr\big) - \tfrac1{2\vk - \p}\big(\phi_R'g_{12}\sbrack{\geq 3}\big)
\]
and the estimates \eqref{g12-LO} and \eqref{rho-Hs}. The estimate \eqref{rA-err-Tight} is then a corollary of the estimates \eqref{g12-Tight}, \eqref{g12-err-Tight}, \eqref{E:algebra}, and \eqref{more sbrack'}.
\end{proof}

We will prove tightness for solutions of \eqref{NLS} and \eqref{mKdV} by considering the equation satisfied by \(\Re\rho(\vk)\).  Our next lemma shows that this is a suitable quantity to consider.  The utility of this density should not be conflated with that of the currents used to prove the local smoothing effect.  In particular, in the \eqref{NLS} setting, it is the \emph{imaginary} part of $\rho$ that is used to prove local smoothing.

\begin{lem}\label{L:ReRho}
For $\delta$ sufficiently small we have
\eq{ReRho}{
\|\phi_Rq\|_{H^s}^2\approx \int_1^\kappa \vk^{2s+1}\Big(\pm \Re\int \rho\,\phi_R^2\,dx\Big)\,\tfrac{d\vk}{\vk} + \bigO\Big(R^{-2}\|q\|_{H^s}^2 + \|q\|_{H^s_\kappa}^2\Big),
}
uniformly for  \(q\in \BdS\), \(R\geq 1\),  and \(\kappa\geq 1\).
\end{lem}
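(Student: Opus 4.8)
The plan is to run an argument parallel to that of Proposition~\ref{prop:alpha}, but with the flat cut-off replaced by $\phi_R^2$ throughout.  Recall from \eqref{micro A} that $\rho(\vk) = \tfrac{qg_{21}(\vk)-rg_{12}(\vk)}{2+\vr(\vk)}$, and decompose $\rho(\vk) = \rho\sbrack 2(\vk) + \rho\sbrack{\geq 4}(\vk)$ as in \eqref{rho quadratic}--\eqref{rho higher}.  For the quadratic piece, the key computation is the localized analogue of \eqref{R151}: writing out $\pm\tfrac12[\rho\sbrack2(x;\vk)-\rho\sbrack2(x;-\vk)]$ from \eqref{rho quadratic} and pairing against $\phi_R^2$, one obtains $2\vk\|\phi_R q\|_{H^{-1}_\vk}^2$ up to commutator errors of the form $\langle \phi_R q, [\phi_R,(2\vk\mp\p)^{-1}]q\rangle$, which are $\bigO(\vk^{-2}\|\phi_R q\|_{H^{-1}_\vk}\|q\|_{H^{-1}_\vk})$ since $\phi_R' = \tfrac1R\phi'(\cdot/R)$ is $\bigO(R^{-1})$ in $L^\infty$ together with all its derivatives.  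After integrating $\vk^{2s}$ against this over $\vk\in[1,\kappa]$ and invoking Lemma~\ref{lem:EquivNorm}, the main term reproduces $\|\phi_R q\|_{H^s}^2$ up to $\bigO(\|\phi_R q\|_{H^s_\kappa}^2)$ (the tail $\vk\geq\kappa$, which is $\lesssim \|q\|_{H^s_\kappa}^2$ after undoing the localization) and the commutator contributes $\bigO(R^{-1}\|\phi_R q\|_{H^s}\|q\|_{H^s})$, absorbable into $R^{-2}\|q\|_{H^s}^2 + \tfrac12\|\phi_R q\|_{H^s}^2$ by Cauchy--Schwarz.

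For the higher-order piece $\rho\sbrack{\geq 4}(\vk)$, I would use \eqref{rho higher} to write $\phi_R^2\rho\sbrack{\geq 4}(\vk) = \phi_R q\cdot\phi_R(\tfrac{g_{21}(\vk)}{2+\vr(\vk)})\sbrack{\geq 3} - \phi_R r\cdot\phi_R(\tfrac{g_{12}(\vk)}{2+\vr(\vk)})\sbrack{\geq 3}$ and bound the pairing by duality between $H^s$ and $H^{-s}$, using \eqref{rA-err-Tight} (and its $g_{21}$ counterpart, which follows from the conjugation symmetry \eqref{grho-symmetries}) together with the embedding $H^{s+1}\hookrightarrow H^{-s}$.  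This gives
\[
\Bigl|\Re\!\int\!\rho\sbrack{\geq4}(\vk)\,\phi_R^2\,dx\Bigr| \lesssim |\vk|^{-2(2s+1)}\delta^2\bigl(\|\phi_R q\|_{H^s}^2 + (|\vk|R)^{-1}\|q\|_{H^s}\|\phi_R q\|_{H^s} + \cdots\bigr),
\]
and integrating $\vk^{2s}$ over $[1,\kappa]$ — the exponent $-2(2s+1)+2s = -2s-2 < -1$ makes the $\vk$-integral converge at $\vk=1$ — produces a term bounded by $\delta^2\|\phi_R q\|_{H^s}^2$ plus $R^{-1}$-errors.  Choosing $\delta$ small defeats the implicit constant and lets this be absorbed into the left-hand side.

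Collecting the two pieces and rearranging gives \eqref{ReRho}.  The main obstacle I anticipate is the bookkeeping of the commutator errors: one must check that every time a derivative falls on $\phi_R$ rather than on $q$ the resulting term is genuinely $\bigO(R^{-1})$ smaller in the right norm, and that after the $\vk$-integration these errors land inside the stated $\bigO(R^{-2}\|q\|_{H^s}^2 + \|q\|_{H^s_\kappa}^2)$ (as opposed to producing, say, an uncontrolled $\|q\|_{H^s}^2$).  This is where the specific choice of $\phi_R$ — smooth, with $\phi_R^{(k)} = \bigO(R^{-k})$ and, crucially, $\phi_R'$ supported where $\phi_R$ is bounded away from both $0$ and $1$ — is used, exactly as in the treatment of $\psi_h$ in Section~\ref{S:6}; the quadrilinear/trace-ideal machinery is not needed here because we only require the crude duality bound \eqref{rA-err-Tight} rather than spacetime estimates.
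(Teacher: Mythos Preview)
Your proposal is correct and follows essentially the same route as the paper: split $\rho=\rho\sbrack2+\rho\sbrack{\geq4}$, identify the main quadratic term as $2\vk\|\phi_R q\|_{H^{-1}_\vk}^2$ modulo commutator errors coming from $[\phi_R,(2\vk\mp\p)^{-1}]$, integrate using Lemma~\ref{lem:EquivNorm}, bound the quartic remainder via \eqref{rA-err-Tight} and duality, and absorb the $\delta^2\|\phi_R q\|_{H^s}^2$ term for small $\delta$.  Two cosmetic slips to clean up: your stated commutator bound $O(\vk^{-2}\|\phi_R q\|_{H^{-1}_\vk}\|q\|_{H^{-1}_\vk})$ is missing the $R^{-1}$ factor that you then (correctly) produce after integration, and ``converge at $\vk=1$'' should read ``remain bounded as $\kappa\to\infty$'' since $-2s-2<-1$ governs convergence at the \emph{upper} limit.
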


\begin{proof}
As in the proof of Lemma~\ref{lem:Currents-quad}, we write
\[
\rho\sbrack{2} = \bR[q,r] = \tfrac12\big(q\cdot\tfrac r{2\vk + \p} + \tfrac q{2\vk - \p}\cdot r\big)
\]
and compute that
\[
\Re \int \bR[\phi_R q,\phi_R r]\,dx = \pm 2\vk\|\phi_Rq\|_{H^{-1}_\vk}^2.
\]
Applying Lemma~\ref{lem:EquivNorm} we then obtain
\[
\|\phi_Rq\|_{H^s}^2\approx \int_1^\kappa \vk^{2s+1}\Big(\pm \Re \int \bR[\phi_R q,\phi_R r]\,dx\Big)\,\tfrac{d\vk}{\vk} + \bigO\Big(\|q\|_{H^s_\kappa}^2\Big).
\]

It remains to bound the contribution of the difference
\begin{align*}
\int \rho\,\phi_R^2\,dx - \int \bR[\phi_R q,\phi_R r]\,dx &= \int \Big(\bR[q,r]\,\phi_R^2 - \bR[\phi_R q,\phi_R r]\Big)\,dx\\
&\quad  + \int \Big(q\cdot \big(\tfrac{g_{21}}{2 + \vr}\big)\sbrack{\geq 3} - r\cdot \big(\tfrac{g_{12}}{2 + \vr}\big)\sbrack{\geq 3}\Big)\,\phi_R^2\,dx.
\end{align*}
For the first term we bound
\begin{align*}
\left|\int \Big(\bR[q,r]\,\phi_R^2 - \bR[\phi_R q,\phi_R r]\Big)\,dx\right|
&\lesssim \|\phi_R q\|_{H_\vk^s} \bigl\| \bigl[ \phi_R, \tfrac1{2\vk-\p}\bigr]q\bigr\|_{H_\vk^{-s}}\\
&\lesssim \vk^{-(1+2s)} (\vk R)^{-1}\|\phi_R q\|_{H^s_\vk}\|q\|_{H^s_\vk}.
\end{align*}
For the second term we apply the estimate \eqref{rA-err-Tight} and Young's inequality to bound
\begin{align*}
\left|\int q\, \big(\tfrac{g_{21}}{2 + \vr}\big)\sbrack{\geq 3}\,\phi_R^2\,dx\right| &\lesssim \vk^{-(2s+1)}\|\phi_R q\|_{H^s_\vk}\|\phi_R\big(\tfrac{g_{21}}{2 + \vr}\big)\sbrack{\geq 3}\|_{H^{s+1}_\vk}\\
&\lesssim \vk^{-2(2s+1)}\delta^2\|\phi_Rq\|_{H^s_\vk}^2 + \vk^{-2(2s+1)}(\vk R)^{-2}\delta^2\|q\|_{H^s_\vk}^2.
\end{align*}
As a consequence, we may integrate to obtain
\begin{align*}
&\int_1^\kappa\vk^{2s+1}\left|\int \rho\,\phi_R^2\,dx - \Re \int \bR[\phi_R q,\phi_R r]\,dx\right|\,\tfrac{d\vk}\vk\\
&\qquad\qquad\qquad\lesssim R^{-1}\|\phi_Rq\|_{H^s}\|q\|_{H^s} + \delta^2\|\phi_Rq\|_{H^s}^2 + \delta^2R^{-2}\|q\|^2_{H^s},
\end{align*}
from which we derive the estimate \eqref{ReRho} by taking $\delta$ sufficiently small.
\end{proof}

We now arrive at the center-piece of this section:

\begin{prop}[Tightness of the flows]\label{prop:Tightness}
For \(\delta>0\) sufficiently small, the following holds:  If \(Q\subset \BdS\) is tight and equicontinuous in \(H^s\) then
\[
\Big\{q(t) = e^{tJ\nabla H_\star}q:q\in Q,\;t\in [-1,1]\Big\}\quad\text{is tight in $H^s$}.
\]
Here \(\star=\NLS,\mKdV\).
\end{prop}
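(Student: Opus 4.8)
The plan is to control the time evolution of the spatially localized energy $\|\phi_R q(t)\|_{H^s}^2$ by integrating the microscopic conservation law \eqref{MicroscopicA} for $\rho(\vk)$ against $\phi_R^2$ in $x$ and $\vk$, exactly as in the local smoothing proofs but now tracking the spatial cut-off rather than a fixed cut-off $\psi_h$. Concretely, I would fix $\star=\NLS$ or $\mKdV$, take $q(0)\in Q$, write $q(t)$ for the solution, and apply Lemma~\ref{L:ReRho} at two times: at $t$ and at $0$. Subtracting, the left-hand sides give $\|\phi_R q(t)\|_{H^s}^2 - \|\phi_R q(0)\|_{H^s}^2$ up to errors that are $O(R^{-2}\|q\|_{H^s}^2+\|q\|_{H^s_\kappa}^2)$, while on the right we get
\[
\pm\int_1^\kappa \vk^{2s+1}\,\Re\!\int\bigl[\rho(t,x;\vk)-\rho(0,x;\vk)\bigr]\phi_R^2(x)\,dx\,\tfrac{d\vk}\vk .
\]
Here the $\|\cdot\|_{H^s_\kappa}$ term is handled by equicontinuity (Proposition~\ref{prop:Equicontinuity}): since $Q^\star$ is equicontinuous, $\sup_{q\in Q}\sup_{|t|\le 1}\|q(t)\|_{H^s_\kappa}\to 0$ as $\kappa\to\infty$, so we may choose $\kappa$ large to make this term as small as desired, uniformly in $q\in Q$, $|t|\le1$, $R\ge1$.

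Next I would estimate the main term using the conservation law. By \eqref{MicroscopicA}, $\rho(t;\vk)-\rho(0;\vk) = -\int_0^t \partial_x j_\star(\tau;\vk)\,d\tau$, so after an integration by parts
\[
\Re\!\int\bigl[\rho(t;\vk)-\rho(0;\vk)\bigr]\phi_R^2\,dx = \Re\int_0^t\!\!\int j_\star(\tau,x;\vk)\,(\phi_R^2)'(x)\,dx\,d\tau .
\]
Since $(\phi_R^2)' = \tfrac 2R \phi_R \phi'(\cdot/R)$ and $\phi'(\cdot/R)$ is a fixed Schwartz-type bump of width $R$, this carries a gain of $R^{-1}$ (and the derivative lands harmlessly). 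The currents $j_{\NLS}(\vk)$ and $j_{\mKdV}(\vk)$ from Corollary~\ref{C:microscopic} decompose into quadratic parts plus the higher-order tails $j_\star\sbrack{\ge4}$. For the quadratic parts I would use the explicit paraproduct expressions from the proof of Lemma~\ref{lem:Currents-quad} together with the commutator estimates of Lemma~\ref{L:bounds}, paired against the $R^{-1}$-small multiplier $(\phi_R^2)'$, to get a bound of the form $R^{-1}(\|\phi_Rq\|_{H^s}+\|q\|_{H^s})^2$ integrated over $\vk\in[1,\kappa]$, which after performing the $\vk$-integral gives something like $\kappa^{C}R^{-1}\|q\|_{H^s}^2$ plus $\delta\|\phi_Rq\|_{L^\infty_tH^s}^2$-type terms that can be absorbed. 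For the higher-order tails $j_\star\sbrack{\ge 4}$, I would reuse the structure of the error estimates of Lemma~\ref{lem:Currents-err} and the tightness estimate \eqref{rA-err-Tight}: these terms are already gaining powers of $\vk^{-(2s+1)}$, so a crude bound pairing them with $\phi_R^2$ and extracting one factor of $\|\phi_Rq\|_{H^s_\vk}$ (using \eqref{rA-err-Tight} for $(\tfrac{g_{12}}{2+\vr})\sbrack{\ge3}$) yields $\delta^2\|\phi_Rq\|_{H^s}^2$ plus $\delta^2(R^{-2}+\text{(small)})\|q\|_{H^s}^2$; the $\delta^2\|\phi_Rq\|_{L^\infty_tH^s}^2$ piece is absorbed into the left-hand side after taking a supremum over $|t|\le1$.

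Assembling these, and writing $N(R) := \sup_{q\in Q}\sup_{|t|\le1}\|\phi_R q(t)\|_{H^s}$, I would arrive at an inequality of the schematic form
\[
N(R)^2 \;\lesssim\; \sup_{q\in Q}\|\phi_R q(0)\|_{H^s}^2 \;+\; C_\kappa\bigl(R^{-1}+R^{-2}\bigr)\delta^2 \;+\; \sup_{q\in Q}\|q\|_{H^s_\kappa}^2 \;+\; \delta^2 N(R)^2 ,
\]
uniformly in $R\ge1$, $\kappa\ge1$. Choosing $\delta$ small absorbs the last term. Now given $\eps>0$: first pick $\kappa$ (via equicontinuity of $Q^\star$) so that the $\|q\|_{H^s_\kappa}^2$ term is $<\eps$; then pick $R_0$ large so that $C_\kappa(R^{-1}+R^{-2})<\eps$ and (using tightness of $Q$ itself) $\sup_{q\in Q}\|\phi_R q(0)\|_{H^s}^2<\eps$ for $R\ge R_0$. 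This gives $N(R)^2\lesssim \eps$ for all $R\ge R_0$, i.e. $\{q(t):q\in Q,\ |t|\le1\}$ is tight. The main obstacle I anticipate is the bookkeeping in the quadratic-current estimate: one must verify that pairing the localized currents against $(\phi_R^2)'$ genuinely produces an $R^{-1}$ gain with only $\kappa$-polynomial (not $\kappa$-exponential) loss when integrating in $\vk$ up to $\kappa$ — this requires using the $H^{-1}_\vk$-type smallness of the commutators in Lemma~\ref{L:bounds} carefully, in the same spirit as the derivation of \eqref{NLS-LS-HFID} and \eqref{mKdV-LS-HFID}, rather than naive $\vk$-uniform bounds. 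The higher-order terms, by contrast, should be routine given \eqref{rA-err-Tight} and the machinery of Section~\ref{S:6}.
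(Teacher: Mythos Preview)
Your overall framework matches the paper's: apply Lemma~\ref{L:ReRho}, use the conservation law to convert $\int[\rho(t)-\rho(0)]\phi_R^2\,dx$ into the time-integral of $\int j_\star\,(\phi_R^2)'\,dx$, and conclude by sending $\kappa,R\to\infty$. The gap lies in how you propose to estimate the current.

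You claim the quadratic part $\int j_\star\sbrack{2}(\vk)\,(\phi_R^2)'\,dx$ can be bounded by $R^{-1}\bigl(\|\phi_R q\|_{H^s}+\|q\|_{H^s}\bigr)^2$ using only commutator estimates and $L^\infty_t H^s$ control. This is not achievable: the current carries one derivative more than $\rho$ (schematically $q'\cdot\tfrac{r}{2\vk+\p}$), and for $s<0$ the bilinear form $(q,r)\mapsto\int q'\,\tfrac{r}{2\vk+\p}\,(\phi_R^2)'\,dx$ is unbounded on $H^s\times H^s$ --- test with $q=r$ spatially localized on the support of $(\phi_R^2)'$ at frequency $N\gg\vk$ and the ratio to $\|q\|_{H^s}^2$ grows like $N^{-2s}$. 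No commutator identity rescues this; the derivative must be tamed by local smoothing in time. (Your $R^{-1}$ is also too optimistic: the operative norm is $\|(\phi_R^2)'\|_{H^1}\sim R^{-1/2}$, not $\|(\phi_R^2)'\|_{L^\infty}\sim R^{-1}$.)

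The paper's argument differs from your $j\sbrack{2}/j\sbrack{\ge4}$ split in two essential ways. First, the \emph{full} current is rewritten via integration by parts and \eqref{rho-ID} into the form $q\cdot\bigl((2\vk\mp\p)\tfrac{g_{21}}{2+\vr}-\tfrac12 r\bigr)$ plus its conjugate and a benign $\log[2+\vr]$ term, so that no derivative sits on $q$. Second, one then decomposes $q=\tfrac{4\kappa^2q}{4\kappa^2-\p^2}-\tfrac{\p^2q}{4\kappa^2-\p^2}$ relative to the fixed scale $\kappa$ (not $\vk$). The low-frequency piece is placed in $H^{-s}$ at cost $\kappa^{-2s}$ and estimated in $L^\infty_t$ against $\|(\phi_R^2)'\|_{H^1}\lesssim R^{-1/2}$. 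The high-frequency piece is the crux: one inserts the partition \eqref{CPUD}, puts $\tfrac{\p^2(\psi_h^6q)}{4\kappa^2-\p^2}$ in $L^\infty_h L^2_t$ via the high-frequency local smoothing estimate \eqref{NLS-LS-HF} (resp.\ \eqref{mKdV-LS-HF}), and the remaining factor in $L^1_h L^2_t$ via \eqref{easy2} and \eqref{ET LS}. This contributes $\kappa^{-(2s+1)}\|q(0)\|_{H^s_\kappa}\|q(0)\|_{H^s}$ plus $\kappa$-negative-power terms, which carry \emph{no} $R$-decay but vanish as $\kappa\to\infty$ via equicontinuity. Thus the architecture is the opposite of what you anticipated: Lemma~\ref{lem:Currents-err} is not invoked at all, while the ``leading'' part of the current is precisely where the local smoothing input of Section~\ref{S:6} is indispensable.
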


We will prove this result for each of the two flows separately.  One element common to both is the following: For \(\sigma = s+\frac12\) or \(\sigma = s+1\), we have
\begin{align}
\|\psi_h^6(\phi_R^2)' F\|_{L_h^1L_t^2H^\sigma}&\lesssim \|\psi_h^3(\phi_R^2)'\|_{L^1_hH^1}\|\psi_h^3F\|_{L^\infty_hL^2_tH^\sigma} \lesssim \|F\|_{X^\sigma}. \label{easy2}
\end{align}

\begin{proof}[Proof of Proposition~\ref{prop:Tightness} for \eqref{NLS}]
Taking \(t\in [-1,1]\) and \(R\geq 1\), we multiply the equation \eqref{MicroscopicA} by \(\phi_R^2\), take the real part, and integrate by parts to obtain
\eq{Im-rho-NLS}{
\Re\int \bigl[\rho(t) - \rho(0)\bigr]\, \phi_R^2\,dx = \Re\int_0^t \int j_{\NLS}\,(\phi_R^2)' \,dx\,d\tau.
}
Choosing \(\kappa\geq 1\) and applying the estimate \eqref{ReRho} and the a priori estimate \eqref{APBound}, we obtain
\begin{align*}
\|\phi_R q\|_{L^\infty_tH^s}^2 &\lesssim \|\phi_R q(0)\|_{H^s}^2 + R^{-2}\|q(0)\|_{H^s}^2 + \|q(0)\|_{H^s_\kappa}^2\\
&\quad + \int_1^\kappa \vk^{2s+1} \biggl\| \int j_{\NLS}\,(\phi_R^2)' \,dx \biggr\|_{L^1_t}\,\tfrac{d\vk}\vk.
\end{align*}

Integrating by parts and using \eqref{rho-ID} we may write
\begin{align*}
\int j_{\NLS}\,(\phi_R^2)'\,dx  &= -i \int q \,\Big( (2\vk - \p)\tfrac{g_{21}}{2 + \vr} - \tfrac12r\Big)\,(\phi_R^2)'\,dx \\
&\quad + i \int r \,\Big( (2\vk + \p)\tfrac{g_{12}}{2 + \vr} + \tfrac12q\Big)\,(\phi_R^2)'\,dx\\
&\quad - \tfrac{i}{2} \int \log[2 + \vr] \,(\phi_R^2)'''\,dx.
\end{align*}

For the final term we may apply \eqref{rho-Linfty} and \eqref{APBound} to obtain
\begin{align*}
\biggl\| \int \log[2 + \vr] \,(\phi_R^2)'''\,dx\biggr\|_{L^1_t}  \lesssim R^{-2} .
\end{align*}

The remaining two terms are treated identically, so it suffices to consider the first.  We decompose
\begin{align}\label{decomp 3:16}
q=\tfrac{4\kappa^2q}{4\kappa^2-\p^2} - \tfrac{\p^2q}{4\kappa^2-\p^2}
\end{align}
and estimate the contribution of the low frequency term via
\begin{align*}
&\biggl\| \int\tfrac{4\kappa^2q}{4\kappa^2-\p^2} \,\Big( (2\vk - \p)\tfrac{g_{21}}{2 + \vr} -\tfrac12 r\Big)\,(\phi_R^2)'\,dx\biggr\|_{L^1_t}\\
&\qquad\qquad\lesssim \bigl\|\tfrac{\kappa^2q}{4\kappa^2-\p^2}\bigr\|_{L_t^\infty H^{-s}} \bigl\|(\phi_R^2)'\big( (2\vk - \p)\tfrac{g_{21}}{2 + \vr} -\tfrac12 r\big)\bigr\|_{L_t^\infty H^{s}}\\
&\qquad\qquad\lesssim \kappa^{-2s}\|q\|_{L_t^\infty H^{s}} \bigl\|(\phi_R^2)'\bigr \|_{H^1} \Big(\|(2\vk - \p)\tfrac{g_{21}}{2 + \vr}\|_{L_t^\infty H^s} + \|q\|_{L_t^\infty H^s}\Big)\\
&\qquad\qquad\lesssim R^{-\frac12}\kappa^{-2s}\|q(0)\|_{H^s}^2.
\end{align*}

To continue, we use \eqref{CPUD} to express the high frequency term via
\begin{align*}
&\int\tfrac{\p^2q}{4\kappa^2-\p^2} \,\Big( (2\vk - \p)\tfrac{g_{21}}{2 + \vr} - \tfrac 12r\Big)\,(\phi_R^2)'\,dx \\
&\qquad = \tfrac{7}{512} \iint \Bigl(\bigl[\psi_h^6, \tfrac{\p^2}{4\kappa^2-\p^2}\bigr]q + \tfrac{\p^2(\psi_h^6q)}{4\kappa^2-\p^2}\Bigr) \,\psi_h^6\,\Big( (2\vk - \p)\tfrac{g_{21}}{2 + \vr} -\tfrac12 r\Big)\,(\phi_R^2)'\,dx\,dh.
\end{align*}
For the commutator term, we apply the local smoothing estimates \eqref{ET LS} and \eqref{NLS-LS},  together with \eqref{1'} and \eqref{easy2} to bound
\begin{align*}
&\biggl\| \int \bigl[\psi_h^6, \tfrac{\p^2}{4\kappa^2-\p^2}\bigr]q \,\psi_h^6\,\Big( (2\vk - \p)\tfrac{g_{21}}{2 + \vr} -\tfrac12 r\Big)\,(\phi_R^2)'\,dx\biggr\|_{L^1_{t,h}}\\
&\qquad \lesssim \bigl\|\bigl[\psi_h^6, \tfrac{\p^2}{4\kappa^2-\p^2}\bigr]q\bigr\|_{L^\infty_{t,h} H^{-(s+\frac12)}}\bigl\|\psi_h^6(\phi_R^2)'\big( (2\vk - \p)\tfrac{g_{21}}{2 + \vr} -\tfrac12 r\big)\bigr\|_{L^1_hL_t^2H^{s+\frac12}}\\
&\qquad \lesssim \kappa^{-(2s+\frac32)} \|q(0)\|_{H^s}\Big(\bigl\|(2\vk - \p)\tfrac{g_{21}}{2 + \vr} \bigr\|_{X^{s+\frac12}} + \|q\|_{X^{s+\frac12}}\Big)\\
&\qquad \lesssim \kappa^{-(2s+\frac32)}\|q(0)\|_{H^s}^2 .
\end{align*}
For the remaining term, we use \eqref{ET LS}, \eqref{NLS-LS}, \eqref{NLS-LS-HF}, and \eqref{easy2}, as follows:
\begin{align*}
&\biggl\| \int \tfrac{\p^2(\psi_h^6q)}{4\kappa^2-\p^2} \,\psi_h^6\,\Big( (2\vk - \p)\tfrac{g_{21}}{2 + \vr} -\tfrac12 r\Big)\,(\phi_R^2)'\,dx \biggr\|_{L^1_{t,h}}\\
&\qquad \lesssim \bigl\|\tfrac{\p^2(\psi_h^6q)}{4\kappa^2-\p^2}\bigr\|_{L_h^\infty L^2_t H^{-(s+\frac12)}}\bigl\|\psi_h^6(\phi_R^2)'\big( (2\vk - \p)\tfrac{g_{21}}{2 + \vr} -\tfrac12 r\big)\bigr\|_{L^1_hL_t^2H^{s+\frac12}}\\
&\qquad \lesssim \kappa^{-(2s+1)} \|(\psi_h^6q)'\|_{L_h^\infty L^2_t H_\kappa^{s-\frac12}}\Big(\bigl\|(2\vk - \p)\tfrac{g_{21}}{2 + \vr} \bigr\|_{X^{s+\frac12}} + \|q\|_{X^{s+\frac12}}\Big)\\
&\qquad \lesssim \kappa^{-(2s+1)}\|q(0)\|_{H^s_\kappa}\|q(0)\|_{H^s} +\kappa^{-\frac32(2s+1)} \delta\|q(0)\|_{H^s}^2.
\end{align*}

Combining these bounds we see that for any \(\kappa\geq 1\) we have the estimate
\begin{align*}
\|\phi_R q\|_{L^\infty_t H^s}^2 &\lesssim \|\phi_R q(0)\|_{H^s}^2 +\|q(0)\|_{H^s_\kappa}^2 + \delta \|q(0)\|_{H^s_\kappa} \\
&\quad + \kappa^{2s+1} R^{-2} + \bigl( \kappa R^{-\frac12} + \kappa^{-(s+\frac12)}\bigr)\delta^2.
\end{align*}
Taking the supremum over \(q(0)\in Q\) and using that \(Q\) is tight we obtain
\begin{align*}
\limsup_{R\rightarrow\infty}\sup_{q(0)\in Q}\|\phi_Rq\|_{L^\infty_t H^s}^2
		&\lesssim \sup\limits_{q(0)\in Q} \delta \|q(0)\|_{H^s_\kappa}+ \kappa^{-(s+\frac12)} \delta^2.
\end{align*}
Using that \(Q\) is equicontinuous, the result follows by sending \(\kappa\rightarrow \infty\).
\end{proof}

\begin{proof}[Proof of Proposition~\ref{prop:Tightness} for \eqref{mKdV}]
Mimicking the argument given in the \eqref{NLS} case reduces matters to proving a suitable $L^1_t$ estimate for
\begin{align}\label{6:j_mKdV}
\int & j_{\mKdV}\,(\phi_R^2)'\,dx \\
&= \int q'\,\Big( (2\vk - \p)\tfrac{g_{21}}{2 + \vr}  - r\Big)\,(\phi_R^2)'\,dx
		+ \int r' \,\Big( (2\vk + \p)\tfrac{g_{12}}{2 + \vr} + q\Big)\,(\phi_R^2)'\,dx \notag\\
&\quad +  \int q\,\big(\tfrac{g_{21}}{2 + \vr}\big)' (\phi_R^2)''\,dx
		- \int r \,\big(\tfrac{g_{12}}{2 + \vr}\big)' (\phi_R^2)''\,dx
		- 2\vk \int qr \,(\phi_R^2)'\,dx \notag\\
&\quad -2  \int \rho \,qr \,(\phi_R^2)'\,dx  + \int \rho \,(4\vk^2 + \p^2)(\phi_R^2)'\,dx .\notag
\end{align}

From Corollary~\ref{C:ET}, \eqref{X-Product-2}, \eqref{APBound}, and \eqref{mKdV-LS} we have
\begin{align}\label{space-time-rho}
\| \rho \|_{L^\infty_t H^s} + \| \rho \|_{X^{s+1}} \lesssim \|q(0)\|_{H^s}^2 .
\end{align}
Thus, we may estimate the final term in \eqref{6:j_mKdV} as follows:
\begin{align*}
\biggl\| \int \rho \,(4\vk^2 + \p^2)(\phi_R^2)'\,dx\biggr\|_{L^1_t} 
	\lesssim\big(\vk^2 R^{-\frac12} + R^{-\frac52}\big)\|q(0)\|_{H^s}^2.
\end{align*}

To estimate the contribution of the remaining terms, we rely on the decomposition \eqref{decomp 3:16}.  We first bound the low-frequency contribution to each of the terms in \eqref{6:j_mKdV}, before treating the high-frequency terms.  From \eqref{ET Sob} and \eqref{APBound}, we have
\begin{align*}
&\biggl\|\int \tfrac{4\kappa^2q'}{4\kappa^2-\p^2}\,\Big( (2\vk - \p)\tfrac{g_{21}}{2 + \vr}- r\Big)\,(\phi_R^2)'\,dx\biggr\|_{L^1_t}\\
&\qquad\lesssim \bigl\|\tfrac{\kappa^2q'}{4\kappa^2-\p^2}\bigr\|_{L_t^\infty H^{-s}}\bigl\|(\phi_R^2)'\bigr\|_{H^1} \bigl\| (2\vk - \p)\tfrac{g_{21}}{2 + \vr}  - r\bigr\|_{L_t^\infty H^{s}} \lesssim \kappa^{1-2s} R^{-\frac12}\|q(0)\|_{H^s}^2.
\end{align*}
Arguing similarly, we also obtain
\begin{align*}
\biggl\| \int  \tfrac{4\kappa^2q}{4\kappa^2-\p^2}\,\big(\tfrac{g_{21}}{2 + \vr}\big)' \,(\phi_R^2)''\,dx\biggr\|_{L^1_t}&\lesssim \kappa^{-2s} R^{-\frac32}\|q(0)\|_{H^s}^2,\\
\vk\biggl\| \int  \tfrac{4\kappa^2q}{4\kappa^2-\p^2}\,r \,(\phi_R^2)'\,dx\biggr\|_{L^1_t}&\lesssim \vk\kappa^{-2s} R^{-\frac12}\|q(0)\|_{H^s}^2.
\end{align*}

For the penultimate term in \eqref{6:j_mKdV}, we decompose both $q$ and $r$ according to \eqref{decomp 3:16}:
\begin{align*}
\biggl\| \int  \tfrac{4\kappa^2q}{4\kappa^2-\p^2}\, \tfrac{4\kappa^2 r}{4\kappa^2-\p^2}\, \rho (\phi_R^2)'\,dx\biggr\|_{L^1_t}
		&\lesssim \bigl\|\tfrac{\kappa^2 q}{4\kappa^2-\p^2}\bigr\|_{L^\infty_t H^{s+1}}^2 \bigl\|\rho \bigr\|_{L^\infty_t H^s} \bigl\|(\phi_R^2)'  \bigr\|_{H^1} \\
&\lesssim \kappa^2 R^{-\frac12} \|q(0)\|_{H^s}^4.
\end{align*}

To estimate the contribution of the high-frequency term in the decomposition \eqref{decomp 3:16}, we use \eqref{CPUD}.  For example, we write
\begin{align*}
& \biggl\| \int\tfrac{\p^3q}{4\kappa^2-\p^2} \,\Big( (2\vk - \p)\tfrac{g_{21}}{2 + \vr} - \tfrac 12r\Big)\,(\phi_R^2)'\,dx\biggr\|_{L^1_t} \\
&\qquad \lesssim  \biggl\| \int \Bigl(\bigl[\psi_h^6, \tfrac{\p^3}{4\kappa^2-\p^2}\bigr]q + \tfrac{\p^3(\psi_h^6q)}{4\kappa^2-\p^2}\Bigr) \,\psi_h^6\,\Big( (2\vk - \p)\tfrac{g_{21}}{2 + \vr} -\tfrac12 r\Big)\,(\phi_R^2)'\,dx\biggr\|_{L^1_{t,h}}.
\end{align*}
Using \eqref{ET LS}, \eqref{mKdV-LS}, \eqref{easy2}, and \eqref{1'}, we get
\begin{align*}
&\biggl\| \int \bigl[\psi_h^6, \tfrac{\p^3}{4\kappa^2-\p^2}\bigr]q\,\psi_h^6\,\Big( (2\vk - \p)\tfrac{g_{21}}{2 + \vr}  - r\Big)\,(\phi_R^2)'\,dx\biggr\|_{L^1_{t,h}} \\
&\qquad\lesssim  \bigl\|\bigl[\psi_h^6, \tfrac{\p^3}{4\kappa^2-\p^2}\bigr]q\bigr\|_{L_h^\infty L_t^\infty H^{-(s+1)}}\bigl\|\psi_h^6(\phi_R^2)'\big( (2\vk - \p)\tfrac{g_{21}}{2 + \vr} -\tfrac12 r\big)\bigr\|_{L_h^1 L_t^2 H^{s+1}}\\
& \qquad\lesssim \kappa^{-(2s+1)}\|q(0)\|_{H^s}\|q(0)\|_{H^s_\kappa}.
\end{align*}
Using also \eqref{mKdV-LS-HF}, we estimate
\begin{align*}
&\left| \int\int_0^t\int\tfrac{\p^3(\psi_h^6q)}{4\kappa^2-\p^2}\,\psi_h^6\,\Big( (2\vk - \p)\tfrac{g_{21}}{2 + \vr}  - r\Big)\,(\phi_R^2)'\,dx\,d\tau\,dh\right|\\
&\qquad\lesssim \bigl\| \tfrac{\p^3(\psi_h^6q)}{4\kappa^2-\p^2}\bigr\|_{L_h^\infty L_t^2 H^{-(s+1)}} \bigl\| \psi_h^6(\phi_R^2)'\big( (2\vk - \p)\tfrac{g_{21}}{2 + \vr}  - r\big)\bigr\|_{L_h^1 L_t^2 H^{s+1}}\\
&\qquad \lesssim \kappa^{-(2s+1)} \|(\psi_h^6q)'\|_{L_h^\infty L^2_t H_\kappa^{s}} \|q(0)\|_{H^s} \\
& \qquad\lesssim \kappa^{-(2s+1)}\|q(0)\|_{H^s_\kappa}\|q(0)\|_{H^s}+ \kappa^{-\frac32(1+2s)} \log^2(2\kappa) \|q(0)\|_{H^s}^3.
\end{align*}
Arguing similarly, we also obtain
\begin{align*}
&\biggl\| \int \bigl[\psi_h^6, \tfrac{\p^2}{4\kappa^2-\p^2}\bigr]q\,\psi_h^6\,\big(\tfrac{g_{21}}{2 + \vr}\big)' \,(\phi_R^2)''\,dx\biggr\|_{L^1_{t,h}}
	\lesssim R^{-1}\kappa^{-2(1+s)}\|q(0)\|_{H^s}\|q(0)\|_{H^s_\kappa},  \\
&\vk\biggl\| \int \bigl[\psi_h^6, \tfrac{\p^2}{4\kappa^2-\p^2}\bigr]q \,\psi_h^6\,r(\phi_R^2)'\,dx\biggr\|_{L^1_{t,h}}
	\lesssim \vk\kappa^{-2(1+s)}\|q(0)\|_{H^s}\|q(0)\|_{H^s_\kappa}
\end{align*}
and
\begin{align*}
&\biggl\| \int  \tfrac{\p^2(\psi_h^6q)}{4\kappa^2-\p^2}\,\psi_h^6\,\big(\tfrac{g_{21}}{2 + \vr}\big)' \,(\phi_R^2)''\,dx \biggr\|_{L^1_{t,h}}\\
&\qquad\qquad \lesssim R^{-1}\kappa^{-2(1+s)}\|q(0)\|_{H^s_\kappa}\|q(0)\|_{H^s}+ R^{-1}\kappa^{-(\frac52+3s)}\log^2(2\kappa)\|q(0)\|_{H^s}^3,\\
&\vk\biggl\| \int \int_0^t\int  \tfrac{\p^2(\psi_h^6q)}{4\kappa^2-\p^2} \,\psi_h^6\,r(\phi_R^2)'\,dx\biggr\|_{L^1_{t,h}}\\
&\qquad\qquad\lesssim \vk\kappa^{-2(1+s)}\|q(0)\|_{H^s_\kappa}\|q(0)\|_{H^s}+\vk\kappa^{-(\frac52+3s)}\log^2(2\kappa)\|q(0)\|_{H^s}^3.
\end{align*}

This leaves us to handle the high-frequency contribution to the penultimate term in \eqref{6:j_mKdV}, which involves the combination
\begin{align*}
\tfrac{4\kappa^2q}{4\kappa^2-\p^2} \cdot \Bigl(\bigl[\psi_h^6, \tfrac{\p^2}{4\kappa^2-\p^2}\bigr]r + \tfrac{\p^2(\psi_h^6 r)}{4\kappa^2-\p^2}\Bigr) \psi_h^6
	+ \Bigl(\bigl[\psi_h^6, \tfrac{\p^2}{4\kappa^2-\p^2}\bigr]q + \tfrac{\p^2(\psi_h^6q)}{4\kappa^2-\p^2}\Bigr) \cdot r\,\psi_h^6 .
\end{align*}
We illustrate the estimation of these contributions using the latter summand.  Using \eqref{1'}, \eqref{easy2}, and \eqref{space-time-rho}, we get
\begin{align*}
&\biggl\| \int \bigl[\psi_h^6, \tfrac{\p^2}{4\kappa^2-\p^2}\bigr]q\cdot r\,\psi_h^6\,\rho \,(\phi_R^2)'\,dx\biggr\|_{L^1_{t,h}} \\
&\qquad\qquad  \lesssim  \bigl\| \bigl[\psi_h^6, \tfrac{\p^2}{4\kappa^2-\p^2}\bigr]q\bigr\|_{L^\infty_{t,h} H^{-s}}
	\| r \|_{L^\infty_{t} H^s} \bigl\| \psi_h^6 (\phi_R^2)' \rho \bigr\|_{L^1_h L_t^2 H^{s+1}}\\
&\qquad\qquad  \lesssim \kappa^{-(1+2s)} \|q(0)\|_{H^s_\kappa}\|q(0)\|_{H^s}^3, \\
&\biggl\| \int \tfrac{\p^2(\psi_h^6q)}{4\kappa^2-\p^2}\,r \,\rho\,\psi_h^6 \,(\phi_R^2)'\,dx \biggr\|_{L^1_{t,h}} \\
&\qquad\qquad\lesssim \bigl\| \tfrac{\p^2(\psi_h^6q)}{4\kappa^2-\p^2}\bigr\|_{L_h^\infty L_t^2 H^{-s}} \|q\|_{L_t^\infty H^s} \|\rho\|_{X^{s+1}}\\
&\qquad\qquad\lesssim \kappa^{-(1+2s)}\|q(0)\|_{H^s_\kappa}\|q(0)\|_{H^s}^3+\kappa^{-\frac32(1+2s)}\log^2(2\kappa)\|q(0)\|_{H^s}^5.
\end{align*}

The proof may now be completed exactly as in the NLS case.
\end{proof}

\section{Convergence of the difference flows}\label{S:convg}
Our main goal in this section is to prove the following:
\begin{prop}[Difference flow approximates the identity]\label{prop:diff-flows}
Let $\delta>0$ be sufficiently small and fix \(\star \in\{ \NLS,\mKdV\}\).  Given \(Q\subset \BdS\) that is equicontinuous in \(H^s\) and \(\vk\geq 4\), we have
\[
\psi_h^{12} g_{12}\bigl(\vk;e^{tJ\nabla(H_\star - H_{\star}^{\kappa})}q\bigr)\rightarrow \psi_h^{12}  g_{12}(\vk;q)\text{ in }\Cont([-1,1];H^{s+1})\text{ as }\kappa\rightarrow\infty,
\]
uniformly for \(q\in Q\) and $h\in\R$.
\end{prop}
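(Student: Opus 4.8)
The first step is to reduce the statement from the $g_{12}$ variable to a statement about $q$ itself.  By the diffeomorphism property of the map $q\mapsto g_{12}(\vk)$ (Proposition~\ref{prop:g}), it suffices to prove that $\psi_h^{12}\, q_\kappa(t)\to\psi_h^{12}\, q(t)$ in $\Cont([-1,1];H^s)$ uniformly on $Q$, where $q_\kappa(t):=e^{tJ\nabla H_\star^\diff}q$ and $q$ is the (fixed) initial datum.  Indeed, once this is known, the localized versions of \eqref{g12-Tight} together with the continuity of the map $q\mapsto g_{12}$ in the relevant topology (cf.~Proposition~\ref{prop:g} and its quantitative tails \eqref{g12-LO}) upgrade local convergence in $H^s$ to local convergence of $\psi_h^{12}g_{12}$ in $H^{s+1}$; the point is that $g_{12}\sbrack{1}=-(2\vk-\p)^{-1}q$ responds continuously to $q$, while the higher-order tail is uniformly small by \eqref{g12-err-Tight}.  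So the heart of the matter is the convergence $\psi_h^{12}q_\kappa\to\psi_h^{12}q$.

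\textbf{Main estimate.}  To prove this, I would integrate the difference-flow equation \eqref{NLS-diff} (resp.~\eqref{mKdV-diff}) against a test function and exploit the fact that the difference Hamiltonian is, by construction \eqref{D:HNLSK}--\eqref{D:Hdiff}, a tail of the asymptotic expansion \eqref{A asymptotics}, so that its vector field is built from the quantities $g_{12}(\pm\kappa)$, which are small at high $\kappa$.  Concretely, from \eqref{NLS-diff} we have
\[
i\tfrac{d}{dt}q_\kappa = -q_\kappa''+2q_\kappa^2 r_\kappa - 4\kappa^3\bigl(g_{12}(\kappa)-g_{12}(-\kappa)\bigr) - 4\kappa^2 q_\kappa ,
\]
and by \eqref{biHam2} the combination $-q_\kappa''+2q_\kappa^2 r_\kappa - 4\kappa^3(g_{12}(\kappa)-g_{12}(-\kappa)) - 4\kappa^2 q_\kappa$ is $O(\kappa^{-1})$ in a suitable sense; more precisely one rewrites it exactly (not just asymptotically) using the identities \eqref{g12-ID}--\eqref{g21-ID} and the series tails of Proposition~\ref{prop:g}, exhibiting it as a sum of terms each carrying a negative power of $\kappa$ once tested against $\psi_h^{12}$.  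The estimates we need for these terms are exactly the local smoothing bounds for the difference flow, Proposition~\ref{P:ls NLS diff} (resp.~Proposition~\ref{P:ls mKdV diff}) and their corollaries \ref{C:ls}, \ref{C:ls mKdV}, together with the commutator bounds of Lemma~\ref{L:bounds} used to move the cutoff $\psi_h$ past the operators $(4\kappa^2-\p^2)^{-1}$.  Pairing $i\tfrac{d}{dt}q_\kappa$ against a fixed $H^{-s}$-test function localized by $\psi_h$, integrating in $t\in[-1,1]$, and using Corollary~\ref{C:ls} to absorb the high-frequency content (which is where the extra $\kappa$-parameter in the local-smoothing norm earns its keep), one obtains
\[
\sup_{h}\bigl\| \psi_h^{12}\bigl(q_\kappa(t)-q(0)\bigr) - \text{(already-estimated part)} \bigr\|_{L^\infty_t H^s}\lesssim \kappa^{-\theta}\|q(0)\|_{H^s}^{?}
\]
for some $\theta>0$; combined with equicontinuity of $Q$ (which controls the low-frequency, non-decaying remainder via $\|q(0)\|_{H^s_\kappa}\to 0$), this yields the claimed uniform convergence.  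The localization is essential: without it one would be claiming a result false on the torus, so the proof must route through the local smoothing estimates of Section~\ref{S:6}, which genuinely use the line geometry.

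\textbf{Where the difficulty lies.}  The routine part is the diffeomorphism reduction and the bookkeeping of which term carries which power of $\kappa$; the genuinely delicate point is controlling the \emph{quadratic} part of the difference-flow vector field after localization.  The quadratic terms do not individually decay fast enough; only after the cancellation between $g_{12}(\kappa)$ and $g_{12}(-\kappa)$ (the same mechanism exploited in \eqref{rem Dual cancel NLS}, \eqref{rem Dual cancel mKdV}) and after carefully distributing derivatives via \eqref{del2comm} does one see the decay.  In the mKdV case this is compounded by the need to track the transport term $4\kappa^2 q'$ and its interplay with the cutoff, which is why one must work in the moving frame $(t,x)\mapsto(t,x-4\kappa^2 t)$ as in the proof of Proposition~\ref{prop:kappa-flows} and then transfer back — and this frame change does not commute with $\psi_h$, producing further commutator terms that must be absorbed using Corollary~\ref{C:ls mKdV}.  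I expect that organizing these cancellations cleanly, rather than any single estimate, will be the main obstacle, and that the argument will parallel — but not be reducible to — the error analysis already carried out in Lemma~\ref{lem:Currents-err}.
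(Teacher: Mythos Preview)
Your overall strategy—show the time derivative is small in a weak norm, then use equicontinuity (Proposition~\ref{prop:Equicontinuity}) to upgrade—matches the paper's. But the paper makes the opposite choice of variable: rather than reducing from $g_{12}(\vk)$ to $q$ via the diffeomorphism and then estimating $\tfrac{d}{dt}\psi_h^{12}q_\kappa$, it works directly with the evolution \eqref{g12-NLS-diff} (resp.\ \eqref{g12-mKdV-diff}) of $g_{12}(\vk)$ and proves
\[
\lim_{\kappa\to\infty}\sup_{q\in Q}\sup_{h}\Bigl\| \tfrac{d}{dt}\bigl(\psi_h^{12}g_{12}(\vk;q_\kappa(t))\bigr)\Bigr\|_{L^1_t H^{-4}}=0.
\]
The diffeomorphism is used only once, in the opposite direction: to transfer equicontinuity of $Q^*$ in $H^s$ to equicontinuity of $\{\psi_h^{12}g_{12}(\vk;q):q\in Q^*,h\}$ in $H^{s+1}$, which is what makes the weak-norm argument close. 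This avoids your localized-diffeomorphism step entirely. It also buys a derivative: the leading linear piece of $\tfrac{d}{dt}g_{12}(\vk)$, after the algebraic identity $-g_{12}''+2qrg_{12}+2q^2g_{21}=-4\vk^2g_{12}-[1+\vr](2\vk+\p)q$, becomes $[1+\vr(\vk)]\tfrac{(2\vk+\p)\p^2 q}{4\kappa^2-\p^2}$, whereas in your $q$-based approach the leading term is $\tfrac{\p^4 q}{4\kappa^2-\p^2}$, one derivative worse. The paper then decomposes $\tfrac{d}{dt}g_{12}(\vk)$ into eleven (NLS) or sixteen (mKdV) error terms and dispatches each using the difference-flow local smoothing bounds (Corollaries~\ref{C:ls},~\ref{C:ls mKdV}), the commutator estimates of Lemma~\ref{L:bounds}, and the cancellation estimates \eqref{rem Dual NLS}, \eqref{rem Dual cancel NLS}, \eqref{rem Dual cancel mKdV}, \eqref{rem Dual cancel mKdV 2}, \eqref{rho rem NLS}, \eqref{rho rem mKdV}, \eqref{rem Dual cancel mKdV 3} from Section~\ref{S:6}.

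One correction: your concern about the transport term $4\kappa^2 q'$ in the mKdV case and the need for a moving frame is unfounded. That term cancels exactly against the linear parts of $8\kappa^4(g_{12}(\kappa)+g_{12}(-\kappa))$ and $q'''$, leaving $\tfrac{\p^5}{4\kappa^2-\p^2}q$ (or, in the paper's $g_{12}(\vk)$ formulation, the analogous $[1+\vr(\vk)]\tfrac{(2\vk+\p)\p^3 q}{4\kappa^2-\p^2}$ which appears as $\err_3$); no frame change is needed, and the paper does not perform one.
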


\begin{proof}[Proof for \eqref{NLS-diff}]
Applying Proposition~\ref{prop:Equicontinuity} we see that
\[
Q^* = \{e^{tJ\nabla(H_{\NLS} - H_{\NLS}^{\kappa})}q:q\in Q,\, t\in \R\}
\]
is equicontinuous in \(H^s\). By Proposition~\ref{prop:g}, for any $\vk\geq 1$, the map \(q\mapsto g_{12}(\vk)\) is a diffeomorphism from \(\Bd\rightarrow H^{s+1}\); moreover, this map commutes with spatial translations.  Thus the set
\[
\{g_{12}(\vk;q):q\in Q^*\} \qtq{and so also} \{\psi_h^{12}g_{12}(\vk;q):q\in Q^*, \, h\in \R\}
\]
is equicontinuous in \(H^{s+1}\). As a consequence, it suffices to show that
\eq{DiffFlowGoal}{
\lim\limits_{\kappa\rightarrow\infty}\sup\limits_{q\in Q}\sup\limits_{h\in \R}\left\| i\frac d{dt}\left(\psi_h^{12} g_{12}(\vk;e^{tJ\nabla(H_\NLS-H_{\NLS}^{\kappa})}q)\right)\right\|_{L^1_tH^{-4}} = 0.
}

Using the identities \eqref{g12-ID} for \(g_{12}\) and \eqref{rho-ID} for \(\vr\) we may write
\[
-g_{12}(\vk)'' + 2qrg_{12}(\vk) +   2q^2g_{21}(\vk) = - 4\vk^2g_{12}(\vk) -[1 + \vr(\vk)] \,(2\vk + \p)q.
\]
Thus, we may rewrite \eqref{g12-NLS-diff} as
\[
i\frac d{dt}g_{12}(\vk) = \sum\limits_{j=1}^{11}\err_j,
\]
where we define
\begin{align*}
&\err_1 = \tfrac{4\vk^4}{\kappa^2 - \vk^2}g_{12}(\vk),\qquad \err_2 = \tfrac{4\kappa^2\vk^2}{\kappa^2 - \vk^2}[1 + \vr(\vk)]\tfrac{(2\vk + \p)q}{4\kappa^2 - \p^2},\\
&\err_3 = [1 + \vr(\vk)]\tfrac{(2\vk + \p)\p^2 q}{4\kappa^2-\p^2}, \qquad\err_4=-\tfrac{32\kappa^4\vk^2}{\kappa^2-\vk^2}g_{12}(\vk)\,\tfrac{q}{4\kappa^2-\p^2}\,\tfrac{r}{4\kappa^2 - \p^2},\\
&\err_5 =\tfrac{16\kappa^4\vk}{\kappa^2 - \vk^2}g_{12}(\vk)\Bigl[\tfrac{q}{4\kappa^2 - \p^2}\,\tfrac{\p r}{4\kappa^2 - \p^2} -\tfrac{\p q}{4\kappa^2 - \p^2}\,\tfrac{r}{4\kappa^2 - \p^2}\Bigr],\\
&\err_6= \bigl[\tfrac{8\kappa^4}{\kappa^2 - \vk^2}+16\kappa^2\bigr]g_{12}(\vk) \,\tfrac{\p q}{4\kappa^2 - \p^2}\,\tfrac{\p r}{4\kappa^2 - \p^2},\\
&\err_7 = - 8\kappa^2g_{12}(\vk)\p^2\Bigl[\tfrac{q}{4\kappa^2 - \p^2}\, \tfrac{r}{4\kappa^2 - \p^2}\Bigr],\\
&\err_8 =2g_{12}(\vk)\,\tfrac{\p^2q}{4\kappa^2 - \p^2}\,\tfrac{\p^2r}{4\kappa^2 - \p^2},\\
&\err_9 = \tfrac{2\kappa^3\vk}{\kappa^2-\vk^2}[1+\vr(\vk)]\,     \Bigl[-g_{12}\sbrack{\geq3}(\kappa)+g_{12}\sbrack{\geq3}(-\kappa) \Bigr],\\
&\err_{10} = - \tfrac{2\kappa^4}{\kappa^2 - \vk^2}[1+\vr(\vk)]\, \Bigl[ g_{12}\sbrack{\geq3}(\kappa)+g_{12}\sbrack{\geq3}(-\kappa) \Bigr],\\
&\err_{11} = g_{12}(\vk)\Bigl[\tfrac{2\kappa^3}{\kappa - \vk}\vr(\kappa)\sbrack{\geq 4} + \tfrac{2\kappa^3}{\kappa + \vk}\vr(-\kappa)\sbrack{\geq 4}\Bigr].
\end{align*}

It remains to bound each of the terms \(\err_j\).  We will rely on the a priori estimate \eqref{APBound} and the local smoothing estimate \eqref{NLS-diff-LS}, which yield
\begin{align}\label{apriori}
\normNK{q} = \|q\|_{L^\infty_t H^s}+ \|q\|_{X^{s+\frac12}_\kappa} \lesssim \|q(0)\|_{H^s}.
\end{align}
We will also employ the estimates recorded in Corollary~\ref{C:ls}, as well as the bounds
\begin{align}
\|q\|_{X^{-(s+\frac12)}_\kappa} \lesssim \kappa^{-\frac23(2s+1)}\|q(0)\|_{H^s},\label{X0}
\end{align}
\begin{align}
\|g_{12}(\vk)\|_{X^{s+\frac32}_\kappa} \lesssim\|q(0)\|_{H^s}\qtq{and} \|\vr(\vk)\|_{X^{s+\frac32}_\kappa}\lesssim\|q(0)\|_{H^s}^2,\label{X1}
\end{align}
which follow from \eqref{Reg-Gain}, \eqref{apriori}, \eqref{g12-X}, and \eqref{rho-X-1}.

As $\vk$ is fixed, we allow implicit constants to depend on this parameter. Throughout the proof, we will take $\kappa\geq 2\vk$. When it is convenient to argue by duality, we will write $\phi$ for a generic function in $L^\infty_t H^4$ of unit norm.

\step{Estimate for \(\err_1\).} We apply the estimate \eqref{g12-Hs} to bound
\[
\left\|\psi_h^{12}\err_1\right\|_{L^1_tH^{-4}}\lesssim \kappa^{-2}\|g_{12}(\vk)\|_{L^\infty_tH^{s+1}}\lesssim \kappa^{-2}\|q(0)\|_{H^s}.
\]

\step{Estimate for \(\err_2\).} Similarly, using duality and \eqref{rho-Hs}, we may bound
\begin{align*}
\left\|\psi_h^{12} \err_2\right\|_{L^1_tH^{-4}}
&\lesssim \bigl\|\psi_h^{12}[1+\vr(\vk)]\bigr\|_{L^\infty_tH^{s+1}}\bigl\|\tfrac{(2\vk + \p)q}{4\kappa^2 - \p^2}\bigr\|_{L^\infty_tH^{-(s+1)}}\lesssim\kappa^{-2(1+s)}\|q(0)\|_{H^s}.
\end{align*}

\step{Estimate for \(\err_3\).}  We estimate
\begin{align*}
\bigl\|\psi_h^{12} [1+\vr(\vk)]\tfrac{(2\vk + \p)\p^2 q}{4\kappa^2-\p^2}\bigr\|_{L^1_tH^{-4}}
&\lesssim \bigl\| \psi_h^6[1+\vr(\vk)] \tfrac{(2\vk + \p)\p^2 (\psi_h^6q)}{4\kappa^2-\p^2}\bigr\|_{L^1_tH^{-4}}\\
& \quad + \bigl\|\psi_h^6[1+\vr(\vk)] [\psi_h^6,\tfrac{(2\vk + \p)\p^2 }{4\kappa^2-\p^2}]q\bigr\|_{L^1_tH^{-4}}.
\end{align*}
We will bound both of these terms using duality. Using \eqref{X0} and \eqref{X1}, we get
\begin{align*}
\bigl\| \psi_h^6[1+\vr(\vk)] \tfrac{(2\vk + \p)\p^2 (\psi_h^6q)}{4\kappa^2-\p^2}\bigr\|_{L^1_tH^{-4}}
&\lesssim \sup_\phi \|\phi[1+\vr(\vk)]\|_{X^{s+\frac32}_\kappa} \|q\|_{X^{-(s+\frac12)}_\kappa}\\
&\lesssim \bigl[\kappa^{-1} + \|q(0)\|_{H^s}^2\bigr] \kappa^{-\frac23(2s+1)}\|q(0)\|_{H^s}\\
&\lesssim \kappa^{-\frac23(2s+1)}\|q(0)\|_{H^s}.
\end{align*}
Using instead \eqref{rho-Hs} and \eqref{1'}, we may bound
\begin{align*}
&\bigl\|\psi_h^6[1+\vr(\vk)] [\psi_h^6,\tfrac{(2\vk + \p)\p^2 }{4\kappa^2-\p^2}]q\bigr\|_{L^1_tH^{-4}}\\
&\quad\lesssim\sup_\phi \|\psi_h^6\phi [1+\vr(\vk)]\|_{L_t^\infty H^{s+1}} \bigl\| [\psi_h^6,\tfrac{(2\vk + \p)\p^2 }{4\kappa^2-\p^2}]q\bigr\|_{L_t^\infty H^{-(s+1)}}\lesssim \kappa^{-(2s+1)}\|q(0)\|_{H^s}.
\end{align*}

\step{Estimate for \(\err_4\).} Using \(L^1\subset H^{-4}\) and \(H^{s+1}\subset L^\infty \) together with \eqref{g12-Hs}, we get
\begin{align*}
\|\psi_h^{12} \err_4\|_{L^1_tH^{-4}}&\lesssim \kappa^{2}\|g_{12}(\vk)\|_{L^\infty_{t,x}}\bigl\|\tfrac q{4\kappa^2 - \p^2}\bigr\|_{L^\infty_tL^2}^2\lesssim \kappa^{-2(1+s)} \|q(0)\|_{H^s}^3.
\end{align*}

\step{Estimate for \(\err_5\).} Arguing as for $\err_4$, we may bound
\begin{align*}
\|\psi_h^{12} \err_5\|_{L^1_tH^{-4}}\lesssim \kappa^2 \|g_{12}(\vk)\|_{L^\infty_{t,x}}\bigl\|\tfrac q{4\kappa^2 - \p^2}\bigr\|_{L^\infty_tL^2}\bigl\|\tfrac {\p q}{4\kappa^2 - \p^2}\bigr\|_{L^\infty_tL^2}\lesssim \kappa^{-(1+2s)}\|q(0)\|_{H^s}^3.
\end{align*}

\step{Estimate for \(\err_6\).}  Using that \(L^1\subset H^{-4}\) and Corollary~\ref{C:ls} we may bound
\begin{align*}
\|\psi_h^{12} \err_6\|_{L^1_tH^{-4}} &\lesssim \kappa^2\|g_{12}(\vk)\|_{L^\infty_{t,x}}  \bigl\| \psi_h^6\tfrac{\p q}{4\kappa^2-\p^2}\bigr\|_{L_{t,x}^2}^2
\lesssim \kappa^{-\frac23(2s+1)}\|q(0)\|_{H^s}^3.
\end{align*}

\step{Estimate for \(\err_7\).}  We estimate
\begin{align*}
\|\psi_h^{12} \err_7\|_{L^1_tH^{-4}} &\lesssim \kappa^2\bigl\|\psi_h^6g_{12}(\vk)\p^2 \bigl[\psi_h^6\tfrac{q}{4\kappa^2 - \p^2}\, \tfrac{r}{4\kappa^2 - \p^2}\bigr]\bigr\|_{L^1_tH^{-4}}\\
&\quad+\kappa^2\bigl\|\psi_h^6g_{12}(\vk)[\psi_h^6,\p^2] \bigl[\tfrac{q}{4\kappa^2 - \p^2}\, \tfrac{r}{4\kappa^2 - \p^2}\bigr]\bigr\|_{L^1_tH^{-4}}.
\end{align*}
Using that \(L^1\subset H^{-4}\), we estimate the commutator term by
\begin{align*}
&\kappa^2\bigl\|\psi_h^6g_{12}(\vk)[\psi_h^6,\p^2] \bigl[\tfrac{q}{4\kappa^2 - \p^2}\, \tfrac{r}{4\kappa^2 - \p^2}\bigr]\bigr\|_{L^1_tH^{-4}}\\
&\qquad \lesssim \kappa^2\|g_{12}(\vk)\|_{L^\infty_{t,x}} \bigl\|\tfrac{q}{4\kappa^2 - \p^2}\bigr\|_{L_t^\infty H^1} \bigl\|\tfrac{q}{4\kappa^2 - \p^2}\bigr\|_{L_t^\infty L^2}\lesssim \kappa^{-(1+2s)}\|q(0)\|_{H^s}^3.
\end{align*}

To estimate the remaining term, we argue by duality. Using \eqref{X1}, we have
\begin{align}\label{E:err7'NLS}
&\kappa^2\bigl\|\psi_h^6g_{12}(\vk)\p^2 \bigl[\psi_h^6\tfrac{q}{4\kappa^2 - \p^2}\, \tfrac{r}{4\kappa^2 - \p^2}\bigr]\bigr\|_{L^1_tH^{-4}}\\
&\qquad\lesssim \kappa^2\|q(0)\|_{H^s} \bigl\| \sqrt{4\kappa^2-\p^2} \bigl[\psi_h^6\tfrac{q}{4\kappa^2 - \p^2}\, \tfrac{r}{4\kappa^2 - \p^2}\bigr]\bigr\|_{L_t^2 H^{-(s+\frac12)}} \notag
\end{align}
Employing Lemma~\ref{L:localize}, breaking into Littlewood--Paley pieces, and using Bernstein's inequality, we deduce that
\begin{align*}
\text{LHS\eqref{E:err7'NLS}} &\lesssim \|q(0)\|_{H^s}\!\!\! \sum_{N_2\leq N_1}\!\!\!\tfrac{\kappa^2  (\kappa+N_1+N_2)}{(\kappa+N_1)^2 (\kappa+N_2)^2}  \|(\psi_h^3q)_{N_1}\|_{L_{t,x}^2} N_2^{\frac12-s}\|(\psi_h^3q)_{N_2}\|_{L^\infty_t H^s} .
\end{align*}
Invoking Corollary~\ref{C:ls} and evaluating the resulting sum, we ultimately find
$$
\err_7\lesssim \kappa^{-\frac23(1+2s)}\|q(0)\|_{H^s}^3.
$$

\step{Estimate for \(\err_8\).}  Using \(L^1\subset H^{-4}\) and Corollary~\ref{C:ls}, we bound
\begin{align*}
\|\psi_h^{12} \err_8\|_{L^1_tH^{-4}} &\lesssim\|g_{12}(\vk)\|_{L^\infty_{t,x}}  \bigl\| \psi_h^6\tfrac{\p^2 q}{4\kappa^2-\p^2}\bigr\|_{L_{t,x}^2}^2
\lesssim \kappa^{-(2s+1)}\|q(0)\|_{H^s}^3.
\end{align*}

\step{Estimate for \(\err_9\).}  Using \eqref{rho-Hs}, \eqref{rho-X-1}, and \eqref{rem Dual NLS}, we may bound
\begin{align*}
\|\psi_h^{12} \err_9\|_{L^1_tH^{-4}}
&\lesssim \kappa^{-\frac43(2s+1)} \sup_\phi\normNK{(2\vk+\p)\bigl(\phi[1+\vr(\vk)]\bigr)} \|q(0)\|_{H^s}^3\\
&\lesssim \kappa^{-\frac43(2s+1)}\|q(0)\|_{H^s}^3.
\end{align*}

\step{Estimate for \(\err_{10}\).} Arguing as for $\err_9$ and using \eqref{rem Dual cancel NLS} in place of \eqref{rem Dual NLS}, we find
\begin{align*}
\|\psi_h^{12} \err_{10}\|_{L^1_tH^{-4}}
&\lesssim \kappa^{-\frac43(2s+1)}\sup_\phi\normNK{(2\vk+\p)\bigl(\phi[1+\vr(\vk)]\bigr)} \|q(0)\|_{H^s}^3\\
&\lesssim \kappa^{-\frac43(2s+1)}\|q(0)\|_{H^s}^3.
\end{align*}

\step{Estimate for \(\err_{11}\).}  Using \eqref{g12-Hs} and \eqref{rho rem NLS}, we obtain
\begin{align*}
\|\psi_h^{12} \err_{11}\|_{L^1_tH^{-4}} &\lesssim \kappa^2 \|g_{12}\|_{L_{t,x}^\infty} \bigl\| \vr(\pm\kappa)\sbrack{\geq 4} \psi_h^{12}\bigr\|_{L_{t,x}^1}
\lesssim \kappa^{-\frac43(2s+1)}\|q(0)\|_{H^s}^5.
\end{align*}

Collecting all our estimates for the error terms yields \eqref{DiffFlowGoal}.
\end{proof}

\medskip

\begin{proof}[Proof for \eqref{mKdV-diff}] It suffices to show the following analogue of \eqref{DiffFlowGoal}:
\eq{DiffFlowGoal2}{
\lim\limits_{\kappa\rightarrow\infty}\sup\limits_{q\in Q}\sup\limits_{h\in \R}\left\| \frac d{dt}\left(\psi_h^{12} g_{12}(\vk;e^{tJ\nabla(H_\mKdV-H_{\mKdV}^{\kappa})}q)\right)\right\|_{L^1_tH^{-4}} = 0.
}

Using the identities \eqref{g12-ID} for \(g_{12}\) and \eqref{rho-ID} for \(\vr\) we may write
\begin{align*}
&-g_{12}(\vk)''' + 6qrg_{12}(\vk)' +  6qq'g_{21}(\vk) + 6rq'g_{12}(\vk)-4\kappa^2g_{12}(\vk)' + \tfrac{8\kappa^4\vk}{\kappa^2-\vk^2}g_{12}(\vk)\\
&= g_{12}(\vk) \bigl[ \tfrac{8\vk^5}{\kappa^2-\vk^2} +4\vk qr + 2rq' - 2qr' \bigr]- [1 + \vr(\vk)] \bigl[ q'' +2\vk q' + 4(\kappa^2+\vk^2)q -2q^2r\bigr].
\end{align*}
As a consequence, we may write \eqref{g12-mKdV-diff} as
\[
\frac d{dt}g_{12}(\vk) = \sum\limits_{j=1}^{16}\err_j,
\]
where we define
\begin{align*}
&\err_1 = \tfrac{8\vk^5}{\kappa^2 - \vk^2}g_{12}(\vk),\qquad \err_2 = \tfrac{8\kappa^2\vk^3}{\kappa^2 - \vk^2}[1 + \vr(\vk)]\tfrac{(2\vk + \p)q}{4\kappa^2 - \p^2} + 4\vk^2[1 + \vr(\vk)]\tfrac{\p^2q}{4\kappa^2 - \p^2},\\
&\err_3 = [1 + \vr(\vk)]\tfrac{(2\vk + \p)\p^3 q}{4\kappa^2-\p^2}, \qquad\err_4=-\tfrac{64\kappa^4\vk^3}{\kappa^2-\vk^2}g_{12}(\vk)\,\tfrac{q}{4\kappa^2-\p^2}\,\tfrac{r}{4\kappa^2 - \p^2},\\
&\err_5 =\tfrac{32\kappa^4\vk^2}{\kappa^2 - \vk^2}g_{12}(\vk)\Bigl[\tfrac{q}{4\kappa^2 - \p^2}\,\tfrac{\p r}{4\kappa^2 - \p^2} -\tfrac{\p q}{4\kappa^2 - \p^2}\,\tfrac{r}{4\kappa^2 - \p^2}\Bigr],\\
&\err_6= \tfrac{16\kappa^4\vk}{\kappa^2 - \vk^2}g_{12}(\vk) \,\tfrac{\p q}{4\kappa^2 - \p^2}\,\tfrac{\p r}{4\kappa^2 - \p^2},\\
&\err_7 = -16\kappa^2\vk g_{12}(\vk)\Bigl[\tfrac{ \p^2q}{4\kappa^2 - \p^2}\, \tfrac{r}{4\kappa^2 - \p^2}+\tfrac{q}{4\kappa^2 - \p^2}\, \tfrac{\p^2r}{4\kappa^2 - \p^2}\Bigr] ,\\
&\err_8 =4\vk g_{12}(\vk)\,\tfrac{\p^2q}{4\kappa^2 - \p^2}\,\tfrac{\p^2r}{4\kappa^2 - \p^2},\\
&\err_9 = 16\kappa^2g_{12}(\vk)\Bigl[\tfrac{\p^2 q}{4\kappa^2 - \p^2}\, \tfrac{\p r}{4\kappa^2 - \p^2}-\tfrac{\p q}{4\kappa^2 - \p^2}\, \tfrac{\p^2 r}{4\kappa^2 - \p^2}\Bigr] ,\\
&\err_{10}=-8\kappa^2g_{12}(\vk) \partial \Bigl[\tfrac{\p^2 q}{4\kappa^2 - \p^2}\, \tfrac{ r}{4\kappa^2 - \p^2} - \tfrac{ q}{4\kappa^2 - \p^2}\, \tfrac{\p^2 r}{4\kappa^2 - \p^2}\Bigr] ,\\
&\err_{11}= 2g_{12}(\vk)\Bigl[\tfrac{\p^3 q}{4\kappa^2 - \p^2}\, \tfrac{\p^2 r}{4\kappa^2 - \p^2}-\tfrac{\p^2 q}{4\kappa^2 - \p^2}\, \tfrac{\p^3r}{4\kappa^2 - \p^2}\Bigr],\\
&\err_{12} =- \tfrac{4\kappa^5}{\kappa^2-\vk^2}[1+\vr(\vk)]\, \Bigl[ g_{12}(\kappa)\sbrack{\geq 3}-g_{12}(-\kappa)\sbrack{\geq 3}-\tfrac1{2\kappa^3} q^2r  \Bigr],\\
&\err_{13} = - \tfrac{4\kappa^4\vk}{\kappa^2 - \vk^2}[1+\vr(\vk)]\, \Bigl[ g_{12}(\kappa)\sbrack{\geq 3}+g_{12}(-\kappa)\sbrack{\geq 3}  \Bigr],\\
&\err_{14} =  \tfrac{4\kappa^4\vk}{\kappa^2 - \vk^2}g_{12}(\vk)\Bigl[\vr(\kappa)\sbrack{\geq 4} + \vr(-\kappa)\sbrack{\geq 4}\Bigr],\\
&\err_{15} =  \tfrac{4\kappa^5}{\kappa^2 - \vk^2}g_{12}(\vk)\Bigl[\vr(\kappa)\sbrack{\geq 4} -\vr(-\kappa)\sbrack{\geq 4}\Bigr],\\
&\err_{16} =  -\tfrac{2\vk^2}{\kappa^2 - \vk^2}[1+\vr(\vk)] q^2r.
\end{align*}

To bound the error terms, we will rely on the a priori estimate \eqref{APBound} and the local smoothing estimate \eqref{mKdV-diff-LS}, which yield
\begin{align}\label{apriorim}
\normMK{q} = \|q\|_{L^\infty_t H^s}+ \|q\|_{X^{s+1}_\kappa} \lesssim \|q(0)\|_{H^s}.
\end{align}
We will also employ the estimates recorded in Corollary~\ref{C:ls mKdV}, as well as the bounds
\begin{align}
\|q\|_{X^{-s}_\kappa} \lesssim \kappa^{-\frac12(2s+1)}\|q(0)\|_{H^s},\label{Xqm}
\end{align}
\begin{align}
\|g_{12}(\vk)\|_{X^{s+2}_\kappa} \lesssim\|q(0)\|_{H^s}\qtq{and} \|\vr(\vk)\|_{X^{s+2}_\kappa}\lesssim\|q(0)\|_{H^s}^2,\label{Xgm}
\end{align}
which follow from \eqref{Reg-Gain}, \eqref{apriorim}, \eqref{g12-X}, and \eqref{rho-X-1}.

We will allow implicit constants to depend on $\vk$. Throughout the proof, we will take $\kappa\geq 2\vk$. As before, when arguing by duality, we write $\phi$ for a function in $L^\infty_t H^4$ of unit norm.

\step{Estimate for \(\err_1\).} We apply the estimate \eqref{g12-Hs} to bound
\[
\left\|\psi_h^{12}\err_1\right\|_{L^1_tH^{-4}}\lesssim \kappa^{-2}\|g_{12}(\vk)\|_{L^\infty_tH^{s+1}}\lesssim \kappa^{-2}\|q(0)\|_{H^s}.
\]

\step{Estimate for \(\err_2\).} Similarly, using duality and \eqref{rho-Hs}, we may bound
\begin{align*}
&\left\|\psi_h^{12} \err_2\right\|_{L^1_tH^{-4}}\\
&\qquad\lesssim \bigl\|\psi_h^{12}[1+\vr(\vk)]\bigr\|_{L^\infty_tH^{s+1}}\bigl[\bigl\|\tfrac{(2\vk + \p)q}{4\kappa^2 - \p^2}\bigr\|_{L^\infty_tH^{-(s+1)}}+\bigl\|\tfrac{\p^2q}{4\kappa^2 - \p^2}\bigr\|_{L^\infty_tH^{-(s+1)}}\bigr]\\
&\qquad\lesssim\kappa^{-(1+2s)}\|q(0)\|_{H^s}.
\end{align*}

\step{Estimate for \(\err_3\).}  We estimate
\begin{align*}
\bigl\|\psi_h^{12} \err_3\bigr\|_{L^1_tH^{-4}}
&\lesssim \bigl\| \psi_h^6[1+\vr(\vk)] \tfrac{(2\vk + \p)\p^3 (\psi_h^6q)}{4\kappa^2-\p^2}\bigr\|_{L^1_tH^{-4}}\\
& \quad + \bigl\|\psi_h^6[1+\vr(\vk)] [\psi_h^6,\tfrac{(2\vk + \p)\p^3 }{4\kappa^2-\p^2}]q\bigr\|_{L^1_tH^{-4}}.
\end{align*}
We will bound both of these terms using duality. Using \eqref{Xqm} and \eqref{Xgm}, we get
\begin{align*}
\bigl\| \psi_h^6[1+\vr(\vk)] \tfrac{(2\vk + \p)\p^3 (\psi_h^6q)}{4\kappa^2-\p^2}\bigr\|_{L^1_tH^{-4}}
&\lesssim \sup_\phi\|\phi[1+\vr(\vk)]\|_{X^{s+2}_\kappa} \|q\|_{X^{-s}_\kappa}\\
&\lesssim \bigl[\kappa^{-1} + \|q(0)\|_{H^s}^2\bigr]  \kappa^{-\frac12(2s+1)}\|q(0)\|_{H^s}\\
&\lesssim \kappa^{-\frac12(2s+1)}\|q(0)\|_{H^s}.
\end{align*}
To estimate the commutator term, we use \eqref{2} and \eqref{rho-Hs}, as follows:
\begin{align*}
&\bigl\|\psi_h^6[1+\vr(\vk)] [\psi_h^6,\tfrac{(2\vk + \p)\p^3 }{4\kappa^2-\p^2}]q\bigr\|_{L^1_tH^{-4}}\\
& \lesssim \sup_\phi\|\psi_h^6\phi [1+\vr(\vk)]\|_{L_t^\infty H^{s+1}} \bigl\| [\psi_h^6,\tfrac{(2\vk + \p)\p^3 }{4\kappa^2-\p^2}]q\bigr\|_{L_t^2 H^{-(s+1)}}\lesssim \kappa^{-\frac12(2s+1)}\|q(0)\|_{H^s}.
\end{align*}

Collecting our estimates we obtain
\[
\|\psi_h^{12} \err_3\|_{L^1_tH^{-4}}\lesssim \kappa^{-\frac12(2s+1)}\|q(0)\|_{H^s}.
\]

\step{Estimate for \(\err_4\).} Using \(L^1\subset H^{-4}\) and \(H^{s+1}\subset L^\infty\) together with \eqref{g12-Hs}, we get
\begin{align*}
\|\psi_h^{12} \err_4\|_{L^1_tH^{-4}}&\lesssim \kappa^{2}\|g_{12}(\vk)\|_{L^\infty_{t,x}}\bigl\|\tfrac q{4\kappa^2 - \p^2}\bigr\|_{L^\infty_tL^2}^2\lesssim \kappa^{-2(1+s)} \|q(0)\|_{H^s}^3.
\end{align*}

\step{Estimate for \(\err_5\).} Arguing as for $\err_4$, we may bound
\begin{align*}
\|\psi_h^{12} \err_5\|_{L^1_tH^{-4}}\lesssim \kappa^2 \|g_{12}(\vk)\|_{L^\infty_{t,x}}\bigl\|\tfrac q{4\kappa^2 - \p^2}\bigr\|_{L^\infty_tL^2}\bigl\|\tfrac {\p q}{4\kappa^2 - \p^2}\bigr\|_{L^\infty_tL^2}\lesssim \kappa^{-(1+2s)}\|q(0)\|_{H^s}^3.
\end{align*}

\step{Estimate for \(\err_6\).}  Using \(L^1\subset H^{-4}\) and Corollary~\ref{C:ls mKdV}, we may bound
\begin{align*}
\|\psi_h^{12} \err_6\|_{L^1_tH^{-4}} &\lesssim \kappa^2\|g_{12}(\vk)\|_{L^\infty_{t,x}}  \bigl\| \psi_h^6\tfrac{\p}{4\kappa^2-\p^2}q\bigr\|_{L_{t,x}^2}^2
\lesssim \kappa^{-(1+s)}\|q(0)\|_{H^s}^3.
\end{align*}

\step{Estimate for \(\err_7\).}  Arguing as for $\err_6$, we may bound
\begin{align*}
\|\psi_h^{12} \err_7\|_{L^1_tH^{-4}}
&\lesssim \kappa^2\|g_{12}(\vk)\|_{L^\infty_{t,x}} \bigl\|\tfrac q{4\kappa^2 - \p^2}\bigr\|_{L^\infty_tL^2} \bigl\| \psi_h^6\tfrac{\p^2}{4\kappa^2-\p^2}q\bigr\|_{L_{t,x}^2}\\
&\lesssim \kappa^{-(1+2s)}\|q(0)\|_{H^s}^3.
\end{align*}

\step{Estimate for \(\err_8\).}  Arguing as for $\err_6$ again, we bound
\begin{align*}
\|\psi_h^{12} \err_8\|_{L^1_tH^{-4}} &\lesssim\|g_{12}(\vk)\|_{L^\infty_{t,x}}  \bigl\| \psi_h^6\tfrac{\p^2 q}{4\kappa^2-\p^2}\bigr\|_{L_{t,x}^2}^2
\lesssim \kappa^{-2(1+s)}\|q(0)\|_{H^s}^3.
\end{align*}

\step{Estimate for \(\err_9\).}  Using \eqref{1'} and then \eqref{apriorim} yields
\begin{align}\label{E:e9m}
\bigl\|\psi_h^6 \tfrac{\p q}{4\kappa^2 - \p^2}\bigr\|_{L_t^2 H^{s+1}}
	&\lesssim \kappa^{-1} \| q \|_{L^\infty_t H^s} + \kappa^{-1}\bigl\| \tfrac{\psi^6 q}{\sqrt{4\kappa^2-\p^2}} \|_{L_t^2 H^{s+2}} \lesssim \kappa^{-1} \|q(0)\|_{H^s}.
\end{align}
Analogously, but also breaking at frequency $N=\sqrt{\kappa}$, we find
\begin{align}
\bigl\|\psi_h^6 \tfrac{\p^2 q}{4\kappa^2 - \p^2}\bigr\|_{L_t^2 H^{-1-s}}
	&\lesssim \kappa^{-2-2s}\| q \|_{L_t^\infty H^{s}} + \kappa^{-2} N^{1-2s} \| (\psi_h^6 q)_{\leq N} \|_{L^\infty_t H^s} \notag\\
& \quad {}+ \kappa^{-1} N^{-(1+2s)} \| (\psi_h^6 q)_{> N} \|_{X_\kappa^{s+1}} \label{E:e9m'}\\
&\lesssim \kappa^{-1-\frac12(1+2s)} \|q(0)\|_{H^s}.\notag
\end{align}
Combining these bounds, we deduce that
\begin{align*}
\|\psi_h^{12} \err_9\|_{L^1_tH^{-4}} &\lesssim \kappa^2\|g_{12}\|_{L^\infty_t H^{s+1}}  \bigl\|\psi_h^6 \tfrac{\p q}{4\kappa^2 - \p^2}\bigr\|_{L_t^2 H^{s+1}}
	\bigl\|\psi_h^6 \tfrac{\p^2 q}{4\kappa^2 - \p^2}\bigr\|_{L_t^2 H^{-1-s}}  \\
&\lesssim \kappa^{-\frac12{(1+2s)}}\|q(0)\|_{H^s}^3.
\end{align*}

\step{Estimate for \(\err_{10}\).}  Our goal here is to employ \eqref{mKdV-para-2'}.  Given $\phi\in L^\infty_t H^4$, we have
\begin{align*}
[\phi \psi_h^{12} g_{12} ]' \tfrac{\p^2 q }{4\kappa^2 - \p^2}\, \tfrac{r}{4\kappa^2 - \p^2}
	= m\big[ q,r, \tfrac{(\psi_h^{12}\phi)'}{\psi_h^{12}}, \psi_h^{12} \tfrac{(2\vk+\p)g_{12}}{2\vk+\p}\bigr]
		+ m\big[ q,r, g_{12}', \psi_h^{12} \tfrac{(2\vk+\p)\phi}{2\vk+\p}\bigr],
\end{align*}
where the paraproduct $m$ has symbol
$$
m(\xi_1,\ldots,\xi_4) = \tfrac{\xi_1^2}{(4\kappa^2+\xi_1^2)(4\kappa^2+\xi_2^2)} .
$$

In this way, we see that
\begin{align*}
\kappa^2 \biggl\| \int \phi \psi_h^{12} g_{12}  \cdot \p \bigl[\tfrac{\p^2 q }{4\kappa^2 - \p^2}\, \tfrac{r}{4\kappa^2 - \p^2}\bigr]\,dx \biggr\|_{L^1_{t}}
		\lesssim \kappa^{-(1+2s)} \| \phi\|_{L^\infty_t H^4} \|q(0)\|_{H^s}^3
\end{align*}
and thence that
$$
\|\psi_h^{12} \err_{10}\|_{L^1_tH^{-4}} \lesssim \kappa^{-(1+2s)} \|q(0)\|_{H^s}^3.
$$

\step{Estimate for \(\err_{11}\).}  Arguing as for \eqref{E:e9m}, we first use \eqref{1'} and \eqref{apriorim} to see that
\begin{align*}
\bigl\| \psi_h^6 \tfrac{\p^2 q}{4\kappa^2 - \p^2} \bigr\|_{L^2_t H^{s+1}} \lesssim \| q\|_{L^\infty_t H^s} + \| q \|_{X^{s+1}_\kappa} \lesssim \|q(0)\|_{H^s} 
\end{align*}
and
\begin{align*}
\bigl\| \psi_h^6  \tfrac{\p^3  q}{4\kappa^2 - \p^2}\bigr\|_{L^2_t H^{-(s+1)}} \lesssim \kappa^{-(1+2s)}\| q\|_{L^\infty_t H^s} + \kappa^{-(1+2s)} \| q \|_{X^{s+1}_\kappa}
	\lesssim \kappa^{-(1+2s)}  \|q(0)\|_{H^s}.
\end{align*}
Thus,
\begin{align*}
\|\psi_h^{12} \err_{11}\|_{L^1_tH^{-4}} &\lesssim  \|g_{12}(\vk)\|_{L_t^\infty H^{s+1}} \bigl\| \psi_h^6 \tfrac{\p^2 q}{4\kappa^2 - \p^2} \bigr\|_{L^2_t H^{s+1}}
		\bigl\| \psi_h^6  \tfrac{\p^3  q}{4\kappa^2 - \p^2}\bigr\|_{L^2_t H^{-(s+1)}} \\
&\lesssim \kappa^{-(1+2s)}\|q(0)\|_{H^s}^3.
\end{align*}

\step{Estimate for \(\err_{12}\).}  We first note that \eqref{rho-Hs} and \eqref{rho-X-1} imply
\begin{align*}
\normMK{(2\vk+\p)\bigl(\phi[1+\vr(\vk)]\bigr)} \lesssim 1 + \normMK{q} \lesssim 1
\end{align*}
for any $\phi\in L^\infty_t H^4$ of unit norm.  Thus, it follows from \eqref{rem Dual cancel mKdV 2} that
\begin{align*}
\|\psi_h^{12} \err_{12}\|_{L^1_tH^{-4}} &\lesssim \kappa^{-(2s+1)}\|q(0)\|_{H^s}^3.
\end{align*}

\step{Estimate for \(\err_{13}\).} Arguing as for $\err_{12}$ and using \eqref{rem Dual cancel mKdV} in place of \eqref{rem Dual cancel mKdV 2}, we get
\begin{align*}
\|\psi_h^{12} \err_{13}\|_{L^1_tH^{-4}} &\lesssim \kappa^{-(2s+1)}\|q(0)\|_{H^s}^3.
\end{align*}

\step{Estimate for \(\err_{14}\).}  Using \(L^1\subset H^{-4}\) together with \eqref{g12-Hs} and \eqref{rho rem mKdV}, we get
\begin{align*}
\|\psi_h^{12} \err_{14}\|_{L^1_tH^{-4}} &\lesssim \kappa^2 \|g_{12}\|_{L_{t,x}^\infty} \bigl\| \vr(\pm\kappa)\sbrack{\geq 4} \psi_h^{12}\bigr\|_{L_{t,x}^1}
\lesssim \kappa^{-\frac12-(2s+1)}\|q(0)\|_{H^s}^5.
\end{align*}

\step{Estimate for \(\err_{15}\).} The argument here is essentially a recapitulation of the proof of \eqref{rem Dual cancel mKdV 3}.  For example, from \eqref{gamma 6 is okay}, we have 
\begin{align*}
\kappa^3 \bigl\|\psi_h^{12} g_{12}(\vk) \vr\sbrack{\geq 6}(\pm\kappa)\bigr\|_{L^1_tH^{-4}}
	&\lesssim \kappa^{-(1+2s)} \|g_{12}\|_{L_{t,x}^\infty}  \|q(0)\|_{H^s}^6 \lesssim \kappa^{-(1+2s)}\|q(0)\|_{H^s}^7.
\end{align*}

In order to repeat the treatment of the $\gamma\sbrack{4}$ terms given previously, we need one additional piece of information, namely, that $f$ defined by
\begin{align*}
\tfrac{f}{2\kappa-\p} = \phi\, g_{12}(\vk) \tfrac{q}{2\kappa-\p}
\end{align*}
satisfies
\begin{align*}
\| f\|_{L^\infty_t H^s} \lesssim \| q\|_{L^\infty_t H^s} \qtq{and} \normMK{f} \lesssim \normMK{q}
\end{align*}
for every  $\phi\in L^\infty_t H^4$ of unit norm.  These assertions follow readily from \eqref{g12-Hs}, \eqref{X-Product-2}, and \eqref{g12-X}.  Thus we may conclude that
\begin{align*}
\|\psi_h^{12} \err_{15}\|_{L^1_tH^{-4}} &\lesssim \kappa^{-(2s+1)}\|q(0)\|_{H^s}^5.
\end{align*}

\step{Estimate for \(\err_{16}\).} Breaking at frequency $N=\sqrt \kappa$ and using \eqref{apriorim}, we find
\begin{align*}
\| \psi_h^6 q \|_{L_t^2 H^{s+1}}&\lesssim N \|q\|_{L_t^\infty H^s} + \tfrac{\kappa}{N} \|q\|_{X_\kappa^{s+1}}\lesssim \kappa^{\frac12}\|q(0)\|_{H^s}.
\end{align*}

Thus, arguing by duality and using \eqref{g12-Hs}, we estimate 
\begin{align*}
\|\psi_h^{12} \err_{16}\|_{L^1_tH^{-4}}&\lesssim \kappa^{-2} \|q\|_{L_t^\infty H^s} \|\psi_h^6 q\|_{L_t^2 H^{s+1}}^2 \sup_\phi\|\phi[1+\vr(\vk)]\|_{L_t^\infty H^{s+1}} \\
&\lesssim \kappa^{-1}\|q(0)\|_{H^s}^3.
\end{align*}

Combining our estimates for all the error terms we deduce \eqref{DiffFlowGoal}, which then completes the proof of the mKdV case of Proposition~\ref{prop:diff-flows}.
\end{proof}


\section{Well-posedness}\label{S:8}
In this section we prove Theorem~\ref{thrm:main}. While we have already established the necessary prerequisites to obtain global well-posedness in \(H^s\) for $-\frac12<s<0$, we begin this section with one additional equicontinuity result that will be applied to yield well-posedness  at higher regularity. 

This equicontinuity relies on a certain macroscopic conservation law, which we introduce through its density 
\[
\wrho(\vk) := qr - 2\vk\rho(\vk).
\]
That this density satisfies a conservation law follows readily from Corollary~\ref{C:microscopic} and the conservation of mass. The associated microscopic conservation law will be essential for proving local smoothing estimates at positive regularity in the next section.

For later reference, we note that
\begin{align}\label{wrho2}
\wrho\sbrack{2}(\vk) = \tfrac12\Bigl(q \cdot \tfrac{r'}{2\vk+ \p} -\tfrac{q'}{2\vk - \p}\cdot r \Bigr).
\end{align}

Using $\wrho$, we prove the following analogue of Proposition~\ref{prop:alpha}:
\begin{prop}\label{p:higher} Let \(0\leq \sigma< \frac12\). Then there exists \(\delta>0\) so that for any \(q(0)\in \Schwartz\) satisfying \(\|q(0)\|_{L^2}\leq \delta\), the solution \(q(t)\) of \eqref{NLS} or \eqref{mKdV} satisfies
\eq{equi +}{
\|q(t)'\|_{H^{\sigma-1}_\kappa}^2 \lesssim \|q(0)'\|_{H^{\sigma-1}_\kappa}^2 + \kappa^{2\sigma-1}\delta^2\|q(0)\|_{L^2}^2,
}
uniformly for \(t\in \R\) and \(\kappa\geq 1\).
\end{prop}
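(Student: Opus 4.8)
The plan is to imitate the proof of Proposition~\ref{prop:alpha}, but working with the modified density $\wrho(\vk) = qr - 2\vk\rho(\vk)$ in place of $\rho(\vk)$, and extracting the $H^{\sigma-1}_\kappa$ norm of $q'$ (equivalently, a weighted $H^\sigma_\kappa$ norm of $q$ with one extra derivative) rather than the $H^s_\kappa$ norm of $q$. Since $\wrho(\vk)$ is a linear combination of conserved densities (the mass density $qr$ and $\rho(\vk)$, which is conserved by Corollary~\ref{C:microscopic}), the quantity $\int\wrho(x;\vk)\,dx$ is conserved under both \eqref{NLS} and \eqref{mKdV}. So the mechanism is: write $\wrho = \wrho\sbrack{2} + \wrho\sbrack{\geq 4}$, show that the quadratic part, suitably integrated against $\vk^{2\sigma}\,\tfrac{d\vk}{\vk}$, reconstructs $\|q'\|_{H^{\sigma-1}_\kappa}^2$ up to equivalence, and show that the higher-order remainder is a lower-order perturbation controlled by $\delta^2\|q(0)\|_{L^2}^2$ with a favorable power of $\kappa$.

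First I would compute the quadratic part. From \eqref{wrho2}, $\wrho\sbrack{2}(\vk) = \tfrac12\bigl(q\cdot\tfrac{r'}{2\vk+\p} - \tfrac{q'}{2\vk-\p}\cdot r\bigr)$, so after integrating and using $r = \pm\bar q$, a Plancherel computation in the spirit of \eqref{R151} gives
\begin{equation}\label{p:higher-quad}
\pm\int \tfrac12\bigl[\wrho\sbrack{2}(x;\vk) - \wrho\sbrack{2}(x;-\vk)\bigr]\,dx = 2\vk\int \frac{\xi^2|\hat q(\xi)|^2\,d\xi}{4\vk^2+\xi^2}.
\end{equation}
(Here the symmetrization $\vk\mapsto-\vk$ is the same device as in \eqref{alpha from A}: conjugation reverses time for the \eqref{NLS} flow, and for \eqref{mKdV} one simply uses that $\wrho(\vk)\pm\wrho(-\vk)$ are each conserved via \eqref{grho-symmetries}.) Integrating \eqref{p:higher-quad} against $\vk^{2\sigma-1}\,d\vk$ over $[\kappa,\infty)$ and invoking Lemma~\ref{lem:EquivNorm} with $s'$ replaced by $-1$ and $s$ by $\sigma-1$ (both valid since $\sigma<\tfrac12$ means $\sigma-1<-\tfrac12<\sigma$, wait — one needs $s'<s$, i.e. $-1<\sigma-1$, true for $\sigma>0$; the borderline $\sigma=0$ needs the endpoint, which still works since $\|q'\|_{H^{-1}_\kappa}^2=\int\frac{\xi^2|\hat q|^2}{4\kappa^2+\xi^2}$ directly), one recovers $\|q'\|_{H^{\sigma-1}_\kappa}^2 \approx \int_\kappa^\infty \vk^{2\sigma-1}\bigl(\pm\int\tfrac12[\wrho\sbrack{2}(\vk)-\wrho\sbrack{2}(-\vk)]\,dx\bigr)\,d\vk$.

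Next I would bound the remainder $\wrho\sbrack{\geq 4}(\vk) = \pm\,qr - 2\vk\,\rho\sbrack{\geq 4}(\vk)$, where $\rho\sbrack{\geq 4}(\vk) = q\cdot\bigl(\tfrac{g_{21}(\vk)}{2+\vr(\vk)}\bigr)\sbrack{\geq 3} - r\cdot\bigl(\tfrac{g_{12}(\vk)}{2+\vr(\vk)}\bigr)\sbrack{\geq 3}$ from \eqref{rho higher}. Hmm — actually $\wrho\sbrack{2}$ contains no $qr$ term and the quadratic part of $2\vk\rho(\vk)$ absorbs the pure $qr$; let me be careful: $\wrho = qr - 2\vk\rho$, and $\rho\sbrack{2}(\vk)$ from \eqref{rho quadratic} is $\tfrac12(q\cdot\tfrac{r}{2\vk+\p} + \tfrac{q}{2\vk-\p}\cdot r)$, whose leading behavior in $\vk$ is $\tfrac{qr}{2\vk}$; so $qr - 2\vk\rho\sbrack{2}$ is indeed the $O(\vk^{-1})$ quantity in \eqref{wrho2}, and $\wrho\sbrack{\geq 4} = -2\vk\,\rho\sbrack{\geq 4}(\vk)$. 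Using $s=0$ versions of the estimates in Corollary~\ref{C:ET}, specifically the bound $\bigl\|\bigl(\tfrac{g_{21}(\vk)}{2+\vr(\vk)}\bigr)\sbrack{\geq 3}\bigr\|_{H^{-0}}$ (or rather its $H^1$ version from \eqref{ET1 Sob} interpolated down, as in \eqref{rat in -s}), one gets $\|\rho\sbrack{\geq 4}(\vk)\|_{L^1}\lesssim \vk^{-2}\delta^2\|q\|_{L^2}^2$, hence $|\int\wrho\sbrack{\geq 4}(\vk)\,dx|\lesssim \vk^{-1}\delta^2\|q\|_{L^2}^2$, and integrating against $\vk^{2\sigma-1}\,d\vk$ over $[\kappa,\infty)$ (convergent since $2\sigma-2<-1$) yields $\lesssim \kappa^{2\sigma-1}\delta^2\|q\|_{L^2}^2$. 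Combining with the conservation of $\int\wrho(\vk)\,dx$, the mass bound $\|q(t)\|_{L^2}=\|q(0)\|_{L^2}$, and the lower-order estimate at both times $t$ and $0$, one obtains \eqref{equi +}.

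The main obstacle — beyond bookkeeping — is verifying that the estimates of Section~\ref{S:3}, all stated with the running hypothesis $s\in(-\tfrac12,0)$, degrade gracefully to the endpoint $s=0$ and to the slightly supercritical regularity $\sigma\in[0,\tfrac12)$ of $q'$. The relevant inputs are: that $L(\kappa)$ is still invertible for $\|q\|_{L^2}\le\delta$ (this is exactly Proposition~\ref{P:R} at $s=0$, since $\|\Lambda\|_{\I_2}\lesssim\|q\|_{L^2}$ from \eqref{LogarithmicBound}); that $g_{12}(\vk),g_{21}(\vk)\in H^1$ with $\|g_{12}\sbrack{\geq 3}\|_{H^1_\vk}\lesssim \vk^{-1}\delta^2\|q\|_{L^2}$ (the $s=0$ case of \eqref{g12-LO}, whose duality proof via Lemma~\ref{L:BasicBounds} goes through verbatim); and the $s=0$ case of the $L^1$ bound on $\rho\sbrack{\geq 4}$. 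One must also check that the $\vk$-integral identity of Lemma~\ref{lem:EquivNorm} is applied with an admissible pair $(s',s)=(-1,\sigma-1)$ — legitimate for $0<\sigma<\tfrac12$, with the $\sigma=0$ case handled directly as noted above since then no $\vk$-integration of the quadratic term is needed. I do not anticipate any genuinely new analytic difficulty; this proposition is a soft corollary of the machinery already in place, specialized to $L^2$-small data and reading off one more derivative.
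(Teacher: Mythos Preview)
Your approach is the paper's: exploit conservation of $\int\wrho(\vk)\,dx$, show the quadratic part equals $\|q'\|_{H^{-1}_\vk}^2$, bound the quartic-and-higher remainder by $\vk^{-1}\delta^2\|q\|_{L^2}^2$ using the $s=0$ version of \eqref{ET1 Sob}, and then either set $\vk=\kappa$ (for $\sigma=0$) or integrate via Lemma~\ref{lem:EquivNorm} (for $0<\sigma<\tfrac12$).

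Two small computational slips to correct. First, because $qr$ is real and $\rho(-\vk)=-\bar\rho(\vk)$, one has $\wrho(-\vk)=\overline{\wrho(\vk)}$; thus your antisymmetric combination $\tfrac12[\wrho(\vk)-\wrho(-\vk)]$ isolates $i\Im\wrho(\vk)$, not the real part. The paper simply takes $\pm\Re\int\wrho(\vk)\,dx$ directly. Second, the quadratic identity is
\[
\pm\Re\int\wrho\sbrack{2}(\vk)\,dx=\|q'\|_{H^{-1}_\vk}^2,
\]
with no extra factor of $2\vk$: the $2\vk$ from \eqref{R151} has already been absorbed in passing from $\rho$ to $\wrho=qr-2\vk\rho$. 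With these fixed, your outline is exactly the paper's proof.
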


\begin{proof}
Using \eqref{g12-Hs}, \eqref{g12-LO}, and \eqref{rho-Linfty}, we get
\begin{align*}
\|g_{12}(\vk)\|_{L^2}\lesssim \vk^{-1} \|q\|_{L^2}, \quad \|g_{12}\sbrack{\geq 3}(\vk)\|_{L^2}\lesssim \vk^{-2} \|q\|_{L^2}^3, \quad \|\vr(\vk)\|_{L^\infty}\leq \vk^{-1}\|q\|_{L^2}^2.
\end{align*}
Consequently, using \eqref{more sbrack'} and \eqref{rho higher} we obtain
\eq{ET1 Sob +}{
\|\big(\tfrac{g_{12}(\vk)}{2 + \gamma(\vk)}\big)\sbrack{\geq 3}\|_{L^2}\lesssim \vk^{-2}\delta^2\|q\|_{L^2} \qtq{and so} \|\wrho\sbrack{\geq 4}(\vk)\|_{L^1} \lesssim \vk^{-1}\delta^2\|q\|_{L^2}^2
}
whenever \(0<\delta\ll1\) is sufficiently small. Employing \eqref{wrho2} and \eqref{ET1 Sob +} we get
\[
\pm \Re\int\wrho(x;\vk)\,dx = \|q'\|_{H^{-1}_\vk}^2 + \bigO\bigl(\vk^{-1}\delta^2\|q\|_{L^2}^2\bigr).
\]
If \(\sigma = 0\), we simply set $\vk=\kappa$. If \(0<\sigma< \tfrac12\), we apply the estimate \eqref{EquivNorm} to obtain
\[
\int_\kappa^\infty \vk^{2\sigma}\Bigg(\pm \Re\int\wrho(x;\vk)\,dx\Bigg)\,\tfrac{d\vk}\vk \approx \|q'\|_{H^{\sigma-1}_\kappa}^2 + \bigO\bigl(\kappa^{2\sigma-1}\delta^2\|q\|_{L^2}^2\bigr).
\]
As the mass and left-hand sides in these estimates are conserved under both \eqref{NLS} and \eqref{mKdV}, the claim \eqref{equi +} now follows.
\end{proof}

\begin{proof}[Proof of Theorem~\ref{thrm:main}] In view of the history discussed in the introduction, it suffices to treat regularities $-\frac12<s<0$ for \eqref{NLS} and $-\frac12<s<\frac14$ for \eqref{mKdV}. With the tools at our disposal, we are able to give a uniform treatment of both equations over the range $(-\frac12, \frac12)$, so this is what we do. As the arguments for \eqref{NLS} and \eqref{mKdV} are identical, we provide details in the case of \eqref{NLS}.

We first consider initial data \(q\in H^s\), where \(-\frac12<s<0\). Let \(0<\delta\ll1\) be sufficiently small and, rescaling according to \eqref{scaling}, assume that \(q\in \Bd\). Let \(\{q_n\}_{n\geq 1}\subset\BdS\) so that \(q_n\rightarrow q\) in \(H^s\) as \(n\rightarrow\infty\).

In view of Propositions~\ref{prop:Equicontinuity} and \ref{prop:Tightness}, the set
\[
Q:= \left\{e^{tJ\nabla H_\NLS}q_n:n\geq 1,\;t\in [-1,1]\right\}
\]
is equicontinuous and tight in \(H^s\). Further, by Proposition~\ref{prop:alpha} we may find some \(C = C(s)\geq 1\) so that \(Q\subset B_{C\delta}\cap\Schwartz\).

For fixed \(\vk\geq 4\) let \(g_{12}(\cdot) = g_{12}(\vk;\cdot)\) and \(\kappa\geq 2\vk\). Let \(R\geq 1\), \(\phi_R\) be as in Section~\ref{sec:Tight}, and \(\chi_R\in \Schwartz\) be a non-negative function so that \(1\leq \phi_R^2 + \chi_R^2\). We then bound
\begin{align*}
&\|g_{12}(e^{tJ\nabla H_{\NLS}}q_n) - g_{12}(e^{tJ\nabla H_{\NLS}}q_m)\|_{L^\infty_tH^{s+1}}\\
&\qquad \lesssim \|g_{12}(e^{tJ\nabla H_{\NLS}^\kappa}q_n) - g_{12}(e^{tJ\nabla H_{\NLS}^\kappa}q_m)\|_{L^\infty_tH^{s+1}}\\
&\qquad\qquad + \sup\limits_{q\in Q^*}\|\chi_Rg_{12}(e^{tJ\nabla (H_{\NLS} - H_{\NLS}^\kappa)}q) - \chi_Rg_{12}(q)\|_{L^\infty_tH^{s+1}}\\
&\qquad\qquad + \sup\limits_{n\geq 1}\|\phi_Rg_{12}(e^{tJ\nabla H_{\NLS}}q_n)\|_{L^\infty_tH^{s+1}},
\end{align*}
where the set
\[
Q^* := \left\{e^{J\nabla\left(tH_\NLS + sH_{\NLS}^{\kappa}\right)}q_n:n\geq 1,\;\kappa\geq 2\vk,\;t,s\in [-1,1]\right\}.
\]
By Propositions~\ref{prop:alpha} and~\ref{prop:kappa-flows} we have \(Q^*\subset B_{C\delta}\cap\Schwartz\), while by Proposition~\ref{prop:Equicontinuity}, \(Q^*\) is equicontinuous in \(H^s\).

By Proposition~\ref{prop:kappa-flows} and the diffeomorphism property of Proposition~\ref{prop:g} we have
\[
\lim\limits_{n,m\rightarrow\infty}\|g_{12}(e^{tJ\nabla H_{\NLS}^{\kappa}}q_n) - g_{12}(e^{tJ\nabla H_{\NLS}^{\kappa}}q_m)\|_{L^\infty_tH^{s+1}} = 0.
\]
Using Proposition~\ref{prop:diff-flows} we obtain
\begin{align*}
&\lim\limits_{\kappa\rightarrow\infty}\sup\limits_{q\in Q^*}\|\chi_Rg_{12}(e^{tJ\nabla (H_{\NLS} - H_{\NLS}^{\kappa})}q) - \chi_Rg_{12}(q)\|_{L^\infty_tH^{s+1}}\\
&\qquad \lesssim_R \lim\limits_{\kappa\rightarrow\infty}\sup\limits_{q\in Q^*}\sup\limits_{h\in \R}\|\psi_h^{12}g_{12}(e^{tJ\nabla (H_{\NLS} - H_{\NLS}^{\kappa})}q) - \psi_h^{12}g_{12}(q)\|_{L^\infty_tH^{s+1}} = 0.
\end{align*}
Finally, from the estimate \eqref{g12-Tight} and the fact that \(Q\subset B_{C\delta}\cap\Schwartz\) is tight we have
\[
\lim\limits_{R\rightarrow\infty}\sup\limits_{n\geq 1}\|\phi_Rg_{12}(e^{tJ\nabla H_{\NLS}}q_n)\|_{L^\infty_tH^{s+1}} = 0.
\]

Thus, \(\{g_{12}(e^{tJ\nabla H_{\NLS}}q_n)\}\) is Cauchy in \(\Cont([-1,1];H^{s+1})\) and from the diffeomorphism property we conclude that \(\{e^{tJ\nabla H_{\NLS}}q_n\}\) is Cauchy in \(\Cont([-1,1];H^s)\). This yields local well-posedness of \eqref{NLS} in \(H^s\) on the time interval \([-1,1]\).

From the estimate \eqref{APBound} with \(\kappa=1\) we obtain the estimate
\[
\|e^{tJ\nabla H_{\NLS}}q\|_{H^s}\leq C\|q\|_{H^s},
\]
uniformly for \(t\in \R\) and \(q\in \BdS\). Using this bound we may iterate the local well-posedness argument to complete the proof of global well-posedness in \(H^s\).

Now consider initial data \(q\in H^\sigma\), where \(0\leq \sigma<\frac12\). Let \(0<\delta\ll1\) be sufficiently small and \(\{q_n\}_{n\geq 1}\) be a sequence of Schwartz functions so that \(q_n\to q\) in \(H^\sigma\) as \(n\to\infty\). After possibly rescaling, assume that \(\|q_n\|_{L^2}\leq \delta\) for all \(n\geq 1\).

Applying our well-posedness result with \(s = -\frac14\), the sequence of solutions \(\{e^{tJ\nabla H_{\NLS}}q_n\}\) is Cauchy in \(\Cont([-1,1];H^{-\frac14})\). Applying the estimate \eqref{equi +}, we see that the corresponding set \(Q\) is equicontinuous in \(H^\sigma\) and hence the sequence \(\{e^{tJ\nabla H_{\NLS}}q_n\}\) is also Cauchy in \(\Cont([-1,1];H^\sigma)\). This gives local well-posedness in~\(H^\sigma\).

Employing the estimate \eqref{equi +} with \(\kappa = 1\), and the conservation of mass, we obtain the estimate
\[
\|e^{tJ\nabla H_{\NLS}}q\|_{H^\sigma}\lesssim \|q\|_{H^\sigma},
\]
uniformly for \(t\in \R\) and \(q\in \Schwartz\) satisfying \(\|q\|_{L^2}\leq \delta\). This suffices to complete the proof of global well-posedness in \(H^\sigma\).
\epf

\section{Proof of Theorems \ref{thrm:LS_N} and \ref{thrm:LS_m}}\label{S:9}

In this section we prove Theorems~\ref{thrm:LS_N} and~\ref{thrm:LS_m}. We start by considering \eqref{NLS}:

\begin{proof}[Proof of Theorem~\ref{thrm:LS_N}] The estimate \eqref{E:NLS LS apb} follows from \eqref{NLS-LS} and rescaling. It remains to prove the continuity statement in Theorem~\ref{thrm:LS_N}.

Let \(0<\delta\ll1\) be sufficiently small and, by rescaling, assume the initial data \(q(0)\in \Bd\). Let \(\{q_n(0)\}_{n\geq 1}\subseteq\BdS\) so that \(q_n(0)\to q(0)\) in \(H^s\) as \(n\to\infty\) and denote the corresponding solutions by \(q(t) = e^{tJ\nabla H_{\NLS}}q(0)\) and \(q_n(t) = e^{tJ\nabla H_{\NLS}}q_n(0)\). It suffices to prove that \(q_n\rightarrow q\) in \(X^{s+\frac12}\) as \(n\to \infty\).

Decomposing into low and high frequencies, we may bound
\begin{align*}
\|q_n - q_m\|_{X^{s+\frac12}} &\leq \|P_{\leq \kappa}(q_n - q_m)\|_{L^\infty_tH^{s+\frac12}} + 2\sup\limits_{n\geq 1}\sup\limits_{h\in \R}\|P_{>\kappa}(\psi_h^6q_n)\|_{L^2_tH^{s+\frac12}}\\
&\lesssim \sqrt{\kappa}\|q_n - q_m\|_{L^\infty_tH^s} + \sup\limits_{n\geq 1}\sup\limits_{h\in \R}\|(\psi_h^6q_n)'\|_{L^2_tH^{s-\frac12}_\kappa}.
\end{align*}
As the set \(\{q_n(0)\}_{n\geq 1}\) is equicontinuous in \(H^s\), we may apply \eqref{NLS-LS-HF} from Proposition~\ref{prop:NLS-LS} to obtain
\[
\lim\limits_{\kappa\rightarrow\infty}\sup\limits_{n\geq 1}\sup\limits_{h\in \R}\|(\psi_h^6q_n)'\|_{L^2_tH^{s-\frac12}_\kappa} = 0.
\]
Finally, from Theorem~\ref{thrm:main} we have \(q_n\rightarrow q\) in \(\Cont([-1,1];H^s)\) as \(n\to\infty\), which completes the proof that \(q_n\rightarrow q\) in \(X^{s+\frac12}\).
\end{proof}

The corresponding result for \eqref{mKdV}, Theorem~\ref{thrm:LS_m}, is proved almost identically: When \(-\frac12<s<0\), we replace Proposition~\ref{prop:NLS-LS} by Proposition~\ref{prop:mKdV-LS}, whereas at higher regularity we use the following:

\begin{prop}\label{P:ls}
Let \(0\leq \sigma<\frac12\). Then there exists \(\delta>0\) so that for any \(q(0)\in \Schwartz\) satisfying \(\|q(0)\|_{L^2}\leq \delta\), the solution \(q(t)\) of \eqref{mKdV} satisfies the estimate
\eq{mKdV-LS +}{
\|q\|_{X^{\sigma+1}}\lesssim \|q(0)\|_{H^\sigma}.
}

Further, we have the high frequency estimate
\eq{mKdV-LS-HF +}{
\|(\psi_h^6q)''\|_{L^2_tH^{\sigma-1}_\kappa}^2\lesssim \|q(0)'\|_{H^{\sigma-1}_\kappa}^2 + \kappa^{2\sigma-1}\|q(0)\|_{L^2}^2,
}
uniformly for \(h\in \R\) and \(\kappa\geq 1\).
\end{prop}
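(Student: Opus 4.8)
The plan is to reprise the proof of Proposition~\ref{prop:mKdV-LS} --- integrated microscopic conservation law, coercive quadratic current, error terms, boundary term, and $\vk$-integration --- but driven by the density $\wrho(\vk)=qr-2\vk\rho(\vk)$ introduced above rather than by $\rho(\vk)$. The arithmetic gain is one power of $\vk$: whereas the $\rho$-current produced the coercive quantity $\vk\,\|(\psi_h^6q)'\|_{H^{-1}_\vk}^2$, the $\wrho$-current will produce $\|(\psi_h^6q)''\|_{H^{-1}_\vk}^2$, which after integration against $\vk^{2\sigma}\,\tfrac{d\vk}\vk$ recovers exactly the $X^{\sigma+1}$-type norm at stake. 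The first step is to record the current: conservation of mass gives $\p_t(qr)+\p_x j_M=0$ for a suitable (quadratic-plus-higher) mKdV mass current $j_M$, whose quadratic part is $(qr)''-3q'r'$, and $\p_t\rho(\vk)+\p_x j_\mKdV(\vk)=0$ by Corollary~\ref{C:microscopic}, so that $\p_t\wrho(\vk)+\p_x\wj(\vk)=0$ with $\wj(\vk)=j_M-2\vk\,j_\mKdV(\vk)$; integrating over $[-1,1]\times\R$ against $\psi_h^{12}$ and using \eqref{phi-def} yields the analogue of \eqref{LSD}, $\int_{-1}^1\!\int_\R\wj(\vk)\,\psi_h^{12}\,dx\,dt=\int_\R[\wrho(1,\cdot;\vk)-\wrho(-1,\cdot;\vk)]\phi_h\,dx$.

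The decisive algebraic point is that $\wj(\vk)$ is a genuine renormalization: the paraproduct symbol of $\wj\sbrack{2}(\vk)$ is $(-\xi^2+\xi\eta-\eta^2)\bigl[1-2\vk R(\xi,\eta)\bigr]$, with $R$ the paraproduct symbol from the proof of Lemma~\ref{lem:Currents-quad}, and this vanishes as $\vk\to\infty$ because the $\vk$-independent parts of $j_M$ and $2\vk\,j_\mKdV$ cancel. Likewise $\wrho\sbrack{2}(\vk)$ has symbol $\tfrac12\bigl[\tfrac{-i\xi}{2\vk-i\xi}+\tfrac{i\eta}{2\vk+i\eta}\bigr]$, small at low frequencies --- this is what keeps the boundary term from diverging under the $\vk$-integral. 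Building on this I would prove the positive-regularity analogues of the three ingredients of Section~\ref{S:6}. For the quadratic current, mimicking the proof of \eqref{jmKdV-quad} and reusing the identity obtained there for the $j_\mKdV$ piece shows that, for the appropriate choice of the $\pm$ dichotomy, $\Re\int\wj\sbrack{2}(\vk)\psi_h^{12}\,dx=\pm3\|(\psi_h^6q)''\|_{H^{-1}_\vk}^2$ up to commutator errors involving fewer derivatives of $q$; the clean collapse rests on $12\vk^2\tfrac{\xi^2}{4\vk^2+\xi^2}-3\xi^2=-3\tfrac{\xi^4}{4\vk^2+\xi^2}$. For the boundary term, the analogue of Lemma~\ref{lem:rhA} is $\bigl|\int\wrho(\vk)\phi_h\,dx\bigr|\lesssim\|q'\|_{H^{-1}_\vk}^2+\vk^{-1}\delta^2\|q\|_{L^2}^2$: the leading part \eqref{wrho2} contributes, after integration by parts (its ``Wronskian-type'' remainder is proportional to $(\vr\sbrack{2})'$, hence $O(\vk^{-2}\|q\|_{L^2}^2)$), a diagonal term bounded by $\|q'\|_{H^{-1}_\vk}^2$ together with $\phi_h$-commutators that gain a power of $\vk$, while $\wrho\sbrack{\geq4}$ is controlled as in \eqref{ET1 Sob +}. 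Finally, the higher-order currents $\wj\sbrack{\geq4}(\vk)$ are estimated as in Lemma~\ref{lem:Currents-err}, using the negative-regularity local smoothing and a priori bounds already available from Proposition~\ref{prop:mKdV-LS} and Proposition~\ref{prop:alpha} (legitimate since $\|q(0)\|_{H^s}\le\|q(0)\|_{L^2}\le\delta$ for $s<0$) to place all but two factors of $q$ in $L^\infty_{t,x}$; these come with a $\vk$-gain and a factor $\delta^2$.

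Assembling these yields a $\vk$-pointwise bound of the schematic form $\|(\psi_h^6q)''\|_{L^2_tH^{-1}_\vk}^2\lesssim\sup_t\|q(t)'\|_{H^{-1}_\vk}^2+\vk^{-1}\delta^2\|q\|_{L^\infty_tL^2}^2+(\vk\text{-gaining errors involving }\|q\|_{X^{\sigma+1}})$. For $0<\sigma<\tfrac12$ I would multiply by $\vk^{2\sigma-1}$ and integrate over $\vk\in[\kappa,\infty)$: Lemma~\ref{lem:EquivNorm} turns the left side into $\|(\psi_h^6q)''\|_{L^2_tH^{\sigma-1}_\kappa}^2$ and the first right-hand term into $\sup_t\|q(t)'\|_{H^{\sigma-1}_\kappa}^2$, which by the a priori estimate \eqref{equi +} of Proposition~\ref{p:higher} is $\lesssim\|q(0)'\|_{H^{\sigma-1}_\kappa}^2+\kappa^{2\sigma-1}\delta^2\|q(0)\|_{L^2}^2$, while the $\vk^{-1}$-term integrates (using $\sigma<\tfrac12$) to the same $\kappa^{2\sigma-1}$ power; for $\sigma=0$ one simply takes $\vk=\kappa$ throughout. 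Choosing $\delta$ small to absorb the $\|q\|_{X^{\sigma+1}}$ errors in a standard bootstrap and taking the supremum over $h$ gives \eqref{mKdV-LS-HF +}, and then \eqref{mKdV-LS +} follows by the low/high-frequency split exactly as at the end of the proof of Proposition~\ref{prop:mKdV-LS}, with the low-frequency piece controlled by \eqref{equi +} at $\kappa=1$.

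I expect the main obstacle to be the bookkeeping in the coercivity and boundary-term steps: one must verify that after the $-2\vk\,j_\mKdV$ twist the quadratic current really collapses to the single coercive symbol $\tfrac{\xi^4}{4\vk^2+\xi^2}$ with only genuinely lower-order, $\vk$-gaining remainders, and --- dually --- that $\int\wrho(\vk)\phi_h\,dx$ is organized so that the pieces that look $\vk$-independent cancel against the mass-conservation part of the current, leaving a bound by $\|q'\|_{H^{-1}_\vk}^2$ plus $\vk$-gaining errors rather than a contribution that would diverge upon integration in $\vk$.
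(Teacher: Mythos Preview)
Your overall architecture matches the paper's exactly: same density $\wrho(\vk)=qr-2\vk\rho(\vk)$, same integrated conservation law against $\psi_h^{12}$, same coercive quadratic term $\|(\psi_h^6q)''\|_{H^{-1}_\vk}^2$, same boundary bound by $\|q'\|_{H^{-1}_\vk}^2+\vk^{-1}\|q\|_{L^2}^2$, and the same $\vk$-integration via Lemma~\ref{lem:EquivNorm} followed by the low/high split. The paper records these as Lemmas~\ref{L1} and~\ref{L2} and then assembles them precisely as you describe.

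The one genuine gap is in your treatment of $\wj\sbrack{\geq4}$. You propose to estimate it ``as in Lemma~\ref{lem:Currents-err}'' using the already-available negative-regularity bounds, asserting the result ``comes with a $\vk$-gain.'' But that gain does not follow from the methods of Lemma~\ref{lem:Currents-err} alone: multiplying \eqref{jmKdV-err} by $2\vk$ leaves an $O(1)$ contribution (from the $\vk^{-1}$ term), and the mass-current quartic $-3q^2r^2$ is likewise $\vk$-independent. The $\vk^{-1}$ decay only emerges after these leading quartic pieces are cancelled against each other --- a higher-order echo of the quadratic renormalization you correctly flag, but one that must be exhibited explicitly. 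The paper does this in Lemma~\ref{L2}: it works directly at the $L^2$ level with the norm $\normM{q}^2=\|q\|_{X^1}^2+\|q\|_{L^\infty_tL^2}^2$, splits the quartic remainder into two pieces $\err_1,\err_2$ (see the decomposition after \eqref{jdecomp+}), and shows e.g.\ that $\Re\err_1=q^2r\,\tfrac{r''}{4\vk^2-\p^2}+qr^2\,\tfrac{q''}{4\vk^2-\p^2}$, while $\err_2$ requires a careful integration by parts to expose the cancellation. The tools here (Gagliardo--Nirenberg via \eqref{GN}, the $L^4_tL^\infty$ bound \eqref{oomph}, and the $L^2$-level analogues \eqref{ET1 LS +}, \eqref{ET2 LS +}) are different from those in Section~\ref{S:6}. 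So the obstacle is not where you predicted (coercivity/boundary) but in the error term, and it requires a fresh computation rather than a citation.
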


To prove Proposition~\ref{P:ls}, we use the microscopic conservation law for \(\wrho(\vk)\),
\[
\p_t\wrho + \p_x\wj = 0,
\]
where the current
\[
\wj (\vk):= (qr)'' - 3(q'r' + q^2r^2) - 2\vk j_{\mKdV}(\vk).
\]
We will first establish analogues of \eqref{jmKdV-quad}, \eqref{rhoA}, and \eqref{jmKdV-err}.  We then use these as in the proof of Proposition~\ref{prop:mKdV-LS} to derive \eqref{mKdV-LS +} and \eqref{mKdV-LS-HF +}.

We start with the analogues of the estimates \eqref{jmKdV-quad} and \eqref{rhoA}. 

\begin{lem}\label{L1}
Let \(q\in \Schwartz\) satisfy \(\|q\|_{L^2}\leq \delta\) and let \(\Psi_h\) be as in \eqref{phi-def}. Then 
\begin{align}
\left|\Re\int \wrho(x;\vk)\,\Psi_h(x)\,dx\right|&\lesssim \|q'\|_{H_\vk^{-1}}^2 + \vk^{-1}\|q\|_{L^2}^2,\label{rhoA+}\\
\Re \int \wj\sbrack 2(x;\vk)\,\psi_h^{12}(x)\,dx & = \mp 3\|(\psi_h^6q)''\|_{H^{-1}_\vk}^2 +\bigO\Big( \|q'\|_{H^{-1}_\vk}^2+ \vk^{-2}\|q\|^2_{L^2}\Big)\label{jmKdV-quad +}\\
&\quad + \bigO\Bigl(\|(\psi_h^6 q)''\|_{H^{-1}_\vk}\bigl(\|q'\|_{H^{-1}_\vk} + \vk^{-1}\|q\|_{L^2}\bigr)\Bigr),\notag
\end{align}
uniformly for  \(\vk\geq 1\) and $h\in\R$.
\end{lem}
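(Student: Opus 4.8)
The two claims \eqref{rhoA+} and \eqref{jmKdV-quad +} mirror the pairing of Lemmas~\ref{lem:rhA} and~\ref{lem:Currents-quad} and should be proved in essentially the same way, starting from the series decomposition $\wrho(\vk) = \wrho\sbrack{2}(\vk) + \wrho\sbrack{\geq 4}(\vk)$.  The first step is to identify the higher-order tails.  Since $\wrho = qr - 2\vk\rho$, we have $\wrho\sbrack{\geq 4} = -2\vk\,\rho\sbrack{\geq 4}$, where $\rho\sbrack{\geq 4}(\vk) = q\cdot\big(\tfrac{g_{21}}{2+\vr}\big)\sbrack{\geq 3} - r\cdot\big(\tfrac{g_{12}}{2+\vr}\big)\sbrack{\geq 3}$ by \eqref{rho higher}.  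Using the $L^2$-based estimates \eqref{ET1 Sob +} for $\big(\tfrac{g_{12}}{2+\vr}\big)\sbrack{\geq 3}$ derived in the proof of Proposition~\ref{p:higher} (which gives $\|\big(\tfrac{g_{12}}{2+\vr}\big)\sbrack{\geq 3}\|_{L^2} \lesssim \vk^{-2}\delta^2\|q\|_{L^2}$), one gets $\|\wrho\sbrack{\geq 4}(\vk)\|_{L^1} \lesssim \vk^{-1}\delta^2\|q\|_{L^2}^2$.  This bounds the tail contribution to \eqref{rhoA+} by $\vk^{-1}\delta^2\|q\|_{L^2}^2$, which is absorbed into the stated error (the factor $\delta^2$ being harmless for $\delta$ small), and likewise disposes of the tail terms when $\wj = \wj\sbrack{2} + \wj\sbrack{\geq 4}$ is split — since $\wj\sbrack{\geq 4}$ is (up to the $2\vk$ factor, absorbed into the error) the tail $j_{\mKdV}\sbrack{\geq 4}(\vk)$, whose magnitude is already controlled, pointwise in time, by the quadrilinear estimates behind Lemma~\ref{lem:Currents-err}; the key point is that for Schwartz data these tails are genuinely lower order in $\vk$.

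\textbf{The quadratic terms.}  For \eqref{rhoA+}, substitute $\wrho\sbrack{2}(\vk) = \tfrac12\big(q\cdot\tfrac{r'}{2\vk+\p} - \tfrac{q'}{2\vk-\p}\cdot r\big)$ from \eqref{wrho2} and pair against $\phi_h$.  Writing $\tfrac{q'}{2\vk-\p} = \tfrac{\p}{2\vk-\p}q$ and recalling that $\tfrac{\p}{2\vk-\p}$ is bounded from $H^{-1}_\vk$ to $H^{-1}_\vk$ uniformly, one bounds $|\int \wrho\sbrack{2}\phi_h\,dx|$ by $\|\phi_h q'\|_{H^{-1}_\vk}\|q\|_{H^{-1}_\vk}$-type products; since $\phi_h$ is a bounded multiplier on each $H^{\pm1}_\vk$ (its derivative being a bounded Schwartz function) and $\|q\|_{H^{-1}_\vk}\lesssim \vk^{-1}\|q\|_{L^2}$, this is $\lesssim \|q'\|_{H^{-1}_\vk}^2 + \vk^{-1}\|q\|_{L^2}^2$ after a Cauchy--Schwarz with weights.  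For \eqref{jmKdV-quad +}, use $\wj\sbrack{2}(\vk) = (qr)'' - 3(q'r') - 2\vk\, j_{\mKdV}\sbrack{2}(\vk)$.  The term $(qr)''$ paired against $\psi_h^{12}$ integrates by parts to $\int qr\,(\psi_h^{12})''\,dx$, which is $\bigO(\|q'\|_{H^{-1}_\vk}^2 + \vk^{-2}\|q\|_{L^2}^2)$ by the same kind of estimate as in the proof of Lemma~\ref{lem:rhA}.  For the remaining terms I would follow the proof of \eqref{jmKdV-quad} verbatim, but now carrying one extra derivative: the main term $-6\vk\|(\psi_h^6 q)'\|_{H^{-1}_\vk}^2$ from \eqref{jmKdV-quad}, multiplied by the extra $-2\vk$ and combined with the $-3\bR[q',r']$ contribution, produces the leading term $\mp 3\|(\psi_h^6 q)''\|_{H^{-1}_\vk}^2$ (via $\|\cdot\|^2_{H^{-1}_\vk}$ of a derivative $=$ $\|\cdot\|^2$ up to lower order in $\vk$, cf.~\eqref{X-Smoothing}), while all the commutators $[\psi_h^6, \tfrac{\p}{2\vk\mp\p}]$ etc. are estimated exactly as in Lemma~\ref{lem:Currents-quad}, now against one more derivative, yielding the cross term $\|(\psi_h^6 q)''\|_{H^{-1}_\vk}(\|q'\|_{H^{-1}_\vk}+\vk^{-1}\|q\|_{L^2})$ and the pure error $\|q'\|_{H^{-1}_\vk}^2 + \vk^{-2}\|q\|_{L^2}^2$.

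\textbf{Main obstacle.}  The delicate point is bookkeeping the interplay of the two derivatives against the $2\vk$ prefactor: one must check that every commutator generated when moving $\psi_h^6$ through $\tfrac{\p^2}{(2\vk-\p)(2\vk+\p)}$-type operators really does gain enough powers of $\vk$ to land in the advertised error, and that the only surviving top-order piece is the single coercive term $\mp 3\|(\psi_h^6 q)''\|_{H^{-1}_\vk}^2$ rather than something like $\vk\|(\psi_h^6q)'\|_{H^{-1}_\vk}\|(\psi_h^6q)''\|_{H^{-1}_\vk}$ with the wrong homogeneity.  This is exactly the kind of cancellation exhibited in the proof of \eqref{jmKdV-quad}, so I expect it to go through, but it requires care in identifying which products to integrate by parts first and which Sobolev weight ($H^{\pm1}_\vk$ or $H^{\pm1/2}_\vk$) to put on each factor.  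Once \eqref{rhoA+} and \eqref{jmKdV-quad +} are in hand, the passage to the local smoothing bounds \eqref{mKdV-LS +} and \eqref{mKdV-LS-HF +} is a direct adaptation of the argument in Proposition~\ref{prop:mKdV-LS}, using Lemma~\ref{lem:EquivNorm} and the equicontinuity estimate \eqref{equi +} from Proposition~\ref{p:higher} in place of \eqref{APBound}.
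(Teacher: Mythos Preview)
Your treatment of \eqref{rhoA+} is essentially the paper's: use \eqref{wrho2} for the quadratic part and \eqref{ET1 Sob +} for the tail $\wrho\sbrack{\geq 4}=-2\vk\rho\sbrack{\geq 4}$.

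For \eqref{jmKdV-quad +}, however, there is a genuine gap. You claim that the mass contribution $\int qr\,(\psi_h^{12})''\,dx=\pm\int|q|^2(\psi_h^{12})''\,dx$ is $\bigO(\|q'\|_{H^{-1}_\vk}^2+\vk^{-2}\|q\|_{L^2}^2)$. This is false: the left side is independent of $\vk$, while the right side tends to zero as $\vk\to\infty$. The problem is that in your decomposition $\wj\sbrack{2}=(qr)''-3q'r'-2\vk j_\mKdV\sbrack{2}$, the term $(qr)''$ must be \emph{combined} with the piece $-2\vk(\bR[q,r])''$ coming from $j_\mKdV\sbrack{2}$ before estimating; each is individually of size $\|q\|_{L^2}^2$, but together they yield $\int(qr-2\vk\bR[q,r])(\psi_h^{12})''\,dx$, which has a derivative structure and is genuinely small. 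If you instead follow the proof of \eqref{jmKdV-quad} verbatim and multiply its errors by $2\vk$, you inherit $2\vk\int\bR[q,r](\psi_h^{12})''$ bounded by $2\vk\|q\|_{H^{-1/2}_\vk}^2\sim\|q\|_{L^2}^2$, the same obstruction.

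The paper resolves this by introducing the paraproduct $\wbR$ with symbol $\wR(\xi,\eta)=\tfrac{-i\xi}{2(2\vk-i\xi)}+\tfrac{i\eta}{2(2\vk+i\eta)}$, so that $\wrho\sbrack{2}=\wbR[q,r]$ and $\wj\sbrack{2}=(\wbR[q,r])''-3\wbR[q',r']$. Since $\wR=1-2\vk R$, this is algebraically equivalent to your splitting, but the cancellation is now built in: the symbol $\wR$ carries an extra derivative, giving $\Re\int\wbR[q,r](\psi_h^{12})''\,dx=\Re\int\tfrac{\xi^2}{4\vk^2+\xi^2}\hat q(\xi)\overline{\widehat{q(\psi_h^{12})''}}(\xi)\,d\xi$, which is immediately $\lesssim\|q'\|_{H^{-1}_\vk}^2+\vk^{-2}\|q\|_{L^2}^2$. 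The leading term then comes cleanly from $-3\Re\int\wbR[(\psi_h^6q)',(\psi_h^6r)']\,dx=\mp3\|(\psi_h^6q)''\|_{H^{-1}_\vk}^2$, and the commutator terms involve $[\psi_h^6,\tfrac{\p^3}{4\vk^2-\p^2}]$ rather than the lower-order commutators from Lemma~\ref{lem:Currents-quad}. Your approach can be salvaged by grouping terms this way, but as written it misses the essential cancellation.
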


\begin{proof}
Using \eqref{wrho2}, we estimate
\begin{align*}
\left|\Re\int \wrho\sbrack 2(x;\vk)\,\Psi_h(x)\,dx\right| &=\left|\Re \int \tfrac{\xi^2}{4\vk^2+\xi^2} \hat q(\xi) \overline{\widehat{q\Psi_h}}(\xi)\, d\xi\right|\\
&\lesssim \|q'\|_{H^{-1}_\vk}\|(\Psi_h q)'\|_{H^{-1}_\vk}\lesssim \|q'\|_{H^{-1}_\vk}^2 + \vk^{-1}\|q'\|_{H^{-1}_\vk}\|q\|_{L^2}.
\end{align*}
Combining this with \eqref{ET1 Sob +} yields \eqref{rhoA+}.

We turn now to \eqref{jmKdV-quad +}.  The quadratic part of the current satisfies
\[
\wj\sbrack 2(\vk) = \bigl(\wbR[q,r]\bigr)'' - 3\wbR[q',r'],
\]
where the paraproduct \(\wbR[q,r]\) has symbol
\[
\wR(\xi,\eta) := \tfrac{-i\xi}{2(2\vk - i\xi)} + \tfrac{i\eta}{2(2\vk + i\eta)}.
\]
Notice also that \eqref{wrho2} shows
\[
\wrho\sbrack 2(x;\vk) = \wbR[q,r](x) = \tfrac1{2\pi}\int \wR(\xi,\eta)\hat q(\xi)\hat r(\eta) e^{ix(\xi + \eta)}\,d\xi\,d\eta.
\]

Taking the real part we have
\[
\Re\int \wbR[(\psi_h^6q)',(\psi_h^6r)']\,dx = \pm \|(\psi_h^6q)''\|_{H^{-1}_\vk}^2,
\]
and hence we may write
\begin{align}
\label{smore}\Re\int \wj\sbrack 2(\vk)\,\psi_h^{12}\,dx &= \mp 3\|(\psi_h^6q)''\|_{H^{-1}_\vk}^2 + \Re \int \wbR[q,r](\psi^{12}_h)''\,dx\\
&\quad - 3\Re\int \Bigl(\psi_h^{12}\wbR[q',r'] - \wbR[(\psi_h^6q)',(\psi_h^6r)']\Bigr)\,dx.\notag
\end{align}

Proceeding as in the proof of \eqref{rhoA+}, we may bound the second term on \(\RHS{smore}\) by
\[
\left|\Re\int \wrho\sbrack 2(x;\vk)\,(\psi^{12}_h)''(x)\,dx\right|\lesssim \|q'\|_{H^{-1}_\vk}^2 + \vk^{-2}\|q\|_{L^2}^2.
\]

The remaining term on \(\RHS{smore}\) is given by
\begin{align*}
&-3 \Re\int \Bigl(\psi_h^{12}\wbR[q',r'] - \wbR[(\psi_h^6q)',(\psi_h^6r)']\Bigr)\,dx\\
&\quad= 3 \Re \int \Bigl( [\psi_h^6, \tfrac{\p^3}{4\vk^2 - \p^2}]q\cdot (\psi_h^6 r)' - [\psi_h^6, \tfrac{\p^3}{4\vk^2 - \p^2}]q\cdot (\psi_h^6)' r -\tfrac{(\psi^6_hq)'''}{4\vk^2-\p^2}(\psi_h^6)' r\Bigr)\,dx.
\end{align*}
Integrating by parts, we may bound
\begin{align*}
&\left|\Re\int\Bigl(\psi_h^{12}\wbR[q',r'] - \wbR[(\psi_h^6q)',(\psi_h^6r)']\Bigr)\,dx\right|\\
&\quad \lesssim\|(\psi_h^6 q)''\|_{H^{-1}_\vk} \bigl(\|q'\|_{H^{-1}_\vk} + \vk^{-1}\|q\|_{L^2}\bigr) +\|q'\|_{H^{-1}_\vk}^2 + \vk^{-2}\|q\|^2_{L^2},
\end{align*}
which completes the proof of \eqref{jmKdV-quad +}.
\end{proof}

It remains to prove an analogue of the estimate \eqref{jmKdV-err}. To this end, we denote
\[
\normM{q}^2 := \|q\|_{X^1}^2 + \|q\|_{L^\infty_tL^2}^2,
\]
which corresponds to the local smoothing norm in the case \(s = 0\).

\begin{lem}\label{L2}
Let \(q\in \Cont([-1,1];\Schwartz)\) satisfy \(\|q(0)\|_{L^2}\leq \delta\). We have
\eq{jmKdV-err +}{
\left\|\Re\int \wj\sbrack{\geq 4}(\vk)\,\psi_h^{12}\,dx\right\|_{L^1_t}\lesssim \vk^{-1}\delta^2\normM{q}^2,
}
uniformly for  \(\vk\geq 1\) and $h\in\R$.
\end{lem}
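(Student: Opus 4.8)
The plan is to follow the template set by Lemma~\ref{lem:Currents-err}, specifically the proof of \eqref{jmKdV-err}, since $\wj\sbrack{\geq 4}(\vk)$ is (up to the factor $2\vk$ and two cleanly-controlled derivatives of $qr$ coming from the $(qr)'' - 3(q'r'+q^2r^2)$ term) nothing more than $-2\vk\, j_{\mKdV}\sbrack{\geq 4}(\vk)$. First I would record the explicit formula for $\wj\sbrack{\geq 4}(\vk)$ by combining $\wj(\vk) = (qr)'' - 3(q'r' + q^2r^2) - 2\vk j_{\mKdV}(\vk)$ with the expansion of $j_{\mKdV}\sbrack{\geq 4}$ extracted from Corollary~\ref{C:microscopic} and \eqref{rho higher}; the purely quadratic pieces $(qr)''$ and $3q'r'$ contribute only to $\wj\sbrack{2}$ and drop out, so
$$
\wj\sbrack{\geq 4}(\vk) = -3(q^2r^2) - 2\vk\, j_{\mKdV}\sbrack{\geq 4}(\vk).
$$

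Next I would estimate each piece. The term $\|\int q^2 r^2 \psi_h^{12}\,dx\|_{L^1_t}$ is handled by a Littlewood--Paley decomposition exactly as the quartic terms $q^2 r \frac{g_{21}}{2+\vr}$ were treated at the end of the proof of \eqref{jmKdV-err}: put the two lowest frequencies in $L^\infty_{t,x}$ using Bernstein together with $\|q\|_{L^\infty_tL^2}\le\delta$, and the two highest in $L^2_{t,x}$ using $\|(\psi_h^3 q)_N\|_{L^2_{t,x}}\lesssim N^{-1}\normM{q}$ (the $s=0$ case of the $X^1$ bound via Bernstein), which yields a bound $\lesssim \vk^{0}\delta^2\normM{q}^2$ — actually better than needed, so certainly $\lesssim \vk^{-1}\delta^2\normM{q}^2$ after noting there is no $\vk$ dependence to worry about here, or simply absorbing it. For the main term $2\vk\, j_{\mKdV}\sbrack{\geq 4}(\vk)$, I would simply rerun the proof of \eqref{jmKdV-err} with $s=0$: write $j_{\mKdV}\sbrack{\geq 4}$ as in that proof, use the decomposition \eqref{lo+hi} of $\psi_h^3(4\vk^2 + 2\vk\p + \p^2)q$ into low and high frequency parts, estimate the high-frequency part via Bernstein and \eqref{ET1 LS} (with $\sigma=s+1=1$), and estimate the low-frequency part via \eqref{stack Dual mKdV} (again with $s=0$). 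The extra factor of $2\vk$ is compensated because the proof of \eqref{jmKdV-err} already produces the bound $[\vk^{-1}+\vk^{-2(2s+1)}\log^4|2\vk|]\delta^2\normM{q}^2$ which, at $s=0$, is $[\vk^{-1}+\vk^{-2}\log^4|2\vk|]\delta^2\normM{q}^2$; multiplying by $\vk$ gives $[1 + \vk^{-1}\log^4|2\vk|]\delta^2\normM{q}^2$, which is \emph{not} $O(\vk^{-1})$. So the naive transplant loses a power of $\vk$.

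Consequently the main obstacle — and the part requiring genuine care rather than transcription — is recovering that lost power of $\vk$. This is precisely the same phenomenon flagged in the remark after Lemma~\ref{lem:Currents-err}: the bound \eqref{jmKdV-err} ``could be improved by a parallel analysis'' of the more delicate type carried out for \eqref{jmKdVdiff-err}. Here one genuinely needs the sharper accounting. The gain must come from the structure $(2\vk+\p)q = 2\vk q + q'$ (and similarly for the second-derivative terms): every surviving quartic term in $j_{\mKdV}\sbrack{\geq 4}$ carries, after integration by parts against $\psi_h^{12}$, enough derivatives that the worst $\vk^{-1}$ from the $\frac{g_{12}}{2+\vr}$-type factors (via \eqref{ET1 LS}, \eqref{stack Dual mKdV}) combines with an additional $\vk^{-1}$ extracted from the operators $(2\vk\pm\p)^{-1}$ hidden inside $g_{12}\sbrack{\geq 3}$ and $\big(\tfrac{g_{12}}{2+\vr}\big)\sbrack{\geq 3}$, using $\|(2\vk\mp\p)^{-1}\|_{\op}\le (2\vk)^{-1}$. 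Concretely I would split $\big(\tfrac{g_{21}(\vk)}{2+\vr(\vk)}\big)\sbrack{\geq 3} = \tfrac12 g_{21}\sbrack{\geq 3} - \tfrac{g_{21}\vr}{2(2+\vr)}$ via \eqref{more sbrack'}, use \eqref{g21-ID} to write $g_{21}\sbrack{\geq 3} = (2\vk+\p)^{-1}(r\vr)$, thereby exposing one extra $\vk^{-1}$, and then re-estimate using \eqref{stack mKdV L2}-style $L^2_tL^2_x$ bilinear bounds together with Lemma~\ref{L:LpIp mKdV}; for the term with two derivatives $\partial^2 q \cdot \frac{g_{21}}{2+\vr}$ one commutes one derivative onto the low-frequency factor. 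After this bookkeeping every term is $\lesssim \vk^{-1}\delta^2\normM{q}^2$, and summing over the (finitely many) terms and taking $\delta$ small completes the proof of \eqref{jmKdV-err +}.
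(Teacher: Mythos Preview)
Your proposal contains a genuine gap, and it originates at the very first step. You write that the term $-3q^2r^2$ ``yields a bound $\lesssim \vk^{0}\delta^2\normM{q}^2$ --- actually better than needed, so certainly $\lesssim \vk^{-1}\delta^2\normM{q}^2$.'' This is backwards: for $\vk\geq 1$ the bound $\vk^{0}=1$ is \emph{worse} than $\vk^{-1}$, not better. The integral $\int q^2r^2\,\psi_h^{12}\,dx$ has no $\vk$-dependence whatsoever, so no amount of Littlewood--Paley analysis on this term in isolation can produce a factor of $\vk^{-1}$. The same issue recurs in the leading quartic pieces of $4\vk\,q^2r\,\tfrac{g_{21}}{2+\vr}$ and $-4\vk\,r^2q\,\tfrac{g_{12}}{2+\vr}$: since $2\vk\,\tfrac{r}{2\vk+\p}\to r$ as $\vk\to\infty$, these too are order one. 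Your proposed fix at the end (squeezing an extra $\vk^{-1}$ out of the resolvents inside $g_{12}\sbrack{\geq 3}$) can only help the $\sbrack{\geq 3}$ contributions; it does nothing for these genuinely quartic, $\vk^0$-sized pieces.

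The paper's proof is built around precisely the cancellation you are missing. It splits $-3q^2r^2=-2q^2r^2-q^2r^2$ and pairs the pieces with the other quartic contributions to form
\[
\err_1 := 4\vk\, q^2r\cdot\big(\tfrac{g_{21}}{2 + \vr}\big)\sbrack 1 - 4\vk\, r^2q\cdot\big(\tfrac{g_{12}}{2 + \vr}\big)\sbrack 1 - 2q^2r^2
\]
and an analogous $\err_2$ involving the $\sbrack{3}$-parts. The crucial observation is that taking the \emph{real part} of $\err_1$ produces an extra derivative: one computes $\Re\err_1 = q^2r\,\tfrac{r''}{4\vk^2-\p^2}+qr^2\,\tfrac{q''}{4\vk^2-\p^2}$, and the $\tfrac{\p^2}{4\vk^2-\p^2}$ supplies the needed $\vk^{-1}$. (This is why the lemma is stated for $\Re\int\wj\sbrack{\geq 4}$ rather than the full quantity.) For $\err_2$ a careful integration by parts does the analogous job. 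Separately, the paper does not transplant the $s<0$ trace-ideal machinery at all; instead it develops new $s=0$-specific estimates \eqref{ET1 LS +} and \eqref{ET2 LS +} via the Gagliardo--Nirenberg bound $\|\psi_h^3q\|_{L^4_tL^\infty}\lesssim\delta^{1/2}\normM{q}^{1/2}$, which is both simpler and sharper here than the route you sketch.
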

\begin{proof}
We first establish several variants of the estimates in Corollary~\ref{C:ET}, inspired by the decomposition \eqref{jdecomp+} below. Using that
\begin{align}\label{GN}
\|f\|_{L^\infty}\lesssim \|f'\|_{L^2}^{\frac12} \|f\|_{L^2}^{\frac12},
\end{align}
we obtain
\eq{oomph}{
\|\psi^3_h q\|_{L^4_tL^\infty_x} \lesssim\|(\psi^3_h q)'\|_{L^2_{t,x}}^{\frac12}\|\psi^3_h q\|_{L^\infty_tL^2_x}^{\frac12}\lesssim \delta^{\frac12} \normM{q}^{\frac12}.
}
Thus, using \eqref{g12 1 3} and \eqref{psi up down}, we may bound
\begin{align*}
\|\psi^3_h g_{12}\sbrack 1(\vk)\|_{L^4_tL^\infty} \lesssim \vk^{-1}\delta^{\frac12} \normM{q}^{\frac12}.
\end{align*}
From \eqref{rho-Linfty} we get
\begin{align}\label{rho-Linfty'}
\|\vr(\vk)\|_{L_{t,x}^\infty}\lesssim \vk^{-1}\delta^2,
\end{align}
and thence using \eqref{psi up down} again, we find
\begin{align*}
\|\psi^3_h g_{12}\sbrack {\geq 3}(\vk)\|_{L^4_tL^\infty}&\lesssim \vk^{-1}\|\psi^3_h q\|_{L^4_tL^\infty_x} \|\vr(\vk)\|_{L_{t,x}^\infty}\lesssim \vk^{-2}\delta^{\frac52}\normM{q}^{\frac12}.
\end{align*}
From the identity \eqref{QuadraticID} and the estimate \eqref{rho-Linfty'}, taking \(0<\delta\ll1\) sufficiently small we obtain
\[
\|\psi^6_h\vr(\vk)\|_{L^2_tL^\infty} \lesssim \|\psi^3_hg_{12}\|_{L^4_tL^\infty}\|\psi^3_hg_{21}\|_{L^4_tL^\infty} \lesssim \vk^{-2}\delta \normM{q}.
\]
Consequently, using \eqref{g12-ID} we get
\begin{align*}
\|\psi^6_h g_{12}\sbrack{\geq 3}(\vk)\|_{L^2_tH^1_\vk}
&\lesssim \|q\|_{L^\infty_tL^2}\|\psi^6_h\vr(\vk)\|_{L^2_tL^\infty}+\bigl\| (\psi_h^6)'\tfrac{q\vr(\vk)}{2\vk-\p}\bigr\|_{L^2_{t,x}} \\
&\lesssim \vk^{-2}\delta^2\normM q + \vk^{-1} \|q\|_{L^\infty_t L^2} \|\vr(\vk)\|_{L_{t,x}^\infty}\lesssim \vk^{-2}\delta^2\normM q.
\end{align*}
Recalling the identity \eqref{more sbrack'} and using \eqref{rho-ID} to write \(\vr'\) in terms of \(q,r,g_{12},g_{21}\), we may apply these estimates to obtain
\eq{ET1 LS +}{
\|\psi^6_h\big(\tfrac{g_{12}(\vk)}{2 + \vr(\vk)}\big)\sbrack{\geq 3}\|_{L^2_tH^1_\vk} \lesssim\vk^{-2}\delta^2\normM q.
}

Using \eqref{g12-ID} and \eqref{psi up down} again, we may bound
\[
\|\psi_h^9 g_{12}\sbrack{\geq3}(\vk)\|_{L^{\frac43}_tL^\infty}\lesssim \vk^{-1}\|\psi_h^3q\|_{L^4_tL^\infty}\|\psi_h^6\vr(\vk)\|_{L^2_tL^\infty}\lesssim \vk^{-3}\delta^{\frac32}\normM{q}^{\frac32},
\]
Using the identity \eqref{E:vr gr 4}, we estimate
\begin{align*}
\|\psi_h^{12}\vr\sbrack{\geq 4}\|_{L^1_tL^\infty} &\lesssim \|\psi_h^6\vr\|_{L^2_tL^\infty}^2  +  \|\psi_h^9g_{12}\sbrack{\geq 3}\|_{L^{\frac43}_tL^\infty}\|\psi_h^3g_{21}\|_{L^4_tL^\infty} +\|\psi_h^3g_{12}\sbrack 1\|_{L^4_tL^\infty}\|\psi_h^9g_{21}\sbrack{\geq 3}\|_{L^{\frac43}_tL^\infty}\\
&\lesssim \vk^{-4}\delta^2 \normM{q}^2.
\end{align*}
Applying \eqref{g12-ID} once again we obtain
\begin{align*}
\|\psi_h^{12}g_{12}\sbrack{\geq 5}(\vk)\|_{L^1_tH^1_\vk} &\lesssim \|q\|_{L^\infty_tL^2}\|\psi_h^{12}\vr\sbrack{\geq 4}(\vk)\|_{L^1_tL^\infty}\lesssim \vk^{-4}\delta^3\normM{q}^2.
\end{align*}
Finally, we use the identity \eqref{more sbrack''} with the above estimates, as well as \eqref{rho-ID} to replace \(\vr'\), to obtain
\eq{ET2 LS +}{
\|\psi_h^{12}\big(\tfrac{g_{12}(\vk)}{2 + \vr(\vk)}\big)\sbrack{\geq 5}\|_{L^1_tH^1_\vk} \lesssim  \vk^{-4}\delta^3\normM{q}^2.
}

We turn now to estimating the current.  Using Corollary~\ref{C:microscopic}, we have
\begin{align}\label{jdecomp+}
\wj\sbrack{\geq 4} (\vk)&= 
-2\vk q''\cdot\big(\tfrac{g_{21}}{2 + \vr}\big)\sbrack{\geq 3} + 2\vk r''\cdot \big(\tfrac{g_{12}}{2 + \vr}\big)\sbrack{\geq 3}\\
&\quad - 4\vk^2(2\vk + \p) q\cdot\big(\tfrac{g_{21}}{2 + \vr}\big)\sbrack{\geq 3} + 4\vk^2(2\vk - \p) r\cdot \big(\tfrac{g_{12}}{2 + \vr}\big)\sbrack{\geq 3}\notag\\ 
&\quad + 4\vk q^2r\tfrac{g_{21}}{2 + \vr} - 4\vk r^2q\tfrac{g_{12}}{2 + \vr} - 3q^2r^2.\notag
\end{align}

For the first two terms we apply the estimate \eqref{ET1 LS +} to bound
\begin{align*}
&\left\|\int 2\vk q''\cdot\big(\tfrac{g_{21}}{2 + \vr}\big)\sbrack{\geq 3}\,\psi_h^{12}\,dx\right\|_{L^1_t}\\
&\qquad\lesssim \vk \|\psi_h^6 q''\|_{L^2_tH^{-1}_\vk} \|\psi_h^6\big(\tfrac{g_{21}}{2 + \vr}\big)\sbrack{\geq 3}\|_{L^2_tH^1_{\vk}}\\
&\qquad\lesssim \vk^{-1}\delta^2\normM q\bigl(\|(\psi_h^6 q)''\|_{L^2_tH^{-1}_\vk} + \|q'\|_{L^\infty_tH^{-1}_\vk}+ \vk^{-1}\|q\|_{L_t^\infty L^2}\big),
\end{align*}
which is acceptable.

We bound the sextic and higher order contributions of the remaining terms using \eqref{ET2 LS +} and \eqref{ET1 LS +}, as follows:
\begin{align*}
\left\|\int 4\vk^2(2\vk + \p) q\cdot  \big(\tfrac{g_{21}}{2 + \vr}\big)\sbrack{\geq 5}\ \psi_h^{12}\,dx\right\|_{L^1_t}
& \lesssim \vk^2 \|q\|_{L^\infty_tL^2}\|\psi_h^{12}\big(\tfrac{g_{12}}{2 + \vr}\big)\sbrack{\geq 5}\|_{L^1_tH^1_\vk}\\
&\lesssim \vk^{-2}\delta^4\normM{q}^2,\\
\left\|\int 4\vk q^2r\cdot  \big(\tfrac{g_{21}}{2 + \vr}\big)\sbrack{\geq 3}\ \psi_h^{12}\,dx\right\|_{L^1_t}
& \lesssim  \vk\|q\|_{L^\infty_tL^2}\|\psi_h^3q\|_{L^4_tL^\infty}^2\|\psi_h^6\big(\tfrac{g_{12}}{2 + \vr}\big)\sbrack{\geq 3}\|_{L^2_{t,x}}\\
&\lesssim \vk^{-2}\delta^4\normM{q}^2.
\end{align*}

It remains to consider the contributions of
\begin{align*}
\err_1 &:= 4\vk q^2r\cdot\big(\tfrac{g_{21}}{2 + \vr}\big)\sbrack 1 - 4\vk r^2q\cdot\big(\tfrac{g_{12}}{2 + \vr}\big)\sbrack 1 - 2q^2r^2,\\
\err_2 &:= - 4\vk^2(2\vk + \p) q\cdot\big(\tfrac{g_{21}}{2 + \vr}\big)\sbrack{3} + 4\vk^2(2\vk - \p) r\cdot \big(\tfrac{g_{12}}{2 + \vr}\big)\sbrack{3} - q^2r^2.
\end{align*}

For $\err_1$ we use the identity \eqref{more sbrack} to write
\[
\Re \err_1 = q^2r \tfrac{r''}{4\vk^2 - \p^2} + qr^2\tfrac{q''}{4\vk^2 - \p^2},
\]
so we may bound
\begin{align*}
&\left\|\Re \int \err_1\ \psi_h^{12}\,dx \right\|_{L^1_t}\\
&\qquad \lesssim \|q\|_{L^\infty_tL^2}\|\psi_h^3q\|_{L^4_tL^\infty}^2\Bigl(\vk^{-1}\|(\psi_h^6 q)''\|_{L^2_tH^{-1}_\vk} + \bigl\|[\psi_h^6, \tfrac{\p^2}{4\vk^2-\p^2}]q\bigr\|_{L^2_{t,x}}\Bigr)\\
&\qquad \lesssim \delta^2\normM q \Bigl(\vk^{-1}\|(\psi_h^6 q)''\|_{H^{-1}_\vk} + \vk^{-2}\normM{q} \Bigr),
\end{align*}
which is acceptable.

Recalling the identities \eqref{more sbrack}, \eqref{g12 1 3}, \eqref{g21 1 3}, \eqref{gamma 2}, we may integrate by parts to obtain
\begin{align*}
\int \err_2\,\psi_h^{12}\,dx &=\int 4\vk^2(2\vk + \p)q\cdot[\psi_h^{12},\tfrac1{2\vk +\p} ] \bigl( r\cdot \tfrac q{2\vk - \p}\cdot \tfrac r{2\vk + \p}\bigr)\,dx\\
&\quad + \int 4\vk^2(2\vk - \p)r\cdot [\psi_h^{12},\tfrac1{2\vk - \p}]\bigl(q\cdot \tfrac r{2\vk + \p}\cdot \tfrac q{2\vk - \p}\bigr)\,dx\\
&\quad + \int 6\vk^2 \Bigl(\tfrac {q'}{2\vk - \p}\cdot r - \tfrac {r'}{2\vk + \p}\cdot q\Bigr) \tfrac r{2\vk + \p}\cdot  \tfrac q{2\vk - \p}\,\psi_h^{12}\,dx\\
&\quad+\int 2\vk qr\Bigl(\tfrac{q'}{2\vk - \p}\cdot\tfrac r{2\vk + \p}- \tfrac q{2\vk - \p}\cdot\tfrac{r'}{2\vk + \p}\Bigr)\psi_h^{12}\,dx\\
&\quad - \int 2\vk^2 \Bigl(q' \cdot \tfrac r{2\vk + \p}- r' \cdot \tfrac q{2\vk - \p}\Bigr) \tfrac r{2\vk + \p}\cdot \tfrac q{2\vk - \p}\,\psi_h^{12}\,dx\\
&\quad + \int qr\cdot\tfrac{q'}{2\vk - \p}\cdot\tfrac{r'}{2\vk + \p}\,\psi_h^{12}\,dx.
\end{align*}
We then bound each of these terms by applying \eqref{psi up down} with \eqref{oomph} as follows:
\begin{align*}
&\left\|\int 4\vk^2(2\vk + \p)q\cdot[\psi_h^{12},\tfrac1{2\vk +\p} ] \bigl( r\cdot \tfrac q{2\vk - \p}\cdot \tfrac r{2\vk + \p}\bigr)\,dx\right\|_{L^1_t}\\
&\qquad \lesssim \vk^2 \|(2\vk + \p)q\|_{L^\infty_tH^{-1}_\vk}\|[\psi_h^{12},\tfrac1{2\vk +\p} ] \bigl( r\tfrac q{2\vk - \p} \tfrac r{2\vk + \p}\bigr)\|_{L^2_tH^1_\vk}\\
&\qquad \lesssim \vk^{-1}\|q\|_{L^\infty_tL^2}^2\|\psi_h^3q\|_{L^4_tL^\infty}^2\lesssim \vk^{-1}\delta^2\normM{q}^2,
\\
&\left\|\int 6\vk^2 \tfrac {q'}{2\vk - \p}\cdot r \cdot \tfrac q{2\vk - \p}\cdot  \tfrac r{2\vk +\p}\,\psi_h^{12}\,dx\right\|_{L^1_t} +\left\|\int 2\vk qr\tfrac{q'}{2\vk - \p}\tfrac r{2\vk + \p}\,\psi_h^{12}\,dx\right\|_{L^1_1}\\
&\qquad + \left\|\int 2\vk^2 q' \cdot \tfrac r{2\vk + \p}\cdot \tfrac q{2\vk - \p}\cdot \tfrac r{2\vk + \p}\,\psi_h^{12}\,dx\right\|_{L^1_t}\\
&\qquad\qquad  \lesssim \vk^{-1}\|\psi^3_h q\|_{L^4_tL^\infty}^2\|\psi_h^6q'\|_{L^2_{t,x}}\|q\|_{L^\infty_tL^2} \lesssim \vk^{-1}\delta^2\normM{q}^2,
\\
&\left\|\int qr\tfrac{q'}{2\vk - \p}\tfrac{r'}{2\vk + \p}\,\psi_h^{12}\,dx\right\|_{L^1_t}\\
&\qquad\lesssim \vk^{-1}\|\psi^3 q\|_{L^4_tL^\infty}^2\|\psi^6_h q'\|_{L^2_{t,x}}\|\tfrac{q'}{2\vk - \p}\|_{L^\infty_tL^2}\lesssim \vk^{-1}\delta^2\normM{q}^2,
\end{align*}
with identical estimates for the symmetric terms.

Combining the estimates for \(\wj\sbrack{\geq 4}\) we obtain the estimate \eqref{jmKdV-err +}.
\end{proof}

\bpf[Proof of Proposition~\ref{P:ls}]
We now argue as in the proof of Proposition~\ref{prop:mKdV-LS}, with \(\rho,j_\mKdV\) replaced by \(\wrho,\wj\) respectively, and the estimates \eqref{rhoA}, \eqref{jmKdV-quad}, \eqref{jmKdV-err} replaced by the estimates \eqref{rhoA+}, \eqref{jmKdV-quad +}, \eqref{jmKdV-err +} respectively, to obtain
\begin{align*}
\|(\psi_h^6 q)''\|_{L^2_tH^{-1}_\vk}^2 &\lesssim \epsilon\|(\psi_h^6 q)''\|_{L^2_tH^{-1}_\vk}^2 + (1 + \tfrac1\epsilon)\big(\|q'\|_{L^\infty_tH^{-1}_\vk}^2 + \vk^{-1}\|q\|_{L^\infty_tL^2}^2\big)\\
&\quad + \vk^{-1}\delta^2\normM{q}^2,
\end{align*}
where the implicit constant is independent of \(h,\vk,\epsilon\). Taking $\epsilon$ sufficiently small to defeat the implicit constant above and using \eqref{equi +} and the conservation of mass, we may bound
\begin{align*}
\|(\psi_h^6 q)''\|_{L^2_tH^{-1}_\vk}^2 &\lesssim \|q(0)'\|_{H^{-1}_\vk}^2+\vk^{-1}\|q(0)\|_{L^2}^2 + \delta^2\vk^{-1}\|q\|_{X^1}^2.
\end{align*}
Arguing as in Proposition~\ref{prop:mKdV-LS} and using the conservation of mass to bound the low frequencies, we obtain the estimates \eqref{mKdV-LS +} and \eqref{mKdV-LS-HF +} in the case \(\sigma = 0\).

If \(0<\sigma<\frac12\) we first use \eqref{mKdV-LS +} with \(\sigma = 0\) to bound $\|q\|_{X^1}$ and then integrate using \eqref{EquivNorm} to obtain
\begin{align*}
\|(\psi_h^6 q)''\|_{L^2_tH^{\sigma-1}_\kappa}^2 &\approx \int_\kappa^\infty \vk^{2\sigma}\|(\psi_h^6 q)''\|_{L^2_tH^{-1}_\vk}^2\,\tfrac{d\vk}\vk 
\lesssim \|q(0)'\|_{H^{\sigma-1}_\kappa}^2+ \kappa^{2\sigma-1}\|q(0)\|_{L^2}^2,
\end{align*}
and the proof of the estimates \eqref{mKdV-LS +} and \eqref{mKdV-LS-HF +} is completed similarly.
\epf

\appendix
\section{Ill-posedness}\label{S:A}

The key observation that drives everything in this section is the following:

\begin{lem}\label{L:A1}If $\psi:\R\to\C$ is a Schwartz function and $\psi_\lambda(x):=\lambda \psi(\lambda x)$, then
\begin{align}
\int \psi(x)\,dx &=0 \qtq{implies} \|  \psi_\lambda \|_{H^{\sigma}(\R)}^2
	\lesssim_\psi \begin{cases}  1 & : \sigma=-\tfrac12 \\ \lambda^{-1} + \lambda^{1+2\sigma} & :\sigma<-\frac12\end{cases} \label{mean0} \\
\intertext{whereas}
\int \psi(x)\,dx &\neq 0 \qtq{implies} \| \psi_\lambda \|_{H^\sigma (\R)}^2
	\gtrsim_\psi \begin{cases} \log \lambda& :\sigma=-\tfrac12 \\ 1 & :\sigma<-\frac12 \end{cases} \label{mean1}
\end{align}
uniformly for $\lambda\geq 2$.
\end{lem}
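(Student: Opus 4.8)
The statement is purely about the Sobolev norm of the rescaled bump $\psi_\lambda(x)=\lambda\psi(\lambda x)$, so the natural approach is to pass to the Fourier side and exploit the elementary scaling identity $\widehat{\psi_\lambda}(\xi)=\hat\psi(\xi/\lambda)$. Thus
\[
\|\psi_\lambda\|_{H^\sigma}^2=\int_\R (1+\xi^2)^\sigma\,|\hat\psi(\xi/\lambda)|^2\,d\xi
=\lambda\int_\R (1+\lambda^2\eta^2)^\sigma\,|\hat\psi(\eta)|^2\,d\eta,
\]
after the substitution $\xi=\lambda\eta$. Everything then reduces to estimating the weight $(1+\lambda^2\eta^2)^\sigma$ against the fixed rapidly decaying function $|\hat\psi(\eta)|^2$, separating the regions $|\eta|\lesssim\lambda^{-1}$ and $|\eta|\gtrsim\lambda^{-1}$. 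In the inner region the weight is comparable to $1$; in the outer region it is comparable to $(\lambda|\eta|)^{2\sigma}$, i.e. $\lambda^{2\sigma}|\eta|^{2\sigma}$.

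\textbf{Key steps.} First I would record the two regime estimates uniformly for $\lambda\ge 2$:
\[
\|\psi_\lambda\|_{H^\sigma}^2 \approx \lambda\!\!\int_{|\eta|\le 1/\lambda}\!\!|\hat\psi(\eta)|^2\,d\eta
 \;+\; \lambda^{1+2\sigma}\!\!\int_{|\eta|> 1/\lambda}\!\!|\eta|^{2\sigma}|\hat\psi(\eta)|^2\,d\eta .
\]
For the \emph{upper} bound \eqref{mean0}, the hypothesis $\int\psi=0$ means $\hat\psi(0)=0$, so (since $\psi$ is Schwartz) $|\hat\psi(\eta)|\lesssim_\psi |\eta|$ near the origin; hence the first integral is $O(\lambda^{-3})$, contributing $O(\lambda^{-2})$, which is dominated by $\lambda^{-1}$. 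For the second integral, when $\sigma<-\tfrac12$ the factor $|\eta|^{2\sigma}$ is integrable at infinity but singular at $0$; using $|\hat\psi(\eta)|^2\lesssim\min\{\eta^2,1\}$ and splitting at $|\eta|=1$, one gets $\int_{1/\lambda<|\eta|\le1}|\eta|^{2+2\sigma}+\int_{|\eta|>1}|\eta|^{2\sigma}\lesssim_\psi 1+\lambda^{-1-2\sigma}$ (the first term being $O(1)$ since $2+2\sigma>1$, noting $\sigma>-\tfrac32$ is not assumed but the bound $\lambda^{-1-2\sigma}$ absorbs any excess when $2+2\sigma\le 1$), so the contribution is $\lesssim_\psi \lambda^{1+2\sigma}+\lambda^{-1}$, which is exactly the claimed form after pairing with the first term. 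When $\sigma=-\tfrac12$, $|\eta|^{-1}|\hat\psi(\eta)|^2$ is bounded near $0$ (by $|\hat\psi(\eta)|^2\lesssim\eta^2$) and integrable at infinity, so the second integral is $O(1)$ and $\lambda^{1+2\sigma}=1$, giving $\|\psi_\lambda\|_{H^{-1/2}}^2\lesssim_\psi 1$.

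For the \emph{lower} bound \eqref{mean1}, the hypothesis $\int\psi\neq0$ gives $\hat\psi(0)\neq0$, hence by continuity there is $c_0>0$ and $\eta_0>0$ with $|\hat\psi(\eta)|\ge c_0$ for $|\eta|\le\eta_0$. When $\sigma<-\tfrac12$ I simply keep the first (inner-region) term: $\lambda\int_{|\eta|\le1/\lambda}|\hat\psi|^2\gtrsim \lambda\cdot c_0^2\cdot\lambda^{-1}=c_0^2$ for $\lambda\ge 1/\eta_0$, and for $2\le\lambda<1/\eta_0$ one has a lower bound by compactness; this yields $\|\psi_\lambda\|_{H^\sigma}^2\gtrsim_\psi 1$. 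When $\sigma=-\tfrac12$ I lower-bound the full $H^{-1/2}$ norm by restricting the frequency integral to $1\le|\xi|\le\lambda\eta_0$ where $(1+\xi^2)^{-1/2}\approx|\xi|^{-1}$ and $|\hat\psi(\xi/\lambda)|\ge c_0$, giving $\|\psi_\lambda\|_{H^{-1/2}}^2\gtrsim c_0^2\int_{1}^{\lambda\eta_0}|\xi|^{-1}\,d\xi=c_0^2\log(\lambda\eta_0)\gtrsim_\psi\log\lambda$ for $\lambda$ large, again patching small $\lambda$ by compactness.

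\textbf{Main obstacle.} The computation is entirely routine; the only genuine content is matching the \emph{exact} exponents in \eqref{mean0}. The delicate point is the behaviour of the outer integral $\int_{|\eta|>1/\lambda}|\eta|^{2\sigma}|\hat\psi(\eta)|^2\,d\eta$ as $\sigma$ ranges over $(-\infty,-\tfrac12]$: one must be careful that for $-\tfrac32<\sigma<-\tfrac12$ the near-origin part is $O(1)$ while for $\sigma\le-\tfrac32$ it is $O(\lambda^{-1-2\sigma})$ — in either case the bound $\lambda^{-1}+\lambda^{1+2\sigma}$ of \eqref{mean0} captures it, but verifying this without losing a logarithm at the borderline $\sigma=-\tfrac32$ (where the integral $\int_{1/\lambda}^1|\eta|^{-1}\,d\eta\approx\log\lambda$ appears) requires using $|\hat\psi(\eta)|^2\lesssim\eta^2$ rather than just boundedness, so that the integrand is $|\eta|^{2+2\sigma}$ and the log is avoided for all $\sigma>-\tfrac32$, while for $\sigma\le-\tfrac32$ the power $\lambda^{1+2\sigma}$ already dominates. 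Keeping this bookkeeping straight across all cases is the one place care is needed; no inequality here is hard.
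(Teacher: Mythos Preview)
Your approach is exactly what the paper has in mind: its ``proof'' is the single sentence ``This follows from direct computation,'' and your Fourier-side splitting at $|\eta|=1/\lambda$ together with $|\hat\psi(\eta)|\lesssim|\eta|$ near the origin is precisely that computation.

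There is, however, an arithmetic slip that prevents the argument as written from closing. In the outer integral you claim
\[
\int_{1/\lambda<|\eta|\le 1}|\eta|^{2+2\sigma}\,d\eta + \int_{|\eta|>1}|\eta|^{2\sigma}\,d\eta \lesssim 1+\lambda^{-1-2\sigma},
\]
but the correct bound is $1+\lambda^{-3-2\sigma}$ (just compute the first antiderivative). With your exponent the contribution becomes $\lambda^{1+2\sigma}(1+\lambda^{-1-2\sigma})=\lambda^{1+2\sigma}+1$, which is \emph{not} bounded by $\lambda^{-1}+\lambda^{1+2\sigma}$. With the correct exponent one gets $\lambda^{1+2\sigma}+\lambda^{-2}$, which is. (Relatedly, the condition for the near-origin integral to be $O(1)$ is $2+2\sigma>-1$, not $2+2\sigma>1$.) Your ``Main obstacle'' paragraph shows you have the right picture --- the borderline is at $\sigma=-\tfrac32$, and for $\sigma\le-\tfrac32$ the near-origin piece contributes $\lambda^{-2}\log\lambda$ or $\lambda^{-2}$, both of which are $\lesssim\lambda^{-1}$ --- so this is purely a bookkeeping error, not a missing idea.
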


This follows from direct computation. Better bounds are possible in the $\sigma<-\frac12$ case of \eqref{mean0}, but simplicity is preferable.

In order to exploit Lemma~\ref{L:A1}, we need solutions for our flows that initially have mean zero, but later have non-zero mean.  For just \eqref{NLS} or \eqref{mKdV}, this is trivial.  However, we wish to consider all evolutions in the hierarchy simultaneously (excepting translation and phase rotation).  For this reason, it is convenient to work with the generating function $A(\kappa)$ for the Hamiltonians and then expand in inverse powers of $\kappa$.  Under the $A(\kappa)$ flow, 
\begin{equation}
\tfrac{d}{dt} \int q \,dx = i \int g_{12}(x;\kappa)\,dx. \qtq{Moreover,} \int g_{12}\sbrack{1}(x;\kappa)\,dx = - \tfrac1{2\kappa}\int q \,dx.
\end{equation}
These assertions follow from \eqref{qr under A} and \eqref{g12 1 3}, respectively. Delving further, shows
\begin{align}\label{E:A.4}
\int g_{12}\sbrack{3}(x;\kappa)\,dx = \tfrac{\pm 1}{4\kappa^3\sqrt{2\pi}} \sum_{\ell=0}^\infty 
		\int  \bigl(\tfrac{i}{2\kappa}\bigr)^\ell \hat q(\eta-\xi) \hat q(\xi) \overline{\hat q(\eta)} \, \tfrac{\xi^{\ell+1}-\eta^{\ell+1}}{\xi-\eta}\,d\xi\,d\eta.
\end{align}

\begin{prop}\label{P:illposed}
Both \eqref{NLS} and \eqref{mKdV} exhibit instantaneous inflation of the $H^{\sigma}$ norm, in the sense of \eqref{instantaneous}, for every $\sigma\leq -\frac12$.  Indeed, this also holds for all higher flows in the hierarchy (focusing or defocusing).
\end{prop}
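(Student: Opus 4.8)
The plan is to build a one-parameter family of Schwartz solutions, indexed by a scaling parameter $\lambda$, whose initial data has mean zero but whose value at any later time has nonzero mean. Applying Lemma~\ref{L:A1} to the rescaled profiles then converts the mean-zero-to-nonzero transition into the claimed norm inflation \eqref{instantaneous}, simultaneously for every flow in the hierarchy. Concretely, I would fix a Schwartz bump $\phi$ with $\int\phi\,dx\neq 0$, set $q_0^{(\lambda)}$ to be a suitable linear combination of rescaled, modulated copies of $\phi$ arranged so that $\int q_0^{(\lambda)}\,dx = 0$ while the cubic interaction in \eqref{E:A.4} produces a nonzero contribution; then dilate the whole picture by $\lambda$ via \eqref{scaling}. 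The freedom in choosing the modulation frequencies and the two (or more) pieces of the combination is what lets one kill the linear term $\int g_{12}\sbrack{1}\,dx = -\tfrac1{2\kappa}\int q\,dx$ while keeping the first nontrivial cubic term $\int g_{12}\sbrack{3}\,dx$ from \eqref{E:A.4} away from zero.

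First I would make the mean-evolution identity quantitative. Under the $A(\kappa)$ flow, $\tfrac{d}{dt}\int q\,dx = i\int g_{12}(x;\kappa)\,dx$, and since $\int g_{12}\sbrack{1}(x;\kappa)\,dx = -\tfrac1{2\kappa}\int q\,dx$, the mean-zero condition at $t=0$ forces $\tfrac{d}{dt}\big|_{t=0}\int q\,dx = i\int g_{12}\sbrack{\geq 3}(x;\kappa)\,dx$. So I would choose the initial data to make the leading cubic term \eqref{E:A.4} nonvanishing; a direct computation with two Gaussian-type wave packets at frequencies $\pm N$ and $0$, with amplitudes tuned for zero total mean, gives a nonzero value of the $\ell=0$ summand (the factor $\tfrac{\xi^{\ell+1}-\eta^{\ell+1}}{\xi-\eta}$ is $1$ when $\ell=0$, so this summand is essentially $\int\hat q(\eta-\xi)\hat q(\xi)\overline{\hat q(\eta)}\,d\xi\,d\eta = \sqrt{2\pi}\int |q|^2 q\,dx$ up to constants). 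Since \eqref{NLS} and \eqref{mKdV} — and each higher flow — arise as fixed coefficients in the $\kappa\to\infty$ expansion of $A(\kappa)$ (Lemma~\ref{L:A asymptotics}), and the corresponding vector fields are obtained by reading off a power of $\kappa^{-1}$, proving the statement for the $A(\kappa)$ flow with the cubic term nonzero simultaneously handles every member: one simply extracts the appropriate coefficient, noting that the mean-evolution of that flow picks up the corresponding term in the $\kappa$-expansion of $\int g_{12}\sbrack{3}\,dx$, which is nonzero for the chosen data.

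Next I would run the scaling argument. For the dilated data $q_\lambda(0,x) = \lambda q_0(\lambda x)$, the $H^\sigma$ norm at time $0$ obeys the mean-zero bound \eqref{mean0}, hence $\|q_\lambda(0)\|_{H^\sigma}\to 0$ as $\lambda\to\infty$ when $\sigma\leq -\tfrac12$ (logarithmically slowly at $\sigma=-\tfrac12$, polynomially below). On the other hand, the scaling \eqref{scaling} maps time $t$ to $\lambda^m t$, so the solution $q_\lambda$ at a small time $t$ equals $\lambda\, q(\lambda^m t,\lambda\cdot)$; by continuity in time of the Schwartz-class solution and the fact that $\tfrac{d}{dt}\int q\,dx\neq 0$ at $t=0$, there is a fixed $t_0>0$ with $\int q(t_0,x)\,dx\neq 0$, whence at time $\tau_\lambda := \lambda^{-m}t_0\to 0$ the rescaled solution has a non-mean-zero profile and \eqref{mean1} gives $\|q_\lambda(\tau_\lambda)\|_{H^\sigma}\gtrsim\log\lambda$ or $\gtrsim 1$. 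Adjusting amplitudes by a further small constant prefactor (which preserves zero/nonzero mean and the qualitative behaviour) and taking $\lambda$ large, one arranges $\|q_\lambda(0)\|_{H^\sigma}<\varepsilon$, $\tau_\lambda<\varepsilon$, and $\|q_\lambda(\tau_\lambda)\|_{H^\sigma}>\varepsilon^{-1}$, which is exactly \eqref{instantaneous}.

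The main obstacle I anticipate is the bookkeeping at $\sigma=-\tfrac12$: there the inflation factor from \eqref{mean1} is only $\log\lambda$, so one has to check that $\log\lambda$ still beats the $O(1)$ (rather than vanishing) upper bound for mean-zero data at this endpoint — indeed \eqref{mean0} only gives $\|q_\lambda(0)\|_{H^{-1/2}}\lesssim 1$, not $\to 0$. This forces an extra rescaling of \emph{amplitude}: replace $q_0$ by $\epsilon_\lambda q_0$ with $\epsilon_\lambda\to 0$ chosen so that $\epsilon_\lambda\to 0$ kills the initial norm while $\epsilon_\lambda^3\log\lambda\to\infty$ still holds — feasible since the growth mechanism is cubic in amplitude while the initial size is linear, so e.g. $\epsilon_\lambda = (\log\lambda)^{-1/4}$ works. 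One must then verify that for such small, slowly-varying data the Schwartz solution's mean genuinely moves by an amount comparable to $\epsilon_\lambda^3$ on the relevant time scale (a short-time Taylor expansion of $t\mapsto\int q\,dx$ controlled by Lemma~\ref{L:A} bounds on $g_{12}\sbrack{\geq 3}$ and its higher analogues), and that higher-order terms in the $\kappa$-expansion do not conspire to cancel the cubic contribution — both are routine but need to be stated. Everything else is a direct computation using the identities already assembled in Section~\ref{S:4} together with Lemma~\ref{L:A1}.
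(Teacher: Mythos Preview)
Your overall strategy --- build mean-zero Schwartz data that acquires nonzero mean under the flow, then rescale and invoke Lemma~\ref{L:A1} --- is exactly the paper's.  But you have the difficulty backwards: the genuine gap is at $\sigma<-\tfrac12$, which you treat as the easy case.

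For $\sigma<-\tfrac12$ the bound \eqref{mean1} gives only $\|\psi_\lambda\|_{H^\sigma}\gtrsim_\psi 1$; in fact $\|\psi_\lambda\|_{H^\sigma}^2\to|\hat\psi(0)|^2\int(1+\xi^2)^\sigma\,d\xi$, a \emph{finite} constant, as $\lambda\to\infty$.  So your rescaled single-profile solution has $\|q_\lambda(0)\|_{H^\sigma}\to 0$ but $\|q_\lambda(\tau_\lambda)\|_{H^\sigma}\to c$, never $>\eps^{-1}$.  ``Adjusting amplitudes by a small prefactor'' only shrinks the mean and hence the later norm.  The paper supplies the missing idea: take as initial data a superposition $\sum_{n=1}^N u_\lambda(0,x+nL)$ of $N$ well-separated translates.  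For $L$ large these evolve nearly independently (by perturbation theory in a high-regularity space), so at time $\lambda^{-m}t_1$ one has approximately $N$ translates of a fixed mean-nonzero rescaled profile, whose $H^\sigma$ norm grows with $N$; the mean-zero initial superposition still has small $H^\sigma$ norm once $\lambda$ is large.  One then chooses $N$ large, then $L$ large, then $\lambda$ large.

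Two smaller points.  At $\sigma=-\tfrac12$ your amplitude idea is right and matches the paper, but $\epsilon_\lambda=(\log\lambda)^{-1/4}$ gives later norm $\sim\epsilon_\lambda^3\sqrt{\log\lambda}=(\log\lambda)^{-1/4}\to 0$; the paper instead fixes a small amplitude $a$ depending only on $\eps$ and then sends $\lambda\to\infty$.  Also, you assert without proof that the relevant coefficient in the $\kappa$-expansion of \eqref{E:A.4} is nonzero for \emph{each} flow; the paper secures this by choosing $\widehat{u_0}(\xi)=a\xi^2e^{-\xi^2}$ (even flows) or $a(\xi^2+\xi^3)e^{-\xi^2}$ (odd flows), which makes the corresponding integrand in \eqref{E:A.4} sign-definite after symmetrization.
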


\begin{proof}
We first consider a fixed Schwartz solution $u:\R\times\R\to\C$ of our chosen equation.  For even numbered Hamiltonians of the hierarchy, such as \eqref{NLS}, we choose initial data $\widehat{u_0}(\xi) = a \xi^2 e^{-\xi^2}$, where $a>0$ will be chosen small shortly.  For odd numbered Hamiltonians, such as \eqref{mKdV}, we choose $\widehat{u_0}(\xi) = a [\xi^2+\xi^3] e^{-\xi^2}$.
The key criterion for selecting these initial data and for choosing $a>0$ is that
\begin{equation}\label{E:good u}
\int u(0,x)\,dx =0 \qtq{but}  \int u(t_1,x)\,dx \neq 0
\end{equation}
for some $t_1>0$ and any sufficiently small $a>0$.  The existence of such a $t_1$ will follow if we show non-vanishing of the cubic terms in the time derivative of $\int u$ at time $t=0$.  This is precisely the role of \eqref{E:A.4}.

For even numbered Hamiltonians (i.e. $\ell$ even) the integrand in \eqref{E:A.4} is sign definite and so \eqref{E:good u} is clear.  For odd numbered Hamiltonians, we first symmetrize under $\eta\leftrightarrow\xi$ and then under simultaneous inversion in $\eta$ and $\xi$; this then leads to an integrand with a sign-definite imaginary part.

In the case $\sigma=-\tfrac12$, we choose $q=a u_\lambda$ using the rescaling of $u$ given by \eqref{scaling}: One chooses $a$ small to guarantee that the initial data has size $\eps$ and then $\lambda$ large to guarantee that $\lambda^{-m}t_1<\eps$ and that the norm exceeds $\eps^{-1}$ at this time.

When $\sigma<-\tfrac12$, we need an extra idea:  Consider the solution $q$ with initial data
$$
q(0,x) = \sum_{n=1}^N a u_\lambda(0,x+nL).
$$
Note that $\sum a  u_\lambda(t,x+nL)$ is almost a solution and becomes more so as $L\to\infty$.  As all equations in the hierarchy are known to admit a perturbation theory in high regularity spaces \cite{MR2653659,MR1195480}, we know that the approximate solution differs little from $q(t,x)$ uniformly for $t\in[0,\lambda^{-m}t_1]$ provided we take $L$ large enough.  The ill-posedness result now follows by choosing $N$ and $L$ large enough to guarantee large norm at time $\lambda^{-m}t_1$ and ensuring that $\lambda$ is large enough to place this time in $[0,\eps]$ and to make the norm small at time $t=0$.
\end{proof}

Evidently this argument cannot be applied to \eqref{Miura mKdV}, because $\int q$ is conserved.  Nevertheless, we are able to show the following form of norm inflation in the focusing case:

\begin{prop}\label{P:mKdV in -1/2}
For any sequence of times $t_n\to 0$, there is a sequence of (real-valued) Schwartz-class solutions $q_n$ to focusing \eqref{Miura mKdV} 
that satisfy 
\begin{align}\label{E:mKdV in -1/2}
\| q_n(0) \|_{\dot H^{-\frac12}} \lesssim 1 \qtq{and} \| q_n(t_n) \|_{H^{-\frac12}} \to \infty.
\end{align}
Moreover, instantaneous norm inflation in the sense of \eqref{instantaneous} holds when $\sigma<-\frac12$.
\end{prop}

Note that at time zero, these solutions belong to the \emph{homogeneous} Sobolev space; this is a stronger requirement than belonging to $H^{-1/2}(\R)$ and enforces that $\int q_n(0,x)\,dx = 0$. (Unlike for \eqref{NLS} and \eqref{mKdV}, this mean-zero property is preserved by \eqref{Miura mKdV}.)  Nevertheless, the (weaker) inhomogeneous norm diverges.

For $\sigma=-\frac12$, we show norm inflation for initial data of size one, rather than for arbitrarily small initial data.  It is only in this sense that the result is weaker than \eqref{instantaneous}.
\begin{proof}
All that is required is a careful inspection of the two-soliton solutions
$$
u(t,x) := 6\frac{ 2\cosh(x-t)-\cosh(2x-8t) }{\,\cosh(3x-9t)+9\cosh(x-7t)-8}  \qtq{and} u_\lambda(t,x) = \lambda u(\lambda^3 t,\lambda x).
$$

Fix $\sigma<-\frac12$.  As rescalings of a single mean-zero Schwartz function,
$$
\| u_\lambda (0) \|_{\dot H^{-\frac12}(\R)}^2 \approx 1 \qtq{and}  \| u_\lambda (0) \|_{H^\sigma(\R)}^2 \lesssim \lambda^{-1} +  \lambda^{1+2\sigma} 
$$
uniformly for $\lambda\geq1$.  On the other hand, for $\lambda_n^2 t_n^{ } \gtrsim 1$, we see that the solution resolves into two sign-definite Schwartz solitons (each of width $\approx \lambda_n^{-1}$) separated by a distance $\approx \lambda_n^2 t_n^{ }$.  In this way, one readily shows that
$$
\| u_{\lambda_n} (t_n) \|_{H^{-\frac12}(\R)}^2 \approx \log( \lambda_n )
	\qtq{and}  \| u_{\lambda_n} (t_n) \|_{H^\sigma(\R)}^2 \gtrsim 1 .
$$
The claim \eqref{E:mKdV in -1/2} follows at once by choosing $\lambda_n$ appropriately.  Norm inflation in $H^\sigma(\R)$ follows via the same summation device employed in the proof Proposition~\ref{P:illposed}.
\end{proof}

\bibliographystyle{habbrv}
\bibliography{refs}

\begin{thebibliography}{10}

\bibitem{MR450815}
M.~J. Ablowitz, D.~J. Kaup, A.~C. Newell, and H.~Segur.
\newblock The inverse scattering transform-{F}ourier analysis for nonlinear
  problems.
\newblock {\em Studies in Appl. Math.}, 53(4):249--315, 1974.

\bibitem{MR1404320}
B.~Birnir, G.~Ponce, and N.~Svanstedt.
\newblock The local ill-posedness of the modified {K}d{V} equation.
\newblock {\em Ann. Inst. H. Poincar\'{e} Anal. Non Lin\'{e}aire},
  13(4):529--535, 1996.

\bibitem{MR1209299}
J.~Bourgain.
\newblock Fourier transform restriction phenomena for certain lattice subsets
  and applications to nonlinear evolution equations. {I}. {S}chr\"{o}dinger
  equations.
\newblock {\em Geom. Funct. Anal.}, 3(2):107--156, 1993.

\bibitem{MR1215780}
J.~Bourgain.
\newblock Fourier transform restriction phenomena for certain lattice subsets
  and applications to nonlinear evolution equations. {II}. {T}he
  {K}d{V}-equation.
\newblock {\em Geom. Funct. Anal.}, 3(3):209--262, 1993.

\bibitem{arXiv:1912.01536}
B.~Bringmann, R.~Killip, and M.~Visan.
\newblock Global well-posedness for the fifth-order {K}d{V} equation in
  {$H^{-1}(\Bbb R)$}.
\newblock {\em Ann. PDE}, 7(2):Paper No. 21, 46, 2021.

\bibitem{MR1909648}
N.~Burq, P.~G\'{e}rard, and N.~Tzvetkov.
\newblock An instability property of the nonlinear {S}chr\"{o}dinger equation
  on {$S^d$}.
\newblock {\em Math. Res. Lett.}, 9(2-3):323--335, 2002.

\bibitem{arXiv:1911.00551}
A.~Chapouto.
\newblock A remark on the well-posedness of the modified {KDV} equation in the
  {F}ourier-{L}ebesgue spaces.
\newblock {\em Discrete Contin. Dyn. Syst.}, 41(8):3915--3950, 2021.

\bibitem{arXiv:1811.05182}
M.~Chen and B.~Guo.
\newblock Local well and ill posedness for the modified {K}d{V} equations in
  subcritical modulation spaces.
\newblock {\em Commun. Math. Sci.}, 18(4):909--946, 2020.

\bibitem{MR2333210}
M.~Christ.
\newblock Power series solution of a nonlinear {S}chr\"{o}dinger equation.
\newblock In {\em Mathematical aspects of nonlinear dispersive equations},
  volume 163 of {\em Ann. of Math. Stud.}, pages 131--155. Princeton Univ.
  Press, Princeton, NJ, 2007.

\bibitem{MR2018661}
M.~Christ, J.~Colliander, and T.~Tao.
\newblock Asymptotics, frequency modulation, and low regularity ill-posedness
  for canonical defocusing equations.
\newblock {\em Amer. J. Math.}, 125(6):1235--1293, 2003.

\bibitem{christ2003illposedness}
M.~Christ, J.~Colliander, and T.~Tao.
\newblock Ill-posedness for nonlinear {S}chr\"odinger and wave equations.
\newblock \textit{Preprint}, 2003.
\newblock
  \href{https://arxiv.org/abs/math/0311048}{\texttt{arXiv:math/0311048}}.

\bibitem{christ2003instability}
M.~Christ, J.~Colliander, and T.~Tao.
\newblock Instability of the periodic nonlinear {S}chr\"odinger equation.
\newblock \textit{Preprint}, 2003.
\newblock
  \href{https://arxiv.org/abs/math/0311227}{\texttt{arXiv:math/0311227}}.

\bibitem{MR2376575}
M.~Christ, J.~Colliander, and T.~Tao.
\newblock A priori bounds and weak solutions for the nonlinear
  {S}chr\"{o}dinger equation in {S}obolev spaces of negative order.
\newblock {\em J. Funct. Anal.}, 254(2):368--395, 2008.

\bibitem{MR3058496}
M.~Christ, J.~Holmer, and D.~Tataru.
\newblock Low regularity a priori bounds for the modified {K}orteweg-de {V}ries
  equation.
\newblock {\em Lib. Math. (N.S.)}, 32(1):51--75, 2012.

\bibitem{MR1969209}
J.~Colliander, M.~Keel, G.~Staffilani, H.~Takaoka, and T.~Tao.
\newblock Sharp global well-posedness for {K}d{V} and modified {K}d{V} on
  {$\Bbb R$} and {$\Bbb T$}.
\newblock {\em J. Amer. Math. Soc.}, 16(3):705--749, 2003.

\bibitem{MR2348643}
L.~D. Faddeev and L.~A. Takhtajan.
\newblock {\em Hamiltonian methods in the theory of solitons}.
\newblock Classics in Mathematics. Springer, Berlin, english edition, 2007.
\newblock Translated from the 1986 Russian original by Alexey G. Reyman.

\bibitem{PhysRevLett.19.1095}
C.~S. Gardner, J.~M. Greene, M.~D. Kruskal, and R.~M. Miura.
\newblock Method for solving the korteweg-devries equation.
\newblock {\em Phys. Rev. Lett.}, 19:1095--1097, Nov 1967.

\bibitem{MR2096258}
A.~Gr\"{u}nrock.
\newblock An improved local well-posedness result for the modified {K}d{V}
  equation.
\newblock {\em Int. Math. Res. Not.}, (61):3287--3308, 2004.

\bibitem{MR2181058}
A.~Gr\"{u}nrock.
\newblock Bi- and trilinear {S}chr\"{o}dinger estimates in one space dimension
  with applications to cubic {NLS} and {DNLS}.
\newblock {\em Int. Math. Res. Not.}, (41):2525--2558, 2005.

\bibitem{MR2653659}
A.~Gr\"{u}nrock.
\newblock On the hierarchies of higher order m{K}d{V} and {K}d{V} equations.
\newblock {\em Cent. Eur. J. Math.}, 8(3):500--536, 2010.

\bibitem{MR2390318}
A.~Gr\"{u}nrock and S.~Herr.
\newblock Low regularity local well-posedness of the derivative nonlinear
  {S}chr\"{o}dinger equation with periodic initial data.
\newblock {\em SIAM J. Math. Anal.}, 39(6):1890--1920, 2008.

\bibitem{MR2529909}
A.~Gr\"{u}nrock and L.~Vega.
\newblock Local well-posedness for the modified {K}d{V} equation in almost
  critical {$\widehat{H^r_s}$}-spaces.
\newblock {\em Trans. Amer. Math. Soc.}, 361(11):5681--5694, 2009.

\bibitem{MR3602811}
S.~Guo.
\newblock On the 1{D} cubic nonlinear {S}chr\"{o}dinger equation in an almost
  critical space.
\newblock {\em J. Fourier Anal. Appl.}, 23(1):91--124, 2017.

\bibitem{MR2531556}
Z.~Guo.
\newblock Global well-posedness of {K}orteweg-de {V}ries equation in
  {$H^{-3/4}(\Bbb R)$}.
\newblock {\em J. Math. Pures Appl. (9)}, 91(6):583--597, 2009.

\bibitem{MR3073156}
Z.~Guo, S.~Kwon, and T.~Oh.
\newblock Poincar\'{e}-{D}ulac normal form reduction for unconditional
  well-posedness of the periodic cubic {NLS}.
\newblock {\em Comm. Math. Phys.}, 322(1):19--48, 2013.

\bibitem{MR3801473}
Z.~Guo and T.~Oh.
\newblock Non-existence of solutions for the periodic cubic {NLS} below
  {$L^2$}.
\newblock {\em Int. Math. Res. Not. IMRN}, (6):1656--1729, 2018.

\bibitem{MR4320535}
B.~Harrop-Griffiths, R.~Killip, and M.~Vi\c{s}an.
\newblock Microscopic conservation laws for integrable lattice models.
\newblock {\em Monatsh. Math.}, 196(3):477--504, 2021.

\bibitem{MR3665197}
T.~Kappeler and J.-C. Molnar.
\newblock On the well-posedness of the defocusing m{K}d{V} equation below
  {$L^2$}.
\newblock {\em SIAM J. Math. Anal.}, 49(3):2191--2219, 2017.

\bibitem{MR2131061}
T.~Kappeler and P.~Topalov.
\newblock Global well-posedness of m{K}d{V} in {$L^2(\Bbb T,\Bbb R)$}.
\newblock {\em Comm. Partial Differential Equations}, 30(1-3):435--449, 2005.

\bibitem{MR759907}
T.~Kato.
\newblock On the {C}auchy problem for the (generalized) {K}orteweg--de {V}ries
  equation.
\newblock In {\em Studies in applied mathematics}, volume~8 of {\em Adv. Math.
  Suppl. Stud.}, pages 93--128. Academic Press, New York, 1983.

\bibitem{MR1383498}
T.~Kato.
\newblock On nonlinear {S}chr\"{o}dinger equations. {II}. {$H^s$}-solutions and
  unconditional well-posedness.
\newblock {\em J. Anal. Math.}, 67:281--306, 1995.

\bibitem{MR1403260}
T.~Kato.
\newblock Correction to: ``{O}n nonlinear {S}chr\"{o}dinger equations. {II}.
  {$H^s$}-solutions and unconditional well-posedness''.
\newblock {\em J. Anal. Math.}, 68:305, 1996.

\bibitem{MR1211741}
C.~E. Kenig, G.~Ponce, and L.~Vega.
\newblock Well-posedness and scattering results for the generalized
  {K}orteweg-de {V}ries equation via the contraction principle.
\newblock {\em Comm. Pure Appl. Math.}, 46(4):527--620, 1993.

\bibitem{MR1195480}
C.~E. Kenig, G.~Ponce, and L.~Vega.
\newblock Higher-order nonlinear dispersive equations.
\newblock {\em Proc. Amer. Math. Soc.}, 122(1):157--166, 1994.

\bibitem{MR1813239}
C.~E. Kenig, G.~Ponce, and L.~Vega.
\newblock On the ill-posedness of some canonical dispersive equations.
\newblock {\em Duke Math. J.}, 106(3):617--633, 2001.

\bibitem{arXiv:1904.11910}
R.~Killip, J.~Murphy, and M.~Visan.
\newblock Invariance of white noise for {K}d{V} on the line.
\newblock {\em Invent. Math.}, 222(1):203--282, 2020.

\bibitem{MR3990604}
R.~Killip and M.~Vi\c{s}an.
\newblock Kd{V} is well-posed in {$H^{-1}$}.
\newblock {\em Ann. of Math. (2)}, 190(1):249--305, 2019.

\bibitem{MR3820439}
R.~Killip, M.~Vi\c{s}an, and X.~Zhang.
\newblock Low regularity conservation laws for integrable {PDE}.
\newblock {\em Geom. Funct. Anal.}, 28(4):1062--1090, 2018.

\bibitem{MR2501679}
N.~Kishimoto.
\newblock Well-posedness of the {C}auchy problem for the {K}orteweg-de {V}ries
  equation at the critical regularity.
\newblock {\em Differential Integral Equations}, 22(5-6):447--464, 2009.

\bibitem{MR3917712}
N.~Kishimoto.
\newblock A remark on norm inflation for nonlinear {S}chr\"{o}dinger equations.
\newblock {\em Commun. Pure Appl. Anal.}, 18(3):1375--1402, 2019.

\bibitem{MR2353092}
H.~Koch and D.~Tataru.
\newblock A priori bounds for the 1{D} cubic {NLS} in negative {S}obolev
  spaces.
\newblock {\em Int. Math. Res. Not. IMRN}, (16):Art. ID rnm053, 36, 2007.

\bibitem{MR2995102}
H.~Koch and D.~Tataru.
\newblock Energy and local energy bounds for the 1-d cubic {NLS} equation in
  {$H^{-\frac14}$}.
\newblock {\em Ann. Inst. H. Poincar\'{e} Anal. Non Lin\'{e}aire},
  29(6):955--988, 2012.

\bibitem{MR3874652}
H.~Koch and D.~Tataru.
\newblock Conserved energies for the cubic nonlinear {S}chr\"{o}dinger equation
  in one dimension.
\newblock {\em Duke Math. J.}, 167(17):3207--3313, 2018.

\bibitem{arXiv:1805.08410}
S.~Kwon, T.~Oh, and H.~Yoon.
\newblock Normal form approach to unconditional well-posedness of nonlinear
  dispersive {PDE}s on the real line.
\newblock {\em Ann. Fac. Sci. Toulouse Math. (6)}, 29(3):649--720, 2020.

\bibitem{MR235310}
P.~D. Lax.
\newblock Integrals of nonlinear equations of evolution and solitary waves.
\newblock {\em Comm. Pure Appl. Math.}, 21:467--490, 1968.

\bibitem{MR252825}
R.~M. Miura.
\newblock Korteweg-de {V}ries equation and generalizations. {I}. {A} remarkable
  explicit nonlinear transformation.
\newblock {\em J. Mathematical Phys.}, 9:1202--1204, 1968.

\bibitem{MR2927357}
L.~Molinet.
\newblock Sharp ill-posedness results for the {K}d{V} and m{K}d{V} equations on
  the torus.
\newblock {\em Adv. Math.}, 230(4-6):1895--1930, 2012.

\bibitem{MR3702002}
T.~Oh.
\newblock A remark on norm inflation with general initial data for the cubic
  nonlinear {S}chr\"{o}dinger equations in negative {S}obolev spaces.
\newblock {\em Funkcial. Ekvac.}, 60(2):259--277, 2017.

\bibitem{arXiv:1806.08761}
T.~Oh and Y.~Wang.
\newblock Global well-posedness of the one-dimensional cubic nonlinear
  {S}chr\"{o}dinger equation in almost critical spaces.
\newblock {\em J. Differential Equations}, 269(1):612--640, 2020.

\bibitem{arXiv:1811.04606}
T.~Oh and Y.~Wang.
\newblock On global well-posedness of the modified {K}d{V} equation in
  modulation spaces.
\newblock {\em Discrete Contin. Dyn. Syst.}, 41(6):2971--2992, 2021.

\bibitem{MRiesz}
M.~Riesz.
\newblock Sur les ensembles compacts de fonctions sommables.
\newblock {\em Acta Sci. Math. (Szeged)}, 6:136--142, 1933.

\bibitem{MR4142237}
R.~Schippa.
\newblock On the existence of periodic solutions to the modified {K}orteweg--de
  {V}ries equation below {$H^{1/2}(\Bbb T)$}.
\newblock {\em J. Evol. Equ.}, 20(3):725--776, 2020.

\bibitem{MR512086}
R.~S. Strichartz.
\newblock Restrictions of {F}ourier transforms to quadratic surfaces and decay
  of solutions of wave equations.
\newblock {\em Duke Math. J.}, 44(3):705--714, 1977.

\bibitem{MR641651}
M.~Tsutsumi.
\newblock Weighted {S}obolev spaces and rapidly decreasing solutions of some
  nonlinear dispersive wave equations.
\newblock {\em J. Differential Equations}, 42(2):260--281, 1981.

\bibitem{MR915266}
Y.~Tsutsumi.
\newblock {$L^2$}-solutions for nonlinear {S}chr\"{o}dinger equations and
  nonlinear groups.
\newblock {\em Funkcial. Ekvac.}, 30(1):115--125, 1987.

\bibitem{MR0406174}
V.~E. Zakharov and A.~B. Shabat.
\newblock Exact theory of two-dimensional self-focusing and one-dimensional
  self-modulation of waves in nonlinear media.
\newblock {\em \v{Z}. \`Eksper. Teoret. Fiz.}, 61(1):118--134, 1971.

\end{thebibliography}

\end{document}